    \newcommand{\PARENS}[1]{\left(#1\right)}
    \newcommand{\ccases}[1]{\begin{cases}#1\end{cases}}
\DeclareFontFamily{U}{MnSymbolC}{}
\DeclareSymbolFont{MnSyC}{U}{MnSymbolC}{m}{n}
\DeclareFontShape{U}{MnSymbolC}{m}{n}{
    <-6>  MnSymbolC5
   <6-7>  MnSymbolC6
   <7-8>  MnSymbolC7
   <8-9>  MnSymbolC8
   <9-10> MnSymbolC9
  <10-12> MnSymbolC10
  <12->   MnSymbolC12}{}
\DeclareMathSymbol{\intprod}{\mathbin}{MnSyC}{'267}
\newcommand{\coloneq}{:=}
\newcommand{\ee}{\boldsymbol{e}}
\newcommand{\n}{\boldsymbol{n}}
\newcommand{\m}{\boldsymbol{m}}
\newcommand{\x}{\boldsymbol{x}}
\newcommand{\y}{\boldsymbol{y}}
\newcommand{\z}{\boldsymbol{z}}
\newcommand{\q}{{\boldsymbol{\upalpha}}}
\newcommand{\kk}{\boldsymbol{k}}
\newcommand{\ele}{\boldsymbol{\ell}}
\newcommand{\SC}{\operatorname{SC}}
\newcommand{\dd}{\operatorname{d}}
\newcommand{\Exp}[1]{\operatorname{e}^{#1}}
\newcommand{\diag}{\operatorname{diag}}
\newcommand{\Ds}{\mathscr D}
\newcommand{\N}{\mathbb{N}}
\newcommand{\Z}{\mathbb{Z}}
\newcommand{\R}{\mathbb{R}}
\newcommand{\C}{\mathbb{C}}
\newcommand{\I}{\mathbb{I}}
\newcommand{\T}{\mathbb{T}}
\newtheorem{pro}{Proposition}[subsection]
\newtheorem{lemma}{Lemma}[subsection]
\newtheorem{definition}{Definition}[subsection]
\newtheorem{theorem}{Theorem}[subsection]
\newtheorem{cor}{Corollary}[subsection]
\numberwithin{equation}{subsection}
\begin{document}
\title[Multivariate Orthogonality and Toda]{Multivariate orthogonal polynomials\\ and integrable systems}
\author{Gerardo Ariznabarreta}\address{Departamento de F\'{\i}sica Te\'{o}rica II (M\'{e}todos Matem\'{a}ticos de la F\'{\i}sica), Universidad Complutense de Madrid, 28040-Madrid, Spain}
\email{gariznab@ucm.es}
\thanks{GA thanks economical support from the Universidad Complutense de Madrid  Program ``Ayudas para Becas y Contratos Complutenses Predoctorales en Espa\~{n}a 2011"}
\author{Manuel Ma\~{n}as}
\email{manuel.manas@ucm.es}
%\dedicatory{Corresponding author e-mail : manuel.manas@ucm.es}
\thanks{MM thanks economical support from the Spanish ``Ministerio de Econom\'{\i}a y Competitividad" research project MTM2012-36732-C03-01,  \emph{Ortogonalidad y aproximaci\'{o}n; teor\'{\i}a y aplicaciones}}
\keywords{Multivariate orthogonal polynomials, Borel--Gauss factorization, quasi-determinants, Christoffel--Darboux kernels, Darboux transformations,  Christoffel formula, quasi tau matrices, kernel polynomials, integrable hierarchies, Toda equations, KP equations}
\subjclass{15A23,33C45,37K10,37L60,42C05,46L55}

\begin{abstract}
Multivariate orthogonal polynomials in $D$ real dimensions are   considered from the perspective of the Cholesky factorization of a moment matrix.   The approach allows for the construction of corresponding multivariate orthogonal polynomials, associated second kind functions, Jacobi type matrices and associated three term relations and also  Christoffel--Darboux formul{\ae}. The multivariate orthogonal polynomials, its second kind functions and the corresponding Christoffel--Darboux kernels are shown to be quasi-determinants --as well as Schur complements--  of bordered truncations of the moment matrix; quasi-tau functions are introduced. It is proven that  the second kind functions are  multivariate Cauchy transforms of the multivariate orthogonal polynomials. Discrete and continuous deformations of the measure lead to Toda type integrable hierarchy, being the corresponding flows  described through Lax and Zakharov--Shabat equations; bilinear equations are found. Varying size matrix nonlinear partial difference and differential equations of the 2D Toda lattice type are shown to be solved by matrix coefficients of the multivariate orthogonal polynomials. The discrete flows, which are shown to be connected with a Gauss--Borel factorization of the Jacobi type matrices and its quasi-determinants,  lead to  expressions for the multivariate orthogonal polynomials and its second kind functions in terms of shifted quasi-tau matrices, which generalize to the multidimensional realm those that relate the Baker and adjoint Baker functions with  ratios of Miwa shifted $\tau$-functions in the 1D scenario.
In this context, the multivariate extension of the elementary Darboux transformation is given in terms of quasi-determinants of matrices built up by the evaluation, at a poised set of nodes lying in an appropriate hyperplane in $\R^D$, of the multivariate orthogonal polynomials. The multivariate Christoffel formula for the iteration of $m$ elementary Darboux transformations is given as a quasi-determinant.
 It is shown, using congruences in the space of semi-infinite matrices, that the discrete and continuous flows are intimately connected and determine nonlinear partial difference-differential equations that involves only one site in the integrable lattice behaving as a Kadomstev--Petviashvili type system. Finally,  a brief discussion of measures with a particular linear isometry invariance and some of its consequences for the corresponding multivariate polynomials are given. In particular, it is shown that the Toda times that preserve the invariance condition lay in a secant variety of the Veronese variety of the fixed point set of the linear isometry.

\end{abstract}

\maketitle
%\newpage
%\setcounter{tocdepth}{1}
\tableofcontents
%\newpage

\section{Introduction}

This paper is devoted to the study of the interrelation between the theory of  Multivariate Orthogonal Polynomials, or orthogonal polynomials on several variables, and the theory of Integrable Systems of Toda type. We perform this analysis with the aid of the Gauss--Borel factorization of the moment matrix, that  in this case reduces to a Cholesky factorization. To understand better the situation we now proceed to give a brief description on the state of the art for multivariate orthogonal polynomials, then we recall some facts regarding Toda equations and integrable systems. As we use quasi-determinants in a number of places we have also included some comments regarding this subject. Finally, we describe the aims, results and the layout of the paper.

\subsection{On multivariate orthogonal polynomials}
Multivariate orthogonal polynomials has been a subject of study for many years, we refer the reader to the book by Charles F. Dunkl and Yuan Xu \cite{Dunkl} in where the authors of this paper enjoyed learning diverse aspects of multivariate orthogonality.  The authors presents in that book the  general theory and emphasizes the classical types of orthogonal polynomials whose weight functions are supported on standard domains such as the cube, the simplex, the sphere and the ball. It also focuses on those of Gaussian type, for which fairly explicit formul{\ae} exist. Another general source could be the lecture notes \cite{xu4} which provide an introduction to orthogonal polynomials of several  variables. It  covers the basic theory but deal mostly with examples, paying special attention to those orthogonal polynomials associated with classical type weight functions supported on the standard domains, for which fairly explicit formul{\ae} exist.

The  recurrence relation for orthogonal polynomials in several variables was studied by Xu in \cite{xu0}, while in \cite{xu1} he linked multivariate orthogonal polynomials with a commutative family of self-adjoint operators and the spectral theorem was used to show the existence of a three term relation for the orthogonal polynomials. He discusses in \cite{xu2}  how the three term relation leads to the construction of multivariate orthogonal polynomials and cubature formul{\ae}.  Xu considers in  \cite{xu8}  polynomial subspaces that contain discrete multivariate orthogonal polynomials  with respect to the bilinear form are identified and shows that the discrete orthogonal polynomials still satisfy a three-term relation and that Favard's theorem holds. Explicit three term recurrence relations for the determination of multivariate orthogonal polynomials, which allow for the derivation of evaluation algorithms of finite series of these polynomials, were obtained \cite{barrio}.  Recursive three-term recurrence for the multivariate Jacobi polynomials on a simplex are explicitly given in \cite{waldron}. In \cite{rodal}  several relations linking differences of bivariate discrete orthogonal polynomials and polynomials are given. We should also mention the work \cite{delgado} in where bivariate real valued polynomials orthogonal with respect to a positive linear functional are considered; interestingly  the authors discuss orthogonal polynomials associated with positive definite block Hankel matrices whose entries are also Hankel and  develop methods for constructing such matrices.

Multivariate Pad\'{e} approximants cubature formul{\ae} were considered in  \cite{beno}. The analysis of orthogonal polynomials and cubature formul{\ae} on the unit ball, the standard simplex, and the unit sphere  \cite{xu6 } lead to conclude  the strong connection of orthogonal structures and cubature formul{\ae} for these three regions.  In \cite{li-xu} Tchebychev polynomials were obtained using symmetric and antisymmetric sums of exponentials and Gaussian cubatures were found, which exist very rarely in higher dimension.
The paper \cite{xu3} presents a systematic study of the common zeros of polynomials in several variables which are related to higher dimensional quadrature.
 In \cite{knese}  a description of polynomials orthogonal on the bicircle and polycircle and their relation to bounded analytic functions on the polydisk is given. Important in this work is a Christoffel--Darboux like formula which in the bivariate case can be related to stable polynomials, Bernstein--Szeg\H{o} measures and gives a new proof of Ando  theorem in operator theory.

Karlin and McGregor  \cite{karlin}  and Milch \cite{milch} discussed interesting examples of multivariate Hahn and Krawt\-chouk polynomials related to growth birth and death processes.
A  study  of two-variable orthogonal polynomials associated with a moment functional satisfying the two-variable analogue of the Pearson differential equation and an extension of some of the usual characterizations of the classical orthogonal polynomials in one variable was found \cite{fernandez}. In \cite{1} semiclassical orthogonal polynomials in two variables are defined as the orthogonal polynomials associated with a quasi definite linear functional satisfying a matrix Pearson-type differential equation, semiclassical functionals
are characterized by means of the analogue of the structure relation in one variable and non trivial examples of semiclassical orthogonal polynomials in two variables where given. Xu and Ilieva gave in \cite{xu7} a characterization of all second order difference operators of several variables that have discrete orthogonal polynomials as eigenfunctions is given and under some mild assumptions, they give a complete solution of the problem.

 In \cite{fernandez2} the authors analyze a bilinear form obtained by adding a Dirac mass to a positive definite moment functional defined in the linear space of polynomials in several variables.
 A new proof of Gasper theorem on the positivity of sums of triple products on Jacobi polynomials  was given in \cite{geronimo};
this theorem plays an important role in setting up a convolution structure for Jacobi polynomials, the correlation operator is an operator on the $N$-sphere looking for its eigenfunction expansion in various angular momentum sectors leads to Gasper's theorem and to the Koornwinder--Schwartz product formul{\ae} for the biangle which constitutes an extension of Gasper's theorem to the bivariate case.
Xu discusses in \cite{xu5} monomial orthogonal polynomials with respect to the weight function  on the unit sphere as well as for the related weight functions on the unit ball and on the standard simplex getting explicit formul{\ae} for the $L^2$ norm and explicit expansions in terms of known orthonormal basis.

Let us mention that even there are Maple libraries --MOPS-- to treat with multivariate orthogonal polynomials, in particular  computes Jack, Hermite, Laguerre, and Jacobi multivariate polynomials, as well as eigenvalue statistics for the Hermite, Laguerre, and Jacobi ensembles
of random matrix theory \cite{dumitriua}.

\subsection{On the Toda equations}

Some times the name given to equations or theorems do not correspond exactly to the original discoverers of the result. This is one of those cases.

The Toda equations can be traced back to the classical {\oe}uvre \emph{Le\c{c}ons sur la Th\'{e}orie G\'{e}n\'{e}rale des Surfaces} published in 1915,  by the French Mathematician Jean Gaston Darboux \cite{darboux-toda}; when he studies the Laplace method on reduction and invariance properties associated with the canonical hyperbolic
equation $\Delta r=0$  where $\Delta$  is a second order real  hyperbolic operator. In the \emph{Deuxi\`{e}me Partie. Livre IV. Chapitre II. La m\'{e}thode de Laplace} if we go to number 336  we discover recursion (27) (page 30 of \cite{darboux-toda}) for the invariants $h_k$ and $h_{k-1}$ --of equations $E_k$ in number 335--:
\begin{align*}
h_{k+1}+h_{k-1}  =2h_k-\frac{\partial^2\log h_k}{\partial x\partial y},
\end{align*}
that for the new dependent variable $q_k$ given by
\begin{align*}
  h_k=\Exp{q_{k-1}-q_k}
\end{align*}
reads as the 2D Toda equation
\begin{align*}
  \frac{\partial^2 q_k}{\partial x\partial y}=\Exp{q_k-q_{k+1}}-\Exp{q_{k-1}-q_k},
\end{align*}
that for the dimensional reduction $x=\pm y=t$ simplifies to the  Toda equation
\begin{align*}
  \frac{\partial^2 q_k}{\partial t^2}=\Exp{q_k-q_{k+1}}-\Exp{q_{k-1}-q_k}.
\end{align*}

Then, more than a half a century later the Japanese Physicist Morikazu Toda,
introduced  \cite{toda} a simple model, that he named as exponential lattice, for a one-dimensional crystal in solid state physics with a nearest neighbor interaction, with potential  $\phi(r)= \frac{a}{ b}\Exp{-r}+a r+c$, $a, b>0$ , such that the particles are subject to
 \begin{align*}
\frac{\dd p_k}{\dd t}(t) &= \Exp{-(q_k(t) - q_{k-1}(t))} - \Exp{-(q_{k+1}(t) - q_k(t))}, \\
\frac{\dd q_k}{\dd t} (t) &= p_k(t),
\end{align*}
where $q_k$ and $p_k$ are the displacement of the $k$-th particle from its equilibrium position, and  its momentum (here the mass is set equal to the unity).
In \cite{toda} exact solutions where obtained in terms of the Jacobian elliptic functions, it was also shown that the system has $N$ normal modes and the expansion due to vibration  of the chain was discussed. Later on \cite{toda1} relations between this nonlinear exponential lattice, the Boussinesq equation and the Korteweg--de Vries equation showed up and therefrom  two-soliton solutions were given in each case for both the head-on and the overtaking collisions.

The Toda lattice is a completely integrable system \emph{\`{a} la Liouville}  as it was shown in 1974 first by Michel Hen\'{o}n \cite{henon} and then by Hermann Flaschka \cite{flaschka2} in terms  Flaschka's variables:
\begin{align*}
  a_k(t) =& \frac{1}{2} \Exp{-\frac{q_{k+1}(t) - q_{k}(t)}{2}}, &b_k(t) = -\frac{1}{2} p_k(t),
\end{align*}
so that 1D Toda equations are written as follows
 \begin{align}
\dot a_k(t) &= a_k(t) \big(b_{k+1}(t)-b_k(t)\big), \\
\dot b_k(t) &= 2 \big(a_k(t)^2-a_{k-1}(t)^2\big).
\end{align}
These equations can be reformulated as  the  Lax equation
$\dot L(t) = [P(t), L(t)]$;  the  \emph{Lax pair},  $L$ and $P$, are linear operators in the space $\ell^2(\mathbb{Z})$  of square summable sequences given by
 \begin{align}
(L(t) f)_k &= a_k(t) f_{k+1} + a_{k-1}(t) f_{k-1} + b_k(t) f_k,  & L(t)=\PARENS{\begin{matrix*}[r]
  b_0(t) & a_0(t)&0 &0 &\dots\\
  a_0(t)& b_1(t)& a_1(t)&0&\dots\\
  0& a_1(t)&b_2(t)& a_2(t)&\dots\\
  &\ddots &\ddots&\ddots
\end{matrix*}},\\
(P(t) f)_k &= a_k(t) f_{k+1} - a_{k-1}(t) f_{k-1}, & P(t)=\PARENS{\begin{matrix*}[c]
 0 & a_0(t)&0 &0 &\dots\\
  a_0(t)& 0& a_1(t)&0&\dots\\
  0& a_1(t)& 0& a_2(t)&\dots\\
  &\ddots &\ddots&\ddots
\end{matrix*}}.
\end{align}
Observe that $L$ is a Jacobi operator, with only the superdiagonal, diagonal and subdiagonal non zero.
The spectrum of $L(t)$ do not depend on time. These eigenvalues  gives a set of independent integrals of motion: the Toda lattice is completely integrable.
 In particular, the Toda lattice can be solved by virtue of the inverse scattering transform for the Jacobi operator $L$. For arbitrary and sufficiently fast decaying initial conditions asymptotically for large $t$ the solution split into a sum of solitons and a decaying dispersive part.
The inverse scattering transform for this system was applied to find solutions in \cite{manakov,flaschka1}. Also in 1975 Mark Kac and Pierre van Moerbeke published two articles in PNAS regarding the Toda Lattice. In \cite{kac-van moerbeke1} a discrete version of Floquet's theory was applied to a system of non-linear differential equations related to the periodic Toda lattice and some solutions found by Toda where shown to fir in the inverse scattering formalism, but more important was \cite{kac-van moerbeke2} in where the motion of the periodic Toda lattice was
explicitly determined in terms of Abelian integrals.

\subsection{Gauss--Borel factorization in integrable systems and orthogonal polynomials}
The seminal paper of Mikio Sato \cite{sato0,sato}, and further developments performed  by the Kyoto school through the use of the bilinear equation and the $\tau$-function formalism \cite{date1,date2,date3}, settled the basis for the Lie group theoretical description of integrable hierarchies, in this direction we have the relevant contribution by Motohico Mulase \cite{mulase} in which the factorization problems, dressing procedure, and linear systems were the key for
integrability. In this dressing setting the multicomponent integrable hierarchies of Toda type were analyzed in depth by Kimio Ueno and Kanehisa Takasaki \cite{ueno-takasaki0,ueno-takasaki1,ueno-takasaki}. See also the papers \cite{BtK,BtK2} and \cite{kac} on the multi-component KP hierarchy and \cite{mma} on the multi-component Toda lattice hierarchy. In a series of papers Mark Adler and Pierre van Moerbeke showed how the Gauss--Borel factorization problem appears in the theory
of the 2D Toda hierarchy and what they called the discrete KP hierarchy \cite{adler,adler-van moerbeke, adler-vanmoerbeke 0,adler-van moerbeke 1,adler-van moerbeke 1.1,adler-van moerbeke 2}. These papers clearly established  --from a group-theoretical setup-- why standard orthogonality of polynomials and integrability of nonlinear equations of Toda type where so close. In fact, the Gauss--Borel factorization of the moment matrix may be understood as the Gauss--Borel factorization of the initial condition for the integrable hierarchy. To see the connection between the work of Mulase and that of Adler and van Moerbeke see \cite{felipe}. Later on, in the recent paper \cite{adler-vanmoerbeke 5}, it is shown that the multiple orthogonal construction described in previous paragraphs was linked with the multi-component KP hierarchy.

 In the Madrid group, based on the Gauss--Borel factorization, we have been searching further the deep links between the Theory of Orthogonal Polynomials and the Theory of Integrable Systems. In \cite{cum1} we studied the generalized orthogonal polynomials \cite{adler} and its matrix extensions from the Gauss--Borel view point. In \cite{cum2} we gave a complete study in terms of factorization for multiple orthogonal polynomials of mixed type and characterized the integrable systems associated to them. Then, we studied Laurent orthogonal polynomials in the unit circle trough the CMV approach in \cite{carlos} and find in \cite{carlos2} the Christoffel--Darboux formula for generalized orthogonal matrix polynomials. These methods where further extended, for example we gave an alternative Christoffel--Darboux formula for mixed multiple orthogonal polynomials \cite{gerardo1} or developed the corresponding  theory of matrix  Laurent orthogonal polynomials in the unit circle and its associated Toda type hierarchy \cite{MOPUC}.

\subsection{On quasi-determinants} We will like to make some comments on Schur complements and quasi-determinants. Besides its name observe that the Schur complement was  not introduced by Issai Schur but by Emilie Haynsworth in 1968 in \cite{schur1,schur2}. In fact, Haynsworth coined that named because the Schur determinant formula given in what today is known as Schur lemma in \cite{schur}.  In the book \cite{zhang} one can find an ample overview on Schur complement and many of its applications.  The most easy examples of quasi-determinants are Schur complements. Israel Gel'fand and collaborators have made many important contributions to the subject and the survey article \cite{gelfand} is an excellent reference. In addition, we also recommend Peter Olver's paper on multivariate interpolation where in \S 3 the reader will find an alternative interesting  approach to the subject.
  In the late 1920  Archibald Richardson \cite{quasideterminant1,quasideterminant2}, one of the two responsible of Littlewood--Richardson rule,  and the famous logician Arend Heyting \cite{quasideterminant3}, founder of intuitionist logic, studied possible extensions of the determinant notion to division rings. Heyting defined the \emph{designant} of a matrix with noncommutative entries, which for $2\times 2$ matrices was the Schur complement, and generalized to larger dimensions by induction. Let us stress that both Richardson's and Heyting's \emph{ quasi-determinants }
were generically rational functions of the matrix coefficients. Soon, in 1931, Oystein Ore \cite{quasideterminant0} manifested his disgust with the rational character of the just introduced quasi-determinant and gave a polynomial proposal, the Ore's determinant.
A definitive impulse to the modern theory was given by the Gel'fand's school \cite{quasidetermiant6,quasideterminant4,quasideterminant5,quasidetermiant7,quasidetermiant8,quasidetermiant9}.
Quasi-determinants where defined over free division rings and was early noticed that is not an analog of the commutative determinant but rather of a ratio determinants. A cornerstone for  quasi-determinants is  the  \emph{heredity principle}, quasi-determinants of quasi-determinants are quasi-determinants; there is no analog of such a principle for determinants.
However, many of the properties of determinants extend to this case, see the cited papers and also \cite{minor} for quasi-minors expansions. Let us mention that in the early 1990 the Gelf'and school \cite{quasidetermiant7} already noticed the role quasi-determinants for some integrable systems, see also \cite{rekhtah} for some recent work in this direction regarding non-Abelian Toda and Painlev\'{e} II equations. Jon Nimmo and his collaborators,  the Glasgow school, have  studied the relation of quasi-determinants and integrable systems, in particular we can mention the papers \cite{nimmo0,nimmo1,nimmo2,gilson,nimmo3};  in this direction see also \cite{qd-otros,qd-otros2,qd-otros3}. All this paved the route, using the connection with orthogonal polynomials \emph{\`{a} la Cholesky}, to the appearance of quasi-determinants in the multivariate orthogonality context. Later, in 2006 Peter Olver applied quasi-determinants to multivariate interpolation \cite{olver}. This is the approach we apply in this paper. As in \cite{olver} the blocks have different sizes, and so multiplication of blocks is only allowed if they are \emph{compatible}. In general, the (non-commutative) multiplication makes sense if the number of columns and rows of the blocks involved fit well. Moreover, we are only permitted to invert diagonal entries that in general makes the minors expansions by columns or rows not applicable \cite{minor} but allows for other result, like the Sylvester's theorem,  to hold in this wider scenario.

\subsection{Aims, results and structure of the paper}
The question was possed to us by Jeff Geronimo: What about the integrable systems associated with multivariate orthogonal polynomials? To answer  this question we  consulted \cite{Dunkl} and we readily  noticed  the ubiquity of the Gauss--Borel factorization in the subject, and therefore the opportunity to link it with the theory of integrable systems. Once this fact was realize we applied the factorization technology of the moment matrix to reproduce the general theory presented in \cite{Dunkl}.

The main difference with the case of orthogonal polynomials in the real line (OPRL) is that now the moment matrix is a block matrix, with its elements being rectangular matrices of varying size. We had come across with matrix blocks before when we studied matrix orthogonal polynomials, but there the size of each block was fixed, now is variable.  This intrinsic fact, leagued with the multivariate character, lead in the one hand to the appearance of Schur complements and  quasi-determinants and, on the other hand, to multivariate Cauchy integrals and integrals along the Shilov border of poly-disks --that is, to be faced to some basic facts of complex analysis in several variables.
The Schur complement already appeared in the study of matrix orthogonal polynomials, see for example \cite{MOPUC,giovanni1,giovanni2}, but we did not understood yet in \cite{MOPUC} the important role played int the theory by quasi-determinants; now we do.  We adjacently  get across symmetric algebra \cite{federer,knapp}, being  isomorphic to the set of multivariate polynomials it some times allow for simple derivation of some result or illuminate some structure.
All the necessary material regarding these issues can be found in the Appendices.

\subsubsection{Results} In the first place we recover a number of classical results from the multivariate orthogonality general theory, see for example \cite{Dunkl},  using  a Cholesky factorization\footnote{A Gauss--Borel factorization for the symmetric case.} of a symmetric moment matrix. We  got the multivariate orthogonal polynomials associated with a given Borel measure and the corresponding second kind functions, that happen to be multivariate  Cauchy transforms of the polynomials. All these objects have \emph{quasi-determinantal expressions in terms of bordered truncated moment matrices}. Then, the shift matrices allow to get the three term relation and also Jacobi type matrices and Christoffel--Darboux formul{\ae}.

Once we have been able to reproduce, with a Cholesky flavour, classical results for multivariate orthogonal polynomials,  we begun the quest of discrete and continuous deformations of the measure which lead to equations of the Toda type. We found both partial difference and partial differential nonlinear equations  for the varying size  matrices. Moreover, we introduce  \emph{quasi-tau matrices} and find the analogous, in this multi-variable scenario, to the 1D expressions of the orthogonal polynomials and its second kind functions as ratios of Miwa shifted tau functions. Besides these achievements we noticed that the discrete flows allow for the finding of the multivariate extension of the elementary Darboux transformations via what we named as the \emph{sample matrix trick}. These allow not only to express the kernel polynomials, but also its second kind functions, the quasi-tau matrices and some other important coefficients as quasi-determinants of the original data.  The sample matrix trick allows also for the study of iterated Darboux transformations and the finding of the \emph{multivariate version of the Christoffel formula}. Many relevant elements of Toda integrable theory, as linear systems, Lax equations, Zakharov--Shabat equations and bilinear equations, are found.  An \emph{asymptotic module} or \emph{asymptotic congruence} arguments  permit for another perspective of the hierarchy, and we find KP type equations for this multivariate case. Finally,  a \emph{linear isometry invariance} of the measure is assumed and  we get, through the Cholesky factorization, the consequences for the multivariate orthogonality and the corresponding integrable systems.

\subsubsection{The layout of the paper}   After this introduction we discuss in \S \ref{2} the general theory of multivariate orthogonal polynomials by using the Cholesky factorization of a moment matrix. We describe the monomials and order them, according to the reserve lexicographic order, so that  we can analyze the conditions for the Cholesky factorization to hold and find the multivariate orthogonal polynomials and its associated second kind functions and its integral representation. The shift matrices  are introduced and the three term relations are recovered. The Christoffel--Darboux  formul{\ae} is deduced in this context.

In \S \ref{3} we introduce discrete Toda deformations of the measure, we find the corresponding integrable discrete flows, wave matrices, lattice resolvents and  Lax (or Jacobi type matrices) pairs are given; a quasi-determinantal expression in terms of the Jacobi matrix for the lattice resolvent is found. Discrete Lax and Zakharov--Shabat equations and corresponding discrete Toda type equations for the varying size quasi-tau functions are described. Then, we find some interesting expressions for the multivariate orthogonal polynomials and its second kind functions in terms of quasi-tau functions and its shifts. For the orthogonal polynomials we need to use the Moore--Penrose pseudo-inverse of a matrix given in terms of the shift matrices and for the second kind functions we need to use a composed, or total, translation. In the 1D scenario these formul{\ae} are the well known expressions for these objects in terms of quotients of tau functions and its Miwa shifts (which happen to be discrete flows). We observe that these discrete transformations are elementary Darboux (or Christoffel) transformations and we are able, introducing the sample matrix trick, to give an explicit expression for the transformed polynomials in terms quasi-determinants of the original ones. The $n$-th iteration of these multivariate elementary Darboux lead to a multivariate Christoffel formul{\ae} expressing the new orthogonal polynomials  $\tilde P_{[k]}(\x)$ in terms of quasi-determinants of the original ones $P_{[k]}(\x),\dots,P_{[k+n]}(\x)$ evaluated at some appropriate nodes. This approach leads to the finding of quasi-determinantal expressions for the kernel polynomials in terms of the evaluation of the Christoffel--Darboux kernels.

Continuous Toda deformations of the measure are discussed in \S \ref{4}. We introduce
Baker and adjoint Baker functions in terms of multivariate orthogonal polynomials and its multivariate Cauchy transforms, we find the corresponding  Lax and Zakharov-Shabat equations and write a continuous Toda type equations for the quasi-tau matrices. The discrete flows are identified with Miwa shifts and the bilinear equations, with integrals along tori --Shilov borders of appropriate  polydisks-- are given. Next, in \S \ref{5} we apply the congruence technique  to find KP type equations, nonlinear equations that relate through nonlinear partial differential-difference equations coefficients of the polynomials but for the same $k$, not involving, as it do happen in the Toda scenario, near neighbours $k+1$ and $k-1$. We connect using this method  discrete and continuous flows. Then, we present linear equations and corresponding nonlinear partial differential equations for the second order flows. We end the section by exploring the linear equations for the third order flows and giving some hints for higher order flows. Finally, in \S \ref{6} we study some linear isometry type symmetries of the measure and its consequences on the multivariate orthogonal polynomials; we discuss also what discrete or continuous flows preserve this symmetry.

In the Appendices we present some necessary material for  reading of the paper. In particular,   compositions, multisets and symmetric algebras  are briefly  treated in  Appendix \ref{symmetric}. Then, in Appendix \ref{qd} we recall some aspects of pseudo-inverses,  Schur complements and quasi-determinants and, in Appendix \ref{scv}, we give some notations and results that appear in the  analysis in several complex variables. For the sake of clarity some of the proofs of Propositions and Theorems have been collected in Appendix \ref{proofss}.

\section{Multivariate orthogonality \`{a} la Cholesky}\label{2}
We study  multivariate  orthogonal polynomials  in a  $D$-dimensional real space (MVOPR) in terms of a Cholesky factorization of a semi-infinite moment matrix. We consider $D$ independent real variables $\x=\left(x_1,x_2,\dots,x_D \right)^\top\in \Omega\subseteq\mathbb{R}^D$ varying in the domain $\Omega$ together with a Borel measure $\dd\mu(x) \in \mathcal{B}(\Omega)$.
The inner product  of two real valued functions $f(\x)$ and $g(\x)$ is defined by
\begin{align*}
 \langle f,g\rangle&\coloneq\int_{\Omega} f(\x)\dd\mu(\x) g(\x).
\end{align*}

\subsection{Ordering the monomials}
Given a multi-index $\q=(\alpha_1,\dots,\alpha_D)^\top \in\Z_+^{D}$ of non-negative integers we write $\x^{\q}=x_1^{\alpha_1}\cdots x_D^{\alpha_D}$; the length\footnote{Also known as absolute value, order or norm. } of $\q$ is $|\q|\coloneq  \sum_{a=1}^{D} \alpha_a$. This length induces the total ordering of monomials, $\x^{\q}<\x^{\q'}\Leftrightarrow|\q|<|\q'|$, that we will use to arrange the monomials. For each non-negative integer $k\in\Z_+$ we introduce the set
\begin{align*}
[k]\coloneq \{\q\in \Z_+^{D}: |\q|=k\},
\end{align*}
 built up with those vectors  in the lattice $\Z_+^D$ with a given length $k$.

We will use the graded reversed lexicographic order; i.e.,  for $\q_1,\q_2\in [k]$
\begin{align*}
 \q_1>\q_2 \Leftrightarrow \exists p\in \Z_+ \text{ with } p<D \text{ such that } \alpha_{1,1}=\alpha_{2,1},\dots,\alpha_{1,p}=\alpha_{2,p} \text{ and } \alpha_{1,p+1}<\alpha_{2,p+1},
\end{align*}
and if $\q^{(k)}\in[k]$ and $\q^{(\ell)}\in[\ell]$, with $k<\ell$ then $\q^{(k)}<\q^{(\ell)}$.
Given the set of integer vectors of length $k$ we use the reversed lexicographic order and write
\begin{align*}
  [k]=\big\{\q_1^{(k)},\q_2^{(k)},\dots,\q^{(k)}_{|[k]|}\big\} \text{ with } \q_a^{(k)}<\q_{a+1}^{(k)}.
\end{align*}
Here $|[k]|$ is the cardinality of the set $[k]$, i.e., the number of elements in the set.
Observe that $|[0]|=1$,  $|[1]|=D$ and $|[2]|=\frac{(D+1)D}{2}$.

  We introduce the  vectors of monomials
 \begin{align*}
  \chi&\coloneq \PARENS{\begin{matrix}\chi_{[0]} \\ \chi_{[1]} \\ \vdots \\ \chi_{[k]} \\ \vdots \end{matrix}}
  & \mbox{where} & &
  \chi_{[k]}&\coloneq  \PARENS{\begin{matrix} \x^{\q_1} \\  \x^{\q_2} \\\vdots \\ \x^{\q_{|[k]|}} \end{matrix}},\\
  \chi^*&\coloneq  \Big(\prod_{a=1}^D x_a^{-1}\Big)\chi(x_1^{-1},\dots,x_D^{-1});
  \end{align*}
 for example $   \chi_{[0]}=1$ , $\chi_{[0]}^*=\prod_{a=1}^D x_a^{-1}$ and
 \begin{align*}
 \chi_{[1]}&=\PARENS{\begin{matrix}
     x_1\\x_2\\\vdots \\x_D
   \end{matrix}}, & \chi^*_{[1]}&=\Big(\prod_{a=1}^D x_a^{-1}\Big) \PARENS{\begin{matrix}
     x_1^{-1}\\x_2^{-1}\\\vdots \\x_D^{-1}
   \end{matrix}}&
   \chi_{[2]}&=\PARENS{\begin{matrix}
     x_1^2\\x_1x_2\\\vdots\\x_1x_D\\x_2^2\\x_2x_3\\\vdots\\x_2x_D\\x_3^2\\\vdots\\x_D^2
   \end{matrix}}, &
    \chi_{[2]}^*&=\Big(\prod_{a=1}^D x_a^{-1}\Big) \PARENS{\begin{matrix}
     x_1^{-2}\\x_1^{-1}x_2^{-1}\\\vdots\\x_1^{-1}x_D^{-1}\\x_2^{-2}\\x_2^{-1}x_3^{-1}\\\vdots\\x_2^{-1}x_D^{-1}\\x_3^{-2}\\\vdots\\x_D^{-2}
   \end{matrix}}.
 \end{align*}
 Observe that for $k=1$ we have that the vectors $\q^{(1)}_a=\ee_a$ for $a\in\{1,\dots,D\}$ forms  the canonical basis of $\R^D$, and for any $\q_j\in[k]$ we have $\q_j=\sum_{a=1}^D \alpha_{j}^a\ee_a$ .
 For the sake of simplicity unless needed we will drop off the super-index and write $\q_j$ instead of $\q^{(k)}_j$, as is understood that $|\q_j|=k$.
 Notice that
 \begin{align}\label{eq:chi-chi*}
   \chi^\top(\y)\chi^*(\x)&=\prod_{a=1}^D\frac{1}{x_a-y_a},&
   \forall \x,\y\in\C^D \text{ such that }|x_a|>|y_a|.
 \end{align}
 \subsection{Monomials and  symmetric tensor powers}

  The dual space of the symmetric tensor powers, see Appendix \ref{symmetric}, happens to be isomorphic to the set of symmetric multilinear functionals on $\R^D$, $\big(\text{Sym}^k(\R^D)\big)^*\cong S((\R^D)^k,\R)$. Hence,
homogeneous polynomials of a given total degree  can be identified with symmetric tensor powers.
 Each multi-index $\q\in[k]$\footnote{Observe that in \cite{federer} we have diverse notation $[k]\equiv \Xi(D,k)$.} can be thought as a weak $D$-composition of $k$ (or weak composition in  $D$ parts), $k=\alpha_{1}+\dots+\alpha_{D}$.
Notice that these weak compositions may be considered as multisets and that, given a linear basis $\{\ee_a\}_{a=1}^D$ of $\R^D$,  as we know from Appendix \ref{symmetric}, we have the linear basis $\{\ee_{a_1}\odot\cdots\odot \ee_{a_k}\}_{\substack{1\leq a_1\leq\cdots\leq a_k\leq D\\ k\in\Z_+}}$ for the symmetric power $\operatorname{S}^k(\R^D)$, where we are using multisets $1\leq a_1\leq\cdots\leq a_k\leq D$. In particular, see Appendix \ref{spowers}, the vectors of this basis $\ee_{a_1}^{\odot M(a_1)}\odot\cdots\odot \ee_{a_p}^{\odot M(a_p)}$, or better its duals $(\ee_{a_1}^*)^{\odot M(a_1)}\odot\cdots\odot (\ee_{a_p}^*)^{\odot M(a_p)}$ are in bijection with monomials of the form $x_{a_1}^{M(a_1)}\cdots x_{a_p} ^{M(a_p)}$. Therefore, either counting weak compositions or multisets we are lead to the following conclusion:  the cardinality of $[k]$ is $|[k]|= \big(\!{D\choose k}\!\big) = {D+k-1 \choose k} $.

The monomials can be nicely expressed in terms of  symmetric products and the multinomial matrix, see Appendix \ref{symmetric}. The reverse lexicographic order can be applied  to $\big(\R^D\big)^{\odot k}\cong \R^{|[k]|}$, we then  take a linear basis of $\operatorname{S}^k(\R^D)$ as the ordered set $B_c=\{\ee^{\q_1},\dots,\ee^{\q_{|[k]|}}\}$ with $\ee^{\q_j}\coloneq \ee_1^{\odot \alpha_{j}^1}\odot\dots\odot \ee_{D}^{\odot \alpha_{j}^D}$ so that
$\chi_{[k]}(\x)=\sum_{i=1}^{|[k]|}\x^{\q_j}\ee^{\q_j}$. This means that in this canonical basis the column matrix representing $\chi_{[k]}$ is
$[\chi_{[k]}]_{B_c}=\begin{psmallmatrix}
  \x^{\q_1}\\\vdots\\\x^{\q_{|[k]|}}
\end{psmallmatrix}$. We will identify $\chi_{[k]}$ with $[\chi_{[k]}]_{B_c}$.
\begin{pro}\label{chi-symmetric}
  If  $[\x^{\odot k}]_{B_c}$ is the column matrix representing $\x^{\odot k}$ in the canonical basis $B_c$ we have
  \begin{align}\label{chi-symmetric power}
  \chi_{[k]}(\x)=\big(\mathcal M_{[k]}\big)^{-1}[\x^{\odot k}]_{B_c}.
  \end{align}
\end{pro}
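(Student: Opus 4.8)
The plan is to compute $\x^{\odot k}$ explicitly by expanding the $k$-fold symmetric product of $\x=\sum_{a=1}^{D}x_a\ee_a$, to read off its coordinates in the basis $B_c$, and then to recognise the resulting coordinate vector as $\mathcal M_{[k]}$ applied to $\chi_{[k]}(\x)$, with $\mathcal M_{[k]}$ the multinomial matrix of Appendix \ref{symmetric}. Since $\chi_{[k]}(\x)$ is by construction identified with its coordinate vector $[\chi_{[k]}]_{B_c}=(\x^{\q_1},\dots,\x^{\q_{|[k]|}})^\top$, and since $\mathcal M_{[k]}$ will turn out to be invertible, the asserted identity \eqref{chi-symmetric power} is equivalent to the single relation $[\x^{\odot k}]_{B_c}=\mathcal M_{[k]}\,\chi_{[k]}(\x)$, and this is what I would establish.

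First I would expand, using multilinearity and the commutativity of the symmetric product $\odot$,
\begin{align*}
\x^{\odot k}=\Big(\sum_{a=1}^{D}x_a\ee_a\Big)^{\odot k}=\sum_{1\le a_1,\dots,a_k\le D}x_{a_1}\cdots x_{a_k}\,\ee_{a_1}\odot\cdots\odot\ee_{a_k}.
\end{align*}
Each tuple $(a_1,\dots,a_k)$ determines a multiset, equivalently a weak $D$-composition $\q=(\alpha^1,\dots,\alpha^D)$ of $k$ with $\alpha^a=\#\{i:a_i=a\}$, and tuples in the same class yield the same symmetric product $\ee^{\q}=\ee_1^{\odot\alpha^1}\odot\cdots\odot\ee_D^{\odot\alpha^D}$ and the same monomial $\x^{\q}$. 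The number of tuples in a class is the multinomial coefficient $\binom{k}{\q}=k!/(\alpha^1!\cdots\alpha^D!)$, so collecting terms gives the multinomial theorem in the symmetric algebra,
\begin{align*}
\x^{\odot k}=\sum_{|\q|=k}\binom{k}{\q}\,\x^{\q}\,\ee^{\q}.
\end{align*}

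Reading off the coordinates in the ordered basis $B_c$, the $j$-th entry of $[\x^{\odot k}]_{B_c}$ is $\binom{k}{\q_j}\x^{\q_j}$, whence $[\x^{\odot k}]_{B_c}=\diag\big(\binom{k}{\q_1},\dots,\binom{k}{\q_{|[k]|}}\big)\,\chi_{[k]}(\x)$. It then remains only to identify this diagonal matrix of multinomial coefficients with $\mathcal M_{[k]}$ as defined in Appendix \ref{symmetric}; its diagonal entries are positive integers, so it is invertible, and applying its inverse gives exactly \eqref{chi-symmetric power}.

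The step I would check most carefully --- and essentially the only point where something could go wrong --- is the bookkeeping that reconciles the normalisation convention for $\odot$ underlying the Appendix definition of $\mathcal M_{[k]}$ with the one used in the expansion above, so that the combinatorial factor $\binom{k}{\q_j}$ emerging from collecting permuted index tuples is precisely the $j$-th diagonal entry of $\mathcal M_{[k]}$, rather than its reciprocal or a global factorial rescaling. Once the conventions are aligned, the remainder is a routine consequence of multilinearity, the commutativity of $\odot$, and the fixed graded reverse-lexicographic ordering of $[k]$.
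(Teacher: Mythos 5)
Your proposal is correct and follows essentially the same route as the paper: the identity is obtained from the multinomial theorem for symmetric powers, expanding $(x_1\ee_1+\dots+x_D\ee_D)^{\odot k}$ and collecting terms to get $[\x^{\odot k}]_{B_c}=\mathcal M_{[k]}\chi_{[k]}(\x)$. The only difference is that you also spell out the proof of the multinomial theorem itself (counting permuted index tuples per multiset class) and flag the normalisation convention for $\odot$, steps the paper's proof takes for granted.
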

\begin{proof}
It is a consequence of the multinomial theorem for symmetric powers
\begin{align*}
  \x^{\odot k}&=(x_1\ee_1+\dots+x_D\ee_D)^{\odot k}\\
  =&\sum_{j=1}^{|[k]|}  {k\choose \q_j}\x^{\q_j}\ee_{1}^{\odot \alpha_{j}^1}\odot\dots\odot \ee_{D}^{\odot \alpha_{j}^D}\\
  =&\mathcal M_{[k]}\chi_{[k]}(\x).
\end{align*}
\end{proof}
%\begin{pro}
%For $\z_1,\z_2\in\C^D$ the following
%  \begin{align*}
% (\z_1\cdot\z_2)^k=\big(\chi_{[k]}(\z_1)\big)^{\top}\mathcal M_{[k]} \chi_{[k]}(\z_2)
%\end{align*}
%holds and consequently when $|\z_1\cdot\z_2|<1$
%\begin{align*}
% \frac{1}{1-\z_1\cdot\z_2}
% =\big(\chi(\z_1)\big)^{\top}\mathcal M \chi(\z_2).
%\end{align*}
%\end{pro}
%\begin{proof}
%We give two proofs for the first equation, the second follows observing that is a geometric series.
%\begin{itemize}
%  \item[First proof.] Just follow the next reasoning
%  \begin{flalign*}
%(\boldsymbol z_1\cdot\boldsymbol z_2)^k=& \langle \boldsymbol z_1^{\odot k},\boldsymbol z_2^{\odot k}\rangle^{(k)}&\text{from \eqref{symmetric interior product pure} }\\=&
%(\boldsymbol z_1^{\odot k})^\top\mathcal M_{[k]}^{-1}\boldsymbol z_2^{\odot k} &\text{from \eqref{explicit interior product symmetric tensor power}}\\=&
%\big(\chi_{[k]}(\z_1)\big)^{\top}\mathcal M_{[k]} \chi_{[k]}(\z_2)& \text{from \eqref{chi-symmetric power}}
%  \end{flalign*}
%  \item[Second Proof.] From
%\begin{align*}
%\big(\chi_{[k]}(\z_1)\big)^\top \mathcal M_{[k]}\chi_{[k]}(\z_2)=
%\sum_{j=1}^{|[k]|}  {k\choose \q_j}\prod_{a=1}^D(z_{1,a}z_{2,a})^{\alpha_{j}^a}.
%\end{align*}
%and  the multinomial theorem
%\begin{align*}
% \Big(\sum_{a=1}^Dz_a\Big)^k=\sum_{j=1}^{|[k]|}  {k\choose \q_j}\prod_{a=1}^Dz_{a}^{\alpha_{j}^a}
%\end{align*}
%we get the desired result.
%\end{itemize}
%\end{proof}

\subsection{Cholesky factorization of the moment matrix}
In this paper we will consider semi-infinite matrices $A$ with a block or partitioned structure induced by the graded reversed lexicographic order
\begin{align*}
A&=\PARENS{\begin{matrix}
   A_{[0],[0]} & A_{[0],[1]} &  \cdots  \\
   A_{[1],[0]} & A_{[1],[1]} &  \cdots \\
   \vdots                &                 \vdots         &  \\
  \end{matrix}}, &
A_{[k],[\ell]}&=\PARENS{\begin{matrix}
  A_{\q^{(k)}_1,\q^{(\ell)}_1} &   \dots & A_{\q^{(k)}_1,\q^{(\ell)}_{|[\ell]|} }\\
  \vdots & & \vdots\\
  A_{\q^{(k)}_{|[k]|},\q^{(\ell)}_1} &  \dots & A_{\q^{(k)}_{|[k]|},\q^{(\ell)}_{|[\ell]|} }
  \end{matrix}} \in\R^{|[k]|\times |[\ell]|}.
\end{align*}
We use the notation $0_{[k],[\ell]}\in\R^{|[k]|\times|[\ell]|}$ for the rectangular zero matrix, $0_{[k]}\in\R^{|[k]|}$ for the zero vector, and $\I_{[k]}\in\R^{|[k]|\times|[k]|}$ for the identity matrix. For the sake of simplicity we normally  just write $0$ or $\I$ for the zero or identity matrices, and we implicitly assume that the sizes of these matrices are the ones indicated by its position in the partitioned matrix.
\begin{definition}\label{moment}
Associated with the measure $\dd\mu$ we have the following moment matrix
 \begin{align*}
  G&\coloneq \int_{\Omega} \chi(\x)\dd\mu(\x) \chi(\x)^\top.
 \end{align*}
We write the moment matrix  in block form
  \begin{align*}
  G=  \PARENS{\begin{matrix}
   G_{[0],[0]} & G_{[0],[1]} &  \dots \\
   G_{[1],[0]} & G_{[1],[1]} &  \dots \\
   \vdots                &   \vdots              &
  \end{matrix}}
   \end{align*}
with each entry being a rectangular  matrix  with real coefficients
 \begin{align}
 \label{eq:Gkl}
  G_{[k],[\ell]}\coloneq &\int_{\Omega} \chi_{[k]}(\x)\dd\mu(\x) (\chi_{[\ell]}(\x))^\top,
  &k,\ell&=0,1,\dots, \\
 =&\PARENS{\begin{matrix}
  G_{\q^{(k)}_1,\q^{(\ell)}_1} &   \dots & G_{\q^{(k)}_1,\q^{(\ell)}_{|[\ell]|} }\\
  \vdots & & \vdots\\
  G_{\q^{(k)}_{|[k]|},\q^{(\ell)}_1} &  \dots & G_{\q^{(k)}_{|[k]|},\q^{(\ell)}_{|[\ell]|} }
  \end{matrix}} \in  \R^{|[k]|\times |[\ell]|}, &
    G_{\q_i^{(k)},\q^{(\ell)}_j}&\coloneq \int_{\Omega} \x^{\q^{(k)}_i+\q^{(\ell)}_j}\dd\mu(\x)
     \in \mathbb{R}. %& i&=1,\dots,|[k]|, j=1,\dots,|[\ell]|.
     \notag
 \end{align}
Truncated  moment matrices are given by
 \begin{align*}
  G^{[\ell]}&\coloneq
  \PARENS{\begin{matrix}
   G_{[0],[0]} &  \cdots & G_{[0],[\ell-1]} \\
   \vdots                        &   & \vdots \\
   G_{[\ell-1],[0]}  &  \cdots & G_{[\ell-1],[\ell-1]}
  \end{matrix}},
 \end{align*}
 and for $k\geq \ell$ we will also use the following bordered truncated moment matrix
\begin{align*}
   G^{[\ell+1]}_k&\coloneq
  \PARENS{\begin{array}{ccc}
   G_{[0],[0]} &  \cdots & G_{[0],[\ell-1]} \\
   \vdots                        &   & \vdots \\
   G_{[\ell-2],[0]}  &  \cdots & G_{[\ell-2],[\ell-1]}\\[1pt]
   \hline
   G_{[k],[0]}& \dots & G_{[k],[\ell-1]}
  \end{array}}
 \end{align*}
 in where we have replaced the last row of blocks, $\PARENS{\begin{matrix}
  G_{[\ell-1],[0]}& \dots & G_{[\ell-1],[\ell-1]} \end{matrix}}$, of the truncated moment matrix $G^{[\ell+1]}$ by the row of blocks $\PARENS{\begin{matrix}
  G_{[k],[0]}& \dots & G_{[k],[\ell-1]} \end{matrix}}$.
\end{definition}
Notice that from the above definition we know that the moment matrix is a symmetric matrix, $G=G^\top$,
which implies that a Gauss--Borel factorization of it, in terms of unitriangular lower\footnote{Lower triangular with the block diagonal populated by identity matrices.} and upper triangular matrices, is a Cholesky factorization.
We describe now when and how the Cholesky factorization of the moment can be performed. The result and its proof uses Schur complements, see Appendix \ref{schur}.
\begin{pro}\label{pro:cholesky}\begin{enumerate}
  \item If $\det G^{[\ell]}\neq 0$ for all $\ell=0,1,\dots$ then $G$ admits the following Cholesky type factorization
 \begin{align}\label{cholesky}
  G&=S^{-1} H \left(S^{-1}\right)^{\top},
  \end{align}
  with
\begin{align*}
  S^{-1}&=\PARENS{\begin{matrix}
  \I    &             0                &  0                      &  \cdots            \\
  (S^{-1})_{[1],[0]}        & \I&     0                      &   \cdots         \\
  (S^{-1})_{[2],[0]}        & (S^{-1})_{[2],[1]} & \I&      \\
           \vdots                       &        \vdots                  &                               &\ddots
  \end{matrix}}, \\
  H&=\PARENS{\begin{matrix}
H_{[0]}           &   0         &     0         \\
0                 & H_{[1]} &   0             &    \cdots       \\
0                  &    0            & H_{[2]} &                        \\
\vdots   &   \vdots &              &     \ddots       \\
  \end{matrix}},
  \end{align*}

\item When $\det G^{[\ell]}\neq 0$ the Cholesky type factorization holds and
\begin{align*}
\det G^{[\ell]}&=\prod_{k=0}^{\ell-1} \det H_{[k]}\neq 0
\end{align*}
so that all $H_{[k]}$ are invertible, $k=0,1,\dots$.
\end{enumerate}
\end{pro}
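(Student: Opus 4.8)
The plan is to build the factorization of the leading principal block truncations $G^{[\ell]}$ by induction on $\ell$, peeling off one block row and one block column at each step through the Schur complement (Appendix \ref{schur}), and then to exploit the symmetry $G=G^\top$ to force the upper factor to coincide with the transpose of the lower one, thereby turning the block Gauss--Borel factorization into the asserted Cholesky factorization.

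First I would record the two structural facts that drive the induction. Since $G=G^\top$, each leading principal block submatrix $G^{[\ell]}$ is itself symmetric and, by hypothesis, invertible; in particular $(G^{[\ell]})^{-1}$ is symmetric. Passing from $G^{[\ell]}$ to $G^{[\ell+1]}$ amounts to bordering $G^{[\ell]}$ by the block column $c_{[\ell]}$ with entries $G_{[i],[\ell]}$, $i=0,\dots,\ell-1$, by the block row $r_{[\ell]}$ with entries $G_{[\ell],[i]}$, and by the diagonal block $G_{[\ell],[\ell]}$; symmetry of $G$ yields $r_{[\ell]}=c_{[\ell]}^\top$. For the inductive step, assume $G^{[\ell]}=(S^{[\ell]})^{-1}H^{[\ell]}\big((S^{[\ell]})^{-1}\big)^\top$ with $(S^{[\ell]})^{-1}$ block unit lower triangular and $H^{[\ell]}$ block diagonal. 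Defining the block Schur complement
\begin{align*}
H_{[\ell]}\coloneq G_{[\ell],[\ell]}-r_{[\ell]}(G^{[\ell]})^{-1}c_{[\ell]},
\end{align*}
the elementary block factorization reads
\begin{align*}
G^{[\ell+1]}=\begin{pmatrix}\I&0\\ r_{[\ell]}(G^{[\ell]})^{-1}&\I\end{pmatrix}\begin{pmatrix}G^{[\ell]}&0\\0&H_{[\ell]}\end{pmatrix}\begin{pmatrix}\I&(G^{[\ell]})^{-1}c_{[\ell]}\\0&\I\end{pmatrix}.
\end{align*}
Because $(G^{[\ell]})^{-1}$ is symmetric and $r_{[\ell]}=c_{[\ell]}^\top$, the rightmost factor is exactly the transpose of the leftmost one. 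Substituting the inductive hypothesis into the central block and absorbing the two outer unitriangular factors gives
\begin{align*}
(S^{[\ell+1]})^{-1}=\begin{pmatrix}(S^{[\ell]})^{-1}&0\\ r_{[\ell]}(G^{[\ell]})^{-1}(S^{[\ell]})^{-1}&\I\end{pmatrix},\qquad H^{[\ell+1]}=\begin{pmatrix}H^{[\ell]}&0\\0&H_{[\ell]}\end{pmatrix},
\end{align*}
which are again block unit lower triangular and block diagonal, completing the induction; the base case is $G^{[1]}=G_{[0],[0]}=H_{[0]}$.

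For part (2) I would take determinants of the displayed Schur factorization: the outer unitriangular factors contribute $1$, so $\det G^{[\ell+1]}=\det G^{[\ell]}\,\det H_{[\ell]}$, and unwinding from $\det G^{[1]}=\det H_{[0]}$ yields $\det G^{[\ell]}=\prod_{k=0}^{\ell-1}\det H_{[k]}$. Since the left-hand side is nonzero by hypothesis, every factor is nonzero and each $H_{[k]}$ is invertible. Finally, the construction is manifestly nested: the leading $\ell$ block rows and columns of $(S^{[\ell+1]})^{-1}$ and of $H^{[\ell+1]}$ recover $(S^{[\ell]})^{-1}$ and $H^{[\ell]}$, so the truncated factors stabilize and assemble into semi-infinite matrices $S^{-1}$ and $H$ of the stated shape with $G=S^{-1}H(S^{-1})^\top$ entrywise.

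The step I expect to require the most care is the propagation of symmetry: one must verify at each stage that $(G^{[\ell]})^{-1}$ is symmetric and that $r_{[\ell]}=c_{[\ell]}^\top$, since it is precisely this that collapses the generic block Gauss--Borel factorization into a Cholesky factorization with transpose-paired triangular factors. The passage to the semi-infinite limit is then a formality once the nesting of the truncations is observed, underpinned by the uniqueness of the block Gauss--Borel factorization guaranteed by the invertibility of all the leading minors $G^{[\ell]}$.
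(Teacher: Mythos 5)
Your proof is correct and follows essentially the same route as the paper's: both factor $G^{[\ell+1]}$ through the Schur complement of the leading block $G^{[\ell]}$, use the symmetry $G=G^\top$ to identify the outer unitriangular factors as transposes of one another, recurse over $\ell$, and obtain part (2) from the Schur determinant formula by telescoping. The only cosmetic difference is that you run the recursion as an upward induction with explicit nesting of the truncated factors, whereas the paper unwinds it top-down and invokes the direct limit; the substance is identical.
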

\begin{proof}
See Appendix \ref{proof1}
\end{proof}

A quasi-determinant version, see Appendix \ref{qd}, of the above  result can be given
\begin{pro}\label{qd1}
If the quasi-determinants of the truncated moment matrices are invertible
\begin{align*}
\det  \Theta_*(G^{[k+1]})\neq& 0, & k=0,1,\dots
\end{align*}
the Cholesky factorization \eqref{cholesky} can be performed where
\begin{align*}
  H_{[k]}&=\Theta_*(G^{[k+1]}), &
  (S^{-1})_{[k],[\ell]}&=\Theta_*(G^{[\ell+1]}_k)\Theta_*(G^{[\ell+1]})^{-1}.
\end{align*}
\end{pro}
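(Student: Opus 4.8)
The plan is to reduce everything to Proposition \ref{pro:cholesky} together with the identification, recalled in Appendix \ref{qd}, of the quasi-determinant $\Theta_*$ of a block matrix (relative to its distinguished trailing block) with the corresponding Schur complement. First I would check that the standing hypothesis $\det \Theta_*(G^{[k+1]}) \neq 0$ matches the leading-block-minor hypothesis $\det G^{[\ell]} \neq 0$ of Proposition \ref{pro:cholesky}. The point is that $\Theta_*(G^{[k+1]})$ is by definition the Schur complement of the principal block $G^{[k]}$ in $G^{[k+1]}$, so it is only defined once $G^{[k]}$ is invertible; an induction on $k$ using the Schur determinant identity $\det G^{[k+1]} = \det G^{[k]}\cdot \det \Theta_*(G^{[k+1]})$ (Appendix \ref{schur}) shows that invertibility of all the quasi-determinants is equivalent to invertibility of all the leading block minors. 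This licenses Proposition \ref{pro:cholesky} and secures the existence of \eqref{cholesky}; it then only remains to name the factors.

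For the diagonal factor I would argue that the proof of Proposition \ref{pro:cholesky} (Appendix \ref{proof1}) already exhibits $H_{[k]}$ as the Schur complement of $G^{[k]}$ in $G^{[k+1]}$: truncating the factorization to the first $k+1$ block rows and columns gives $G^{[k+1]} = L_k\,\mathrm{diag}(H_{[0]},\dots,H_{[k]})\,L_k^\top$ with $L_k$ lower unitriangular, and a direct block multiplication shows the trailing Schur complement collapses to $H_{[k]}$. By the very definition of $\Theta_*$ this reads $H_{[k]} = \Theta_*(G^{[k+1]})$, which is the first asserted formula.

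The subdiagonal blocks are the substantive part. I would start from the factorization read entrywise: for $k \geq \ell$ one has $G_{[k],[\ell]} = \sum_{m\le \ell}(S^{-1})_{[k],[m]}H_{[m]}\big((S^{-1})_{[\ell],[m]}\big)^\top$, and isolating the top term $m=\ell$, where $(S^{-1})_{[\ell],[\ell]}=\I$, gives $(S^{-1})_{[k],[\ell]}H_{[\ell]} = G_{[k],[\ell]} - \sum_{m<\ell}(S^{-1})_{[k],[m]}H_{[m]}\big((S^{-1})_{[\ell],[m]}\big)^\top$. The claim is that the subtracted sum is exactly the off-diagonal correction of the bordered Schur complement $\Theta_*(G^{[\ell+1]}_k)$, whose trailing block is $G_{[k],[\ell]}$ and whose leading principal block is $G^{[\ell]}$. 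To see this I would write the border row $(G_{[k],[0]},\dots,G_{[k],[\ell-1]})$ as $r_k H_\ell L_\ell^\top$ and the border column as $L_\ell H_\ell r_\ell^\top$, where $r_k,r_\ell$ collect the first $\ell$ blocks of rows $[k]$ and $[\ell]$ of $S^{-1}$, $L_\ell=(S^{-1})^{[\ell]}$ and $H_\ell=\mathrm{diag}(H_{[0]},\dots,H_{[\ell-1]})$; each identity is just the truncated factorization read off row-by-row. Since $G^{[\ell]}=L_\ell H_\ell L_\ell^\top$, the triangular factors in $r_k H_\ell L_\ell^\top (G^{[\ell]})^{-1} L_\ell H_\ell r_\ell^\top$ cancel telescopically, leaving $r_k H_\ell r_\ell^\top = \sum_{m<\ell}(S^{-1})_{[k],[m]}H_{[m]}\big((S^{-1})_{[\ell],[m]}\big)^\top$, which is precisely the subtracted sum. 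Hence $(S^{-1})_{[k],[\ell]}H_{[\ell]}=\Theta_*(G^{[\ell+1]}_k)$, and right-multiplying by $H_{[\ell]}^{-1}=\Theta_*(G^{[\ell+1]})^{-1}$ yields the second asserted formula.

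The main obstacle I anticipate is one of block bookkeeping rather than of idea: because the blocks $G_{[k],[\ell]}$ are rectangular of varying sizes $|[k]|\times|[\ell]|$, I must verify at each step that the products are size-compatible (as emphasized in the discussion of quasi-determinants) and that only the square, invertible objects $H_{[k]}$ and $G^{[\ell]}$ are ever inverted. A secondary point to pin down is the precise indexing of the bordered matrix $G^{[\ell+1]}_k$, so that its trailing block is indeed $G_{[k],[\ell]}$ and its leading principal block is $G^{[\ell]}$; with that convention the Schur-complement cancellation above lines up verbatim and no analog of a cofactor expansion, unavailable here, is needed.
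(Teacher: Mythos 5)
Your proof is correct, but it takes a genuinely different route from the paper's. The paper disposes of Proposition \ref{qd1} in one line, by invoking Theorem 3 of \cite{olver} (the general quasi-determinantal $LDV$ factorization of a regular block matrix, recalled in Appendix \ref{qd}) applied to the symmetric matrix $G$. You instead start from the Cholesky factorization already secured by Proposition \ref{pro:cholesky} and verify the two quasi-determinantal identities by hand: writing the border row as $r_kH_\ell L_\ell^\top$, the border column as $L_\ell H_\ell r_\ell^\top$ and $G^{[\ell]}=L_\ell H_\ell L_\ell^\top$, the triangular factors cancel telescopically in $r_kH_\ell L_\ell^\top\big(G^{[\ell]}\big)^{-1}L_\ell H_\ell r_\ell^\top$, giving $\Theta_*(G^{[\ell+1]}_k)=(S^{-1})_{[k],[\ell]}H_{[\ell]}$, and the case $k=\ell$ gives $\Theta_*(G^{[\ell+1]})=H_{[\ell]}$; in effect you re-prove the relevant (symmetric, Cholesky) special case of Olver's theorem rather than quoting it. What the paper's citation buys is brevity and generality; what your argument buys is self-containedness and two clarifications the paper glosses over: the equivalence, via the Schur determinant identity $\det G^{[k+1]}=\det G^{[k]}\det\Theta_*(G^{[k+1]})$, between the hypothesis $\det\Theta_*(G^{[k+1]})\neq 0$ of Proposition \ref{qd1} and the hypothesis $\det G^{[\ell]}\neq 0$ of Proposition \ref{pro:cholesky}; and the precise indexing of the bordered matrix $G^{[\ell+1]}_k$ (leading principal block $G^{[\ell]}$, trailing block $G_{[k],[\ell]}$), which is the reading required for the statement to match Olver's $L_{i,j}=\Theta_*(A^{12\dots j-1,i}_{12\dots j-1,j})\Theta_*(A^{12\dots j}_{12\dots j})^{-1}$ and which resolves an off-by-one ambiguity in the paper's displayed definition of that bordered truncation.
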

\begin{proof}
  It is just a consequence of Theorem 3 of \cite{olver}, see Appendix \ref{qd}.
\end{proof}
\subsubsection{On quasi-tau functions}
In the 1D scenario the tau functions can be introduced as the determinant of a truncated moment matrix
\begin{align*}
   \tau_k\coloneq &\det G^{[k]}, & k=&1,2,\dots
\end{align*}
and $\tau_0=1$, so that
\begin{align}\label{quasitau1d}
  H_k=&\frac{\tau_{k+1}}{\tau_k}, & k=&0,1,2,\dots.
\end{align}
and
\begin{align*}
  \tau_{k+1}=H_kH_{k-1}\cdots H_0.
\end{align*}
Moreover, observe that \eqref{quasitau1d} can be written as a quasi-determinant
\begin{align*}
  H_{k}=\det (G^{[k+1]}/G^{[k]})=\Theta_*(G^{[k+1]}).
\end{align*}
Thus, in the 1D scenario the described analogy  suggests that the squared norms $H_k$ can be considered as quasi-tau functions, being the tau functions $\tau_k=\det G^{[k]}$ determinants of the truncated moment matrix and
the quasi-tau functions $H_k=\Theta_*(G^{[k+1]})$ quasi-determinants of the  truncated moment matrix. This extends to the multivariant setting and now we have $H_{[k]}=\Theta_*(G^{[k+1]})$,  motivating us to refer to these matrices as quasi-tau matrices. Let us mention that other authors have introduced similar concepts before, for example in \cite{miranian} a matrix valued tau function is considered for the case of matrix orthogonal polynomials. However, the motivation of the author did not come from the quasi-determinant expressions in terms of the moment matrix but from formul{\ae} from integrable systems.
\subsection{MVOPR}
With the aid of the Cholesky factorization we are ready to introduce the MVOPR
\begin{definition}
 The MVOPR associated to the measure $\dd \mu$  are
 \begin{align}\label{eq:polynomials}
  P&=S\chi =\PARENS{\begin{matrix}
    P_{[0]}\\
    P_{[1]}\\
    \vdots
  \end{matrix}}, & P_{[k]}(\x)&=\sum_{\ell=0}^k S_{[k],[\ell]} \chi_{[\ell]}(\x) =\PARENS{\begin{matrix}
    P_{\q^{(k)}_1}\\
    \vdots\\
    P_{\q^{(k)}_{|[k]|}}
  \end{matrix}},&
  P_{\q^{(k)}_i}&=\sum_{\ell=0}^k\sum_{j=1}^{|[\ell]|} S_{\q^{(k)}_i,\q^{(\ell)}_j} \x^{\q^{(\ell)}_j}.
 \end{align}
We introduce  the coefficients
\begin{align*}
  \beta_{[k]}&\coloneq S_{[k],[k-1]}, & k\geq 1,
\end{align*}
which take values in the linear space of rectangular matrices $\R^{|[k]|\times|[k-1]|}$ and also define the subdiagonal matrix
\begin{align*}
  \beta=\PARENS{\begin{matrix}
    0 & 0 & 0 &0&\cdots\\
    \beta_{[1]}& 0 & 0&0&\cdots\\
    0&\beta_{[2]}&0&0&\cdots\\
    0&0&\beta_{[3]}&0&\cdots\\
    \vdots&\vdots&\ddots&\ddots&\ddots
  \end{matrix}}.
\end{align*}\end{definition}

An immediate consequence of Proposition \ref{qd1} is
\begin{pro}
  The following quasi-determinantal expression holds true
  \begin{align*}
    \beta_{[k]}=-\Theta_*(G^{[k]}_k)\Theta_*(G^{[k]})^{-1}.
  \end{align*}
\end{pro}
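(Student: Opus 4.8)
The plan is to reduce the claim to the quasi-determinantal expression for the blocks of $S^{-1}$ already established in Proposition \ref{qd1}, together with the elementary observation that, on the first subdiagonal, inverting a block lower-unitriangular matrix merely changes the sign. By definition $\beta_{[k]}=S_{[k],[k-1]}$ is the subdiagonal block of $S$, whereas Proposition \ref{qd1} computes blocks of $S^{-1}$ only; so the first task is to express $S_{[k],[k-1]}$ in terms of $(S^{-1})_{[k],[k-1]}$.

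To this end I would exploit the identity $S\,S^{-1}=\I$. Since both $S$ and $S^{-1}$ inherit the block lower-unitriangular structure coming from the Cholesky factorization \eqref{cholesky}, the $([k],[k-1])$ block of $S\,S^{-1}=\I$ is an off-diagonal block, hence the zero matrix, and the only nonzero contributions come from $m=k-1$ and $m=k$:
\[
\sum_{m=k-1}^{k} S_{[k],[m]}\,(S^{-1})_{[m],[k-1]}=0 .
\]
Using $S_{[k],[k]}=(S^{-1})_{[k-1],[k-1]}=\I$, this collapses to $S_{[k],[k-1]}+(S^{-1})_{[k],[k-1]}=0$, i.e. $\beta_{[k]}=-(S^{-1})_{[k],[k-1]}$.

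It then only remains to substitute the value of $(S^{-1})_{[k],[k-1]}$ furnished by Proposition \ref{qd1} with $\ell=k-1$, namely $(S^{-1})_{[k],[k-1]}=\Theta_*(G^{[k]}_k)\,\Theta_*(G^{[k]})^{-1}$, which yields $\beta_{[k]}=-\Theta_*(G^{[k]}_k)\,\Theta_*(G^{[k]})^{-1}$ at once. There is essentially no obstacle here: everything reduces to Proposition \ref{qd1}, whose proof rests on Theorem~3 of \cite{olver} (see Appendix \ref{qd}). The only points meriting care are bookkeeping in the noncommutative, varying-size block calculus — one checks that $\Theta_*(G^{[k]}_k)$ and $\Theta_*(G^{[k]})^{-1}$ are \emph{compatible} (their inner block sizes both equal $|[k-1]|$, so the product is a well-defined $|[k]|\times|[k-1]|$ matrix, matching the size of $\beta_{[k]}$), and that $\Theta_*(G^{[k]})$ is invertible, which is guaranteed by the standing hypothesis of Proposition \ref{qd1}.
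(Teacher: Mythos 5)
Your proof is correct and is precisely the argument the paper leaves implicit: the statement is presented there as an ``immediate consequence'' of Proposition \ref{qd1}, and filling that in requires exactly your two steps, namely the first-subdiagonal sign flip $\beta_{[k]}=S_{[k],[k-1]}=-(S^{-1})_{[k],[k-1]}$ coming from $SS^{-1}=\I$ for block lower-unitriangular matrices, followed by the substitution $\ell=k-1$ in the formula $(S^{-1})_{[k],[\ell]}=\Theta_*(G^{[\ell+1]}_k)\Theta_*(G^{[\ell+1]})^{-1}$. Your compatibility and invertibility checks are also sound, so nothing is missing.
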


Observe that $P_{[k]}=\chi_{[k]}(\x)+\beta_{[k]}\chi_{[k-1]}(\x)+\cdots$ is a vector constructed with the polynomials $P_{\q_i}(\x)$ of degree  $k$, each of which has only one monomial of degree $k$; i. e., we can write $P_{\q_i}(\x)=\x^{\q_i}+Q_{\q_i}(\x)$, with $\deg Q_{\q_i}<k$.

\begin{pro}
 The MVOPR  satisfy
 \begin{align}
  \int_{\Omega} P_{[k]}(\x) \dd \mu(\x) (P_{[\ell]}(\x))^{\top}&=\int_{\Omega} P_{[k]} (\x)\dd \mu(\x) (\chi_{[\ell]}(\x))^{\top}=0, &
  \ell&=0,1,\dots,k-1,\label{orth}\\
  \int_{\Omega} P_{[k]} (\x)\dd \mu(\x)(P_{[k]}(\x))^{\top}&=\int_{\Omega} P_{[k]}(\x) \dd\mu(\x)(\chi_{[k]}(\x))^{\top}=H_{[k]}.\label{H}
 \end{align}
\end{pro}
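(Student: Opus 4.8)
The plan is to avoid computing the individual integrals one at a time and instead to package all of them into identities between semi-infinite matrices, exactly as in the Gauss--Borel formalism. Writing $P=S\chi$ and using linearity of the integral together with Definition \ref{moment}, I would first record
\begin{align*}
 \int_{\Omega} P(\x)\,\dd\mu(\x)\,\chi(\x)^\top = S\int_{\Omega} \chi(\x)\,\dd\mu(\x)\,\chi(\x)^\top = SG,
\end{align*}
and likewise $\int_{\Omega} P\,\dd\mu\,P^\top = SGS^\top$.

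Next I would feed in the Cholesky factorization \eqref{cholesky}, namely $G=S^{-1}H(S^{-1})^\top$. This immediately gives $SG=H(S^{-1})^\top$, and
\begin{align*}
 SGS^\top = H(S^{-1})^\top S^\top = H,
\end{align*}
the last step using $(S^{-1})^\top S^\top=(SS^{-1})^\top=\I$.

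The orthogonality relations are then read off block by block. Since $H$ is block diagonal with diagonal blocks $H_{[k]}$, the $([k],[\ell])$ block of $SGS^\top=H$ is $H_{[k]}$ when $\ell=k$ and $0$ otherwise; but this block is precisely $\int_{\Omega} P_{[k]}\,\dd\mu\,P_{[\ell]}^\top$, which yields the first equality in \eqref{orth} and the first equality in \eqref{H} at once. For the $\chi$ versions I would exploit that $S^{-1}$ is block lower unitriangular, so $(S^{-1})^\top$ is block upper unitriangular; hence the $([k],[\ell])$ block of $H(S^{-1})^\top$ equals $H_{[k]}\big((S^{-1})_{[\ell],[k]}\big)^\top$, which vanishes for $\ell<k$ and equals $H_{[k]}\,\I^\top=H_{[k]}$ for $\ell=k$. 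This supplies the remaining equalities. As a cross-check one can reconcile the two forms directly: substituting $P_{[\ell]}=\sum_{m=0}^\ell S_{[\ell],[m]}\chi_{[m]}$ and invoking the $\chi$-orthogonality just established, all terms with $m\le\ell<k$ drop out, while for $\ell=k$ only the $m=k$ term survives and contributes $H_{[k]}\I^\top=H_{[k]}$.

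The only point requiring a word of care — rather than a genuine obstacle — is the legitimacy of these manipulations with semi-infinite matrices and of interchanging integration with the matrix products. This is harmless here precisely because $S$ is lower triangular: each row of $P$ is a \emph{finite} linear combination of rows of $\chi$, so every entry of $SG$ and of $SGS^\top$ is a finite sum, and pulling $\int_{\Omega}$ through these finite sums is merely linearity of the integral. No convergence issue arises, and the associativity used to pass from $SGS^\top$ to $H$ is the associativity of products of (compatibly partitioned) triangular and block-diagonal semi-infinite matrices, which holds entrywise as finite sums.
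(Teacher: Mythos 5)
Your proof is correct and follows exactly the route the paper intends (and leaves implicit, stating the proposition without proof): writing $P=S\chi$, using the Cholesky factorization to get $SG=H(S^{-1})^\top$ and $SGS^\top=H$, and reading off the $([k],[\ell])$ blocks using the block-triangular/diagonal structure. Your remark on finiteness of all the entrywise sums, thanks to the lower-triangularity of $S$, is the right justification for the semi-infinite matrix manipulations, so nothing is missing.
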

Therefore, we have  the following orthogonality conditions
 \begin{align*}
\int_{\Omega} P_{\q^{(k)}_i} (\x)P_{\q^{(\ell)}_j}(\x)\dd \mu(\x)&=\int_{\Omega} P_{\q^{(k)}_i}  (\x)\x^{\q^{(\ell)}_j}\dd \mu(\x)=0. &
  \ell&=0,1,\dots,k-1,& i&=1,\dots,|[k]|,& j&=1,\dots,|[\ell]|,
  \end{align*}
  with the normalization conditions
  \begin{align*}
  \int_{\Omega} P_{\q_i} (\x)P_{\q_j}(\x)\dd \mu(\x)&=\int_{\Omega} P_{\q_i}  (\x)\x^{\q_j}\dd \mu(\x)=H_{\q_i,\q_j}, &  i,j&=1,\dots,|[k]|.
   \end{align*}
      Here we use Dunkl and Xu's notation, see \cite{Dunkl}. Despite the MVOPR are orthogonal for different $k$ and $\ell$, $P_{\q^{(k)}_i}\bot P_{\q^{(\ell)}_j}$, for $k=\ell$ the value of $\langle P_{\q_i},P_{\q_j}\rangle=H_{\q_i,\q_j}$ is not zero in general, and the set of polynomials given by coefficients of the vector $P_{[k]}$ are not orthogonal among them. Observe that \eqref{H} imply that the matrices $H_{[k]}$ are Grammian matrices and that, being the measure  positive definite,  we can write $H_{[k]}=M_{[k]}^\top h_{[k]} M_{[k]}$, for some orthogonal matrix $M_{[k]}\in \text{O}(\R^{|[k]|})$ and  diagonal matrix $h_{[k]}=\diag(h_{[k],1},\dots,h_{[k],|[k]|})$ with $h_{[k],j}>0$ for $j\in\{1,\dots,|[k]|\}$. With the new vector polynomials $\tilde P_{[k]}=M_{[k]}P_{[k]}$ we do have
  \begin{align*}
  \int_{\Omega} \tilde P_{\q_i} (\x)\tilde P_{\q_j}(\x)\dd \mu(\x)=&\delta_{i,j}h_{[k],j}, &  i,j&=1,\dots,|[k]|,
   \end{align*}
   being the $h_{[k],j}$ the squared norms of the polynomials. Now, instead of a block Cholesky factorization  we have a standard Cholesky factorization $G=\tilde S^{-1} h (\tilde S^{-1})^\top$, with
   $\tilde S=\diag( M_{[0]}^\top,M_{[1]}^\top,\dots) S$ and $h=\diag(h_{[0]},h_{[1]},\dots)$.  However, this scalar Cholesky factorization does not help much in the understanding of MOVPR, the reason will become clear in \S \ref{los Lambda}, where the three term relations or the Christoffel--Darboux formul{\ae} are deduced from the block Cholesky factorization. The clue is that the shift matrices, for which we have the symmetry \eqref{eq:symmetry} of the moment matrix,  are naturally written in block form.

   Also notice that $H_{[0]}=\int_\Omega\dd\mu(\x)$ is just the measure of the support.

\begin{pro}
 The MVOPR can be expressed as Schur complements of bordered truncated moment matrices
 \begin{align*}
 P_{[\ell]}(\x)=\SC\PARENS{\begin{array}{ccc|c}
       G_{[0],[0]}  & \cdots & G_{[0],[\ell-1]} & \chi_{[0]}(\x)\\
           \vdots      &       &            \vdots   &  \vdots\\
       G_{[\ell-1],[0]} & \cdots & G_{[\ell-1],[\ell-1]} & \chi_{[\ell-1]}(\x)\\[1pt]
\hline
             G_{[\ell],[0]} & \cdots & G_{[\ell],[\ell-1]} & \chi_{[\ell]}(\x)
      \end{array}},
 \end{align*}
 or as quasi-determinants
  \begin{align*}
 P_{[\ell]}=\Theta_*\PARENS{\begin{matrix}
       G_{[0],[0]}  & \cdots & G_{[0],[\ell-1]} & \chi_{[0]}(\x)\\
           \vdots      &       &            \vdots   &  \vdots\\
                   G_{[\ell],[0]} & \cdots & G_{[\ell],[\ell-1]} & \chi_{[\ell]}(\x)
      \end{matrix}}.
 \end{align*}
\end{pro}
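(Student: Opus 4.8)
The plan is to establish the Schur-complement formula by a direct computation and then obtain the quasi-determinantal expression from the standard identification of Schur complements with quasi-determinants. First I would note that, since $S^{-1}$ is block-unitriangular (Proposition \ref{pro:cholesky}), so is its inverse $S$; in particular $S_{[\ell],[\ell]}=\I$ and $S_{[\ell],[m]}=0$ for $m>\ell$, so that \eqref{eq:polynomials} reads
\[
P_{[\ell]}(\x)=\chi_{[\ell]}(\x)+\sum_{m=0}^{\ell-1} S_{[\ell],[m]}\,\chi_{[m]}(\x).
\]
Thus the only unknowns are the subdiagonal blocks of the $\ell$-th block row of $S$, which I collect into the block row $\mathbf{s}\coloneq\big(S_{[\ell],[0]}\ \cdots\ S_{[\ell],[\ell-1]}\big)$.

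Next I would determine $\mathbf{s}$ from the orthogonality conditions \eqref{orth}. Substituting the expansion above into $\int_{\Omega}P_{[\ell]}(\x)\dd\mu(\x)(\chi_{[k]}(\x))^\top=0$ for $k=0,\dots,\ell-1$ and using \eqref{eq:Gkl} gives $\sum_{m=0}^{\ell} S_{[\ell],[m]}\,G_{[m],[k]}=0$; isolating the $m=\ell$ term (whose coefficient is $\I$) yields the block linear system $\mathbf{s}\,G^{[\ell]}=-\big(G_{[\ell],[0]}\ \cdots\ G_{[\ell],[\ell-1]}\big)$. Because the hypothesis guaranteeing the Cholesky factorization forces $\det G^{[\ell]}\neq 0$, the truncated moment matrix $G^{[\ell]}$ is invertible and I can solve $\mathbf{s}=-\big(G_{[\ell],[0]}\ \cdots\ G_{[\ell],[\ell-1]}\big)\big(G^{[\ell]}\big)^{-1}$.

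Substituting back I obtain
\[
P_{[\ell]}(\x)=\chi_{[\ell]}(\x)-\big(G_{[\ell],[0]}\ \cdots\ G_{[\ell],[\ell-1]}\big)\big(G^{[\ell]}\big)^{-1}\big(\chi_{[0]}(\x)\ \cdots\ \chi_{[\ell-1]}(\x)\big)^{\top},
\]
which I would then recognize as exactly the Schur complement $\SC$ of the top-left block $G^{[\ell]}$ in the displayed bordered matrix whose bottom-right corner is the column $\chi_{[\ell]}(\x)$. This proves the first formula. For the second, I would simply invoke the identity recalled in Appendix \ref{qd} (a consequence of Theorem~3 of \cite{olver}) that the quasi-determinant $\Theta_*$ taken about the bottom-right block of a partitioned matrix coincides with the Schur complement of the complementary top-left block.

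I expect the only delicate point to be the bookkeeping of the variable block sizes: the trailing ``$D$-block'' of the Schur complement is the column vector $\chi_{[\ell]}(\x)\in\R^{|[\ell]|}$ rather than a square block, so one must check that all products $\mathbf{s}\,G^{[\ell]}$, $\big(G^{[\ell]}\big)^{-1}\big(\chi_{[0]}\ \cdots\ \chi_{[\ell-1]}\big)^{\top}$, etc., are \emph{compatible} in the sense of the rectangular block calculus used throughout the paper. This is precisely the framework of \cite{olver}, where only diagonal blocks are inverted, so both the Schur complement and the quasi-determinant remain well defined in this setting and the two expressions agree.
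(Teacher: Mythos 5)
Your proof is correct and follows essentially the same route as the paper: the paper extracts the block row $\big(S_{[\ell],[0]}\ \cdots\ S_{[\ell],[\ell-1]}\big)=-\big(G_{[\ell],[0]}\ \cdots\ G_{[\ell],[\ell-1]}\big)\big(G^{[\ell]}\big)^{-1}$ directly from the vanishing of the strictly lower-triangular part of $SG=H\left(S^{-1}\right)^{\top}$, which is exactly the linear system you derive from the orthogonality conditions \eqref{orth}, and both arguments conclude by recognizing the Schur complement and invoking the identification $\Theta_*=\SC$ from Appendix \ref{qd}. The only difference is bookkeeping—orthogonality relations versus truncations of the factorization identity—and these are the same equations.
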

\begin{proof}
Any semi-infinite matrix can be written in block form $M=\begin{psmallmatrix}
  M^{[\ell]}& M^{[\ell],[\geq ell]}\\
  M^{[\geq \ell] [,\ell]}& M^{[\geq \ell]}
\end{psmallmatrix}$, where $M^{[\ell]}$ is the truncation with the first $\ell$ block rows and block columns, $M^{[\ell],[\geq \ell]}$ is the truncation with the $\ell$ first rows and the columns after the $\ell$-th, $M^{[\geq \ell] ,[\ell]}$ the truncation as the previous permuting the role of rows and columns $M^{[\geq \ell]}$ the truncation formed by all rows and columns after the $\ell$-th one.
From the factorization of the moment matrix
 \begin{align*}
  SG&=H(S^{-1})^{\top} & \Longrightarrow& & 0&=S^{[\geq \ell], [\ell]}G^{[\ell]}+S^{[\geq \ell]} G^{[\geq \ell], [\ell]}
  & \Longrightarrow & &  S^{[\geq \ell], [\ell]}&=-S^{[\geq \ell]} G^{[\geq \ell], [\ell]}\big(G^{[\ell]}\big)^{-1}.
 \end{align*}
With this we can rewrite $P_{[\ell]}=\sum_{k=0}^\ell S_{[\ell],[k]} \chi_{[k]}$ as
\begin{align*}
P_{[\ell]}&=\chi_{[\ell]}-
\PARENS{\begin{matrix}G_{[\ell],[0]} & G_{[\ell],[1]} & \cdots & G_{[\ell],[\ell-1]}\end{matrix}}
\left(G^{[\ell]} \right)^{-1} \chi^{[\ell]}.
\end{align*}
To get the stated result we need only to fix our attention in any of the rows of this matrix.
\end{proof}
In terms of ratios of determinant we get for the components
 \begin{align*}
 P_{\q^{(\ell)}_j}=\SC\PARENS{\setlength{\extrarowheight}{3.5pt}\begin{array}{ccc|c}
       G_{[0],[0]}  & \cdots & G_{[0],[\ell-1]} & \chi_{[0]}\\
       \vdots            & &  \vdots             &  \vdots\\
       G_{[\ell-1],[0]}  & \cdots & G_{[\ell-1],[\ell-1]} & \chi_{[\ell-1]} \\[1pt]
\hline
 G_{\q^{(\ell)}_j,[0]}  & \cdots & G_{\q^{(\ell)}_j,[\ell-1]} & \x^{\q^{(\ell)}_j}  \bigstrut[t]
      \end{array}}=
      \frac{\begin{vmatrix}
                   G_{[0],[0]}  & \cdots & G_{[0],[\ell-1]} & \chi_{[0]}\\
       \vdots            & &  \vdots             &  \vdots\\
       G_{[\ell-1],[0]}  & \cdots & G_{[\ell-1],[\ell-1]} & \chi_{[\ell-1]}\\
    G_{\q^{(\ell)}_j,[0]}  & \cdots & G_{\q^{(\ell)}_j,[\ell-1]} & \x^{\q^{(\ell)}_j}
                 \end{vmatrix}
}{ \begin{vmatrix}
 G_{[0],[0]}        & \cdots & G_{[0],[\ell-1]}\\
\vdots            &          &  \vdots     \\
  G_{[\ell-1],[0]}  & \cdots & G_{[\ell-1],[\ell-1]}
                 \end{vmatrix}}.
\end{align*}
\subsection{Functions of the second kind}
In this subsection we need some material regarding several complex variables analysis and we refer the reader to Appendix \ref{scv}.
Complementary to the vector $P$ of multivariate polynomials we introduce
\begin{definition}
Second kind functions are given by the coefficients of
  \begin{align}\label{def:C}
  C\coloneq &H(S^{-1})^\top\chi^*=\PARENS{\begin{matrix}
    C_{[0]}\\
    C_{[1]}\\
\vdots
  \end{matrix}}, & C_{[k]}&\coloneq \PARENS{\begin{matrix}
    C_{\q_1}\\
    \vdots\\
    C_{\q_{|[k]|}}
  \end{matrix}}. &
\end{align}
\end{definition}
Observe that for $\z=(z_1,\dots,z_D)^\top\in\C^D$ we have
\begin{align*}
 C_{[\ell]}(\z)=H_{[\ell]}\sum_{k=\ell}^\infty \big((S^{-1})_{[k],[\ell]}\big)^\top\chi^*_{[k]}(\z),
\end{align*}
which is a vector with each of its  components $C_{\ele_a}(\z)$, $a=1,\dots, |[\ell]|$, a $D$-fold Laurent series. This is just not the case for the definition of $P$, see \eqref{eq:polynomials}, where we had  finite sums instead of infinite series.
In the case of $C_{\q_i}(\z)$, which has  domain of convergence $\Ds_{\q_i}$, we can introduce $\boldsymbol w=\z^{-1}\coloneq (z_1^{-1},\dots,z_D^{-1})^\top$ --i.e., $\z=\boldsymbol w^{-1}$-- and notice that $C_{\q_i}(\z(\boldsymbol w))=\sum_{\boldsymbol\beta\in\Z_+^D} c_{\boldsymbol\beta}\boldsymbol{w}^{\boldsymbol\beta}$ is a power series in $\boldsymbol w$ and consequently converges in a complete Reinhardt domain $\mathscr D$. Therefore, its domain of convergency is the union of polydisks, and in each of them the convergence is absolute and uniform. In particular, the polydisk of convergence $\Delta(\boldsymbol r)\subset\mathscr D $ satisfies the extended Cauchy--Hadamard formula $\limsup_{|\boldsymbol\beta|\to\infty}\sqrt[|\boldsymbol\beta|]{|c_{\boldsymbol\beta}|\boldsymbol{r}^{\boldsymbol\beta}}=1$. The domain of
convergence of $C_{\q_i}$ contains a polyannulus of convergence with polyradii given by $\boldsymbol r=\boldsymbol 0$ and $\boldsymbol R=(r_1^{-1},\dots,r_D^{-1})$.

Let us show that  the second kind functions can be expressed   as multivariate Cauchy transforms of the MVOPR.
\begin{pro}\label{cauchy}
  The second kind functions satisfy
  \begin{align*}
       C_{\q_i}(\z)&=\int_\Omega \frac{P_{\q_i}(\y)}{(z_1-y_1)\cdots(z_D-y_D)}\dd\mu(\y), &
     \forall \z&\in\Ds_{\q_i}\setminus\operatorname{supp}(\dd\mu),  & i&=1,\dots,|[k]|.
  \end{align*}
\end{pro}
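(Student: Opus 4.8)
The plan is to reduce the whole statement to the one-line algebraic identity $H(S^{-1})^\top = SG$ extracted from the Cholesky factorization \eqref{cholesky}, combined with the generating-function identity \eqref{eq:chi-chi*}; everything else is analytic bookkeeping about convergence and domains. First I would rewrite the second kind functions: left-multiplying \eqref{cholesky} by $S$ gives $H(S^{-1})^\top = SG$, so that the definition \eqref{def:C} becomes
\begin{align*}
C(\z) = H(S^{-1})^\top \chi^*(\z) = S\,G\,\chi^*(\z).
\end{align*}
Then, inserting the integral representation of the moment matrix from Definition \ref{moment},
\begin{align*}
G\,\chi^*(\z) = \Big(\int_\Omega \chi(\y)\dd\mu(\y)\,\chi(\y)^\top\Big)\chi^*(\z) = \int_\Omega \chi(\y)\,\big(\chi(\y)^\top\chi^*(\z)\big)\,\dd\mu(\y),
\end{align*}
and invoking \eqref{eq:chi-chi*} with $\x=\z$, valid whenever $|z_a|>|y_a|$ for every $a$, the scalar kernel collapses to $\chi(\y)^\top\chi^*(\z)=\prod_{a=1}^D (z_a-y_a)^{-1}$.

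Carrying this through, and using $P=S\chi$ from \eqref{eq:polynomials} together with the fact that each block row of $S$ is a \emph{finite} sum (so $S$ passes freely through the integral sign), I obtain
\begin{align*}
C(\z) &= S\int_\Omega \frac{\chi(\y)}{(z_1-y_1)\cdots(z_D-y_D)}\,\dd\mu(\y)\\
&= \int_\Omega \frac{S\chi(\y)}{(z_1-y_1)\cdots(z_D-y_D)}\,\dd\mu(\y) = \int_\Omega \frac{P(\y)}{(z_1-y_1)\cdots(z_D-y_D)}\,\dd\mu(\y).
\end{align*}
Reading off the $\q_i$ component yields the asserted formula; the algebra is essentially costless.

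The nontrivial points are all analytic. Since \eqref{eq:chi-chi*} is a geometric-type expansion, to interchange the infinite summation defining $\chi(\y)^\top\chi^*(\z)$ with $\int_\Omega$ I would first restrict to the region where $|z_a|$ exceeds $\sup_{\y\in\operatorname{supp}(\dd\mu)}|y_a|$ for every $a$; there the series converges absolutely and uniformly in $\y$ on the support, legitimizing the term-by-term integration. Such a region is an open polyannulus lying inside the domain of convergence $\Ds_{\q_i}$ described just before the statement (small $\boldsymbol w=\z^{-1}$).

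The main obstacle, and the step I expect to require the most care, is promoting the equality from this polyannulus to all of $\Ds_{\q_i}\setminus\operatorname{supp}(\dd\mu)$. Both sides are holomorphic there: the left-hand side is the sum of the $D$-fold Laurent series on its domain of convergence $\Ds_{\q_i}$, while the right-hand side is a multivariate Cauchy transform, holomorphic off $\operatorname{supp}(\dd\mu)$ by differentiation under the integral sign. Since these two holomorphic functions agree on the open polyannulus, the identity theorem for holomorphic functions of several complex variables (Appendix \ref{scv}) extends the equality to the connected domain $\Ds_{\q_i}\setminus\operatorname{supp}(\dd\mu)$, which completes the argument.
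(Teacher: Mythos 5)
Your proposal is correct and follows essentially the same route as the paper's own proof in Appendix \ref{proof2}: both rewrite $C=SG\chi^*$ via the Cholesky factorization, insert the integral representation of the moment matrix, identify $S\chi=P$, and sum the series into the Cauchy kernel using \eqref{eq:chi-chi*}. Your treatment is in fact somewhat more careful than the paper's on the analytic side, since the paper merely asserts the interchange of series and integral, whereas you justify it on a polyannulus and then extend by the identity theorem to all of $\Ds_{\q_i}\setminus\operatorname{supp}(\dd\mu)$.
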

\begin{proof}
  See Appendix \ref{proof2}.
  \end{proof}
  We introduce
\begin{align*}
  \Gamma&\coloneq G \chi^*=\PARENS{\begin{matrix}
    \Gamma_{[0]}\\
    \Gamma_{[1]}\\
\vdots
  \end{matrix}}, & \Gamma_{[k]}&\coloneq \PARENS{\begin{matrix}
    \Gamma_{\q_1}\\
    \vdots\\
    \Gamma_{\q_{|[k]|}}
  \end{matrix}}.
\end{align*}
\begin{pro}
  The coefficients $\Gamma_{\q_i}$ are the multivariate Cauchy transform of the monomials $\x^{\q_i}$
  \begin{align*}
   \Gamma_{\q_i}(\x)=\int_\Omega\frac{\y^{\q_i}}{(x_1-y_1)\cdots(x_D-y_D)}\dd\mu(\y).
  \end{align*}
\end{pro}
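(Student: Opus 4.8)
The plan is to compute $\Gamma=G\chi^*$ directly from the integral representation of the moment matrix and then read off the entry indexed by $\q_i$. Inserting the definition $G=\int_\Omega \chi(\y)\dd\mu(\y)\chi(\y)^\top$ from Definition \ref{moment} into $\Gamma(\x)=G\chi^*(\x)$ and using associativity to group the two factors that depend on $\y$, I would write
\begin{align*}
\Gamma(\x)=\int_\Omega \chi(\y)\,\big(\chi(\y)^\top\chi^*(\x)\big)\,\dd\mu(\y).
\end{align*}
The scalar $\chi(\y)^\top\chi^*(\x)$ is precisely the pairing evaluated in \eqref{eq:chi-chi*}, which collapses the full bilinear product of the monomial vectors into the kernel $\prod_{a=1}^D (x_a-y_a)^{-1}$ whenever $|x_a|>|y_a|$. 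Substituting this and then extracting the component of $\chi(\y)$ carried by the multi-index $\q_i$, namely $\y^{\q_i}$, yields exactly the asserted formula.

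The only genuine point to be addressed is that $G\chi^*$ hides a semi-infinite summation: each entry $\Gamma_{\q_i}(\x)=\sum_{j}\big(\int_\Omega \y^{\q_i+\q_j}\dd\mu(\y)\big)\prod_{a}x_a^{-1-\alpha_j^a}$ is an infinite $D$-fold Laurent series, and \eqref{eq:chi-chi*} is itself the statement that the factorized geometric series $\prod_a x_a^{-1}\sum_{\alpha\geq 0}(y_a/x_a)^\alpha$ sums to that kernel. Thus the heart of the argument, and its main obstacle, is the interchange of this summation with the integral over $\Omega$, legitimate on the polyannular region where $|x_a|>|y_a|$ for all $\y\in\operatorname{supp}(\dd\mu)$; there the series converges absolutely and uniformly, so the interchange is justified. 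This is exactly the several complex variables material recorded in Appendix \ref{scv}, and it mirrors the convergence discussion that fixes the domains $\Ds_{\q_i}$ for the second kind functions, so I would simply invoke that analysis rather than redo it.

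A cleaner route that sidesteps the convergence bookkeeping altogether is to piggyback on Proposition \ref{cauchy}. Since $G=S^{-1}H(S^{-1})^\top$ and $C=H(S^{-1})^\top\chi^*$, one has $\Gamma=S^{-1}C$, and because $S^{-1}$ is block lower unitriangular every row is a \emph{finite} combination of the $C_{\q_j}$. Hence $\Gamma_{\q_i}=\sum_{\ell,j}(S^{-1})_{\q_i,\q_j^{(\ell)}}C_{\q_j^{(\ell)}}$ is a finite sum; feeding in the Cauchy-transform formula of Proposition \ref{cauchy} for each $C_{\q_j^{(\ell)}}$ and pulling the finite combination inside the integral, the numerator becomes the $\q_i$-th entry of $S^{-1}P=S^{-1}S\chi=\chi$, that is $\y^{\q_i}$. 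This reproduces the claim with no further analytic work, the convergence obstacle having already been discharged in the proof of Proposition \ref{cauchy} in Appendix \ref{proof2}.
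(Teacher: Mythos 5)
Both of your routes are correct, and they are not the same argument. Your first route is precisely the paper's proof: the paper disposes of this Proposition by remarking that it is a byproduct of the proof of Proposition \ref{cauchy}, and indeed the chain of equalities in Appendix \ref{proof2}, stripped of the factors $S_{[\ell],[k]}$, is exactly your direct computation of $G\chi^*$ via the integral representation of $G$, the kernel identity \eqref{eq:chi-chi*}, and the interchange of the semi-infinite sum with the integral on the region where $|x_a|>|y_a|$ for all $\y\in\operatorname{supp}(\dd\mu)$. Your second route is genuinely different and is a nice reduction: from the Cholesky factorization one has $S\Gamma=SG\chi^*=H(S^{-1})^\top\chi^*=C$, hence $\Gamma=S^{-1}C$, and since $S^{-1}$ is block lower unitriangular each block row $\Gamma_{[\ell]}=\sum_{k=0}^{\ell}(S^{-1})_{[\ell],[k]}C_{[k]}$ is a \emph{finite} combination; plugging in Proposition \ref{cauchy} and using linearity of the integral, the numerator collapses to $(S^{-1}P)_{[\ell]}(\y)=(S^{-1}S\chi)_{[\ell]}(\y)=\chi_{[\ell]}(\y)$, which is the claim. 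What this buys is that all the analytic content (the series/integral interchange and the convergence domains) is invoked once, inside Proposition \ref{cauchy}, rather than redone; there is also no circularity, since the relation $C=S\Gamma$ follows from the definitions and the Cholesky factorization alone. The only bookkeeping point worth stating explicitly in that route is that the resulting identity holds for $\z$ in the intersection $\cap_{k\leq\ell}\cap_{j}\Ds_{\q_j^{(k)}}\setminus\operatorname{supp}(\dd\mu)$ of the finitely many convergence domains involved, which is nonempty since each contains a polyannulus about infinity.
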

\begin{proof}
  Is a byproduct of the proof of Proposition \ref{cauchy}.
\end{proof}
\begin{pro}
  We have $C=S\Gamma$.
\end{pro}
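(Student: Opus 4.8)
The plan is to prove $C = S\Gamma$ by unwinding the definitions and using the Cholesky factorization $G = S^{-1} H (S^{-1})^\top$ established in Proposition \ref{pro:cholesky}. The statement asserts an identity between two semi-infinite column vectors of functions, so it suffices to manipulate their defining expressions. First I would recall that $C$ is defined in \eqref{def:C} as $C = H (S^{-1})^\top \chi^*$, while $\Gamma$ is defined as $\Gamma = G\chi^*$. The key observation is that both objects are built by applying a fixed matrix to the same column vector $\chi^*$ of Laurent monomials, so the whole statement reduces to a matrix identity acting on $\chi^*$.

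The central step is to compute $S\Gamma$ directly. Substituting the definition of $\Gamma$ gives
\begin{align*}
  S\Gamma = S G \chi^*.
\end{align*}
Now I would insert the Cholesky factorization $G = S^{-1} H (S^{-1})^\top$ to obtain
\begin{align*}
  S\Gamma = S\, S^{-1} H (S^{-1})^\top \chi^* = H (S^{-1})^\top \chi^*,
\end{align*}
where $S S^{-1} = \I$ cancels. The right-hand side is precisely the definition of $C$ in \eqref{def:C}, so $S\Gamma = C$, as claimed.

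Because the argument is a one-line cancellation once the factorization is inserted, there is essentially no analytic obstacle at the level of formal manipulation. The only genuine subtlety, which I would address briefly, is convergence: $\chi^*$ is a vector of $D$-fold Laurent series and the products $G\chi^*$ and $(S^{-1})^\top \chi^*$ involve infinite sums of blocks, so the associativity $S(G\chi^*) = (SG)\chi^*$ must be justified on the common domain of convergence. Here I would invoke the analysis preceding Proposition \ref{cauchy}, where the domains of convergence of the components $C_{\q_i}(\z)$ and $\Gamma_{\q_i}(\x)$ are identified as complete Reinhardt domains (unions of polyannuli) on which the relevant series converge absolutely and uniformly; absolute convergence legitimizes rearrangement and the termwise application of the lower-triangular matrix $S$, which acts on each block row $[k]$ through finitely many blocks below the diagonal. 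Thus the purely algebraic cancellation is valid wherever the series in question converge, which completes the proof.
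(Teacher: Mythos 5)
Your proof is correct and takes essentially the same route the paper does: the paper states this proposition without any proof precisely because the identity is the immediate cancellation $S\Gamma = SG\chi^* = S\,S^{-1}H(S^{-1})^\top\chi^* = C$ via the Cholesky factorization, and this same identity $C=(SG)\chi^*$ is in fact the opening step of the paper's proof of Proposition \ref{cauchy} in Appendix \ref{proof2}. Your closing remark on domains of convergence and the finitely-many-blocks action of the lower unitriangular $S$ is a sensible precaution fully consistent with the paper's own discussion of the Laurent series involved.
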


\begin{pro}
 In terms of Schur complements or quasi-determinants of bordered truncated moment matrices the functions of the second kind are
 \begin{align*}
 C_{[\ell]}=\SC\PARENS{\begin{array}{ccc|c}
       G_{[0],[0]}  & \cdots & G_{[0],[\ell-1]} & \Gamma_{[0]}\\
           \vdots      &       &            \vdots   &  \vdots\\
       G_{[\ell-1],[0]} & \cdots & G_{[\ell-1],[\ell-1]} & \Gamma_{[\ell-1]}\\[1pt]
\hline
             G_{[\ell],[0]} & \cdots & G_{[\ell],[\ell-1]} & \Gamma_{[\ell]}
      \end{array}}=\Theta_*\PARENS{\begin{matrix}
       G_{[0],[0]}  & \cdots & G_{[0],[\ell-1]} & \Gamma_{[0]}\\
           \vdots      &       &            \vdots   &  \vdots\\
       G_{[\ell-1],[0]} & \cdots & G_{[\ell-1],[\ell-1]} & \Gamma_{[\ell-1]}\\
             G_{[\ell],[0]} & \cdots & G_{[\ell],[\ell-1]} & \Gamma_{[\ell]}
      \end{matrix}}.
 \end{align*}
\end{pro}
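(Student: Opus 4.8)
The plan is to exploit the structural parallel between the already established identity $C=S\Gamma$ and the relation $P=S\chi$, so that the argument becomes a verbatim repetition of the proof given above for the Schur complement expression of $P_{[\ell]}$, only with the vector of monomials $\chi$ replaced throughout by $\Gamma$. First I would record that, since $S$ is block lower unitriangular, its $[\ell]$-th block row reads $\big(S_{[\ell],[0]},\dots,S_{[\ell],[\ell-1]},\I,0,0,\dots\big)$, and therefore
\begin{align*}
C_{[\ell]}=\sum_{k=0}^{\ell}S_{[\ell],[k]}\Gamma_{[k]}=\Gamma_{[\ell]}+\sum_{k=0}^{\ell-1}S_{[\ell],[k]}\Gamma_{[k]}.
\end{align*}
This is a genuinely finite sum, so no difficulty arises from the fact that the entries of the $\Gamma_{[k]}$ are $D$-fold Laurent series rather than polynomials; the manipulation lives entirely at the algebraic level of the finite blocks.

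Next I would determine the subdiagonal blocks $\big(S_{[\ell],[0]},\dots,S_{[\ell],[\ell-1]}\big)$ exactly as in the proof for $P_{[\ell]}$. Rewriting the Cholesky factorization \eqref{cholesky} as $SG=H(S^{-1})^\top$ and reading off the block equation for the $[\geq\ell],[\ell]$ corner gives $S^{[\geq\ell],[\ell]}=-S^{[\geq\ell]}G^{[\geq\ell],[\ell]}\big(G^{[\ell]}\big)^{-1}$. Taking the first block row of this identity, and using that the first block row of $S^{[\geq\ell]}$ is $(\I,0,0,\dots)$ while the first block row of $G^{[\geq\ell],[\ell]}$ is $\big(G_{[\ell],[0]},\dots,G_{[\ell],[\ell-1]}\big)$, yields
\begin{align*}
\big(S_{[\ell],[0]},\dots,S_{[\ell],[\ell-1]}\big)=-\big(G_{[\ell],[0]},\dots,G_{[\ell],[\ell-1]}\big)\big(G^{[\ell]}\big)^{-1}.
\end{align*}

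Substituting this back, and writing $\Gamma^{[\ell]}\coloneq\big(\Gamma_{[0]}^\top,\dots,\Gamma_{[\ell-1]}^\top\big)^\top$, I obtain
\begin{align*}
C_{[\ell]}=\Gamma_{[\ell]}-\big(G_{[\ell],[0]},\dots,G_{[\ell],[\ell-1]}\big)\big(G^{[\ell]}\big)^{-1}\Gamma^{[\ell]},
\end{align*}
which is precisely the Schur complement $\SC$ of the block $G^{[\ell]}$ in the bordered truncated moment matrix carrying the column $\big(\Gamma_{[0]}^\top,\dots,\Gamma_{[\ell]}^\top\big)^\top$. The quasi-determinantal form then follows at once from the identification of $\Theta_*$ with the corresponding Schur complement recalled in Appendix \ref{qd} (Theorem 3 of \cite{olver}).

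I expect no serious obstacle: the whole content is that $C=S\Gamma$ has the same shape as $P=S\chi$, so the earlier computation transports unchanged. The only point deserving a word of care is that the $\Gamma_{[k]}$ have infinite-series entries; but since only finitely many blocks enter, and since the relation $\Gamma=G\chi^*$ together with the convergence discussion for the second kind functions guarantees these objects are well defined on the relevant polyannuli $\Ds_{\q_i}$, the block algebra is fully legitimate.
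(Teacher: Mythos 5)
Your proposal is correct and is essentially the proof the paper intends: the paper states this proposition without separate argument precisely because it follows from $C=S\Gamma$ by repeating, word for word, the proof it gives for the Schur-complement expression of $P_{[\ell]}$ (extracting $\big(S_{[\ell],[0]},\dots,S_{[\ell],[\ell-1]}\big)=-\big(G_{[\ell],[0]},\dots,G_{[\ell],[\ell-1]}\big)\big(G^{[\ell]}\big)^{-1}$ from $SG=H(S^{-1})^\top$ and substituting), which is exactly what you do. Your added remark that only finitely many blocks of $\Gamma$ enter, so the Laurent-series nature of its entries causes no difficulty, is a sound and welcome clarification but does not change the route.
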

In terms of determinant ratios  the components are
 \begin{align*}
 C_{\q^{(\ell)}_j}=\SC\PARENS{\begin{array}{ccc|c}
       G_{[0],[0]}  & \cdots & G_{[0],[\ell-1]} & \Gamma_{[0]}\\
       \vdots            & &  \vdots             &  \vdots\\
       G_{[\ell-1],[0]}  & \cdots & G_{[\ell-1],[\ell-1]} & \Gamma_{[\ell-1]}\\[1pt]
\hline
    G_{\q^{(\ell)}_j,[0]}  & \cdots & G_{\q^{(\ell)}_j,[\ell-1]} & \Gamma_{\q^{(\ell)}_j}
      \end{array}}=
      \frac{\begin{vmatrix}
                   G_{[0],[0]}  & \cdots & G_{[0],[\ell-1]} & \Gamma_{[0]}\\
       \vdots            & &  \vdots             &  \vdots\\
       G_{[\ell-1],[0]}  & \cdots & G_{[\ell-1],[\ell-1]} & \Gamma_{[\ell-1]}\\
    G_{\q^{(\ell)}_j,[0]}  & \cdots & G_{\q^{(\ell)}_j,[\ell-1]} & \Gamma_{\q^{(\ell)}_j}
                 \end{vmatrix}
}{ \begin{vmatrix}
 G_{[0],[0]}        & \cdots & G_{[0],[\ell-1]}\\
\vdots            &          &  \vdots     \\
  G_{[\ell-1],[0]}  & \cdots & G_{[\ell-1],[\ell-1]}
                 \end{vmatrix}}.
\end{align*}
\begin{definition}
Given $k$ distinct labels $a_1,\dots,a_k$ in $\{1,\dots, D\}$ we introduce the reduced second kind functions
\begin{align}\label{hatsecond}
     \widehat C_{a_1,\dots,a_k}=& \lim_{x_{a_1}\to\infty}\cdots\lim_{ x_{a_k}\to\infty}\Big(\Big[\prod_{i=1}^kx_{a_i}\Big]C\Big).
\end{align}
\end{definition}
Observe that the labels in the reduced second kind functions  indicate precisely those independent variables in
which they do not depend; therefore, when $k=D$ we have a constant.
For  the reduced second kind functions we find
\begin{pro}
  When $\operatorname{supp}(\dd\mu)$ is a bounded set the reduced second kind functions fulfill
\begin{align}\label{cauchyintegral}
 \widehat C_{\q_i,a_1,\dots,a_k}&=\int_\Omega\frac{P_{\q_i}(\y)}{\prod_{i=1}^{D-k}(z_{b_i}-y_{b_i})}\dd\mu(\y), &a &=1,\dots,|[\ell]|,&    \z\in\Ds_{\q_i}\setminus\operatorname{supp}(\dd\mu)
\end{align}
where $\{a_1,\dots,a_k\}\cup\{b_1,\dots,b_{D-k}\}=\{1,\dots,D\}$.
\end{pro}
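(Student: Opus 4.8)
The plan is to start from the multivariate Cauchy transform representation of the second kind functions established in Proposition \ref{cauchy} and to perform the successive limits prescribed by the definition \eqref{hatsecond} directly on the integral, the boundedness of $\operatorname{supp}(\dd\mu)$ being precisely what licenses the interchange of limit and integration.

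First I would fix $\z\in\Ds_{\q_i}\setminus\operatorname{supp}(\dd\mu)$ with the $z_{a_i}$ large, and write, using Proposition \ref{cauchy},
\begin{align*}
\Big[\prod_{i=1}^k z_{a_i}\Big] C_{\q_i}(\z)
=\int_\Omega P_{\q_i}(\y)\,\prod_{i=1}^k\frac{z_{a_i}}{z_{a_i}-y_{a_i}}\,\prod_{l=1}^{D-k}\frac{1}{z_{b_l}-y_{b_l}}\,\dd\mu(\y),
\end{align*}
where I have split the product $\prod_{j=1}^D(z_j-y_j)^{-1}$ according to the partition $\{1,\dots,D\}=\{a_1,\dots,a_k\}\cup\{b_1,\dots,b_{D-k}\}$ and distributed the prefactor $\prod_i z_{a_i}$ among the factors carrying an index in $\{a_1,\dots,a_k\}$.

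Next I would take the iterated limits $z_{a_1}\to\infty,\dots,z_{a_k}\to\infty$. Pointwise in $\y$ each factor $z_{a_i}/(z_{a_i}-y_{a_i})\to 1$, so the whole point of the argument is to move these limits under the integral sign, one at a time. This is exactly where boundedness of $\operatorname{supp}(\dd\mu)$ is used: on the compact support the coordinates $|y_{a_i}|$ are uniformly bounded, so for $|z_{a_i}|$ large the ratios $z_{a_i}/(z_{a_i}-y_{a_i})$ are uniformly close to $1$ and uniformly bounded; the polynomial $P_{\q_i}(\y)$ is bounded there, and the remaining factors $\prod_l(z_{b_l}-y_{b_l})^{-1}$ are bounded for $\y$ in the support and $\z$ off it. Since $\dd\mu$ has finite total mass (the moment $H_{[0]}=\int_\Omega\dd\mu$ must be finite), dominated convergence applies at each step and the limits pass inside the integral, successively deleting the factors indexed by $a_1,\dots,a_k$; each intermediate integral is of the same form, so iterating the argument is harmless.

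Carrying out the $k$ limits then yields
\begin{align*}
\widehat C_{\q_i,a_1,\dots,a_k}
=\int_\Omega \frac{P_{\q_i}(\y)}{\prod_{l=1}^{D-k}(z_{b_l}-y_{b_l})}\,\dd\mu(\y),
\end{align*}
which is the asserted formula \eqref{cauchyintegral}. The only genuine obstacle is the justification of the limit--integral interchange; everything else is bookkeeping of the index partition, and I expect the boundedness hypothesis to be invoked precisely and only at that step, which is why it was absent from Proposition \ref{cauchy} but is indispensable here.
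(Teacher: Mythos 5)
Your proposal is correct and follows essentially the same route as the paper: both start from the multivariate Cauchy transform representation of Proposition \ref{cauchy}, write $\big[\prod_i z_{a_i}\big]C_{\q_i}$ so that the factors indexed by $a_1,\dots,a_k$ become $\big(1-y_{a_i}/z_{a_i}\big)^{-1}$, and invoke the Lebesgue dominated convergence theorem (with the dominating function supplied by the boundedness of $\operatorname{supp}(\dd\mu)$) to pass the limits inside the integral. Your write-up is merely more explicit than the paper about why the dominating function works, which the paper relegates to a footnote.
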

\begin{proof}
The Lebesgue dominated convergence theorem  ensures that we can interchange the limit with the integral\footnote{The control or dominating function can be taken to be $g_{\z}(\y)=\frac{P_{\q_i}(\y)}{\prod_{i=1}^{D-k}(z_{b_i}-y_{b_i})}$}
\begin{align*}
 \lim_{z_{a_1}\to\infty}\cdots\lim_{ z_{a_k}\to\infty}\Big(\Big[\prod_{i=1}^kz_{a_i}\Big]C\Big)&=\int_\Omega
 \lim_{z_{a_1}\to\infty}\cdots\lim_{ z_{a_k}\to\infty}\Big(\Big[\prod_{i=1}^kz_{a_i}\Big)
\frac{P(\y)}{\prod_{a=1}^D(z_a-y_a)}\dd\mu(\y)\\
 &=\int_\Omega\Big[\prod_{i=1}^k
\lim_{z_{a_i}\to\infty}\Big(1-\frac{y_{a_i}}{z_{a_i}}\Big)^{-1}\Big]
\frac{P(\y)}{\prod_{i=1}^{D-k}(z_{b_i}-y_{b_i})}\dd\mu(\y).
\end{align*}
\end{proof}
From this result we infer that
\begin{align*}
  \widehat C_{[k],1,\dots,D}=H_{[0]}\delta_{0,k}.
\end{align*}

\subsection{The shift matrices}\label{los Lambda}
In this section we are going to discuss three term relations that extend the recursion relations existing in $D=1$ to the multivariate case. For this aim we need to introduce a set of $D$ shift matrices $\{\Lambda_1,\dots,\Lambda_D\}$ that play a very important role, they model the action of increasing by one the degree of the monomials.
\begin{definition}
The shift matrices are  given by
  \begin{align*}
\Lambda_a&=\PARENS{\begin{matrix}
                0   & (\Lambda_a)_{[0],[1]} & 0 & 0 &\cdots\\
0        & 0 &(\Lambda_a)_{[1],[2]} & 0 &\cdots\\
   0                &  0    &          0       & (\Lambda_a)_{[2],[3]}  &  \\
         0                &  0    &          0             &               0         &\ddots  \\
                         \vdots        &  \vdots    &  \vdots         &\vdots
                 \end{matrix}}
\end{align*}
where the entries in the non zero blocks are given by
\begin{align*}
      (\Lambda_a)_{\q^{(k)}_i,\q^{(k+1)}_j}&=\delta_{\q^{(k)}_i+\ee_a,\q^{(k+1)}_j},&
      a&=1,\dots, D, &
      i&=1,\dots,|[k]|,&
      j&=1,\dots,|[k+1]|.
\end{align*}
\end{definition}

Related to these   shift matrices we further introduce
\begin{definition}
\begin{enumerate}
    \item Given any $\kk=\sum_{a=1}^D k_{a}\ee_a\in\Z_+^D$ we define
 \begin{align*}
   \Lambda_{\kk}&\coloneq \Lambda_1^{k_1}\cdots\Lambda_D^{k_D} &
   \Pi_{\kk}&\coloneq \left(\Lambda_1^\top\right)^{k_1}\left(\Lambda_1\right)^{k_1}\cdots\left(\Lambda_D^\top\right)^{k_D}\left(\Lambda_D\right)^{k_D}
 \end{align*}
 \item For $\kk=n\ee_a$,  $a\in\{1,\dots, D\}$ and $n\in\Z_+$ we use the  notations
 \begin{align*}
   \Pi_{a,n}\coloneq &\Pi_{n\ee_a}=\left(\Lambda_a^\top\right)^n\left(\Lambda_a\right)^n.
 \end{align*}
 If $n=1$ we use $\Pi_{a,1}=\Pi_a=\Lambda_a^\top\Lambda_a$.
 \end{enumerate}
\end{definition}
Notice that $\kk_b+\ee_a\in[k+1]$.

\begin{pro}\label{pro:Lambda}
 \begin{enumerate}
 \item  The matrices $\Pi_{a,n}$ are projections, $\left(\Pi_{a,n}\right)^2=\Pi_{a,n}$ and $\Pi_{a,n}=\Pi_{a,n}^\top$. Moreover they are
 diagonal matrices whose non vanishing coefficients  are the unity.
 The ones are located precisely in the entries of the diagonal corresponding to the entries (monomials) in $\chi$ which contain
 $(x_a)^m$ with $m\geq n$ among its factors. We can write
     \begin{align*}
       \I=\Pi_{a,n}+\Pi_{a,n}^\perp,
     \end{align*}
     where $\Pi_{a,n}^\perp$ is a diagonal matrix with its non vanishing coefficients,
     which are equal to the unity, located in those entries of $\chi$ which contain $(x_a)^m$ with $m<n$ among its factors.
We also have
 \begin{align}\label{piachi}
\Pi_a^\perp\chi(\x)&=\chi(\x)|_{x_a=0}, &
  \Pi_a^\perp\chi^*(\x)&=x_a^{-1}\lim_{x_a\to\infty}\big(x_a\chi^*(\x)\big).% &  x^{i}:&=(x_1,x_2,\dots,x_i=0,\dots,x_d)
\end{align}
  \item  The shift matrices fulfill the  following properties  for all  $\kk,\ele\in\Z_+^D$
  \begin{align*}
\Lambda_{\kk}\Lambda_{\ele}&=\Lambda_{\kk+\ele}=\Lambda_{\ele} \Lambda_{\kk},  &
\Lambda_{\kk}(\Lambda_{\kk})^\top&=\mathbb{I}.
\end{align*}
\item When $a\neq b$ we have the commutation relations
\begin{align*}
\Lambda_{b}^\top\Lambda_{a}&=\Lambda_{a}\Lambda_{b}^\top,
&
  \Pi_{a}\Pi_{b}&=\Pi_{b}\Pi_{a}=\Pi_{\ee_a+\ee_b}
\end{align*}
\item We also have the ``eigenvalue'' type properties
\begin{align}\label{eigen}
\Lambda_{\kk}\chi(\x)&= \x^{\kk} \chi(\x),& \Lambda_{\kk}\chi^{*}(\x)&= \x^{-\kk} \chi^{*}(\x),\\\label{lambdaTchi}
\Lambda_{\kk}^\top\chi(\x)&= \x^{-\kk}\Pi_{\kk}\chi,  & \Lambda_{\kk}^\top\chi^{*}(\x)&=\x^{\kk}\Pi_{\kk}\chi^{*}(\x).
  \end{align}
   \end{enumerate}
\end{pro}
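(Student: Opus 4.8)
The whole proposition reduces to one master observation about the combinatorics of the shift matrices, so the plan is to prove that observation first and then read off every claim from it. I would begin by establishing the closed form $(\Lambda_{\kk})_{\q,\q'}=\delta_{\q+\kk,\q'}$ for every $\kk\in\Z_+^D$, arguing by composition: since $\big((\Lambda_a)^{k_a}\big)_{\q,\q'}=\delta_{\q+k_a\ee_a,\q'}$ directly from the definition, multiplying these blocks telescopes the shifts and $\sum_a k_a\ee_a=\kk$. Transposing gives $(\Lambda_{\kk}^\top)_{\q,\q'}=\delta_{\q'+\kk,\q}$, i.e. $\Lambda_{\kk}^\top$ lowers a multi-index by $\kk$ whenever $\q-\kk\in\Z_+^D$ and annihilates it otherwise.

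With these two formulas in hand, the first halves of part (4) are immediate: applying $\Lambda_{\kk}$ componentwise to $\chi$ gives $(\Lambda_{\kk}\chi)_\q=\x^{\q+\kk}=\x^{\kk}\chi_\q$, and since $\chi^*_\q=\prod_c x_c^{-\alpha_c-1}$ the same bookkeeping yields $\Lambda_{\kk}\chi^*=\x^{-\kk}\chi^*$. For the transposed relations the lowering is only partial, which is exactly where the projection $\Pi_{\kk}$ enters: $(\Lambda_{\kk}^\top\chi)_\q$ equals $\x^{-\kk}\x^\q$ when $\q\geq\kk$ componentwise and vanishes otherwise, matching $\x^{-\kk}\Pi_{\kk}\chi$ once we know $\Pi_{\kk}$ is the diagonal projection onto those $\q$ with $\alpha_c\geq k_c$ for all $c$ (and similarly for $\chi^*$).

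To secure that description of the projections, which is the content of part (1), I would compute $\Pi_{a,n}=(\Lambda_a^\top)^n(\Lambda_a)^n$ entrywise: the inner sum collapses to $\delta_{\q,\q'}$ times the condition $\q-n\ee_a\in\Z_+^D$, so $\Pi_{a,n}$ is diagonal with a $1$ exactly on monomials carrying $(x_a)^m$ with $m\geq n$. Being diagonal and $0/1$-valued it is automatically an idempotent symmetric projection, and $\I=\Pi_{a,n}+\Pi_{a,n}^\perp$ merely splits the diagonal. The two identities in \eqref{piachi} then follow by reading off components: $\Pi_a^\perp$ retains precisely the $x_a$-free monomials, which is what setting $x_a=0$ does to $\chi$; and the factor $x_a^{-\alpha_a}$ in $x_a\chi^*_\q$ tends to $1$ or $0$ according as $\alpha_a=0$ or $\alpha_a\geq1$, giving the stated limit. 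Finally $\Pi_{\kk}=\prod_c\Pi_{c,k_c}$ is a product of commuting diagonal projections, hence the projection onto the intersection $\{\q:\q\geq\kk\}$, closing the loop with part (4).

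Parts (2) and (3) are then pure entrywise multiplication using the master formula. For part (2), $(\Lambda_{\kk}\Lambda_{\ele})_{\q,\q'}=\sum_{\q''}\delta_{\q+\kk,\q''}\delta_{\q''+\ele,\q'}=\delta_{\q+\kk+\ele,\q'}$ gives $\Lambda_{\kk}\Lambda_{\ele}=\Lambda_{\kk+\ele}$, which is symmetric in $\kk,\ele$, while $(\Lambda_{\kk}\Lambda_{\kk}^\top)_{\q,\q'}=\sum_{\q''}\delta_{\q+\kk,\q''}\delta_{\q'+\kk,\q''}=\delta_{\q,\q'}$ gives $\Lambda_{\kk}\Lambda_{\kk}^\top=\I$. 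For part (3) with $a\neq b$ I would compare the supports of $\Lambda_b^\top\Lambda_a$ and $\Lambda_a\Lambda_b^\top$: both entries equal $1$ exactly when $\q'=\q+\ee_a-\ee_b$ and $\alpha_b\geq1$. The commutation $\Pi_a\Pi_b=\Pi_b\Pi_a=\Pi_{\ee_a+\ee_b}$ is then immediate, diagonal matrices commuting and the right-hand side being their product by definition. I expect the single genuinely delicate point to be this $a\neq b$ commutation of $\Lambda_b^\top\Lambda_a$: one must check that the membership constraint $\q-\ee_b\in\Z_+^D$ arising on one side coincides with the constraint $\q+\ee_a-\ee_b\in\Z_+^D$ arising on the other, which is precisely where the hypothesis $a\neq b$ is used and where the asymmetry $\Lambda_{\kk}^\top\Lambda_{\kk}=\Pi_{\kk}\neq\I$ must be kept carefully in view.
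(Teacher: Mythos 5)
Your proof is correct, and there is nothing in the paper to compare it against: this is one of the propositions the paper states without proof, treating it as an immediate consequence of the definitions, and your entrywise master formula $(\Lambda_{\kk})_{\q,\q'}=\delta_{\q+\kk,\q'}$ (with its transpose $(\Lambda_{\kk}^\top)_{\q,\q'}=\delta_{\q'+\kk,\q}$) is precisely the routine verification being left implicit. Your closing remark is also well placed: the only point requiring care is that $\Lambda_a^\top\Lambda_a=\Pi_a\neq\I=\Lambda_a\Lambda_a^\top$, so the commutation in part (3) genuinely needs $a\neq b$, exactly as you check.
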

\begin{pro}\label{pro3}
  For $k$ distinct labels $a_1,\dots, a_k\in\{1,\dots,D\}$,  $a_i\neq a_j$ for $i\neq j$, and $k$ complex numbers  $q_{a_1},\dots,q_{a_k}\in\C$ we have
      \begin{multline}\label{lambdaTq}
       \Big[    \prod_{i=1}^k (\Lambda_{a_i}^\top-q_{a_i})\Big]\chi^*=\Big[\prod_{i=1}^k(x_{a_i}-q_{a_i})\Big]\chi^*
    +(-1)^k\lim_{x_{a_1}\to\infty}\cdots\lim_{ x_{a_{k}}\to\infty}\Big(\Big[\prod_{i=1}^kx_{a_i}\Big]\chi^*\Big),
    \\+\sum_{j=1}^{k-1}\frac{(-1)^j}{(k-j)!j!}\sum_{\sigma\in \mathfrak S_k}\Big(\Big[\prod_{i=j+1}^k\big(x_{a_\sigma(i)}-q_{a_{\sigma(i)}}\big)\Big]
    \lim_{x_{a_{\sigma(1)}}\to\infty}\cdots\lim_{ x_{a_{\sigma(j)}}\to\infty}\Big(\Big[\prod_{i=1}^jx_{a_\sigma(i)}\Big]\chi^*\Big)\Big),
  \end{multline}
  where $\mathfrak S_k$ denotes the symmetric group   of $k$ letters.
\end{pro}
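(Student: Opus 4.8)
The plan is to recast the right-hand side of \eqref{lambdaTq} in a transparent subset form and then argue by induction on $k$. For a single label $a$ I introduce the regularized-limit operator acting on $\chi^*$ by $\mathcal T_a\chi^*\coloneq\lim_{x_a\to\infty}\big(x_a\chi^*\big)$, so that \eqref{piachi} reads $\mathcal T_a\chi^*=x_a\Pi_a^\perp\chi^*$. Combining $\Lambda_a^\top\chi^*=x_a\Pi_a\chi^*$ from \eqref{lambdaTchi} with $\Pi_a=\I-\Pi_a^\perp$ yields the single-factor identity
\begin{align*}
(\Lambda_a^\top-q_a)\chi^*=(x_a-q_a)\chi^*-\mathcal T_a\chi^*,
\end{align*}
which is exactly \eqref{lambdaTq} for $k=1$. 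Writing $A\coloneq\{a_1,\dots,a_k\}$ and, for $S\subseteq A$, letting $\mathcal T_S$ be the composition of the $\mathcal T_a$, $a\in S$ (these commute, being limits in distinct variables), the goal becomes the compact formula
\begin{align}\label{subsetform}
\Big[\prod_{i=1}^k(\Lambda_{a_i}^\top-q_{a_i})\Big]\chi^*=\sum_{S\subseteq A}(-1)^{|S|}\Big[\prod_{a\in A\setminus S}(x_a-q_a)\Big]\mathcal T_S\chi^*,
\end{align}
from which \eqref{lambdaTq} is recovered by a symmetrization count.

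I would prove \eqref{subsetform} by induction on $k$, the base case $k=0$ being $\chi^*=\chi^*$ (the $S=\emptyset$ term) and $k=1$ the single-factor identity. For the step, set $A'\coloneq A\setminus\{a_k\}$, apply $(\Lambda_{a_k}^\top-q_{a_k})$ to the $(k-1)$-term formula for $A'$, and analyze each summand $w_S\coloneq\big[\prod_{a\in A'\setminus S}(x_a-q_a)\big]\mathcal T_S\chi^*$ with $S\subseteq A'$. Two commutation facts, both read off from the diagonal (componentwise) description of $\Pi_a,\Pi_a^\perp$ in Proposition \ref{pro:Lambda}, drive the argument: first, multiplication by the scalar function $\prod_{a\in A'\setminus S}(x_a-q_a)$ commutes with the \emph{constant} matrix $\Lambda_{a_k}^\top$; second, for $a_k\notin S$ the operator $\Lambda_{a_k}^\top$ commutes with $\mathcal T_S$, since $\Lambda_{a_k}^\top$ only shifts the $x_{a_k}$-degree of each monomial while $\mathcal T_S$ acts on the remaining variables. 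Together with the analogue of \eqref{piachi} propagated to $u=\mathcal T_S\chi^*$, namely $\mathcal T_{a_k}u=x_{a_k}\Pi_{a_k}^\perp u$ (valid precisely because $a_k\notin S$ leaves the $x_{a_k}$-exponents untouched), these give $(\Lambda_{a_k}^\top-q_{a_k})w_S=(x_{a_k}-q_{a_k})w_S-\mathcal T_{a_k}w_S$. Since $A\setminus S=(A'\setminus S)\cup\{a_k\}$ and $\mathcal T_{a_k}w_S=\big[\prod_{a\in A'\setminus S}(x_a-q_a)\big]\mathcal T_{S\cup\{a_k\}}\chi^*$, each $(k-1)$-term splits into the $S$- and the $(S\cup\{a_k\})$-terms of \eqref{subsetform}; summing over $S\subseteq A'$ rebuilds the sum over all $S\subseteq A$.

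Finally, passing from \eqref{subsetform} to \eqref{lambdaTq} is a counting step: the $S=\emptyset$ and $S=A$ terms produce the first two summands of \eqref{lambdaTq}, and for $0<|S|=j<k$ each $j$-subset occurs in the sum over $\sigma$ exactly $j!\,(k-j)!$ times — once per ordering of its $j$ elements among the first $j$ slots and of the complementary $k-j$ elements among the last slots — while the summand depends only on $S$, so dividing by $(k-j)!\,j!$ reproduces $\sum_{|S|=j}$ with sign $(-1)^j$. The iterated limits in \eqref{lambdaTq} coincide with $\mathcal T_S$ because the variables are distinct and each carries its own factor $x_a$. I expect the genuinely delicate point to be the interchange, in the inductive step, of $\Lambda_{a_k}^\top$ with the limits $\mathcal T_S$ and the scalar factors: the eigen-relations \eqref{lambdaTchi} are a priori available only for $\chi^*$ itself, and the heart of the proof is propagating them to the regularized vectors $\mathcal T_S\chi^*$, which is exactly what the diagonal description of $\Pi_a$ and $\Pi_a^\perp$ supplies.
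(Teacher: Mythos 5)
Your proof is correct and rests on essentially the same ingredients as the paper's: the eigen-relation \eqref{lambdaTchi}, the splitting $\Pi_a=\I-\Pi_a^\perp$, the identity \eqref{piachi}, and the same subset-versus-$\mathfrak S_k$ counting with the factor $j!\,(k-j)!$. The only difference is organizational — the paper expands the commuting product $\prod_i(x_{a_i}-q_{a_i}-x_{a_i}\Pi_{a_i}^\perp)$ in one stroke and converts $\Pi^\perp$'s to limits at the end, whereas you run the same expansion as an induction on $k$, converting to the limit operators $\mathcal T_S$ at each step and thereby making explicit the commutation and limit-propagation facts that the paper leaves implicit.
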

\begin{proof}
See Appendix \ref{proof3}.
\end{proof}
\begin{pro}The moment matrix $G$ satisfies
  \begin{align}\label{eq:symmetry}
 \Lambda_{\kk} G&= G \big(\Lambda_{\kk}\big)^\top,&  \forall &\kk\in\Z_+^D.
  \end{align}
\end{pro}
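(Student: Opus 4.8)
The plan is to deduce \eqref{eq:symmetry} directly from the ``eigenvalue'' property \eqref{eigen}, namely $\Lambda_{\kk}\chi(\x)=\x^{\kk}\chi(\x)$, together with the elementary observation that the monomial $\x^{\kk}$ is a scalar and hence commutes with all the matrix products appearing under the integral sign. First I would act with $\Lambda_{\kk}$ on the left of the defining expression $G=\int_\Omega \chi(\x)\dd\mu(\x)\chi(\x)^\top$ from Definition \ref{moment}. Since $\Lambda_{\kk}$ is a fixed semi-infinite matrix whose block rows carry only finitely many nonzero entries (it sends the block $[k]$ into $[k+|\kk|]$), the product $\Lambda_{\kk}G$ is computed entrywise and the matrix may be pushed inside the integral, yielding
\begin{align*}
\Lambda_{\kk} G=\int_\Omega\big(\Lambda_{\kk}\chi(\x)\big)\dd\mu(\x)\,\chi(\x)^\top=\int_\Omega \x^{\kk}\,\chi(\x)\dd\mu(\x)\,\chi(\x)^\top.
\end{align*}

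Next I would treat the right-hand side in the same spirit. Transposing \eqref{eigen} gives $\chi(\x)^\top\big(\Lambda_{\kk}\big)^\top=\x^{\kk}\chi(\x)^\top$, whence
\begin{align*}
G\big(\Lambda_{\kk}\big)^\top=\int_\Omega\chi(\x)\dd\mu(\x)\,\big(\chi(\x)^\top\big(\Lambda_{\kk}\big)^\top\big)=\int_\Omega\chi(\x)\dd\mu(\x)\,\x^{\kk}\,\chi(\x)^\top.
\end{align*}
Because $\x^{\kk}$ is a real scalar it may be moved freely across the matrix products $\chi(\x)\dd\mu(\x)\chi(\x)^\top$, so the two integrands coincide pointwise in $\x$. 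Consequently the two integrals agree and \eqref{eq:symmetry} follows.

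The calculation is immediate; the only point requiring any care --- and the closest thing to an obstacle --- is the justification for interchanging the action of the semi-infinite matrix $\Lambda_{\kk}$ with the integral. This is harmless in the present situation precisely because each block row of $\Lambda_{\kk}$ is finitely supported, so every entry of \eqref{eq:symmetry} reduces to an equality between two ordinary convergent moment integrals of the form $\int_\Omega \x^{\q+\q'}\dd\mu(\x)$, rather than between infinite series. Thus no analytic subtlety beyond the existence of the moments enters, and the symmetry \eqref{eq:symmetry} is simply the matrix-level reflection of the fact that $\Lambda_{\kk}$ realizes multiplication of $\chi$ by the scalar $\x^{\kk}$.
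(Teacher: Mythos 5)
Your proof is correct and is precisely the argument the paper intends: it combines the integral definition of $G$ with the eigenvalue property $\Lambda_{\kk}\chi(\x)=\x^{\kk}\chi(\x)$ of Proposition \ref{pro:Lambda}, which is exactly what the paper's one-line proof cites. Your extra remark justifying the interchange of $\Lambda_{\kk}$ with the integral (finitely supported block rows, so every entry is a finite sum of moments) is a welcome detail the paper leaves implicit.
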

\begin{proof}
It follows from Definition \ref{moment} of the moment matrix  $G$ and the eigen-value property  in Proposition \ref{pro:Lambda} for  $\Lambda_{\kk}$.
\end{proof}
\subsection{Jacobi matrices and three terms relations}
Once the shift matrices have been introduced we are ready to discuss its \emph{dressing}, that leads to the Jacobi matrices which are extremely important not only for the general theory of MVOPR but also for exploring its connection with the Toda theory.

\begin{definition}
We introduce the following Jacobi type matrices
 \begin{align}\label{def:J}
  J_{\kk}&\coloneq S \Lambda_{\kk} S^{-1}, &  \forall &\kk\in\Z_+^D,
 \end{align}
 and the basic Jacobi matrices
 \begin{align*}
 J_a&\coloneq J_{\ee_a}.
 \end{align*}
\end{definition}
\begin{pro}
The  Jacobi type  matrices satisfy
  \begin{align}\label{pro9}
J_{\kk}^{\top} &=H^{-1}J_{\kk}H, & J_{\kk}P&=\x^{\kk}P, & \forall &\kk\in\Z_+^D.
 \end{align}
\end{pro}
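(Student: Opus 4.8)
The plan is to prove the two assertions of \eqref{pro9} independently, each by substituting the definition $J_{\kk}=S\Lambda_{\kk}S^{-1}$ and then invoking a single structural fact: the eigenvalue property of the shift matrices for the second identity, and the symmetry \eqref{eq:symmetry} of the moment matrix together with the Cholesky factorization \eqref{cholesky} for the first.

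I would start with the intertwining relation $J_{\kk}P=\x^{\kk}P$, which is immediate. Using $P=S\chi$ from \eqref{eq:polynomials} together with the ``eigenvalue'' property $\Lambda_{\kk}\chi(\x)=\x^{\kk}\chi(\x)$ of \eqref{eigen}, I compute
\begin{align*}
J_{\kk}P=\big(S\Lambda_{\kk}S^{-1}\big)\big(S\chi\big)=S\Lambda_{\kk}\chi=S\big(\x^{\kk}\chi\big)=\x^{\kk}S\chi=\x^{\kk}P,
\end{align*}
where the penultimate step uses that $\x^{\kk}$ is a scalar and hence commutes with the matrix $S$.

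For the conjugation relation $J_{\kk}^{\top}=H^{-1}J_{\kk}H$, I would start from the symmetry \eqref{eq:symmetry}, namely $\Lambda_{\kk}G=G\big(\Lambda_{\kk}\big)^{\top}$, and insert the factorization $G=S^{-1}H\big(S^{-1}\big)^{\top}$ to obtain
\begin{align*}
\Lambda_{\kk}S^{-1}H\big(S^{-1}\big)^{\top}=S^{-1}H\big(S^{-1}\big)^{\top}\big(\Lambda_{\kk}\big)^{\top}.
\end{align*}
Left-multiplying by $S$ and right-multiplying by $S^{\top}$, and using $\big(S^{-1}\big)^{\top}S^{\top}=\big(SS^{-1}\big)^{\top}=\I$ on the left-hand side and $\big(S^{-1}\big)^{\top}\big(\Lambda_{\kk}\big)^{\top}S^{\top}=\big(S\Lambda_{\kk}S^{-1}\big)^{\top}$ on the right-hand side, I would arrive at $J_{\kk}H=H J_{\kk}^{\top}$. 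Since each diagonal block $H_{[k]}$ is invertible by Proposition \ref{pro:cholesky}, the matrix $H$ is invertible and I may left-multiply by $H^{-1}$ to get the claim.

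The computation itself is routine; the only point demanding care is that all manipulations take place in the ring of semi-infinite block matrices, so I would note that every product appearing above is well defined and associative because $S$ and $S^{-1}$ are block lower unitriangular, $H$ is block diagonal, and $\Lambda_{\kk}$ is banded, whence each entry of every product is a finite sum. This is the only genuine obstacle, and it is resolved entirely by the triangular and banded structure of the factors.
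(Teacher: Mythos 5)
Your proof is correct and follows essentially the same route as the paper: the conjugation identity is obtained by inserting the Cholesky factorization $G=S^{-1}H\big(S^{-1}\big)^{\top}$ into the symmetry $\Lambda_{\kk}G=G\Lambda_{\kk}^{\top}$ and rearranging to $J_{\kk}H=HJ_{\kk}^{\top}$, while the eigenvalue relation follows directly from $P=S\chi$ and $\Lambda_{\kk}\chi=\x^{\kk}\chi$. Your version merely spells out the intermediate multiplications and the well-definedness of the semi-infinite matrix products, which the paper leaves implicit.
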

\begin{proof}
  Using the Cholesky factorization of the moment matrix $G$ we get
\begin{align*}
S \Lambda_{\kk} S^{-1}= H \left(S \Lambda_{\kk} S^{-1}\right)^{\top} H^{-1}.
\end{align*}
The eigen-value property is obvious from \eqref{eigen}.
\end{proof}
For the second kind functions we have

\begin{pro}\label{pro4}
 For $k$ distinct labels $a_1,\dots, a_k\in\{1,\dots,D\}$,  $a_i\neq a_j$ for $i\neq j$, and $k$ complex numbers  $q_{a_1},\dots,q_{a_k}\in\C$ we have
  \begin{align*}
   \Big[    \prod_{i=1}^k (J_{a_i}-q_{a_i})\Big]C\coloneq & \Big[\prod_{i=1}^k(x_{a_i}-q_{a_i})\Big]C+(-1)^k\widehat C_{a_{1},\dots,a_{k}}\\
    &+\sum_{j=1}^{k-1}\frac{(-1)^j}{(k-j)!j!}\sum_{\sigma\in \mathfrak S_k}\Big(\Big[\prod_{i=j+1}^k\big(x_{a_\sigma(i)}-q_{a_{\sigma(i)}}\big)\Big]
    \widehat C_{a_{\sigma(1)},\dots,a_{\sigma(j)}}\Big),
  \end{align*}
\end{pro}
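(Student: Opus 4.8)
The plan is to \emph{dress} the corresponding statement for $\chi^*$ and the transposed shift matrices, namely Proposition \ref{pro3}, by conjugating with $S$ and then passing the resulting operators through the moment matrix $G$ by means of its symmetry \eqref{eq:symmetry}. The whole argument reduces the assertion about $C$ and the $J_{a_i}$ to the already-established assertion about $\chi^*$ and the $\Lambda_{a_i}^\top$.

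First I would record the elementary dressing identity. Since $J_a=S\Lambda_a S^{-1}$ by \eqref{def:J} and the $q_{a_i}$ are scalars, each factor satisfies $J_{a_i}-q_{a_i}=S(\Lambda_{a_i}-q_{a_i})S^{-1}$, so the product telescopes to
\begin{align*}
\prod_{i=1}^k (J_{a_i}-q_{a_i})=S\Big[\prod_{i=1}^k (\Lambda_{a_i}-q_{a_i})\Big]S^{-1}.
\end{align*}
Recalling that $C=S\Gamma=SG\chi^*$ (and hence $S^{-1}C=G\chi^*$), applying this operator to $C$ gives $\prod_{i=1}^k (J_{a_i}-q_{a_i})C=S\big[\prod_{i=1}^k (\Lambda_{a_i}-q_{a_i})\big]G\chi^*$. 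Next I would push the whole product through $G$, moving factors one at a time from the right and using $\Lambda_{a_i}G=G\Lambda_{a_i}^\top$ from \eqref{eq:symmetry} together with $q_{a_i}G=Gq_{a_i}$; since the commuting matrices $\Lambda_{a_i}^\top$ may be reordered freely this produces $\big[\prod_{i=1}^k (\Lambda_{a_i}-q_{a_i})\big]G=G\big[\prod_{i=1}^k(\Lambda_{a_i}^\top-q_{a_i})\big]$, whence
\begin{align*}
\prod_{i=1}^k (J_{a_i}-q_{a_i})C=SG\Big[\prod_{i=1}^k(\Lambda_{a_i}^\top-q_{a_i})\Big]\chi^*.
\end{align*}

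Now I would substitute into the bracket the explicit expansion of $\prod_{i=1}^k(\Lambda_{a_i}^\top-q_{a_i})\chi^*$ furnished by Proposition \ref{pro3}, and apply $SG$ term by term. The scalar prefactors $\prod_{i=1}^k(x_{a_i}-q_{a_i})$ and $\prod_{i=j+1}^k(x_{a_{\sigma(i)}}-q_{a_{\sigma(i)}})$ commute with the constant matrix $SG$ and pull out; using $SG\chi^*=C$ the leading term becomes $\prod_{i=1}^k(x_{a_i}-q_{a_i})C$, while each limit term $\lim(\prod x_{a}\,\chi^*)$ becomes the corresponding $\lim(\prod x_{a}\,C)=\widehat C_{\dots}$ by the defining formula \eqref{hatsecond}. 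Matching the three groups of terms reproduces the stated identity exactly, the summand $(-1)^k\widehat C_{a_1,\dots,a_k}$ arising from the full $k$-fold limit and the sum over $\mathfrak S_k$ from the intermediate $j$-fold limits.

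The only genuine point requiring care — and the main obstacle — is the interchange of the semi-infinite left multiplication by $SG$ with the limits $x_{a_i}\to\infty$, since each component of $C$ is an infinite $D$-fold Laurent series rather than a finite sum. I would dispose of it exactly as in the proof of the reduced Cauchy-transform identity \eqref{cauchyintegral}: writing $SG\chi^*=S\Gamma$ with $S$ lower unitriangular, hence a finite sum in each block row, reduces the question to interchanging the limits with the integral representation $\Gamma_{\q}(\x)=\int_\Omega \y^{\q}/\prod_a(x_a-y_a)\,\dd\mu(\y)$, which is legitimate by dominated convergence on the polyannulus of convergence described before Proposition \ref{cauchy}. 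All the remaining manipulations are the purely formal, per-row finite algebra already used for Proposition \ref{pro3}.
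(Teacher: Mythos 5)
Your proposal is correct and follows essentially the same route as the paper's proof: write the product of $(J_{a_i}-q_{a_i})$ as $S\prod(\Lambda_{a_i}-q_{a_i})S^{-1}$, use the Cholesky factorization to identify $S^{-1}C=G\chi^*$, pass the product through $G$ via the symmetry \eqref{eq:symmetry}, and then invoke Proposition \ref{pro3} together with $SG\chi^*=C$ and the definition \eqref{hatsecond} of the reduced functions. The only difference is that you make explicit the dominated-convergence justification for interchanging the semi-infinite matrix multiplication with the limits $x_{a_i}\to\infty$, a point the paper leaves implicit.
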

\begin{proof}
See Appendix \ref{proof4}.
\end{proof}
From these properties we easily conclude that
\begin{pro}\label{explicit jacobi}
  The explicit form of the basic Jacobi matrices is
  \begin{align*}
J_{a}&= \PARENS{\begin{matrix}
 (J_a)_{[0],[0]}  & (J_a)_{[0],[1]}   &            0              &         0                  &          0&  \cdots         &\\
(J_a)_{[1],[0]}    & (J_a)_{[1],[1]}        &  (J_a)_{[1],[2]}    &   0                 &  0& \cdots\\
              0       &   (J_a)_{[2],[1]} &   (J_a)_{[2],[2]}      &   (J_a)_{[2],[3]}       &      0              &\cdots\\
               0      &     0                &  (J_a)_{[3],[2]}       &   (J_a)_{[3],[3]}       & (J_a)_{[3],[4]}  &\\
                 \vdots    &  \vdots                   &             \quad\quad  \quad \ddots          &      \quad\quad  \quad\ddots      &   \quad\quad  \quad\ddots            &
 \end{matrix}},
\end{align*}
where
\begin{align*}
 (J_a)_{[0],[0]}&=-(\Lambda_a)_{[0],[1]} \beta_{[1]} &
 (J_a)_{[0],[1]}&=(\Lambda_a)_{[0],[1]} \\
\end{align*}
and
\begin{align*}
 (J_a)_{[k],[k-1]}&=H_{[k]}\left[(\Lambda_a)_{[k-1],[k]} \right]^\top \left(H_{[k-1]}\right)^{-1},  \\
 (J_a)_{[k],[k]}&=\beta_{[k]} (\Lambda_a)_{[k-1],[k]}-(\Lambda_a)_{[k],[k+1]}\beta_{[k+1]} ,\\
 (J_a)_{[k],[k+1]}&=(\Lambda_a)_{[k],[k+1]}.
\end{align*}
\end{pro}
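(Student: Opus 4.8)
The plan is to exploit the factorized form $J_a = S\Lambda_a S^{-1}$ from \eqref{def:J} together with the symmetry $J_a^{\top} = H^{-1} J_a H$ recorded in \eqref{pro9}. First I would pin down the band structure. Since $S$ and $S^{-1}$ are block lower unitriangular and $\Lambda_a$ carries only the superdiagonal blocks $(\Lambda_a)_{[k],[k+1]}$, a direct inspection of the block products shows that $S\Lambda_a$ has nonzero blocks only in positions $[k],[\ell]$ with $\ell \le k+1$, namely $(S\Lambda_a)_{[k],[\ell]} = S_{[k],[\ell-1]}(\Lambda_a)_{[\ell-1],[\ell]}$, and multiplying on the right by the lower triangular $S^{-1}$ preserves this, so $J_a$ is block lower Hessenberg, i.e. $(J_a)_{[k],[\ell]} = 0$ for $\ell > k+1$. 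Transposing, $J_a^{\top}$ is block upper Hessenberg, vanishing for $\ell < k-1$; but conjugation by the block diagonal $H$ preserves the lower Hessenberg shape, so the identity $J_a^{\top} = H^{-1} J_a H$ forces $J_a$ to vanish also for $\ell < k-1$. Hence $J_a$ is block tridiagonal, which is the claimed three-diagonal skeleton.

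Next I would read off the three surviving diagonals. For the superdiagonal, in the product $J_a = (S\Lambda_a)S^{-1}$ the only contribution to the $[k],[k+1]$ block comes from $(S\Lambda_a)_{[k],[k+1]}(S^{-1})_{[k+1],[k+1]}$, and using $S_{[k],[k]} = \I$ and $(S^{-1})_{[k+1],[k+1]} = \I$ this collapses to $(J_a)_{[k],[k+1]} = (\Lambda_a)_{[k],[k+1]}$. For the diagonal, the $[k],[k]$ block receives contributions only from the summation indices $k$ and $k+1$: the first gives $S_{[k],[k-1]}(\Lambda_a)_{[k-1],[k]} = \beta_{[k]}(\Lambda_a)_{[k-1],[k]}$, while the second gives $(\Lambda_a)_{[k],[k+1]}(S^{-1})_{[k+1],[k]}$. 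Here I would use the relation $(S^{-1})_{[k+1],[k]} = -\beta_{[k+1]}$, which follows from reading the $[k+1],[k]$ block of $S S^{-1} = \I$ together with $\beta_{[k+1]} = S_{[k+1],[k]}$; this yields $(J_a)_{[k],[k]} = \beta_{[k]}(\Lambda_a)_{[k-1],[k]} - (\Lambda_a)_{[k],[k+1]}\beta_{[k+1]}$, with the convention that the $\beta_{[0]}$ term is absent, reproducing the special cases $(J_a)_{[0],[0]} = -(\Lambda_a)_{[0],[1]}\beta_{[1]}$ and $(J_a)_{[0],[1]} = (\Lambda_a)_{[0],[1]}$.

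Finally, rather than computing the subdiagonal directly, I would extract it from the symmetry. Taking the $[k],[k-1]$ block of $J_a^{\top} = H^{-1} J_a H$ and using that the block diagonal $H$ acts as $H_{[k]}^{-1}(\,\cdot\,)H_{[k-1]}$ on that block gives $\big((\Lambda_a)_{[k-1],[k]}\big)^{\top} = H_{[k]}^{-1}(J_a)_{[k],[k-1]}H_{[k-1]}$, where I have used the already established superdiagonal value $(J_a)_{[k-1],[k]} = (\Lambda_a)_{[k-1],[k]}$ together with the block transposition rule $(J_a^{\top})_{[k],[k-1]} = \big((J_a)_{[k-1],[k]}\big)^{\top}$. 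Solving for the subdiagonal block gives $(J_a)_{[k],[k-1]} = H_{[k]}\big((\Lambda_a)_{[k-1],[k]}\big)^{\top} (H_{[k-1]})^{-1}$, as claimed. The only genuinely delicate point is the bookkeeping of which block indices survive in each product and, relatedly, getting the transpose placement right in the subdiagonal formula; everything else is routine block multiplication.
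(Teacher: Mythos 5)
Your proof is correct and follows exactly the route the paper intends: the paper states this proposition with no written proof beyond ``From these properties we easily conclude that,'' the properties being the definition $J_a = S\Lambda_a S^{-1}$ in \eqref{def:J} and the symmetry $J_a^{\top}=H^{-1}J_aH$ of \eqref{pro9}, which is precisely the combination you use (Hessenberg shape from the factorization, tridiagonality from the symmetry, and the subdiagonal block extracted from the symmetry after computing the diagonal and superdiagonal directly). Your bookkeeping of the block products, the identity $(S^{-1})_{[k+1],[k]}=-\beta_{[k+1]}$, and the transpose placement in $(J_a)_{[k],[k-1]}=H_{[k]}\big[(\Lambda_a)_{[k-1],[k]}\big]^{\top}(H_{[k-1]})^{-1}$ are all accurate.
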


From \eqref{def:J} and \eqref{pro9} it is easy to see that the $J_{\kk}$ only have $(2|\kk|+1)$ block diagonals that do not vanish. These
are the $|\kk|$ first block superdiagonals, the diagonal itself and $|\kk|$ first block subdiagonals.

\begin{definition}\label{vectores}
We introduce the following objects
\begin{align*}
 \boldsymbol \Lambda&\coloneq (\Lambda_1,\dots,\Lambda_D)^\top,& \boldsymbol J&\coloneq (J_1,\dots,J_D)^\top,&
 \boldsymbol{\widehat C}&\coloneq (\widehat C_{1},\dots,\widehat C_{D})^\top.
\end{align*}
and given any vector $\boldsymbol n=(n_1,\dots,n_D)^\top\in\R^D$ we define the following \emph{dot products}
\begin{align*}
  \boldsymbol n\cdot\boldsymbol\Lambda&\coloneq \sum_{a=1}^Dn_a\Lambda_a, &
    \boldsymbol n\cdot\boldsymbol J&\coloneq \sum_{a=1}^Dn_a J_a.
\end{align*}
\end{definition}
The celebrated three term relations \cite{Dunkl}  in the multivariate context are
\begin{pro}
The MVOPR satisfy  the following three term  relations\footnote{Observe that for $k=0$ we get
  \begin{align*}
   \boldsymbol n\cdot\boldsymbol x+(\boldsymbol n\cdot\boldsymbol \Lambda)_{[0],[1]} \beta_{[1]}&=(\boldsymbol n\cdot\boldsymbol \Lambda)_{[0],[1]}P_{[1]}(\x).
 \end{align*}
which agrees with the definition of $P$ in terms of the factorization matrix, this is, $P_{[1]}(\x)^\top=(x_1,\dots,x_D)+\beta_{[1]}^\top$.}
 \begin{multline}\label{eq: recursion MVORP}
 ( \boldsymbol n\cdot\boldsymbol  x)P_{[k]}=H_{[k]}(\boldsymbol n\cdot\boldsymbol \Lambda)_{[k-1],[k]}^\top H_{[k-1]}^{-1} P_{[k-1]}+\big(\beta_{[k]} (\boldsymbol n\cdot\boldsymbol \Lambda)_{[k-1],[k]}-(\Lambda_a)_{[k],[k+1]}\beta_{[k+1]}\big)P_{[k]}+
(\boldsymbol n\cdot\boldsymbol \Lambda)_{[k],[k+1]}P_{[k+1]},
 \end{multline}
for $k=1,2,\dots$.
 The second kind functions satisfy\footnote{For $k=0$ we get
 \begin{align*}
   (x_a+(\Lambda_a)_{[0],[1]} \beta_{[1]})C_{[0]}(\x)&=(\Lambda_a)_{[0],[1]}C_{[1]}(\x)+\hat C_{[0],a}(\x).
 \end{align*}}
  \begin{multline}\label{eq: recursion Cauchy}
  (\boldsymbol n\cdot\boldsymbol x)C_{[k]}=H_{[k]}(\boldsymbol n\cdot\boldsymbol \Lambda)_{[k-1],[k]}^\top H_{[k-1]}^{-1} C_{[k-1]}+\big(\beta_{[k]} (\boldsymbol n\cdot\boldsymbol \Lambda)_{[k-1],[k]}-(\boldsymbol n\cdot\boldsymbol \Lambda)_{[k],[k+1]}\beta_{[k+1]}\big)C_{[k]}\\
  +(\boldsymbol n\cdot\boldsymbol \Lambda)_{[k],[k+1]}C_{[k+1]}-\boldsymbol n\cdot\boldsymbol {\widehat C}_{[k]},
 \end{multline}
 for $k=1,2,\dots$.
\end{pro}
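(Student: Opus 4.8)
The plan is to read off both three term relations as the $[k]$-th block row of a single global matrix identity, exploiting the block-tridiagonal structure of the Jacobi matrices established in Proposition \ref{explicit jacobi}. Since every term in \eqref{eq: recursion MVORP} and \eqref{eq: recursion Cauchy} is linear in $\boldsymbol n$, it suffices to work with the contracted operator $\boldsymbol n\cdot\boldsymbol J=\sum_{a=1}^D n_a J_a$, which inherits from Proposition \ref{explicit jacobi} the very same block-tridiagonal shape as each basic Jacobi matrix $J_a$.

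For the polynomials I start from the eigenvalue property \eqref{pro9}, namely $J_a P=x_a P$ for each $a=1,\dots,D$. Multiplying by $n_a$ and summing gives the compact identity $(\boldsymbol n\cdot\boldsymbol J)P=(\boldsymbol n\cdot\boldsymbol x)P$. Because $\boldsymbol n\cdot\boldsymbol J$ is block-tridiagonal, its $[k]$-th block row couples only $P_{[k-1]}$, $P_{[k]}$ and $P_{[k+1]}$, and reads
\begin{align*}
(\boldsymbol n\cdot\boldsymbol J)_{[k],[k-1]}P_{[k-1]}+(\boldsymbol n\cdot\boldsymbol J)_{[k],[k]}P_{[k]}+(\boldsymbol n\cdot\boldsymbol J)_{[k],[k+1]}P_{[k+1]}=(\boldsymbol n\cdot\boldsymbol x)P_{[k]}.
\end{align*}
Substituting the explicit entries of Proposition \ref{explicit jacobi}, which by linearity in $\boldsymbol n$ read $(\boldsymbol n\cdot\boldsymbol J)_{[k],[k-1]}=H_{[k]}(\boldsymbol n\cdot\boldsymbol\Lambda)_{[k-1],[k]}^\top H_{[k-1]}^{-1}$, $(\boldsymbol n\cdot\boldsymbol J)_{[k],[k]}=\beta_{[k]}(\boldsymbol n\cdot\boldsymbol\Lambda)_{[k-1],[k]}-(\boldsymbol n\cdot\boldsymbol\Lambda)_{[k],[k+1]}\beta_{[k+1]}$ and $(\boldsymbol n\cdot\boldsymbol J)_{[k],[k+1]}=(\boldsymbol n\cdot\boldsymbol\Lambda)_{[k],[k+1]}$, then produces \eqref{eq: recursion MVORP} directly; the $k=0$ footnote case follows from the same reading using the boundary entries $(J_a)_{[0],[0]}$ and $(J_a)_{[0],[1]}$.

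For the second kind functions the same block-row reading applies, but now $C$ is \emph{not} a genuine eigenvector of $\boldsymbol n\cdot\boldsymbol J$. Instead I invoke Proposition \ref{pro4} in its simplest instance ($k=1$, $q_a=0$), which gives $J_a C=x_a C-\widehat C_a$; contracting with $\boldsymbol n$ yields $(\boldsymbol n\cdot\boldsymbol J)C=(\boldsymbol n\cdot\boldsymbol x)C-\boldsymbol n\cdot\boldsymbol{\widehat C}$. Reading the $[k]$-th block row with exactly the same Jacobi entries then delivers \eqref{eq: recursion Cauchy}, the inhomogeneous term $-\boldsymbol n\cdot\boldsymbol{\widehat C}_{[k]}$ being precisely the block-$[k]$ component of the correction supplied by Proposition \ref{pro4}.

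The hard part is this inhomogeneous correction. Whereas the polynomial recursion is a clean consequence of the eigenvalue equation, the second kind functions pick up the reduced functions $\widehat C_a$ because $\Lambda_a^\top$ does not act diagonally on $\chi^*$: by \eqref{lambdaTchi} together with Proposition \ref{pro3} it produces a shift accompanied by a projection $\Pi_a$ and by boundary contributions at infinity. The delicate point is to track these boundary pieces and verify that they assemble exactly into $\widehat C_a$ rather than into spurious lower-order terms. This is exactly what Proposition \ref{pro4} organizes, so once that result is in hand the verification reduces to its $k=1$ specialization and a mechanical substitution of the Jacobi entries.
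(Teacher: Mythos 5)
Your proof is correct and follows exactly the route the paper takes: the paper's own (very terse) proof cites precisely the eigenvalue property \eqref{eigen}/\eqref{pro9}, Proposition \ref{pro4} in its $k=1$ instance, and the explicit block-tridiagonal form of Proposition \ref{explicit jacobi}, which are the three ingredients you contract with $\boldsymbol n$ and read off block row by block row. Your write-up simply makes explicit the substitution of the Jacobi entries that the paper calls "immediate."
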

\begin{proof}
  They are an immediate consequence of \eqref{eigen} and Propositions \ref{pro4}, for $k=1$,  and \ref{explicit jacobi}.
\end{proof}
\subsection{Christoffel--Darboux formul{\ae}}
\begin{definition}\label{DEFCD}
 The Christoffel--Darboux  kernels are
 \begin{align*}
  K^{(\ell)}(\x,\y)&\coloneq \left(\chi^{[\ell]}(\x)\right)^{\top} \left(G^{[\ell]} \right)^{-1}\chi^{[\ell]}(\y)=
  \sum_{k=0}^{\ell-1}\left(P_{[k]}(\x)\right)^\top \left(H_{[k]} \right)^{-1}P_{[k]}(\y),
  \end{align*}
while the second kind Christoffel--Darboux kernels are given by
\begin{align*}
 Q^{(\ell)}(\x,\y)&\coloneq \left(\big(\chi^*\big)^{[\ell]}(\x)\right)^{\top} \left(G^{[\ell]} \right)\big(\chi^*\big)^{[\ell]}(\y)=
  \sum_{k=0}^{\ell-1}\left(C_{[k]}(\x)\right)^\top \left(H_{[k]} \right)^{-1}C_{[k]}(\y).
 \end{align*}
\end{definition}
The Christoffel--Darboux kernel $K^{(\ell+1)}(\x,\y)$ gives the projection, $S_\ell:L^2(\R^D,\mu)\to L^2(\R^D,\mu)$, on the set of MOVPR of degree $\ell$ or less, given any vector in the real Hilbert space $f\in L^2(\R^D,\mu)$ its orthogonal projection is given by the truncated Fourier series
$S_{\ell}(f)(\x)=\int_\Omega K^{(\ell+1)}(\x,\y) f(\y)\dd\mu(\y)$. In fact, if $P$ is a MOVPR of degree $\ell$ or less then $P=S_{\ell}(P)$. and  $S_{\ell}\circ S_\ell=S_\ell$;   these kernels are subject to the \emph{reproducing property} $K^{(\ell+1)}(\x,\y)=\int_\Omega K^{(\ell+1)}(\x,\boldsymbol z)\dd\mu(\z)K^{(\ell+1)}(\boldsymbol z,\y)$. This projection is the best approximation to $f$ with MOVPR of degree $\ell$ or less, in the sense that the mean square distance,  $\int_\Omega (f(\x)-S_\ell(f)(\x))^2\dd\mu(\x)$, is minimized and all the other polynomials of  degree $\ell$ or less have a bigger mean square distance to $f$. When the space of MVOPR is dense in $L^2(\R^D,\mu)$ then the Fourier series converge in the mean square distance to $f$, $\lim_{\ell\to\infty}\int_\Omega (f(\x)-S_{\ell}(f)(\x))^2\dd\mu(\x)=0$. For more information see \S 3.5 of \cite{Dunkl}.

From  Definition \ref{DEFCD} it directly follows that
\begin{pro}
  In terms of Schur complements or quasi-determinants of bordered truncated moment matrices the Christoffel--Darboux kernel is expressed as
  \begin{align*}
 K^{[\ell]}(\x,\y)&=\operatorname{SC}\PARENS{\begin{array}{ccc|c}
       G_{[0],[0]}  & \cdots & G_{[0],[\ell-1]} & \chi_{[0]}(\y)\\
           \vdots      &       &            \vdots   &  \vdots\\
       G_{[\ell-1],[0]} & \cdots & G_{[\ell-1],[\ell-1]} & \chi_{[\ell-1]}(\y)\\[2pt]
\hline
           \bigstrut[t]  \chi_{[0]}^{\top}(\x) & \cdots & \chi_{[\ell-1]}^{\top}(\x) & 0
      \end{array}}
      =\Theta_*\PARENS{\begin{matrix}
       G_{[0],[0]}  & \cdots & G_{[0],[\ell-1]} & \chi_{[0]}(\y)\\
           \vdots      &       &            \vdots   &  \vdots\\
       G_{[\ell-1],[0]} & \cdots & G_{[\ell-1],[\ell-1]} & \chi_{[\ell-1]}(\y)\\ \bigstrut[t]
            \chi_{[0]}^{\top}(\x) & \cdots & \chi_{[\ell-1]}^{\top}(\x) & 0
      \end{matrix}}.
\end{align*}
while the second kind Christoffel--Darboux kernel have the following one
\begin{align*}
 Q^{[\ell]}(\x,\y)&=\operatorname{SC}\PARENS{\begin{array}{ccc|c}
       (G^{-1})_{[0],[0]}  & \cdots & (G^{-1})_{[0],[\ell-1]} & \chi^*_{[0]}(\y)\\
           \vdots      &       &            \vdots   &  \vdots\\
       (G^{-1})_{[\ell-1],[0]} & \cdots & (G^{-1})_{[\ell-1],[\ell-1]} & \chi^*_{[\ell-1]}(\y)\\[1pt]
\hline
 \bigstrut[t]    \bigstrut[t]  (\chi^*_{[0]})^{\top}(\x) & \cdots & (\chi^*_{[\ell-1]})^{\top}(\x) & 0
      \end{array}}\\&=\Theta_*\PARENS{\begin{matrix}
       (G^{-1})_{[0],[0]}  & \cdots & (G^{-1})_{[0],[\ell-1]} & \chi^*_{[0]}(\y)\\
           \vdots      &       &            \vdots   &  \vdots\\
       (G^{-1})_{[\ell-1],[0]} & \cdots & (G^{-1})_{[\ell-1],[\ell-1]} & \chi^*_{[\ell-1]}(\y)\\
            (\chi^*_{[0]})^{\top}(\x) & \cdots & (\chi^*_{[\ell-1]})^{\top}(\x) & 0
      \end{matrix}}.
\end{align*}
\end{pro}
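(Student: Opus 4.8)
The plan is to treat both kernels uniformly, exploiting that each is a sum $\sum_{k=0}^{\ell-1} f_{[k]}(\x)^\top H_{[k]}^{-1} f_{[k]}(\y)$ with $f=P$ for the first kernel and $f=C$ for the second, and that in both cases $f=Sv$ for a suitable ``bare'' vector $v$, namely $v=\chi$, respectively $v=\Gamma=G\chi^*$ via the identity $C=S\Gamma$. Since $S$ is lower unitriangular, truncation commutes with multiplication and with inversion, so that $f^{[\ell]}=S^{[\ell]}v^{[\ell]}$ and $(G^{[\ell]})^{-1}=(S^{[\ell]})^\top (H^{[\ell]})^{-1} S^{[\ell]}$, where $H^{[\ell]}=\diag(H_{[0]},\dots,H_{[\ell-1]})$. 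Substituting, each sum collapses to the single quadratic form $(v^{[\ell]}(\x))^\top (G^{[\ell]})^{-1} v^{[\ell]}(\y)$. The concluding step is the elementary observation that for an invertible block $A$ the Schur complement of $A$ in $\begin{psmallmatrix} A & b\\ c^\top & 0\end{psmallmatrix}$ equals $-c^\top A^{-1} b$; taking $A=G^{[\ell]}$, $b=v^{[\ell]}(\y)$ and $c^\top=(v^{[\ell]}(\x))^\top$ identifies the quadratic form with such a Schur complement, and the quasi-determinant expression then follows from the identity $\SC=\Theta_*$ recorded in Appendix \ref{qd} (Olver's Theorem~3 of \cite{olver}, already invoked for Proposition \ref{qd1}).

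For $K^{[\ell]}$ this is immediate and exact. Here $v=\chi$, $f=P$, and $P^{[\ell]}=S^{[\ell]}\chi^{[\ell]}$ holds with no remainder because $P=S\chi$ is built from finite sums. Thus $K^{(\ell)}(\x,\y)=(\chi^{[\ell]}(\x))^\top (G^{[\ell]})^{-1}\chi^{[\ell]}(\y)$, which is precisely the Schur complement of $G^{[\ell]}$ in the bordered moment matrix whose last column and row are $\chi^{[\ell]}(\y)$ and $(\chi^{[\ell]}(\x))^\top$ and whose corner is $0$; I would only need to check the sign orientation adopted for $\SC$ and $\Theta_*$ so that the corner-$0$ Schur complement is recorded with the correct convention. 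The quasi-determinant statement is then the same assertion read through $\SC=\Theta_*$.

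For $Q^{[\ell]}$ the same mechanism applies once $C=S\Gamma$ with $\Gamma=G\chi^*$ is used, giving $Q^{(\ell)}(\x,\y)=(\Gamma^{[\ell]}(\x))^\top (G^{[\ell]})^{-1}\Gamma^{[\ell]}(\y)$; rewriting $\Gamma=G\chi^*$ and passing to the dual factorization $G^{-1}=S^\top H^{-1}S$ is what converts the borders into $\chi^*$ and the pivot block into the truncation of $G^{-1}$, as stated. The hard part is exactly this conversion: unlike $P$, the components $C_{\q_i}$ and $\Gamma_{\q_i}$ are genuine infinite $D$-fold Laurent series, so the naive truncation identity $\Gamma^{[\ell]}=G^{[\ell]}(\chi^*)^{[\ell]}$ fails because the omitted blocks with index $\ge\ell$ still contribute, and, dually, truncation does not commute with inversion on the upper-triangular side, so $((G^{-1})^{[\ell]})^{-1}\ne G^{[\ell]}$ in general. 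I would therefore carry out this step only after fixing the domain of convergence, using the polyannulus and the absolute and uniform convergence results of Appendix \ref{scv} to justify manipulating the tails term by term. This is where the analysis in several complex variables enters, and it is the genuine obstacle, the algebra itself being identical to the $K$ case.
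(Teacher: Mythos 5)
Your $K^{[\ell]}$ argument is correct and is in substance the paper's own (one-line) proof: Definition \ref{DEFCD} already records $K^{(\ell)}(\x,\y)=\big(\chi^{[\ell]}(\x)\big)^{\top}\big(G^{[\ell]}\big)^{-1}\chi^{[\ell]}(\y)$, and that quadratic form is precisely the zero-corner Schur complement of the bordered truncated moment matrix, hence a quasi-determinant via $\SC=\Theta_*$. Your reduction of the sum $\sum_{k<\ell}P_{[k]}^\top H_{[k]}^{-1}P_{[k]}$ to this quadratic form through $P^{[\ell]}=S^{[\ell]}\chi^{[\ell]}$ and $\big(G^{[\ell]}\big)^{-1}=\big(S^{[\ell]}\big)^{\top}\big(H^{[\ell]}\big)^{-1}S^{[\ell]}$ is sound, since all truncations there respect triangularity; the sign caveat you flag is real but is a convention issue in the paper itself (a zero-corner $\SC$ produces $-K$).

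The $Q^{[\ell]}$ half has a genuine gap, and the tool you propose cannot close it. From $\sum_{k<\ell}C_{[k]}^\top H_{[k]}^{-1}C_{[k]}$ you correctly reach $\big(\Gamma^{[\ell]}(\x)\big)^{\top}\big(G^{[\ell]}\big)^{-1}\Gamma^{[\ell]}(\y)$, but the conversion you then need, namely $\Gamma^{[\ell]}=G^{[\ell]}\big(\chi^*\big)^{[\ell]}$ together with $\big((G^{-1})^{[\ell]}\big)^{-1}=G^{[\ell]}$, fails for algebraic, not analytic, reasons: the discrepancy $\Gamma^{[\ell]}-G^{[\ell]}\big(\chi^*\big)^{[\ell]}$ is exactly the tail $\sum_{j\geq\ell}G_{[i],[j]}\chi^*_{[j]}$ ($i<\ell$), which is generically nonzero no matter how nicely it converges; the several-complex-variables material of Appendix \ref{scv} makes such tails well defined, it never makes them vanish. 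Worse, your starting point itself is a dead end: the truncated sum over the $C_{[k]}$ genuinely contains contributions from block indices $\geq\ell$ (already visible for $D=1$, $\ell=1$, where it equals $C_0(x)C_0(y)/H_0$ rather than $H_0/(xy)$), so it does not coincide with the bordered expression to be proven, and no correct argument can begin there. The paper's proof avoids this entirely: it reads the $Q$ formula directly off the first expression in Definition \ref{DEFCD}, $Q^{(\ell)}(\x,\y)\coloneq\big((\chi^*)^{[\ell]}(\x)\big)^{\top}G^{[\ell]}\big(\chi^*\big)^{[\ell]}(\y)$, with the pivot blocks $(G^{-1})_{[i],[j]}$ of the bordered matrix understood as the blocks of $\big(G^{[\ell]}\big)^{-1}$, the inverse of the truncated moment matrix (available from the truncated Cholesky factors), not as truncations of the full inverse $G^{-1}$. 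With that reading the Schur complement is immediately $-\big((\chi^*)^{[\ell]}(\x)\big)^{\top}G^{[\ell]}\big(\chi^*\big)^{[\ell]}(\y)$, i.e. the statement up to the same sign convention, and $\SC=\Theta_*$ finishes the proof. The missing idea, in short, is to work from the quadratic-form definition of $Q$ with the truncated-inverse reading of the pivot; the route through the $C$-sum cannot be repaired, and convergence analysis is the wrong category of remedy.
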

In terms of determinants we have
 \begin{align*}
 K^{[\ell]}(\x,\y)=\frac{\begin{vmatrix}
                   G_{[0],[0]}  & \cdots & G_{[0],[\ell-1]} & \chi_{[0]}(\y)\\
       \vdots            & &  \vdots             &  \vdots\\
       G_{[\ell-1],[0]}  & \cdots & G_{[\ell-1],[\ell-1]} & \chi_{[\ell-1]}(\y)\\ \bigstrut[t]
    \chi_{[0]}^{\top}(\x)  & \cdots & \chi_{[\ell-1]}^{\top}(\x) & 0
                 \end{vmatrix}
}{ \begin{vmatrix}
 G_{[0],[0]}        & \cdots & G_{[0],[\ell-1]}\\
\vdots            &          &  \vdots     \\
  G_{[\ell-1],[0]}  & \cdots & G_{[\ell-1],[\ell-1]}
                 \end{vmatrix}}.
\end{align*}
\begin{align*}
 Q^{[\ell]}(\x,\y)=\begin{vmatrix}
                   (G^{-1})_{[0],[0]}  & \cdots & (G^{-1})_{[0],[\ell-1]} & \chi^*_{[0]}(\y)\\
       \vdots            & &  \vdots             &  \vdots\\
       (G^{-1})_{[\ell-1],[0]}  & \cdots & (G^{-1})_{[\ell-1],[\ell-1]} & \chi^*_{[\ell-1]}(\y)\\ \bigstrut[t]
    (\chi^*_{[0]})^{\top}(\x)  & \cdots & (\chi^*_{[\ell-1]})^{\top}(\x) & 0
                 \end{vmatrix}
\cdot \begin{vmatrix}
 G_{[0],[0]}        & \cdots & G_{[0],[\ell-1]}\\
\vdots            &          &  \vdots     \\
  G_{[\ell-1],[0]}  & \cdots & G_{[\ell-1],[\ell-1]}
                 \end{vmatrix}.
\end{align*}

\begin{theorem}
  The following  Christoffel--Darboux formula holds
\begin{align}    \label{kerr}
  K^{(\ell)}(\x,\y) =&\frac{
  \big((\boldsymbol n\cdot\boldsymbol \Lambda)_{[\ell-1],[\ell]}P_{[\ell]}(\x)\big)^\top
  \big(H_{[\ell-1]} \big)^{-1}
  P_{[\ell-1]}(\y)-  P_{[\ell-1]}(\x)^\top \big(H_{[\ell-1]} \big)^{-1}(\boldsymbol n\cdot\boldsymbol \Lambda)_{[\ell-1],[\ell]}P_{[\ell]}(\y)
  }{\boldsymbol n\cdot(\boldsymbol {x}-\boldsymbol{y})},
  \end{align}
  while for the second kind kernel we have
  \begin{align}
                 \label{kerQQ}
  Q^{(\ell)}(\x,\y) =&\frac{\left[(\boldsymbol n\cdot\boldsymbol \Lambda)_{[\ell-1],[\ell]}C_{[\ell]}(\x)\right]^\top\left(H_{[\ell-1]} \right)^{-1}\left[C_{[\ell-1]}(\y)\right]-  \left[C_{[\ell-1]}(\x)\right]^\top \left(H_{[\ell-1]} \right)^{-1}\left[(\boldsymbol n\cdot\boldsymbol \Lambda)_{[\ell-1],[\ell]}C_{[\ell]}(\y) \right]}{\boldsymbol n\cdot(\boldsymbol {x}-\boldsymbol{y})}\\
               \nonumber     &+\frac{\big[\boldsymbol n\cdot\boldsymbol {\widehat C}^{[\ell]}(\x) \big]^{\top}\left(H^{[\ell]}\right)^{-1}C^{[\ell]}(\y)-\left[C^{[\ell]}(\x)\right]^{\top}\left(H^{[\ell]}\right)^{-1}\boldsymbol n\cdot\boldsymbol {\widehat C}^{[\ell]}(\y)}{\boldsymbol n\cdot(\boldsymbol {x}-\boldsymbol{y})}.
\end{align}
\end{theorem}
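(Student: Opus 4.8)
The plan is to prove both formulae by a single Christoffel--Darboux telescoping carried out at the level of semi-infinite matrices, letting the block-tridiagonal shape of the Jacobi matrices absorb the bookkeeping. Abbreviate $J_{\boldsymbol n}:=\boldsymbol n\cdot\boldsymbol J$ and $\Lambda_{\boldsymbol n}:=\boldsymbol n\cdot\boldsymbol\Lambda$, and let $\Pi_{[\ell]}$ denote the block projection onto the blocks $[0],\dots,[\ell-1]$, so that $\Pi_{[\ell]}H^{-1}\Pi_{[\ell]}$ is just the truncation $(H^{[\ell]})^{-1}$ of the block-diagonal matrix $H^{-1}$. By Definition \ref{DEFCD} one has $K^{(\ell)}(\x,\y)=P(\x)^\top\Pi_{[\ell]}H^{-1}\Pi_{[\ell]}P(\y)$ and $Q^{(\ell)}(\x,\y)=C(\x)^\top\Pi_{[\ell]}H^{-1}\Pi_{[\ell]}C(\y)$. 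First I would record the two inputs: the eigenvalue relation $J_{\boldsymbol n}P=(\boldsymbol n\cdot\x)P$ from \eqref{pro9}, and the symmetry $J_{\boldsymbol n}^\top=H^{-1}J_{\boldsymbol n}H$, equivalently $J_{\boldsymbol n}^\top H^{-1}=H^{-1}J_{\boldsymbol n}$, also from \eqref{pro9}.

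Next I would multiply $K^{(\ell)}$ by $\boldsymbol n\cdot(\x-\y)$ and push the scalars onto $J_{\boldsymbol n}$ on either side, using $(\boldsymbol n\cdot\x)P(\x)^\top=P(\x)^\top J_{\boldsymbol n}^\top$ and $(\boldsymbol n\cdot\y)P(\y)=J_{\boldsymbol n}P(\y)$, which produces the middle factor $J_{\boldsymbol n}^\top\Pi_{[\ell]}H^{-1}\Pi_{[\ell]}-\Pi_{[\ell]}H^{-1}\Pi_{[\ell]}J_{\boldsymbol n}$. Since the block-diagonal $H^{-1}$ commutes with $\Pi_{[\ell]}$ and $J_{\boldsymbol n}^\top H^{-1}=H^{-1}J_{\boldsymbol n}$, this factor collapses to $H^{-1}[J_{\boldsymbol n},\Pi_{[\ell]}]$. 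The crucial input now is Proposition \ref{explicit jacobi}: $J_{\boldsymbol n}$ is block-tridiagonal, so $[J_{\boldsymbol n},\Pi_{[\ell]}]$ has finite support, namely its $([\ell],[\ell-1])$ block equals $(J_{\boldsymbol n})_{[\ell],[\ell-1]}$ and its $([\ell-1],[\ell])$ block equals $-(J_{\boldsymbol n})_{[\ell-1],[\ell]}$, with everything else vanishing; in particular the resulting bilinear form in $P$ is a genuinely finite sum.

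Substituting the explicit entries $(J_{\boldsymbol n})_{[\ell],[\ell-1]}=H_{[\ell]}(\Lambda_{\boldsymbol n})_{[\ell-1],[\ell]}^\top H_{[\ell-1]}^{-1}$ and $(J_{\boldsymbol n})_{[\ell-1],[\ell]}=(\Lambda_{\boldsymbol n})_{[\ell-1],[\ell]}$ from Proposition \ref{explicit jacobi}, the two surviving terms of $P(\x)^\top H^{-1}[J_{\boldsymbol n},\Pi_{[\ell]}]P(\y)$ become $\big((\Lambda_{\boldsymbol n})_{[\ell-1],[\ell]}P_{[\ell]}(\x)\big)^\top H_{[\ell-1]}^{-1}P_{[\ell-1]}(\y)$ and $-P_{[\ell-1]}(\x)^\top H_{[\ell-1]}^{-1}(\Lambda_{\boldsymbol n})_{[\ell-1],[\ell]}P_{[\ell]}(\y)$, the $H_{[\ell]}$-factor cancelling against $H_{[\ell]}^{-1}$. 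Dividing by $\boldsymbol n\cdot(\x-\y)$ yields \eqref{kerr}.

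For $Q^{(\ell)}$ I would run the identical computation with $C$ in place of $P$, the only change being that the eigenvalue relation is replaced by Proposition \ref{pro4} in the form $J_{\boldsymbol n}C=(\boldsymbol n\cdot\x)C-\boldsymbol n\cdot\boldsymbol{\widehat C}$. The homogeneous part reproduces verbatim the first line of \eqref{kerQQ} in the second-kind functions, while the inhomogeneity $\boldsymbol n\cdot\boldsymbol{\widehat C}$, after truncation by $\Pi_{[\ell]}$, contributes the residual terms $(\boldsymbol n\cdot\boldsymbol{\widehat C}^{[\ell]}(\x))^\top(H^{[\ell]})^{-1}C^{[\ell]}(\y)-(C^{[\ell]}(\x))^\top(H^{[\ell]})^{-1}\boldsymbol n\cdot\boldsymbol{\widehat C}^{[\ell]}(\y)$, which is exactly the second line of \eqref{kerQQ}. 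I expect the main obstacle to be bookkeeping rather than conceptual: one must keep the signs and the $H$-conjugations from the block-tridiagonal entries straight so that the $H_{[\ell]}$-factors cancel, and in the second-kind case one must check that the operator manipulations stay legitimate even though $C$ is built from convergent $D$-fold Laurent series rather than polynomials. This last point is fine precisely because $[J_{\boldsymbol n},\Pi_{[\ell]}]$ and the $\boldsymbol{\widehat C}^{[\ell]}$ truncations all have finite support, so every bilinear form in play reduces to a finite sum valid on $\Ds_{\q_i}\setminus\operatorname{supp}(\dd\mu)$.
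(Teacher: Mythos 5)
Your proof is correct and is essentially the paper's own argument in different packaging: both sandwich $H^{-1}(\boldsymbol n\cdot\boldsymbol J)$ between the polynomial (or second kind) vectors and rely on the same four ingredients, namely the eigenvalue relations of \eqref{pro9}, the symmetry $(\boldsymbol n\cdot\boldsymbol J)^\top H^{-1}=H^{-1}(\boldsymbol n\cdot\boldsymbol J)$, block tridiagonality, and the explicit Jacobi entries of Proposition \ref{explicit jacobi}, with Proposition \ref{pro4} supplying the inhomogeneous term $\boldsymbol n\cdot\boldsymbol{\widehat C}$ in the second kind case. Your commutator $H^{-1}[\boldsymbol n\cdot\boldsymbol J,\Pi_{[\ell]}]$ consists of exactly the two boundary blocks that the paper produces by evaluating the truncated sandwich $(P^{[\ell]}(\x))^\top (H^{-1}\boldsymbol n\cdot\boldsymbol J)^{[\ell]} P^{[\ell]}(\y)$ acting to the right versus to the left, so the two computations coincide term by term.
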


\begin{proof}

In the first place, for the polynomials,  in the one hand we have
\begin{align*}
 (P^{[\ell]}(\x))^\top (H^{-1}\n\cdot\boldsymbol J)^{[\ell]} P^{[\ell]}(\y)&=
 (\n\cdot\y) (P^{[\ell]}(\x))^\top (H^{[\ell]})^{-1} P^{[\ell]}(\y)-
 P_{[l-1]}(\boldsymbol{x})^{\top}\left(H_{[\ell-1]}\right)^{-1}(\n\cdot\boldsymbol J)_{[\ell-1],[\ell]}P_{[l]}(\boldsymbol{y}),
 \end{align*}
 and in the other hand
 \begin{align*}
 (P^{[\ell]}(\x))^\top (H^{-1}\n\cdot\boldsymbol J)^{[\ell]} P^{[\ell]}(\y)&=
 (\n\cdot\x) (P^{[\ell]}(\x))^\top (H^{[\ell]})^{-1} P^{[\ell]}(\y)-
\big((\n\cdot\boldsymbol J)_{[\ell-1][\ell]}P_{[\ell]}(\x))^{\top}(H_{[\ell-1]})^{-1}P_{[\ell-1]}(\y).
\end{align*}
For the second kind functions we  proceed similarly.  However, we must take care of the appearing of
 the reduced second kind functions. In this case the two possibilities are
\begin{align*}
 (C^{[\ell]}(\x))^\top (H^{-1}\n\cdot\boldsymbol J )^{[\ell]} C^{[\ell]}(y)&=\begin{multlined}[t]
(\n\cdot\y)(C^{[\ell]}(\x))^\top(H^{[\ell]})^{-1}C^{[\ell]}(\y)-
 (C^{[\ell]}(\x))^\top(H^{[l]})^{-1}(\n\cdot \boldsymbol{\hat{C}}^{[\ell]}(\y))\\-
 C_{[\ell-1]}(\x)^{\top}(H_{[\ell-1]})^{-1}(\n\cdot\boldsymbol J)_{[\ell-1],[\ell]}
 C_{[\ell]}(\y)
 \end{multlined}
 \\
 (C^{[\ell]}(\x))^\top (H^{-1}\n\cdot\boldsymbol J )^{[\ell]} C^{[\ell]}(y)&=\begin{multlined}[t]
(\n\cdot\x)(C^{[\ell]}(\x))^\top(H^{[\ell]})^{-1}C^{[\ell]}(\y)-
( \n\cdot\boldsymbol{\hat{C}}^{[\ell]}(\x))^{\top}(H^{[\ell]})^{-1} C^{[\ell]}(\y)\\-
((\n\cdot\boldsymbol J)_{[\ell-1],[\ell]} C_{[\ell]}(\x))^\top(H_{[l-1]})^{-1}C_{[\ell-1]}(\y).
 \end{multlined}
\end{align*}
\end{proof}
 Observe also that \eqref{kerQQ} is not a standard Christoffel--Darboux formula because the last term involves all the reduced second kind functions. These terms are absent in the scalar case but in this multivariant scenario show up.

\section{On discrete Toda  and MOVPR}\label{3}

In this section we discuss the connection between MVOPR associated with
different measures; these collection of measures can be considered as a lattice of measures.
 The set of transformations which we are about to introduce  is not the more general one but capture the essential facts.
\subsection{The discrete flows}
  For the construction of $D$ discrete flows we consider an invertible matrix \begin{align*}
    N=(n_{a,b})_{a,b=1,\dots, D}\in\operatorname{GL}(\R^D),
  \end{align*}
   and therefore  $D$ linearly independent vectors $\n_a=(n_{a,1},\dots,n_{a,D})^\top$, and a vector $\boldsymbol q=(q_1,\dots,q_D)^\top\in\R^D$, where $q_a\neq 0$, $ a=\{1,\dots,D\}$.
 For a given measure $\dd\mu$ and each multi-index $\m=(m_1,\dots,m_D)^\top\in\Z^D$ we consider the measure
 \begin{align*}
 \dd\mu_{\m}(\x)=\Big[\prod_{a=1}^D\big(\n_a\cdot\x-q_a\big)^{m_a}\Big]\dd\mu(\x).
\end{align*}

Associated with this deformed measure we  introduce the  set
\begin{align}\label{def:R}
  R\coloneq \{\x\in\R^D: |\n_1\cdot\x|<|q_1|,\dots,|\n_D\cdot\x|<|q_D| \},
\end{align}
and the related sets $R_a\coloneq \{ \x\in\R^D: -|q_a|<\n_a\cdot\x<|q_a|\}$, $a\in\{1,\dots,D\}$. Observe that  $R=\cap_{a=1}^D R_a$ is a bounded  open convex polytope included in the ball centered at the origin of radius $\max_{a\in\{1,\dots, D\}}|q_a|$. We see that the border of $R_a$ is $\partial R_a=\pi_a^+\cup\pi_a^-$ in terms of the hyperplanes $\pi_a^\pm\coloneq \{\x\in\R^D: \n_a\cdot\x=\pm q_a\}$.
The measure $\dd\mu_{\m}$ has a definite sign in $R\cap\operatorname{supp}(\mu)$ since the hyperplane $\pi_a^+:\,\n_a\cdot\x=q_a$ belong to the border and therefore is unreachable in $R$.

As we will see later on, \S \ref{s.darboux},  these discrete flows are built up in terms of Darboux transformations. Sometimes the flows described by $m_a\to m_a+1$ are known as Christoffel transformations and those associated  with $m_a\to m_a-1$ as Geronimus transformations.

A natural question arises here: Which are the corresponding moment matrices? and the answer, given in terms of shift matrices, is fairly nice.
\begin{pro}
For a given  Borel measure $\mu$ let us assume that $\operatorname{supp}\mu\subset R$, with $R$ given in \eqref{def:R},  then the moment matrices $G(\m)$ of the measures $\dd\mu_{\m}$ satisfy
  \begin{align}\label{eq: Miwa-G}
    G(\m)=\bigg(\prod_{a=1}^D( \n_a\cdot\boldsymbol\Lambda-q_a)^{m_a}\bigg)G=G\bigg(\prod_{a=1}^D( \n_a\cdot\boldsymbol\Lambda^\top-q_a)^{m_a}\bigg).
  \end{align}
\end{pro}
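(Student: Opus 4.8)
The plan is to reduce everything to the eigenvalue property of the shift matrices in \eqref{eigen} together with the fact, recorded in Proposition \ref{pro:Lambda}, that the $\Lambda_a$ mutually commute. First I would observe that, taking $\kk=\ee_b$ in \eqref{eigen}, one has $\Lambda_b\chi(\x)=x_b\chi(\x)$, so that for each index $a$
\begin{align*}
(\n_a\cdot\boldsymbol\Lambda)\chi(\x)=\sum_{b=1}^D n_{a,b}\Lambda_b\chi(\x)=\Big(\sum_{b=1}^D n_{a,b}x_b\Big)\chi(\x)=(\n_a\cdot\x)\chi(\x),
\end{align*}
and hence each factor acts on $\chi$ as multiplication by a scalar,
\begin{align*}
(\n_a\cdot\boldsymbol\Lambda-q_a)\chi(\x)=(\n_a\cdot\x-q_a)\chi(\x).
\end{align*}

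Since $\n_a\cdot\boldsymbol\Lambda$ is strictly block upper triangular and $q_a\neq 0$, the matrix $\n_a\cdot\boldsymbol\Lambda-q_a$ is invertible; as $\m\in\Z^D$ may have negative entries, this is exactly what is needed to make the powers $(\n_a\cdot\boldsymbol\Lambda-q_a)^{m_a}$ well defined. Moreover, because $\operatorname{supp}\mu\subset R$ the scalar $\n_a\cdot\x-q_a$ never vanishes on the support, so applying the inverse to the previous identity shows that the eigenvalue relation survives inversion, and therefore
\begin{align*}
(\n_a\cdot\boldsymbol\Lambda-q_a)^{m_a}\chi(\x)=(\n_a\cdot\x-q_a)^{m_a}\chi(\x),\qquad m_a\in\Z.
\end{align*}
Using that the $\Lambda_a$ commute, the product over $a$ may be applied factor by factor, each producing a scalar, so that
\begin{align*}
\Big(\prod_{a=1}^D(\n_a\cdot\boldsymbol\Lambda-q_a)^{m_a}\Big)\chi(\x)=\Big(\prod_{a=1}^D(\n_a\cdot\x-q_a)^{m_a}\Big)\chi(\x).
\end{align*}

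With this identity the first equality is immediate: starting from $G(\m)=\int_\Omega\chi(\x)\dd\mu_{\m}(\x)\chi(\x)^\top$, I would insert the definition of $\dd\mu_{\m}$, commute the scalar weight onto $\chi(\x)$, replace it by the matrix product via the identity just displayed, and finally pull the constant matrix $\prod_{a}(\n_a\cdot\boldsymbol\Lambda-q_a)^{m_a}$ out of the integral to recover $\big(\prod_{a}(\n_a\cdot\boldsymbol\Lambda-q_a)^{m_a}\big)G$. The hypothesis $\operatorname{supp}\mu\subset R$ guarantees that all the deformed moments are finite, so these manipulations are legitimate.

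For the second equality I would transpose the first. Both $G$ and $G(\m)$ are symmetric, being moment matrices of scalar measures, and since the $\Lambda_a$ commute so do their transposes, whence
\begin{align*}
\Big(\prod_{a=1}^D(\n_a\cdot\boldsymbol\Lambda-q_a)^{m_a}\Big)^{\top}=\prod_{a=1}^D(\n_a\cdot\boldsymbol\Lambda^\top-q_a)^{m_a},
\end{align*}
the reversal of order caused by transposition being harmless. Transposing $G(\m)=\big(\prod_{a}(\n_a\cdot\boldsymbol\Lambda-q_a)^{m_a}\big)G$ then yields $G(\m)=G\big(\prod_{a}(\n_a\cdot\boldsymbol\Lambda^\top-q_a)^{m_a}\big)$; equivalently this can be obtained from the symmetry \eqref{eq:symmetry} of $G$. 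The only genuinely delicate point is the one isolated above, namely ensuring that the negative powers are meaningful and that the eigenvalue property persists under inversion, together with the routine check that $\operatorname{supp}\mu\subset R$ makes the deformed moments finite so that the constant matrix may be extracted from the integral.
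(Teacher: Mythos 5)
Your overall route is the same as the paper's: use the eigenvalue property of Proposition \ref{pro:Lambda} and handle negative $m_a$ through the inverse $(\n_a\cdot\boldsymbol\Lambda-q_a)^{-1}$, which exists as an upper triangular matrix because $q_a\neq 0$. The gap is in the step where you claim the eigenvalue relation ``survives inversion''. Your stated reason --- that $\operatorname{supp}\mu\subset R$ makes the scalar $\n_a\cdot\x-q_a$ non-vanishing on the support --- is not sufficient, and it is not where the hypothesis actually enters. The matrix $(\n_a\cdot\boldsymbol\Lambda-q_a)^{-1}$ is the Neumann series $-q_a^{-1}-q_a^{-2}(\n_a\cdot\boldsymbol\Lambda)-q_a^{-3}(\n_a\cdot\boldsymbol\Lambda)^2-\cdots$; each of its blocks is a finite expression, so the matrix itself is well defined, but its action on the vector $\chi(\x)$ produces in every entry a genuine infinite series, namely $-\sum_{j\geq0}q_a^{-(j+1)}(\n_a\cdot\x)^j$ times the corresponding entry of $\chi(\x)$. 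This geometric series converges, with sum $(\n_a\cdot\x-q_a)^{-1}$, precisely when $|\n_a\cdot\x|<|q_a|$, i.e.\ precisely on $R_a$; for $|\n_a\cdot\x|>|q_a|$ the scalar $\n_a\cdot\x-q_a$ is still non-zero, yet $(\n_a\cdot\boldsymbol\Lambda-q_a)^{-1}\chi(\x)$ is a divergent expression and the identity you want is meaningless. Put differently, ``applying the inverse to the previous identity'' hides an associativity claim, $(\n_a\cdot\boldsymbol\Lambda-q_a)^{-1}\big[(\n_a\cdot\boldsymbol\Lambda-q_a)\chi\big]=\big[(\n_a\cdot\boldsymbol\Lambda-q_a)^{-1}(\n_a\cdot\boldsymbol\Lambda-q_a)\big]\chi$, which for semi-infinite matrices holds only when the relevant series converge. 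That convergence is the real content of the hypothesis $\operatorname{supp}\mu\subset R$, and it is exactly the point the paper's proof isolates.

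A secondary point: to pull $\prod_{a=1}^D(\n_a\cdot\boldsymbol\Lambda-q_a)^{m_a}$ out of the integral when some $m_a<0$ you must interchange the defining series with $\int_\Omega\cdot\,\dd\mu$, and finiteness of the deformed moments alone does not do this. Since $\operatorname{supp}\mu$ is a closed subset of the bounded set $R$, it is compact, so $\sup_{\x\in\operatorname{supp}\mu}|\n_a\cdot\x/q_a|<1$ and the series converges uniformly on the support, which legitimizes the interchange. Calling this step ``routine'' is acceptable once the convergence issue above is repaired, but as written your proof never establishes the convergence on which both steps rest.
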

\begin{proof}
We need to be specially careful when $m_a$ is a negative integer because, if that case we are dealing with powers of the  inverse matrix of $( \n_a\cdot\boldsymbol\Lambda-q_a)$, $a\in\{1,\dots,D\}$. The request $q_a\neq 0$  ensures that $( \n_a\cdot\boldsymbol\Lambda-q_a)^{-1}$ can be formally given as the following upper triangular matrix
\begin{align*}
  ( \n_a\cdot\boldsymbol\Lambda-q_a)^{-1}=-q_a^{-1}-q_a^{-2} (\n_a\cdot\boldsymbol\Lambda)^2-q_a^{-3} (\n_a\cdot\boldsymbol\Lambda)^2-\cdots.
\end{align*}
This series is a matrix organized by superdiagonals with $ \big((\n_a\cdot\boldsymbol\Lambda-q_a)^{-1}\big)_{[k],[k+j]}=-q_a^{-(j+1)}( (\n_a\cdot\boldsymbol\Lambda)^j)_{[k],[k+j]}$ the $j$-th block in the $k$-th superdiagonal, and no series is involved for a given block; i.e., the expression is not only formal but it is well defined. However, we should also tackle the more subtle problem of the domain of these matrices. In particular, its action on $\chi$ gives $(\n_a\cdot\boldsymbol\Lambda)\chi(\x)=(\n_a\cdot\boldsymbol\x)\chi(\x)$ and corresponding  series is
$  ( \n_a\cdot\boldsymbol\Lambda-q_a)^{-1}\chi=-q_a^{-1}-q_a^{-2}(\n_a\cdot\boldsymbol\x)-q_a^{-3}(\n_a\cdot\boldsymbol\x)^2-\cdots$ which converges for
           $|\n_a\cdot\boldsymbol\x|<|q_a|$.
Recalling Proposition \ref{pro:Lambda} the result follows.
\end{proof}
\begin{definition}
\begin{enumerate}
  \item We introduce
  \begin{align*}
    W_0(\m)\coloneq \prod_{a=1}^D( \n_a\cdot\boldsymbol\Lambda-q_a)^{m_a}.
  \end{align*}
  \item The action of the  translation $T_a$ on any function $f$ on $\Z^D$ is defined by
  \begin{align*}
    (T_af)(m_1,\dots,m_a,\dots,m_D)=f(m_1,\dots,m_a+1,\dots,m_D),
  \end{align*}
and the partial difference operator is given by
  \begin{align*}
    \Delta_a\coloneq T_a-1.
  \end{align*}
  These translations depend on $N,\boldsymbol q$ and when needed we use the notation $T^{(N,\boldsymbol q)}_a$ or $T^{(\boldsymbol q)}_a$ to indicate it.
\item The MVOPR and the corresponding second kind functions associated with $\dd\mu_{\m}(\x)$ will be denoted by $P(\m,\x)$ and $C(\m,\x)$.
\item Assuming  the block Cholesky factorization for\footnote{With $S(\m)$ block lower triangular with the block diagonal populated by identities, and $H(\m)$ block diagonal.} $G(\m)$, $G(\m)=(S(\m))^{-1}H(\m)\big(S(\m)^{-1}\big)^\top$, for each $\m\in\Z^D$ we introduce the following semi-infinite matrices
 \begin{align}\label{def:Ma}
  M_a(\m)&\coloneq S(\m) \big((T_aS)(\m)\big)^{-1}.
   \end{align}
\end{enumerate}
\end{definition}

For the sake of simplicity, from hereon and when not needed we will omit writing the $\m$-dependence and it will be implicitly assumed.
\begin{pro}\label{pro:discrete G}
 The moment matrix satisfies
  \begin{align*}
    T_aG=( \n_a\cdot\boldsymbol\Lambda-q_a)G=G( (\n_a\cdot\boldsymbol\Lambda)^\top-q_a).
  \end{align*}
  \end{pro}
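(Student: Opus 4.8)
The plan is to apply the translation $T_a$ directly to the Miwa-type representation of the moment matrix furnished by \eqref{eq: Miwa-G}. Writing $W_0(\m)=\prod_{a=1}^D(\n_a\cdot\boldsymbol\Lambda-q_a)^{m_a}$ as in the definition above, equation \eqref{eq: Miwa-G} reads $G(\m)=W_0(\m)G$. Since $T_a$ replaces $\m$ by $\m+\ee_a$, I would compute $T_aG(\m)=G(\m+\ee_a)=W_0(\m+\ee_a)G$, so the entire task reduces to extracting one extra factor $(\n_a\cdot\boldsymbol\Lambda-q_a)$ from $W_0(\m+\ee_a)$.

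The key point is that all the factors building $W_0$ commute with one another. Indeed, by Proposition \ref{pro:Lambda} the shift matrices satisfy $\Lambda_b\Lambda_c=\Lambda_c\Lambda_b$ for all $b,c$, hence the linear combinations $\n_a\cdot\boldsymbol\Lambda=\sum_c n_{a,c}\Lambda_c$ commute pairwise, and so do the shifted matrices $(\n_a\cdot\boldsymbol\Lambda-q_a)$. As the commutativity is inherited by the formal upper-triangular inverses $(\n_a\cdot\boldsymbol\Lambda-q_a)^{-1}$ constructed in the proof of \eqref{eq: Miwa-G}, the whole family $\{(\n_b\cdot\boldsymbol\Lambda-q_b)^{m_b}\}_{b=1}^D$ lies in a commutative algebra irrespective of the sign of the exponents $m_b\in\Z$. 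I may therefore isolate the $a$-th factor,
\begin{align*}
  W_0(\m+\ee_a)=(\n_a\cdot\boldsymbol\Lambda-q_a)\,W_0(\m),
\end{align*}
and conclude $T_aG(\m)=(\n_a\cdot\boldsymbol\Lambda-q_a)W_0(\m)G=(\n_a\cdot\boldsymbol\Lambda-q_a)G(\m)$, which is the first asserted identity.

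For the second equality I would repeat the argument starting from the right-hand representation in \eqref{eq: Miwa-G}, namely $G(\m)=G\prod_{a=1}^D((\n_a\cdot\boldsymbol\Lambda)^\top-q_a)^{m_a}$. Taking transposes of the relations in Proposition \ref{pro:Lambda} shows that the transposed factors $((\n_b\cdot\boldsymbol\Lambda)^\top-q_b)$ again commute pairwise, so that $T_a$ now pulls one factor out on the right and yields $T_aG(\m)=G(\m)((\n_a\cdot\boldsymbol\Lambda)^\top-q_a)$. There is no genuine analytic difficulty here; the only point demanding care—and the one I would flag—is the bookkeeping for negative $m_a$, where one must verify that the factor extracted is the honest formal inverse and that it still commutes with the remaining factors, which is precisely what the commutativity of the $\Lambda_c$ in Proposition \ref{pro:Lambda} guarantees.
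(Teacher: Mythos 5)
Your proof is correct and follows essentially the same route as the paper, which simply observes that \eqref{eq: Miwa-G} can be written as $G(\m)=W_0(\m)G=G\big(W_0(\m)\big)^\top$ and then reads off the effect of $T_a$ by pulling out one factor. Your additional remarks on the pairwise commutativity of the factors (including the formal inverses for negative $m_a$) just make explicit what the paper leaves implicit via Proposition \ref{pro:Lambda}.
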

  \begin{proof}
    We observe that \eqref{eq: Miwa-G} could be written as
  \begin{align*}
    G(\m)=W_0(\m)G=G\big(W_0(\m)\big)^\top.
  \end{align*}
  \end{proof}
\begin{pro}
The matrix $M_a=S(T_aS)^{-1}$ fulfills
 \begin{align}\label{eq:Ma+alternative}
M_a&=H\big((T_aS)( \n_a\cdot\boldsymbol\Lambda-q_a)S^{-1}\big)^\top(T_aH)^{-1}.
\end{align}
Moreover, $M_a$ is a  block  lower unitriangular matrix with only the first subdiagonal different from zero; i.e., $M_a=\I+\rho_a$ with
\begin{align}
\rho_a&=H ( \n_a\cdot\boldsymbol\Lambda)^\top (T_aH)^{-1}\label{consequence}\\
&=-\Delta_a\beta. \label{ultima}
 \end{align}
\end{pro}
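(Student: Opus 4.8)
The plan is to obtain \eqref{eq:Ma+alternative} by applying the translation $T_a$ to the factorization, then to read off the band structure of $M_a$, and finally to extract the two explicit forms of the first subdiagonal $\rho_a$. First I would write \eqref{cholesky} as $SG=H(S^{-1})^\top$ and apply $T_a$, getting $(T_aS)(T_aG)=(T_aH)\big((T_aS)^{-1}\big)^\top$. Substituting Proposition \ref{pro:discrete G} in the form $T_aG=(\n_a\cdot\boldsymbol\Lambda-q_a)G$ together with $G=S^{-1}H(S^{-1})^\top$, and then multiplying on the right by $S^\top$ so that $(S^{-1})^\top S^\top=\I$, the identity collapses to
\[
(T_aS)(\n_a\cdot\boldsymbol\Lambda-q_a)S^{-1}H=(T_aH)\big(S(T_aS)^{-1}\big)^\top=(T_aH)M_a^\top .
\]
Solving for $M_a^\top$, transposing, and using that each $H_{[k]}$ is Grammian and hence $H$ and $T_aH$ are symmetric, produces exactly \eqref{eq:Ma+alternative}.

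Next I would pin down the shape of $M_a$ by combining two facts. On the one hand $M_a=S(T_aS)^{-1}$ is a product of block lower unitriangular matrices, hence itself block lower unitriangular, so its block diagonal is $\I$ and it vanishes strictly above the diagonal. On the other hand, in \eqref{eq:Ma+alternative} the inner matrix $\n_a\cdot\boldsymbol\Lambda-q_a$ carries only the block diagonal (from $-q_a$) and the first block superdiagonal (from $\n_a\cdot\boldsymbol\Lambda$, by Proposition \ref{pro:Lambda}); since $T_aS$ and $S^{-1}$ are lower triangular, the product $A\coloneq(T_aS)(\n_a\cdot\boldsymbol\Lambda-q_a)S^{-1}$ cannot acquire anything beyond the first block superdiagonal, so $A^\top$ vanishes below the first block subdiagonal, and conjugation by the block diagonal factors $H$ and $(T_aH)^{-1}$ preserves this. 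Intersecting ``vanishes above the diagonal, diagonal $=\I$'' with ``vanishes below the first subdiagonal'' leaves only the diagonal $\I$ and the first block subdiagonal, i.e.\ $M_a=\I+\rho_a$ with $\rho_a$ confined to the first subdiagonal.

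To get \eqref{consequence} I would compute the first superdiagonal block of $A$. By the triangularity of $T_aS$ and $S^{-1}$ and the band of $\n_a\cdot\boldsymbol\Lambda-q_a$, the only surviving summand in $A_{[k-1],[k]}$ uses the unit diagonal blocks $(T_aS)_{[k-1],[k-1]}=\I=(S^{-1})_{[k],[k]}$, giving $A_{[k-1],[k]}=(\n_a\cdot\boldsymbol\Lambda)_{[k-1],[k]}$; transposing and conjugating by the block diagonal $H$, $(T_aH)^{-1}$ yields $(\rho_a)_{[k],[k-1]}=H_{[k]}\big((\n_a\cdot\boldsymbol\Lambda)^\top\big)_{[k],[k-1]}(T_aH)_{[k-1]}^{-1}$, which is \eqref{consequence}. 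For \eqref{ultima} I would instead read off the first subdiagonal in the identity $M_a(T_aS)=S$: with $M_a=\I+\rho_a$ and recalling that $\beta$ and $T_a\beta$ are the subdiagonals of $S$ and $T_aS$, the $[k],[k-1]$ block gives $(T_a\beta)_{[k]}+(\rho_a)_{[k],[k-1]}=\beta_{[k]}$, whence $\rho_a=-(T_a-1)\beta=-\Delta_a\beta$.

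The main obstacle is precisely the band-structure step: neither the definition $M_a=S(T_aS)^{-1}$ alone (which only gives lower unitriangularity) nor \eqref{eq:Ma+alternative} alone (which only confines $M_a$ from below) forces $\rho_a$ onto the first subdiagonal. The key is to use both simultaneously, exploiting that the transpose of $M_a$ inherits at most one block superdiagonal directly from $\n_a\cdot\boldsymbol\Lambda$. Once this confinement is secured, the two evaluations of $\rho_a$ are routine block bookkeeping, and their mutual consistency $-\Delta_a\beta=H(\n_a\cdot\boldsymbol\Lambda)^\top(T_aH)^{-1}$ serves as a convenient check.
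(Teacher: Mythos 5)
Your proof is correct and follows essentially the same route as the paper: introduce the Cholesky factorization into the discrete evolution of the moment matrix (Proposition \ref{pro:discrete G}), solve for $M_a$ to obtain \eqref{eq:Ma+alternative}, then intersect the lower unitriangularity from \eqref{def:Ma} with the one-superdiagonal band of $(T_aS)(\n_a\cdot\boldsymbol\Lambda-q_a)S^{-1}$ to confine $\rho_a$ to the first subdiagonal and read off \eqref{consequence} and \eqref{ultima}. The only cosmetic difference is that you use the equality $T_aG=(\n_a\cdot\boldsymbol\Lambda-q_a)G$ and transpose at the end (invoking symmetry of $H$), whereas the paper works directly from the transposed form $T_aG=G\big((\n_a\cdot\boldsymbol\Lambda)^\top-q_a\big)$; your write-up also usefully makes explicit the band-structure argument that the paper leaves implicit.
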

\begin{proof}
For \eqref{eq:Ma+alternative} introduce the Cholesky factorization in the second equality in \eqref{eq: Miwa-G}, this equation has as direct consequence \eqref{consequence}. The relation \eqref{ultima} follows from  \eqref{def:Ma} and \eqref{eq:Ma+alternative}.
\end{proof}
 Componentwise we have
  \begin{align*}
\rho_a\coloneq &\PARENS{\begin{matrix}
0                                                                                      &   0                     &   0                     &                \cdots        \\
\rho_{a,[1]}   &  0& 0                       &              \cdots         \\
0                                    & \rho_{a,[2]} &0        &         & \\
         \vdots                           &      \ddots                  &            \ddots       &                         \ddots
                 \end{matrix}}
 \end{align*}
 with
 \begin{align}\label{rho1}
 \rho_{a,[k]}&\coloneq H_{[k]}[( \n_a\cdot\boldsymbol\Lambda)_{[k-1],[k]}]^\top \big( T_aH_{[k-1]}\big) ^{-1} %\in\R^{|[k]|\times|[k-1]|}
\\ &=-\Delta_a\beta_{[k]}.\label{rho2}
 \end{align}
\begin{definition}
  We introduce the wave matrices\footnote{These definitions are motivated by the relation $W_1 G= W_2$.}
  \begin{align*}
    W_1\coloneq &SW_0, & W_2\coloneq &H\big(S^{-1}\big)^\top,
    \end{align*}
    and the lattice resolvents
    \begin{align}\label{eq:def.omega}
    \omega_a\coloneq & (T_aH) M_a^\top H^{-1}, & a\in&\{1,\dots, D\}.
  \end{align}
\end{definition}
  \begin{pro}\label{pro:G_and_evolved_waves2}
The evolved wave functions $W_1$ and $W_2$ satisfy the following
\begin{align}\label{toto2}
 G=&W_1(\m)^{-1}W_2(\m).
\end{align}
\end{pro}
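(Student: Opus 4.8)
The plan is to establish the equivalent ``motivating'' identity $W_1G=W_2$ recorded in the footnote to the definition of the wave matrices, and then to deduce \eqref{toto2} by a single left multiplication. Throughout, $G=G(\boldsymbol 0)$ denotes the undeformed, $\m$-independent moment matrix, whereas $S=S(\m)$, $H=H(\m)$ and $W_0=W_0(\m)$ all carry the lattice dependence; keeping this distinction straight is the only real bookkeeping issue in the argument.

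First I would expand the left-hand side using $W_1=SW_0$, writing $W_1G=S\,(W_0G)$. The essential input is the Miwa-type relation \eqref{eq: Miwa-G}, namely $G(\m)=W_0G$; substituting it gives $W_1G=S\,G(\m)$. Next I would insert the block Cholesky factorization of the \emph{deformed} moment matrix, $G(\m)=S^{-1}H(S^{-1})^\top$, which holds for each $\m\in\Z^D$ under the standing nondegeneracy hypothesis (Proposition \ref{pro:cholesky}). This collapses the product into
\[
W_1G=S\,G(\m)=S\,S^{-1}H(S^{-1})^\top=H(S^{-1})^\top=W_2,
\]
which is exactly the desired intermediate identity $W_1G=W_2$.

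Finally, to pass from $W_1G=W_2$ to $G=W_1^{-1}W_2$ I only need the invertibility of $W_1=SW_0$. The factor $S$ is block lower unitriangular, hence invertible; and $W_0=\prod_{a=1}^D(\n_a\cdot\boldsymbol\Lambda-q_a)^{m_a}$ is invertible since each factor $\n_a\cdot\boldsymbol\Lambda-q_a$ is block upper triangular with invertible diagonal blocks $-q_a\I$ (recall $\boldsymbol\Lambda$ is strictly block upper triangular and $q_a\neq0$), the explicit inverse for negative exponents having already been constructed in the proof of \eqref{eq: Miwa-G}. Left multiplication by $W_1^{-1}$ then yields \eqref{toto2}.

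I do not expect any genuine obstacle: the whole proof is a substitution chain combining \eqref{eq: Miwa-G} with the Cholesky factorization. The single point demanding care is confirming the invertibility of $W_0$, which is precisely where the hypothesis $q_a\neq0$ is used; everything else is a formal manipulation of the semi-infinite block-triangular matrices.
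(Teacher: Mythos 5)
Your proof is correct and takes essentially the same route as the paper: both combine the relation \eqref{eq: Miwa-G}, i.e. $G(\m)=W_0(\m)G$, with the Cholesky factorization $G(\m)=S(\m)^{-1}H(\m)\big(S(\m)^{-1}\big)^\top$, the only difference being that you first establish the product identity $W_1(\m)G=W_2(\m)$ and then invert, whereas the paper applies $W_0(\m)^{-1}$ at the outset and reads off $W_1(\m)^{-1}W_2(\m)$ directly. Your explicit verification that $W_0(\m)$ (and hence $W_1(\m)$) is invertible, resting on $q_a\neq 0$ and the block-triangular structure, is left implicit in the paper but is the same underlying fact.
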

\begin{proof}
From the Cholesky factorization we deduce that
  \begin{align}\label{toto}
 G=W_0(\m)^{-1}(S(\m))^{-1}\,H(\m)\big((S(\m))^{-1}\big)^\top=W_1(\m)^{-1}W_2(\m).
\end{align}
\end{proof}
For the lattice resolvent we have
\begin{pro}
The lattice resolvent can be expressed  as
\begin{align} \label{omJ}
  \omega_a=&(T_aS)( \n_a\cdot\boldsymbol\Lambda-q_a)S^{-1}.
  \end{align}
  Moreover, we have the explicit form
  \begin{align*}
    \omega_a= \alpha_a+\n_a\cdot\boldsymbol\Lambda
  \end{align*}
  where the diagonal term have the following alternative expressions
  \begin{align}
\alpha_a =&(T_aH)H^{-1}\label{omJ2}\\
 =& (T_a\beta)( \n_a\cdot\boldsymbol\Lambda)-
 ( \n_a\cdot\boldsymbol\Lambda)\beta- q_a.\label{resolvent beta}
\end{align}
\end{pro}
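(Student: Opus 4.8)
The plan is to establish the three displayed identities in turn, each time starting from the definition $\omega_a = (T_aH) M_a^\top H^{-1}$ in \eqref{eq:def.omega} and feeding it one of the two characterizations of $M_a$ already at hand. Throughout I would use that the diagonal blocks $H_{[k]}$ are Grammian matrices by \eqref{H}, hence symmetric, so that $H^\top = H$ and $(T_aH)^\top = T_aH$.

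First I would derive \eqref{omJ}. Substituting \eqref{eq:Ma+alternative} and transposing, the symmetry of the block-diagonal factors gives
\[
M_a^\top = (T_aH)^{-1}\big((T_aS)( \n_a\cdot\boldsymbol\Lambda - q_a)S^{-1}\big)H.
\]
The outer factors $(T_aH)$ and $H^{-1}$ in $\omega_a = (T_aH)M_a^\top H^{-1}$ then telescope against $(T_aH)^{-1}$ and $H$, leaving $\omega_a = (T_aS)( \n_a\cdot\boldsymbol\Lambda - q_a)S^{-1}$.

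Next I would read off the explicit form and \eqref{omJ2} from the other description $M_a = \I + \rho_a$ with $\rho_a = H( \n_a\cdot\boldsymbol\Lambda)^\top (T_aH)^{-1}$ from \eqref{consequence}. Computing
\[
\omega_a = (T_aH)(\I + \rho_a^\top)H^{-1} = (T_aH)H^{-1} + (T_aH)\rho_a^\top H^{-1},
\]
transposing $\rho_a$ and invoking the symmetry of $H$ once more collapses the second summand to $\n_a\cdot\boldsymbol\Lambda$, so that $\omega_a = \alpha_a + \n_a\cdot\boldsymbol\Lambda$ with $\alpha_a = (T_aH)H^{-1}$.

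Finally I would verify \eqref{resolvent beta} by extracting the block-diagonal part of the right-hand side of \eqref{omJ}. Writing $S = \I + \beta + \cdots$ and $T_aS = \I + T_a\beta + \cdots$, and noting that matching first subdiagonals in $S S^{-1} = \I$ forces $(S^{-1})_{[k+1],[k]} = -\beta_{[k+1]}$ while $(S^{-1})_{[k],[k]} = \I$, I would split $\omega_a = (T_aS)( \n_a\cdot\boldsymbol\Lambda)S^{-1} - q_a (T_aS)S^{-1}$. Because $\n_a\cdot\boldsymbol\Lambda$ is purely first-superdiagonal while $T_aS$ and $S^{-1}$ are block lower unitriangular, only the intermediate indices $j=k$ and $j=k-1$ survive in the diagonal block $[k]$ of the first term---feeding $\Lambda$ either the identity $(T_aS)_{[k],[k]}$ on the left or the identity $(S^{-1})_{[k],[k]}$ on the right---which produces exactly $(T_a\beta)( \n_a\cdot\boldsymbol\Lambda) - ( \n_a\cdot\boldsymbol\Lambda)\beta$, while the diagonal of the unitriangular product $(T_aS)S^{-1}$ is the identity, contributing $-q_a$. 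The main obstacle is precisely this combinatorial bookkeeping: keeping the block index ranges straight so that only those two intermediate blocks contribute and so that $(S^{-1})_{[k+1],[k]} = -\beta_{[k+1]}$ enters with the correct sign. Everything else is telescoping algebra resting on the symmetry of the diagonal blocks $H_{[k]}$.
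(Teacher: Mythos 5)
Your proposal is correct and follows the same route as the paper's (much terser) proof: obtain \eqref{omJ} by inserting \eqref{eq:Ma+alternative} into \eqref{eq:def.omega} and telescoping via the symmetry of the block-diagonal quasi-tau matrices, obtain the explicit form and \eqref{omJ2} from $M_a=\I+\rho_a$ with \eqref{consequence}, and then read off \eqref{resolvent beta} by matching the block-diagonal part of \eqref{omJ} against $\alpha_a$. Your diagonal bookkeeping, including $(S^{-1})_{[k+1],[k]}=-\beta_{[k+1]}$ and the survival of only the two intermediate block indices, is exactly the computation the paper leaves implicit.
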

Componentwise we have
\begin{align}\label{H1}
\alpha_{a,[k]}&= (T_aH_{[k]})H_{[k]} ^{-1}
\\&=(T_a\beta_{[k]})( \n_a\cdot\boldsymbol\Lambda)_{[k-1],[k]}-
 ( \n_a\cdot\boldsymbol\Lambda)_{[k],[k+1]}\beta_{[k+1]}- q_a.\label{H2}
\end{align}
\begin{proof}
The first relation is a consequence of \eqref{eq:Ma+alternative} and  \eqref{eq:def.omega}, then \eqref{omJ} and  \eqref{omJ2} are consequences of \eqref{omJ} and \eqref{eq:def.omega}.
Finally, from \eqref{omJ} and  \eqref{omJ2} we infer \eqref{resolvent beta}.
\end{proof}
As byproduct we get
\begin{pro}\label{todabetaH}
The following equations hold
\begin{align*}
(T_aH_{[k]})H_{[k]} ^{-1}&=(T_a\beta_{[k]})( \n_a\cdot\boldsymbol\Lambda)_{[k-1],[k]}-
 ( \n_a\cdot\boldsymbol\Lambda)_{[k],[k+1]}\beta_{[k+1]}- q_a,\\
 H_{[k]}[( \n_a\cdot\boldsymbol\Lambda)_{[k-1],[k]}]^\top \big( T_aH_{[k-1]}\big) ^{-1}&=-\Delta_a\beta_{[k]}.
\end{align*}
\end{pro}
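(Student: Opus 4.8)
The plan is to read off both identities directly from the two alternative expressions already obtained for the diagonal and subdiagonal parts of the lattice resolvent $\omega_a$ and of the matrix $M_a$, so that Proposition \ref{todabetaH} is genuinely a byproduct of the preceding results rather than a new computation.

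First I would treat the diagonal identity. The lattice resolvent was shown to decompose as $\omega_a=\alpha_a+\n_a\cdot\boldsymbol\Lambda$, and its diagonal term $\alpha_a$ was given two expressions: the factorization form \eqref{omJ2}, $\alpha_a=(T_aH)H^{-1}$, and the $\beta$-form \eqref{resolvent beta}, $\alpha_a=(T_a\beta)(\n_a\cdot\boldsymbol\Lambda)-(\n_a\cdot\boldsymbol\Lambda)\beta-q_a$. Since $H=\diag(H_{[0]},H_{[1]},\dots)$ is block diagonal while $\beta$ carries a single block subdiagonal, restricting both matrix identities to the $[k]$-th diagonal block gives exactly the component relations \eqref{H1} and \eqref{H2}. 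Equating these two block expressions yields the first displayed equation.

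Next I would treat the subdiagonal identity in the same manner. The matrix $M_a=\I+\rho_a$ has $\rho_a$ supported on the first block subdiagonal, and $\rho_a$ was computed in two ways: from the Cholesky factorization as \eqref{consequence}, $\rho_a=H(\n_a\cdot\boldsymbol\Lambda)^\top(T_aH)^{-1}$, and from the definition of $M_a$ together with the difference operator $\Delta_a$ as \eqref{ultima}, $\rho_a=-\Delta_a\beta$. Extracting the $([k],[k-1])$ block, which is the only nonzero block in that subdiagonal row, produces \eqref{rho1} and \eqref{rho2} respectively, and equating them gives the second displayed equation.

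Since every ingredient has already been established, there is no substantive obstacle; the only point requiring care is the bookkeeping of the variable block sizes, so that the rectangular factor $(\n_a\cdot\boldsymbol\Lambda)_{[k-1],[k]}\in\R^{|[k-1]|\times|[k]|}$ and its transpose are contracted against blocks of compatible dimension. This compatibility holds automatically once the positions in the partitioned matrices are respected.
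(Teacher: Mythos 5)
Your proposal is correct and coincides with the paper's own argument: the paper presents Proposition \ref{todabetaH} precisely ``as byproduct'' of equating the two componentwise expressions \eqref{H1}--\eqref{H2} for the diagonal $\alpha_{a,[k]}$ of the resolvent $\omega_a$ and the two expressions \eqref{rho1}--\eqref{rho2} for the subdiagonal $\rho_{a,[k]}$ of $M_a$, exactly as you do. No gap; your remark on block-size compatibility is a harmless extra.
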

We are now ready to show what type nonlinear partial difference equations of Toda type are satisfied.

\begin{theorem}\label{toda equations 1}
  The quasi-tau matrices $ H_{[k]}$ are subject to the following equations
  \begin{multline*}
   \Delta_b\big( (\Delta_aH_{[k]})H_{[k]} ^{-1}\big)=
     ( \n_a\cdot\boldsymbol\Lambda)_{[k],[k+1]}H_{[k+1]}[( \n_b\cdot\boldsymbol\Lambda)_{[k],[k+1]}]^\top \big( T_bH_{[k]}\big) ^{-1}\\
    -(T_aH_{[k]})[( \n_b\cdot\boldsymbol\Lambda)_{[k-1],[k]}]^\top \big( T_aT_bH_{[k-1]}\big) ^{-1}( \n_a\cdot\boldsymbol\Lambda)_{[k-1],[k]}.
  \end{multline*}
The  matrices $\beta_{[k]}$ fulfill
  \begin{multline*}
\big((T_b\beta_{[k]})( \n_b\cdot\boldsymbol\Lambda)_{[k-1],[k]}-
 ( \n_b\cdot\boldsymbol\Lambda)_{[k],[k+1]}\beta_{[k+1]}- q_b \big) (\Delta_a\beta_{[k]}) \\=T_b\Delta_a\beta_{[k]}\big((T_aT_b\beta_{[k]})( \n_b\cdot\boldsymbol\Lambda)_{[k-1],[k]}-
 ( \n_b\cdot\boldsymbol\Lambda)_{[k],[k+1]}(T_a\beta_{[k+1]})- q_b\big).
  \end{multline*}
\end{theorem}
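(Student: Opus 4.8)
The plan is to derive both identities directly from the two relations packaged in Proposition~\ref{todabetaH}, namely \eqref{H1}--\eqref{H2} written as
\begin{align*}
\alpha_{a,[k]}\coloneq(T_aH_{[k]})H_{[k]}^{-1}=(T_a\beta_{[k]})( \n_a\cdot\boldsymbol\Lambda)_{[k-1],[k]}-( \n_a\cdot\boldsymbol\Lambda)_{[k],[k+1]}\beta_{[k+1]}- q_a,
\end{align*}
and \eqref{rho1}--\eqref{rho2} written as
\begin{align*}
\rho_{a,[k]}\coloneq H_{[k]}[( \n_a\cdot\boldsymbol\Lambda)_{[k-1],[k]}]^\top\big(T_aH_{[k-1]}\big)^{-1}=-\Delta_a\beta_{[k]},
\end{align*}
exploiting throughout that the shift matrices $\boldsymbol\Lambda$ are independent of $\m$ and that $T_a,T_b,\Delta_a,\Delta_b$ pairwise commute. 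These two relations are exactly the discrete evolution data, and both Toda equations are compatibility consequences of them.

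For the quasi-tau equation I would first note $(\Delta_aH_{[k]})H_{[k]}^{-1}=\alpha_{a,[k]}-\I$, so that $\Delta_b\big((\Delta_aH_{[k]})H_{[k]}^{-1}\big)=\Delta_b\alpha_{a,[k]}$. Applying $\Delta_b$ to the $\beta$-expression of $\alpha_{a,[k]}$ above, commuting $\Delta_b$ past $T_a$ and using that $\boldsymbol\Lambda$ is $\m$-independent, gives
\begin{align*}
\Delta_b\alpha_{a,[k]}=(T_a\Delta_b\beta_{[k]})( \n_a\cdot\boldsymbol\Lambda)_{[k-1],[k]}-( \n_a\cdot\boldsymbol\Lambda)_{[k],[k+1]}(\Delta_b\beta_{[k+1]}).
\end{align*}
The remaining step is to read the two $\Delta_b\beta$ blocks back as $H$-expressions through the $\rho$-relation, namely $\Delta_b\beta_{[k+1]}=-\rho_{b,[k+1]}$ and $T_a\Delta_b\beta_{[k]}=-T_a\rho_{b,[k]}$; substituting the explicit forms of $\rho_{b,[k+1]}$ and $T_a\rho_{b,[k]}$ reproduces the stated $H$-equation verbatim.

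For the $\beta$-equation I would instead push the $\rho$-relation through a single $T_b$ shift and eliminate the $H$'s. Starting from $\rho_{a,[k]}=H_{[k]}[( \n_a\cdot\boldsymbol\Lambda)_{[k-1],[k]}]^\top(T_aH_{[k-1]})^{-1}$, applying $T_b$ and inserting $T_bH_{[k]}=\alpha_{b,[k]}H_{[k]}$ together with $T_aT_bH_{[k-1]}=(T_a\alpha_{b,[k-1]})(T_aH_{[k-1]})$ collapses all the $H$-dependence to
\begin{align*}
\alpha_{b,[k]}\,\rho_{a,[k]}=(T_b\rho_{a,[k]})(T_a\alpha_{b,[k-1]}).
\end{align*}
Replacing $\rho_{a,[k]}=-\Delta_a\beta_{[k]}$ and then expanding the factor $\alpha_{b,[k]}$ and the resolvent coefficient $T_a\alpha_{b,[k-1]}$ through the $\beta$-expression above yields the stated $\beta$-equation. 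Here the block shapes $\beta_{[k]}\in\R^{|[k]|\times|[k-1]|}$ and $( \n_b\cdot\boldsymbol\Lambda)_{[k-1],[k]}\in\R^{|[k-1]|\times|[k]|}$ are the decisive consistency check: they force the right-hand factor to be the \emph{level-$[k-1]$} resolvent coefficient $T_a\alpha_{b,[k-1]}$, so that both sides have common shape $|[k]|\times|[k-1]|$.

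I expect the only genuine difficulty to be exactly this bookkeeping of rectangular block sizes, together with the careful commutation of the translations $T_a,T_b$ with the $\m$-independent shift matrices $\boldsymbol\Lambda$; no input beyond Proposition~\ref{todabetaH} and $[T_a,T_b]=0$ is needed. The same results may alternatively be read off from the superdiagonal and subdiagonal blocks of the Zakharov--Shabat identity $(T_b\omega_a)\omega_b=(T_a\omega_b)\omega_a$, which follows from the wave-matrix evolution $T_aW_1=\omega_aW_1$ and $[T_a,T_b]=0$, with $\omega_a=\alpha_a+\n_a\cdot\boldsymbol\Lambda$; but the direct substitution route above is the most economical.
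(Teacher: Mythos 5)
Your proposal is correct and coincides with the paper's own proof, which is stated simply as ``an immediate consequence of Proposition \ref{todabetaH}'': your two substitution arguments (applying $\Delta_b$ to the $\alpha$-relation \eqref{H1}--\eqref{H2} and reading the resulting $\Delta_b\beta$ blocks back through the $\rho$-relation \eqref{rho1}--\eqref{rho2}; then applying $T_b$ to the $\rho$-relation and eliminating the $H$'s via $T_bH_{[k]}=\alpha_{b,[k]}H_{[k]}$ and $T_aT_bH_{[k-1]}=(T_a\alpha_{b,[k-1]})(T_aH_{[k-1]})$) are precisely that consequence spelled out.

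Note, moreover, that your shape bookkeeping is right where the printed theorem is not: the right-hand factor of the $\beta$-equation must be the level-$[k-1]$ coefficient $T_a\alpha_{b,[k-1]}=(T_aT_b\beta_{[k-1]})(\n_b\cdot\boldsymbol\Lambda)_{[k-2],[k-1]}-(\n_b\cdot\boldsymbol\Lambda)_{[k-1],[k]}(T_a\beta_{[k]})-q_b$, of size $|[k-1]|\times|[k-1]|$, whereas the statement as printed uses the level-$[k]$ blocks $(T_aT_b\beta_{[k]})$, $(T_a\beta_{[k+1]})$, which makes the right-hand product undefined for $D>1$; your version is the dimensionally consistent one that actually follows from Proposition \ref{todabetaH}.
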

\begin{proof}
   An immediate consequence of Proposition \ref{todabetaH}.
\end{proof}
Let us stress  that the matrices in the nonlinear lattice have increasing sizes.

\begin{pro}\label{prepa}
 If $T_aG$ and $G$ admit Cholesky decompositions then for each $a\in\{1,\dots,D\}$ we have the following $LU$ factorization
  \begin{align}\label{eq:Cholesky for J}
\n_a\cdot\boldsymbol J-q_a=M_a \omega_a,
  \end{align}
and the  $UL$ factorization
 \begin{align}\label{eq: UL for TJ}
    T_a(  \n_a\cdot\boldsymbol J)-q_a=&\omega_a M_a.
  \end{align}
\end{pro}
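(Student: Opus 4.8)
The plan is to derive both identities purely algebraically, reducing each product to a conjugation and then recognizing the result through the dressing definition \eqref{def:J} of the Jacobi matrices. The only inputs I need are the alternative expression \eqref{omJ} for the lattice resolvent, $\omega_a=(T_aS)(\n_a\cdot\boldsymbol\Lambda-q_a)S^{-1}$, the definition \eqref{def:Ma} of the connection matrix $M_a=S(T_aS)^{-1}$, and the identity $\n_a\cdot\boldsymbol J=S(\n_a\cdot\boldsymbol\Lambda)S^{-1}$, which follows from $J_b=S\Lambda_bS^{-1}$ and the dot-product convention of Definition \ref{vectores}. The standing hypothesis that both $G$ and $T_aG$ admit Cholesky factorizations guarantees that $S,\,T_aS,\,H,\,T_aH$ all exist, with $S$ and $T_aS$ block lower unitriangular and hence invertible, so every manipulation below is legitimate.

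For the $LU$ factorization \eqref{eq:Cholesky for J} I would form the product $M_a\omega_a$ and let the middle factors telescope:
\begin{align*}
M_a\omega_a=S(T_aS)^{-1}\,(T_aS)(\n_a\cdot\boldsymbol\Lambda-q_a)S^{-1}
=S(\n_a\cdot\boldsymbol\Lambda-q_a)S^{-1}=\n_a\cdot\boldsymbol J-q_a,
\end{align*}
where in the last step I use $S(\n_a\cdot\boldsymbol\Lambda)S^{-1}=\n_a\cdot\boldsymbol J$ together with the fact that the scalar $q_a$ commutes with $S$. This already settles the first claim. For the $UL$ factorization \eqref{eq: UL for TJ} I would instead form $\omega_aM_a$, so that now it is the $S^{-1}S$ in the middle that cancels:
\begin{align*}
\omega_aM_a=(T_aS)(\n_a\cdot\boldsymbol\Lambda-q_a)S^{-1}\,S(T_aS)^{-1}
=(T_aS)(\n_a\cdot\boldsymbol\Lambda-q_a)(T_aS)^{-1}.
\end{align*}

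The key observation that finishes the second identity is that the shift matrices $\Lambda_a$ do not depend on the lattice variable $\m$, so conjugating $\n_a\cdot\boldsymbol\Lambda$ by $T_aS$ is exactly the shift of the conjugation by $S$; that is, $(T_aS)(\n_a\cdot\boldsymbol\Lambda)(T_aS)^{-1}=T_a\big(S(\n_a\cdot\boldsymbol\Lambda)S^{-1}\big)=T_a(\n_a\cdot\boldsymbol J)$, while $q_a$ is untouched by $T_a$. Hence $\omega_aM_a=T_a(\n_a\cdot\boldsymbol J)-q_a$, as required.

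No genuine obstacle arises in the computation itself; its entire content is the telescoping of $(T_aS)^{-1}(T_aS)$ in one order and of $S^{-1}S$ in the other. The single point requiring care is conceptual rather than technical: one must track that $T_a$ acts on the $\m$-dependent dressing data $S$ and $H$ but leaves the $\m$-independent $\Lambda_a$ and the scalar $q_a$ alone, which is precisely what turns the raw conjugation in the $UL$ case into the \emph{shifted} Jacobi operator $T_a(\n_a\cdot\boldsymbol J)$. Finally, to justify the labels \emph{LU} and \emph{UL}, I would recall that $M_a=\I+\rho_a$ is block lower unitriangular by \eqref{ultima}, whereas $\omega_a=\alpha_a+\n_a\cdot\boldsymbol\Lambda$ is block upper triangular, so the two displays genuinely exhibit a lower-times-upper and an upper-times-lower factorization of the respective (shifted) Jacobi operators.
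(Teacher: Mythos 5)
Your proof is correct, but it follows a route that differs from the paper's, most notably in the second identity. For the $LU$ factorization \eqref{eq:Cholesky for J} the difference is mild: you telescope $M_a\omega_a$ using the already-established expression \eqref{omJ} for the resolvent, whereas the paper re-runs the underlying moment-matrix computation, equating the Cholesky factorization of $T_aG$ with $(\n_a\cdot\boldsymbol\Lambda-q_a)G$ written through the Cholesky factorization of $G$; since \eqref{omJ} was itself obtained from exactly that manipulation, the two arguments have the same mathematical content. For the $UL$ factorization \eqref{eq: UL for TJ}, however, the paper argues via the discrete Lax equations \eqref{lax}, writing $T_a(\n_a\cdot\boldsymbol J-q_a)\,\omega_a=\omega_a(\n_a\cdot\boldsymbol J-q_a)=\omega_aM_a\omega_a$ and then cancelling $\omega_a$ on the right — a step that tacitly requires the invertibility of $\omega_a$ (available because $q_a\neq 0$ makes $\n_a\cdot\boldsymbol\Lambda-q_a$ invertible), and which moreover cites \eqref{lax} from Proposition \ref{pro5}, stated and proven only after this proposition. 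Your argument instead computes $\omega_aM_a=(T_aS)(\n_a\cdot\boldsymbol\Lambda-q_a)(T_aS)^{-1}$ and identifies this with $T_a(\n_a\cdot\boldsymbol J)-q_a$ by observing that $\Lambda_a$ and $q_a$ are $\m$-independent, so the conjugation by $T_aS$ is precisely the $T_a$-shift of the conjugation by $S$. This buys you a self-contained derivation that uses only material preceding the proposition, avoids the forward reference and the cancellation-of-$\omega_a$ issue entirely, and makes the $LU$/$UL$ terminology explicit through the triangularity of $M_a=\I+\rho_a$ and $\omega_a=\alpha_a+\n_a\cdot\boldsymbol\Lambda$; what the paper's route displays, by contrast, is how the factorization interlocks with the intertwining (Lax) structure of the discrete flows.
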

\begin{proof}
From Proposition \ref{pro:discrete G} we get
\begin{flalign*}
  T_aG=&(T_aS)^{-1}(T_aH)\big((T_aS)^{-1}\big)^\top &\text{Cholesky factorization of $(T_aG)$}\\
          =&(\n_a\cdot\boldsymbol J-q_a)G&\text{see Proposition \ref{pro:discrete G}}\\
           =&( \n_a\cdot\boldsymbol J-q_a)S^{-1}H\big(S^{-1}\big)^\top&\text{Cholesky factorization of $G$}
\end{flalign*}
and therefore we have the Cholesky factorization \eqref{eq:Cholesky for J}.
To prove  \eqref{eq: UL for TJ} we observe that the Lax equations \eqref{lax} lead to
\begin{align*}
  T_a(\n_a\cdot\boldsymbol J-q_a)\omega_a=&\omega_a(\n_a\cdot\boldsymbol J-q_a)\\
  =&\omega_a M_a\omega_a
\end{align*}
which imply the result.
\end{proof}
Thus, for each given direction these translations  reproduce the behaviour of the classical elementary Darboux transformations which imply the interchange or intertwining  of
the lower triangular  and upper triangular factors in the Gauss--Borel decomposition. Later on we will discuss the explicit form of these Darboux transformations for  the MVOPR, quasi-tau and $\beta$ matrices.

Before we derived  \eqref{rho1} and \eqref{H1} where we expressed $\rho_{a,[k]}$, \eqref{rho2}, \eqref{H2} and $\alpha_{a,[k]}$ in terms of $H$ and $\beta$ matrices and its discrete time translations. Now, we show an alternative form of writing these functions, with no discrete time translations involved,  in terms of quasi-determinants of truncated Jacobi  matrices.

\begin{theorem}\label{quasideterminant alpha}
In terms of  quasi-determinants of the Jacobi matrices we have the following formul{\ae}
  \begin{align*}
    \rho_{a,[k]}&=(\n_a\cdot\boldsymbol J_{[k],[k-1]})\big(\Theta_*(\n_a\cdot\boldsymbol J^{[k]}-q_a\I^{[k]})\big)^{-1},\\
\alpha_{a,[k]}&=\Theta_*(\n_a\cdot\boldsymbol J^{[k+1]}-q_a\I^{[k+1]}).
  \end{align*}
\end{theorem}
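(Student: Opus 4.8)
The plan is to read both formul\ae{} directly off the $LU$ factorization established in Proposition~\ref{prepa},
\begin{align*}
\n_a\cdot\boldsymbol J-q_a=M_a\,\omega_a,
\end{align*}
exploiting the structural facts already recorded: $M_a=\I+\rho_a$ is block lower unitriangular with $\rho_a$ supported on the first block subdiagonal, while $\omega_a=\alpha_a+\n_a\cdot\boldsymbol\Lambda$ is block upper triangular with block diagonal $\alpha_a$. Since $\alpha_a=(T_aH)H^{-1}$ is block diagonal with each $\alpha_{a,[k]}=(T_aH_{[k]})H_{[k]}^{-1}$ invertible, this is a genuine $LU$ factorization with invertible pivots, which is exactly what makes the quasi-determinants well defined.

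First I would record the elementary truncation lemma: if $L$ is block lower triangular and $U$ block upper triangular, then $(LU)^{[k+1]}=L^{[k+1]}U^{[k+1]}$, because in $(LU)_{[i],[j]}=\sum_m L_{[i],[m]}U_{[m],[j]}$ the triangular supports force $m\le\min(i,j)$, so no block outside the truncation ever contributes. Applied to the factorization above this gives
\begin{align*}
(\n_a\cdot\boldsymbol J-q_a)^{[k+1]}=M_a^{[k+1]}\,\omega_a^{[k+1]},
\end{align*}
an $LU$ factorization of the truncated matrix with $M_a^{[k+1]}$ lower unitriangular and $\omega_a^{[k+1]}$ upper triangular with invertible diagonal blocks. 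Then I would invoke the Schur-complement reading of $\Theta_*$ from Appendix~\ref{qd}: for a two-by-two block matrix $\begin{psmallmatrix}A&B\\C&D\end{psmallmatrix}=\begin{psmallmatrix}L_{11}&0\\L_{21}&\I\end{psmallmatrix}\begin{psmallmatrix}U_{11}&U_{12}\\0&U_{22}\end{psmallmatrix}$ a direct computation gives $\Theta_*=D-CA^{-1}B=U_{22}$, so the quasi-determinant with respect to the bottom-right corner is the trailing diagonal block of $U$. For $(\n_a\cdot\boldsymbol J-q_a)^{[k+1]}$ that trailing block of $\omega_a^{[k+1]}$ is its $[k],[k]$ entry, namely $\alpha_{a,[k]}$; this is precisely the second formula.

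Finally, for $\rho_{a,[k]}$ I would compute the single off-diagonal block in position $[k],[k-1]$ of the untruncated identity $\n_a\cdot\boldsymbol J-q_a=M_a\omega_a$. On the left the $q_a\I$ term is diagonal and contributes nothing, leaving $(\n_a\cdot\boldsymbol J)_{[k],[k-1]}$; on the right, in $\sum_m(M_a)_{[k],[m]}(\omega_a)_{[m],[k-1]}$ the lower-unitriangular support of $M_a$ forces $m\in\{k-1,k\}$ and the upper support of $\omega_a$ forces $m\le k-1$, so only $m=k-1$ survives, giving $\rho_{a,[k]}\,\alpha_{a,[k-1]}$. Hence $(\n_a\cdot\boldsymbol J)_{[k],[k-1]}=\rho_{a,[k]}\,\alpha_{a,[k-1]}$, and substituting the already-proven $\alpha$-formula with $k$ replaced by $k-1$, $\alpha_{a,[k-1]}=\Theta_*(\n_a\cdot\boldsymbol J^{[k]}-q_a\I^{[k]})$, and inverting (legitimate since $\alpha_{a,[k-1]}$ is invertible) yields the first formula.

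I expect no serious obstacle: once the $LU$ factorization of Proposition~\ref{prepa} is available, everything reduces to finite-block linear algebra. The only points demanding care are the correct bookkeeping of the triangular supports in the truncation lemma and fixing the orientation of $\Theta_*$ as the bottom-right-corner quasi-determinant, so that its value reads as the trailing pivot of the upper factor; no analytic estimate or convergence argument is involved.
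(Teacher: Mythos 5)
Your proof is correct, and it rests on the same pillar as the paper's: the $LU$ factorization $\n_a\cdot\boldsymbol J-q_a=M_a\omega_a$ of Proposition \ref{prepa}. The difference lies in how the quasi-determinantal formul{\ae} are extracted from it. The paper's proof is a one-line citation: it applies Olver's general factorization theorem (Theorem 3 of \cite{olver}, reproduced in Appendix \ref{qd}), which expresses the diagonal pivots and the entries of the lower factor of any regular block matrix as quasi-determinants of truncations and bordered truncations; to reach the stated form of $\rho_{a,[k]}$ one must additionally observe that the band structure of $\n_a\cdot\boldsymbol J$ collapses the bordered-truncation quasi-determinant prescribed for the subdiagonal entries to the single block $(\n_a\cdot\boldsymbol J)_{[k],[k-1]}$. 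You instead re-derive exactly the special case needed: your truncation lemma $(LU)^{[k+1]}=L^{[k+1]}U^{[k+1]}$ for triangular factors, the $2\times 2$ Schur-complement identity showing $\Theta_*$ equals the trailing pivot $U_{22}$ of the upper factor, and the direct matching of the $[k],[k-1]$ blocks (where the supports of $M_a$ and $\omega_a$ leave only the term $\rho_{a,[k]}\alpha_{a,[k-1]}$) together replace the appeal to Olver's theorem. What each approach buys: the paper's route is shorter and situates the result inside an established general framework, whereas yours is self-contained, makes the regularity hypotheses explicit (invertibility of the pivots $\alpha_{a,[j]}=(T_aH_{[j]})H_{[j]}^{-1}$, hence of the leading truncations, which is what legitimizes the Schur complements), and avoids the bordered-truncation bookkeeping entirely. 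Both arguments are sound.
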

\begin{proof}
  It is a direct consequence of Proposition \ref{prepa} and Theorem 3.4 of \cite{olver}.
\end{proof}

In particular we deduce that
  \begin{align*}
    \rho_{a,[k]}&=H_{[k]}(\n_a\cdot\boldsymbol\Lambda_{[k-1],[k]})^\top H_{[k-1]}\big(\Theta_*(\n_a\cdot\boldsymbol J^{[k]}-q_a\I^{[k]})\big)^{-1}.
  \end{align*}

Next, we are ready to collect the integrable system structure for these discrete flows giving the classical elements: linear systems, Lax and Zakharov--Shabat equations in its discrete version.

\begin{pro}\label{pro5}
  \begin{enumerate}
    \item For each $a\in\{1,\dots,D\}$ the wave matrices $W_1$ and $W_2$  are solutions of the following linear system
    \begin{align}\label{eq: linear W}
      T_aW=&\omega_a W.
    \end{align}
    \item The Jacobi type matrices $\n_a\cdot\boldsymbol J$, $a\in\{1,\dots,D\}$, satisfy the discrete Lax equations
    \begin{align}\label{lax}
      T_b(\n_a\cdot\boldsymbol J)\omega_b=&\omega_b\,(\n_a\cdot\boldsymbol J),& M_b\,T_b(\n_a\cdot\boldsymbol J)&=(\n_a\cdot\boldsymbol J)M_b,& & \forall a,b\in\{1,\dots,D\}.
    \end{align}

    \item The lattice resolvent $\omega_a$ and the $M_a$, $a\in\{1,\dots,D\}$, are subject to the discrete Zakharov--Shabat
    equations
    \begin{align}\label{ZS}
      (T_a\omega_b)\omega_a=&(T_b\omega_a)\omega_b, & M_a(T_aM_b)&=M_b(T_bM_a),& &\forall a,b\in\{1,\dots,D\}.
    \end{align}
       \end{enumerate}
\end{pro}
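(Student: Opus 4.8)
The plan is to obtain all three items from the dressing identities already established, so that only the definitions \eqref{def:Ma}, \eqref{omJ}, the Miwa relation \eqref{eq: Miwa-G}, and the commutativity of the shift matrices from Proposition \ref{pro:Lambda} are needed. First I would settle the linear system \eqref{eq: linear W} for $W_1$ directly. Writing $W_1=SW_0$ and observing that, because the factors $(\n_b\cdot\boldsymbol\Lambda-q_b)$ all commute, the increment $m_a\mapsto m_a+1$ produces exactly one extra factor, so that $T_aW_0=(\n_a\cdot\boldsymbol\Lambda-q_a)W_0$, one computes
\begin{align*}
(T_aW_1)W_1^{-1}&=(T_aS)(T_aW_0)W_0^{-1}S^{-1}=(T_aS)(\n_a\cdot\boldsymbol\Lambda-q_a)S^{-1}=\omega_a,
\end{align*}
the last equality being \eqref{omJ}. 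For $W_2$ I would avoid a second computation: from $G=W_1^{-1}W_2$ in \eqref{toto2} one has $W_1G=W_2$ with $G$ the undeformed, hence $\m$-independent, moment matrix, so applying $T_a$ gives $(T_aW_1)G=T_aW_2$, whence
\begin{align*}
(T_aW_2)W_2^{-1}&=(T_aW_1)G\,W_2^{-1}=(T_aW_1)G(W_1G)^{-1}=(T_aW_1)W_1^{-1}=\omega_a.
\end{align*}
This establishes \eqref{eq: linear W} for both wave matrices at once.

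For the Lax equations \eqref{lax} I would substitute $\n_a\cdot\boldsymbol J=S(\n_a\cdot\boldsymbol\Lambda)S^{-1}$ from \eqref{def:J}, together with \eqref{omJ} and $M_b=S(T_bS)^{-1}$, and let the conjugations telescope. Explicitly,
\begin{align*}
\omega_b(\n_a\cdot\boldsymbol J)&=(T_bS)(\n_b\cdot\boldsymbol\Lambda-q_b)(\n_a\cdot\boldsymbol\Lambda)S^{-1},&
T_b(\n_a\cdot\boldsymbol J)\,\omega_b&=(T_bS)(\n_a\cdot\boldsymbol\Lambda)(\n_b\cdot\boldsymbol\Lambda-q_b)S^{-1},
\end{align*}
and the two right-hand sides coincide precisely because $[\n_a\cdot\boldsymbol\Lambda,\n_b\cdot\boldsymbol\Lambda]=0$ by Proposition \ref{pro:Lambda}. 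The second Lax equation needs no commutativity: both $M_b\,T_b(\n_a\cdot\boldsymbol J)$ and $(\n_a\cdot\boldsymbol J)M_b$ reduce to $S(\n_a\cdot\boldsymbol\Lambda)(T_bS)^{-1}$ after the cancellation $(T_bS)^{-1}(T_bS)=\I$.

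Finally, the Zakharov--Shabat equations \eqref{ZS} are the compatibility conditions of the linear system just proved. Since the translations commute, $T_aT_bW=T_bT_aW$ for a wave matrix $W$; expanding each side with \eqref{eq: linear W} yields $(T_a\omega_b)\omega_a W=(T_b\omega_a)\omega_b W$, and cancelling the invertible $W$ gives the first identity in \eqref{ZS}. The $M$-version is again pure telescoping: using $T_aM_b=(T_aS)(T_aT_bS)^{-1}$ one finds that both $M_a(T_aM_b)$ and $M_b(T_bM_a)$ equal $S(T_aT_bS)^{-1}$, the equality resting only on $T_aT_b=T_bT_a$.

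The only genuinely delicate points are bookkeeping ones: keeping $G$ fixed under $T_a$ in the $W_2$ argument, and invoking the commutativity of the shift matrices for the first Lax equation — that commutativity is the single structural input making the Lax pair consistent, and it is exactly what fails to be automatic when one leaves the one-variable setting. Everything else is cancellation of adjacent factors.
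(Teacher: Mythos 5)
Your proof is correct, and three of its five sub-arguments coincide with the paper's own: the $W_1$ half of item (1), the second Lax equation, and both Zakharov--Shabat identities are proved there by exactly the same telescoping cancellations of $(T_bS)^{-1}(T_bS)$ and the commutativity $T_aT_b=T_bT_a$. You diverge in two places, both legitimately. For $T_aW_2=\omega_aW_2$ the paper computes directly from $W_2=H(S^{-1})^\top$ and recognizes the defining expression \eqref{eq:def.omega}, $\omega_a=(T_aH)M_a^\top H^{-1}$; you instead reuse Proposition \ref{pro:G_and_evolved_waves2}, i.e.\ $W_1G=W_2$ with the undeformed, $\m$-independent $G$, which transfers the $W_1$ result to $W_2$ for free --- and note you can avoid inverting $W_2$ altogether by writing $T_aW_2=(T_aW_1)G=\omega_aW_1G=\omega_aW_2$, which stays within the products of semi-infinite matrices the paper is careful to allow. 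For the first Lax equation the paper argues through the deformed moment matrix: it uses the symmetry \eqref{eq:symmetry} in the form $(\n_b\cdot\boldsymbol\Lambda)(T_aG)=(T_aG)(\n_b\cdot\boldsymbol\Lambda)^\top$ together with the Cholesky factors of $G$ and $T_aG$, which exhibits the Lax consistency as an inheritance of the self-adjointness $J^\top=H^{-1}JH$; you instead expand both sides via \eqref{def:J} and \eqref{omJ} and reduce everything to $[\n_a\cdot\boldsymbol\Lambda,\n_b\cdot\boldsymbol\Lambda]=0$ from Proposition \ref{pro:Lambda}. Your route is shorter and isolates the single structural input (commutativity of the undressed shift operators), while the paper's route makes visible how the symmetry of the moment matrix --- the very feature that turns Gauss--Borel into Cholesky --- propagates to the dressed Jacobi matrices; the two are equivalent here, since that symmetry is itself a consequence of the eigenvalue property and commutativity of the $\Lambda$'s.
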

\begin{proof}
 See Appendix \ref{proof5}.
\end{proof}

\subsection{Quasi-tau functions formul{\ae} for MOVPR}
\subsubsection{Expressing the MVOPR in terms of quasi-tau matrices}\label{qt}
We discuss here certain expressions for the MVOPR, in terms of the  quasi-tau matrices $H_{[k]}$, that extend to the multidimensional situation the $\tau$ type formul{\ae} of the 1-D scenario.
\begin{pro}
 The MVOPR satisfy
  \begin{align}
  \rho_{a,[k]}(T_aP)_{[k-1]}+  (T_aP)_{[k]}&=P_{[k]},\label{nose}\\
  \label{SnS}
\alpha_{a,[k-1]}P_{[k-1]}+ (\n_a\cdot\boldsymbol \Lambda)_{[k-1],[k]} P_{[k]}&=  (\n_a\cdot\x-q_a) (T_aP)_{[k-1]},
 \end{align}
\end{pro}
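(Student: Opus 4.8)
The plan is to read both identities off the definition $P=S\chi$ together with the two factorization relations already established for $M_a$ and for the lattice resolvent $\omega_a$. The one preliminary remark that makes everything go through is that the monomial vector $\chi(\x)$ carries no $\m$-dependence, so the translation $T_a$ acts only on the factor $S$: from $P=S\chi$ I get $T_aP=(T_aS)\chi$, and of course $P=S\chi$ itself. These two relations are the bridge between the polynomials and the semi-infinite matrices.

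For \eqref{nose} I would start from the definition $M_a=S(T_aS)^{-1}$ in \eqref{def:Ma}, rewritten as $M_a(T_aS)=S$. Applying both sides to $\chi$ and using $T_aP=(T_aS)\chi$ and $P=S\chi$ gives the compact identity $M_a(T_aP)=P$. Now I would insert the explicit block shape $M_a=\I+\rho_a$, where $\rho_a$ is the block subdiagonal matrix whose $([k],[k-1])$ entry is $\rho_{a,[k]}$, as in \eqref{consequence}. Reading off the $[k]$-block of $M_a(T_aP)=P$ then yields precisely $(T_aP)_{[k]}+\rho_{a,[k]}(T_aP)_{[k-1]}=P_{[k]}$, which is \eqref{nose}.

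For \eqref{SnS} I would use the resolvent expression \eqref{omJ}, namely $\omega_a=(T_aS)(\n_a\cdot\boldsymbol\Lambda-q_a)S^{-1}$. Applying $\omega_a$ to $P=S\chi$, the factor $S^{-1}S$ cancels and the eigenvalue property $(\n_a\cdot\boldsymbol\Lambda)\chi=(\n_a\cdot\x)\chi$ from \eqref{eigen} in Proposition \ref{pro:Lambda} converts the shift matrix into scalar multiplication, so $\omega_aP=(T_aS)(\n_a\cdot\x-q_a)\chi=(\n_a\cdot\x-q_a)(T_aP)$. On the other side I would insert the block decomposition $\omega_a=\alpha_a+\n_a\cdot\boldsymbol\Lambda$ (diagonal plus superdiagonal) and read the $[k-1]$-block, which is $\alpha_{a,[k-1]}P_{[k-1]}+(\n_a\cdot\boldsymbol\Lambda)_{[k-1],[k]}P_{[k]}$. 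Equating the two expressions for the $[k-1]$-block of $\omega_aP$ gives \eqref{SnS}.

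I do not expect a genuine obstacle: the content is bookkeeping with the partitioned structure, and the two real ingredients---the scalar $\m$-independence of $\chi$ and the eigenvalue relation for $\n_a\cdot\boldsymbol\Lambda$---are both available. The only point needing a little care is the direction of the block indices, namely that the subdiagonal block of $M_a$ sitting in row $[k]$ is labelled $\rho_{a,[k]}$ while the superdiagonal block of $\n_a\cdot\boldsymbol\Lambda$ sits in position $([k-1],[k])$, so that the two read-offs land in the correct block rows and produce exactly the stated shifts between neighbouring degrees.
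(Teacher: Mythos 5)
Your proof is correct and follows essentially the same route as the paper, whose own proof simply cites the two identities $P=M_a(T_aP)$ and $\omega_a P=(\n_a\cdot\x-q_a)(T_aP)$ that you derive from $M_a=S(T_aS)^{-1}$ and $\omega_a=(T_aS)(\n_a\cdot\boldsymbol\Lambda-q_a)S^{-1}$. Your block read-offs using $M_a=\I+\rho_a$ and $\omega_a=\alpha_a+\n_a\cdot\boldsymbol\Lambda$ are exactly the bookkeeping the paper leaves implicit.
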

\begin{proof}
  It is just a consequence of
$ P=M_a(T_aP)$ and $\omega_a P= (\n_a\cdot\x-q_a) (T_aP)$.
\end{proof}
 \begin{pro}\label{importante}
 When $\boldsymbol p\in\pi_a^+$, i.e. $\n_a\cdot\boldsymbol p=q_a$, the following relation holds
 \begin{align}\label{eq:relation}
(\n_a\cdot\boldsymbol \Lambda)_{[k-1],[k]}P_{[k]}(\boldsymbol p)=
 -\alpha_{a,[k-1]} P_{[k-1]}(\boldsymbol p).
 \end{align}
\end{pro}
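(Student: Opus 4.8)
The plan is to read the result straight off the three-term-type relation \eqref{SnS}, which was already established as an identity of polynomial vectors in $\x$, being a consequence of the matrix relations $P=M_a(T_aP)$ and $\omega_a P=(\n_a\cdot\x-q_a)(T_aP)$. First I would recall \eqref{SnS} displayed with the argument made explicit,
\begin{align*}
\alpha_{a,[k-1]}P_{[k-1]}(\x)+(\n_a\cdot\boldsymbol\Lambda)_{[k-1],[k]}P_{[k]}(\x)=(\n_a\cdot\x-q_a)\,(T_aP)_{[k-1]}(\x),
\end{align*}
which holds for every $\x$ since it comes from applying the semi-infinite matrix identities above to the polynomial vector $P(\x)$.

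Second, I would specialize to $\x=\boldsymbol p$ with $\boldsymbol p\in\pi_a^+$, that is $\n_a\cdot\boldsymbol p=q_a$. The scalar prefactor on the right-hand side then becomes $\n_a\cdot\boldsymbol p-q_a=0$, so the entire right-hand side vanishes, leaving
\begin{align*}
\alpha_{a,[k-1]}P_{[k-1]}(\boldsymbol p)+(\n_a\cdot\boldsymbol\Lambda)_{[k-1],[k]}P_{[k]}(\boldsymbol p)=0,
\end{align*}
which rearranges immediately to the claimed \eqref{eq:relation}.

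There is essentially no obstacle here; the single point deserving a word is the legitimacy of the evaluation at $\boldsymbol p$. Because $(T_aP)_{[k-1]}$ is a genuine polynomial vector --- it is the MVOPR vector associated with the shifted measure $\dd\mu_{\m+\ee_a}$ and hence has only finitely many nonzero monomial coefficients --- its value at $\boldsymbol p$ is a finite vector, and the product $(\n_a\cdot\x-q_a)(T_aP)_{[k-1]}(\x)$ truly vanishes at $\boldsymbol p$ with no convergence or boundary subtlety intervening. This is in pointed contrast to the analogous manipulations for the second kind functions $C$, where the $D$-fold Laurent series and the reduced functions $\widehat C$ would force extra care; for the polynomial relation the argument is purely algebraic.
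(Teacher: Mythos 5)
Your proof is correct and is precisely the paper's own argument: the paper proves \eqref{eq:relation} by setting $\n_a\cdot\x=q_a$ in \eqref{SnS}, exactly as you do. Your added remark on the legitimacy of pointwise evaluation (the right-hand side being a genuine polynomial, in contrast to the Laurent-series situation for the second kind functions) is a harmless elaboration of the same one-line proof.
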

\begin{proof}
Set $\n_a\cdot\x=q_a$ in \eqref{SnS}.
\end{proof}
\begin{definition}
Together with   $\boldsymbol q\coloneq (q_1,\dots,q_d)$ we consider the two following rectangular matrices
\begin{align*}
 [N\boldsymbol\Lambda]_{k}&\coloneq \PARENS{\begin{matrix}
  (\n_1\cdot\boldsymbol\Lambda)_{[k],[k+1]} \\  \vdots \\(\n_D\cdot\boldsymbol\Lambda)_{[k],[k+1]}
 \end{matrix}} \in \R^{D|[k]|\times |[k+1]|}, &
 [\boldsymbol{ T}^{(\boldsymbol q)} H]_k&\coloneq \PARENS{\begin{matrix}
  T_1^{(\boldsymbol q)}H_{[k]} \\ \vdots \\   T_D^{(\boldsymbol q)}H_{[k]}
                                        \end{matrix}}\in \R^{D|[k]|\times |[k]|}.
 \end{align*}
\end{definition}
Observe that,  putting together as rows the blocks $ (\n_a\cdot\boldsymbol\Lambda)_{[k],[k+1]} $, for $a\in\{1,\dots, D\}$, we get  a full column rank matrix $[N\boldsymbol\Lambda]_{k}$. Hence,  the correlation matrix $[N\boldsymbol\Lambda]_{k}^\top[N\boldsymbol\Lambda]_{k}\in\R^{|[k+1]|\times |[k+1]|}$ is invertible and the pseudo-inverse is
\begin{align*}
[N\boldsymbol\Lambda]_{k}^{+}=\big([N\boldsymbol\Lambda]_{k}^\top[N\boldsymbol\Lambda]_{k}\big)^{-1}[N\boldsymbol\Lambda]_{k}^\top,
\end{align*}
which happens to be  a left inverse,  see Appendix \ref{appendix pseudoinverse}.

We are now ready for
 \begin{theorem}\label{theorem:tauMVOPR}
The MVOPR can be expressed in terms of  quasi-tau matrices $H$  and its discrete time translations as follows
\begin{align*}
 P_{[k]}({\boldsymbol q})=(-1)^k
[N\boldsymbol\Lambda]_{k-1}^{+}[\boldsymbol {T}^{(N\boldsymbol q)} H]_{k-1} (H_{[k-1]})^{-1}[N\boldsymbol\Lambda]_{k-2}^{+}[\boldsymbol {T}^{(N\boldsymbol q)} H]_{k-2} (H_{[k-2]})^{-1}\cdots[N\boldsymbol\Lambda]_{0}^{+}[\boldsymbol {T}^{(N\boldsymbol q)} H]_{0} H_{[0]}^{-1}.
\end{align*}
 \end{theorem}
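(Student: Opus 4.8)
The plan is to evaluate every object at the single point $\boldsymbol p$ at which \emph{all} the hyperplanes $\pi_a^+$ meet, and to convert the one-direction relation of Proposition~\ref{importante} into a first order matrix recursion in the degree $k$. Since $N\in\operatorname{GL}(\R^D)$, the $D$ equations $\n_a\cdot\boldsymbol p=q_a$, $a=1,\dots,D$, have a unique common solution $\boldsymbol p=N^{-1}\boldsymbol q$, namely the vertex $\bigcap_{a=1}^D\pi_a^+$; this is the point abbreviated $\boldsymbol q$ in the statement. Because $\boldsymbol p\in\pi_a^+$ for \emph{every} $a$ simultaneously, Proposition~\ref{importante} supplies, at this fixed $\boldsymbol p$ (and fixed $\m$), the $D$ relations
\begin{align*}
(\n_a\cdot\boldsymbol\Lambda)_{[k-1],[k]}P_{[k]}(\boldsymbol p)=-\alpha_{a,[k-1]}P_{[k-1]}(\boldsymbol p),\qquad a=1,\dots,D.
\end{align*}

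First I would stack these $D$ identities as block rows. By the definition of $[N\boldsymbol\Lambda]_{k-1}$ the left-hand sides assemble into $[N\boldsymbol\Lambda]_{k-1}P_{[k]}(\boldsymbol p)$. For the right-hand sides I would use the diagonal form $\alpha_{a,[k-1]}=(T_aH_{[k-1]})H_{[k-1]}^{-1}$ from \eqref{H1}, so that the stacked coefficient column is exactly $[\boldsymbol T^{(N\boldsymbol q)}H]_{k-1}\,H_{[k-1]}^{-1}$; hence
\begin{align*}
[N\boldsymbol\Lambda]_{k-1}P_{[k]}(\boldsymbol p)=-[\boldsymbol T^{(N\boldsymbol q)}H]_{k-1}\,H_{[k-1]}^{-1}\,P_{[k-1]}(\boldsymbol p).
\end{align*}
Now $[N\boldsymbol\Lambda]_{k-1}$ has full column rank, so its pseudo-inverse $[N\boldsymbol\Lambda]_{k-1}^{+}$ is a genuine left inverse; multiplying on the left by it gives the first order recursion
\begin{align*}
P_{[k]}(\boldsymbol p)=-[N\boldsymbol\Lambda]_{k-1}^{+}[\boldsymbol T^{(N\boldsymbol q)}H]_{k-1}\,H_{[k-1]}^{-1}\,P_{[k-1]}(\boldsymbol p).
\end{align*}

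The remainder is bookkeeping. Iterating this recursion from level $k$ down to level $0$ produces the announced alternating product, each descent contributing one factor $-[N\boldsymbol\Lambda]_{j}^{+}[\boldsymbol T^{(N\boldsymbol q)}H]_{j}H_{[j]}^{-1}$ together with a sign, for a total prefactor $(-1)^k$. The recursion terminates at $P_{[0]}=\chi_{[0]}=1$, so the final factor is simply $[N\boldsymbol\Lambda]_{0}^{+}[\boldsymbol T^{(N\boldsymbol q)}H]_{0}H_{[0]}^{-1}$, and one recovers precisely the stated closed form for $P_{[k]}(\boldsymbol q)$.

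The only genuinely delicate points, where I expect to spend care rather than computational effort, are the following. First, I must check that Proposition~\ref{importante} really may be invoked for all $D$ directions at the \emph{same} argument; this is exactly what forces the evaluation onto the common vertex $N^{-1}\boldsymbol q$ rather than on a single hyperplane $\pi_a^+$, and it is the crux of the whole construction. Second, I must justify the left-inverse step, i.e. that stacking the $D$ blocks $(\n_a\cdot\boldsymbol\Lambda)_{[k-1],[k]}$ along the linearly independent directions $\n_a$ does yield the full column rank $|[k]|$ asserted in the text, so that $[N\boldsymbol\Lambda]_{k-1}^{+}[N\boldsymbol\Lambda]_{k-1}=\I_{[k]}$ and no information is discarded upon inversion. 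Everything else is the routine telescoping of the recursion.
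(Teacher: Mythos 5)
Your proposal is correct and follows essentially the same route as the paper's own proof: evaluation of Proposition \ref{importante} at the common vertex $\boldsymbol p=N^{-1}\boldsymbol q$ of the hyperplanes $\pi_a^+$, stacking the $D$ relations into $[N\boldsymbol\Lambda]_{k-1}P_{[k]}=-[\boldsymbol T^{(N\boldsymbol q)}H]_{k-1}H_{[k-1]}^{-1}P_{[k-1]}$, applying the Moore--Penrose pseudo-inverse as a left inverse (the full column rank of $[N\boldsymbol\Lambda]_{k-1}$ being asserted in the text preceding the theorem), and telescoping down to $P_{[0]}=1$. The two ``delicate points'' you single out are exactly the ingredients the paper relies on, so no gap remains.
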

 \begin{proof}
Proposition \ref{importante} takes an interesting form if we choose $\boldsymbol p\in\cap_{a=1}^D\pi_a^+$; i.e., $\boldsymbol n_a\cdot \boldsymbol p=q_a$, which means that $N\boldsymbol p=\boldsymbol q$ and as $N$ is invertible we have $\boldsymbol p=N^{-1}\boldsymbol q$. In this case  \eqref{eq:relation} takes the form
\begin{align*}
[N\boldsymbol\Lambda]_{k-1}P_{[k]}(N^{-1}\boldsymbol q)=
 -[\boldsymbol {T}^{(\boldsymbol q)}H]_{k-1} H_{[k-1]}^{-1} P_{[k-1]}(N^{-1}\boldsymbol q).
 \end{align*}
 But, since $[N\boldsymbol \Lambda]_{k-1}$ has full column rank $k$ we
 can take its Moore--Penrose pseudoinverse $[N\boldsymbol\Lambda]_{k-1}^{+}$ to get
 \begin{align*}
P_{[k]}(N^{-1}\boldsymbol q)=
 -[N\boldsymbol\Lambda]_{(k-1)}^+[\boldsymbol {T}^{(\boldsymbol q)}H]_{k-1} H_{[k-1]}^{-1} P_{[k-1]}(N^{-1}\boldsymbol q).
 \end{align*}
Iterating this relation we get the desired result.
 \end{proof}

 In the one dimensional case we only have one component and the block matrices are just  numbers and we should replace $(\n_a\cdot\boldsymbol\Lambda)_{[k-1],[k]}$ by 1.
Thus, for $D=1$ the Theorem \eqref{theorem:tauMVOPR} gives
\begin{align*}
 P_{k}(q)=(-1)^kT H_{k-1}(H_{k-1})^{-1}T H_{k-2} (H_{k-2})^{-1}\cdots T H_{0} H_{0}^{-1},
\end{align*}
so that
\begin{align*}
 P_{k}(q)=&(-1)^k\frac{T H_{k-1}T H_{k-2} \cdots T H_{0}}{H_{k-1}H_{k-2}\cdots H_{0}}
 \\=&(-1)^k\frac{T\tau_k}{\tau_k}, & \tau_k\coloneq \det G^{[k-1]}=H_{k-1}H_{k-2}\cdots H_{0}.
\end{align*}
This is  a well known expression in terms of Miwa shifts of $\tau$-functions, see \S \ref{miwa}, where the $\tau$-function is the determinant of the OPRL  moment matrix

\subsubsection{Quasi-tau matrix expressions for the second kind functions}
\begin{pro}\label{pro6}
The second kind functions satisfy
   \begin{align}
  \rho_{a,[k]}(T_aC)_{[k-1]}+  (T_aC)_{[k]}&=(\n_a\cdot\x-q_a)C_{[k]}-\n_a\cdot{\boldsymbol {\widehat C}}_{[k]},\label{eq: MC}\\
\alpha_{a,[k]}C_{[k]} +(\n_a\cdot\boldsymbol\Lambda)_{[k],[k+1]} C_{[k+1]}&=  (T_a C)_{[k]}.\label{second}
 \end{align}
\end{pro}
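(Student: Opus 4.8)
The plan is to derive both identities from two master relations that parallel the polynomial relations $P=M_a(T_aP)$ and $\omega_aP=(\n_a\cdot\x-q_a)(T_aP)$ used in the preceding proposition, the only genuinely new feature being the appearance of the reduced second kind functions $\boldsymbol{\widehat C}$. The starting observation is that $C=W_2\chi^*$ with $W_2=H(S^{-1})^\top$, and that $\chi^*$ does not depend on the discrete variable $\m$, so the translation $T_a$ acts only on the wave matrix $W_2$.

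First I would establish the relation $T_aC=\omega_aC$. Since $\chi^*$ is $\m$-independent, $T_aC=(T_aW_2)\chi^*$, and the linear system of Proposition \ref{pro5} gives $T_aW_2=\omega_aW_2$, whence $T_aC=\omega_aW_2\chi^*=\omega_aC$. Writing $\omega_a=\alpha_a+\n_a\cdot\boldsymbol\Lambda$ with $\alpha_a$ block diagonal and $\n_a\cdot\boldsymbol\Lambda$ carrying the single superdiagonal block $(\n_a\cdot\boldsymbol\Lambda)_{[k],[k+1]}$, and extracting the $[k]$ block, yields $\alpha_{a,[k]}C_{[k]}+(\n_a\cdot\boldsymbol\Lambda)_{[k],[k+1]}C_{[k+1]}=(T_aC)_{[k]}$, which is exactly \eqref{second}.

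For \eqref{eq: MC} I would multiply the relation just obtained by $M_a$ on the left and use the $LU$ factorization \eqref{eq:Cholesky for J}, namely $M_a\omega_a=\n_a\cdot\boldsymbol J-q_a$, to obtain $M_a(T_aC)=M_a\omega_aC=(\n_a\cdot\boldsymbol J-q_a)C$. The key step is evaluating $(\n_a\cdot\boldsymbol J)C$: applying Proposition \ref{pro4} in the case $k=1$ (a single label) gives $J_bC=x_bC-\widehat C_b$, so $(\n_a\cdot\boldsymbol J)C=\sum_b n_{a,b}(x_bC-\widehat C_b)=(\n_a\cdot\x)C-\n_a\cdot\boldsymbol{\widehat C}$, and therefore $M_a(T_aC)=(\n_a\cdot\x-q_a)C-\n_a\cdot\boldsymbol{\widehat C}$. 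Finally, using $M_a=\I+\rho_a$ with $\rho_a$ supported on the first subdiagonal, the $[k]$ block of the left-hand side is $(T_aC)_{[k]}+\rho_{a,[k]}(T_aC)_{[k-1]}$ and that of the right-hand side is $(\n_a\cdot\x-q_a)C_{[k]}-\n_a\cdot\boldsymbol{\widehat C}_{[k]}$; equating the two gives \eqref{eq: MC}.

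The main obstacle, and really the only subtle point, is the correction term $\n_a\cdot\boldsymbol{\widehat C}$: unlike the polynomial case, where $\n_a\cdot\boldsymbol J$ acts on $P$ as plain multiplication by $\n_a\cdot\x$ (cf. \eqref{pro9}), acting on the Cauchy-transform vector $C$ produces the reduced second kind functions, and one must track them carefully through Proposition \ref{pro4}. Everything else reduces to reading off blocks of $\omega_a=\alpha_a+\n_a\cdot\boldsymbol\Lambda$ and $M_a=\I+\rho_a$, which is routine block bookkeeping.
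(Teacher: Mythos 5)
Your proof is correct, and it reaches \eqref{eq: MC} by a different route than the paper. The paper derives \eqref{eq: MC} by a direct wave-matrix computation: it substitutes the explicit formula \eqref{eq:Ma+alternative} for $M_a$ together with the definition of $T_aC$, so that the product $M_a(T_aC)$ collapses to $H(S^{-1})^\top(\n_a\cdot\boldsymbol\Lambda-q_a)^\top\chi^*$, and then evaluates $(\n_a\cdot\boldsymbol\Lambda-q_a)^\top\chi^*=(\n_a\cdot\x-q_a)\chi^*-\n_a\cdot\widehat{\boldsymbol{\chi^*}}$ directly from \eqref{lambdaTchi} and \eqref{piachi}; the identity \eqref{second} is then obtained, exactly as you do, from $\omega_aC=T_aC$. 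You instead factor through the Jacobi matrices: $M_a(T_aC)=M_a\omega_aC=(\n_a\cdot\boldsymbol J-q_a)C$ by the $LU$ factorization \eqref{eq:Cholesky for J} of Proposition \ref{prepa}, and then you invoke Proposition \ref{pro4} with a single label to write $(\n_a\cdot\boldsymbol J)C=(\n_a\cdot\x)C-\n_a\cdot\boldsymbol{\widehat C}$. The two arguments are logically equivalent, since the proof of Proposition \ref{pro4} consists of precisely the $\chi^*$-level manipulation that the paper repeats inline here; what your version buys is modularity, reusing two results already established (Propositions \ref{prepa} and \ref{pro4}) instead of redoing the cancellation of the $(T_aS)$ and $(T_aH)$ factors, at the cost of implicitly carrying the Cholesky hypotheses under which Proposition \ref{prepa} holds. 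Your explicit justification of $T_aC=\omega_aC$ (via $C=W_2\chi^*$, the $\m$-independence of $\chi^*$, and the linear system \eqref{eq: linear W}) is also a small improvement, since the paper uses this relation without comment.
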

\begin{proof}
See Appendix \ref{proof6}.
\end{proof}

From now on in this subsection we take $N=\I_D$.
\begin{definition}\label{T}
  Let us introduce the composed or total translation
  \begin{align*}
   T&=\prod_{a=1}^DT_{a}, & M&\coloneq S\big(T S\big)^{-1}.
  \end{align*}
\end{definition}
Observe that $P=M(TP)$. From  Cholesky factorization \eqref{cholesky} we get
\begin{pro}The following relation holds
\begin{align}\label{eq:alternativeMD}
M&= (H)\big(S^{-1}\big)^{\top}\Big[\prod_{a=1}^D(\Lambda_a-q_a)\Big]^\top(TS)^{\top}(TH)^{-1}.
\end{align}
\end{pro}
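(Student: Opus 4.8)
The plan is to mimic the derivation of the single-flow formula \eqref{eq:Ma+alternative}, replacing the single translation $T_a$ by the composed translation $T=\prod_{a=1}^DT_a$. First I would record how $T$ acts on the moment matrix. Since we have set $N=\I_D$, Proposition \ref{pro:discrete G} reads $T_aG=G(\Lambda_a^\top-q_a)$ for each $a$, and iterating over $a=1,\dots,D$ gives
\begin{align*}
 TG=G\prod_{a=1}^D(\Lambda_a^\top-q_a).
\end{align*}
Here the order of the factors is immaterial: transposing the commutation relation $\Lambda_b^\top\Lambda_a=\Lambda_a\Lambda_b^\top$ of Proposition \ref{pro:Lambda} shows that the matrices $\Lambda_a^\top-q_a$ mutually commute, so the product is well defined regardless of the order in which the $T_a$ are applied.

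Next I would insert the two Cholesky factorizations. On one side, $TG=(TS)^{-1}(TH)\big((TS)^{-1}\big)^\top$; on the other, using \eqref{cholesky} together with the identity above, $TG=S^{-1}H(S^{-1})^\top\prod_{a=1}^D(\Lambda_a^\top-q_a)$. Equating the two, multiplying on the left by $S$ and recognising $M=S(TS)^{-1}$, I obtain
\begin{align*}
 M(TH)\big((TS)^{-1}\big)^\top=H(S^{-1})^\top\prod_{a=1}^D(\Lambda_a^\top-q_a).
\end{align*}
I would then multiply on the right by $(TS)^\top(TH)^{-1}$. The decisive simplification is that $\big((TS)^{-1}\big)^\top(TS)^\top=\big((TS)(TS)^{-1}\big)^\top=\I$, which collapses the left-hand side to $M(TH)(TH)^{-1}=M$, yielding
\begin{align*}
 M=H(S^{-1})^\top\Big[\prod_{a=1}^D(\Lambda_a^\top-q_a)\Big](TS)^\top(TH)^{-1}.
\end{align*}

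Finally I would convert the product of transposes into the transpose of a product in the stated order. Since $\big[\prod_{a=1}^D(\Lambda_a-q_a)\big]^\top$ is the product of the factors $\Lambda_a^\top-q_a$ in reversed order, and these factors commute, it equals $\prod_{a=1}^D(\Lambda_a^\top-q_a)$; substituting this recovers exactly \eqref{eq:alternativeMD}. The whole argument is essentially the bookkeeping that produced \eqref{eq:Ma+alternative}, so the only genuinely new point — and the one I would be most careful about — is precisely this last manoeuvre: one must invoke the commutativity of the shift matrices (Proposition \ref{pro:Lambda}) both to make $TG$ a well-defined product and to identify $\prod_a(\Lambda_a^\top-q_a)$ with the transpose of $\prod_a(\Lambda_a-q_a)$ written in the order appearing in the statement.
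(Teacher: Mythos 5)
Your proof is correct and follows essentially the same route as the paper: the paper disposes of this Proposition with the one-line remark that it follows from the Cholesky factorization, in exact parallel to its derivation of \eqref{eq:Ma+alternative}, and your argument is precisely that computation carried out for the composed translation $T$. One small repair to the justification: the mutual commutativity of the factors $\Lambda_a^\top-q_a$ follows from transposing $\Lambda_a\Lambda_b=\Lambda_{\ee_a+\ee_b}=\Lambda_b\Lambda_a$ (Proposition \ref{pro:Lambda}, part 2), not from the mixed relation $\Lambda_b^\top\Lambda_a=\Lambda_a\Lambda_b^\top$, whose transpose $\Lambda_a^\top\Lambda_b=\Lambda_b\Lambda_a^\top$ is again a mixed relation and says nothing about two transposed shifts commuting with each other.
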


This lower unitriangular banded (with $D$ subdiagonals) block matrix can be decomposed into the product of $D$  lower unitriangular with only the  diagonal and first subdiagonal different from zero
\begin{pro}
    For each permutation $\sigma$ in the symmetric group $\mathfrak S_D$ the following decomposition holds true
  \begin{align}\label{factorM-}
    M=M_{\sigma(1)}\big(T_{\sigma 1}M_{\sigma 2}\big)\cdots  \big(T_{\sigma 1}\cdots T_{\sigma(D-1)}M_{\sigma D}\big).
  \end{align}
\end{pro}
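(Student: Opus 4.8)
The plan is to prove the factorization \eqref{factorM-} by a telescoping argument that exploits two elementary features of the discrete flows: the partial translations $T_a$ commute with one another, and each $T_a$ acts as a homomorphism on $\m$-dependent semi-infinite block matrices. Concretely, I would first record that $T_a(AB)=(T_aA)(T_aB)$ and $T_a(A^{-1})=(T_aA)^{-1}$ for any block-compatible $A,B$; both are immediate from the entrywise definition of $T_a$ (and the second from applying $T_a$ to $AA^{-1}=\I$). All the matrices involved, $S$, $M$, and the $M_a$, share the same semi-infinite block shape induced by the graded reversed lexicographic order, so their products and inverses are well defined and these homomorphism rules apply verbatim. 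Together with $T_aT_b=T_bT_a$, this lets me push any composite translation inside the defining relation $M_a=S(T_aS)^{-1}$.

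Next, fix a permutation $\sigma\in\mathfrak S_D$ and, for $j\in\{1,\dots,D\}$, abbreviate the composite translation $U_j\coloneq T_{\sigma 1}\cdots T_{\sigma(j-1)}$, with $U_1$ the identity. Applying $U_j$ to $M_{\sigma j}=S(T_{\sigma j}S)^{-1}$ and using the homomorphism property gives the $j$-th factor of the proposed product:
\[
U_j M_{\sigma j} = (U_j S)\big(U_j T_{\sigma j} S\big)^{-1} = (T_{\sigma 1}\cdots T_{\sigma(j-1)}S)(T_{\sigma 1}\cdots T_{\sigma j}S)^{-1}.
\]
The right factor of $U_j M_{\sigma j}$ is exactly the inverse of the left factor appearing in $U_{j+1}M_{\sigma(j+1)}$, so the product over $j=1,\dots,D$ telescopes:
\[
M_{\sigma(1)}\big(T_{\sigma 1}M_{\sigma 2}\big)\cdots\big(T_{\sigma 1}\cdots T_{\sigma(D-1)}M_{\sigma D}\big) = S\,(T_{\sigma 1}\cdots T_{\sigma D}S)^{-1}.
\]

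Finally, because the flows commute, the composite $T_{\sigma 1}\cdots T_{\sigma D}$ equals $\prod_{a=1}^D T_a=T$ regardless of $\sigma$, so the telescoped product is $S(TS)^{-1}=M$, which is the asserted identity \eqref{factorM-}. There is essentially no hard step here; the only point that deserves care is verifying that $T_a$ distributes cleanly over the size-varying, noncommutative block products and inverses, since this is what makes neighbouring factors cancel. The genuinely instructive content is then a one-line consequence of commutativity: the right-hand side of \eqref{factorM-} looks $\sigma$-dependent yet the left-hand side $M$ is not, and the two are reconciled precisely because $T_{\sigma 1}\cdots T_{\sigma D}$ is permutation-independent. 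This is the discrete counterpart of the standard dressing factorization of the wave operator.
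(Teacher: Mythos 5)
Your proof is correct and follows essentially the same route as the paper: both compute each factor $T_{\sigma 1}\cdots T_{\sigma(j-1)}M_{\sigma j}=(T_{\sigma 1}\cdots T_{\sigma(j-1)}S)(T_{\sigma 1}\cdots T_{\sigma j}S)^{-1}$ from the definition $M_a=S(T_aS)^{-1}$ and let the product telescope to $S(TS)^{-1}=M$. The only difference is that you spell out the homomorphism property of $T_a$ and the permutation-independence of $T_{\sigma 1}\cdots T_{\sigma D}$, which the paper leaves implicit.
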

\begin{proof}
  From Definition \ref{def:Ma} for each factor in the RHS of \eqref{factorM-} we have
    \begin{align*}
    M_{\sigma 1} &=S\big(T_{\sigma 1}S\big)^{-1},\\
    T_{\sigma 1}M_{\sigma 2}&=\big(T_{\sigma 1}S\big)\big(T_{\sigma 1}T_{\sigma 2}S\big)^{-1},\\   \shortvdotswithin{=}
T_{\sigma(1)}\cdots T_{\sigma(D-1)}M_{\sigma(D)}&=
\big(T_{\sigma 1}\cdots T_{\sigma(D-1)}S\big)\big(T_{\sigma 1}\cdots T_{\sigma D}S\big)^{-1},
  \end{align*}
  and the result follows.
\end{proof}
\begin{definition}
  We introduce
\begin{align}\label{rhoij}
    \rho^{(a)}_{[k]}\coloneq &\big(\prod_{j=a}^DT_{j}^{-1}\big)\big(\rho_{a,[k]}\big)\in\R^{k\times(k-1)}.
\end{align}
\end{definition}
Matrices that can be expressed as
\begin{pro}
  We have
  \begin{align}\label{rhoij2}
    \rho^{(a)}_{[k]}\coloneq &\big(\prod_{j=a}^DT_{j}^{-1}H_{[k]}\big)
    (\Lambda_a)_{[k-1],[k]}^\top
    \big( \prod_{j=a+1}^DT_{j}^{-1}H_{[k-1]}\big) ^{-1}.
\end{align}
\end{pro}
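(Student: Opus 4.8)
The plan is to start from the already-established expression \eqref{rho1} for $\rho_{a,[k]}$ and simply apply the composite inverse translation $\prod_{j=a}^DT_j^{-1}$ that enters the definition \eqref{rhoij}. With $N=\I_D$ in force, so that $\n_a\cdot\boldsymbol\Lambda=\Lambda_a$, relation \eqref{rho1} reads
\begin{align*}
 \rho_{a,[k]}= H_{[k]}(\Lambda_a)_{[k-1],[k]}^\top\big(T_aH_{[k-1]}\big)^{-1}.
\end{align*}
The only $\m$-dependent factors on the right-hand side are the quasi-tau matrices $H_{[k]}$ and $T_aH_{[k-1]}$; the shift-matrix block $(\Lambda_a)_{[k-1],[k]}$ is a fixed numerical matrix, independent of the lattice point $\m$.

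The key observation I would invoke is that each translation $T_j$ (and its inverse) acts entrywise on matrix-valued functions of $\m\in\Z^D$ and is therefore a ring homomorphism on the algebra of such matrices: it commutes with matrix products and with matrix inversion, and it fixes any constant matrix. Writing $U_a\coloneq\prod_{j=a}^DT_j^{-1}$ and applying $U_a$ to the displayed expression thus distributes across the three factors, leaves $(\Lambda_a)_{[k-1],[k]}^\top$ untouched, and turns the inverse of $T_aH_{[k-1]}$ into the inverse of $U_a\big(T_aH_{[k-1]}\big)$. This yields
\begin{align*}
 \rho^{(a)}_{[k]}=U_a(\rho_{a,[k]})=\big(U_aH_{[k]}\big)(\Lambda_a)_{[k-1],[k]}^\top\big(U_aT_aH_{[k-1]}\big)^{-1}.
\end{align*}

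Finally I would simplify the composite translation acting on the last factor. Since the $T_j$ commute and form a group, $U_aT_a=\big(\prod_{j=a}^DT_j^{-1}\big)T_a=T_a^{-1}T_a\prod_{j=a+1}^DT_j^{-1}=\prod_{j=a+1}^DT_j^{-1}$, so that $U_aT_aH_{[k-1]}=\prod_{j=a+1}^DT_j^{-1}H_{[k-1]}$, while $U_aH_{[k]}=\prod_{j=a}^DT_j^{-1}H_{[k]}$. Substituting these two identities into the previous display reproduces exactly \eqref{rhoij2}. The computation is entirely mechanical; the only point requiring a moment's care is the cancellation $T_a^{-1}T_a$ inside $U_aT_a$, which is what shifts the lower index of the product from $a$ to $a+1$ and thereby distinguishes the two translation prefactors appearing in the statement. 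I expect no genuine obstacle beyond this bookkeeping.
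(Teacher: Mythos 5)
Your proof is correct and is precisely the paper's argument: the paper's proof of this proposition consists of the single remark that the identity ``follows immediately when we substitute \eqref{rho1} into \eqref{rhoij}'', and your computation is exactly that substitution carried out explicitly, using that the commuting translations $T_j$ act as ring homomorphisms (so they distribute over products and inverses and fix the constant block $(\Lambda_a)_{[k-1],[k]}$) together with the cancellation $T_a^{-1}T_a$ that shifts the product index from $a$ to $a+1$.
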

\begin{proof}
  It follows immediately when we substitute \eqref{rho1} into \eqref{rhoij}.
\end{proof}

For second kind functions, the analogous result to Theorem \ref{theorem:tauMVOPR} is
\begin{theorem}\label{teo1}
  The second kind functions can be written as
  \begin{align*}
  C_{[k]}(\boldsymbol q)=(-1)^{k+D} \smashoperator{\sum_{1\leq a_1\leq\cdots \leq a_k\leq D }}\rho^{(a_k)}_{[k]}\cdots \rho^{(a_1)}_{[1]}T^{-1}H_{[0]}
\end{align*}
  or in terms of quasi-tau matrices and its inverse translations  as follows
  \begin{multline}\label{C}
C_{[k]}(\boldsymbol q)= (-1)^{k+D} \!\!\!\!\!\!\!\!\!\!\sum_{1\leq a_1\leq\cdots \leq a_k\leq D }
(\prod_{j=a_k}^DT_j^{-1}H_{[k]})(\Lambda_{a_k})_{[k-1],[k]}^\top(\prod_{j=a_k+1}^DT_{j}^{-1}H_{[k-1]})^{-1}
\cdots\\\times
(\prod_{j=a_1}^DT_j^{-1}H_{[1]})(\Lambda_{a_1})_{[0],[1]}^\top(\prod_{j=a_1+1}^DT_j^{-1}H_{[0]})^{-1} T^{-1}H_{[0]}.
  \end{multline}
\end{theorem}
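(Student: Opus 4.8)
The plan is to imitate the derivation of Theorem \ref{theorem:tauMVOPR}, but now using the second--kind recursion \eqref{eq: MC} in place of \eqref{SnS}, and carrying the reduced functions \eqref{hatsecond} through the iteration. Since we have set $N=\I_D$ we have $\n_a=\ee_a$, so $\n_a\cdot\x=x_a$ and $\n_a\cdot\boldsymbol{\widehat C}_{[k]}=\widehat C_{a,[k]}$; thus \eqref{eq: MC} is exactly the block form of the matrix identity $M_a(T_aC)=(x_a-q_a)C-\widehat C_a$, where $\widehat C_a$ denotes the vector with blocks $\widehat C_{a,[k]}$. This is the object I would iterate.

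First I would promote this to a one--variable reduction relation for the reduced functions. For a set $A\subseteq\{1,\dots,D\}$ and a label $b\notin A$, applying to $M_b(T_bC)=(x_b-q_b)C-\widehat C_b$ the limits $\lim_{x_a\to\infty}(x_a\,\cdot\,)$ for $a\in A$ that define \eqref{hatsecond} --- they commute with the $\x$--independent operators $M_b,T_b$ and factor out of $(x_b-q_b)$ since $b\notin A$ --- gives $M_b(T_b\widehat C_A)=(x_b-q_b)\widehat C_A-\widehat C_{A\cup\{b\}}$, i.e. in blocks $\rho_{b,[k]}(T_b\widehat C_A)_{[k-1]}+(T_b\widehat C_A)_{[k]}=(x_b-q_b)\widehat C_{A,[k]}-\widehat C_{A\cup\{b\},[k]}$. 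Next I evaluate at the point $\x=\boldsymbol q$, which lies in $\bigcap_a\pi_a^+$ and, since $\operatorname{supp}(\dd\mu)\subset R$, in the domain where the Cauchy transforms converge; then $x_b-q_b=0$ kills the first inhomogeneous term, and applying $T_b^{-1}$ (which acts only on the lattice variable, not on $\boldsymbol q$) yields
\[
\widehat C_{A,[k]}(\boldsymbol q)=-(T_b^{-1}\rho_{b,[k]})\,\widehat C_{A,[k-1]}(\boldsymbol q)-(T_b^{-1}\widehat C_{A\cup\{b\},[k]})(\boldsymbol q),
\]
valid for every $b\notin A$ and every level $k$ (with $\widehat C_{A,[-1]}=0$). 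The base of the recursion is the fully reduced function $\widehat C_{[k],1,\dots,D}=H_{[0]}\delta_{0,k}$ established above.

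Finally I would iterate, reducing the variables in the fixed order $b=D,D-1,\dots,1$: at the $a$--th stage take $A=A_a:=\{a+1,\dots,D\}$ and $b=a$, so that $A_a\cup\{a\}=A_{a-1}$, $A_D=\emptyset$ (whence $\widehat C_{A_D}=C$), and $A_0=\{1,\dots,D\}$ (whence the base case). The first term lowers the level through $-T_a^{-1}\rho_{a,[\cdot]}$, while the second passes from $\widehat C_{A_a}$ to $\widehat C_{A_{a-1}}$ carrying one extra $T_a^{-1}$. Nesting the $D$ stages, each fully expanded term performs $m_a\ge0$ level--steps at stage $a$ with $\sum_a m_a=k$: it contributes $m_a$ factors $\rho_a$ together with one transition per stage, so the sign is $(-1)^{k+D}$ ($k$ level--steps, $D$ transitions). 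A $\rho_a$ inserted at stage $a$ sits inside the transition--translations $T_{a+1}^{-1}\cdots T_D^{-1}$ accumulated from the earlier stages $D,\dots,a+1$, plus its own $T_a^{-1}$, hence appears precisely as $\rho^{(a)}_{[i]}=\big(\prod_{j=a}^{D}T_j^{-1}\big)\rho_{a,[i]}$ of \eqref{rhoij}; the terminal $H_{[0]}$ receives all $D$ transition--translations $T_D^{-1}\cdots T_1^{-1}=T^{-1}$. Ordering the level--steps by stage forces the labels to be non--increasing read left to right, i.e. $a_1\le\cdots\le a_k$, so the admissible tuples $(m_1,\dots,m_D)$ are in bijection with multisets $1\le a_1\le\cdots\le a_k\le D$. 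This produces the first displayed formula, and substituting the explicit expression \eqref{rhoij2} for $\rho^{(a)}_{[k]}$ gives \eqref{C} at once.

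The main obstacle will be exactly this translation bookkeeping: verifying that the nested operators $T_b^{-1}$ reassemble into the prefactors $\prod_{j=a}^{D}T_j^{-1}$ defining $\rho^{(a)}$ and into the single $T^{-1}$ acting on $H_{[0]}$, and that the stagewise ordering reproduces the multiset sum with neither over-- nor under--counting. A secondary point to check carefully is the legitimacy of evaluating the reduced Cauchy transforms at $\boldsymbol q$, which I would justify from $\operatorname{supp}(\dd\mu)\subset R$ together with $\boldsymbol q\in\bigcap_a\pi_a^+$ as in Proposition \ref{cauchy}.
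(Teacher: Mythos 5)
Your proof is correct, and it reaches the theorem by a genuinely different organization than the paper's. The paper first composes all $D$ translations at once: it establishes (Lemma \ref{l1}) a single identity for $M(TC)$, where $M=S(TS)^{-1}$ is the total connection matrix, expressing it through $\big[\prod_a(x_a-q_a)\big]C$ together with all the reduced functions $\widehat C_{\sigma 1,\dots,\sigma j}$; this lemma is obtained algebraically from \eqref{eq:alternativeMD} and \eqref{lambdaTq}, not by iteration. Evaluating at $\x=\boldsymbol q$ then kills every term except $(-1)^D\widehat C_{1,\dots,D}=(-1)^D\delta_{k,0}H_{[0]}$, and the theorem follows by explicitly inverting $M$ through its factorization \eqref{factorM-} into bidiagonal factors $\big(\prod_{j=a}^D T_j^{-1}M_a\big)$, whose inverses are written down and whose product's $([k],[0])$ block yields the multiset sum. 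You never form the composed identity: instead you lift the one-direction relation \eqref{eq: MC} to the partially reduced functions $\widehat C_A$ --- the family of identities $M_b(T_b\widehat C_A)=(x_b-q_b)\widehat C_A-\widehat C_{A\cup\{b\}}$, which the paper does not state --- then evaluate at $\boldsymbol q$ and unroll a nested two-index recursion (level $k$, stage $a$), anchored at the same base case $\widehat C_{[k],1,\dots,D}=\delta_{k,0}H_{[0]}$. The terminal combinatorics (multiset ordering, sign $(-1)^{k+D}$, reassembly of the shifts into $\rho^{(a)}_{[i]}$ of \eqref{rhoij} and into $T^{-1}H_{[0]}$) are the same in both proofs; in the paper they arise from multiplying the inverted bidiagonal factors, in yours from unrolling the recursion, and your stage-by-stage accounting of the accumulated $T_j^{-1}$'s is exactly what makes the two agree. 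What the paper's route buys is compactness: once Lemma \ref{l1} and \eqref{factorM-} are in hand, the answer is a single matrix computation. What your route buys is that it is more elementary and self-contained --- it needs only Proposition \ref{pro6} plus limit interchanges, avoids inverting $M$ altogether, and the intermediate relations for $\widehat C_A$ have independent interest --- at the price of heavier bookkeeping of limits, signs and translations, which you carry out correctly (your justification of evaluation at $\boldsymbol q$ via $\operatorname{supp}(\dd\mu)\subset R$ matches the paper's implicit assumption).
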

\begin{proof}
See Appendix \ref{proof7}.
\end{proof}
Observe that for $D=1$ the formula \eqref{C} reads
  \begin{align*}
C_{[k]}(q)= &(-1)^{k+1} (T^{-1}H_{[k]})H_{[k-1]})^{-1}
\cdots
(T^{-1}H_{[1]})H_{[0]})^{-1} T^{-1}H_{[0]}\\
=& (-1)^{k+1} \frac{T^{-1}(H_{[k]}\cdots H_{[0]})}{H_{[k-1]}\cdots H_{[0]}}\\
=& (-1)^{k+1} \frac{T^{-1}\tau_k}{\tau_{k-1}},
  \end{align*}
for $ \tau_k=\det G^{[k]}$, which is the well known formula the well-known formula that expresses  the adjoint Baker functions in terms of $\tau$-functions and Miwa shifts.
For $D=2$ we have $T=T_1T_2$ and $(T^{-1}M)^{-1}=(T_2^{-1}M_2)^{-1} (T^{-1}M_1)^{-1}$
  \begin{gather*}
C_{[k]}(\boldsymbol q)= (-1)^{k}
\Big(T_2^{-1}H_{[k]})(\Lambda_{1})_{[k-1],[k]}^\top(H_{[k-1]})^{-1}
\cdots
(T_2^{-1}H_{[1]})(\Lambda_1)_{[0],[1]}^\top(H_{[0]})^{-1} \\+
(T_2^{-1}H_{[k]})(\Lambda_{1})_{[k-1],[k]}^\top(H_{[k-1]})^{-1}
\cdots
(T_2^{-1}H_{[2]})(\Lambda_1)_{[1],[2]}^\top(H_{[1]})^{-1} (T^{-1}H_{[1]})(\Lambda_2)_{[0],[1]}^\top (T_2^{-1}H_{[0]})^{-1}\\+
(T_2^{-1}H_{[k]})(\Lambda_{1})_{[k-1],[k]}^\top(H_{[k-1]})^{-1}
\cdots
(T^{-1}H_{[2]})(\Lambda_2)_{[1],[2]}^\top(T_2^{-1}H_{[1]})^{-1} (T^{-1}H_{[1]})(\Lambda_2)_{[0],[1]}^\top (T_2^{-1}H_{[0]})^{-1}\\+\begin{multlined}[t]
(T^{-1}H_{[k]})(\Lambda_{2})_{[k-1],[k]}^\top(T_2^{-1}H_{[k-1]})^{-1}\\
\cdots
(T^{-1}H_{[2]})(\Lambda_2)_{[1],[2]}^\top(T_2^{-1}H_{[1]})^{-1} (T^{-1}H_{[1]})(\Lambda_2)_{[0],[1]}^\top (T_2^{-1}H_{[0]})^{-1}\Big)T^{-1}H_{[0]}.\end{multlined}
  \end{gather*}

\subsection{Transforming the Christoffel--Darboux kernels and kernel polynomials}

We give here some relations among translated and non translated Christoffel--Darboux kernels; we begin with the following result for the kernels $K^{(\ell)}(\x,\y)$ and the MOVPR.
\begin{theorem}\label{translatedCD}
The translated and non translated Christoffel--Darboux  kernels are connected by
   \begin{align}\label{kerrSnS}
 K^{(\ell)}(\x,\y)&=(\n_a\cdot\x-q_a)(T_aK)^{(\ell-1)}(\x,\y)+
P_{[\ell-1]}(\x)^{\top}(H_{[\ell-1]})^{-1} (T_aP)_{[\ell-1]}(\y).
\end{align}
 \end{theorem}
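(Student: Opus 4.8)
The plan is to derive \eqref{kerrSnS} by a telescoping computation on the block expansion $K^{(\ell)}(\x,\y)=\sum_{k=0}^{\ell-1}P_{[k]}(\x)^\top(H_{[k]})^{-1}P_{[k]}(\y)$ of Definition \ref{DEFCD}, inserting the two discrete-flow relations \eqref{nose} and \eqref{SnS} which connect $P$ with its shift $T_aP$. The guiding idea is that \eqref{nose} lets me trade the untranslated factor $P_{[k]}(\y)$ for translated ones, while \eqref{SnS}, read at the argument $\x$, is exactly what manufactures the factor $(\n_a\cdot\x-q_a)$ demanded by the right-hand side.

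First I would substitute $P_{[k]}(\y)=(T_aP)_{[k]}(\y)+\rho_{a,[k]}(T_aP)_{[k-1]}(\y)$, from \eqref{nose}, into the $\y$-slot of each summand, splitting $K^{(\ell)}$ into a \emph{diagonal} sum $\sum_{k=0}^{\ell-1}P_{[k]}(\x)^\top(H_{[k]})^{-1}(T_aP)_{[k]}(\y)$ and a \emph{subdiagonal} sum carrying the $\rho_{a,[k]}$. Using the explicit value $\rho_{a,[k]}=H_{[k]}[(\n_a\cdot\boldsymbol\Lambda)_{[k-1],[k]}]^\top(T_aH_{[k-1]})^{-1}$ of \eqref{rho1}, the product $(H_{[k]})^{-1}\rho_{a,[k]}$ collapses to $[(\n_a\cdot\boldsymbol\Lambda)_{[k-1],[k]}]^\top(T_aH_{[k-1]})^{-1}$; after shifting the index $k\mapsto j+1$ (the $k=0$ term drops out) the subdiagonal sum becomes $\sum_{j=0}^{\ell-2}[(\n_a\cdot\boldsymbol\Lambda)_{[j],[j+1]}P_{[j+1]}(\x)]^\top(T_aH_{[j]})^{-1}(T_aP)_{[j]}(\y)$. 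Now I would apply \eqref{SnS} at $\x$ to rewrite the bracket as $(\n_a\cdot\boldsymbol\Lambda)_{[j],[j+1]}P_{[j+1]}(\x)=(\n_a\cdot\x-q_a)(T_aP)_{[j]}(\x)-\alpha_{a,[j]}P_{[j]}(\x)$. The first piece assembles precisely into $(\n_a\cdot\x-q_a)\sum_{j=0}^{\ell-2}(T_aP)_{[j]}(\x)^\top(T_aH_{[j]})^{-1}(T_aP)_{[j]}(\y)=(\n_a\cdot\x-q_a)(T_aK)^{(\ell-1)}(\x,\y)$, which is the leading term of \eqref{kerrSnS}.

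It then remains to show that the diagonal sum minus the residual $\alpha$-sum telescopes to the single boundary block. Here I would use $\alpha_{a,[j]}=(T_aH_{[j]})H_{[j]}^{-1}$ from \eqref{omJ2} together with the symmetry of the Grammian matrices $H_{[j]}$ and $T_aH_{[j]}$ guaranteed by \eqref{H}, so that $[\alpha_{a,[j]}P_{[j]}(\x)]^\top(T_aH_{[j]})^{-1}=P_{[j]}(\x)^\top(H_{[j]})^{-1}$ and the residual sum reduces to $\sum_{j=0}^{\ell-2}P_{[j]}(\x)^\top(H_{[j]})^{-1}(T_aP)_{[j]}(\y)$. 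Subtracting this from the diagonal sum $\sum_{k=0}^{\ell-1}P_{[k]}(\x)^\top(H_{[k]})^{-1}(T_aP)_{[k]}(\y)$ leaves exactly the top term $P_{[\ell-1]}(\x)^\top(H_{[\ell-1]})^{-1}(T_aP)_{[\ell-1]}(\y)$, which closes \eqref{kerrSnS}. The only delicate points are the index bookkeeping in the re-indexing step and the systematic use of $H_{[k]}^\top=H_{[k]}$ to carry $\alpha_{a,[j]}$ across the inner product; once these are handled the cross-terms cancel automatically, so I expect no genuine obstacle beyond careful accounting.
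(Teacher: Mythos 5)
Your proof is correct: every identity you invoke --- \eqref{nose}, \eqref{SnS}, the explicit form \eqref{rho1} of $\rho_{a,[k]}$, and $\alpha_{a,[j]}=(T_aH_{[j]})H_{[j]}^{-1}$ from \eqref{H1} --- is available in the paper, the $k=0$ subdiagonal term indeed vanishes, the reindexing $k\mapsto j+1$ is sound, and the symmetry $H_{[j]}^\top=H_{[j]}$, $(T_aH_{[j]})^\top=T_aH_{[j]}$ (both are Grammians coming from the Cholesky factorization of symmetric moment matrices) legitimately carries $\alpha_{a,[j]}$ across the pairing, after which the telescoping closes. Your route is organized genuinely differently from the paper's, though both ultimately rest on the same two facts $M_a(T_aP)=P$ and $\omega_a P=(\n_a\cdot\x-q_a)\,T_aP$. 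The paper evaluates the truncated sandwich $P^{[\ell]}(\x)^\top(H^{[\ell]})^{-1}M_a^{[\ell]}(T_aP^{[\ell]})(\y)$ in two ways: acting to the right it collapses to $K^{(\ell)}(\x,\y)$, while acting to the left with $M_a=H\omega_a^\top(T_aH)^{-1}$ --- which forces explicit bookkeeping of the truncation defect of the upper-triangular factor $\omega_a$, producing the boundary block $(\n_a\cdot\boldsymbol\Lambda)_{[\ell-1],[\ell]}P_{[\ell]}(\x)$ --- it reaches the intermediate identity \eqref{CDprevio}, which relates $K^{(\ell)}$ to $T_aK^{(\ell)}$ with the \emph{same} superscript; a single application of \eqref{SnS} to that one boundary term then lowers the superscript and yields \eqref{kerrSnS}. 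You instead expand $M_a=\I+\rho_a$ blockwise inside the defining sum of $K^{(\ell)}$ and apply \eqref{SnS} inside every summand before telescoping. What your organization buys: since $M_a$ is block lower unitriangular, all manipulations stay inside the truncation, so the truncation-defect analysis that occupies most of the paper's proof simply disappears. The price: you need the symmetry of $H_{[j]}$ and $T_aH_{[j]}$ explicitly (the paper's proof never does, because it keeps $M_a$ in the transposed form \eqref{eq:Ma+alternative}, where the factors of $H$ cancel without any transposition), and you invoke \eqref{SnS} at every level $j=0,\dots,\ell-2$ rather than once at the top.
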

 \begin{proof}
See Appendix \ref{prooftranslatedCD}.
 \end{proof}

For the second kind kernels $Q^{(\ell)}(\x,\y) $ and second kind functions $C(\x)$we have
  \begin{pro}
The transformed and initial  second kind Christoffel--Darboux kernels are connected by
\begin{align}\label{kerrQSnS}
 (T_{a}Q)^{(\ell)}(\x,\y)-(T_{a}Q)^{(\ell)}(\x,\y_a)&=(y_a-q_a)Q^{(\ell)}(\x,\y)\\
 \nonumber &+\left[ C_{[\ell]}(\x) \right ]^{\top}\left[(\Lambda_a)_{[\ell-1][\ell]} \right]^\top (T_{a}H)_{[\ell-1]}^{-1} \left[ (T_{a}C)_{[\ell-1]}(\y)-(T_{a}C)_{[\ell-1]}(\y_a)\right]
\end{align}
 Moreover, these kernels fulfill
  \begin{align*}
  &Q^{(\ell)}(\x,\y)+(T_{a}Q)^{(\ell)}(\x_a,\y)-(T_{a}Q)^{(\ell)}(\x,\y_a)=\\
  &=\frac{\left[(\Lambda_a)_{[\ell-1],[\ell]}C_{[\ell]}(\x)\right]^\top\left[\left(H_{[\ell-1]} \right)^{-1}[C_{[\ell-1]}(\y)]-\left((T_{a}H)_{[\ell-1]} \right)^{-1}[(T_{a}C)_{[\ell-1]}(\y_a)]\right]}{x_a-y_a}\\
  &-\frac{\left[\left(H_{[\ell-1]} \right)^{-1} C_{[\ell-1]}(\x)-\left((T_{a}H)_{[\ell-1]} \right)^{-1} (T_aC)_{[\ell-1]}(\x_a)\right]^{\top}\left[(\Lambda_a)_{[\ell-1],[\ell]}C_{[\ell]}(\y) \right]}{x_a-y_a}.
 \end{align*}
 \end{pro}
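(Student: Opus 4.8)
The plan is to prove both identities by Christoffel–Darboux telescoping built on the two discrete one-step relations \eqref{eq: MC} and \eqref{second} (here $N=\I_D$, so that $\n_a=\ee_a$ and $\n_a\cdot\x=x_a$), together with the explicit shapes of $\rho_{a,[k]}$ in \eqref{rho1} and $\alpha_{a,[k]}$ in \eqref{H1}, and the symmetry of the Gram matrices $H_{[k]}$ and $T_aH_{[k]}$. Throughout, $\y_a$ (resp. $\x_a$) denotes $\y$ (resp. $\x$) with its $a$-th entry frozen to the value $q_a$, i.e. the evaluation on the hyperplane $\pi_a^+$; the decisive feature is that the reduced second kind function $\widehat C_a$ does not depend on the $a$-th coordinate.

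For the first equation my first step is to subtract \eqref{eq: MC} evaluated at $\y_a$ from the same relation at $\y$: this annihilates $\widehat C_{a,[k]}$ and leaves the homogeneous recursion $\rho_{a,[k]}D_{k-1}+D_k=(y_a-q_a)C_{[k]}(\y)$ for the differences $D_k:=(T_aC)_{[k]}(\y)-(T_aC)_{[k]}(\y_a)$ (with $D_{-1}=0$). I then write $(T_{a}Q)^{(\ell)}(\x,\y)-(T_{a}Q)^{(\ell)}(\x,\y_a)=\sum_{k=0}^{\ell-1}(T_aC)_{[k]}(\x)^\top (T_aH_{[k]})^{-1}D_k$, insert this recursion, and use \eqref{second} in the transposed form $(T_aC)_{[k]}(\x)^\top(T_aH_{[k]})^{-1}=C_{[k]}(\x)^\top H_{[k]}^{-1}+C_{[k+1]}(\x)^\top(\Lambda_a)_{[k],[k+1]}^\top (T_aH_{[k]})^{-1}$, which follows from $\alpha_{a,[k]}=(T_aH_{[k]})H_{[k]}^{-1}$ and symmetry. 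The $C_{[k]}(\x)^\top H_{[k]}^{-1}C_{[k]}(\y)$ contributions assemble into $(y_a-q_a)Q^{(\ell)}(\x,\y)$, while the remaining pieces, once $\rho_{a,[k]}=H_{[k]}(\Lambda_a)_{[k-1],[k]}^\top (T_aH_{[k-1]})^{-1}$ is used, telescope: the interior terms cancel in pairs and only the top boundary term at $k=\ell-1$ survives, giving exactly \eqref{kerrQSnS}.

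For the second equation I would first exploit that $Q^{(\ell)}$ and $(T_{a}Q)^{(\ell)}$ are symmetric in their arguments (the Gram matrices are symmetric), so \eqref{kerrQSnS} also holds with $\x$ and $\y$ interchanged; subtracting the two versions yields $(T_{a}Q)^{(\ell)}(\x_a,\y)-(T_{a}Q)^{(\ell)}(\x,\y_a)=(y_a-x_a)Q^{(\ell)}(\x,\y)+B_1-B_2$, where $B_1,B_2$ are the two boundary terms. Expanding $(T_aC)_{[\ell-1]}=\alpha_{a,[\ell-1]}C_{[\ell-1]}+(\Lambda_a)_{[\ell-1],[\ell]}C_{[\ell]}$ via \eqref{second}, the quadratic $(\Lambda_a)^\top (T_aH_{[\ell-1]})^{-1}(\Lambda_a)$ pieces in $B_1-B_2$ cancel, and $B_1-B_2$ collapses to the untranslated Christoffel–Darboux numerator plus a boundary term $T_{\mathrm{shift}}$ made of the hyperplane evaluations $(T_aC)_{[\ell-1]}(\x_a)$, $(T_aC)_{[\ell-1]}(\y_a)$; this is precisely the numerator on the right of the asserted formula. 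Feeding this back into the displayed combination $Q^{(\ell)}(\x,\y)+(T_{a}Q)^{(\ell)}(\x_a,\y)-(T_{a}Q)^{(\ell)}(\x,\y_a)$ and eliminating $Q^{(\ell)}(\x,\y)$ through the untranslated kernel formula \eqref{kerQQ} produces the claimed $\tfrac1{x_a-y_a}$ expression.

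The hard part will be the bookkeeping of the reduced second kind functions $\widehat C_a$: they are exactly what makes \eqref{kerQQ} differ from a classical Christoffel–Darboux identity, and the purpose of freezing the $a$-th coordinate at $q_a$ is to trade them for the hyperplane evaluations $(T_aC)_{[k]}(\x_a)$. Concretely, reconciling the two routes above reduces to the identity $T_{\mathrm{shift}}=\sum_{k}\big[\widehat C_{a,[k]}(\x)^\top H_{[k]}^{-1}C_{[k]}(\y)-C_{[k]}(\x)^\top H_{[k]}^{-1}\widehat C_{a,[k]}(\y)\big]$, which I would establish by telescoping the hyperplane recursion $(T_aC)_{[k]}(\x_a)=-\widehat C_{a,[k]}(\x)-\rho_{a,[k]}(T_aC)_{[k-1]}(\x_a)$ coming from \eqref{eq: MC} at $x_a=q_a$. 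Verifying this cancellation, while keeping the varying block sizes and the order of the noncommuting rectangular factors straight, is where the real effort lies.
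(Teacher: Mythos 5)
Your treatment of the first identity \eqref{kerrQSnS} is correct, and it is essentially the paper's own argument in expanded form: your homogeneous recursion for $D_k=(T_aC)_{[k]}(\y)-(T_aC)_{[k]}(\y_a)$ is exactly the relation $M_a\big[(T_aC)(\y)-(T_aC)(\y_a)\big]=(y_a-q_a)C(\y)$ obtained from \eqref{eq: MC}, and your transposed form of \eqref{second} is the identity $M_a^\top H^{-1}=(T_aH)^{-1}\omega_a$ by which the paper moves $M_a$ across the truncated pairing; your telescoping is the blockwise version of that manipulation. For the second identity your route (symmetry of the kernels, \eqref{kerrQSnS} applied in both orders, then \eqref{second}) is genuinely different from the paper's, which instead substitutes $\widehat C_a(\x)=-M_a(T_aC)(\x_a)$ into \eqref{kerQQ} and transposes $M_a$ across the pairing, collecting the boundary block of the truncation $\omega_a^{[\ell]}C^{[\ell]}$. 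Your route is in fact cleaner than you anticipate: your own computation gives $B_1-B_2=N$ exactly, where $N$ denotes the full right-hand numerator of the asserted formula, so the reduced functions $\widehat C_a$ and the $T_{\mathrm{shift}}$ identity you brace for in the last paragraph never need to enter at all.

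The genuine gap is your final step. From $B_1-B_2=N$ your subtraction yields
\begin{align*}
(T_aQ)^{(\ell)}(\x_a,\y)-(T_aQ)^{(\ell)}(\x,\y_a)=(y_a-x_a)\,Q^{(\ell)}(\x,\y)+N,
\end{align*}
so adding $Q^{(\ell)}(\x,\y)$ produces $(1+y_a-x_a)\,Q^{(\ell)}(\x,\y)+N$, and no substitution of \eqref{kerQQ} for $Q^{(\ell)}$ can convert this into $N/(x_a-y_a)$: the extra $Q^{(\ell)}$ contribution survives and the $\widehat C_a$ terms reappear. What your computation actually proves is
\begin{align*}
(x_a-y_a)\,Q^{(\ell)}(\x,\y)+(T_aQ)^{(\ell)}(\x_a,\y)-(T_aQ)^{(\ell)}(\x,\y_a)=N,
\end{align*}
that is, the displayed proposition with the translated-kernel difference also carrying the factor $1/(x_a-y_a)$. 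Carrying the paper's own proof to its end gives exactly this same relation (it can also be confirmed numerically on a two-point measure in $D=1$), so the printed statement is off by this factor; your derivation, stopped one step earlier, is a correct proof of the corrected identity, and the only error is the concluding assertion that the literal printed form follows, which it does not from your computation nor from the paper's.
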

 \begin{proof}
  Apply $T_{a}$ to
  \begin{align}
    \frac{(T_{a}C)(\x)-(T_{a}C)(\x_a)}{x_a-q_a}&=  \big(M_a\big)^{-1}C(\x)
  \end{align} and use \eqref{kerQQ} in
  \begin{align*}
 Q^{(\ell)}(\x,\y) &=\frac{\left[(\Lambda_a)_{[\ell-1],[\ell]}C_{[\ell]}(\x)\right]^\top\left(H_{[\ell-1]} \right)^{-1}\left[C_{[\ell-1]}(\y)\right]-  \left[C_{[\ell-1]}(\x)\right]^\top \left(H_{[\ell-1]} \right)^{-1}\left[(\Lambda_a)_{[\ell-1],[\ell]}C_{[\ell]}(\y) \right]}{x_a-y_a}\\
               \nonumber     &+\frac{\left[-M_a (T_{a}C)^{[\ell]}(\x_a) \right]^{\top}\left(H^{[\ell]}\right)^{-1}C^{[\ell]}(\y)-\left[C^{[\ell]}(\x)\right]^{\top}\left(H^{[\ell]}\right)^{-1}[-M_a (T_aC)^{[\ell]}(\y_a)]}{x_a-y_a}.
  \end{align*}
but now we can let the $M_a$ act on the $C^{[\ell]}$ instead of on the $(T_{a}C)^{[\ell]}$ and this way obtain the result.
 \end{proof}

\subsection{Elementary Darboux transformations and the sample matrix trick}\label{s.darboux}
Darboux transformations were introduced in  \cite{darboux}  in the context of the Sturm--Liouville theory and since them have been applied in several problems. It was in \cite{matveev}, a paper where explicit solutions of the Toda lattice where found, where this covariant transformation was given the name of \emph{Darboux}. It has been used in the 1D realm of orthogonal polynomials quite successfully, see for example \cite{yoon, bueno-marcellan1,bueno-marcellan2, marcellan}.
In Geometry  the theory of transformations of surfaces preserving some given properties conforms a classical subject, in the list of such transformations given in the classical treatise by Einsehart \cite{eisenhart} we find the Levy transformation, which later on was named as elementary Darboux transformation and known in the orthogonal polynomials context as Christoffel transformation \cite{yoon, szego}; in this paper we have denoted it by $T$. The adjoint elementary Darboux or adjoint Levy transformation $T^{-1}$ is also relevant  \cite{matveev,dsm} and is referred some times as a Geronimus transformation \cite{yoon}, and in the notation of this paper corresponds to $T^{-1}$. For further information see \cite{rogers-schief,gu}. In order to  extend it to the multivariate realm let us recall  some basic facts about the 1D case and then extend it to an arbitrary  number of dimensions.

 \subsubsection{The 1-D context. Elementary Darboux transform} For $D=1$ \eqref{eq:relation} reads
  \begin{align*}
    P_k(q)=-\alpha_k P_{k-1}(q)
  \end{align*}
  and as we are dealing with numbers we deduce
\begin{align*}
  \alpha_k=-\frac{P_k(q)}{P_{k-1}(q)}
\end{align*}
that reintroduced in the $D=1$ version of \eqref{SnS} gives, the so called kernel polynomials \cite{yoon},
\begin{align}\label{darboux1D}
TP_{k-1}(x)=\frac{P_k(x)P_{k-1}(q)-P_k(q)P_{k-1}(x)}{x-q}\frac{1}{P_{k-1}(q)}=K^{(k)}(x,q)\frac{H_k}{P_{k-1}(q)}
\end{align}
which is the standard elementary Darboux transformation for the OPRL.  From $\alpha_k=(T H_k)H_k^{-1}$ we get
\begin{align*}
(T_kH_k)P_{k-1}(q)=-P_k(q)H_k,
\end{align*}
Notice that we can recover this relation directly from the $D=1$ version of \eqref{kerrSnS} evaluated at $x=q$
\begin{align*}
 K^{(\ell)}(q,y)=- P_{\ell}(q)(TH_{\ell-1}) ^{-1}   (TP)_{\ell-1}(y).
\end{align*}
That is, according \eqref{darboux1D}, the transformed polynomials are intimately related to the Christoffel--Darboux kernel; this motivates that the polynomials $TP_k$ are sometimes known as kernel polynomials \cite{yoon}.

\subsubsection{The multivariate elementary Darboux transformation}
A nice property of \eqref{darboux1D} is that the Darboux transformed OPRL are expressed explicitly in terms of objects related to the OPRL associated with the original measure. This is apparently lost in the multivariate situation as, despite equation \eqref{SnS} gives new MVOPR associated with the shifted measure $T_a\dd\mu_{\m}$ in terms of the MVOPR for the measure $\dd\mu_{\m}$, now the equivalent relation \eqref{eq:relation}  does not allow to express $\alpha_{a,[k]}$ in terms of MVOPR for the original measure solely. We could use Theorem \ref{quasideterminant alpha} which involves no translations, however it is expressed in terms of quasi-determinants of the Jacobi type matrix and not in terms of the MOVPR. We will show a way to overcome this problem.

We begin with the elementary multivariate Darboux transformation associated with $\n\in\R^D$ and $q\in\R$. For that aim we are going to describe what we call the sample matrix trick.
\begin{definition}\label{def:sample}
Given  the set $\{\boldsymbol{p}_{1},\dots \boldsymbol{p}_{|[k]|}\}\subset\pi^+=\{\x\in\R^D:\x\cdot\n=q\}\subset\mathbb{R}^D$, whose elements are known as \emph{nodes},  we consider
the \emph{sample matrices}
\begin{align*}
    \Sigma^k_{[\ell]}= \PARENS{\begin{matrix}P_{[\ell]}(\boldsymbol p_{1})&\dots&P_{[\ell]}(\boldsymbol p_{|[k]|})\end{matrix}}\in \R^{|[\ell]|\times |[k]|}.
 \end{align*}
The set $\{\boldsymbol{p}_{1},\dots \boldsymbol{p}_{|[k]|}\}$ of nodes is said to be a \emph{poised set}  for the \emph{interpolation polynomials}  $\{P_{\kk_a}\}_{a=1}^{|[k]|}$ if  the sample matrix $\Sigma^k_{[k]}$  is invertible, i.e. $\det \Sigma^k_{[k]}\neq 0$.
\end{definition}
We now consider the transformation generated by the discrete flow $T\dd\mu(x)=(\n\cdot\x-q)\dd\mu(x)$. An important observation is that the matrix $\alpha_{[k]}$ can be expressed in terms of sample matrices of MOVPR, this is the \emph{sample matrix trick},
 \begin{pro}For a poised set $\{\boldsymbol{p}_{1},\dots, \boldsymbol{p}_{|[k]|}\}\subset\pi^+\subset\mathbb{R}^D$ of nodes we can write
   \begin{align}\label{alphaMVOPR}
      \alpha_{[k]}=-(\n\cdot\boldsymbol \Lambda)_{[k],[k+1]}  \Sigma^k_{[k+1]}\big(  \Sigma^k_{[k]}\big)^{-1}.
   \end{align}
 \end{pro}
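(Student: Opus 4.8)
The plan is to specialize the pointwise identity \eqref{eq:relation} of Proposition \ref{importante} to the present single-direction situation (one vector $\n$ and one scalar $q$, so the hyperplane is $\pi^+=\{\x\in\R^D:\x\cdot\n=q\}$ and the subscript $a$ is dropped), then evaluate it at the nodes and assemble the resulting columns into one matrix equation. Shifting the index $k\to k+1$ in \eqref{eq:relation} gives, for \emph{every} $\boldsymbol p\in\pi^+$,
\begin{align*}
(\n\cdot\boldsymbol \Lambda)_{[k],[k+1]}\,P_{[k+1]}(\boldsymbol p)=-\alpha_{[k]}\,P_{[k]}(\boldsymbol p).
\end{align*}
The essential feature to exploit is that this holds identically on the hyperplane $\pi^+$, while $\alpha_{[k]}$ is a single fixed matrix independent of the point $\boldsymbol p$.

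First I would evaluate this relation at each node $\boldsymbol p_1,\dots,\boldsymbol p_{|[k]|}$ of the poised set and place the resulting vectors side by side as columns. By the very definition of the sample matrices in Definition \ref{def:sample}, the columns of $\Sigma^k_{[k+1]}$ are the $P_{[k+1]}(\boldsymbol p_j)$ and those of $\Sigma^k_{[k]}$ are the $P_{[k]}(\boldsymbol p_j)$, so the $|[k]|$ column identities collapse to the single matrix identity
\begin{align*}
(\n\cdot\boldsymbol \Lambda)_{[k],[k+1]}\,\Sigma^k_{[k+1]}=-\alpha_{[k]}\,\Sigma^k_{[k]}.
\end{align*}
A dimension check confirms consistency: since $(\n\cdot\boldsymbol \Lambda)_{[k],[k+1]}\in\R^{|[k]|\times|[k+1]|}$ and $\Sigma^k_{[k+1]}\in\R^{|[k+1]|\times|[k]|}$, both sides are $|[k]|\times|[k]|$ matrices, matching the size of $\alpha_{[k]}=(T_aH_{[k]})H_{[k]}^{-1}$.

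Finally, the poised hypothesis $\det\Sigma^k_{[k]}\neq0$ is precisely what is required: multiplying the matrix identity on the right by $\big(\Sigma^k_{[k]}\big)^{-1}$ yields
\begin{align*}
\alpha_{[k]}=-(\n\cdot\boldsymbol \Lambda)_{[k],[k+1]}\,\Sigma^k_{[k+1]}\big(\Sigma^k_{[k]}\big)^{-1},
\end{align*}
which is exactly \eqref{alphaMVOPR}. There is no hard analytic step; the proof is essentially the observation that an identity holding on all of $\pi^+$ can be solved for $\alpha_{[k]}$ by sampling at $|[k]|$ nodes in sufficiently general position. The one conceptual point worth flagging is that the left-hand side is node-independent, so the product on the right must be the same for any poised set — a consistency automatically guaranteed by the fact that \eqref{eq:relation} holds identically rather than only at the chosen points. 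The only mild obstacle is bookkeeping: ensuring the index shift $k\to k+1$ and the column-stacking convention of the sample matrices are compatible so that the abstract pointwise relation reduces cleanly to the stated formula.
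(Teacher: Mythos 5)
Your proof is correct and is essentially identical to the paper's: both specialize \eqref{eq:relation} (with the index shift $k-1\to k$) to the single direction $\n$, evaluate at the $|[k]|$ nodes, stack the resulting vector identities into the matrix equation $\alpha_{[k]}\Sigma^k_{[k]}=-(\n\cdot\boldsymbol\Lambda)_{[k],[k+1]}\Sigma^k_{[k+1]}$, and invert $\Sigma^k_{[k]}$ using the poised hypothesis. The dimension check and the remark on node-independence are harmless additions not present in the paper's (terser) argument.
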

 \begin{proof}
   From \eqref{eq:relation} we get
 \begin{align*}
\alpha_{[k]} P_{[k]}(\boldsymbol p_{i})&=-(\n\cdot\boldsymbol \Lambda)_{[k],[k+1]}P_{[k+1]}(\boldsymbol p_{i}), & i&=1,\dots,|[k]|,
  \end{align*}
  or
  \begin{align*}
    \alpha_{[k]} \PARENS{\begin{matrix}P_{[k]}(\boldsymbol p_{1}) & \dots &P_{[k]}(\boldsymbol p_{|[k]|})\end{matrix}}=-\PARENS{\begin{matrix}(\n\cdot\boldsymbol \Lambda)_{[k],[k+1]}P_{[k+1]}(\boldsymbol p_{1})&\dots&(\n\cdot\boldsymbol \Lambda)_{[k],[k+1]}P_{[k+1]}(\boldsymbol p_{|[k]|})\end{matrix}},
  \end{align*}
 and as  we are dealing with a poised set of nodes we get the result.
 \end{proof}
Hence, we have a set of nodes $\{\boldsymbol{p}_{1},\dots, \boldsymbol{p}_{|[k]|}\}\subset\pi^+\subset\mathbb{R}^D$ and a set of interpolation data, $-(\n\cdot\boldsymbol \Lambda)_{[k],[k+1]}  \Sigma^k_{[k+1]}$, so that the linear combination  $\phi(\x)=\alpha_{[k]} P_{[k]}(\x)$, the interpolation function,  passes through the interpolation points;  i.e., $\phi(\boldsymbol p_j)=-(\n\cdot\boldsymbol \Lambda)_{[k],[k+1]}P_{[k+1]}(\boldsymbol p_{j})$.

Now, we are ready to give the elementary multivariate Darboux transformations for MVOPR
\begin{theorem}\label{theorem:Darboux}
  Given a poised set $\{\boldsymbol{p}_{1},\dots \boldsymbol{p}_{|[k]|}\}\subset\pi^+\subset\mathbb{R}^D$ of nodes we have the following expressions of the elementary Darboux transformed MVOPR, the kernel polynomials $TP(\x)$ associated with
  $(\n\cdot \x-q)\dd\mu(\x)$, in terms of quasi-determinants of the original MVOPR
  \begin{align}\label{mvopr-darboux}
      (TP)_{[k]}(\x)=(\n\cdot\x-q) ^{-1}(\n\cdot\boldsymbol \Lambda)_{[k],[k+1]}\Theta_*\PARENS{\begin{matrix}
  \Sigma^k_{[k]} & P_{[k]}(\x)\\
 \Sigma^k_{[k+1]} &P_{[k+1]}(\x)\end{matrix}}.
     \end{align}
  For the second kind functions analogous relations hold
    \begin{align}\label{second-darboux}
      (TC)_{[k]}(\x)=(\n\cdot\boldsymbol \Lambda)_{[k],[k+1]}\Theta_*\PARENS{\begin{matrix}
  \Sigma^k_{[k]} & C_{[k]}(\x)\\
 \Sigma^k_{[k+1]} &C_{[k+1]}(\x)\end{matrix}}.
   \end{align}
   \end{theorem}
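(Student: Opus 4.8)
The plan is to read off $(TP)_{[k]}$ directly from the recursion already established and then feed in the sample matrix trick. Shifting the index $k\to k+1$ in \eqref{SnS} and specializing to the single flow $\n\in\R^D$, $q\in\R$ (so that the subscript $a$ is dropped), one has $\alpha_{[k]}P_{[k]}(\x)+(\n\cdot\boldsymbol\Lambda)_{[k],[k+1]}P_{[k+1]}(\x)=(\n\cdot\x-q)(TP)_{[k]}(\x)$. Since $\n\cdot\x-q$ is a scalar, nonzero off the hyperplane $\pi^+$, I would solve for $(TP)_{[k]}(\x)$ and substitute the expression $\alpha_{[k]}=-(\n\cdot\boldsymbol\Lambda)_{[k],[k+1]}\Sigma^k_{[k+1]}(\Sigma^k_{[k]})^{-1}$ furnished by \eqref{alphaMVOPR}, which is legitimate precisely because the set of nodes is poised, so that $\Sigma^k_{[k]}$ is invertible.

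The second step is purely a matter of recognizing a Schur complement. After the substitution both surviving terms carry $(\n\cdot\boldsymbol\Lambda)_{[k],[k+1]}$ as a common left factor, the sizes of the rectangular blocks matching so that pulling it out on the left is admissible; what remains inside the bracket is $P_{[k+1]}(\x)-\Sigma^k_{[k+1]}(\Sigma^k_{[k]})^{-1}P_{[k]}(\x)$. By the definition of the quasi-determinant relative to the distinguished lower-right entry (Appendix \ref{qd}), this is exactly the $\SC$, i.e. $\Theta_*$, of the bordered sample matrix $\PARENS{\begin{smallmatrix}\Sigma^k_{[k]}&P_{[k]}(\x)\\\Sigma^k_{[k+1]}&P_{[k+1]}(\x)\end{smallmatrix}}$. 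Restoring the scalar factor $(\n\cdot\x-q)^{-1}$ and the left factor then yields \eqref{mvopr-darboux}.

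For the second kind functions I would run the identical argument starting from \eqref{second} in place of \eqref{SnS}. The one structural difference to track is that \eqref{second} carries no factor $(\n\cdot\x-q)$: it reads $\alpha_{[k]}C_{[k]}(\x)+(\n\cdot\boldsymbol\Lambda)_{[k],[k+1]}C_{[k+1]}(\x)=(TC)_{[k]}(\x)$, so the transformed second kind functions emerge without the $(\n\cdot\x-q)^{-1}$ prefactor; substituting \eqref{alphaMVOPR}, factoring out $(\n\cdot\boldsymbol\Lambda)_{[k],[k+1]}$ and reading off the Schur complement of the corresponding bordered sample matrix gives \eqref{second-darboux} at once. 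I do not expect a genuine obstacle here: once \eqref{SnS}, \eqref{second} and the trick \eqref{alphaMVOPR} are in hand the result is essentially a bookkeeping identity. The only points demanding care are the poisedness hypothesis, which is what makes $\alpha_{[k]}$ expressible through sample matrices and guarantees that $\Theta_*$ is defined, and the verification that the block sizes are compatible so that $(\n\cdot\boldsymbol\Lambda)_{[k],[k+1]}$ may be extracted on the left and the quasi-determinant interpreted as the stated Schur complement.
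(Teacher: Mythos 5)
Your proposal is correct and follows essentially the same route as the paper: the paper's proof likewise substitutes the sample-matrix expression \eqref{alphaMVOPR} for $\alpha_{[k]}$ into the (index-shifted) relation \eqref{SnS}, recognizes the resulting bracket $P_{[k+1]}(\x)-\Sigma^k_{[k+1]}\big(\Sigma^k_{[k]}\big)^{-1}P_{[k]}(\x)$ as the Schur complement, i.e. the quasi-determinant $\Theta_*$ of the bordered sample matrix, and then handles \eqref{second-darboux} by the identical substitution into \eqref{second}, where the absence of the $(\n\cdot\x-q)$ factor explains the missing prefactor. Your remarks on poisedness and block-size compatibility match the hypotheses the paper relies on, so there is nothing to correct.
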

\begin{proof}
  To prove \eqref{mvopr-darboux} introduce in \eqref{SnS} the expressions given in  \eqref{alphaMVOPR} to get the kernel polynomials
    \begin{align}
    \label{SnS2}
    \begin{aligned}
      (TP)_{[k]}(\x)=&(\n\cdot\x-q) ^{-1}(\n\cdot\boldsymbol \Lambda)_{[k],[k+1]}\big(P_{[k+1]}(\x)- \Sigma^k_{[k+1]}\big(  \Sigma^k_{[k]}\big)^{-1}P_{[k]}(\x)\big)
\\=&(\n\cdot\x-q) ^{-1}(\n\cdot\boldsymbol \Lambda)_{[k],[k+1]}\operatorname{SC}\PARENS{\begin{matrix}
  \Sigma^k_{[k]} & P_{[k]}(\x)\\
 \Sigma^k_{[k+1]} &P_{[k+1]}(\x)\end{matrix}}.
    \end{aligned}
  \end{align}
  from where the results follows.
  For \eqref{second-darboux} we  recall \eqref{second} and use \eqref{alphaMVOPR}.
\end{proof}
We remark that we are using the notation of \cite{olver} for the quasi-determinants, and in fact in this case the lower right corner is a $|[k]|$-th dimensional vector, not even a square matrix, therefore we are dealing with an extended quasi-determinant with some of elements not in a ring.
Notice that this result extends to the multivariate situation the well known 1D situation described by \eqref{darboux1D}.
For the transformed quasi-tau functions $H$'s and the coefficients $\beta$ we have
\begin{pro}When the conditions specified in Theorem \ref{theorem:Darboux} are satisfied the elementary Darboux transformations of the matrices $H_{[k]}$ and $\beta_{[k]}$ are given by
 the following quasi-determinantal formul{\ae}
   \begin{align*}
     (TH)_{[k]}=(\n\cdot\boldsymbol \Lambda)_{[k],[k+1]}\Theta_*\PARENS{\begin{matrix}
  \Sigma^k_{[k]} & H_{[k]}\\
 \Sigma^k_{[k+1]} &0\end{matrix}}.
   \end{align*}
   and we have the relation
   \begin{align*}
    ( T\beta)_{[k]}(\n\cdot\boldsymbol \Lambda)_{[k-1],[k]}= q+(\n\cdot\boldsymbol \Lambda)_{[k],[k+1]}\Theta_*\PARENS{\begin{matrix}
  \Sigma^k_{[k]} & \I_{[k]}\\
 \Sigma^k_{[k+1]} &\beta_{[k+1]}\end{matrix}}.
   \end{align*}
\end{pro}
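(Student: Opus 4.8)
The plan is to obtain both identities directly from the sample-matrix trick \eqref{alphaMVOPR} together with the expressions for $\alpha_{[k]}$ already recorded in \eqref{H1} and \eqref{H2}, reading the starred quasi-determinant $\Theta_*$ as the Schur complement of its (invertible) principal block. Recall that for a block matrix the starred quasi-determinant is exactly the Schur complement, $\Theta_*\left(\begin{smallmatrix}A&B\\C&D\end{smallmatrix}\right)=D-CA^{-1}B$, which is precisely the identification used to pass from $\operatorname{SC}$ to $\Theta_*$ in the proof of Theorem \ref{theorem:Darboux}. Throughout we are in the single-flow situation of this subsection, where $N=\n$, $q\in\R$, and $T\dd\mu=(\n\cdot\x-q)\dd\mu$, so $\alpha_{[k]}$ denotes the single-flow matrix and the principal block $\Sigma^k_{[k]}$ is square and invertible by the poisedness hypothesis.

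For the quasi-tau matrices, the starting point is the single-flow specialization of \eqref{omJ2}--\eqref{H1}, namely $\alpha_{[k]}=(TH_{[k]})H_{[k]}^{-1}$, whence $(TH)_{[k]}=\alpha_{[k]}H_{[k]}$. Substituting the sample-matrix trick $\alpha_{[k]}=-(\n\cdot\boldsymbol\Lambda)_{[k],[k+1]}\Sigma^k_{[k+1]}(\Sigma^k_{[k]})^{-1}$ from \eqref{alphaMVOPR} gives $(TH)_{[k]}=-(\n\cdot\boldsymbol\Lambda)_{[k],[k+1]}\Sigma^k_{[k+1]}(\Sigma^k_{[k]})^{-1}H_{[k]}$. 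It then suffices to recognize the trailing factor as a Schur complement: $\Theta_*\left(\begin{smallmatrix}\Sigma^k_{[k]}&H_{[k]}\\\Sigma^k_{[k+1]}&0\end{smallmatrix}\right)=0-\Sigma^k_{[k+1]}(\Sigma^k_{[k]})^{-1}H_{[k]}$, which delivers the first claimed formula after left-multiplication by $(\n\cdot\boldsymbol\Lambda)_{[k],[k+1]}$.

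For the coefficients $\beta$, the starting point is the single-flow form of \eqref{H2}, $\alpha_{[k]}=(T\beta_{[k]})(\n\cdot\boldsymbol\Lambda)_{[k-1],[k]}-(\n\cdot\boldsymbol\Lambda)_{[k],[k+1]}\beta_{[k+1]}-q$. Solving for the term carrying $T\beta_{[k]}$ yields $(T\beta_{[k]})(\n\cdot\boldsymbol\Lambda)_{[k-1],[k]}=q+(\n\cdot\boldsymbol\Lambda)_{[k],[k+1]}\beta_{[k+1]}+\alpha_{[k]}$. Inserting \eqref{alphaMVOPR} once more and factoring out $(\n\cdot\boldsymbol\Lambda)_{[k],[k+1]}$ produces $q+(\n\cdot\boldsymbol\Lambda)_{[k],[k+1]}\bigl(\beta_{[k+1]}-\Sigma^k_{[k+1]}(\Sigma^k_{[k]})^{-1}\bigr)$, and the bracketed matrix is exactly $\Theta_*\left(\begin{smallmatrix}\Sigma^k_{[k]}&\I_{[k]}\\\Sigma^k_{[k+1]}&\beta_{[k+1]}\end{smallmatrix}\right)=\beta_{[k+1]}-\Sigma^k_{[k+1]}(\Sigma^k_{[k]})^{-1}\I_{[k]}$, giving the second relation.

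I expect no genuine obstacle: once \eqref{alphaMVOPR}, \eqref{H1} and \eqref{H2} are in hand the argument is pure bookkeeping, the only point requiring care being the Schur-complement reading of $\Theta_*$ and the invertibility of the principal block $\Sigma^k_{[k]}$ secured by poisedness. As flagged already in Theorem \ref{theorem:Darboux}, these are \emph{extended} quasi-determinants: the off-diagonal and corner blocks $\Sigma^k_{[k+1]}$, $0$ and $\beta_{[k+1]}$ are rectangular of size $|[k+1]|\times|[k]|$. This causes no difficulty, since only the square block $\Sigma^k_{[k]}$ is inverted, so the Schur complement $D-\Sigma^k_{[k+1]}(\Sigma^k_{[k]})^{-1}B$ is unambiguous and the final left-multiplication by $(\n\cdot\boldsymbol\Lambda)_{[k],[k+1]}$ restores the correct $|[k]|\times|[k]|$ shape, matching $(TH)_{[k]}$ and $(T\beta_{[k]})(\n\cdot\boldsymbol\Lambda)_{[k-1],[k]}$ respectively.
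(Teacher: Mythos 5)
Your proposal is correct and follows essentially the same route as the paper's own proof: both derive the formulae by equating the sample-matrix expression \eqref{alphaMVOPR} for $\alpha_{[k]}$ with \eqref{H1} for the quasi-tau matrices and with \eqref{H2} for the $\beta$ coefficients, then reading the resulting Schur complements as the starred quasi-determinants. The only difference is that you make explicit the Schur-complement interpretation of $\Theta_*$ and the rectangular-block bookkeeping, which the paper leaves implicit.
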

\begin{proof}
  The first relation is a immediate consequence \eqref{alphaMVOPR} and \eqref{H1} so that
  \begin{align*}
(TH)_{[k]}=-(\n\cdot\boldsymbol \Lambda)_{[k],[k+1]}  \Sigma^k_{[k+1]}\big(  \Sigma^k_{[k]}\big)^{-1}H_{[k]}.
  \end{align*}
  From \eqref{alphaMVOPR} and \eqref{H2}  we get
  \begin{align*}
   (T\beta)_{[k]}( \n\cdot\boldsymbol\Lambda)_{[k-1],[k]}-
 ( \n_a\cdot\boldsymbol\Lambda)_{[k],[k+1]}\beta_{[k+1]}- q=-(\n\cdot\boldsymbol \Lambda)_{[k],[k+1]}  \Sigma^k_{[k+1]}\big(  \Sigma^k_{[k]}\big)^{-1}
  \end{align*}
  that is
    \begin{align*}
   (T\beta)_{[k]}( \n\cdot\boldsymbol\Lambda)_{[k-1],[k]}=q+(\n\cdot\boldsymbol \Lambda)_{[k],[k+1]} \big(\beta_{[k+1]}-( \Sigma^k_{[k+1]}\big(  \Sigma^k_{[k]}\big)^{-1}\big).
  \end{align*}
\end{proof}
Observe that we can  diagrammatically write \emph{\`{a} la Gel'fand}
 \begin{align*}
 (TH)_{[k]}=&\begin{vmatrix}
  \Sigma^k_{[k]} & H_{[k]}\\
 (\n\cdot\boldsymbol \Lambda)_{[k],[k+1]}\Sigma^k_{[k+1]} &\boxed{0}\end{vmatrix},\\
    ( T\beta)_{[k]}(\n\cdot\boldsymbol \Lambda)_{[k-1],[k]}=& q+\begin{vmatrix}
  \Sigma^k_{[k]} & \I_{[k]}\\
 (\n\cdot\boldsymbol \Lambda)_{[k],[k+1]}\Sigma^k_{[k+1]} &\boxed{(\n\cdot\boldsymbol \Lambda)_{[k],[k+1]}\beta_{[k+1]}}\end{vmatrix}.
  \end{align*}

We can give a more explicit expression for $\beta$ using the Moore--Penrose pseudo-inverse. In fact, using Proposition \ref{moorepenroseright} and the multinomial matrix $\mathcal M_{[k]}$ \eqref{multinomialmatrix} we can write
  \begin{align*}
    ( T\beta)_{[k]}%=& q\mathcal M_{[k]}^{-1/2}\big((\n\cdot\boldsymbol \Lambda)_{[k-1],[k]}\mathcal M_{[k]}^{-1/2}\big)^++(\n\cdot\boldsymbol \Lambda)_{[k],[k+1]}\Theta_*\PARENS{\begin{matrix}
%  \Sigma^k_{[k]} & \I_{[k]}\\
% \Sigma^k_{[k+1]} &\beta_{[k+1]}\end{matrix}}\mathcal M_{[k]}^{-1/2}\big((\n\cdot\boldsymbol \Lambda)_{[k-1],[k]}\mathcal M_{[k]}^{-1/2}\big)^+\\
 =&\begin{multlined}[t]q\mathcal M_{[k]}^{-1/2}\big((\n\cdot\boldsymbol \Lambda)_{[k-1],[k]}\mathcal M_{[k]}^{-1/2}\big)^+\\+\begin{vmatrix}
  \Sigma^k_{[k]} &\mathcal M_{[k]}^{-1/2}\big((\n\cdot\boldsymbol \Lambda)_{[k-1],[k]}\mathcal M_{[k]}^{-1/2}\big)^+\\
(\n\cdot\boldsymbol \Lambda)_{[k],[k+1]} \Sigma^k_{[k+1]} &\boxed{(\n\cdot\boldsymbol \Lambda)_{[k],[k+1]}\beta_{[k+1]}\mathcal M_{[k]}^{-1/2}\big((\n\cdot\boldsymbol \Lambda)_{[k-1],[k]}\mathcal M_{[k]}^{-1/2}\big)^+}\end{vmatrix}.
\end{multlined}
   \end{align*}

\subsection{Multivariate Christoffel  formula and quasi-determinants}
For $D=1$ there is a well known formula for the orthogonal polynomials $\{q_n(x)\}$ associated to a measure of the form $c(x-q_1)\cdots(x-q_m)\dd\mu(x)$ in terms of the orthogonal polynomials $\{p_n(x)\}$ of the measure $\dd\mu(x)$, see \S 2.5 of \cite{szego}, as
\begin{align*}
 q_n(x)=\frac{1}{c(x-q_1)\cdots(x-q_m)}\begin{vmatrix}
   p_n(x)&\dots&p_{n+m}(x)\\
    p_n(q_1)&\dots&p_{n+m}(q_1)\\
    \vdots &&\vdots\\
     p_n(q_l)&\dots&p_{n+m}(q_l)
 \end{vmatrix}.
\end{align*}
The Hungarian Mathematician Gabor Szeg\H{o}, who considers the proof very easy, points out that it was proven for $\dd\mu=\dd x$ by Elwin Bruno Christoffel in \cite{christoffel}. This fact was rediscovered in the Toda context, see for example the formula (5.1.11) in \cite{matveev} for $W^+_n(N)$.

In this section we will construct an analogous to the  Christoffel formula  in the multivariate context. We use the sample matrix trick and quasi-determinants.

\subsubsection{Iterating two elementary Darboux transformations}
First, for a better understanding we discuss the iteration of two elementary  Darboux transformations
\begin{align*}
  \dd\mu(\x)\rightarrow (\n^{(1)}\cdot\x-q^{(1)})
  \dd\mu(\x)\rightarrow (\n^{(2)}\cdot\x- q^{(2)}) (\n^{(1)}\cdot\x-q^{(1)})\dd\mu(\x)
\end{align*}
or, equivalently,  $\dd\mu\to T^{(1)}T^{(2)}\dd\mu$.

Given the corresponding lattice resolvents
\begin{align} \label{omegas}
  \omega^{(a)}=&(T^{(a)}S)( \n^{(a)}\cdot\boldsymbol\Lambda-q^{(a)})S^{-1}, & a&\in\{1,2\},
  \end{align}
we introduce
\begin{definition}
The second iterated resolvent matrix is
\begin{align}\label{resolvent2}
  \omega\coloneq \big(T^{(2)}\omega^{(1)}\big)\omega^{(2)}.
\end{align}
\end{definition}
A first result regarding the two step Darboux transformation is
\begin{pro}
The MOVPR satisfy
  \begin{align}\label{mdarboux}
(\n^{(2)}\cdot\x-q^{(2)})(\n^{(1)}\cdot\x-q^{(1)})(T^{(2)}T^{(1)}P)(\x)=\omega P(\x).
\end{align}
\end{pro}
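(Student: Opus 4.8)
The plan is to reduce everything to the single-step linear relation for the lattice resolvent and then chain two copies of it, taking care to distinguish the continuous variable $\x$, on which the scalar factors depend, from the discrete lattice index $\m$, on which the shifts $T^{(a)}$ act.

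First I would record the basic action of each resolvent on $P$. Since $P=S\chi$ we have $S^{-1}P=\chi$, and the eigenvalue property \eqref{eigen} gives $(\n^{(a)}\cdot\boldsymbol\Lambda)\chi=(\n^{(a)}\cdot\x)\chi$; substituting into \eqref{omegas} yields
\begin{align*}
\omega^{(a)}P=(T^{(a)}S)(\n^{(a)}\cdot\boldsymbol\Lambda-q^{(a)})\chi=(\n^{(a)}\cdot\x-q^{(a)})(T^{(a)}S)\chi=(\n^{(a)}\cdot\x-q^{(a)})(T^{(a)}P),
\end{align*}
for $a\in\{1,2\}$, where I used $(T^{(a)}S)\chi=T^{(a)}P$, which holds because $P=S\chi$ depends on $\m$ only through $S$. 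This is exactly the two-flow version of the relation $\omega_aP=(\n_a\cdot\x-q_a)(T_aP)$ already used in the proof of the three-term relations above.

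Next I would apply $\omega=\big(T^{(2)}\omega^{(1)}\big)\omega^{(2)}$ to $P$ and unwind. Using the displayed relation for $a=2$,
\begin{align*}
\omega P=\big(T^{(2)}\omega^{(1)}\big)\omega^{(2)}P=\big(T^{(2)}\omega^{(1)}\big)(\n^{(2)}\cdot\x-q^{(2)})(T^{(2)}P).
\end{align*}
The factor $(\n^{(2)}\cdot\x-q^{(2)})$ is a scalar function of $\x$, so it commutes with the semi-infinite matrix $T^{(2)}\omega^{(1)}$; pulling it to the front and using that the shift $T^{(2)}$ acts multiplicatively, $\big(T^{(2)}\omega^{(1)}\big)(T^{(2)}P)=T^{(2)}\big(\omega^{(1)}P\big)$, I obtain
\begin{align*}
\omega P=(\n^{(2)}\cdot\x-q^{(2)})\,T^{(2)}\big(\omega^{(1)}P\big).
\end{align*}

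Finally I would substitute the $a=1$ relation $\omega^{(1)}P=(\n^{(1)}\cdot\x-q^{(1)})(T^{(1)}P)$ and note that the coefficient $(\n^{(1)}\cdot\x-q^{(1)})$ depends only on $\x$ and not on the lattice index, so it is inert under $T^{(2)}$; hence
\begin{align*}
\omega P=(\n^{(2)}\cdot\x-q^{(2)})(\n^{(1)}\cdot\x-q^{(1)})\,T^{(2)}T^{(1)}P,
\end{align*}
which is precisely \eqref{mdarboux}. The only genuine obstacle is bookkeeping: one must keep straight that each scalar factor $(\n^{(a)}\cdot\x-q^{(a)})$ is a function of the continuous variable $\x$ alone — hence it commutes with every semi-infinite matrix and is fixed by every discrete shift $T^{(b)}$ — whereas the shifts act multiplicatively and nontrivially on $S$, $P$ and $\omega^{(1)}$ through their $\m$-dependence.
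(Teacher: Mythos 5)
Your proof is correct and follows essentially the same route as the paper, whose proof simply invokes the single-step relation $(\n^{(a)}\cdot\x-q^{(a)})T^{(a)}P=\omega^{(a)}P$ for $a\in\{1,2\}$ and chains the two copies through the definition $\omega=\big(T^{(2)}\omega^{(1)}\big)\omega^{(2)}$. You merely make explicit the derivation of that single-step relation from \eqref{eigen} and \eqref{omegas} and the bookkeeping that scalar functions of $\x$ commute with the matrices and are inert under the shifts, which the paper leaves implicit.
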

\begin{proof}
It is a consequence of $(\n^{(a)}\cdot\x-q^{(a)})T^{(a)}P=\omega^{(a)} P$ with $a\in\{1,2\}$.
\end{proof}
Regarding the matrix structure of $\omega$ if we define
\begin{align*}
  \n\coloneq q^{(1)}\n^{(2)}+q^{(2)}\n^{(1)}\in\R^D
\end{align*}
  we quickly found that
\begin{pro}
The second iterated  resolvent $\omega$ decomposes in diagonals as follows
\begin{align}\label{eq:omega2}
\begin{aligned}
  \omega=&\underbracket{(\n^{(1)}\cdot\boldsymbol\Lambda)(\n^{(2)}\cdot\boldsymbol\Lambda)}_{\text{second superdiagonal}}\\&+
\underbracket{(T^{(1)}T^{(2)}\beta)(\n^{(1)}\cdot\boldsymbol\Lambda)(\n^{(2)}\cdot\boldsymbol\Lambda)-
(\n^{(1)}\cdot\boldsymbol\Lambda)(\n^{(2)}\cdot\boldsymbol\Lambda)\beta-\n\cdot\boldsymbol\Lambda
}_{\text{first superdiagonal}}\\&+
  \underbracket{(T^{(1)}T^{(2)}H)H^{-1}}_{\text{diagonal}}
  \end{aligned}
\end{align}
\end{pro}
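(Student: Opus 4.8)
The plan is to expand the defining product \eqref{resolvent2} directly, exploiting the fact that each single-step resolvent splits into a block superdiagonal piece and a block diagonal piece. By \eqref{omJ2}--\eqref{resolvent beta} we may write $\omega^{(a)}=\n^{(a)}\cdot\boldsymbol\Lambda+\alpha^{(a)}$, where the diagonal term admits the two equivalent forms $\alpha^{(a)}=(T^{(a)}H)H^{-1}=(T^{(a)}\beta)(\n^{(a)}\cdot\boldsymbol\Lambda)-(\n^{(a)}\cdot\boldsymbol\Lambda)\beta-q^{(a)}$. Since the shift matrices $\boldsymbol\Lambda$ and the scalars $q^{(a)}$ are independent of $\m$, the translation $T^{(2)}$ acts only on $H$ and $\beta$, so $T^{(2)}\omega^{(1)}=\n^{(1)}\cdot\boldsymbol\Lambda+T^{(2)}\alpha^{(1)}$ is again superdiagonal plus diagonal. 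Multiplying the two factors, the block diagonals that can occur are only the diagonal, the first superdiagonal and the second superdiagonal, because the degrees of the four products lie in $\{0,1,2\}$; hence the absence of any subdiagonal contribution in \eqref{eq:omega2} is automatic, and the real content is the explicit identification of the three surviving pieces.

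First I would read off the second superdiagonal: the unique product of two superdiagonal factors is $(\n^{(1)}\cdot\boldsymbol\Lambda)(\n^{(2)}\cdot\boldsymbol\Lambda)$, which is exactly the stated top term. Next I would compute the diagonal using the $H$-form of $\alpha$. It is the single product $(T^{(2)}\alpha^{(1)})\alpha^{(2)}=(T^{(2)}T^{(1)}H)(T^{(2)}H)^{-1}(T^{(2)}H)H^{-1}$, in which the inner factor $(T^{(2)}H)^{-1}(T^{(2)}H)$ telescopes to the identity, leaving $(T^{(1)}T^{(2)}H)H^{-1}$ after invoking commutativity of the two translations (which holds because the corresponding measure multiplications commute pointwise). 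This reproduces the diagonal term.

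Finally I would treat the two mixed, first-superdiagonal terms $(\n^{(1)}\cdot\boldsymbol\Lambda)\alpha^{(2)}+(T^{(2)}\alpha^{(1)})(\n^{(2)}\cdot\boldsymbol\Lambda)$, now using the $\beta$-form of $\alpha$. Expanding both summands, the contributions $(\n^{(1)}\cdot\boldsymbol\Lambda)(T^{(2)}\beta)(\n^{(2)}\cdot\boldsymbol\Lambda)$ appear with opposite signs and cancel; the two $\beta$-free pieces combine, via the definition $\n=q^{(1)}\n^{(2)}+q^{(2)}\n^{(1)}$, into $-\,\n\cdot\boldsymbol\Lambda$; and the surviving terms assemble into $(T^{(1)}T^{(2)}\beta)(\n^{(1)}\cdot\boldsymbol\Lambda)(\n^{(2)}\cdot\boldsymbol\Lambda)-(\n^{(1)}\cdot\boldsymbol\Lambda)(\n^{(2)}\cdot\boldsymbol\Lambda)\beta$, which is the first-superdiagonal term. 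The only delicate bookkeeping, and the place where a sign or translation error could hide, is keeping straight which argument each translation hits: $T^{(2)}\alpha^{(1)}$ involves $T^{(2)}T^{(1)}\beta$ and $T^{(2)}\beta$ but leaves $\n^{(1)}\cdot\boldsymbol\Lambda$ and $q^{(1)}$ untouched. Once that is handled, the cancellation of the $(\n^{(1)}\cdot\boldsymbol\Lambda)(T^{(2)}\beta)(\n^{(2)}\cdot\boldsymbol\Lambda)$ terms is the crux and everything collapses to \eqref{eq:omega2}.
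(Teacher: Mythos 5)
Your proof is correct: all three diagonals come out right, the cancellation of the cross terms $(\n^{(1)}\cdot\boldsymbol\Lambda)(T^{(2)}\beta)(\n^{(2)}\cdot\boldsymbol\Lambda)$ is indeed the crux of the first-superdiagonal computation, and your bookkeeping of which objects $T^{(2)}$ acts on is accurate. The route differs from the paper's in how the superdiagonal terms are obtained. The paper first composes the dressing forms \eqref{omegas} to write
\begin{align*}
  \omega=(T^{(1)}T^{(2)}S)\big((\n^{(1)}\cdot\boldsymbol\Lambda)(\n^{(2)}\cdot\boldsymbol\Lambda)-\n\cdot\boldsymbol\Lambda+q^{(1)}q^{(2)}\big)S^{-1},
\end{align*}
the middle factor $(T^{(2)}S)^{-1}(T^{(2)}S)$ having telescoped away; then the second and first superdiagonals are read off at once from the diagonal expansions of $T^{(1)}T^{(2)}S$ and $S^{-1}$. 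In that approach the $q$-terms combine into $-\n\cdot\boldsymbol\Lambda$ automatically, at the level of the scalar polynomial $(\n^{(1)}\cdot\boldsymbol\Lambda-q^{(1)})(\n^{(2)}\cdot\boldsymbol\Lambda-q^{(2)})$, so no cancellation of $\beta$-cross-terms ever appears; the diagonal is then extracted, exactly as you do, from the $H$-form \eqref{omJ2} and telescoping. Your proof instead expands the product of the two additive splittings $\n^{(a)}\cdot\boldsymbol\Lambda+\alpha^{(a)}$ and uses the $\beta$-form \eqref{resolvent beta} for the mixed terms, paying for the avoided dressing step with the manual cancellation and the hand-assembly of $\n$. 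Both are legitimate; the paper's version has the advantage that it scales directly to the $m$-step iteration (this is precisely how \eqref{omegamS} and \eqref{eq:omegam} are handled later), whereas your pairwise expansion would become combinatorially heavier there, since the number of cross terms to cancel grows with $m$.
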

\begin{proof}
From \eqref{omegas} and \eqref{resolvent2} we get
\begin{align*}
  \omega=(T^{(1)}T^{(2)}S)\big((\n^{(1)}\cdot\boldsymbol\Lambda)(\n^{(2)}\cdot\boldsymbol\Lambda)-\n\cdot\boldsymbol\Lambda+q^{(1)}q^{(2)}\big)S^{-1}
\end{align*}
and the two superdiagonal terms follow immediately. Now, from $\omega^{(a)}=\n^{(a)}\cdot\boldsymbol \Lambda+(T^{(a)}H)H^{-1}$ we get the diagonal part
$\big(T^{(2)}((T^{(1)}H)H^{-1})\big)(T^{(2)}H)H^{-1}$.
\end{proof}
Notice that the Zakharov--Shabat or compatibility equations \eqref{ZS}, which can be written as the symmetry condition $\omega=\big(T^{(2)}\omega^{(1)}\big)\omega^{(2)}=\big(T^{(1)}\omega^{(2)}\big)\omega^{(1)}$, are an immediate consequence of the previous result.
\begin{pro}\label{pro:omega2c} Relations \eqref{eq:omega2} can be written  componentwise  as follows
\begin{align*}
 (\omega)_{[k],[k+2]}=&\big((\n^{(1)}\cdot\boldsymbol\Lambda)(\n^{(2)}\cdot\boldsymbol\Lambda)\big)_{[k],[k+2]},\\
 (\omega)_{[k],[k+1]}=&
   (T^{(1)}
 T^{(2)}\beta_{[k]})
 \big((\n^{(1)}\cdot\boldsymbol\Lambda)(\n^{(2)}
 \cdot\boldsymbol\Lambda)\big)_{[k-1],[k+1]}-\big((\n^{(1)}\cdot\boldsymbol\Lambda)(\n^{(2)}\cdot\boldsymbol\Lambda)\big)_{[k],[k+2]}\beta_{[k+2]}
 -(\n\cdot\boldsymbol\Lambda)_{[k],[k+1]}
\\
 (\omega)_{[k],[k]}=&(T^{(1)}T^{(2)}H_{[k]})H_{[k]}^{-1}.
\end{align*}
\end{pro}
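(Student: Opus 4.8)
The plan is to read equation \eqref{eq:omega2} off diagonal by diagonal, since that identity already presents $\omega$ as a sum of three terms supported respectively on the second block superdiagonal, the first block superdiagonal, and the block diagonal. The whole content of the statement is the transcription of \eqref{eq:omega2} into block components, so the work consists in confirming that each term lives on the diagonal claimed and then computing the relevant block entries via the block-multiplication rule.

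First I would recall the structural facts underlying \eqref{eq:omega2}. By the Definition of the shift matrices, each $\Lambda_a$ — and hence each $\n^{(1)}\cdot\boldsymbol\Lambda$, $\n^{(2)}\cdot\boldsymbol\Lambda$ and $\n\cdot\boldsymbol\Lambda$ — is a pure first block superdiagonal matrix, whose only nonzero blocks sit in positions $[k],[k+1]$; while $\beta$ is by construction a pure first block subdiagonal matrix, with blocks $\beta_{[k]}$ in positions $[k],[k-1]$. Consequently $(\n^{(1)}\cdot\boldsymbol\Lambda)(\n^{(2)}\cdot\boldsymbol\Lambda)$ is supported on the second block superdiagonal, with
\[
\big((\n^{(1)}\cdot\boldsymbol\Lambda)(\n^{(2)}\cdot\boldsymbol\Lambda)\big)_{[k],[k+2]}=(\n^{(1)}\cdot\boldsymbol\Lambda)_{[k],[k+1]}(\n^{(2)}\cdot\boldsymbol\Lambda)_{[k+1],[k+2]}.
\]
Extracting the $[k],[k+2]$ block of $\omega$ from \eqref{eq:omega2} then immediately gives the first identity, since the remaining two terms contribute nothing on that diagonal.

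Next I would extract the $[k],[k+1]$ block. The block-diagonal term $(T^{(1)}T^{(2)}H)H^{-1}$ and the second-superdiagonal term both vanish there, so only the middle bracket of \eqref{eq:omega2} matters, and I would compute its three summands separately. The product $(T^{(1)}T^{(2)}\beta)(\n^{(1)}\cdot\boldsymbol\Lambda)(\n^{(2)}\cdot\boldsymbol\Lambda)$ lands on $[k],[k+1]$ by pairing the subdiagonal block $(T^{(1)}T^{(2)}\beta)_{[k],[k-1]}=T^{(1)}T^{(2)}\beta_{[k]}$ with the second-superdiagonal block $\big((\n^{(1)}\cdot\boldsymbol\Lambda)(\n^{(2)}\cdot\boldsymbol\Lambda)\big)_{[k-1],[k+1]}$; the product $(\n^{(1)}\cdot\boldsymbol\Lambda)(\n^{(2)}\cdot\boldsymbol\Lambda)\beta$ lands there by pairing $\big((\n^{(1)}\cdot\boldsymbol\Lambda)(\n^{(2)}\cdot\boldsymbol\Lambda)\big)_{[k],[k+2]}$ with $\beta_{[k+2]}$; and $\n\cdot\boldsymbol\Lambda$ contributes simply $(\n\cdot\boldsymbol\Lambda)_{[k],[k+1]}$. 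Assembling these with the signs of \eqref{eq:omega2} yields the second identity, and the block diagonal $[k],[k]$ is then read directly off the last term, giving $(T^{(1)}T^{(2)}H_{[k]})H_{[k]}^{-1}$.

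I do not expect a genuine obstacle here: the statement is purely the componentwise version of \eqref{eq:omega2}, and the entire argument is bookkeeping of which block diagonal each factor occupies and where each matrix product is supported. The only point demanding a moment's care is verifying that the three terms of \eqref{eq:omega2} are disjointly supported on the three named diagonals, so that no cross-contribution to a given block is overlooked; this is immediate from the pure-diagonal structure of the $\n^{(a)}\cdot\boldsymbol\Lambda$ and of $\beta$ noted above.
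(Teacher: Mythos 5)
Your proof is correct and follows essentially the same route as the paper: the paper in fact supplies no separate proof of this proposition, treating it as an immediate componentwise reading of \eqref{eq:omega2}, which is precisely the bookkeeping you carry out. Your verification that the three terms of \eqref{eq:omega2} are disjointly supported on the second superdiagonal, first superdiagonal and diagonal (using that each $\n^{(i)}\cdot\boldsymbol\Lambda$ is a pure first block superdiagonal and $\beta$ a pure first block subdiagonal) is exactly the implicit argument the authors rely on.
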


Again we need to look at certain   hyperplanes $\pi^{(a,+)}=\{\x\in\R^D: \n^{(a)}\cdot\x=q^{(a)}\}$,
for $a\in\{1,2\}$.
\begin{pro}
  For any $\boldsymbol p\in\pi^{1,+}\cup\pi^{2,+}$; i.e., either $\n^{(1)}\cdot\boldsymbol p=q^{(1)}$ or $\n^{(2)}\cdot\boldsymbol p=q^{(2)}$, we have
\begin{align}\label{eq:darboux.2step}
  \omega_{[k],[k+2]}P_{[k+2]}(\boldsymbol p)+
\omega_{[k],[k+1]}P_{[k+1]}(\boldsymbol p)+
  \omega_{[k],[k]}P_{[k]}(\boldsymbol p)=0.
\end{align}
\end{pro}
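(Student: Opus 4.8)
The plan is to read the matrix identity \eqref{mdarboux} one block row at a time and then combine the banded structure of $\omega$ with the vanishing of the scalar prefactor on the two hyperplanes. Concretely, I would start from \eqref{mdarboux}, namely
\begin{align*}
  (\n^{(2)}\cdot\x-q^{(2)})(\n^{(1)}\cdot\x-q^{(1)})(T^{(2)}T^{(1)}P)(\x)=\omega P(\x),
\end{align*}
which holds for every $\x$, and project it onto the $[k]$ block row.

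For the right-hand side I would invoke the diagonal decomposition \eqref{eq:omega2}, or equivalently its componentwise form in Proposition \ref{pro:omega2c}, which shows that $\omega$ has only the block diagonal and the first and second block superdiagonals nonzero. Hence the $[k]$ block row of $\omega P$ collapses to exactly three terms,
\begin{align*}
  (\omega P)_{[k]}=\omega_{[k],[k]}P_{[k]}+\omega_{[k],[k+1]}P_{[k+1]}+\omega_{[k],[k+2]}P_{[k+2]},
\end{align*}
which is precisely the left-hand side of the asserted relation \eqref{eq:darboux.2step}. For the left-hand side of \eqref{mdarboux} I would simply evaluate at a node $\boldsymbol p\in\pi^{1,+}\cup\pi^{2,+}$: by definition of these hyperplanes at least one of the factors $\n^{(1)}\cdot\boldsymbol p-q^{(1)}$ or $\n^{(2)}\cdot\boldsymbol p-q^{(2)}$ vanishes, so the scalar prefactor $(\n^{(2)}\cdot\boldsymbol p-q^{(2)})(\n^{(1)}\cdot\boldsymbol p-q^{(1)})$ is zero. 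This annihilates the entire left-hand side irrespective of the value of $(T^{(2)}T^{(1)}P)_{[k]}(\boldsymbol p)$, and equating the two block rows at $\boldsymbol p$ yields \eqref{eq:darboux.2step}.

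There is no genuine obstacle here: the statement is a direct corollary of \eqref{mdarboux} and the banded form of $\omega$. The only point requiring a moment's care is that the prefactor is a genuine \emph{scalar}, so its vanishing kills the whole vector $(T^{(2)}T^{(1)}P)_{[k]}(\boldsymbol p)$ without any invertibility or poisedness hypothesis. This is the exact two-step analogue of the one-step identity in Proposition \ref{importante}, where a single linear factor vanishing on $\pi_a^+$ produced the corresponding two-term relation; iterating the two elementary Darboux steps produces instead the three-term relation with the second block superdiagonal of $\omega$ now present.
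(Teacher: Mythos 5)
Your proof is correct and follows essentially the same route as the paper, whose proof consists of the single remark that the identity follows from \eqref{mdarboux}; your write-up simply makes explicit the two ingredients the paper leaves implicit, namely the banded structure of $\omega$ from Proposition \ref{pro:omega2c} and the vanishing of the scalar prefactor on $\pi^{1,+}\cup\pi^{2,+}$.
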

\begin{proof}
  It follows from \eqref{mdarboux}.
\end{proof}
We now employ the sample matrix trick  used for the elementary Darboux transformation to characterize $\omega$ in terms of MOVPR evaluated at some particular points. For the sets $\{\boldsymbol p^{(1)}_{j}\}_{j=1}^{|[k]|}, \{\boldsymbol { p}^{(2)}_{j}\}_{j=1}^{|[k+1]|}$ we use the notation introduced in Definition \ref{def:sample}, i.e. we use the matrices $\Sigma^{(1),k}_{[\ell]}$ for the first set of points and $\Sigma^{(2),k}_{[\ell]}$ for the second set.
\begin{pro}\label{pro:chistoffel2}
   Suppose that  $\big\{\boldsymbol p^{(1)}_{j}\big\}_{j=1}^{|[k]|}\cup \big\{\boldsymbol  p^{(2)}_{j}\big\}_{j=1}^{|[k+1]|}\subset \pi^{1,+}\cup\pi^{2,+}$ is a poised set for $\begin{psmallmatrix}
      P_{[k]}\\P_{[k+1]}
    \end{psmallmatrix}$, i.e. $\begin{vsmallmatrix}
    \Sigma^{(1),k}_{[k]} &\Sigma^{(2),k}_{[k+1]}\\
    \Sigma^{(1),k}_{[k+1]} &\Sigma^{(2),k+1}_{[k+1]}
    \end{vsmallmatrix}\neq 0$. Then
      \begin{align*}
    \PARENS{\begin{matrix}
  \omega_{[k],[k]} & \omega_{[k],[k+1]}
    \end{matrix}}=&-\omega_{[k],[k+2]}\PARENS{\begin{matrix}
     \Sigma^{(1),k}_{[k+2]} & \Sigma^{(2),{k+1}}_{[k+2]}
    \end{matrix}}\PARENS{\begin{matrix}
    \Sigma^{(1),k}_{[k]} &\Sigma^{(2),k+1}_{[k]}\\
    \Sigma^{(1),k}_{[k+1]} &\Sigma^{(2),k+1}_{[k+1]}
    \end{matrix}}^{-1}.
  \end{align*}
\end{pro}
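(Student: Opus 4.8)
The plan is to apply the sample matrix trick, exactly as in the proof of the elementary Darboux transformation, but now to the three-term relation \eqref{eq:darboux.2step} furnished by the second iterated resolvent. The key observation is that \eqref{eq:darboux.2step} holds at \emph{every} point of $\pi^{1,+}\cup\pi^{2,+}$, while the three coefficient blocks $\omega_{[k],[k]}$, $\omega_{[k],[k+1]}$, $\omega_{[k],[k+2]}$ are independent of the evaluation point. Hence I may evaluate the relation simultaneously at all the nodes of the poised set and assemble the resulting column vectors into sample matrices.

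First I would fix the combined node set $\big\{\boldsymbol p^{(1)}_{j}\big\}_{j=1}^{|[k]|}\cup\big\{\boldsymbol p^{(2)}_{j}\big\}_{j=1}^{|[k+1]|}\subset\pi^{1,+}\cup\pi^{2,+}$, so that each of the $|[k]|+|[k+1]|$ nodes satisfies the hypothesis of \eqref{eq:darboux.2step}. Collecting the evaluations columnwise into the matrices $\Sigma^{(1),k}_{[\ell]}$ and $\Sigma^{(2),k+1}_{[\ell]}$ for $\ell\in\{k,k+1,k+2\}$ of Definition \ref{def:sample}, the three constant blocks factor out on the left and I obtain the single matrix identity
\begin{align*}
\omega_{[k],[k+2]}\PARENS{\begin{matrix}\Sigma^{(1),k}_{[k+2]}&\Sigma^{(2),k+1}_{[k+2]}\end{matrix}}
+\omega_{[k],[k+1]}\PARENS{\begin{matrix}\Sigma^{(1),k}_{[k+1]}&\Sigma^{(2),k+1}_{[k+1]}\end{matrix}}
+\omega_{[k],[k]}\PARENS{\begin{matrix}\Sigma^{(1),k}_{[k]}&\Sigma^{(2),k+1}_{[k]}\end{matrix}}=0.
\end{align*}
Next I would regroup the last two summands as the single product
\begin{align*}
\PARENS{\begin{matrix}\omega_{[k],[k]}&\omega_{[k],[k+1]}\end{matrix}}
\PARENS{\begin{matrix}\Sigma^{(1),k}_{[k]}&\Sigma^{(2),k+1}_{[k]}\\ \Sigma^{(1),k}_{[k+1]}&\Sigma^{(2),k+1}_{[k+1]}\end{matrix}}
=-\omega_{[k],[k+2]}\PARENS{\begin{matrix}\Sigma^{(1),k}_{[k+2]}&\Sigma^{(2),k+1}_{[k+2]}\end{matrix}},
\end{align*}
which is legitimate because the left $|[k]|$ columns of the middle matrix collect the first node set and its right $|[k+1]|$ columns collect the second. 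By the poisedness hypothesis this square matrix is invertible, and multiplying on the right by its inverse yields the asserted formula.

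The argument is entirely routine; the only point deserving care is the bookkeeping of block dimensions, namely $\omega_{[k],[k+j]}\in\R^{|[k]|\times|[k+j]|}$ while $\Sigma^{(1),k}_{[\ell]}\in\R^{|[\ell]|\times|[k]|}$ and $\Sigma^{(2),k+1}_{[\ell]}\in\R^{|[\ell]|\times|[k+1]|}$, so that the middle matrix is square of size $|[k]|+|[k+1]|$ and its invertibility is precisely the stated poised condition. I do not expect any genuine obstacle: once \eqref{eq:darboux.2step} is available, the claim is nothing more than its column-collected form normalized by the inverse of the poised sample matrix.
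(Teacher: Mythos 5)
Your proposal is correct and follows exactly the paper's own argument: evaluate \eqref{eq:darboux.2step} at all nodes of the poised set, assemble the evaluations into the sample matrices of Definition \ref{def:sample}, and right-multiply by the inverse of the resulting square sample matrix, whose invertibility is the poisedness hypothesis. The only difference is that you spell out the dimension bookkeeping and the intermediate three-term matrix identity, which the paper leaves implicit.
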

\begin{proof}
  As said we proceed as in the proof of Proposition \ref{alphaMVOPR} and evaluate \eqref{eq:darboux.2step} in the set $\{\boldsymbol p^{(1)}_{j}\}_{j=1}^{|[k]|}\cup \{\boldsymbol  p^{(2)}_{j}\}_{j=1}^{|[k+1]|}$ to get
     \begin{align*}
    \PARENS{\begin{matrix}
    \omega_{[k],[k]} & \omega_{[k],[k+1]}
    \end{matrix}}\PARENS{\begin{matrix}
    \Sigma^{(1),k}_{[k]} &\Sigma^{(2),k+1}_{[k]}\\
    \Sigma^{(1),k}_{[k+1]} &\Sigma^{(2),k+1}_{[k+1]}
    \end{matrix}}=&-\omega_{[k],[k+2]}\PARENS{\begin{matrix}
     \Sigma^{(1),k}_{[k+2]} & \Sigma^{(2),k+1}_{[k+2]}
    \end{matrix}},
  \end{align*}
from where the result follows.
\end{proof}
\begin{theorem}
  For the composition of two elementary Darboux transformations, when the conditions required in Proposition \ref{pro:chistoffel2} hold,  we have the following multivariate quasi-determinantal Christoffel formula for the kernel polynomials
  \begin{align*}
  (T^{(2)}T^{(1)}P)_{[k]}(\x)=  \frac{\big((\n^{(1)}\cdot\boldsymbol\Lambda)(\n^{(2)}\cdot\boldsymbol\Lambda)\big)_{[k],[k+2]}}{(\n^{(2)}\cdot\x-q^{(2)})(\n^{(1)}\cdot\x-q^{(1)})}
  \Theta_*\PARENS{\begin{matrix}
    \Sigma^{(1),k}_{[k]} &\Sigma^{(2),k+1}_{[k]}&P_{[k]}(\x)\\
    \Sigma^{(1),k}_{[k+1]} &\Sigma^{(2),k+1}_{[k+1]}&P_{[k+1]}(\x)\\
        \Sigma^{(1),k}_{[k+2]} & \Sigma^{(2),{k+1}}_{[k+2]} &P_{[k+2]}(\x)
  \end{matrix}}.
  \end{align*}
\end{theorem}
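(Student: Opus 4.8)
The plan is to assemble the formula directly from the master relation \eqref{mdarboux} together with the sample-matrix characterization in Proposition \ref{pro:chistoffel2} and the componentwise description of the second iterated resolvent in Proposition \ref{pro:omega2c}. First I would read off the $[k]$-th block row of \eqref{mdarboux}. Since the decomposition \eqref{eq:omega2} shows that $\omega$ has only three nonzero block diagonals (the diagonal and the first two superdiagonals), this block row collapses to a three-term expression, giving
\begin{align*}
(\n^{(2)}\cdot\x-q^{(2)})(\n^{(1)}\cdot\x-q^{(1)})(T^{(2)}T^{(1)}P)_{[k]}(\x)
=\omega_{[k],[k]}P_{[k]}(\x)+\omega_{[k],[k+1]}P_{[k+1]}(\x)+\omega_{[k],[k+2]}P_{[k+2]}(\x).
\end{align*}

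Next I would eliminate the lower-order coefficients $\omega_{[k],[k]}$ and $\omega_{[k],[k+1]}$ using the poised-node identity of Proposition \ref{pro:chistoffel2}. Writing the first two terms on the right as the row $\PARENS{\begin{matrix}\omega_{[k],[k]}&\omega_{[k],[k+1]}\end{matrix}}$ acting on the column $\PARENS{\begin{smallmatrix}P_{[k]}(\x)\\P_{[k+1]}(\x)\end{smallmatrix}}$ and substituting that proposition, everything factors through the common left factor $\omega_{[k],[k+2]}$, leaving inside the bracket exactly
\begin{align*}
P_{[k+2]}(\x)-\PARENS{\begin{matrix}\Sigma^{(1),k}_{[k+2]}&\Sigma^{(2),k+1}_{[k+2]}\end{matrix}}
\PARENS{\begin{matrix}\Sigma^{(1),k}_{[k]}&\Sigma^{(2),k+1}_{[k]}\\\Sigma^{(1),k}_{[k+1]}&\Sigma^{(2),k+1}_{[k+1]}\end{matrix}}^{-1}
\PARENS{\begin{matrix}P_{[k]}(\x)\\P_{[k+1]}(\x)\end{matrix}}.
\end{align*}
This is precisely the Schur complement of the leading $2\times 2$ block array relative to the lower-right corner; by the identity $\SC=\Theta_*$ recorded in Appendix \ref{schur} (and used repeatedly, e.g. in Propositions \ref{qd1} and \ref{pro:cholesky}), it equals $\Theta_*$ of the bordered sample matrix appearing in the statement.

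Finally I would invoke Proposition \ref{pro:omega2c} to identify the prefactor $\omega_{[k],[k+2]}=\big((\n^{(1)}\cdot\boldsymbol\Lambda)(\n^{(2)}\cdot\boldsymbol\Lambda)\big)_{[k],[k+2]}$, and then divide both sides by the scalar $(\n^{(2)}\cdot\x-q^{(2)})(\n^{(1)}\cdot\x-q^{(1)})$, which is licit off the two hyperplanes; this yields the claimed quasi-determinantal Christoffel formula. I do not expect a genuine obstacle here: the result is a clean assembly of three already-established facts. The only point demanding care is bookkeeping of the block dimensions so that the Schur-complement-to-quasi-determinant identification is applied with the correct lower-right corner, namely recognizing that the $2\times 2$ block array of sample matrices is exactly the invertible matrix whose nonvanishing is the poisedness hypothesis of Proposition \ref{pro:chistoffel2}; the $D$-variable combinatorics of the hyperplanes $\pi^{(a,+)}$ has already been absorbed into that proposition and requires no further argument.
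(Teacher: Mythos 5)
Your proposal is correct and follows essentially the same route as the paper's own proof: extract the $[k]$-th block row of \eqref{mdarboux}, substitute the poised-node identity of Proposition \ref{pro:chistoffel2} to absorb $\omega_{[k],[k]}$ and $\omega_{[k],[k+1]}$ into a Schur complement with common left factor $\omega_{[k],[k+2]}$, identify that Schur complement with $\Theta_*$ of the bordered sample matrix, read off $\omega_{[k],[k+2]}$ from Proposition \ref{pro:omega2c}, and divide by $(\n^{(2)}\cdot\x-q^{(2)})(\n^{(1)}\cdot\x-q^{(1)})$. No gaps; the bookkeeping point you flag about the lower-right corner is handled exactly as in the paper.
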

\begin{proof}
From \eqref{mdarboux} we get
\begin{align*}
  (\n^{(2)}\cdot\x-q^{(2)})(\n^{(1)}\cdot\x-q^{(1)})(T^{(2)}T^{(1)}P)_{[k]}(\x)=\omega_{[k],[k+2]} P_{[k+2]}(\x)+\omega_{[k],[k+1]]}P_{[k+1]}(\x)+\omega_{[k],[k]}P_{[k]}(\x)\\
  =\omega_{[k],[k+2]} \Big(P_{[k+2]}(\x)-\PARENS{\begin{matrix}
     \Sigma^{(1),k}_{[k+2]} & \Sigma^{(2),{k+1}}_{[k+2]}
    \end{matrix}}\PARENS{\begin{matrix}
    \Sigma^{(1),k}_{[k]} &\Sigma^{(2),k}_{[k]}\\
    \Sigma^{(1),k}_{[k+1]} &\Sigma^{(2),k+1}_{[k+1]}
    \end{matrix}}^{-1}\PARENS{\begin{matrix}
      P_{[k]}(\x)\\
        P_{[k+1]}(\x)
    \end{matrix}}\Big)
\end{align*}
from where the result follows.
\end{proof}
\begin{pro}\label{betaH2step}
The quasi-tau matrices $H_{[k]}$ and the $\beta_{[k]}$ matrices transform for a 2-step elementary Darboux transformation according to the following quasi-determinantal formul{\ae}
  \begin{align*}
(T^{(1)}T^{(2)}H)_{[k]}=&
\big((\n^{(1)}\cdot\boldsymbol\Lambda)(\n^{(2)}\cdot\boldsymbol\Lambda)\big)_{[k],[k+2]}
 \begin{vmatrix}
    \Sigma^{(1),k}_{[k]} &\Sigma^{(2),k+1}_{[k]}&H_{[k]}\\
    \Sigma^{(1),k}_{[k+1]} &\Sigma^{(2),k+1}_{[k+1]}&0_{[k+1],[k]}\\
        \Sigma^{(1),k}_{[k+2]} & \Sigma^{(2),{k+1}}_{[k+2]} &\boxed{0_{[k+2],[k]}}
  \end{vmatrix},\\
 (T^{(1)}
 T^{(2)}\beta_{[k]})
 \big((\n^{(1)}\cdot\boldsymbol\Lambda)(\n^{(2)}
 \cdot\boldsymbol\Lambda)\big)_{[k-1],[k+1]}
=&
\begin{multlined}[t]
(\n\cdot\boldsymbol\Lambda)_{[k],[k+1]}\\+\big((\n^{(1)}\cdot\boldsymbol\Lambda)(\n^{(2)}\cdot\boldsymbol\Lambda)\big)_{[k],[k+2]}
\begin{vmatrix}
    \Sigma^{(1),k}_{[k]} &\Sigma^{(2),k+1}_{[k]}&  0_{[k],[k+1]}\\
   \Sigma^{(1),k}_{[k+1]} &\Sigma^{(2),k+1}_{[k+1]}&     \I_{[k+1]}\\
           \Sigma^{(1),k}_{[k+2]} & \Sigma^{(2),{k+1}}_{[k+2]} &\boxed{\beta_{[k+2]}}
  \end{vmatrix}.
  \end{multlined}
\end{align*}
\end{pro}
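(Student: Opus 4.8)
The plan is to read both identities directly off the componentwise description of the second iterated resolvent in Proposition \ref{pro:omega2c}, combined with the sample matrix characterization of $\omega$ in Proposition \ref{pro:chistoffel2}, recognizing a Schur complement in each case. No genuinely new computation is needed; everything reduces to the definition of $\Theta_*$ as a Schur complement (Appendix \ref{qd}).

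First I would treat the quasi-tau matrices. The diagonal block of Proposition \ref{pro:omega2c} reads $\omega_{[k],[k]}=(T^{(1)}T^{(2)}H_{[k]})H_{[k]}^{-1}$, hence $(T^{(1)}T^{(2)}H)_{[k]}=\omega_{[k],[k]}H_{[k]}$. I would extract $\omega_{[k],[k]}$ from Proposition \ref{pro:chistoffel2} by right-multiplying the displayed identity there by the block column selector $\begin{psmallmatrix}\I_{[k]}\\0\end{psmallmatrix}$, isolating the first block column. Right-multiplying the outcome by $H_{[k]}$ converts this selector into the right border $\begin{psmallmatrix}H_{[k]}\\0\end{psmallmatrix}$, and substituting $\omega_{[k],[k+2]}=\big((\n^{(1)}\cdot\boldsymbol\Lambda)(\n^{(2)}\cdot\boldsymbol\Lambda)\big)_{[k],[k+2]}$ from the second superdiagonal block displays $(T^{(1)}T^{(2)}H)_{[k]}$ as $\big((\n^{(1)}\cdot\boldsymbol\Lambda)(\n^{(2)}\cdot\boldsymbol\Lambda)\big)_{[k],[k+2]}$ times the Schur complement, with respect to the upper-left sample block, of the bordered sample matrix whose boxed lower-right corner is $0_{[k+2],[k]}$. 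This is exactly the first stated quasi-determinant.

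Next I would handle the $\beta$ matrices. The first superdiagonal block of Proposition \ref{pro:omega2c} yields $(T^{(1)}T^{(2)}\beta_{[k]})\big((\n^{(1)}\cdot\boldsymbol\Lambda)(\n^{(2)}\cdot\boldsymbol\Lambda)\big)_{[k-1],[k+1]}=\omega_{[k],[k+1]}+\omega_{[k],[k+2]}\beta_{[k+2]}+(\n\cdot\boldsymbol\Lambda)_{[k],[k+1]}$. I would extract $\omega_{[k],[k+1]}$ from Proposition \ref{pro:chistoffel2} by right-multiplying by the second selector $\begin{psmallmatrix}0\\\I_{[k+1]}\end{psmallmatrix}$. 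Collecting $\omega_{[k],[k+1]}+\omega_{[k],[k+2]}\beta_{[k+2]}$ and factoring out $\omega_{[k],[k+2]}=\big((\n^{(1)}\cdot\boldsymbol\Lambda)(\n^{(2)}\cdot\boldsymbol\Lambda)\big)_{[k],[k+2]}$ produces precisely the Schur complement of the sample matrix bordered on the right by $\begin{psmallmatrix}0\\\I\end{psmallmatrix}$ and carrying $\beta_{[k+2]}$ in the boxed corner. Adding the remaining term $(\n\cdot\boldsymbol\Lambda)_{[k],[k+1]}$ reproduces the second stated formula.

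The delicate point, and the step most likely to trip up the bookkeeping, is matching each column selector to the corresponding right border of the bordered sample matrix --- the pair $\begin{psmallmatrix}\I\\0\end{psmallmatrix}\leftrightarrow\begin{psmallmatrix}H_{[k]}\\0\end{psmallmatrix}$ in the $H$ case and $\begin{psmallmatrix}0\\\I\end{psmallmatrix}\leftrightarrow\begin{psmallmatrix}0\\\I\end{psmallmatrix}$ in the $\beta$ case --- together with verifying that the convention placing the distinguished (boxed) entry in the lower-right corner yields exactly the Schur complement $D-CA^{-1}B$ used throughout. Once this selector-to-border correspondence is fixed, both identities follow by direct substitution.
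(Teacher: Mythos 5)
Your proposal is correct and follows essentially the same route as the paper's own proof in Appendix \ref{proof beta H 2 step}: extract $\omega_{[k],[k]}$ and $\omega_{[k],[k+1]}$ from Proposition \ref{pro:chistoffel2} via the block column selectors, substitute into the componentwise resolvent identities of Proposition \ref{pro:omega2c}, and recognize the resulting expressions as Schur complements written in boxed quasi-determinant notation. The selector-to-border bookkeeping you flag (including absorbing $H_{[k]}$ into the border for the first identity) is exactly the step the paper performs, so nothing is missing.
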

\begin{proof}
See Appendix \ref{proof beta H 2 step}.
\end{proof}
Observe that in this case we have used Gel'fand style instead of the Olver's notation for quasi-determinant.

\subsubsection{The general case: $m$ steps Darboux transformations}
We are now ready to consider the general case of $m$ iterated elementary Darboux transformations
\begin{align*}
  \dd\mu(\x)&\rightarrow \mathcal Q(\x) \dd\mu(\x), & \mathcal Q&\coloneq  \prod_{i=1}^m(\n^{(i)}\cdot\x- q^{(i)}),
\end{align*}
i.e.,  $\dd\mu\to Td\mu$, where $T\coloneq T^{(1)}\cdots T^{(m)}$ is the iteration of $m$ elementary Darboux transformations

In terms of the lattice resolvents
\begin{align*}
  \omega^{(i)}=&(T^{(i)}S)( \n^{(i)}\cdot\boldsymbol\Lambda-q^{(i)})S^{-1}, & i&\in\{1,\dots,m\}.
  \end{align*}
we introduce
\begin{definition}\label{def:miteratedresolvent}
  The   $m$-th iterated resolvent is
\begin{align*}
  \omega\coloneq \big(T^{(m)}\cdots T^{(2)}\omega^{(1)}\big)\big(T^{(m)}\cdots T^{(3)}\omega^{(2)}\Big)\cdots\omega^{(m)}.
\end{align*}
\end{definition}
From its definition we see that the
\begin{pro}
  The $m$-th iterated resolvent satisfies
  \begin{align}\label{omegamS}
   \omega= (TS)\Big(\prod_{i=1}^m(\n^{(i)}\cdot\boldsymbol\Lambda-q^{(i)})\Big)S^{-1}.
  \end{align}
\end{pro}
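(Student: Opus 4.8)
The plan is to prove \eqref{omegamS} by a direct telescoping computation, exploiting the single crucial structural feature at play: the factors $\n^{(i)}\cdot\boldsymbol\Lambda-q^{(i)}$ are constant on the lattice of measures, i.e.\ they do not depend on the discrete variable on which the translations $T^{(i)}$ act. Since each $T^{(j)}$ is merely a shift of that variable, it acts as an algebra homomorphism on semi-infinite matrices, $T^{(j)}(AB)=(T^{(j)}A)(T^{(j)}B)$, it fixes every constant matrix, and in particular $T^{(j)}(S^{-1})=(T^{(j)}S)^{-1}$ because $T^{(j)}\I=\I$. I would record these elementary properties first, together with the fact that the $T^{(j)}$ commute among themselves (they implement multiplication of the measure by the polynomials $\n^{(i)}\cdot\x-q^{(i)}$, which commute), so that $T=T^{(1)}\cdots T^{(m)}$ is unambiguously defined.

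Next I would introduce the abbreviation $U_i\coloneq T^{(m)}\cdots T^{(i+1)}$ for $i=1,\dots,m$, with $U_m\coloneq\I$ the empty product, so that the $i$-th factor of $\omega$ in Definition \ref{def:miteratedresolvent} is exactly $U_i\omega^{(i)}$. Applying $U_i$ to $\omega^{(i)}=(T^{(i)}S)(\n^{(i)}\cdot\boldsymbol\Lambda-q^{(i)})S^{-1}$ and using the homomorphism property together with the constancy of the middle factor gives
\begin{align*}
 U_i\omega^{(i)}=(U_i T^{(i)}S)(\n^{(i)}\cdot\boldsymbol\Lambda-q^{(i)})(U_i S^{-1}).
\end{align*}
Here $U_i T^{(i)}=T^{(m)}\cdots T^{(i)}=U_{i-1}$, so the $i$-th factor becomes $(U_{i-1}S)(\n^{(i)}\cdot\boldsymbol\Lambda-q^{(i)})(U_i S^{-1})$. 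Multiplying these factors for $i=1,\dots,m$ in order, the right block $U_i S^{-1}$ of the $i$-th factor meets the left block $U_i S$ of the $(i+1)$-th factor, and by the homomorphism property $(U_i S^{-1})(U_i S)=U_i(S^{-1}S)=\I$; thus all the interior factors telescope away.

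What survives is only the outermost pair, $\omega=(U_0 S)\big(\prod_{i=1}^m(\n^{(i)}\cdot\boldsymbol\Lambda-q^{(i)})\big)(U_m S^{-1})$. Since $U_0=T^{(m)}\cdots T^{(1)}=T$ by commutativity of the translations and $U_m=\I$, this is precisely \eqref{omegamS}. The argument is essentially bookkeeping, and the only place that requires genuine care --- and which I regard as the main (mild) obstacle --- is keeping the nested translation indices straight and checking that the identity $U_iT^{(i)}=U_{i-1}$ is exactly what aligns the boundaries of adjacent factors so that the cancellation is exact. A clean induction on $m$, peeling off the last factor $\omega^{(m)}$ and applying $T^{(m)}$ to the inductive hypothesis for the first $m-1$ transformations, is an equivalent route that avoids writing the whole product and makes the telescoping transparent.
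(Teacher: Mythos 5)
Your proof is correct: the telescoping computation, relying on the translations acting as algebra homomorphisms that fix the constant factors $\n^{(i)}\cdot\boldsymbol\Lambda-q^{(i)}$, is exactly the verification behind the paper's claim, which the paper itself presents without proof as following ``from its definition.'' Your careful bookkeeping with $U_i=T^{(m)}\cdots T^{(i+1)}$ and the identity $U_iT^{(i)}=U_{i-1}$ simply makes explicit what the authors regarded as immediate, so there is no divergence in approach.
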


\begin{pro}
The $m$-th iterated resolvent $\omega$ can be expressed in diagonals as follows
\begin{align}\label{eq:omegam}
\begin{aligned}
  \omega=&\underbracket{\Big(\prod_{i=1}^m(\n^{(i)}\cdot\boldsymbol\Lambda)\Big)}_{\text{$m$-th superdiagonal}}\\&+
\underbracket{(T\beta)\Big(\prod_{i=1}^m(\n^{(i)}\cdot\boldsymbol\Lambda)\Big)-
\Big(\prod_{i=1}^m(\n^{(i)}\cdot\boldsymbol\Lambda)\Big)\beta-\sum_{i=1}^mq^{(i)}\prod_{j\neq i}(\n^{(j)}\cdot\boldsymbol\Lambda)
}_{\text{$(m-1)$-th superdiagonal}}\\&\shortvdotswithin{+}&+
  \underbracket{(TH)H^{-1}}_{\text{diagonal}}
\end{aligned}
\end{align}
\end{pro}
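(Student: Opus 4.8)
The plan is to read off the three top block-diagonals of $\omega$ directly from the factorized expression \eqref{omegamS}, exploiting that the expanded operator has \emph{bounded} superdiagonal support so that conjugation by the block lower unitriangular matrices $TS$ and $S^{-1}$ is transparent on the highest surviving diagonals, and then to obtain the main diagonal separately from the factorized Definition \ref{def:miteratedresolvent}, where a telescoping argument is available.

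First I would expand the operator $\prod_{i=1}^m(\n^{(i)}\cdot\boldsymbol\Lambda-q^{(i)})$. Since the shift matrices commute among themselves by Proposition \ref{pro:Lambda}, the factors $\n^{(i)}\cdot\boldsymbol\Lambda$ commute and the product expands as a genuine polynomial in the $\boldsymbol\Lambda$'s, graded by the number of $\boldsymbol\Lambda$ factors: the top term is $\prod_{i=1}^m(\n^{(i)}\cdot\boldsymbol\Lambda)$, sitting on the $m$-th block superdiagonal; the next term is $-\sum_{i=1}^m q^{(i)}\prod_{j\neq i}(\n^{(j)}\cdot\boldsymbol\Lambda)$, on the $(m-1)$-th superdiagonal; and the remaining terms occupy strictly lower superdiagonals, down to the purely diagonal constant $\prod_{i=1}^m(-q^{(i)})\I$. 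Writing $M\coloneq TS$ and recalling that both $M$ and $S^{-1}$ are block lower unitriangular with unit blocks on the diagonal and first subdiagonals $(T\beta)$ and $-\beta$ respectively, I would then multiply out $M\big(\prod_{i=1}^m(\n^{(i)}\cdot\boldsymbol\Lambda-q^{(i)})\big)S^{-1}$ and extract blocks.

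For the $m$-th superdiagonal block $\omega_{[k],[k+m]}$ the key point is that, although $M$ and $S^{-1}$ have generically nonzero blocks on all their subdiagonals, the expanded operator only reaches the $m$-th superdiagonal; so in $\sum_{p,r}M_{[k],[p]}X_{[p],[r]}(S^{-1})_{[r],[k+m]}$ the conditions $p\le k$, $r\ge k+m$ and $r-p\le m$ force $p=[k]$ and $r=[k+m]$, leaving only the diagonal unit blocks and hence $\omega_{[k],[k+m]}=\big(\prod_{i=1}^m(\n^{(i)}\cdot\boldsymbol\Lambda)\big)_{[k],[k+m]}$. The same index bookkeeping on the $(m-1)$-th superdiagonal leaves exactly three surviving contributions: the top term sandwiched with the first subdiagonal $(T\beta)_{[k]}$ of $M$ on the left ($p=[k-1]$, $r=[k+m-1]$), the top term with the first subdiagonal $-\beta_{[k+m]}$ of $S^{-1}$ on the right ($p=[k]$, $r=[k+m]$), and the second term with unit blocks on both sides ($p=[k]$, $r=[k+m-1]$). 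Deeper subdiagonal blocks of $M$ or $S^{-1}$ are excluded because they would require $r-p>m$. Collecting these reproduces $(T\beta)\prod_i(\n^{(i)}\cdot\boldsymbol\Lambda)-\prod_i(\n^{(i)}\cdot\boldsymbol\Lambda)\beta-\sum_i q^{(i)}\prod_{j\neq i}(\n^{(j)}\cdot\boldsymbol\Lambda)$, read on the $(m-1)$-th superdiagonal, as claimed.

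The diagonal is the delicate point, since through \eqref{omegamS} it receives many cross contributions that are awkward to sum. Here I would instead use Definition \ref{def:miteratedresolvent} together with the explicit form $\omega^{(i)}=\n^{(i)}\cdot\boldsymbol\Lambda+(T^{(i)}H)H^{-1}$ of each elementary resolvent coming from \eqref{omJ2}. Each factor $T^{(m)}\cdots T^{(i+1)}\omega^{(i)}$ is then block upper bidiagonal, with diagonal block $(T^{(i)}\cdots T^{(m)}H)(T^{(i+1)}\cdots T^{(m)}H)^{-1}$ at site $[k]$. Since the block-diagonal of a product of block upper triangular matrices is the ordered product of their block-diagonals, the diagonal of $\omega$ is $\prod_{i=1}^m (T^{(i)}\cdots T^{(m)}H)(T^{(i+1)}\cdots T^{(m)}H)^{-1}$, in which adjacent inverse pairs cancel and the product telescopes to $(TH)H^{-1}$. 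The main obstacle is precisely this diagonal computation; the telescoping route sidesteps the combinatorial blow-up one meets in a direct extraction from \eqref{omegamS}, and the only care needed is to track the translations so that the intermediate factors cancel correctly.
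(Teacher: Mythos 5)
Your proof is correct and follows essentially the same route as the paper: the top two superdiagonals are read off by expanding $\prod_{i=1}^m(\n^{(i)}\cdot\boldsymbol\Lambda-q^{(i)})$ into block superdiagonals and conjugating via \eqref{omegamS}, while the diagonal comes from Definition \ref{def:miteratedresolvent} together with $\omega^{(i)}=\n^{(i)}\cdot\boldsymbol\Lambda+(T^{(i)}H)H^{-1}$, whose translated diagonal blocks telescope to $(TH)H^{-1}$. Your more explicit index bookkeeping for the surviving blocks simply fills in details the paper leaves implicit.
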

\begin{proof}
Observe that $\prod_{i=1}^m(\n^{(i)}\cdot\boldsymbol \Lambda -q^{(i)})$ splits into $m$ block superdiagonals. The $m$-th superdiagonal is $\prod_{i=1}^m(\n^{(i)}\cdot\boldsymbol \Lambda)$ while the $(m-1)$-th superdiagonal is given by $-\sum_{i=1}^mq^{(i)}\prod_{j\neq i}(\n^{(i)}\cdot\boldsymbol\Lambda)$. Then, applying \ref{omegamS}  we get the two higher superdiagonals of the $m$-iterated resolvent.
Now, from $\omega^{(i)}=\n^{(i)}\cdot\boldsymbol \Lambda+(T^{(i)}H)H^{-1}$ we get the diagonal part.
\end{proof}

The   components of the  $m$-th iterated resolvent $\omega$ are
\begin{align}\label{explicitresolvent}
\begin{aligned}
  \omega_{[k],[k+m]}=&\Big(\prod_{i=1}^m\big(\n^{(i)}\cdot\boldsymbol\Lambda\big)\Big)_{[k],[k+m]},\\
  \omega_{[k],[k+m-1]}=&\begin{multlined}[t](T\beta)_{[k]}\Big(\prod_{i=1}^m(\n^{(i)}\cdot\boldsymbol\Lambda)\Big)_{[k-1],[k+m-1]}-
\Big(\prod_{i=1}^m(\n^{(i)}\cdot\boldsymbol\Lambda)\Big)_{[k],[k+m]}\beta_{[k+m]}\\-\sum_{i=1}^mq^{(i)}\Big(\prod_{j\neq i}(\n^{(j)}\cdot\boldsymbol\Lambda)\Big)_{[k],[k+m-1]},
\end{multlined}\\
  \omega_{[k],[k]} =&(TH)_{[k]}H_{[k]}^{-1}.
\end{aligned}
\end{align}

\begin{pro}
The MOVPR and the second kind functions satisfy
  \begin{align}\label{mdarbouxm}
\mathcal Q(\x)TP(\x)=\omega P(\x),\\
  TC(\x)=\omega C(\x),\label{mdarbouxmC}
\end{align}
\end{pro}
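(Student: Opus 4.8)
The plan is to reduce both identities to the closed form \eqref{omegamS} of the iterated resolvent, $\omega=(TS)\big(\prod_{i=1}^m(\n^{(i)}\cdot\boldsymbol\Lambda-q^{(i)})\big)S^{-1}$, combined with $P=S\chi$, the identity $C=S\Gamma$ with $\Gamma=G\chi^*$, and the Cholesky factorization $G=S^{-1}H(S^{-1})^\top$. First I would record the two auxiliary facts I will lean on: the eigenvalue property $(\n^{(i)}\cdot\boldsymbol\Lambda)\chi=(\n^{(i)}\cdot\x)\chi$ coming from \eqref{eigen}, and the multi-step moment flow $\big(\prod_{i=1}^m(\n^{(i)}\cdot\boldsymbol\Lambda-q^{(i)})\big)G=TG$. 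The latter follows directly from $T\dd\mu=\mathcal Q\,\dd\mu$ and Definition \ref{moment}: writing $TG=\int_\Omega\chi\,\mathcal Q(\x)\dd\mu\,\chi^\top$ and replacing $\mathcal Q(\x)\chi$ by $\big(\prod_{i=1}^m(\n^{(i)}\cdot\boldsymbol\Lambda-q^{(i)})\big)\chi$ (legitimate by the eigenvalue property applied one factor at a time, since each factor returns a scalar multiple of $\chi$) lets me pull the constant matrix out of the integral.

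For \eqref{mdarbouxm} I would then compute $\omega P=(TS)\big(\prod_i(\n^{(i)}\cdot\boldsymbol\Lambda-q^{(i)})\big)S^{-1}P$. Since $S^{-1}P=\chi$, the eigenvalue property collapses the product onto $\chi$, giving $\omega P=(TS)\,\mathcal Q(\x)\,\chi=\mathcal Q(\x)(TS)\chi=\mathcal Q(\x)\,TP$, where I used that $\mathcal Q(\x)$ is a scalar independent of $\m$, so it commutes past $TS$, and $TP=(TS)\chi$ because $\chi$ does not depend on $\m$. This is precisely \eqref{mdarbouxm}.

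For \eqref{mdarbouxmC} the same mechanism applies, now carrying $\chi^*$ through the middle by means of the factorization. Starting from $\omega C=(TS)\big(\prod_i(\n^{(i)}\cdot\boldsymbol\Lambda-q^{(i)})\big)S^{-1}C$ and using $S^{-1}C=\Gamma=G\chi^*$, the multi-step moment flow turns $\big(\prod_i(\n^{(i)}\cdot\boldsymbol\Lambda-q^{(i)})\big)G$ into $TG$, so $\omega C=(TS)(TG)\chi^*$. Applying the Cholesky factorization of $TG$, namely $TG=(TS)^{-1}(TH)\big((TS)^{-1}\big)^\top$, cancels the leading $TS$ and yields $\omega C=(TH)\big((TS)^{-1}\big)^\top\chi^*$. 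I then recognize the right-hand side as $T\big(H(S^{-1})^\top\chi^*\big)=TC$, again because $\chi^*$ is $\m$-independent and $T$ distributes over the matrix product, which gives \eqref{mdarbouxmC}.

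All the steps are routine once \eqref{omegamS} is in hand; the only point that needs care is the justification of the multi-step moment flow and the repeated use of the eigenvalue property, where one must verify that each factor $\n^{(i)}\cdot\boldsymbol\Lambda-q^{(i)}$ sends $\chi$ (or a scalar multiple of it) to a scalar multiple of $\chi$, so that the entire product of operators may be replaced by the scalar $\mathcal Q(\x)$. I expect this commutation and eigenvalue bookkeeping, rather than any deep computation, to be the main thing to get right. Alternatively one could iterate the one-step identities $\omega^{(i)}P=(\n^{(i)}\cdot\x-q^{(i)})T^{(i)}P$ and $T^{(i)}C=\omega^{(i)}C$ directly, but then matching the resulting ordering of translated resolvents to Definition \ref{def:miteratedresolvent} would require invoking the Zakharov--Shabat symmetry \eqref{ZS}, so the route through \eqref{omegamS} is the cleaner one.
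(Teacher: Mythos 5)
Your proof is correct, but it takes a genuinely different route from the paper's. The paper proves both identities by simply iterating the one-step intertwinings $(\n^{(i)}\cdot\x-q^{(i)})\,T^{(i)}P(\x)=\omega^{(i)}P(\x)$ and $T^{(i)}C(\x)=\omega^{(i)}C(\x)$ through the product defining $\omega$ in Definition \ref{def:miteratedresolvent}: applying $T^{(m)}$ to the identity for $\omega^{(m-1)}$, then $T^{(m)}T^{(m-1)}$ to the one for $\omega^{(m-2)}$, and so on, reproduces exactly the ordering $\big(T^{(m)}\cdots T^{(2)}\omega^{(1)}\big)\cdots\omega^{(m)}$, so the claim is essentially immediate. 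You instead work from the closed form \eqref{omegamS}, collapsing $\prod_i(\n^{(i)}\cdot\boldsymbol\Lambda-q^{(i)})$ onto $\chi=S^{-1}P$ via the eigenvalue property \eqref{eigen} for the polynomial identity, and, for the second kind functions, routing through $S^{-1}C=\Gamma=G\chi^*$, the aggregated moment flow $\prod_i(\n^{(i)}\cdot\boldsymbol\Lambda-q^{(i)})G=TG$, and the Cholesky factorization of $TG$. What your approach buys is uniformity and self-containedness: both identities are reduced to the single factorization identity \eqref{omegamS}, and the role of the measure deformation is made explicit in the moment-matrix flow; what it costs is that you silently re-derive, in aggregated form, the one-step facts the paper already has on record, and your manipulations with $C$ involve the usual formal products of semi-infinite matrices (acceptable here, since the paper itself argues at the same level of rigor, e.g.\ in the proof of Proposition \ref{pro4}). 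One small correction to your closing remark: the iteration route does \emph{not} require the Zakharov--Shabat symmetry \eqref{ZS}. Since each one-step identity holds at every lattice site, hitting it with translations yields precisely the translated resolvents in the order prescribed by Definition \ref{def:miteratedresolvent}; the ZS equations only become relevant if one wishes to compare different orderings of the elementary transformations, which is not needed for this Proposition.
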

\begin{proof}
It follows from $(\n^{(i)}\cdot\x-q^{(i)})T^{(i)}P(\x)=\omega^{(i)} P(\x)$ with $i\in\{1,\dots,m\}$ and $T^{(i)}C(\x)=\omega^{(i)} C(\x)$.
\end{proof}

We consider again  the hyperplanes $\pi^{(i,+)}=\{\x\in\R^D: \n^{(i)}\cdot\x=q^{(i)}\}$
for $i\in\{1,\dots,m\}$ to get
\begin{pro}
  For any $\boldsymbol p\in\cup_{i=1}^m \pi^{i,+}$ we have
\begin{align}\label{eq:darboux.mstep}
  \omega_{[k],[k+m]}P_{[k+m]}(\boldsymbol p)+
\omega_{[k],[k+1]}P_{[k+m-1]}(\boldsymbol p)+\cdots+
  \omega_{[k],[k]}P_{[k]}(\boldsymbol p)=0.
\end{align}
\end{pro}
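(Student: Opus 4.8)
The plan is to read off the claimed identity as a single block row of the generalized eigenvalue relation \eqref{mdarbouxm}. First I would recall that, by \eqref{eq:omegam} (equivalently by the componentwise expressions \eqref{explicitresolvent}), the $m$-th iterated resolvent $\omega$ is a block-banded matrix whose only nonzero block diagonals run from the main diagonal up to the $m$-th superdiagonal. Consequently the $[k]$ block row of $\omega P$ is the finite sum $(\omega P)_{[k]}=\sum_{j=0}^m \omega_{[k],[k+j]}P_{[k+j]}$, which is exactly the left-hand side of \eqref{eq:darboux.mstep} (up to the evident re-indexing of the middle terms). Taking block row $[k]$ of \eqref{mdarbouxm} therefore yields $\mathcal Q(\x)(TP)_{[k]}(\x)=\sum_{j=0}^m \omega_{[k],[k+j]}P_{[k+j]}(\x)$ as an identity of polynomials in $\x$.

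Next I would specialize $\x=\boldsymbol p$ for an arbitrary $\boldsymbol p\in\bigcup_{i=1}^m \pi^{(i,+)}$. By the definition of these hyperplanes there is at least one index $i$ with $\n^{(i)}\cdot\boldsymbol p=q^{(i)}$, so the corresponding factor of $\mathcal Q(\boldsymbol p)=\prod_{i=1}^m(\n^{(i)}\cdot\boldsymbol p-q^{(i)})$ vanishes and hence $\mathcal Q(\boldsymbol p)=0$. Since $(TP)_{[k]}$ is the MVOPR block associated with the transformed measure $\mathcal Q(\x)\dd\mu(\x)$, it is a genuine polynomial and so $(TP)_{[k]}(\boldsymbol p)$ is finite. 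The left-hand side $\mathcal Q(\boldsymbol p)(TP)_{[k]}(\boldsymbol p)$ is then $0$, and \eqref{eq:darboux.mstep} follows immediately.

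The only delicate point, and the step I would treat with most care, is the potential $0\cdot\infty$ indeterminacy: the iterated elementary Darboux formula \eqref{mvopr-darboux} writes each transformed polynomial with an explicit factor $(\n\cdot\x-q)^{-1}$, so a priori $TP$ could blow up precisely on the hyperplanes where $\mathcal Q$ vanishes. I would resolve this by stressing that $(TP)_{[k]}$ is, by construction, an honest polynomial: the apparent pole produced by the Darboux formula is exactly cancelled by the vanishing of the accompanying quasi-determinant on each $\pi^{(i,+)}$. Once finiteness of $(TP)_{[k]}(\boldsymbol p)$ is secured, the vanishing of the scalar $\mathcal Q(\boldsymbol p)$ closes the argument with no remaining analytic subtlety.
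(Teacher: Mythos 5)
Your proof is correct and takes essentially the same route as the paper's, whose entire argument is that the identity follows from \eqref{mdarbouxm}: you simply spell out the implicit steps of taking the $[k]$-th block row of $\mathcal Q(\x)\,TP(\x)=\omega P(\x)$ and evaluating at $\boldsymbol p$, where $\mathcal Q(\boldsymbol p)=0$. Your extra care about the finiteness of $(TP)_{[k]}(\boldsymbol p)$ (it is a genuine polynomial of the transformed measure, so no $0\cdot\infty$ issue arises) is a sensible precaution that the paper leaves implicit.
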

\begin{proof}
  It follows from \eqref{mdarbouxm}.
\end{proof}
The sample matrix trick is used again  to characterize $\omega$ in terms of MOVPR evaluated at some particular points. For the sets $\big\{\boldsymbol p^{(i)}_{j}\big\}_{j=1}^{|[k+i-1]|}$, $i\in\{1,\dots,m\}$, we use the notation $\Sigma^{(i),k}_{[\ell]}$ introduced in Definition \ref{def:sample}
\begin{pro}\label{pro:chistoffelm}
   Suppose that  $\cup_{i=1}^m\{\boldsymbol p^{(i)}_{j}\}_{j=1}^{|[k-1+i]|}\subset \cup_{i=1}^m\pi^{(i),+}$ is a poised set for $\begin{psmallmatrix}
      P_{[k]}(\x)\\\vdots\\P_{[k+m-1]}(\x)\end{psmallmatrix}$, i.e.
    \begin{align*}
      \begin{vmatrix}
    \Sigma^{(1),k}_{[k]} &\dots &\Sigma^{(m),k+m-1}_{[k]}\\ \vdots&&\vdots\\
    \Sigma^{(1),k}_{[k+m-1]} &\dots&\Sigma^{(m),k+m-1}_{[k+m-1]}
    \end{vmatrix}\neq 0.
    \end{align*} Then
      \begin{align*}
    \PARENS{\begin{matrix}
  \omega_{[k],[k]} & \dots& \omega_{[k],[k+m-1]}
    \end{matrix}}=&-\omega_{[k],[k+m]}\PARENS{\begin{matrix}
     \Sigma^{(1),k}_{[k+m]} & \dots&\Sigma^{(m),{k+m-1}}_{[k+m]}
    \end{matrix}}\PARENS{\begin{matrix}
        \Sigma^{(1),k}_{[k]} &\dots &\Sigma^{(m),k+m-1}_{[k]}\\ \vdots&&\vdots\\
    \Sigma^{(1),k}_{[k+m-1]} &\dots&\Sigma^{(m),k+m-1}_{[k+m-1]}
    \end{matrix}}^{-1}
  \end{align*}
\end{pro}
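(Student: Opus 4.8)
The plan is to repeat, with $m$ families of nodes in place of two, the sample-matrix argument used to establish Proposition~\ref{pro:chistoffel2}. The structural fact that drives the proof is that \eqref{eq:darboux.mstep} holds at \emph{every} point of $\cup_{i=1}^m\pi^{(i),+}$; since each prescribed node $\boldsymbol p^{(i)}_{j}$ lies in $\pi^{(i),+}\subseteq\cup_{l=1}^m\pi^{(l),+}$, the relation can be evaluated simultaneously at all $\sum_{i=1}^m|[k+i-1]|$ of them.

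First I would fix $i\in\{1,\dots,m\}$ and evaluate \eqref{eq:darboux.mstep} at the nodes $\{\boldsymbol p^{(i)}_{j}\}_{j=1}^{|[k+i-1]|}$. Reading the resulting identities off as the columns of the sample matrices $\Sigma^{(i),k+i-1}_{[\ell]}$ of Definition~\ref{def:sample} gives
\begin{align*}
\PARENS{\begin{matrix}\omega_{[k],[k]}&\cdots&\omega_{[k],[k+m-1]}\end{matrix}}
\PARENS{\begin{matrix}\Sigma^{(i),k+i-1}_{[k]}\\[2pt]\vdots\\[2pt]\Sigma^{(i),k+i-1}_{[k+m-1]}\end{matrix}}
=-\omega_{[k],[k+m]}\,\Sigma^{(i),k+i-1}_{[k+m]}.
\end{align*}

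Then I would juxtapose these $m$ relations as block columns, one for each value of $i$, producing the single matrix equation
\begin{align*}
\PARENS{\begin{matrix}\omega_{[k],[k]}&\cdots&\omega_{[k],[k+m-1]}\end{matrix}}
\PARENS{\begin{matrix}
\Sigma^{(1),k}_{[k]}&\dots&\Sigma^{(m),k+m-1}_{[k]}\\
\vdots&&\vdots\\
\Sigma^{(1),k}_{[k+m-1]}&\dots&\Sigma^{(m),k+m-1}_{[k+m-1]}
\end{matrix}}
=-\omega_{[k],[k+m]}\PARENS{\begin{matrix}\Sigma^{(1),k}_{[k+m]}&\dots&\Sigma^{(m),k+m-1}_{[k+m]}\end{matrix}}.
\end{align*}
The large block matrix on the left is square of size $\sum_{i=1}^m|[k+i-1]|$ and is invertible by the poised hypothesis, so multiplying on the right by its inverse yields the stated formula.

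The only real work is bookkeeping: one must check that the $i$-th family contributes exactly the block column of width $|[k+i-1]|$ indexed by the superscript pair $(i),k+i-1$, and that the total row count $\sum_{s=0}^{m-1}|[k+s]|$ of the stacked matrix equals this total column count $\sum_{i=1}^m|[k+i-1]|$, so that the poised determinant condition indeed refers to a square matrix. No analytic ingredient beyond \eqref{mdarbouxm} (which yields \eqref{eq:darboux.mstep}) and the linear-algebra step already exploited in the two-step case is required.
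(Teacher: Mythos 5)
Your proposal is correct and coincides with the paper's own proof: both evaluate \eqref{eq:darboux.mstep} at the full poised set of nodes, assemble the resulting identities into the single block sample-matrix equation, and invert the square block matrix guaranteed invertible by the poised hypothesis. Your family-by-family evaluation followed by juxtaposition, together with the size check that $\sum_{i=1}^m|[k+i-1]|=\sum_{s=0}^{m-1}|[k+s]|$, is just a more explicit write-up of the same one-line argument.
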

\begin{proof}
  Evaluate \eqref{eq:darboux.mstep} in the set $\cup_{i=1}^m\{\boldsymbol p^{(i)}_{j}\}_{j=1}^{|[k-1+i]|}$ to get
          \begin{align*}
    \PARENS{\begin{matrix}
  \omega_{[k],[k]} & \dots& \omega_{[k],[k+m-1]}
    \end{matrix}}\PARENS{\begin{matrix}
        \Sigma^{(1),k}_{[k]} &\dots &\Sigma^{(m),k+m-1}_{[k]}\\ \vdots&&\vdots\\
    \Sigma^{(1),k}_{[k+m-1]} &\dots&\Sigma^{(m),k+m-1}_{[k+m-1]}
    \end{matrix}}=&-\omega_{[k],[k+m]}\PARENS{\begin{matrix}
     \Sigma^{(1),k}_{[k+m]} & \dots&\Sigma^{(m),{k+m-1}}_{[k+m]}
    \end{matrix}}
  \end{align*}
and  the desired result follows immediately.
\end{proof}
\begin{theorem}\label{teo2}
 If the conditions specified in Proposition \ref{pro:chistoffelm} are fulfill the following multivariate quasi-determinantal Christoffel formul{\ae} holds true
  \begin{align*}
  TP_{[k]}(\x)=  \frac{\big(\prod_{i=1}^m(\n^{(i)}\cdot\boldsymbol\Lambda)\big)_{[k],[k+m]}}{\mathcal Q(\x)}
  \Theta_*\PARENS{\begin{matrix}
    \Sigma^{(1),k}_{[k]} &\dots&\Sigma^{(m),k+m-1}_{[k]}&P_{[k]}(\x)\\
   \vdots & &\vdots&\vdots\\
        \Sigma^{(1),k}_{[k+m]} & \dots& \Sigma^{(m),{k+m-1}}_{[k+m]} &P_{[k+m]}(\x)
  \end{matrix}},\\
  TC_{[k]}(\x)=\big(\prod_{i=1}^m(\n^{(i)}\cdot\boldsymbol\Lambda)\big)_{[k],[k+m]}
  \Theta_*\PARENS{\begin{matrix}
    \Sigma^{(1),k}_{[k]} &\dots&\Sigma^{(m),k+m-1}_{[k]}&C_{[k]}(\x)\\
   \vdots & &\vdots&\vdots\\
        \Sigma^{(1),k}_{[k+m]} & \dots& \Sigma^{(m),{k+m-1}}_{[k+m]} &C_{[k+m]}(\x)
  \end{matrix}}.
  \end{align*}
\end{theorem}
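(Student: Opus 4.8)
The plan is to mirror, for general $m$, the two-step argument carried out in the theorem immediately preceding this one, now powered by the linear relations \eqref{mdarbouxm}--\eqref{mdarbouxmC} and the elimination of the lower resolvent blocks furnished by Proposition \ref{pro:chistoffelm}. First I would read off the $[k]$-th block row of \eqref{mdarbouxm}. Since the $m$-th iterated resolvent $\omega$ is banded, with nonzero blocks only on the diagonal and the first $m$ superdiagonals by \eqref{eq:omegam} and \eqref{explicitresolvent}, that block row collapses to a finite sum
\begin{align*}
\mathcal Q(\x)\,(TP)_{[k]}(\x)=\sum_{j=0}^{m}\omega_{[k],[k+j]}P_{[k+j]}(\x).
\end{align*}
The same reading of \eqref{mdarbouxmC} gives the identical relation for $C$, the only difference being the absence of the prefactor $\mathcal Q(\x)$ on the left; this is exactly what will account for the missing denominator in the second asserted formula.

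Next I would eliminate $\omega_{[k],[k]},\dots,\omega_{[k],[k+m-1]}$ via Proposition \ref{pro:chistoffelm}. Writing $\Xi$ for the square poised sample matrix appearing there, substitution and factoring out $\omega_{[k],[k+m]}$ yields
\begin{align*}
\mathcal Q(\x)\,(TP)_{[k]}(\x)=\omega_{[k],[k+m]}\Big(P_{[k+m]}(\x)-\PARENS{\begin{matrix}\Sigma^{(1),k}_{[k+m]}&\dots&\Sigma^{(m),k+m-1}_{[k+m]}\end{matrix}}\Xi^{-1}\PARENS{\begin{matrix}P_{[k]}(\x)\\ \vdots\\ P_{[k+m-1]}(\x)\end{matrix}}\Big).
\end{align*}
The poisedness hypothesis guarantees $\Xi$ is invertible, and a dimension count, $\sum_{i=1}^m|[k+i-1]|$ columns matching the stacked row blocks $[k],\dots,[k+m-1]$, confirms that $\Xi$ is square so that the product is well posed.

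The recognition step, which is the crux, is that the parenthesised quantity is precisely the Schur complement $\SC$ of the bottom-right block in the bordered sample matrix
\begin{align*}
\PARENS{\begin{matrix}\Sigma^{(1),k}_{[k]}&\dots&\Sigma^{(m),k+m-1}_{[k]}&P_{[k]}(\x)\\ \vdots&&\vdots&\vdots\\ \Sigma^{(1),k}_{[k+m]}&\dots&\Sigma^{(m),k+m-1}_{[k+m]}&P_{[k+m]}(\x)\end{matrix}}.
\end{align*}
Invoking $\SC=\Theta_*$ from Appendix \ref{qd}, substituting $\omega_{[k],[k+m]}=\big(\prod_{i=1}^m(\n^{(i)}\cdot\boldsymbol\Lambda)\big)_{[k],[k+m]}$ from \eqref{explicitresolvent}, and dividing by $\mathcal Q(\x)$ produces the first formula. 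The second kind formula comes out of the identical elimination applied to the $C$-block row: the resolvent $\omega$ is the same object in both \eqref{mdarbouxm} and \eqref{mdarbouxmC}, so the very same sample-matrix combination appears, and the absence of the $\mathcal Q(\x)$ factor simply removes the denominator.

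I do not anticipate a genuine obstacle, as the content is linear algebra once Proposition \ref{pro:chistoffelm} is available; the delicate points are purely bookkeeping. One must check that the stacked sample matrix $\Xi$ is square and invertible, handled by poisedness together with the count $\sum_i|[k+i-1]|$, and one must keep the rectangular, size-varying block products in admissible order, since, as stressed in the introduction, only compatible products and the inversion of the designated corner are legitimate in this quasi-determinantal setting.
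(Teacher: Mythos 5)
Your proposal is correct and follows essentially the same route as the paper's own proof in Appendix \ref{proof8}: read off the $[k]$-th block row of \eqref{mdarbouxm} (respectively \eqref{mdarbouxmC}), eliminate $\omega_{[k],[k]},\dots,\omega_{[k],[k+m-1]}$ via Proposition \ref{pro:chistoffelm}, factor out $\omega_{[k],[k+m]}=\big(\prod_{i=1}^m(\n^{(i)}\cdot\boldsymbol\Lambda)\big)_{[k],[k+m]}$, and recognize the remaining expression as the Schur complement, hence the quasi-determinant $\Theta_*$ of the bordered sample matrix. The added bookkeeping on the squareness of the poised sample matrix and the compatibility of the rectangular block products is a welcome clarification but does not change the argument.
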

\begin{proof}
See Appendix \ref{proof8}.
\end{proof}
In the scalar case, $D=1$, this formula in the OPRL context is known as Christoffel formula, see for example \cite{szego}.

\subsubsection{Quasi-determinantal expressions for the resolvent, quasi-tau matrices and $\beta$}
An interesting consequence of Proposition \ref{pro:chistoffelm} is the following
 \begin{pro}
The resolvent coefficients can be expressed as quasi-determinants as follows
  \begin{align*}
  \omega_{[k],[k+i]}=  \big(\prod_{j=1}^m(\n^{(j)}\cdot\boldsymbol\Lambda)\big)_{[k],[k+m]}
 \Theta_*\PARENS{\begin{matrix}
    \Sigma^{(1),k}_{[k]} &\dots&\Sigma^{(m),k+m-1}_{[k]}&0_{[k],[k+i]}\\
    \vdots &&\vdots&\vdots\\
     \Sigma^{(1),k}_{[k+i-1]} &\dots&\Sigma^{(m),k+m-1}_{[k+i-1]}&0_{[k+i-1],[k+i]}\\
       \Sigma^{(1),k}_{[k+i]} &\dots&\Sigma^{(m),k+m-1}_{[k+i]}&\I_{[k+i]}\\
       \Sigma^{(1),k}_{[k+i+1]} &\dots&\Sigma^{(m),k+m-1}_{[k+i+1]}&0_{[k+i+1],[k+i]}\\
       \vdots&&\vdots&\vdots
       \\
        \Sigma^{(1),k}_{[k+m]} & \dots& \Sigma^{(m),{k+m-1}}_{[k+m]} & 0_{[k+m],[k+i]}
  \end{matrix}}.
  \end{align*}
  \end{pro}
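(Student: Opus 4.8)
The plan is to read off the claimed quasi-determinant as a single Schur complement and to match it, block by block, against the row identity already proved in Proposition \ref{pro:chistoffelm}. I would write the bordered matrix appearing on the right-hand side as $\begin{psmallmatrix} A & B \\ C & D\end{psmallmatrix}$, where $A$ is the block matrix with block rows indexed by $[k],\dots,[k+m-1]$ and block columns given by the $m$ sample blocks $\Sigma^{(1),k},\dots,\Sigma^{(m),k+m-1}$ --- that is, precisely the matrix required to be invertible in Proposition \ref{pro:chistoffelm}; here $C=\PARENS{\begin{matrix}\Sigma^{(1),k}_{[k+m]} & \dots & \Sigma^{(m),k+m-1}_{[k+m]}\end{matrix}}$ is its last block row, $B$ is the block column that vanishes except for an $\I_{[k+i]}$ in the block row indexed by $[k+i]$, and $D=0_{[k+m],[k+i]}$. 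Although the individual blocks are rectangular, the overall top-left matrix $A$ is square (its total row count $\sum_{\ell=k}^{k+m-1}|[\ell]|$ equals its total column count $\sum_{j=1}^{m}|[k+j-1]|$) and is invertible by the poisedness hypothesis, so the extended quasi-determinant of Appendix \ref{qd} applies and equals the Schur complement $D-CA^{-1}B=-CA^{-1}B$.

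Next I would compute $A^{-1}B$. Since $B$ is the block-standard-basis column carrying $\I_{[k+i]}$ in the slot $[k+i]$, the product $A^{-1}B$ is exactly the $[k+i]$-block column of $A^{-1}$, whence $CA^{-1}B$ is the $[k+i]$-block entry of the row $CA^{-1}$. On the other hand, Proposition \ref{pro:chistoffelm} asserts
\begin{align*}
\PARENS{\begin{matrix}\omega_{[k],[k]} & \dots & \omega_{[k],[k+m-1]}\end{matrix}}=-\omega_{[k],[k+m]}\,CA^{-1},
\end{align*}
so reading off its $[k+i]$-block gives $\omega_{[k],[k+i]}=-\omega_{[k],[k+m]}\,CA^{-1}B=\omega_{[k],[k+m]}\bigl(-CA^{-1}B\bigr)$. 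Comparing with the Schur complement of the first paragraph yields $\omega_{[k],[k+i]}=\omega_{[k],[k+m]}\,\Theta_*(\cdots)$, and finally substituting $\omega_{[k],[k+m]}=\big(\prod_{j=1}^m(\n^{(j)}\cdot\boldsymbol\Lambda)\big)_{[k],[k+m]}$ from \eqref{explicitresolvent} produces the stated identity.

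Once the border-identity trick is recognised the argument is essentially bookkeeping, so I do not expect a deep obstacle; the points that demand care are twofold. First, one must verify the size matching, namely that $B$ is a genuine block column with the $|[k+i]|\times|[k+i]|$ identity placed in the row of matching height, so that $A^{-1}B$ really extracts a single block column and $D$ has the correct shape --- this is the delicate part since the blocks are of unequal size and only the (square) top-left block $A$ is inverted, exactly the extended quasi-determinant setting noted after Theorem \ref{theorem:Darboux}. Second, one must keep the sign conventions of $\Theta_*$ consistent, checking that the minus sign from the Schur complement $D-CA^{-1}B$ cancels the minus sign in Proposition \ref{pro:chistoffelm}. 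For completeness I would also record the degenerate case $i=m$, where the identity sits in the bottom block row, $B=0$ and $D=\I_{[k+m]}$, so that $\Theta_*=\I_{[k+m]}$ and the formula reduces to the tautology $\omega_{[k],[k+m]}=\omega_{[k],[k+m]}$.
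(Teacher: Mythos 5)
Your proposal is correct and follows essentially the same route as the paper's own proof: both extract the $[k+i]$ block of the row identity in Proposition \ref{pro:chistoffelm} by right-multiplication with the block-standard-basis column carrying $\I_{[k+i]}$, and then recognise $-CA^{-1}B$ (with $D=0$) as the Schur complement defining $\Theta_*$, after which \eqref{explicitresolvent} supplies $\omega_{[k],[k+m]}$. Your additional checks on block sizes, the sign cancellation, and the degenerate case $i=m$ are sound but not needed beyond what the paper records.
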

 \begin{proof}

 Assuming that  $\cup_{i=1}^m\{\boldsymbol p^{(i)}_{j}\}_{j=1}^{|[k-1+i]|}\subset \cup_{i=1}^m\pi^{(i),+}$ is a poised set for $\left(\begin{smallmatrix}
      P_{[k]}(\x)\\\vdots\\P_{[k+m-1]}(\x)
    \end{smallmatrix}\right)$, then for $i\in\{0,\dots,m-1\}$
         \begin{align*}
   \omega_{[k],[k+i]}=&-\omega_{[k],[k+m]}\PARENS{\begin{matrix}
     \Sigma^{(1),k}_{[k+m]} & \dots&\Sigma^{(m),{k+m-1}}_{[k+m]}
    \end{matrix}}\PARENS{\begin{matrix}
        \Sigma^{(1),k}_{[k]} &\dots &\Sigma^{(m),k+m-1}_{[k]}\\ \vdots&&\vdots\\
    \Sigma^{(1),k}_{[k+m-1]} &\dots&\Sigma^{(m),k+m-1}_{[k+m-1]}
    \end{matrix}}^{-1}\PARENS{\begin{matrix}
      0_{[k],[k+i]}\\
      \vdots\\
      0_{[k+i-1],[k+i]}\\
      \I_{[k+i]}\\
      0_{[k+i+1],[k+i]}\\
      \vdots\\
      0_{[k+m-1],[k+i]}
    \end{matrix}}.
  \end{align*}
 \end{proof}

The previous Proposition together with  \eqref{explicitresolvent} gives

 \begin{pro}\label{ostias}
   The $m$-th iteration of elementary Darboux transformations has the following effects on the quasi-tau matrices $H_{[k]}$
  \begin{align*}
    TH_{[k]}=&\big(\prod_{i=1}^m(\n^{(i)}\cdot\boldsymbol\Lambda)\big)_{[k],[k+m]}
  \Theta_*\PARENS{\begin{matrix}
    \Sigma^{(1),k}_{[k]} &\dots&\Sigma^{(m),k+m-1}_{[k]}&H_{[k]}\\
       \Sigma^{(1),k}_{[k+1]} &\dots&\Sigma^{(m),k+m-1}_{[k+1]}&0_{[k+1],[k]}\\
   \vdots & &\vdots&\vdots\\
        \Sigma^{(1),k}_{[k+m]} & \dots& \Sigma^{(m),{k+m-1}}_{[k+m]} &0_{[k+m],[k]},
  \end{matrix}},
  \end{align*}
  and on the matrices $\beta_{[k]}$
  \begin{multline*}
    (T\beta)_{[k]}\Big(\prod_{i=1}^m(\n^{(i)}\cdot\boldsymbol\Lambda)\Big)_{[k-1],[k+m-1]}=
    \sum_{i=1}^mq^{(i)}\Big(\prod_{j\neq i}(\n^{(j)}\cdot\boldsymbol\Lambda)\Big)_{[k],[k+m-1]}\\+\big(\prod_{i=1}^m(\n^{(i)}\cdot\boldsymbol\Lambda)\big)_{[k],[k+m]}
 \Theta_*\PARENS{\begin{matrix}
    \Sigma^{(1),k}_{[k]} &\dots&\Sigma^{(m),k+m-1}_{[k]}&0_{[k],[k+m-1]}\\
    \vdots &&\vdots&\vdots\\
     \Sigma^{(1),k}_{[k+m-2]} &\dots&\Sigma^{(m),k+m-1}_{[k+m-2]}&0_{[k+m-2],[k+m+1]}\\
       \Sigma^{(1),k}_{[k+m-1]} &\dots&\Sigma^{(m),k+m-1}_{[k+m-1]}&\I_{[k+m+1]}\\
        \Sigma^{(1),k}_{[k+m]} & \dots& \Sigma^{(m),{k+m-1}}_{[k+m]} &\beta_{[k+m]}
  \end{matrix}}.
  \end{multline*}
   \end{pro}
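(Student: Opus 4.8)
The plan is to read both formul\ae\ directly off the explicit component expressions \eqref{explicitresolvent} of the $m$-th iterated resolvent, combined with the quasi-determinantal formula for $\omega_{[k],[k+i]}$ obtained in the immediately preceding Proposition. Throughout I shall exploit that $\Theta_*$ is a Schur complement, $\Theta_*\begin{psmallmatrix}A&B\\C&D\end{psmallmatrix}=D-CA^{-1}B$, and therefore enjoys two elementary properties: it is \emph{additive in its lower-right corner}, $\Theta_*\begin{psmallmatrix}A&B\\C&D+E\end{psmallmatrix}=\Theta_*\begin{psmallmatrix}A&B\\C&D\end{psmallmatrix}+E$, and it is \emph{right-linear in its last block column}, so that $\Theta_*\begin{psmallmatrix}A&B\\C&D\end{psmallmatrix}X=\Theta_*\begin{psmallmatrix}A&BX\\C&DX\end{psmallmatrix}$.

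For the quasi-tau matrices I would begin with the diagonal component $\omega_{[k],[k]}=(TH)_{[k]}H_{[k]}^{-1}$ of \eqref{explicitresolvent}, so that $(TH)_{[k]}=\omega_{[k],[k]}H_{[k]}$. I then insert the $i=0$ instance of the resolvent quasi-determinant; there the last block column carries $\I_{[k]}$ in its top slot, zeros in all lower slots, and a vanishing lower-right corner, while the common prefactor is $\big(\prod_{i=1}^m(\n^{(i)}\cdot\boldsymbol\Lambda)\big)_{[k],[k+m]}$. Multiplying on the right by $H_{[k]}$ and invoking right-linearity replaces that $\I_{[k]}$ by $H_{[k]}$ while leaving every other block of the last column (in particular the zero corner) untouched, which yields precisely the quasi-determinant claimed for $TH_{[k]}$.

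For the $\beta$ matrices I would use instead the first-superdiagonal component $\omega_{[k],[k+m-1]}$ of \eqref{explicitresolvent} and solve it for $(T\beta)_{[k]}\big(\prod_{i=1}^m(\n^{(i)}\cdot\boldsymbol\Lambda)\big)_{[k-1],[k+m-1]}$; this isolates on the right-hand side the term $\sum_{i=1}^mq^{(i)}\big(\prod_{j\neq i}(\n^{(j)}\cdot\boldsymbol\Lambda)\big)_{[k],[k+m-1]}$ together with $\omega_{[k],[k+m-1]}+\big(\prod_{i=1}^m(\n^{(i)}\cdot\boldsymbol\Lambda)\big)_{[k],[k+m]}\beta_{[k+m]}$. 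Now I substitute the $i=m-1$ instance of the resolvent quasi-determinant, whose last block column has $\I_{[k+m-1]}$ in its penultimate slot and a vanishing lower-right corner, factor out the common prefactor $\big(\prod_{i=1}^m(\n^{(i)}\cdot\boldsymbol\Lambda)\big)_{[k],[k+m]}$, and apply additivity in the corner with $E=\beta_{[k+m]}$. This converts the zero corner into $\beta_{[k+m]}$ and reproduces exactly the quasi-determinant appearing in the statement.

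The only genuinely delicate point is the block bookkeeping: fixing which block plays the pivot role $A$, which forms the bordering row and column, and which is the lower-right corner $D$, and then checking that right multiplication by $H_{[k]}$ really affects only the last block column and preserves the zero blocks. Once this partition is pinned down, both identities collapse to the two Schur-complement manipulations above applied to the substitution of \eqref{explicitresolvent}, and no further computation is required.
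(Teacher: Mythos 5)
Your proposal is correct and follows essentially the same route as the paper, which derives Proposition \ref{ostias} precisely by combining the quasi-determinantal expression for the resolvent coefficients $\omega_{[k],[k+i]}$ (the preceding Proposition) with the component formul{\ae} \eqref{explicitresolvent}, reading off $TH_{[k]}=\omega_{[k],[k]}H_{[k]}$ from the $i=0$ case and solving the $(m-1)$-th superdiagonal identity for $(T\beta)_{[k]}$ in the $i=m-1$ case. The two elementary Schur-complement manipulations you make explicit --- right-linearity of $\Theta_*$ in the last block column (turning $\I_{[k]}$ into $H_{[k]}$) and additivity in the lower-right corner (turning $0$ into $\beta_{[k+m]}$) --- are exactly the bookkeeping the paper leaves implicit.
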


We remark that in Proposition \ref{betaH2step} and Proposition \ref{ostias} we can get explicitly the transformed $\beta$ by multiplying on the right by  right inverse matrices of the product of factors of type $\n\cdot\boldsymbol\Lambda$,  for that aim see Proposition \ref{moorepenroseright}. We obtain that a right inverse of $\Big(\prod_{i=1}^m(\n^{(i)}\cdot\boldsymbol\Lambda)\Big)_{[k-1],[k+m-1]}$
 can be expressed in terms of pseudo-inverses as
\begin{align*}
\prod_{i=0}^{m-1}\Big(\mathcal M^{-1/2}_{[k+i]}\big( (\n^{(i)}\cdot\boldsymbol\Lambda)_{[k-1+i],[k+i]}\mathcal M_{[k+i]}^{-1/2}\big)^+\Big),
\end{align*}
where we use the multinomial matrix \eqref{multinomialmatrix}.
Therefore
\begin{pro}
  After the iteration of $m$ elementary Darboux transformations the transformed coefficient $\beta$ can be expressed as a quasi-determinant as follows
   \begin{multline*}
T\beta_{[k]}=
    \sum_{i=1}^mq^{(i)}\Big(\prod_{j\neq i}(\n^{(j)}\cdot\boldsymbol\Lambda)\Big)_{[k],[k+m-1]}\prod_{i=0}^{m-1}\Big(\mathcal M^{-1/2}_{[k+i]}\big( (\n^{(i)}\cdot\boldsymbol\Lambda)_{[k-1+i],[k+i]}\mathcal M_{[k+i]}^{-1/2}\big)^+\Big)\\+\big(\prod_{i=1}^m(\n^{(i)}\cdot\boldsymbol\Lambda)\big)_{[k],[k+m]}
\begin{vmatrix}
    \Sigma^{(1),k}_{[k]} &\dots&\Sigma^{(m),k+m-1}_{[k]}&0_{[k],[k+m-1]}\\
    \vdots &&\vdots&\vdots\\
     \Sigma^{(1),k}_{[k+m-2]} &\dots&\Sigma^{(m),k+m-1}_{[k+m-2]}&0_{[k+m-2],[k+m+1]}\\
       \Sigma^{(1),k}_{[k+m-1]} &\dots&\Sigma^{(m),k+m-1}_{[k+m-1]}&\prod_{i=0}^{m-1}\Big(\mathcal M^{-1/2}_{[k+i]}\big( (\n^{(i)}\cdot\boldsymbol\Lambda)_{[k-1+i],[k+i]}\mathcal M_{[k+i]}^{-1/2}\big)^+\Big)\\[1pt]
        \Sigma^{(1),k}_{[k+m]} & \dots& \Sigma^{(m),{k+m-1}}_{[k+m]} &\boxed{\beta_{[k+m]}\prod_{i=0}^{m-1}\Big(\mathcal M^{-1/2}_{[k+i]}\big( (\n^{(i)}\cdot\boldsymbol\Lambda)_{[k-1+i],[k+i]}\mathcal M_{[k+i]}^{-1/2}\big)^+\Big)}
  \end{vmatrix}.
  \end{multline*}
\end{pro}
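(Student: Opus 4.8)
The plan is to read the asserted formula off Proposition \ref{ostias} by a single right-multiplication. That proposition already supplies the factored identity
\[
(T\beta)_{[k]}\Big(\prod_{i=1}^m(\n^{(i)}\cdot\boldsymbol\Lambda)\Big)_{[k-1],[k+m-1]}
=\sum_{i=1}^mq^{(i)}\Big(\prod_{j\neq i}(\n^{(j)}\cdot\boldsymbol\Lambda)\Big)_{[k],[k+m-1]}
+\Big(\prod_{i=1}^m(\n^{(i)}\cdot\boldsymbol\Lambda)\Big)_{[k],[k+m]}\,\Theta_*(\Xi),
\]
where $\Theta_*(\Xi)$ denotes the quasi-determinant appearing there, whose bulk is the stacked sample matrices and whose last block column is $(0,\dots,0,\I_{[k+m-1]},\beta_{[k+m]})^{\top}$ with boxed entry $\beta_{[k+m]}$. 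Writing $\mathcal R$ for the block matrix $\prod_{i=0}^{m-1}\big(\mathcal M_{[k+i]}^{-1/2}((\n^{(i)}\cdot\boldsymbol\Lambda)_{[k-1+i],[k+i]}\mathcal M_{[k+i]}^{-1/2})^{+}\big)$, the whole statement will follow once I multiply this identity on the right by $\mathcal R$ and simplify the left-hand side.

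First I would justify that $\mathcal R$ is a genuine right inverse of $\big(\prod_{i=1}^m(\n^{(i)}\cdot\boldsymbol\Lambda)\big)_{[k-1],[k+m-1]}$. The latter block factors as a telescoping product $B_1B_2\cdots B_m$ of individual shift blocks $B_i=(\n^{(i)}\cdot\boldsymbol\Lambda)_{[k-2+i],[k-1+i]}$, each of which has full row rank because $|[k-2+i]|\le|[k-1+i]|$ and each row of a shift block carries a single unit entry in a distinct pivot column. By Proposition \ref{moorepenroseright}, conjugating with the square root of the multinomial matrix turns the Moore--Penrose pseudo-inverse into the relevant right inverse, so each factor satisfies $B_i\,\mathcal M^{-1/2}(B_i\mathcal M^{-1/2})^{+}=\I$; multiplying these in the order recorded in $\mathcal R$ makes the product telescope to the identity, which is precisely the right-inverse claim already stated in the remark preceding the proposition. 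Hence the left-hand side collapses, $(T\beta)_{[k]}\big(\prod_i(\n^{(i)}\cdot\boldsymbol\Lambda)\big)_{[k-1],[k+m-1]}\mathcal R=(T\beta)_{[k]}$, and the sum term simply acquires a right factor $\mathcal R$, reproducing the first line of the asserted formula.

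The substantive step is the interaction of $\mathcal R$ with the quasi-determinant. Using the Schur-complement description of $\Theta_*$ from Appendix \ref{qd}, if a block matrix is partitioned with boxed bottom-right block $d$, bulk $A$, border column $b$ and border row $c$, then $\Theta_*=d-cA^{-1}b$, so for any size-compatible $\mathcal R$ one has $\Theta_*\cdot\mathcal R=d\mathcal R-cA^{-1}(b\mathcal R)$; that is, right-multiplication by $\mathcal R$ only rescales the last block column, replacing $(0,\dots,0,\I_{[k+m-1]},\beta_{[k+m]})^{\top}$ by $(0,\dots,0,\mathcal R,\beta_{[k+m]}\mathcal R)^{\top}$. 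Applying this to $\Theta_*(\Xi)$ above yields exactly the quasi-determinant displayed in the statement, with the boxed entry $\beta_{[k+m]}\mathcal R$. I expect the only genuine obstacle to be bookkeeping: checking that all block sizes remain compatible (in Olver's sense) through the right-multiplication, and confirming that the order of the pseudo-inverse factors in $\mathcal R$ is the one that telescopes correctly; once the right-inverse identity and the column-rescaling property of $\Theta_*$ are in place, the formula follows without further computation.
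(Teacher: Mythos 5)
Your proposal is correct and follows exactly the paper's own route: the paper derives this proposition by right-multiplying the identity of Proposition \ref{ostias} by the right inverse of $\big(\prod_{i=1}^m(\n^{(i)}\cdot\boldsymbol\Lambda)\big)_{[k-1],[k+m-1]}$ supplied (factor by factor) by Proposition \ref{moorepenroseright}, and absorbing that right factor into the last block column of the quasi-determinant via the Schur-complement formula $\Theta_*=d-cA^{-1}b$. Your two supporting observations — that the telescoping forces the pseudo-inverse factors to be composed in descending order of $i$ (a point the paper's product notation leaves implicit), and that right multiplication only rescales the border column and boxed entry — are precisely the bookkeeping the paper's terse remark relies on.
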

\subsubsection{Christoffel--Darboux kernel and the kernel polynomials}
The transformed polynomials $TP_k$ are known in the 1D case as kernel polynomials because the nice formula \eqref{darboux1D}.
We will show now that a similar relation holds in the multivariate situation, and thus the transformed MVOPR should receive the name of multivariate kernel polynomials in the same footing as it happens in the 1D scenario.

 Now we consider a result similar to Theorem \ref{translatedCD} but for $m$ elementary Darboux transformations.
 In doing so we need
 \begin{definition}
   We introduce the following truncation matrix of the resolvent matrix
   \begin{align*}
     \omega^{[\ell,m]}\coloneq  \PARENS{
  \begin{matrix}
    \omega_{[\ell],[\ell]}&\omega_{[\ell],[\ell+1]}&\dots &\omega_{[\ell],[\ell+m-2]}&\omega_{[\ell],[\ell+m-1]}\\
0_{[\ell+1],[\ell]}&\omega_{[\ell+1],[\ell+1]}&\dots&\omega_{[\ell+1],[\ell+m-2]} &\omega_{[\ell+1],[\ell+m-1]}\\
\vdots & &\ddots&\vdots&\vdots\\
0_{[\ell+m-2],[\ell]}&0_{[\ell+m-2],[\ell+1]}&\dots &\omega_{[\ell+m-2],[\ell+m-2]}&\omega_{[\ell+m-1],[\ell+m-1]}\\
0_{[\ell+m-1],[\ell]}&0_{[\ell+m-1],[\ell+1]}&\dots &0_{[\ell+m-1],[\ell+m-2]}&\omega_{[\ell+m-1],[\ell+m-1]}
 \end{matrix}}
   \end{align*}
   and the upper unitriangular matrix $\zeta^{[\ell,m]}\coloneq H^{[\ell]}\big(TH^{[\ell]}\big)^{-1}\omega^{[\ell,m]}$.
 \end{definition}
 Notice that $\zeta_{[\ell+i],[\ell+i]}^{[\ell,m]}=\I_{[\ell+i]}$, for $i\in\{0,\dots,m-1\}$ and that $\big(H^{[\ell]}\big)^{-1}\zeta^{[\ell,m]}=\big(TH^{[\ell]}\big)^{-1}\omega^{[\ell,m]}$.

 \begin{theorem}\label{superostia}
The following formula relating Christoffel--Darboux kernels after and before the iteration of $m$ elementary Darboux transformations holds true
\begin{align}\label{mCD}
  K^{(\ell+m)}(\x,\y)=\mathcal Q(\x)
  TK^{(\ell)}(\x,\y)+\PARENS{
  \begin{matrix}
TP_{[\ell]}(\y)\\\vdots\\TP_{[\ell+m-1]}(\y)
  \end{matrix}}^\top
\big(H^{[\ell]}\big)^{-1}\zeta^{[\ell,m]}
\PARENS{\begin{matrix}
    P_{[\ell]}(\x)\\\vdots\\P_{[\ell+m-1]}(\x)
  \end{matrix}}.
\end{align}
 \end{theorem}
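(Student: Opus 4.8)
The plan is to recast both sides of \eqref{mCD} as semi-infinite bilinear forms in the \emph{original} polynomials $P(\x),P(\y)$ and match them block by block. Throughout write $M\coloneq S(TS)^{-1}$, so that $P=M\,TP$ and $TP=M^{-1}P$, and let $\mathcal E_k$ denote the block-diagonal projector carrying $\I_{[j]}$ in the $j$-th diagonal block for $0\le j\le k-1$ and zero elsewhere. Since $H$ is block diagonal, Definition \ref{DEFCD} reads $K^{(\ell+m)}(\x,\y)=P(\x)^\top H^{-1}\mathcal E_{\ell+m}P(\y)$, and likewise $TK^{(\ell)}(\x,\y)=(TP(\x))^\top (TH)^{-1}\mathcal E_\ell\,TP(\y)$.

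First I would record the factorization identity $\omega=(TH)M^\top H^{-1}$. It follows from \eqref{omegamS} together with $TG=W_0G=GW_0^\top$ (with $W_0=\prod_{i=1}^m(\n^{(i)}\cdot\boldsymbol\Lambda-q^{(i)})$, obtained by iterating Proposition \ref{pro:discrete G}) and the Cholesky factorization \eqref{cholesky} of $G$ and of $TG$: inserting $S^{-1}H(S^{-1})^\top=G$ one checks that $(TS)W_0S^{-1}H=(TH)((TS)^{-1})^\top S^\top$, which is exactly $\omega=(TH)M^\top H^{-1}$. Because $\omega$ is block upper-triangular with exactly $m$ superdiagonals by \eqref{eq:omegam}, this identity shows $M^\top=(TH)^{-1}\omega H$ is upper-triangular with $m$ superdiagonals and (using $\omega_{[k],[k]}=(TH_{[k]})H_{[k]}^{-1}$ from \eqref{explicitresolvent}) unit diagonal blocks; hence $M$ is lower unitriangular with exactly $m$ subdiagonals, a fact essential below. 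Transposing also gives $\omega^\top(TH)^{-1}=H^{-1}M$.

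Next I would compute $\mathcal Q(\x)TK^{(\ell)}(\x,\y)$. Multiplying $TK^{(\ell)}(\x,\y)=(TP(\x))^\top(TH)^{-1}\mathcal E_\ell\,TP(\y)$ by $\mathcal Q(\x)$ and applying \eqref{mdarbouxm} in the form $\mathcal Q(\x)TP(\x)=\omega P(\x)$ yields $P(\x)^\top\omega^\top(TH)^{-1}\mathcal E_\ell\,TP(\y)$. Using $\omega^\top(TH)^{-1}=H^{-1}M$ and $TP(\y)=M^{-1}P(\y)$ gives
\[
\mathcal Q(\x)TK^{(\ell)}(\x,\y)=P(\x)^\top H^{-1}M\,\mathcal E_\ell\,M^{-1}P(\y).
\]
Subtracting this from $K^{(\ell+m)}(\x,\y)=P(\x)^\top H^{-1}\mathcal E_{\ell+m}P(\y)$ and reinserting $P(\y)=M\,TP(\y)$ on the right produces
\[
K^{(\ell+m)}(\x,\y)-\mathcal Q(\x)TK^{(\ell)}(\x,\y)=P(\x)^\top H^{-1}\big(\mathcal E_{\ell+m}M-M\mathcal E_\ell\big)TP(\y).
\]

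Finally I would evaluate the banded matrix $\mathcal E_{\ell+m}M-M\mathcal E_\ell$. Since $M$ is lower unitriangular with $m$ subdiagonals, $\mathcal E_{\ell+m}M$ keeps columns $0,\dots,\ell+m-1$ of $M$ capped at row $\ell+m-1$, while $M\mathcal E_\ell$ keeps the uncapped columns $0,\dots,\ell-1$; the columns $0,\dots,\ell-1$ cancel exactly and only the lower-triangular $m\times m$ corner block $\{M_{[\ell+i],[\ell+j]}\}_{0\le j\le i\le m-1}$ in rows and columns $\ell,\dots,\ell+m-1$ survives. Thus the difference equals $\sum_{0\le j\le i\le m-1}P_{[\ell+i]}(\x)^\top H_{[\ell+i]}^{-1}M_{[\ell+i],[\ell+j]}\,TP_{[\ell+j]}(\y)$, a scalar; transposing it and using the componentwise form $\omega_{[p],[r]}=(TH_{[p]})M_{[r],[p]}^\top H_{[r]}^{-1}$ of $\omega=(TH)M^\top H^{-1}$ to rewrite $M_{[\ell+i],[\ell+j]}^\top H_{[\ell+i]}^{-1}=(TH_{[\ell+j]})^{-1}\omega_{[\ell+j],[\ell+i]}$, then relabelling indices, gives exactly $\sum_{0\le i\le j\le m-1}TP_{[\ell+i]}(\y)^\top(TH_{[\ell+i]})^{-1}\omega_{[\ell+i],[\ell+j]}P_{[\ell+j]}(\x)$. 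This is the boundary term of \eqref{mCD}, once one recalls $(H^{[\ell]})^{-1}\zeta^{[\ell,m]}=(TH^{[\ell]})^{-1}\omega^{[\ell,m]}$ and that $\omega^{[\ell,m]}$ is the upper-triangular $m\times m$ block of $\omega$. I expect the main obstacle to be precisely this penultimate step: tracking the capping of the bands of $M$ under the two truncations so that only the triangular corner survives, and then aligning transposes and index ranges with the upper-triangular $\omega^{[\ell,m]}$; everything else is formal manipulation of the factorization identities.
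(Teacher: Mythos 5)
Your proof is correct and follows essentially the same route as the paper's: both rest on the factorization identity $\omega=(TH)M^\top H^{-1}$ (equivalently $M=H\omega^\top (TH)^{-1}$) obtained from the moment-matrix symmetry $TG=W_0G=GW_0^\top$ together with the Cholesky factorizations, on the relations $P=M\,TP$ and $\mathcal Q\,TP=\omega P$, and on the fact that the $m$-band structure of $M$ (resp.\ $\omega$) forces everything to localize in the triangular corner block that produces $\zeta^{[\ell,m]}$. The only difference is bookkeeping: the paper evaluates the truncated sandwich $P^{[\ell+m]}(\x)^\top\big(H^{[\ell+m]}\big)^{-1}M^{[\ell+m]}TP^{[\ell+m]}(\y)$ in two ways (acting to the right and to the left), whereas you form the difference $K^{(\ell+m)}(\x,\y)-\mathcal Q(\x)\,TK^{(\ell)}(\x,\y)$ directly and read it off from the banded matrix $\mathcal E_{\ell+m}M-M\mathcal E_\ell$ --- the same corner-block argument in a different guise.
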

 \begin{proof}
 See Appendix \ref{mCD mas}
    \end{proof}

For only one elementary Darboux transformation, $m=1$, the above result reduces to
\begin{align*}
  K^{(\ell+1)}(\x,\y)=&(\n\cdot\x-q)
  TK^{(\ell)}(\x,\y)+
TP_{[\ell]}(\y)^\top
H_{[\ell]}^{-1}P_{[\ell]}(\x),
\end{align*}
and we recover Theorem \ref{translatedCD}. For $m=2$, i.e., the two step Darboux transformation, we get
\begin{align*}
  K^{(\ell+2)}(\x,\y)=\mathcal Q(\x)
  TK^{(\ell)}(\x,\y)+
\PARENS{
  \begin{matrix}
H_{[\ell]}^{-1}TP_{[\ell]}(\y)\\H_{[\ell+1]}^{-1}TP_{[\ell+1]}(\y)
  \end{matrix}}^\top\zeta^{([\ell,2]}\PARENS{\begin{matrix}
    P_{[\ell]}(\x)\\P_{[\ell+1]}(\x)
  \end{matrix}}
\end{align*}
where
   \begin{align*}
   \mathcal Q&=(\n^{(1)}\cdot\x-q^{(1)})(\n^{(2)}\cdot\x-q^{(2)}), &
     \zeta^{[\ell,2]}&= \PARENS{
  \begin{matrix}\I_{[\ell]}
& \zeta_{[\ell],[\ell+1]}\\
0_{[\ell+1],[\ell]}& \I_{[\ell+1]}
 \end{matrix}}\end{align*}
   with
   \begin{align*}
     \zeta_{[\ell],[\ell+1]}=H_{[\ell]}(TH_{[\ell]})^{-1}\Big((T\beta)_{[\ell]}\Big(\prod_{i=1}^2(\n^{(i)}\cdot\boldsymbol\Lambda)\Big)_{[\ell-1],[\ell+1]}-
\Big(\prod_{i=1}^2(\n^{(i)}\cdot\boldsymbol\Lambda)\Big)_{[\ell],[\ell+2]}\beta_{[\ell+2]}-(\n\cdot\boldsymbol\Lambda)_{[\ell],[\ell+1]}\Big)
   \end{align*}
   and $\n=q^{(1)}\n^{(2)}+q^{(2)}\n^{(1)}$. Then,
   \begin{multline*}
  K^{(\ell+2)}(\x,\y)=\mathcal Q(\x)
  TK^{(\ell)}(\x,\y)+TP_{[\ell]}(\y)^\top H_{[\ell]}^{-1}P_{[\ell]}(\x) +TP_{[\ell+1]}(\y)^\top H_{[\ell+1]}^{-1}P_{[\ell+1]}(\x)
  \\+TP_{[\ell]}(\y)^\top H_{[\ell]}^{-1}\zeta_{[\ell],[\ell+1]}P_{[\ell+1]}(\x).
 \end{multline*}

 \begin{definition}
   Given the set of points $\mathscr P\coloneq \cup_{i=1}^m\{\boldsymbol p^{(i)}_{j}\}_{j=1}^{|[\ell-1+i]|}$ we define
   the following Christoffel--Darboux  vectors
    \begin{align*}
      \kappa^{(\ell,m)}(\y)\coloneq &\PARENS{\begin{matrix}
        \kappa^{(\ell,m)}_{[\ell]}(\y)\\\vdots\\   \kappa^{(\ell,m)}_{[\ell+m-1]}(\y)
      \end{matrix}}^\top, &
               \kappa^{(\ell,m)}_{[\ell+i-1]}(\y)\coloneq &\PARENS{\begin{matrix}
        K^{(\ell+m)}(\boldsymbol p^{(i)}_1,\y),\\\vdots\\ K^{(\ell+m)}(\boldsymbol p^{(i)}_{|[\ell+i-1]|},\y)
              \end{matrix}}^\top, &i=1,\dots,m.
    \end{align*}
    We also introduce
    \begin{align*}P^{(\ell,m)}(\x)&=\PARENS{
  \begin{matrix}
P_{[\ell]}(\x)\\\vdots\\P_{[\ell+m-1]}(\x)
  \end{matrix}}^\top,&
      \Sigma^{[\ell,m]}\coloneq \PARENS{\begin{matrix}
    \Sigma^{(1),\ell}_{[\ell]} &\dots &\Sigma^{(m),\ell+m-1}_{[\ell]}\\ \vdots&&\vdots\\
    \Sigma^{(1),\ell}_{[\ell+m-1]} &\dots&\Sigma^{(m),\ell+m-1}_{[\ell+m-1]}
    \end{matrix}}.
    \end{align*}
 \end{definition}

 Now, the sample matrix trick lead to the following finding that relates the Christoffel--Darboux kernel evaluated in a poised set and the transformed polynomials, justifying the denomination of kernel polynomials.
 \begin{pro}
   Assume that $\mathscr P\subset \cup_{i=1}^m \pi^{i,+}$ is a poised set, i.e., $\det\Sigma^{(\ell,m)}\neq 0$ ; then,
\begin{align*}
\kappa^{(\ell,m)}(\x)=(TP)^{(\ell,m)}(\x)
\big(H^{[\ell]}\big)^{-1}\zeta^{[\ell,m]} \Sigma^{[\ell,m]}.
\end{align*}
The following quasi-determinantal expressions  hold
\begin{align*}
TP^{(\ell,m)}(\x)=&-\Theta_*\PARENS{\begin{matrix}
 \zeta^{[\ell,m]} \Sigma^{[\ell,m]} & H^{[\ell]} \\
 \kappa^{(\ell,m)}(\x)&0
\end{matrix}}, &
\Theta_*\PARENS{\begin{matrix}
 \zeta^{[\ell,m]} \Sigma^{[\ell,m]} & H^{[\ell]} \\
 \kappa^{(\ell,m)}(\x)&TP^{(\ell,m)}(\x)
\end{matrix}}=0,
\end{align*}
 \end{pro}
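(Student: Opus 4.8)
The plan is to read both claims straight off the Christoffel--Darboux transformation formula \eqref{mCD} of Theorem \ref{superostia}, by evaluating it at the nodes of the poised set and using that $\mathcal Q$ annihilates those nodes. First I would specialize \eqref{mCD} to the case where the first argument is a node $\boldsymbol p\in\mathscr P$ and the second argument is the free variable $\x$. Since $\mathscr P\subset\cup_{i=1}^m\pi^{i,+}$, each $\boldsymbol p$ lies on some hyperplane $\pi^{i,+}$, so $\n^{(i)}\cdot\boldsymbol p-q^{(i)}=0$ and therefore $\mathcal Q(\boldsymbol p)=\prod_{i=1}^m(\n^{(i)}\cdot\boldsymbol p-q^{(i)})=0$. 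The $\mathcal Q(\boldsymbol p)\,TK^{(\ell)}(\boldsymbol p,\x)$ term of \eqref{mCD} thus vanishes, leaving, for every node,
\[
K^{(\ell+m)}(\boldsymbol p,\x)=(TP)^{(\ell,m)}(\x)\,(H^{[\ell]})^{-1}\zeta^{[\ell,m]}\,\big(P^{(\ell,m)}(\boldsymbol p)\big)^\top .
\]

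Next I would assemble these node identities into a single matrix relation. For fixed $i$, letting $\boldsymbol p$ run over the block $\{\boldsymbol p^{(i)}_j\}_{j=1}^{|[\ell+i-1]|}$ and placing the columns $\big(P^{(\ell,m)}(\boldsymbol p^{(i)}_j)\big)^\top$ side by side produces precisely the $i$-th block column $\left(\begin{smallmatrix}\Sigma^{(i),\ell+i-1}_{[\ell]}\\ \vdots\\ \Sigma^{(i),\ell+i-1}_{[\ell+m-1]}\end{smallmatrix}\right)$ of the sample matrix $\Sigma^{[\ell,m]}$; concatenating over $i=1,\dots,m$ reconstitutes $\Sigma^{[\ell,m]}$ entirely, while the matching list of scalars $K^{(\ell+m)}(\boldsymbol p,\x)$ is by definition the row $\kappa^{(\ell,m)}(\x)$. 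This yields at once the first asserted identity
\[
\kappa^{(\ell,m)}(\x)=(TP)^{(\ell,m)}(\x)\,(H^{[\ell]})^{-1}\zeta^{[\ell,m]}\,\Sigma^{[\ell,m]}.
\]

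For the quasi-determinantal forms I would simply invert this linear relation. The factor $\zeta^{[\ell,m]}$ is upper unitriangular, hence invertible, and $\Sigma^{[\ell,m]}$ is invertible by the poisedness hypothesis $\det\Sigma^{[\ell,m]}\neq0$, so
\[
(TP)^{(\ell,m)}(\x)=\kappa^{(\ell,m)}(\x)\,\big(\zeta^{[\ell,m]}\Sigma^{[\ell,m]}\big)^{-1}H^{[\ell]}.
\]
Recalling that $\Theta_*$ coincides with the Schur complement relative to the top-left block, i.e. $\Theta_*\big(\begin{smallmatrix}A&B\\ C&D\end{smallmatrix}\big)=D-CA^{-1}B$, I set $A=\zeta^{[\ell,m]}\Sigma^{[\ell,m]}$, $B=H^{[\ell]}$, $C=\kappa^{(\ell,m)}(\x)$. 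With $D=0$ this gives $\Theta_*=-\kappa^{(\ell,m)}(\x)(\zeta^{[\ell,m]}\Sigma^{[\ell,m]})^{-1}H^{[\ell]}=-(TP)^{(\ell,m)}(\x)$, which is the first quasi-determinant formula, and with $D=TP^{(\ell,m)}(\x)$ the Schur complement becomes $TP^{(\ell,m)}(\x)-(TP)^{(\ell,m)}(\x)=0$, which is the second.

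The computations are short; the only delicate point is the bookkeeping of transposes and block orderings needed to certify that the node evaluations $\big(P^{(\ell,m)}(\boldsymbol p)\big)^\top$ assemble exactly into $\Sigma^{[\ell,m]}$, in the same order in which $\kappa^{(\ell,m)}$ lists the kernel values, and that $TP^{(\ell,m)}$ and $(TP)^{(\ell,m)}$ denote the same row block-vector. Once this indexing is fixed, both quasi-determinantal identities follow immediately from the Schur-complement description of $\Theta_*$, so there is no substantive analytic obstacle beyond the vanishing of $\mathcal Q$ on $\mathscr P$ and the invertibility of $\zeta^{[\ell,m]}\Sigma^{[\ell,m]}$.
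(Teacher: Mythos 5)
Your proof is correct and coincides with the argument the paper intends: the proposition is stated there without an explicit proof precisely because it follows, via the sample matrix trick, from Theorem \ref{superostia} by evaluating \eqref{mCD} at the poised nodes (where $\mathcal Q$ vanishes), assembling the resulting identities into $\Sigma^{[\ell,m]}$, and inverting $\zeta^{[\ell,m]}\Sigma^{[\ell,m]}$ to read off the Schur-complement form of $\Theta_*$ — exactly your steps. The only caveat is notational, not mathematical: as the paper itself remarks after Theorem \ref{theorem:Darboux}, these are extended quasi-determinants with a non-square bordering block, so the identification $\Theta_*\PARENS{\begin{smallmatrix}A&B\\C&D\end{smallmatrix}}=D-CA^{-1}B$ you invoke is the correct reading.
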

One recognizes the first formula  as an extension to arbitrary dimensions and iterations of the 1D formula \eqref{darboux1D}.

Given another set of points $\tilde {\mathscr  P}\coloneq \cup_{i=1}^m\{{\boldsymbol {\tilde p}}^{(i)}_{j}\}_{j=1}^{|[\ell-1+i]|}$ which do not need to be neither in the union of hyper-planes nor poised we form the corresponding matrix $\tilde\Sigma^{[\ell,m]}$  so that
\begin{align*}
  (K(\boldsymbol p^{(i)}_j,\boldsymbol{\tilde p}^{(\tilde i)}_{\tilde j} ))=(T\tilde\Sigma^{[\ell,m]})\big(H^{[\ell]}\big)^{-1}\zeta^{[\ell,m]} \Sigma^{[\ell,m]}.
\end{align*}
We are expressing the evaluation of the Christoffel--Darboux kernel of the LHS as the product of matrices that at the end are expressed as elementary Darboux transforms, or Miwa shifts --see \S\ref{miwa}--, of the quasi-tau  matrices, see   \S \ref{qt}, as it happens in Theorem 8.1 in \cite{adler-van moerbeke 1}. It is not so clear if it is helpful for the computation of the Fredholm determinant $\det(1-\lambda S_\ell)$ of the projection $S_{\ell}$ with kernel $K^{(\ell+1)}(\x,\y)$ \cite{adler-van moerbeke 1}.

\section{On continuous Toda and MVOPR}\label{4}
Now, once we have consider the Toda type discrete flows and the corresponding moments matrices $G(\m)$ we are ready
 to add continuous deformations to the moment matrix. We will see that for given appropriate deformations or flows of a given measure we get an integrable hierarchy
 that extends the 2D Toda lattice hierarchy.
 In our extension  the dependent variables are size varying matrices which satisfy Toda type nonlinear PDE.
\subsection{The continuous flows}
We first introduce  of time deformations
 \begin{definition}
 Let us define the following covector of time variables
 \begin{align*}
   t=&(t_{[0]},t_{[1]},\dots),& t_{[k]}=&(t_{\q_1^{(k)}},\dots,t_{\q^{(k)}_{|[k]|}}), & t_{\q^{(k)}_j}\in\R.
 \end{align*}
 \end{definition}
 Observe that the just introduced times  can be considered as elements in the symmetric algebra $t^\top\in\operatorname{S}(\R^D)$.
\begin{definition}
 The deformation matrix is
 \begin{align*}
  W_0(t,\m)&=\exp\Big(\sum_{k=0}^{\infty}\sum_{j=1}^{|[k]|} t_{\q^{(k)}_j} \Lambda_{\q^{(k)}_j} \Big)\prod_{a=1}^D( \n_a\cdot\boldsymbol\Lambda-q_a)^{m_a},
 \end{align*}
and the deformed moment matrix is
  \begin{align*}
  G(t,\m)&\coloneq W_0(t,\m)G.
 \end{align*}
\end{definition}
Notice that $  G(t,\m)=G(\m)W_0(t,\m)^\top$, and
 \begin{align*}
 \Lambda_{\kk} G(t,\m)&= G(t,\m) \big(\Lambda_{\kk}\big)^\top,&  \forall &\kk\in\Z_+^D, & G(t,\m)&=(G(t,\m))^\top.
  \end{align*}
  \begin{definition}\label{lostiempos}
 We introduce the notation
 \begin{align*}
t(\x)\coloneq t\chi(\x)=\sum_{k=0}^\infty\sum_{j=1}^{|[k]|}t_{{\q^{(k)}_j}}\x^{{\q^{(k)}_j}}.
 \end{align*}
  \end{definition}

It is not difficult to see that
\begin{pro}\label{deformacion medida}
The deformed moment matrix is the moment matrix of the following deformed measure
\begin{align*}
 \dd \mu_{t,\m}(\x)&=\Exp{t(\x)}\dd \mu_{\m}(\x)=\Exp{t(\x)}\Big[\prod_{a=1}^D\big(\n_a\cdot\x-q_a\big)^{m_a}\Big]\dd\mu(\x).
\end{align*}
\end{pro}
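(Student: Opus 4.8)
The plan is to reduce the statement to the eigenvalue property of the shift matrices (Proposition~\ref{pro:Lambda}) together with the integral representation of the moment matrix (Definition~\ref{moment}). The discrete factor has in effect already been treated in \eqref{eq: Miwa-G}, which identifies $\prod_{a=1}^D(\n_a\cdot\boldsymbol\Lambda-q_a)^{m_a}G$ with the moment matrix of $\dd\mu_{\m}$; hence the only genuinely new ingredient is the effect of the exponential $\exp\big(\sum_{k,j} t_{\q^{(k)}_j}\Lambda_{\q^{(k)}_j}\big)$ on the left of $G$.

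First I would start from $G(t,\m)=W_0(t,\m)G$ and insert $G=\int_\Omega \chi(\x)\dd\mu(\x)\chi(\x)^\top$. Since $W_0(t,\m)$ is independent of $\x$, I would move it inside the integral by linearity so that it acts on $\chi(\x)$. The key step is then to compute $W_0(t,\m)\chi(\x)$ using the eigen-value property \eqref{eigen}: from $\Lambda_{\q}\chi(\x)=\x^{\q}\chi(\x)$ one gets $(\n_a\cdot\boldsymbol\Lambda-q_a)\chi(\x)=(\n_a\cdot\x-q_a)\chi(\x)$ and, since all the $\Lambda_{\q}$ share $\chi(\x)$ as a common eigenvector, $\big(\sum_{k,j}t_{\q^{(k)}_j}\Lambda_{\q^{(k)}_j}\big)^n\chi(\x)=t(\x)^n\chi(\x)$ for every $n$, whence $\exp\big(\sum_{k,j}t_{\q^{(k)}_j}\Lambda_{\q^{(k)}_j}\big)\chi(\x)=\Exp{t(\x)}\chi(\x)$ by Definition~\ref{lostiempos}. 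Combining these gives $W_0(t,\m)\chi(\x)=\Exp{t(\x)}\prod_{a=1}^D(\n_a\cdot\x-q_a)^{m_a}\chi(\x)$, and substituting back yields $\int_\Omega \chi(\x)\Exp{t(\x)}\prod_{a=1}^D(\n_a\cdot\x-q_a)^{m_a}\dd\mu(\x)\chi(\x)^\top$, which is exactly the moment matrix of $\dd\mu_{t,\m}$.

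The main obstacle is analytic rather than algebraic. The block $([k],[\ell])$ of $W_0(t,\m)G$ is an \emph{infinite} sum $\sum_{m\ge k}(W_0(t,\m))_{[k],[m]}G_{[m],[\ell]}$, so the manipulation above requires interchanging this sum with the integral; convergence of $\sum_m (W_0(t,\m))_{[k],[m]}\chi_{[m]}(\x)$ is precisely the convergence of the Taylor expansion of $\Exp{t(\x)}$, and one also needs $\Exp{t(\x)}$ to be integrable against $\dd\mu_{\m}$ so that $\dd\mu_{t,\m}$ is a bona fide measure. I would dispose of the purely formal part by noting that $\sum_{k\ge 1,j}t_{\q}\Lambda_{\q}$ is strictly upper block triangular, so its $n$-th power shifts by at least $n$ blocks and the exponential is well defined block-entry-wise regardless of convergence; the remaining interchange of the series over $m$ with the integral is then justified, for $t$ in a suitable domain of convergence, by dominated convergence, after which the computation of the previous paragraph goes through verbatim.
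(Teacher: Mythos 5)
Your proposal is correct and is exactly the argument the paper has in mind: the paper states this proposition with no proof at all (``It is not difficult to see that\dots''), and the intended justification is precisely what you carried out --- move $W_0(t,\m)$ inside the integral representation of $G$ and use the eigenvalue property $\Lambda_{\q}\chi(\x)=\x^{\q}\chi(\x)$ of Proposition~\ref{pro:Lambda}, just as in the paper's proof of the discrete analogue \eqref{eq: Miwa-G}. Your extra care about the block-wise finiteness of the exponential and the sum--integral interchange supplies rigor the paper simply omits, so there is no gap.
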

The Cholesky factorization
 \begin{align}\label{eq:choleksy.evol}
  G(t,\m)&\coloneq (S(t,\m))^{-1}H(t,\m)\big((S(t,\m))^{-1}\big)^\top
 \end{align}
leads  to new MVOPR depending on both continuous and discrete time parameters. We introduce
\begin{definition}
 The wave semi-infinite matrices are
 \begin{align}\label{wave}
  W_1(t,\m)&\coloneq S(t,\m)W_0(t,\m), &  W_2(t,\m)&\coloneq H(t,\m)\big((S(t,\m))^{-1}\big)^\top.
 \end{align}
\end{definition}
An important fact regarding wave matrices and Gaussian decomposition of the evolved moment matrix is
\begin{pro}\label{pro:G_and_evolved_waves}
The  wave matrices  factorizes the non deformed moment matrix as follows
\begin{align}\label{totoW}
 G=&\big(W_1(t,\m)\big)^{-1}W_2(t,\m).
\end{align}
\end{pro}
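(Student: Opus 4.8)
The plan is to imitate, almost verbatim, the argument already used for Proposition \ref{pro:G_and_evolved_waves2}, since the continuous times enter only through the additional exponential factor in $W_0(t,\m)$ and that factor is manifestly invertible. First I would write down the two facts that define the objects involved: the Cholesky factorization \eqref{eq:choleksy.evol} of the deformed moment matrix, $G(t,\m)=(S(t,\m))^{-1}H(t,\m)\big((S(t,\m))^{-1}\big)^\top$, together with the relation $G(t,\m)=W_0(t,\m)G$ coming from the definition of $G(t,\m)$.

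Next I would solve the identity $W_0(t,\m)G=(S(t,\m))^{-1}H(t,\m)\big((S(t,\m))^{-1}\big)^\top$ for the undeformed $G$ by left-multiplying with $W_0(t,\m)^{-1}$, obtaining
\begin{align*}
G=W_0(t,\m)^{-1}(S(t,\m))^{-1}H(t,\m)\big((S(t,\m))^{-1}\big)^\top.
\end{align*}
Then I would recognize the two factors prescribed by the definitions \eqref{wave}: since $W_1(t,\m)=S(t,\m)W_0(t,\m)$ we have $W_1(t,\m)^{-1}=W_0(t,\m)^{-1}(S(t,\m))^{-1}$, and $W_2(t,\m)=H(t,\m)\big((S(t,\m))^{-1}\big)^\top$ is exactly the remaining product. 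Substituting these two identifications yields $G=\big(W_1(t,\m)\big)^{-1}W_2(t,\m)$, which is \eqref{totoW}.

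The only point requiring care, and hence the main (though mild) obstacle, is the invertibility of $W_0(t,\m)$, which is needed to pass from $W_0(t,\m)G=\cdots$ to the expression for $G$. The exponential factor $\exp\big(\sum_{k,j}t_{\q^{(k)}_j}\Lambda_{\q^{(k)}_j}\big)$ is invertible with inverse the exponential of the negative argument, so no issue arises there. For the discrete factor $\prod_{a=1}^D(\n_a\cdot\boldsymbol\Lambda-q_a)^{m_a}$ the subtle case is $m_a<0$, but this is precisely settled by the standing hypothesis $q_a\neq 0$, under which each $(\n_a\cdot\boldsymbol\Lambda-q_a)^{-1}$ is the well-defined upper triangular matrix exhibited earlier (organized by superdiagonals, with no genuine series occurring in any single block). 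Since $S(t,\m)$ is block lower unitriangular, and hence automatically invertible, all factors appearing are invertible and the manipulation above is legitimate.
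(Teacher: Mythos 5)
Your proof is correct and follows essentially the same route as the paper: left-multiply the identity $G(t,\m)=W_0(t,\m)G$ by $W_0(t,\m)^{-1}$, insert the Cholesky factorization \eqref{eq:choleksy.evol}, and recognize the factors $W_1(t,\m)^{-1}=W_0(t,\m)^{-1}(S(t,\m))^{-1}$ and $W_2(t,\m)=H(t,\m)\big((S(t,\m))^{-1}\big)^\top$ from \eqref{wave}. Your added care about the invertibility of $W_0(t,\m)$ (exponential factor plus the $q_a\neq 0$ hypothesis for negative powers) is a sensible supplement to the formal manipulation the paper itself flags in a footnote.
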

\begin{proof}
From \eqref{eq:choleksy.evol} we deduce that\footnote{The product of two semi-infinite matrices is  a delicate issue. There is no problem if we multiply lower triangular with lower triangular, upper triangular with upper triangular and even lower triangular with upper triangular, as all the coefficients of the resulting matrix are finite sums. But the multiplication of upper triangular with lower triangular could lead to problems as sums are now infinite series that need not to converge. This is why $W_1(t)$ is well defined being $S(t)$ lower triangular and $W_0(t)$ upper triangular. However, for $(W_1)^{-1}$ we need to be more careful as the \emph{na\"{\i}ve} answer  $(W_1)^{-1}=W_0^{-1}S_1^{-1}$ involves  the product of an upper with a lower triangular. A possible answer is to say that the inverse from the right is  $S^{-1}H\big(S^{-1}\big)^\top (W_2(t))^{-1}$, which in fact is a consequence of this Proposition. Despite of being a \emph{formal} Proposition, as we are assuming the existence of the inverse $W_1^{-1}$ one could compute this inverse in appropriate  domains. That is the case for the  adjoint Baker functions.}
  \begin{align}\label{totoWW}
 G=&(W_0(t,\m)^{-1}G(t,\m)\\
 =&(W_0(t,\m)^{-1}(S(t,\m))^{-1}\,H(t,\m)\big((S(t,\m))^{-1}\big)^\top\\
 =&\big(W_1(t,\m)\big)^{-1}W_2(t,\m).
\end{align}
\end{proof}
In what follows we will use the splitting as a direct sum of the linear  semi-infinite matrices in strictly block lower triangular matrices and upper block triangular matrices. Then,  $M_+$ will denote the projection of $M$ in the upper triangular matrices while $M_-$ the projection in the strictly lower triangular matrices.
\begin{pro}
The wave matrices  $W_1$ and $W_2$  solve the following system of  linear differential equations
 \begin{align*}
  \frac{\partial W}{\partial t_{\q^{(k)}_j}}&=\big(J_{{\q^{(k)}_j}}\big)_{+} W,  & j&=1,\dots,|[k]|, k=0,1,\dots
 \end{align*}
\end{pro}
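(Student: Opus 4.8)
The plan is to run a standard Sato--Wilson dressing argument, exploiting the fact that the bare deformation matrix $W_0$ evolves in a purely algebraic way. Writing $\q=\q^{(k)}_j$ and $\partial_\q$ for $\partial/\partial t_{\q^{(k)}_j}$, the first step is to read off from the definition of $W_0(t,\m)$ together with the commutativity $\Lambda_{\kk}\Lambda_{\ele}=\Lambda_{\ele}\Lambda_{\kk}$ of Proposition \ref{pro:Lambda} that differentiating the exponential pulls out a single generator:
\begin{align*}
\partial_\q W_0=\Lambda_{\q}\,W_0.
\end{align*}
Next I would use Proposition \ref{pro:G_and_evolved_waves}, which gives $G=W_1^{-1}W_2$ with $G$ the undeformed, hence $t$-independent, moment matrix. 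Rewriting this as $W_1G=W_2$ and differentiating yields $(\partial_\q W_1)\,G=\partial_\q W_2$; multiplying on the right by $W_2^{-1}$ and using $GW_2^{-1}=W_1^{-1}$ produces the key identity
\begin{align*}
(\partial_\q W_1)\,W_1^{-1}=(\partial_\q W_2)\,W_2^{-1}.
\end{align*}

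The heart of the proof is to compute both sides from the factorizations $W_1=SW_0$ and $W_2=H(S^{-1})^\top$ and to sort them by triangular type. For the left side, $\partial_\q W_1=(\partial_\q S)W_0+S\Lambda_\q W_0$ gives, using Definition \eqref{def:J},
\begin{align*}
(\partial_\q W_1)\,W_1^{-1}=(\partial_\q S)S^{-1}+S\Lambda_\q S^{-1}=(\partial_\q S)S^{-1}+J_\q,
\end{align*}
where $A:=(\partial_\q S)S^{-1}$ is strictly lower block triangular because $S$ is lower unitriangular. For the right side, differentiating $(S^{-1})^\top S^\top=\I$ shows $(\partial_\q(S^{-1})^\top)S^\top=-A^\top$, so that
\begin{align*}
(\partial_\q W_2)\,W_2^{-1}=(\partial_\q H)H^{-1}-H A^\top H^{-1},
\end{align*}
which is upper block triangular (a block-diagonal term plus a strictly upper one, conjugation by the block-diagonal $H$ preserving this type).

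Finally I would equate the two expressions and project. Decomposing $J_\q=(J_\q)_++(J_\q)_-$ in the sense of the splitting introduced just before the statement, the strictly lower block triangular part of the equality forces $A+(J_\q)_-=0$, whence $A=-(J_\q)_-$ and therefore $(\partial_\q W_1)W_1^{-1}=A+J_\q=(J_\q)_+$; the upper block triangular part gives directly $(\partial_\q W_2)W_2^{-1}=(J_\q)_+$. Thus $\partial_\q W=(J_\q)_+W$ for both $W=W_1$ and $W=W_2$, which is the claim. The only delicate point — the main obstacle — is the well-definedness of the semi-infinite matrix products and inverses appearing above, the very subtlety flagged in the footnote to Proposition \ref{pro:G_and_evolved_waves}; this is resolved by noting that every product used is lower-times-lower, upper-times-upper, or lower-times-upper, and that the projections $(J_\q)_\pm$ together with conjugation by the block-diagonal $H$ act diagonal-by-diagonal, so each matrix entry is a finite sum and the manipulations are justified.
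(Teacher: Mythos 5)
Your proof is correct and follows essentially the same route as the paper's own (much terser) argument: differentiate the $t$-independent factorization identity $W_1G=W_2$, sort $(\partial_{\q}W_1)W_1^{-1}=(\partial_{\q}S)S^{-1}+J_{\q}$ against the upper-triangular $(\partial_{\q}W_2)W_2^{-1}$, and project to get $(\partial_{\q}S)S^{-1}=-\big(J_{\q}\big)_-$ and $(\partial_{\q}W_2)W_2^{-1}=\big(J_{\q}\big)_+$. The only difference is that you spell out the intermediate steps and the convergence caveat that the paper leaves implicit.
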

\begin{proof}
Differentiate \eqref{toto} to obtain
 \begin{align*}
\frac{\partial S(t)}{\partial t_{{\q^{(k)}_j}}}S(t)^{-1}&=-\big(J_{{\q^{(k)}_j}}(t)\big)_- ,&
\frac{\partial W_2(t)}{\partial t_{{\q^{(k)}_j}}}W_2(t)^{-1}&=\big(J_{{\q^{(k)}_j}}(t)\big)_+.\\
 \end{align*}
 and the result follows.
\end{proof}
Let us observe that
\begin{align*}
  \frac{\partial S(t)}{\partial t_{{\q^{(k)}_j}}}S(t)^{-1}+\big(J_{{\q^{(k)}_j}}(t)\big)_-=0
\end{align*}
is of particular relevance. Put for example $k=1$ and consider the equations for the times $t_{[1]}=(t_1,\dots,t_D)$, the \emph{first level times},
\begin{align}\label{diff jacobi}
  \frac{\partial S}{\partial t_a}S^{-1}+(J_a)_-=0.
\end{align}
\begin{definition}
  Let us decompose the matrices by diagonals, we write
\begin{align}
  \label{diagonal splitting}
  S=\I+\beta^{(1)}+\beta^{(2)}+\cdots
\end{align}
where $\beta^{(1)}$ is the first subdiagonal, i.e. $\beta=\beta^{(1)}$, and in general $\beta^{(k)}$ is the $k$-th subdiagonal of $S$.
\end{definition}
Then
\begin{pro}\label{otro mas}
The coefficients $S_{[k],[k-j]}=\beta^{(j)}_{[k]}$ of the MOVPR are subject to differential relations and the three first are
  \begin{align*}
  \frac{\partial\beta_{[k]}}{\partial t_a}=&J_{[k],[k-1]},\\
  \frac{\partial\beta^{(2)}_{[k]}}{\partial t_a}=&\frac{\partial\beta^{(1)}_{[k]}}{\partial t_a}\beta^{(1)}_{[k-1]},\\
  \frac{\partial\beta^{(3)}_{[k]}}{\partial t_a}= &\frac{\partial\beta^{(2)}_{[k]}}{\partial t_a}\beta^{(1)}_{[k-1]}
  +\frac{\partial\beta^{(1)}_{[k]}}{\partial t_a}\beta^{(2)}_{[k-1]}-\frac{\partial\beta^{(1)}_{[k]}}{\partial t_a}
  \beta^{(1)}_{[k-1]}    \beta^{(1)}_{[k-2]}
\end{align*}
\end{pro}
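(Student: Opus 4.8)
The plan is to start from the first-level flow equation \eqref{diff jacobi}, $\frac{\partial S}{\partial t_a}S^{-1} + (J_a)_- = 0$, which was already extracted by differentiating the factorization $G = W_1^{-1}W_2$. The one extra structural input I would invoke is Proposition \ref{explicit jacobi}: the basic Jacobi matrix $J_a$ is block-tridiagonal, so its strictly lower part $(J_a)_-$ consists solely of the first block subdiagonal, with blocks $(J_a)_{[k],[k-1]}$. Thus the matrix $F := \frac{\partial S}{\partial t_a}S^{-1}$ is strictly lower triangular and supported \emph{only} on the first block subdiagonal, where $F_{[k],[k-1]} = -(J_a)_{[k],[k-1]}$, while every block of $F$ below that subdiagonal vanishes.

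The key idea is then to read the identity $F = \frac{\partial S}{\partial t_a}S^{-1}$ block-diagonal by block-diagonal, using the splitting \eqref{diagonal splitting}, $S = \I + \sum_{j\geq 1}\beta^{(j)}$, together with the inverse expansion $S^{-1} = \I + \sum_{j\geq 1}\gamma^{(j)}$ produced by the Neumann series $S^{-1} = \sum_{p\geq 0}(-1)^p (S-\I)^p$. The first two inverse subdiagonals are $\gamma^{(1)}_{[m]} = -\beta^{(1)}_{[m]}$ and $\gamma^{(2)}_{[m]} = -\beta^{(2)}_{[m]} + \beta^{(1)}_{[m]}\beta^{(1)}_{[m-1]}$, each product being of compatible varying size. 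Extracting the $([k],[k-j])$ block of $F$ and writing $m = k-i$ gives, with the convention $\gamma^{(0)} = \I$, the master identity $\sum_{i=1}^{j}\frac{\partial\beta^{(i)}_{[k]}}{\partial t_a}\,\gamma^{(j-i)}_{[k-i]} = F_{[k],[k-j]}$.

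From here the three stated relations follow by specializing $j$. For $j=1$ the sum reduces to $\frac{\partial\beta^{(1)}_{[k]}}{\partial t_a} = F_{[k],[k-1]} = -(J_a)_{[k],[k-1]}$, which is the first relation, so that the symbol $J_{[k],[k-1]}$ there stands for $-(J_a)_{[k],[k-1]}$, in agreement with \eqref{diff jacobi}. For $j\geq 2$ the block $F_{[k],[k-j]}$ vanishes, so isolating the $i=j$ term gives the recursion $\frac{\partial\beta^{(j)}_{[k]}}{\partial t_a} = -\sum_{i=1}^{j-1}\frac{\partial\beta^{(i)}_{[k]}}{\partial t_a}\,\gamma^{(j-i)}_{[k-i]}$. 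For $j=2$ this is $-\frac{\partial\beta^{(1)}_{[k]}}{\partial t_a}\gamma^{(1)}_{[k-1]} = \frac{\partial\beta^{(1)}_{[k]}}{\partial t_a}\beta^{(1)}_{[k-1]}$, the second relation; for $j=3$, substituting the formulae for $\gamma^{(1)}_{[k-2]}$ and $\gamma^{(2)}_{[k-1]}$ and rearranging reproduces the third relation exactly.

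The only genuine obstacle is bookkeeping, not ideas: because the blocks $\beta^{(j)}_{[k]} = S_{[k],[k-j]}$ and $\gamma^{(j)}_{[m]}$ are rectangular of varying sizes $|[k]|\times|[k-j]|$, one must track row and column indices so that every product $\frac{\partial\beta^{(i)}_{[k]}}{\partial t_a}\,\gamma^{(j-i)}_{[k-i]}$ is block-compatible; this is precisely what forces the middle factor in the first term of the third relation to carry the column index $[k-2]$ (i.e. $\beta^{(1)}_{[k-2]}$) rather than $[k-1]$. Once the index conventions are fixed, checking that the expanded expression coincides with the stated right-hand side is mechanical, and the general-$j$ formula follows by induction on the subdiagonal index $j$.
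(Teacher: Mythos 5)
Your proof is correct and follows essentially the same route as the paper's: expand $S^{-1}$ by subdiagonals (Neumann series), split $\frac{\partial S}{\partial t_a}S^{-1}$ into subdiagonals, and use \eqref{diff jacobi} together with the block-tridiagonality of $J_a$ from Proposition \ref{explicit jacobi} to match the first subdiagonal with the Jacobi block and annihilate all deeper ones. Your explicit block bookkeeping is in fact sharper than the statement itself: it correctly forces the first term of the third relation to be $\frac{\partial\beta^{(2)}_{[k]}}{\partial t_a}\beta^{(1)}_{[k-2]}$ (the printed $\beta^{(1)}_{[k-1]}$ is size-incompatible), and your reading of $J_{[k],[k-1]}$ as $-(J_a)_{[k],[k-1]}$ agrees with the sign recorded after Proposition \ref{pro10}.
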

\begin{proof}
See Appendix \ref{yo que se}.
\end{proof}

\subsection{Baker functions. Lax and Zakharov--Shabat equations}
\begin{definition}\label{definition:Baker_functions}
Baker functions are defined by
\begin{align*}
  \Psi_1&\coloneq W_1\chi, & \Psi_2&\coloneq W_2\chi^*,
  \end{align*}
  while adjoint Baker functions are given by
  \begin{align*}
  \Psi_1^*&\coloneq (W_1^{-1})^\top\chi^*,& \Psi_2^*&\coloneq (W_2^{-1})^\top\chi.
\end{align*}
\end{definition}
We notice that $\Psi_1$ and $\Psi_2^*$ lead to the computation of finite sums, but $\Psi_1^*$ and $\Psi_2$ involves Laurent series; however $(\Psi_2)_{\q_i}=C_{\q_i}(t,\m)$ and its domain of convergence is $\Ds_{\q_i}(t,\m)$. We will denote by $\Ds_{\q_i}^*(t,\m)$ the domain of convergence of $(\Psi_1^*)_{\q_i}(t,\m)$.
\begin{pro}\label{pro.baker.expressions}
The following expressions for the Baker functions in terms of MVOPR and its multivariate Cauchy transforms hold true
  \begin{flalign*}
 (\Psi_1)_{\q_i}(\z)&=\Exp{ t(\z)}\Big[\prod_{a=1}^D\big(\n_a\cdot\z-q_a\big)^{m_a}\Big]P_{\q_i}(\z,t,\m),\\
  (\Psi_2)_{\q_i}(\z)&=\int_\Omega\frac{P_{\q_i}(\y,t)}{(z_1-y_1)\cdots(z_D-y_D)}\dd\mu_{t,\m}(\y),& \z\in\Ds_{\q_i}(t,\m)\setminus\operatorname{supp}(\dd\mu),\\
  (\Psi_1^*)_{\q_i}(\z)&=\sum_{j=1}^{|[k]|}(H(t,\m)^{-1})_{\q_i,\q_j}\int_\Omega\frac{P_{\q_j}(\y,t,\m)}{(z_1-y_1)\cdots(z_D-y_D)}
  \dd\mu(\y),&\z\in\cap_{j=1}^{|[k]|}\Ds_{\q_j}^*(t,\m)\setminus\operatorname{supp}(\dd\mu),\\
  (\Psi_2^*)_{\q_i}(\z)&=\sum_{j=1}^{|[k]|}(H(t,\m)^{-1})_{\q_i,\q_j}P_{\q_j}(\z,t,\m).
  \end{flalign*}
  \end{pro}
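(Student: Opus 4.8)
The plan is to read off each of the four formul{\ae} directly from the definitions of the wave matrices, using three ingredients already at our disposal: the ``eigenvalue'' properties \eqref{eigen} of the shift matrices, the Cholesky factorization \eqref{eq:choleksy.evol} together with its consequence \eqref{totoW}, and the interpretation of the second kind objects as multivariate Cauchy transforms from Proposition \ref{cauchy}. Throughout, $S$, $H$, $P$ and $C$ carry the arguments $(t,\m)$, and the degree-$k$ block equalities are extracted componentwise at the end.

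First I would dispatch $\Psi_1$. By \eqref{eigen} every factor of $W_0(t,\m)$ acts diagonally on $\chi$, so that
\begin{align*}
W_0(t,\m)\chi(\z)=\Exp{t(\z)}\Big[\prod_{a=1}^D(\n_a\cdot\z-q_a)^{m_a}\Big]\chi(\z),
\end{align*}
where $t(\z)$ is as in Definition \ref{lostiempos}. Multiplying on the left by $S(t,\m)$ and using $P(\z,t,\m)=S(t,\m)\chi(\z)$ gives the claimed expression for $(\Psi_1)_{\q_i}$; since $S$ is lower unitriangular, each component is a finite sum times the scalar prefactor. For $\Psi_2$ I note that $\Psi_2=W_2\chi^*=H(S^{-1})^\top\chi^*$ is, by the very definition \eqref{def:C} of the second kind functions, equal to $C(t,\m)$, the second kind functions of the deformed measure $\dd\mu_{t,\m}$. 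Proposition \ref{cauchy}, applied to $\dd\mu_{t,\m}$ (whose factorization is \eqref{eq:choleksy.evol}), then expresses $(\Psi_2)_{\q_i}=C_{\q_i}(t,\m)$ as the Cauchy transform of $P_{\q_i}(\cdot,t,\m)$ against $\dd\mu_{t,\m}$, which is the stated formula.

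Next come the two adjoint functions, for which the key computation is $(W_2^{-1})^\top$. Since $G=G^\top$ forces each block of $H$ to be symmetric, from $W_2=H(S^{-1})^\top$ I get $(W_2^{-1})^\top=H^{-1}S$. Hence $\Psi_2^*=(W_2^{-1})^\top\chi=H^{-1}S\chi=H^{-1}P$, which componentwise is the last formula. For $\Psi_1^*$ I would use \eqref{totoW}, $G=W_1^{-1}W_2$, to write $(W_1^{-1})^\top=(W_2^{-1})^\top G=H^{-1}SG$; here $G$ is the \emph{undeformed} moment matrix, in agreement with the original measure $\dd\mu$ appearing in the target formula. Thus $\Psi_1^*=H^{-1}S\Gamma$ with $\Gamma=G\chi^*$, and since each $\Gamma_{\q_j}$ is the Cauchy transform of the monomial $\y^{\q_j}$ against $\dd\mu$ (the result established immediately after Proposition \ref{cauchy}), the lower unitriangular action of $S$ turns $(S\Gamma)_{\q_j}$ into the Cauchy transform of $P_{\q_j}(\cdot,t,\m)$ against $\dd\mu$; applying $H^{-1}$ yields the claim.

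The main obstacle is not algebraic but analytic: $\Psi_2$ and $\Psi_1^*$ are built from $\chi^*$ and are therefore $D$-fold Laurent series, and the rearrangement $(W_1^{-1})^\top=(W_2^{-1})^\top G$ rests on the \emph{formal} identity \eqref{totoW}, whose subtlety (multiplying an upper by a lower triangular factor) is exactly the point flagged in the footnote to Proposition \ref{pro:G_and_evolved_waves}. I would therefore be careful to record that the manipulations are legitimate blockwise --- where only finite sums occur, because $S$ and $S^{-1}$ are (uni)triangular --- and that the resulting series converge precisely on the polyannular domains $\Ds_{\q_i}(t,\m)$ and $\cap_{j}\Ds_{\q_j}^*(t,\m)$, away from $\operatorname{supp}(\dd\mu)$, as asserted in the statement.
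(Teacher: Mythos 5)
Your proposal is correct and follows essentially the same route as the paper's own proof: read off $\Psi_1$, $\Psi_2$ and $\Psi_2^*$ directly from the wave-matrix definitions (using the eigenvalue property of $W_0$ on $\chi$, the identification $\Psi_2=C(t,\m)$ with Proposition \ref{cauchy} applied to $\dd\mu_{t,\m}$, and $(W_2^{-1})^\top=H^{-1}S$ from the symmetry of $H$), and obtain $\Psi_1^*=H^{-1}S(t,\m)G\chi^*$ via \eqref{totoW}, identified as Cauchy transforms of the deformed polynomials against the undeformed measure. Your detour through $\Gamma=G\chi^*$ is just an explicit spelling-out of the paper's instruction to ``recall the proof of Proposition \ref{cauchy} replacing $S\to S(t,\m)$ but keeping $G$'', so no genuinely different idea is involved.
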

\begin{proof}
See Appendix \ref{proof9}.
  \end{proof}

  \begin{pro}
   The Baker functions and the adjoint Baker functions satisfy
   \begin{align*}
    J_a \Psi_1&=x_a\Psi_1 &  J_a \Psi_2,&=x_a\Psi_2-\lim_{x_a\to \infty} \left[x_a \Psi_2 \right], \\
    J_a^{\top} \Psi_1^{*}&=x_a\Psi_1^{*}-\lim_{x_a\to\infty} \left[x_a \Psi_1^{*} \right], & J_a^{\top} \Psi_2^{*}&=x_a\Psi_2^{*}.
   \end{align*}
  \end{pro}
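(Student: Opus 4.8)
The plan is to establish the four eigenvalue-type relations directly from the definitions of the Baker and adjoint Baker functions, reducing each to the action of a shift matrix $\Lambda_a$ on $\chi$ or $\chi^*$ together with the dressing relation $J_a = S\Lambda_a S^{-1}$ from \eqref{def:J}. The key observation is that the wave matrices $W_1 = SW_0$ and $W_2 = H(S^{-1})^\top$ conjugate the bare shift $\Lambda_a$ into the Jacobi matrix $J_a$, and that $\Lambda_a$ acts on $\chi$ and $\chi^*$ almost like multiplication by $x_a$ — with correction terms governed by \eqref{eigen} and \eqref{lambdaTchi} of Proposition \ref{pro:Lambda}.

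First I would treat $\Psi_1$. Since $\Psi_1 = W_1\chi = SW_0\chi$ and $W_0$ is a polynomial in the commuting matrices $\{\Lambda_b\}$, the eigenvalue property $\Lambda_a\chi = x_a\chi$ from \eqref{eigen} gives $\Lambda_a W_0\chi = x_a W_0\chi$, hence
\begin{align*}
J_a\Psi_1 = S\Lambda_a S^{-1}\,S W_0\chi = S\Lambda_a W_0\chi = x_a\, S W_0\chi = x_a\Psi_1.
\end{align*}
For $\Psi_2^* = (W_2^{-1})^\top\chi = S H^{-1}\chi$, I would use $J_a^\top = H^{-1}J_a H$ from \eqref{pro9}, so that $J_a^\top S H^{-1} = H^{-1}J_a H\, S H^{-1} = H^{-1}J_a S = H^{-1} S\Lambda_a$; applying this to $\chi$ and again invoking $\Lambda_a\chi = x_a\chi$ yields $J_a^\top\Psi_2^* = x_a\Psi_2^*$. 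These two are the ``clean'' cases where no boundary term appears, precisely because $\Lambda_a$ (not $\Lambda_a^\top$) acts on $\chi$.

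The remaining two relations, for $\Psi_2 = W_2\chi^*$ and $\Psi_1^* = (W_1^{-1})^\top\chi^*$, are where the limit corrections enter, and this is the step I expect to be the main obstacle. Here the $\chi^*$ argument forces the appearance of $\Lambda_a^\top$ acting on $\chi^*$, and \eqref{lambdaTchi} gives $\Lambda_a^\top\chi^* = x_a\Pi_a\chi^*$ rather than simply $x_a\chi^*$; the projection $\Pi_a = \I - \Pi_a^\perp$ produces exactly the missing component. Using $\Pi_a^\perp\chi^*(\x) = x_a^{-1}\lim_{x_a\to\infty}(x_a\chi^*)$ from \eqref{piachi}, one finds $\Lambda_a^\top\chi^* = x_a\chi^* - \lim_{x_a\to\infty}(x_a\chi^*)$. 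I would then track how $W_2 = H(S^{-1})^\top$ intertwines $\Lambda_a^\top$ with $J_a$: from $J_a = S\Lambda_a S^{-1}$ one gets $J_a W_2 = J_a H(S^{-1})^\top = H J_a^\top (S^{-1})^\top = H(S\Lambda_a S^{-1})^\top(S^{-1})^\top = H(S^{-1})^\top\Lambda_a^\top = W_2\Lambda_a^\top$, so that
\begin{align*}
J_a\Psi_2 = W_2\Lambda_a^\top\chi^* = W_2\Big(x_a\chi^* - \lim_{x_a\to\infty}(x_a\chi^*)\Big) = x_a\Psi_2 - \lim_{x_a\to\infty}[x_a\Psi_2],
\end{align*}
where the last limit is justified because $W_2$ acts termwise on the Laurent series and commutes with the limit (the domain-of-convergence subtleties for $\Psi_2$ being already handled by Proposition \ref{pro.baker.expressions}). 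The adjoint case $\Psi_1^* = S^\top W_0^{-\top}\chi^* $ is analogous: here $J_a^\top$ conjugates back through $(W_1^{-1})^\top$ to $\Lambda_a^\top$, and since $W_0$ is built from $\Lambda_b$ whose transposes act on $\chi^*$ by \eqref{eigen}–\eqref{lambdaTchi}, the same boundary term survives. The delicate point throughout is that the infinite-series nature of $\chi^*$ (and of the inverse wave matrices) means one must verify that the relevant matrix products converge and that the limit $x_a\to\infty$ may be exchanged with the matrix action — but all of this convergence machinery is exactly what the second-kind-function analysis preceding this proposition, together with the extended Cauchy–Hadamard estimates on $\Ds_{\q_i}$, already supplies.
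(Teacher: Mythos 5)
Your overall strategy is sound and is, in substance, the same mechanism the paper relies on for the analogous statement it does prove: the proposition under review is stated without proof, but the proof of Proposition \ref{pro4} in Appendix \ref{proof4} proceeds exactly as you do, conjugating the Jacobi matrix back to a shift matrix through the factorization and then converting $\Lambda_a^\top\chi^*$ into $x_a\chi^*$ minus a boundary term via \eqref{lambdaTchi} and \eqref{piachi} (the paper pushes $\Lambda_a$ through $G$ using \eqref{eq:symmetry}, you push it through $H$ using \eqref{pro9}; these are equivalent consequences of the Cholesky factorization). Your treatment of $\Psi_1$ and of $\Psi_2$ --- in particular the intertwining $J_aW_2=W_2\Lambda_a^\top$ --- is correct as written.

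Two of your formulas are mis-ordered, however, and one displayed chain is false as a result. First, $(W_2^{-1})^\top=H^{-1}S$, not $SH^{-1}$ (consistently with Proposition \ref{pro.baker.expressions}, where $\Psi_2^*=H^{-1}P=H^{-1}S\chi$); with your ordering, the step $H^{-1}J_aH\,SH^{-1}=H^{-1}J_aS$ would require $HSH^{-1}=S$, which fails. With the correct ordering the argument collapses to one line: $J_a^\top H^{-1}S=H^{-1}J_aHH^{-1}S=H^{-1}J_aS=H^{-1}S\Lambda_a$, and then $\Lambda_a\chi=x_a\chi$ finishes. Second, $(W_1^{-1})^\top=(S^{-1})^\top(W_0^{-1})^\top$, not $S^\top(W_0^{-1})^\top$. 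More substantively, your justification of the $\Psi_1^*$ case points in the wrong direction: you should not let the $\Lambda_b^\top$'s inside $(W_0^{-1})^\top$ act on $\chi^*$ through \eqref{lambdaTchi}, since that generates projections $\Pi_{\kk}$ and does not close. The clean route is the same conjugation as in the other three cases: $W_1^{-1}J_a=W_0^{-1}\Lambda_aS^{-1}=\Lambda_aW_1^{-1}$ because $\Lambda_a$ commutes with $W_0^{-1}$, hence $J_a^\top(W_1^{-1})^\top=(W_1^{-1})^\top\Lambda_a^\top$, and only now apply $\Lambda_a^\top\chi^*=x_a\chi^*-\lim_{x_a\to\infty}(x_a\chi^*)$ and exchange the limit with the matrix action (the same formal convergence caveat that you, and the paper itself, already flag). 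These are fixable slips rather than conceptual gaps; with the orderings corrected, the proof is complete.
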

\begin{pro}
 \begin{enumerate}
 \item
 The Baker functions are subject to the following linear system of differential equations
 \begin{align*}
   \frac{\partial \Psi_i}{\partial t_{\q_j}}=&\big(J_{\q^{(\ell)}_j}\big)_+\Psi_i, &
     \frac{\partial \Psi_i^*}{\partial t_{\q_j}}=&-\big(J_{\q^{(\ell)}_j}\big)_+^\top\Psi_i^*, & i&=1,2.
 \end{align*}
 \item The MVOPR and its second kinds functions satisfy
 \begin{align*}
      \frac{\partial P}{\partial t_{\q_j}}=&-\x^{\q_j}P+\big(J_{\q^{(\ell)}_j}\big)_+P, &  \frac{\partial C}{\partial t_{\q_j}}=&\big(J_{\q^{(\ell)}_j}\big)_+C.
 \end{align*}
 \item The following Lax equations hold
 \begin{align*}
\frac{\partial J_{\q_i^{(k)}}}{\partial t_{\q^{(\ell)}_j}}&=\big[\big(J_{\q^{(\ell)}_j}\big)_+,J_{\q^{(k)}_i}\big].
 \end{align*}
\item
The  Zakharov--Shabat type  equations
 \begin{align*}
\frac{\partial \big(J_{\q^{(k)}_i}\big)_+}{\partial t_{\q^{(\ell)}_j}}-
\frac{\partial \big(J_{\q^{(\ell)}_j}\big)_+}{\partial t_{\q_i^{(k)}}}+
\big[
\big(J_{\q^{(k)}_i}\big)_+,
\big(J_{\q^{(\ell)}_j}\big)_+
\big]&=0,\\
\frac{\partial \omega_a}{\partial t_{\q}}-(T_aJ_{\q})_+\omega_a+\omega_a(J_{\q})_+
&=0.
 \end{align*}
are fulfilled.
\end{enumerate}
\end{pro}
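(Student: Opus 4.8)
The plan is to run the standard Sato--Wilson dressing argument, taking as granted the linear system $\partial W/\partial t_{\q^{(k)}_j}=(J_{\q^{(k)}_j})_+W$ established earlier for the wave matrices $W_1,W_2$, together with the companion identity $\frac{\partial S}{\partial t_{\q^{(k)}_j}}S^{-1}=-(J_{\q^{(k)}_j})_-$ extracted there from the factorization $G=W_1^{-1}W_2$ of the time-independent moment matrix, see \eqref{totoW}. Everything below is an algebraic consequence of these two facts plus the commutativity of the shift matrices.

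For part (1), since $\Psi_1=W_1\chi$ and $\Psi_2=W_2\chi^*$ and the monomial vectors $\chi,\chi^*$ carry no $t$-dependence, I would differentiate and insert the wave equation, obtaining $\frac{\partial\Psi_1}{\partial t_{\q_j}}=(J_{\q_j})_+W_1\chi=(J_{\q_j})_+\Psi_1$ and likewise $\frac{\partial\Psi_2}{\partial t_{\q_j}}=(J_{\q_j})_+\Psi_2$. For the adjoint Baker functions $\Psi_1^*=(W_1^{-1})^\top\chi^*$ and $\Psi_2^*=(W_2^{-1})^\top\chi$ I would first differentiate the inverse, $\frac{\partial W_i^{-1}}{\partial t_{\q_j}}=-W_i^{-1}(J_{\q_j})_+$, then transpose and apply to the respective monomial vector, which yields $\frac{\partial\Psi_i^*}{\partial t_{\q_j}}=-\big((J_{\q_j})_+\big)^\top\Psi_i^*$.

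For part (2), from $P=S\chi$ I would write $\frac{\partial P}{\partial t_{\q_j}}=\frac{\partial S}{\partial t_{\q_j}}\chi=\big(\tfrac{\partial S}{\partial t_{\q_j}}S^{-1}\big)P=-(J_{\q_j})_-P$, and then use $(J_{\q_j})_-=J_{\q_j}-(J_{\q_j})_+$ together with the eigenvalue relation $J_{\q_j}P=\x^{\q_j}P$ of \eqref{pro9} to land on $-\x^{\q_j}P+(J_{\q_j})_+P$. Since $C=H(S^{-1})^\top\chi^*=W_2\chi^*=\Psi_2$ by \eqref{def:C}, the equation for $C$ is just the $\Psi_2$ case of part (1). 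For the Lax equations of part (3), I would differentiate $J_{\q_i}=S\Lambda_{\q_i}S^{-1}$, use the Sato identity to get $\frac{\partial J_{\q_i}}{\partial t_{\q_j}}=\big[\tfrac{\partial S}{\partial t_{\q_j}}S^{-1},J_{\q_i}\big]=[-(J_{\q_j})_-,J_{\q_i}]$, and then exploit $[J_{\q_j},J_{\q_i}]=0$ (an immediate consequence of the commutativity $\Lambda_{\q_j}\Lambda_{\q_i}=\Lambda_{\q_i}\Lambda_{\q_j}$ of Proposition \ref{pro:Lambda}) to rewrite $-(J_{\q_j})_-=(J_{\q_j})_+-J_{\q_j}$ and conclude $\frac{\partial J_{\q_i}}{\partial t_{\q_j}}=[(J_{\q_j})_+,J_{\q_i}]$.

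The Zakharov--Shabat equations of part (4) I would derive as the zero-curvature (compatibility) conditions of the linear systems. Cross-differentiating $\partial_{t_{\q_j}}W=(J_{\q_j})_+W$, imposing $\partial_{t_{\q_i}}\partial_{t_{\q_j}}W=\partial_{t_{\q_j}}\partial_{t_{\q_i}}W$, and cancelling $W$ on the right gives the first, purely continuous, identity; combining instead the continuous flow with the discrete one $T_aW=\omega_aW$ of \eqref{eq: linear W}, and using that $T_a$ commutes both with $\partial_{t_\q}$ and with the projection $(\cdot)_+$ (so that $(T_aJ_\q)_+=T_a(J_\q)_+$), yields the mixed equation $\frac{\partial\omega_a}{\partial t_\q}-(T_aJ_\q)_+\omega_a+\omega_a(J_\q)_+=0$. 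The main obstacle here is not computation but rigor: the right-cancellation of $W$ presupposes its invertibility, and expressions such as $W_1^{-1}$ mix upper- and lower-triangular factors whose product is an \emph{a priori} divergent series, exactly the convergence issue flagged in the footnote to Proposition \ref{pro:G_and_evolved_waves}. I would handle this by carrying out every cancellation at the level of the well-defined one-sided formal inverses and block by block, each individual block being a finite sum, so that the zero-curvature identities hold as genuine identities of semi-infinite matrices entry by entry.
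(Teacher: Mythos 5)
Your proof is correct and is precisely the argument the paper intends: the proposition is stated there without proof, as an immediate consequence of the wave-matrix equations $\partial W/\partial t_{\q_j}=\big(J_{\q_j}\big)_+W$, the Sato identity $\frac{\partial S}{\partial t_{\q_j}}S^{-1}=-\big(J_{\q_j}\big)_-$, the eigenvalue property \eqref{pro9}, the commutativity of the shift matrices, and the discrete linear system \eqref{eq: linear W}, which are exactly the ingredients you combine. Your care with the right-cancellation of $W$ (banded operators acting on $SW_0$, every entry a finite sum) matches the paper's own practice in Appendix \ref{proof5}, where $SW_0$ is cancelled in the same way for the discrete Zakharov--Shabat equations.
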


\subsection{Miwa shifts and discrete flows}\label{miwa}
We will reproduce a characteristic fact in integrable systems, the Miwa's coherent shift in the time variables lead to discrete flows and Darboux transformations. We now will indicate how these Miwa shifts are for this multivariate context.
The simplest case is perhaps the most interesting one as it reproduces the discrete flows we have considered previously. The  coherent shift in the times
\begin{align*}
  t&\to t'=t\pm [q]_{a}, & t'_{\q}=\ccases{
    t_{\q}, & \q\not\in \Z_+\ee_a,\\
    t_{m\ee_a}\pm \dfrac{1}{m q^m}, &\q=n\ee_a \text{ with $m\in\Z_+$}.
  }
\end{align*}
lead to the following deformation of the measure $\dd\mu_t$
\begin{align*}
 \dd \mu_t (\boldsymbol{x})&\longrightarrow \dd\mu_{t'}= \left(1-\frac{x_a}{q }\right)^{\mp 1} \dd \mu_t (\boldsymbol{x})=-q ^{\pm 1}\dd \left(T_a^{\mp}\mu_t(\boldsymbol{x})\right),
\end{align*}
which follows from
\begin{align*}
\log \left(\left( 1-\frac{x_a}{q}\right)^{\mp 1}\right)&=
\pm\sum_{m=1}^{\infty}\frac{\left(x_a\right)^m}{mq^m}.
\end{align*}
Given $q\in\C$ and $\n\in\R^D$, in order to recover $T$, we need the coherent shift given by
\begin{align*}
  [q]_{\n}=\Big(\frac{\n}{q},\frac{\n^{\odot 2}}{2q^2},\frac{\n^{\odot3}}{3q^3},\cdots\Big).
\end{align*}
In fact, considering $[q]_{\n}$ as a semi-infinite vector of time perturbations of the times variables $t$, we get
\begin{align*}
[q]_{\n}(\x)=\sum_{m=1}^\infty \frac{1}{m q^m}(\n\cdot\x)^m=-\log\Big(1-\frac{\n\cdot\x}{q}\Big).
\end{align*}
Consequently, for the shifted times $t'=t\pm[q]_{\n}$ we find that
\begin{align*}
   \exp(t'(\x)))&=\exp(t(\x)\pm[q]_{\n}(\x))=\exp(t(\x))\exp\Big(\mp\log\Big(1-\frac{\n\cdot\x}{q}\Big)\Big)\\&=\exp(t(\x))\exp\Big(\log\left(\Big(1-\frac{\n\cdot\x}{q}\Big)^{\mp1}\right)\Big)\\
   &=\Big(1-\frac{\n\cdot\x}{q}\Big)^{\mp 1}\exp(t(\x)),
\end{align*}
which immediately leads to the identification
\begin{align*}
  \dd\mu_{t\pm[q]_{\n}}(\x)&= \Big(1-\frac{\n\cdot\x}{q}\Big)^{\mp 1}\dd\mu_t(\x)\\
  &=-q^{\pm 1}\dd (T_{\n}^{\mp 1}\mu_t(\x)).
\end{align*}

\begin{pro}
  The Miwa shifts can be constructed as follows
   \begin{align*}
\big([q]_{\n}\big)_{[k]}=\frac{\n^{\odot k}}{kq^k}
=\frac{1}{k}\mathcal M_{[k]}\chi_{[k]}\Big(\frac{\n}{q}\Big),
 \end{align*}
 \end{pro}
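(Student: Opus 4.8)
The plan is to read off the block-$[k]$ component of the covector $[q]_{\n}$ from its generating function and then recognise the outcome through the multinomial identification of Proposition~\ref{chi-symmetric}.

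First I would start from the series representation established just above,
\[
[q]_{\n}(\x)=\sum_{m=1}^{\infty}\frac{1}{m q^m}(\n\cdot\x)^m,
\]
and recall that, exactly as $t(\x)=t\chi(\x)$ in Definition~\ref{lostiempos}, the covector $[q]_{\n}$ acts by $[q]_{\n}(\x)=\sum_{k\geq 1}\big([q]_{\n}\big)_{[k]}\chi_{[k]}(\x)$. Since $\frac{1}{kq^k}(\n\cdot\x)^k$ is the unique summand carrying monomials of total degree $k$, comparing the homogeneous parts of degree $k$ yields
\[
\big([q]_{\n}\big)_{[k]}\chi_{[k]}(\x)=\frac{1}{kq^k}(\n\cdot\x)^k.
\]

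Next I would expand the right-hand side by the multinomial theorem, $(\n\cdot\x)^k=\sum_{j=1}^{|[k]|}{k\choose \q_j}\n^{\q_j}\x^{\q_j}$, and use linear independence of the monomials $\{\x^{\q_j}\}_{j}$ to read off componentwise
\[
\big([q]_{\n}\big)_{\q_j}=\frac{1}{kq^k}{k\choose \q_j}\n^{\q_j}.
\]
The multinomial theorem for symmetric powers displayed in the proof of Proposition~\ref{chi-symmetric} shows that ${k\choose \q_j}\n^{\q_j}$ is precisely the $\q_j$-coordinate of $\n^{\odot k}=\mathcal M_{[k]}\chi_{[k]}(\n)$ in the canonical basis $B_c$; hence $\big([q]_{\n}\big)_{[k]}=\n^{\odot k}/(kq^k)$, which is the first equality.

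Finally, for the second equality I would pull the scalar out of the symmetric power, $\n^{\odot k}=q^k\,(\n/q)^{\odot k}$, so that $\tfrac{\n^{\odot k}}{kq^k}=\tfrac{1}{k}(\n/q)^{\odot k}$, and then apply \eqref{chi-symmetric power} rearranged as $[\x^{\odot k}]_{B_c}=\mathcal M_{[k]}\chi_{[k]}(\x)$ with $\x=\n/q$ to conclude $\tfrac{1}{k}(\n/q)^{\odot k}=\tfrac{1}{k}\mathcal M_{[k]}\chi_{[k]}(\n/q)$. There is no genuine obstacle here: the argument is a bookkeeping identification, and the only point requiring care is keeping the covector components consistent with the coordinate representation of the symmetric tensors in $B_c$, i.e.\ tracking the multinomial coefficients ${k\choose \q_j}$ correctly through Proposition~\ref{chi-symmetric}.
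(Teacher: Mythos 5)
Your proof is correct and rests on exactly the same ingredient as the paper's one-line proof, namely Proposition~\ref{chi-symmetric} (equivalently, the multinomial theorem for symmetric powers); the paper treats $\big([q]_{\n}\big)_{[k]}=\n^{\odot k}/(kq^k)$ as definitional and invokes \eqref{chi-symmetric power} for the second equality, while you run the same identification in the opposite direction, from the generating series $\sum_m \frac{1}{mq^m}(\n\cdot\x)^m$ back to the blocks. This is only a difference in bookkeeping order, not in substance.
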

\begin{proof}
Use Proposition \ref{chi-symmetric}.
\end{proof}
For each level of times we consider the corresponding nabla or gradient operators
\begin{align*}
  \nabla_{[k]}=\PARENS{\begin{matrix}
    \dfrac{\partial  }{\partial t_{\q^{(k)}_1}}\\\vdots\\\dfrac{\partial }{\partial t_{\q^{(k)}_{|[k]|}}}
  \end{matrix}}
\end{align*}
and also the normal derivatives
\begin{align*}
  \frac{\partial}{\partial \n^{\odot k }}\coloneq \langle \n^{\odot k},\nabla_{[k]}\rangle^{(k)}.
\end{align*}
Then, the Miwa shifts are modelled by the left factor of the vertex type operator
\begin{align*}
 \exp(t(\x)) \exp\Big(\sum_{k=1}^\infty\frac{1}{k q^n}  \frac{\partial}{\partial \n^{\odot k }}\Big).
\end{align*}

 \subsection{Bilinear equations}
We begin with the following observation
\begin{pro}\label{proposition:bilinear_wave}
  Wave matrices evaluated at different times $(t,\m)$ and $(t',\m')$ fulfill
\begin{align*}
  W_1(t,\m)\big(W_1(t',\m')\big)^{-1}=W_2(t,\m)\big(W_2(t',\m')\big)^{-1}.
  \end{align*}
\end{pro}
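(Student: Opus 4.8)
The plan is to reduce the statement to the factorization identity already established in Proposition~\ref{pro:G_and_evolved_waves}, exploiting the single most important structural fact in this construction: the undeformed moment matrix $G$ carries \emph{no} dependence on the continuous times $t$ nor on the discrete variable $\m$, all such dependence having been absorbed into the deformation matrix $W_0(t,\m)$ and hence into the wave matrices.

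Concretely, Proposition~\ref{pro:G_and_evolved_waves} asserts that for every choice of parameters one has $G=\big(W_1(t,\m)\big)^{-1}W_2(t,\m)$. Writing this identity at the two pairs $(t,\m)$ and $(t',\m')$ and using that the left-hand side is one and the same matrix $G$, I would equate
\begin{align*}
  \big(W_1(t,\m)\big)^{-1}W_2(t,\m)=\big(W_1(t',\m')\big)^{-1}W_2(t',\m').
\end{align*}
Multiplying this relation on the left by $W_1(t,\m)$ and on the right by $\big(W_2(t',\m')\big)^{-1}$ then immediately yields
\begin{align*}
  W_1(t,\m)\big(W_1(t',\m')\big)^{-1}=W_2(t,\m)\big(W_2(t',\m')\big)^{-1},
\end{align*}
which is precisely the claimed bilinear identity.

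The computation itself is purely formal, so the only point deserving care is the meaning of the inverses and of the products of semi-infinite matrices involved, exactly the issue flagged in the footnote to Proposition~\ref{pro:G_and_evolved_waves}: $W_1$ is a product of a block lower unitriangular factor $S$ with the upper triangular factor $W_0$, and its inverse must be read as $S^{-1}H\big(S^{-1}\big)^\top\big(W_2\big)^{-1}$ rather than by naively commuting an upper with a lower triangular factor. I expect this to be the main (and indeed only) obstacle: one should either work at the formal level, where Proposition~\ref{pro:G_and_evolved_waves} guarantees the well-definedness of the combination $W_1^{-1}W_2=G$, or restrict to the appropriate domains of convergence (as for the adjoint Baker functions) so that all the semi-infinite products appearing on both sides converge and the cancellations are legitimate. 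Granting this, the rearrangement above is valid and the proof is complete.
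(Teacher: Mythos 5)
Your proposal is correct and follows essentially the same route as the paper: both invoke Proposition \ref{pro:G_and_evolved_waves} at the two parameter pairs, use that the undeformed moment matrix $G$ is one and the same, and rearrange to obtain the bilinear identity. Your additional remarks on the formal meaning of the semi-infinite inverses simply make explicit the caveat already contained in the footnote to Proposition \ref{pro:G_and_evolved_waves}, so nothing further is needed.
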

\begin{proof}
  From Proposition \ref{pro:G_and_evolved_waves} we  have
  \begin{align*}
    W_1(t,\m)G=&W_2(t,\m), &    W_1(t',\m')G=&W_2(t',\m'),
  \end{align*}
  for the same initial moment matrix $G$, from where the result follows immediately.
\end{proof}
\begin{lemma}\label{lemma:integrals_chi_chi_star}
  We have
  \begin{align*}
    \int_{\T^D(\boldsymbol r)} \chi(\boldsymbol z)\chi^*(\boldsymbol z)^\top\dd z_1\cdots\dd z_D=
    \int_{\T^D(\boldsymbol r)} \chi^*(\boldsymbol z)\chi(\boldsymbol z)^\top\dd z_1\cdots\dd z_D=(2\pi \operatorname{i})^D\mathbb I.
  \end{align*}
\end{lemma}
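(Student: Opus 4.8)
The plan is to reduce the stated matrix identity to a purely entrywise computation, since both sides are semi-infinite matrices and equality can be checked block by block, indeed entry by entry. First I would write down the generic $(\q,\q')$ entry of the outer product $\chi(\z)\chi^*(\z)^\top$. By definition $[\chi(\z)]_\q=\z^\q$ and $[\chi^*(\z)]_{\q'}=\big(\prod_{a=1}^D z_a^{-1}\big)\z^{-\q'}$, so that entry equals the single Laurent monomial
\[
\z^{\q-\q'}\prod_{a=1}^D z_a^{-1}=\prod_{a=1}^D z_a^{\alpha_a-\alpha'_a-1},
\]
where $\q=(\alpha_1,\dots,\alpha_D)^\top$ and $\q'=(\alpha'_1,\dots,\alpha'_D)^\top$. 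Because each matrix entry is a lone monomial --- and not a Laurent series, in contrast with the situation for $C=\Psi_2$ treated elsewhere in the text --- there is no convergence issue whatsoever, and the matrix identity is equivalent to evaluating $\int_{\T^D(\boldsymbol r)}$ of this monomial for every index pair $(\q,\q')$.

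Next I would factor the integral over the distinguished torus $\T^D(\boldsymbol r)=\{\z:|z_a|=r_a,\ a=1,\dots,D\}$ into a product of one-dimensional loop integrals. Since the integrand is continuous on the compact torus, Fubini applies and
\[
\int_{\T^D(\boldsymbol r)}\prod_{a=1}^D z_a^{\alpha_a-\alpha'_a-1}\dd z_1\cdots\dd z_D=\prod_{a=1}^D\oint_{|z_a|=r_a} z_a^{\alpha_a-\alpha'_a-1}\dd z_a.
\]
The key elementary fact is that, parametrizing $z_a=r_a\Exp{\operatorname{i}\theta_a}$, one has $\oint_{|z_a|=r_a} z_a^n\dd z_a=2\pi\operatorname{i}\,\delta_{n,-1}$ for every integer $n$, independently of $r_a>0$. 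Applying this with $n=\alpha_a-\alpha'_a-1$ produces a factor $2\pi\operatorname{i}$ precisely when $\alpha_a=\alpha'_a$ and $0$ otherwise. Collecting the $D$ factors yields $(2\pi\operatorname{i})^D\prod_{a=1}^D\delta_{\alpha_a,\alpha'_a}=(2\pi\operatorname{i})^D\delta_{\q,\q'}$, which is exactly $(2\pi\operatorname{i})^D$ times the $(\q,\q')$ entry of the identity $\I$ in the graded reverse lexicographic block structure. This settles the first equality; the second follows from the identical computation after interchanging the roles of $\chi$ and $\chi^*$, since the $(\q,\q')$ entry of $\chi^*(\z)\chi(\z)^\top$ is $\z^{\q'-\q}\prod_a z_a^{-1}$, whose torus integral is again $(2\pi\operatorname{i})^D\delta_{\q',\q}$.

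I do not expect any genuine analytic difficulty here: the only points needing attention are bookkeeping rather than estimation. One must check that the Kronecker delta $\delta_{\q,\q'}$ indexes $\I$ correctly under the chosen ordering of monomials (it does, as rows and columns run over the same ordered index set), and one should record that the value is independent of the polyradius $\boldsymbol r$, reflecting that a single residue is picked up in each variable. The mild subtlety worth flagging explicitly is that, unlike the generating identity \eqref{eq:chi-chi*} which requires $|x_a|>|y_a|$, no series expansion is invoked at all in the present computation, so the result holds for arbitrary $\boldsymbol r\in(0,\infty)^D$ with no domain-of-convergence restriction.
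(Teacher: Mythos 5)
Your proof is correct and follows essentially the same route as the paper's: write the outer product entrywise as Laurent monomials, factor the torus integral via Fubini into $D$ one-dimensional circle integrals, and observe that each factor vanishes unless the integrand is $z_a^{-1}$, which occurs exactly on the diagonal, yielding $(2\pi\operatorname{i})^D\mathbb I$. Your added remarks (independence of the polyradius, absence of any convergence restriction in contrast with \eqref{eq:chi-chi*}) are accurate but do not change the argument.
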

\begin{proof}
Observe that
\begin{align*}
 \chi(\chi^*)^\top&=\PARENS{\begin{matrix}
    Z_{[0],[0]} &Z_{[0],[1]}&\dots\\
     Z_{[1],[0]} &Z_{[1],[1]}&\dots\\
     \vdots & \vdots&
  \end{matrix}}, & Z_{[k],[\ell]}&\coloneq \frac{1}{z_1\cdots z_D}\PARENS{\begin{matrix}
    \z^{\kk_1-\ele_1} &  \z^{\kk_1-\ele_2}&\dots &   \z^{\kk_1-\ele_{|[\ell]|}}\\
    \z^{\kk_2-\ele_1} &  \z^{\kk_2-\ele_2}&\dots &   \z^{\kk_2-\ele_{|[\ell]|}}\\
    \vdots & \vdots &&\vdots\\
        \z^{\kk_{|[k]|}-\ele_1} &  \z^{\kk_{|[k]|}-\ele_2}&\dots &   \z^{\kk_{|[k]|}-\ele_{|[\ell]|}}
  \end{matrix}}.
\end{align*}
If we now integrate in the polydisk distinguished border  $\T^{D}(\boldsymbol r)$ using the Fubini theorem we factor each integral
in a product of $D$ factors, where the $i$-th factor is an integral over $z_i$ on the circle centered at origin of radius $r_i$. This is zero unless the integrand is $z_i^{-1}$ which occurs only in the principal diagonal.
\end{proof}
\begin{lemma}\label{lemma:U_V_integrals}
  Given two semi-infinite matrices $U$ and $V$ we have
  \begin{align*}
    UV=&\frac{1}{(2\pi \operatorname{i})^D}\int_{\T^D(\boldsymbol r)} U\chi(\boldsymbol z) \big(V^T\chi^*(\boldsymbol z)\big)^\top\dd z_1\cdots\dd z_D
    =\frac{1}{(2\pi \operatorname{i})^D}\int_{\T^D(\boldsymbol r)} U\chi^*(\boldsymbol z) \big(V^T\chi(\boldsymbol z)\big)^\top\dd z_1\cdots\dd z_D.
  \end{align*}
\end{lemma}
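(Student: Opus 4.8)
The plan is to reduce the statement to Lemma \ref{lemma:integrals_chi_chi_star} by inserting the integral representation of the identity matrix between $U$ and $V$. By that lemma one has $\mathbb I=\frac{1}{(2\pi\operatorname{i})^D}\int_{\T^D(\boldsymbol r)}\chi(\boldsymbol z)\chi^*(\boldsymbol z)^\top\dd z_1\cdots\dd z_D$, so writing $UV=U\,\mathbb I\,V$ and moving the constant matrices $U$ and $V$ through the (entrywise) integral by linearity, I would obtain
\[
UV=\frac{1}{(2\pi\operatorname{i})^D}\int_{\T^D(\boldsymbol r)} U\chi(\boldsymbol z)\,\chi^*(\boldsymbol z)^\top V\,\dd z_1\cdots\dd z_D .
\]
The first claimed identity then follows from the transpose rearrangement $\chi^*(\boldsymbol z)^\top V=\big(V^\top\chi^*(\boldsymbol z)\big)^\top$, and the second is proved identically upon using instead the companion expression $\mathbb I=\frac{1}{(2\pi\operatorname{i})^D}\int_{\T^D(\boldsymbol r)}\chi^*(\boldsymbol z)\chi(\boldsymbol z)^\top\dd z_1\cdots\dd z_D$ supplied by the same lemma.

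Alternatively, and this is the route I would follow to keep the argument self-contained, I would verify the identity entrywise. Fixing multi-indices $\q,\q'$, the $(\q,\q')$ entry of the integrand on the right-hand side is the product $\big(\sum_{\q''}U_{\q,\q''}\boldsymbol z^{\q''}\big)\big(\sum_{\q'''}V_{\q''',\q'}(\chi^*(\boldsymbol z))_{\q'''}\big)$; integrating monomial by monomial over the distinguished boundary $\T^D(\boldsymbol r)$ and applying the Fubini factorisation exactly as in the proof of Lemma \ref{lemma:integrals_chi_chi_star}, every term vanishes except the one for which $\q''=\q'''$ componentwise, which contributes $1$. The surviving contributions sum to $\sum_{\q''}U_{\q,\q''}V_{\q'',\q'}=(UV)_{\q,\q'}$, yielding the claim; the second form is obtained by exchanging the roles of $\chi$ and $\chi^*$.

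The one genuine point to watch, and the main obstacle, is the legitimacy of exchanging the (in general infinite) matrix products and summations with the torus integration. I would address this by choosing the polyradius $\boldsymbol r$ inside a common polyannulus of convergence for the relevant Laurent series (compare the domains $\Ds_{\q_i}$ introduced for the second kind functions), where the series defining $U\chi$ and $V^\top\chi^*$ converge absolutely and uniformly on $\T^D(\boldsymbol r)$; absolute and uniform convergence then justifies the term-by-term integration and its interchange with the summation. At the purely formal level adopted elsewhere in the paper the interchange is in fact automatic, since for each fixed pair $(\q,\q')$ only finitely many monomials survive the integration, so no convergence question even arises after integrating.
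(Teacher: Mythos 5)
Your proposal is correct and follows essentially the same route as the paper, whose entire proof of this lemma is the instruction ``Use Lemma \ref{lemma:integrals_chi_chi_star}'': you insert the integral representation of $\mathbb I$ furnished by that lemma between $U$ and $V$ and rearrange transposes, exactly as intended. Your additional entrywise verification and the remark on choosing $\boldsymbol r$ so that the relevant series converge (or noting that formally only finitely many monomials survive each entry's integration) are sensible elaborations of the same argument, not a different method.
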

\begin{proof}
  Use Lemma \ref{lemma:integrals_chi_chi_star}.
\end{proof}
\begin{theorem}
  \label{proposition:Baker_bilinear_equation}
For any pair of times $(t,\m)$ and $(t',\m')$, points  $\boldsymbol r_1\in\Ds^*_{\q_j^{(\ell)}}(t',\m')$ and $\boldsymbol r_2\in\Ds_{\q^{(k)}_i}(t,\m)$ in the respective domains of convergence and $D$-dimensional tori  $\T^D(\boldsymbol r_1)$ and $\T^D(\boldsymbol r_2)$ (Shilov borders of polydisks) we can ensure that
  Baker and adjoint Baker functions satisfy the following bilinear identity
  \begin{align*}
    \int_{\T^D(\boldsymbol r_1)}( \Psi_1)_{\q_i^{(k)}}(\boldsymbol{z},t,\m)(\Psi_1^*)_{\q_j^{(\ell)}}(\boldsymbol{z},t',\m') \dd z_1\cdots\dd z_D=
       \int_{\T^D(\boldsymbol r_2)} ( \Psi_2)_{\q_i^{(k)}}(\boldsymbol{z},t,\m)(\Psi_2^*)_{\q_j^{(\ell)}}(\boldsymbol{z},t',\m') \dd z_1\cdots\dd z_D.
  \end{align*}
\end{theorem}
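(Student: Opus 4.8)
The plan is to obtain the scalar bilinear identity as a single entry of an identity between semi-infinite matrices. The two ingredients are Proposition \ref{proposition:bilinear_wave}, which asserts
\begin{align*}
W_1(t,\m)\big(W_1(t',\m')\big)^{-1}=W_2(t,\m)\big(W_2(t',\m')\big)^{-1},
\end{align*}
and the two integral representations of a product of semi-infinite matrices supplied by Lemma \ref{lemma:U_V_integrals}. The strategy is to represent each side of the displayed identity as a matrix-valued integral over a torus whose integrand is an outer product of Baker functions, then compare the two representations using the proposition.

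First I would apply the first representation of Lemma \ref{lemma:U_V_integrals} with $U=W_1(t,\m)$ and $V=\big(W_1(t',\m')\big)^{-1}$, giving
\begin{align*}
W_1(t,\m)\big(W_1(t',\m')\big)^{-1}
=\frac{1}{(2\pi\operatorname{i})^D}\int_{\T^D(\boldsymbol r_1)} \big(W_1(t,\m)\chi(\z)\big)\Big(\big((W_1(t',\m'))^{-1}\big)^\top\chi^*(\z)\Big)^\top\dd z_1\cdots\dd z_D.
\end{align*}
By Definition \ref{definition:Baker_functions} one has $W_1(t,\m)\chi=\Psi_1(\cdot,t,\m)$ and $\big((W_1(t',\m'))^{-1}\big)^\top\chi^*=\Psi_1^*(\cdot,t',\m')$, so the integrand equals $\Psi_1(\z,t,\m)\big(\Psi_1^*(\z,t',\m')\big)^\top$. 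Dually, applying the second representation of the same lemma to $U=W_2(t,\m)$ and $V=\big(W_2(t',\m')\big)^{-1}$, and recalling $W_2\chi^*=\Psi_2$ and $\big(W_2^{-1}\big)^{\top}\chi=\Psi_2^*$, I would obtain
\begin{align*}
W_2(t,\m)\big(W_2(t',\m')\big)^{-1}
=\frac{1}{(2\pi\operatorname{i})^D}\int_{\T^D(\boldsymbol r_2)} \Psi_2(\z,t,\m)\big(\Psi_2^*(\z,t',\m')\big)^\top\dd z_1\cdots\dd z_D.
\end{align*}
The two left-hand sides coincide by Proposition \ref{proposition:bilinear_wave}; hence the two matrix-valued integrals are equal entry by entry, and reading off the entry in block-row $\q_i^{(k)}$ and block-column $\q^{(\ell)}_j$ yields exactly the asserted scalar identity, the common factor $(2\pi\operatorname{i})^{-D}$ cancelling.

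The genuinely delicate point, and the reason the precise radii enter the statement, is analytic rather than algebraic. Lemma \ref{lemma:U_V_integrals} rests on the orthogonality relations of Lemma \ref{lemma:integrals_chi_chi_star}, which require integrating the series $\chi(\z)\chi^*(\z)^\top$ term by term on the distinguished boundary $\T^D(\boldsymbol r)$. For $W_1\chi$ and $\big(W_2^{-1}\big)^\top\chi$ the entries are finite sums, but $\Psi_2=W_2\chi^*$ and $\Psi_1^*=\big(W_1^{-1}\big)^\top\chi^*$ have entries that are honest $D$-fold Laurent series, whose $\q_i^{(k)}$-th component converges only on $\Ds_{\q_i^{(k)}}(t,\m)$ and whose $\q^{(\ell)}_j$-th component only on $\Ds^*_{\q^{(\ell)}_j}(t',\m')$. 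The main obstacle is therefore to choose $\boldsymbol r_1\in\Ds^*_{\q^{(\ell)}_j}(t',\m')$ and $\boldsymbol r_2\in\Ds_{\q_i^{(k)}}(t,\m)$ so that the tori $\T^D(\boldsymbol r_1)$ and $\T^D(\boldsymbol r_2)$ lie inside the relevant polyannuli of convergence, where the series converge absolutely and uniformly; this uniform convergence is what legitimizes the Fubini interchange of summation and integration invoked in Lemma \ref{lemma:integrals_chi_chi_star}. It simultaneously disposes of the subtlety of the formal inverse $W_1^{-1}$ noted in the footnote to Proposition \ref{pro:G_and_evolved_waves}, since throughout one works with the right-inverse expression $W_1^{-1}=S^{-1}H(S^{-1})^\top W_2^{-1}$ evaluated where it converges.
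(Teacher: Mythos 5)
Your proposal is correct and coincides with the paper's own (first) proof, which likewise combines Proposition \ref{proposition:bilinear_wave}, Lemma \ref{lemma:U_V_integrals}, and Definition \ref{definition:Baker_functions}, merely stated there without the details you spell out. Your discussion of the choice of polyradii and uniform convergence is a welcome elaboration of the same argument, not a departure from it.
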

\begin{proof}
We give two different proofs
\begin{itemize}
  \item First proof:   Use Proposition \ref{proposition:bilinear_wave}, Lemma \ref{lemma:U_V_integrals}  and Definition \ref{definition:Baker_functions} to get the result.
  \item Second proof: For any  couple of set of times $(t,\m)$ and $(t',\m')$   and
      \begin{align*}
        \z\in\big(\Ds_{\q_i^{(k)}}(t,\m)\cap\Ds_{\q_j^{(\ell)}}^*(t',\m')\big)\setminus\operatorname{supp}(\dd\mu)
      \end{align*}
  to study
\begin{align*}
   \int_\Omega \dd\mu_t(\y) P_{\q^{(k)}_i}(\y,t,\m)P_{\q^{(\ell)}_j}(\y,t',\m') ,
    \end{align*}
   we can use the Fubini and the integral Cauchy formula --recalling that we are dealing with domains of holomorphy-- in each of the two factors to get
    \begin{multline*}
     \sum_{j'=1}^{|[\ell]|}(H^{-1}(t',\m'))_{\q^{(\ell)}_j,\q^{(\ell)}_{j'}}\int_{\T^D(\boldsymbol r_1)} \dd z_1\cdots\dd z_D\int_\Omega \dd\mu(\y) \frac{ P_{\q^{(k)}_i}(\z,t,\m)P_{\q^{(\ell)}_{j'}}(\y,t',\m')}{(z_1-y_1)\cdots(z_D-y_D)} \Exp{t(\z)}\Big[\prod_{a=1}^D\big((\n_a\cdot\z)-q_a\big)^{m_a}\Big]\\ =
 \sum_{j'=1}^{|[\ell]|}(H^{-1}(t',\m'))_{\q^{(\ell)}_j,\q^{(\ell)}_{j'}}\int_{\T^D(\boldsymbol r_2)}\dd z_1\cdots\dd z_D\int_\Omega \dd\mu(\y) \frac{P_{\q^{(k)}_i}(\y,t,\m)P_{\q^{(\ell)}_{j'}}(\z,t',\m')}{(z_1-y_1)\cdots(z_D-y_D)} \Exp{t(\y)}\Big[\prod_{a=1}^D\big((\n_a\cdot\y)-q_a\big)^{m_a}\Big],
  \end{multline*}
  from where the bilinear identify follows.
\end{itemize}
\end{proof}
\subsection{Toda type integrable equations}
We explore now  the nonlinear partial differential equations   satisfied by the quasi-tau matrices and the $\beta $ matrices.
\begin{pro}\label{pro10}
The following relations hold true
\begin{align}
  \label{todaHbeta1}\frac{\partial H_{[k]}}{\partial t_a}H_{[k]}^{-1}=&\beta_{[k]}(\Lambda_a)_{[k-1],[k]}-(\Lambda_a)_{[k],[k+1]}\beta_{[k+1]},\\
H_{[k+1]}\big[(\Lambda_a)_{[k],[k+1]}\big]^\top H_{[k]}^{-1}=&-\frac{\partial\beta_{[k+1]}}{\partial t_a}.\notag
\end{align}
\end{pro}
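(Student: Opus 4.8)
The plan is to differentiate the Cholesky factorization \eqref{eq:choleksy.evol} of the deformed moment matrix with respect to the first--level time $t_a=t_{\ee_a}$ and then to read off separately the block diagonal and the first block subdiagonal. The two ingredients I would rely on are already at hand: the relation $\frac{\partial S}{\partial t_a}S^{-1}=-(J_a)_-$ recorded in \eqref{diff jacobi}, and the explicit block entries of $J_a$ computed in Proposition \ref{explicit jacobi}, together with the symmetry $H_{[k]}^\top=H_{[k]}$ coming from the Grammian character \eqref{H} of the quasi-tau matrices.

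For the first identity I would start from the linear system $\frac{\partial W_2}{\partial t_a}W_2^{-1}=(J_a)_+$, valid for $W_2=H(S^{-1})^\top$. Writing $W_2^{-1}=S^\top H^{-1}$ and using $\frac{\partial (S^{-1})^\top}{\partial t_a}S^\top=\big(S\frac{\partial S^{-1}}{\partial t_a}\big)^\top=-\big(\frac{\partial S}{\partial t_a}S^{-1}\big)^\top=\big((J_a)_-\big)^\top$, a direct computation gives
\begin{align*}
  (J_a)_+=\frac{\partial W_2}{\partial t_a}W_2^{-1}=\frac{\partial H}{\partial t_a}H^{-1}+H\big((J_a)_-\big)^\top H^{-1}.
\end{align*}
Since $\frac{\partial H}{\partial t_a}H^{-1}$ is block diagonal while $H\big((J_a)_-\big)^\top H^{-1}$ is strictly block upper triangular, projecting this equality onto the block diagonal yields $\frac{\partial H_{[k]}}{\partial t_a}H_{[k]}^{-1}=(J_a)_{[k],[k]}$, and inserting the value of $(J_a)_{[k],[k]}$ from Proposition \ref{explicit jacobi} produces exactly \eqref{todaHbeta1}.

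For the second identity I would instead extract the first block subdiagonal of $\frac{\partial S}{\partial t_a}S^{-1}=-(J_a)_-$. Because $S=\I+\beta+\beta^{(2)}+\cdots$ and $S^{-1}$ are both block lower unitriangular, the block in position $([k+1],[k])$ of $\frac{\partial S}{\partial t_a}S^{-1}$ is simply $\frac{\partial\beta_{[k+1]}}{\partial t_a}$, whence $\frac{\partial\beta_{[k+1]}}{\partial t_a}=-(J_a)_{[k+1],[k]}$. Substituting $(J_a)_{[k+1],[k]}=H_{[k+1]}\big[(\Lambda_a)_{[k],[k+1]}\big]^\top H_{[k]}^{-1}$ from Proposition \ref{explicit jacobi} gives the claimed expression for $-\frac{\partial\beta_{[k+1]}}{\partial t_a}$.

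The points that require care are bookkeeping rather than genuine difficulty: keeping the splitting of a semi-infinite matrix into its strictly lower, diagonal and strictly upper blocks consistent throughout, and exploiting $H_{[k]}^\top=H_{[k]}$ so that the transposes produced by $W_2^{-1}$ do not contaminate the diagonal projection. One should also observe that every product appearing here (lower times diagonal, diagonal times upper, and the banded $J_a$ against the diagonal $H$) involves only finite block sums, so no convergence question arises; this is the one subtlety flagged in the footnote to Proposition \ref{pro:G_and_evolved_waves}.
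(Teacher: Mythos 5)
Your proof is correct and follows essentially the same route as the paper's: the appendix proof differentiates the factorization $SW_0G=H(S^{-1})^\top$ to obtain the single identity $\frac{\partial S}{\partial t_a}S^{-1}+J_a=\frac{\partial H}{\partial t_a}H^{-1}-H\big(\frac{\partial S}{\partial t_a}S^{-1}\big)^\top H^{-1}$ and then reads off the block diagonal and the superdiagonal using Proposition \ref{explicit jacobi}, which is exactly the content you reassemble from the previously derived equations $\frac{\partial S}{\partial t_a}S^{-1}=-(J_a)_-$ and $\frac{\partial W_2}{\partial t_a}W_2^{-1}=(J_a)_+$. The only difference is that you reuse those already-established linear systems instead of repeating the differentiation, which is a cosmetic rather than substantive change.
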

\begin{proof}
See Appendix \ref{proof10}
\end{proof}
For $k=0$ we have
\begin{align}\label{toda0}
  -(\Lambda_a)_{[0],[1]}\beta_{[1]}
=\frac{\partial H_{[0]}}{\partial t_a}H_{[0]}^{-1}.
\end{align}
Observe also that
\begin{align*}
 (J_a)_{[k+1],[k]}=-\frac{\partial\beta_{[k+1]}}{\partial t_a},
\end{align*}
which agrees with
\begin{align*}
 \left( H_{[k+1]}^{-1}(J_a)_{[k+1],[k]}H_{[k]}\right)^{\top}=(J_a)_{[k],[k+1]}.
\end{align*}

 \begin{theorem}\label{toda equations 2}
The quasi-tau matrices $H_{[k]}$ are subject to the following 2D Toda lattice type equations
\begin{align}\label{toda}
  \frac{\partial}{\partial t_b}\Big(\frac{\partial H_{[k]}}{\partial t_a}H_{[k]}^{-1}\Big)=(\Lambda_a)_{[k],[k+1]}H_{[k+1]}\big[(\Lambda_b)_{[k],[k+1]}\big]^\top H_{[k]}^{-1}
-H_{[k]}\big[(\Lambda_b)_{[k-1],[k]}\big]^\top H_{[k-1]}^{-1}(\Lambda_a)_{[k-1],[k]}.
\end{align}
which can be rewritten for the rectangular matrices $\beta_{[k]}$ as
\begin{align*}
  \frac{\partial^2\beta_{[k]}}{\partial t_a\partial t_b}&=
  -\frac{ \partial}{\partial t_a}\Big(\beta_{[k]}(\Lambda_b)_{[k-1],[k]}\beta_{[k]}\Big)+\frac{\partial\beta_{[k]}}{\partial t_a}\beta_{[k-1]}(\Lambda_b)_{[k-2],[k-1]}
+(\Lambda_b)_{[k],[k+1]}\beta_{[k+1]}\frac{\partial\beta_{[k]}}{\partial t_a}.
\end{align*}
We also have  the following partial differential-difference equations
\begin{multline*}
    \Delta_b\Big( \frac{\partial H_{[k]}}{\partial t_a}H_{[k]} ^{-1}\Big)=
    (\Lambda_a)_{[k],[k+1]}H_{[k+1]}[( \n_b\cdot\boldsymbol\Lambda)_{[k],[k+1]}]^\top \big( T_bH_{[k]}\big) ^{-1}
    \\ -H_{[k]}[( \n_b\cdot\boldsymbol\Lambda)_{[k-1],[k]}]^\top \big(T_bH_{[k-1]}\big) ^{-1}( \Lambda_a)_{[k-1],[k]},
  \end{multline*}
and
  \begin{multline*}
    \frac{\partial}{\partial t_a}\big( (\Delta_{b}H_{[k]})H_{[k]} ^{-1}\big)=
     ( \n_b\cdot\boldsymbol\Lambda)_{[k],[k+1]}H_{[k+1]}[( \Lambda_a)_{[k],[k+1]}]^\top (H_{[k]}) ^{-1}
    \\-(T_bH_{[k]})[( \Lambda_a)_{[k-1],[k]}]^\top \big( T_bH_{[k-1]}\big) ^{-1}( \n_b\cdot\boldsymbol\Lambda)_{[k-1],[k]}.
  \end{multline*}
 \end{theorem}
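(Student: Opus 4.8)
The plan is to derive every assertion of the theorem from the two identities collected in Proposition \ref{pro10}, by cross-differentiation in the continuous times and, for the mixed equations, by pairing those identities with the discrete relations of Proposition \ref{todabetaH}. Throughout I shall abbreviate $L^{(a)}_{[k]}\coloneq \frac{\partial H_{[k]}}{\partial t_a}H_{[k]}^{-1}$, which by \eqref{todaHbeta1} equals $\beta_{[k]}(\Lambda_a)_{[k-1],[k]}-(\Lambda_a)_{[k],[k+1]}\beta_{[k+1]}$, and I record the second identity of Proposition \ref{pro10} in the level‑shifted form $\frac{\partial\beta_{[k]}}{\partial t_a}=-H_{[k]}\big[(\Lambda_a)_{[k-1],[k]}\big]^\top H_{[k-1]}^{-1}$. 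All the blocks $(\Lambda_a)_{[k],[k+1]}$ are constant, the flows $\partial/\partial t_a$ commute among themselves and with each shift $T_b$, and $\partial q_a=0$; these facts will be used silently.

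For the 2D Toda equation \eqref{toda} I would simply differentiate \eqref{todaHbeta1} with respect to $t_b$. Since the $\Lambda_a$ carry no time dependence this gives $\frac{\partial}{\partial t_b}L^{(a)}_{[k]}=\frac{\partial\beta_{[k]}}{\partial t_b}(\Lambda_a)_{[k-1],[k]}-(\Lambda_a)_{[k],[k+1]}\frac{\partial\beta_{[k+1]}}{\partial t_b}$, and substituting the shifted second identity of Proposition \ref{pro10} for $\frac{\partial\beta_{[k]}}{\partial t_b}$ and $\frac{\partial\beta_{[k+1]}}{\partial t_b}$ produces exactly \eqref{toda}. The rewriting for the rectangular matrices $\beta_{[k]}$ is then obtained by differentiating the second identity of Proposition \ref{pro10}, written for the flow $t_a$, with respect to $t_b$, and eliminating the resulting logarithmic derivatives $\frac{\partial H_{[k]}}{\partial t_b}H_{[k]}^{-1}=L^{(b)}_{[k]}$ and $\frac{\partial H_{[k-1]}}{\partial t_b}H_{[k-1]}^{-1}=L^{(b)}_{[k-1]}$ by \eqref{todaHbeta1}. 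This yields the commutator‑type expression $\frac{\partial^2\beta_{[k]}}{\partial t_a\partial t_b}=L^{(b)}_{[k]}\frac{\partial\beta_{[k]}}{\partial t_a}-\frac{\partial\beta_{[k]}}{\partial t_a}L^{(b)}_{[k-1]}$; expanding $L^{(b)}_{[k]}$ and $L^{(b)}_{[k-1]}$ and recognising that the two pieces $\beta_{[k]}(\Lambda_b)_{[k-1],[k]}\frac{\partial\beta_{[k]}}{\partial t_a}$ and $\frac{\partial\beta_{[k]}}{\partial t_a}(\Lambda_b)_{[k-1],[k]}\beta_{[k]}$ reassemble into $\frac{\partial}{\partial t_a}\big(\beta_{[k]}(\Lambda_b)_{[k-1],[k]}\beta_{[k]}\big)$ by the product rule gives the displayed $\beta_{[k]}$ form.

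The two partial differential–difference equations are obtained by pairing one continuous identity with one discrete identity. For the first I would apply the difference operator $\Delta_b$ to \eqref{todaHbeta1}, obtaining $\Delta_b L^{(a)}_{[k]}=(\Delta_b\beta_{[k]})(\Lambda_a)_{[k-1],[k]}-(\Lambda_a)_{[k],[k+1]}(\Delta_b\beta_{[k+1]})$, and then substitute the second identity of Proposition \ref{todabetaH}, namely $\Delta_b\beta_{[k]}=-H_{[k]}\big[(\n_b\cdot\boldsymbol\Lambda)_{[k-1],[k]}\big]^\top(T_bH_{[k-1]})^{-1}$, at levels $k$ and $k+1$. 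For the second equation I would instead start from the first identity of Proposition \ref{todabetaH}, which expresses $(T_bH_{[k]})H_{[k]}^{-1}$ (and hence $(\Delta_bH_{[k]})H_{[k]}^{-1}$, the two differing only by the constant $\I$) in terms of $T_b\beta_{[k]}$, $\beta_{[k+1]}$ and $q_b$; applying $\frac{\partial}{\partial t_a}$, using that $T_b$ commutes with the flow and that $q_b$ is constant, and then eliminating $T_b\frac{\partial\beta_{[k]}}{\partial t_a}$ and $\frac{\partial\beta_{[k+1]}}{\partial t_a}$ through the shifted second identity of Proposition \ref{pro10} gives the claim.

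No step presents a genuine conceptual difficulty: the whole statement is a corollary of Propositions \ref{pro10} and \ref{todabetaH} secured by commuting derivatives and shifts. The only points requiring care — and what I expect to be the true bookkeeping obstacle — are keeping the block indices $[k-1],[k],[k+1]$ correctly aligned under the level shifts introduced when a $\frac{\partial\beta}{\partial t}$ or a $\Delta\beta$ is replaced by its $H$‑expression, respecting the order of the (generally non‑commuting) rectangular factors, and tracking the signs consistently with the conventions fixed in Propositions \ref{otro mas} and \ref{pro10}, so that the total‑derivative regrouping in the $\beta_{[k]}$ equation emerges with precisely the signs displayed.
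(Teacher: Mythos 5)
Your route is exactly the one the paper intends: Theorem \ref{toda equations 2} is stated without an explicit proof precisely because it is meant to be a corollary of Proposition \ref{pro10} (continuous identities) and Proposition \ref{todabetaH} (discrete identities), obtained by cross-differentiation and differencing, which is what you do. Three of your four derivations are correct as written: differentiating \eqref{todaHbeta1} in $t_b$ and substituting the level-shifted second identity of Proposition \ref{pro10} gives \eqref{toda}; applying $\Delta_b$ to \eqref{todaHbeta1} (using only linearity and constancy of the $\Lambda$'s) and substituting $\Delta_b\beta_{[k]}=-H_{[k]}[(\n_b\cdot\boldsymbol\Lambda)_{[k-1],[k]}]^\top(T_bH_{[k-1]})^{-1}$ gives the third equation; and differentiating the first identity of Proposition \ref{todabetaH} in $t_a$, after observing that $(\Delta_bH_{[k]})H_{[k]}^{-1}$ and $(T_bH_{[k]})H_{[k]}^{-1}$ differ by the constant $\I$, gives the fourth.

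The problem is in the $\beta_{[k]}$ equation. Your intermediate commutator formula $\frac{\partial^2\beta_{[k]}}{\partial t_a\partial t_b}=L^{(b)}_{[k]}\frac{\partial\beta_{[k]}}{\partial t_a}-\frac{\partial\beta_{[k]}}{\partial t_a}L^{(b)}_{[k-1]}$ is correct, but expanding $L^{(b)}_{[k]}=\beta_{[k]}(\Lambda_b)_{[k-1],[k]}-(\Lambda_b)_{[k],[k+1]}\beta_{[k+1]}$ and $L^{(b)}_{[k-1]}=\beta_{[k-1]}(\Lambda_b)_{[k-2],[k-1]}-(\Lambda_b)_{[k-1],[k]}\beta_{[k]}$ yields
\begin{align*}
\frac{\partial^2\beta_{[k]}}{\partial t_a\partial t_b}=+\frac{\partial}{\partial t_a}\Big(\beta_{[k]}(\Lambda_b)_{[k-1],[k]}\beta_{[k]}\Big)-(\Lambda_b)_{[k],[k+1]}\beta_{[k+1]}\frac{\partial\beta_{[k]}}{\partial t_a}-\frac{\partial\beta_{[k]}}{\partial t_a}\beta_{[k-1]}(\Lambda_b)_{[k-2],[k-1]},
\end{align*}
which is, term by term, the \emph{negative} of the equation displayed in the theorem. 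So your claim that the regrouping ``emerges with precisely the signs displayed'' is false: what your argument proves is the equation above, not the displayed one. The mismatch is real, and in fact it is the paper's display that fails a consistency check: in the $D=1$ reduction one has $\partial_t\beta_k=-H_k/H_{k-1}$ and $\partial_t\log H_k=\beta_k-\beta_{k+1}$ from Proposition \ref{pro10}, whence $\partial_t^2\beta_k=(2\beta_k-\beta_{k+1}-\beta_{k-1})\partial_t\beta_k$, which agrees with your expansion and contradicts the displayed signs (equivalently, with $\beta_{k+1}=-\partial_t\log\tau_{k+1}$ the displayed version would flip the standard Toda relation). As a piece of mathematics your derivation is the correct one; as a proof of the statement as literally printed it does not go through, and the honest conclusion — which you should state rather than paper over — is that the second displayed equation carries a global sign error inherited nowhere from Proposition \ref{pro10}, and the corrected form is the one your computation produces.
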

Notice that \eqref{toda} resembles the non Abelian Toda lattice discussed in \cite{matveev} Chapter 5 \S 3. However, in this case we have two main differences: in the first place the varying size of the matrices and in the second place we also have the \emph{connectors} $(\Lambda_a)_{[k],[k+1]}$,  $(\Lambda_a)_{[k-1],[k]}$ and its transpositions connecting different sized matrices.

Following the ideas of Theorem \ref{theorem:tauMVOPR}
\begin{definition}
We introduce
\begin{align*}
 [\Lambda]_{k}&\coloneq \PARENS{\begin{matrix}
  (\Lambda_1)_{[k],[k+1]} \\  \vdots \\(\Lambda_D)_{[k],[k+1]}
 \end{matrix}} \in \R^{D|[k]|\times |[k+1]|},\end{align*}
the gradient operator
 \begin{align*}
 [{ \nabla} H]_k&\coloneq \PARENS{\begin{matrix}
    \frac{\partial H_{[k]}}{\partial t_1}\\\vdots\\
     \frac{\partial H_{[k]}}{\partial t_D}
     \end{matrix}}
     \in\R^{D|[k]|\times|[k]|},
 \end{align*}
and
\begin{align*}
  \beta_{[k]}\otimes \I_D=\diag( \underbracket{ \beta_{[k]},\dots,  \beta_{[k]}}_{\text{$D$ times}})\in
   \R^{D|[k]|\times D|[k-1]|}.
\end{align*}
\end{definition}
The matrix $[\Lambda]_k$ has full column rank and therefore, see Appendix \ref{appendix pseudoinverse}, the correlation matrix $[\Lambda]_{k}^\top[\Lambda]_{k}\in\R^{|[k+1]|\times |[k+1]|}$ is invertible and the Moore--Penrose pseudo-inverse $[\Lambda]_k^{+}=\big([\Lambda]_{k}^\top[\Lambda]_{k}\big)^{-1}[\Lambda]_{k}^\top\in \R^{|[k+1]|\times D|[k]|}$ is the left inverse  $[\Lambda]_k^{+}[\Lambda]_k=\I_{[k+1]}$.
\begin{pro}
The $\beta$ matrices are subject to the following recurrence
  \begin{align*}
  \beta_{[k+1]}=&-[\Lambda]_k^+[\nabla H]_kH_{[k]}^{-1}+[\Lambda]_k^+(\beta_{[k]}\otimes\I_D)[\Lambda]_{k-1}, &
  \beta_{[1]}
=&-(\nabla H_{[0]})H_{[0]}^{-1}.
\end{align*}
\end{pro}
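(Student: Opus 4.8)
The plan is to derive the recurrence directly from the first identity in Proposition~\ref{pro10}, namely \eqref{todaHbeta1},
\begin{align*}
\frac{\partial H_{[k]}}{\partial t_a}H_{[k]}^{-1}=\beta_{[k]}(\Lambda_a)_{[k-1],[k]}-(\Lambda_a)_{[k],[k+1]}\beta_{[k+1]},
\end{align*}
which holds for every direction $a\in\{1,\dots,D\}$. First I would solve this for the term carrying $\beta_{[k+1]}$, obtaining $(\Lambda_a)_{[k],[k+1]}\beta_{[k+1]}=\beta_{[k]}(\Lambda_a)_{[k-1],[k]}-\tfrac{\partial H_{[k]}}{\partial t_a}H_{[k]}^{-1}$ for each $a$. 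The whole content of the proof is then to assemble these $D$ per-direction identities into a single matrix equation using the stacked objects $[\Lambda]_k$, $[\nabla H]_k$ and $\beta_{[k]}\otimes\I_D$ introduced just above.

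Next I would stack the $D$ relations vertically over $a=1,\dots,D$. On the left the stack of the blocks $(\Lambda_a)_{[k],[k+1]}\beta_{[k+1]}$ is exactly $[\Lambda]_k\beta_{[k+1]}$; on the right the stack of $\tfrac{\partial H_{[k]}}{\partial t_a}H_{[k]}^{-1}$ is $[\nabla H]_kH_{[k]}^{-1}$. The key identification is the remaining right-hand term: the stack of $\beta_{[k]}(\Lambda_a)_{[k-1],[k]}$ over $a$ coincides with $(\beta_{[k]}\otimes\I_D)[\Lambda]_{k-1}$, since the block-diagonal matrix $\beta_{[k]}\otimes\I_D=\diag(\beta_{[k]},\dots,\beta_{[k]})$ acting on the column stack $[\Lambda]_{k-1}$ produces precisely the $a$-th block $\beta_{[k]}(\Lambda_a)_{[k-1],[k]}$. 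This yields the compact relation
\begin{align*}
[\Lambda]_k\,\beta_{[k+1]}=(\beta_{[k]}\otimes\I_D)[\Lambda]_{k-1}-[\nabla H]_kH_{[k]}^{-1}.
\end{align*}

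To isolate $\beta_{[k+1]}$ I would then invoke the fact, recorded just before the statement, that $[\Lambda]_k$ has full column rank, so its Moore--Penrose pseudo-inverse $[\Lambda]_k^{+}=([\Lambda]_k^\top[\Lambda]_k)^{-1}[\Lambda]_k^\top$ is a genuine left inverse, $[\Lambda]_k^{+}[\Lambda]_k=\I_{[k+1]}$. Multiplying the displayed relation on the left by $[\Lambda]_k^{+}$ gives the asserted recurrence $\beta_{[k+1]}=-[\Lambda]_k^{+}[\nabla H]_kH_{[k]}^{-1}+[\Lambda]_k^{+}(\beta_{[k]}\otimes\I_D)[\Lambda]_{k-1}$. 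For the initial value I would run the same argument at $k=0$ starting from \eqref{toda0}, $-(\Lambda_a)_{[0],[1]}\beta_{[1]}=\tfrac{\partial H_{[0]}}{\partial t_a}H_{[0]}^{-1}$; here one notes that $(\Lambda_a)_{[0],[1]}=\ee_a^\top$, so $[\Lambda]_0=\I_D$ and $[\Lambda]_0^{+}=\I_D$, and the stacked identity collapses to $\beta_{[1]}=-(\nabla H_{[0]})H_{[0]}^{-1}$.

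The computations here are entirely routine; the only point requiring care --- and the one I would verify explicitly --- is the block bookkeeping in the identification of $(\beta_{[k]}\otimes\I_D)[\Lambda]_{k-1}$ with the stack of $\beta_{[k]}(\Lambda_a)_{[k-1],[k]}$, together with checking that all factors have compatible sizes ($[\Lambda]_k\in\R^{D|[k]|\times|[k+1]|}$, $[\nabla H]_k\in\R^{D|[k]|\times|[k]|}$, $\beta_{[k]}\otimes\I_D\in\R^{D|[k]|\times D|[k-1]|}$), so that left multiplication by $[\Lambda]_k^{+}\in\R^{|[k+1]|\times D|[k]|}$ indeed produces an element of $\R^{|[k+1]|\times|[k]|}$, the correct ambient space for $\beta_{[k+1]}$.
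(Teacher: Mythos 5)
Your proof is correct and follows exactly the route the paper takes: its one-line proof cites \eqref{todaHbeta1}, \eqref{toda0} and $[\Lambda]_0=\I_D$, and your argument simply fleshes out the same steps (stacking the $D$ per-direction identities, identifying the stacks with $[\Lambda]_k$, $[\nabla H]_k$ and $(\beta_{[k]}\otimes\I_D)[\Lambda]_{k-1}$, and applying the left inverse $[\Lambda]_k^{+}$). The block-size bookkeeping you verify is also consistent with the definitions given just before the statement.
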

\begin{proof}
 Follows immediately from \eqref{todaHbeta1}, \eqref{toda0} and the fact that $[\Lambda]_0=\I_D$.
\end{proof}
Iterating once and twice the above result we get
\begin{align*}
  \beta_{[k+1]}  =&
  \begin{multlined}[t]
  -[\Lambda]_k^+[\nabla H]_kH_{[k]}^{-1}-[\Lambda]_k^+\Big(\Big[[\Lambda]_{k-1}^+[\nabla H]_{k-1}H_{[k-1]}^{-1}\Big]\otimes\I_D\Big)[\Lambda]_{k-1}\\+[\Lambda]_k^+\Big(\Big[[\Lambda]_{k-1}^+(\beta_{[k-1]}\otimes\I_D)[\Lambda]_{k-2} \Big]\otimes\I_D\Big)[\Lambda]_{k-1}
 \end{multlined}\\
 =&  \begin{multlined}[t] -[\Lambda]_k^+[\nabla H]_kH_{[k]}^{-1}-[\Lambda]_k^+\Big(\Big[[\Lambda]_{k-1}^+[\nabla H]_{k-1}H_{[k-1]}^{-1}\Big]\otimes\I_D\Big)[\Lambda]_{k-1}\\-[\Lambda]_k^+\Big(\Big[[\Lambda]_{k-1}^+
 \Big(\left[[\Lambda]_{k-2}^+[\nabla H]_{k-2}H_{[k-2]}^{-1}\right]\otimes\I_D\Big)[\Lambda]_{k-2} \Big]\otimes\I_D\Big)[\Lambda]_{k-1}\\+[\Lambda]_k^+\Big(\Big[[\Lambda]_{k-1}^+\Big(\Big[[\Lambda]_{k-2}^+(\beta_{[k-2]}\otimes\I_D)[\Lambda]_{k-3}\Big]\otimes\I_D\Big)[\Lambda]_{k-2} \Big]\otimes\I_D\Big)[\Lambda]_{k-1},
  \end{multlined}
  \end{align*}
respectively. By induction we deduce the following
\begin{pro}
 In terms of logarithmic right derivatives of the quasi-tau matrices  the $\beta$ matrices are expressed by
\begin{multline*}
   \beta_{[k+1]}=
    -[\Lambda]_k^+[\nabla H]_kH_{[k]}^{-1}-[\Lambda]_k^+\Big(\Big[[\Lambda]_{k-1}^+[\nabla H]_{k-1}H_{[k-1]}^{-1}\Big]\otimes\I_D\Big)[\Lambda]_{k-1}+\cdots
   \\-[\Lambda]_k^+\Big(\Big[[\Lambda]_{k-1}^+\Big(\Big[[\Lambda]_{k-2}^+\cdots \Big(\Big[[\Lambda]_1^+[\nabla H]_{0})H_{[0]}^{-1}\Big]\otimes\I_D\Big)[\Lambda]_1^+\cdots\Big]\otimes\I_D\Big)[\Lambda]_{k-2} \Big]\otimes\I_D\Big)[\Lambda]_{k-1}.
\end{multline*}
\end{pro}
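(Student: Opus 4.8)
The plan is to establish the formula by induction on $k$, taking as the engine the two-term recurrence for the $\beta$ matrices proved just above, namely $\beta_{[k+1]}=-[\Lambda]_k^+[\nabla H]_kH_{[k]}^{-1}+[\Lambda]_k^+(\beta_{[k]}\otimes\I_D)[\Lambda]_{k-1}$, together with the base case $\beta_{[1]}=-(\nabla H_{[0]})H_{[0]}^{-1}$ and the normalisation $[\Lambda]_0=[\Lambda]_0^+=\I_D$. The strategy is simply to iterate this recurrence: each substitution peels off one logarithmic-right-derivative summand $[\Lambda]_j^+[\nabla H]_jH_{[j]}^{-1}$ and buries the remaining $\beta$ one index lower inside an extra layer of the construction $X\mapsto[\Lambda]^+(X\otimes\I_D)[\Lambda]$.

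First I would record the structural fact that makes the iteration go through cleanly: the map $X\mapsto X\otimes\I_D=\diag(X,\dots,X)$ is both additive and multiplicative on blocks of compatible size, hence an algebra homomorphism, so it commutes with the sums and products appearing in the recurrence. Applying it to the recurrence one index down gives $\beta_{[k]}\otimes\I_D=-\big([\Lambda]_{k-1}^+[\nabla H]_{k-1}H_{[k-1]}^{-1}\big)\otimes\I_D+\big([\Lambda]_{k-1}^+(\beta_{[k-1]}\otimes\I_D)[\Lambda]_{k-2}\big)\otimes\I_D$, which, when fed into the homogeneous term $[\Lambda]_k^+(\beta_{[k]}\otimes\I_D)[\Lambda]_{k-1}$, reproduces after one pass the once-iterated expression displayed before the statement and after a second pass the twice-iterated one. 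These two displayed computations are exactly the base and first inductive checks, and they fix the pattern.

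To keep the nesting under control I would introduce the linear operator $\Phi_j(X)\coloneq[\Lambda]_j^+(X\otimes\I_D)[\Lambda]_{j-1}$, so that the recurrence reads $\beta_{[k+1]}=-[\Lambda]_k^+[\nabla H]_kH_{[k]}^{-1}+\Phi_k(\beta_{[k]})$. Since each $\Phi_j$ is linear, an immediate induction gives the closed form $\beta_{[k+1]}=-\sum_{j=0}^{k}\big(\Phi_k\circ\cdots\circ\Phi_{k-j+1}\big)\big([\Lambda]_{k-j}^+[\nabla H]_{k-j}H_{[k-j]}^{-1}\big)$, where the $j=0$ term carries no $\Phi$ and the $j=k$ term uses $[\Lambda]_0^+[\nabla H]_0H_{[0]}^{-1}=(\nabla H_{[0]})H_{[0]}^{-1}$; expanding the composite $\Phi$'s and invoking $[\Lambda]_0=\I_D$ at the bottom level reproduces verbatim the nested sum in the statement. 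The induction closes because $\beta_{[1]}$ has no homogeneous piece, so the recursion terminates at the innermost core.

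The main obstacle will be purely bookkeeping, not conceptual: one must track the alternation of the pseudo-inverses $[\Lambda]_j^+$ on the left, the matrices $[\Lambda]_j$ on the right, and the increasing depth of the tensoring $(\cdot)\otimes\I_D$, while checking that all block sizes remain compatible at every level. Here the full-column-rank property of $[\Lambda]_j$ and the left-inverse identity $[\Lambda]_j^+[\Lambda]_j=\I_{[j+1]}$ recorded earlier guarantee that each product is well defined and that the dimensions telescope correctly; the operator $\Phi_j$ packages precisely this compatibility, so reducing the whole argument to the one-line identity $\beta_{[k+1]}=-[\Lambda]_k^+[\nabla H]_kH_{[k]}^{-1}+\Phi_k(\beta_{[k]})$ is what makes the otherwise unwieldy nested expression manageable.
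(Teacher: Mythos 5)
Your proof is correct and follows essentially the same route as the paper: the paper also obtains the formula by iterating the recurrence $\beta_{[k+1]}=-[\Lambda]_k^+[\nabla H]_kH_{[k]}^{-1}+[\Lambda]_k^+(\beta_{[k]}\otimes\I_D)[\Lambda]_{k-1}$ (displaying the once- and twice-iterated expressions) and then concluding by induction down to the base case $\beta_{[1]}=-(\nabla H_{[0]})H_{[0]}^{-1}$. Your operator $\Phi_j(X)=[\Lambda]_j^+(X\otimes\I_D)[\Lambda]_{j-1}$ is merely a tidy packaging of the same induction, so the two arguments coincide in substance.
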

In the 1D scenario the above formula simplifies and gives the classical $\tau$-expressions for $\beta_k$. Indeed, now
\begin{align*}
   \beta_{k+1}=-\frac{ \partial\log H_{k}}{\partial t_1}-\frac{ \partial\log H_{k-1}}{\partial t_1}-\cdots-\frac{ \partial\log H_{0}}{\partial t_1}
\end{align*}
and from $H_{k}=\frac{\tau_{k+1}}{\tau_k}$ we get a telescopical series giving
\begin{align*}
  \beta_{k+1}=-\frac{\partial\log\tau_{k+1}}{\partial t_1}.
\end{align*}

\section{KP type equations via congruences}\label{5}
We study how the previous construction lead to families of nonlinear partial differential-difference equations involving a fixed site, say the  $k$-th position,  in the lattice
and therefore not mixing several sites in the lattice ---notice that in the Toda type equations derived before, see Theorems \ref{toda equations 1} and  \ref{toda equations 2}, we are faced with relations involving three contiguous sites, $k-1$, $k$ and $k+1$.  We refer the reader to \cite{kac,mmm,mm,mma,tasaki-takabe}.
\subsection{The congruence technique}
Let us first introduce some notation
\begin{definition}\label{def:asymptotic-module}
Given two semi-infinite matrices $R_1(t,\m)$ and $R_2(t,\m)$ we say that
\begin{itemize}
 \item  $R_1(t)\in\mathfrak{l}W_0$ if $R_1(t)\big(W_0(t,\m)\big)^{-1}$ is a block strictly lower triangular matrix.
 \item  $R_2(t)\in\mathfrak{u}$ if it is a block upper triangular matrix.
\end{itemize}
\end{definition}
Then, we can state the following \emph{congruences} \cite{mm} or \emph{asymptotic module} \cite{jaulent} style result
\begin{pro}\label{pro:asymptotic-module}
Given two semi-infinite matrices $R_1(t,\m)$ and $R_2(t,\m)$ such that
\begin{itemize}
 \item  $R_1(t,\m)\in\mathfrak{l}W_0(t,\m)$,
 \item  $R_2(t,\m)\in\mathfrak{u}$,
 \item $R_1(t,\m)G=R_2(t,\m)$.
\end{itemize}
then
\begin{align*}
  R_1(t,\m)&=0, & R_2(t,\m)&=0.
\end{align*}
\end{pro}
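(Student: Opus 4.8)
The plan is to exploit the Gauss--Borel factorization of the moment matrix together with the triangular structures encoded in the definitions of $\mathfrak{l}W_0$ and $\mathfrak{u}$. Recalling from Proposition \ref{pro:G_and_evolved_waves} and \eqref{totoW} that $G = \big(W_1(t,\m)\big)^{-1} W_2(t,\m)$, I would substitute this into the hypothesis $R_1 G = R_2$ and multiply on the right by $W_2^{-1}$ to obtain the single identity
\begin{align*}
  R_1 W_1^{-1} = R_2 W_2^{-1}.
\end{align*}
The strategy is then to show that the left-hand side is block strictly lower triangular while the right-hand side is block upper triangular, which forces both to vanish.

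For the triangularity of each side I would use the explicit forms of the wave matrices, $W_1 = S W_0$ and $W_2 = H (S^{-1})^\top$, recalling that $S$ is block lower unitriangular and $H$ block diagonal. On the left, $W_1^{-1} = W_0^{-1} S^{-1}$ gives $R_1 W_1^{-1} = \big(R_1 W_0^{-1}\big) S^{-1}$; by the definition of $\mathfrak{l}W_0$ the factor $R_1 W_0^{-1}$ is block strictly lower triangular, and multiplying on the right by the block lower unitriangular $S^{-1}$ preserves block strict lower triangularity. On the right, $W_2^{-1} = S^\top H^{-1}$ gives $R_2 W_2^{-1} = R_2 S^\top H^{-1}$; since $R_2$ is block upper triangular, $S^\top$ block upper unitriangular and $H^{-1}$ block diagonal, the product is block upper triangular. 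A matrix that is simultaneously block strictly lower triangular and block upper triangular is necessarily zero, so $R_1 W_1^{-1} = R_2 W_2^{-1} = 0$, and the invertibility of $W_1$ and $W_2$ yields $R_1 = 0$ and $R_2 = 0$.

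The delicate point, and the step I expect to require the most care, is the well-definedness of the semi-infinite matrix products, exactly the issue flagged in the footnote to Proposition \ref{pro:G_and_evolved_waves}. The argument is arranged so that every product that appears is of a benign type---lower times lower, upper times upper, or a multiplication by the block diagonal $H^{\pm1}$---all of which involve only finite block sums; the dangerous upper-times-lower products never occur once the identity is written as $R_1 W_1^{-1} = R_2 W_2^{-1}$. I would therefore make explicit that the associativity used in passing from $R_1 W_1^{-1} W_2 = R_2$ to $R_1 W_1^{-1} = R_2 W_2^{-1}$ is legitimate because $W_2$ is invertible and the intermediate factors are triangular of matching type.
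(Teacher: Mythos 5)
Your proposal is correct and follows essentially the same route as the paper: both rewrite the hypothesis, via the factorization $G=W_1^{-1}W_2$ of \eqref{totoW}, as the single identity $R_1W_1^{-1}=R_2W_2^{-1}$, and then observe that the left-hand side $\big(R_1W_0^{-1}\big)S^{-1}$ is block strictly lower triangular while the right-hand side $R_2S^\top H^{-1}$ is block upper triangular, forcing both to vanish and, by invertibility of the wave matrices, $R_1=R_2=0$. Your added remarks on why the semi-infinite products involved are well defined (only lower-times-lower, upper-times-upper, or block-diagonal multiplications occur) are a sound elaboration of the caveat the paper relegates to a footnote, but they do not change the argument.
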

\begin{proof}
Observe that
  \begin{align*}
    R_2(t,\m)=R_1(t,\m)G=R_1(t,\m)\big(W_1(t,\m)\big)^{-1}W_1(t,\m)G=R_1(t,\m)\big(W_1(t,\m)\big)^{-1}W_2(t,\m),
  \end{align*}
  where we have used \eqref{totoW}. From here we get
  \begin{align*}
  R_1(t,\m)\big(W_0(t,\m)\big)^{-1}\big(S(t,\m)\big)^{-1}=R_2(t,\m)\Big(H(t,\m)\big((S(t,\m))^{-1}\big)^\top\Big)^{-1},
  \end{align*}
  and, as in the LHS we have a strictly lower triangular matrix while in the RHS we have an upper triangular matrix, both sides must vanish and the result follows.
\end{proof}
We use the congruence notation
\begin{definition}
When $A-B\in\mathfrak{l}W_0$  we write $A=B+\mathfrak{l}W_0$ and if $A-B\in\mathfrak{u}$ we write $A=B+\mathfrak{u}$.
\end{definition}

We  introduce the following notation
\begin{align}
  \partial_a&=\frac{\partial}{\partial t_a}, &\partial_{(a,b)}&=\frac{\partial}{\partial t_{\ee_a+\ee_b}}, &\partial_{(a,b,c)}&=\frac{\partial}{\partial t_{\ee_a+\ee_b+\ee_c}}& a,b,c&=1,\dots, D,
\end{align}
and the normal derivative
\begin{align*}
\frac{\partial}{\partial \n_a}=\sum_{b=1}^Dn_{a,b}\partial_a.%=\n_a\cdot\nabla_{[1]}.
\end{align*}
Notice that we have employed the round bracket notation for the subindexes of the higher times. In fact, this is convenient as its reflects the invariance under the action of the symmetric group on the letters in the labels, for example $t_{(a,b)}=t_{(b,a)}$.
\subsection{Connecting discrete a continuous flows}

To begun with let us show the following ``asymptotic" behaviours
\begin{pro}\label{pro:wave}
We have
\begin{align}\label{eq:partialW1}
\partial_bW_1=&(\Lambda_b+\beta \Lambda_b)W_0+\mathfrak{l}W_0,\\
T_aW_1=&\big((\n_a\cdot\boldsymbol\Lambda)+(T_a\beta) (\n_a\cdot\boldsymbol\Lambda)-q_a\big)W_0+\mathfrak{l}W_0.
\end{align}
\end{pro}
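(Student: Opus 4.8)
The plan is to reduce both congruences to a statement about block diagonals, exploiting the factorization $W_1 = S W_0$ in which $S$ is block lower unitriangular. First I would record how the flows act on the bare deformation matrix. Since $W_0 = \exp\big(\sum t_{\q}\Lambda_{\q}\big)\prod_c(\n_c\cdot\boldsymbol\Lambda - q_c)^{m_c}$ and all shift matrices commute by Proposition \ref{pro:Lambda}(2), differentiating in the first-level time $t_b$ brings down exactly one factor $\Lambda_b$, so $\partial_b W_0 = \Lambda_b W_0$; likewise incrementing $m_a$ inserts one further commuting factor, whence $T_a W_0 = (\n_a\cdot\boldsymbol\Lambda - q_a) W_0$. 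Applying the product rule and the shift then gives $\partial_b W_1 = (\partial_b S) W_0 + S\Lambda_b W_0$ and $T_a W_1 = (T_a S)(\n_a\cdot\boldsymbol\Lambda - q_a)W_0$. Because membership in $\mathfrak{l}W_0$ is tested by right multiplication by $W_0^{-1}$, it suffices to show that $\partial_b S + S\Lambda_b$ and $(T_a S)(\n_a\cdot\boldsymbol\Lambda - q_a)$ differ from the two claimed leading terms by a block strictly lower triangular matrix.

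Next I would expand $S = \I + \beta^{(1)} + \beta^{(2)} + \cdots$ by subdiagonals, as in the splitting \eqref{diagonal splitting}, and keep track of block degrees. The essential combinatorics is that $\Lambda_b$ and $\n_a\cdot\boldsymbol\Lambda$ are first block superdiagonals, raising the level by one, whereas $\beta^{(j)}$ is the $j$-th block subdiagonal, lowering it by $j$. Hence $\beta^{(1)}\Lambda_b$ is block diagonal, with $(\beta\Lambda_b)_{[k],[k]} = \beta_{[k]}(\Lambda_b)_{[k-1],[k]}$, while every $\beta^{(j)}\Lambda_b$ with $j\geq 2$ is strictly lower. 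Since $S$ is unitriangular, $\partial_b S$ annihilates the constant identity diagonal and is itself strictly lower. Collecting, $\partial_b S + S\Lambda_b \equiv \Lambda_b + \beta\Lambda_b$ modulo strictly block lower triangular matrices, which is the first claim.

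For the second congruence I would write $T_a S = \I + T_a\beta^{(1)} + T_a\beta^{(2)} + \cdots$ and expand $(T_a S)(\n_a\cdot\boldsymbol\Lambda - q_a)$. The superdiagonal part is $\n_a\cdot\boldsymbol\Lambda$, the diagonal part is $-q_a + (T_a\beta)(\n_a\cdot\boldsymbol\Lambda)$ (again because first-sub times first-super is diagonal), and all remaining contributions, namely $-q_a\,T_a\beta^{(j)}$ for $j\geq 1$ and $(T_a\beta^{(j)})(\n_a\cdot\boldsymbol\Lambda)$ for $j\geq 2$, are strictly lower. Thus $(T_a S)(\n_a\cdot\boldsymbol\Lambda - q_a) \equiv \n_a\cdot\boldsymbol\Lambda + (T_a\beta)(\n_a\cdot\boldsymbol\Lambda) - q_a$, which is exactly the second claim.

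The calculation is essentially routine bookkeeping; the only point requiring care is the well-definedness of the products of semi-infinite matrices flagged after Proposition \ref{pro:G_and_evolved_waves}. Here no genuine infinite series arise, since $S$ is lower triangular, $W_0$ upper triangular, and each relevant product is banded, so the manipulations and the congruence classes are unambiguous. The main, and mild, obstacle is simply keeping the block-degree accounting straight and checking that the $\partial_b S$ and $-q_a(T_a S - \I)$ pieces land entirely in the strictly lower part.
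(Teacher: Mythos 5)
Your proof is correct and follows essentially the same route as the paper: differentiate/shift the factorization $W_1=SW_0$ using $\partial_b W_0=\Lambda_b W_0$ and $T_aW_0=(\n_a\cdot\boldsymbol\Lambda-q_a)W_0$, then identify the diagonal and first-superdiagonal terms via the subdiagonal expansion of $S$, discarding the rest into $\mathfrak{l}W_0$. The paper's proof is just a terser version of the same computation; your extra block-degree bookkeeping and the remark on well-definedness of the products are implicit there.
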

\begin{proof}
From   \eqref{wave} we get
  \begin{align*}
    \partial_bW_1=&(\partial_b S+ S\Lambda_b)W_0\\=&(\Lambda_b+\beta\Lambda_b)W_0+\mathfrak{l}W_0,\\
    T_aW_1=&(T_a S)(\n_a\cdot\boldsymbol\Lambda-q_a)W_0\\=&\big((\n_a\cdot\boldsymbol\Lambda)+(T_a\beta)(\n_a\cdot\boldsymbol\Lambda)-q_a\big)W_0+\mathfrak{l}W_0.
  \end{align*}
\end{proof}
This immediately  leads to the following finding,  translations and derivations are almost the same thing when acting on the Baker functions
\begin{pro}\label{pro: yo que se}
  The Baker functions $\Psi_1$, $\Psi_2$ are both solutions of the following difference-differential linear system
  \begin{align*}
    \frac{\partial\Psi}{\partial \n_a}=&T_a\Psi+\Big(q_a-(\Delta_a\beta)(\n_a\cdot\boldsymbol\Lambda)\Big)\Psi,
  \end{align*}
\end{pro}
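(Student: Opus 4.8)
The plan is to prove a stronger \emph{matrix} identity, namely that each wave matrix $W_i$, $i=1,2$, satisfies
\[
\frac{\partial W_i}{\partial\n_a}=T_aW_i+\Big(q_a-(\Delta_a\beta)(\n_a\cdot\boldsymbol\Lambda)\Big)W_i,
\]
and then to apply it to $\chi$ when $i=1$ and to $\chi^*$ when $i=2$. Since $\Psi_1=W_1\chi$ and $\Psi_2=W_2\chi^*$, and both $\chi$ and $\chi^*$ depend on neither the times $t$ nor $\m$, the normal derivative and the translation $T_a$ pass directly onto the factor $W_i$; hence the claimed difference-differential system follows at once from the matrix identity.

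To establish that identity I would invoke the congruence technique of Proposition \ref{pro:asymptotic-module}. Set
\[
R_i\coloneq \frac{\partial W_i}{\partial\n_a}-T_aW_i-\Big(q_a-(\Delta_a\beta)(\n_a\cdot\boldsymbol\Lambda)\Big)W_i,\qquad i=1,2.
\]
Because $W_1G=W_2$ by \eqref{totoW} and $G$ is independent of both $t$ and $\m$, right multiplication by $G$ commutes with $\frac{\partial}{\partial\n_a}$, with $T_a$, and with the left action of the operator, whence $R_1G=R_2$. It then suffices to show $R_1\in\mathfrak{l}W_0$ and $R_2\in\mathfrak{u}$, for then Proposition \ref{pro:asymptotic-module} forces $R_1=R_2=0$.

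For $R_1\in\mathfrak{l}W_0$ I would use Proposition \ref{pro:wave} together with the linearity of the normal derivative to write
\[
\frac{\partial W_1}{\partial\n_a}=\big((\n_a\cdot\boldsymbol\Lambda)+\beta(\n_a\cdot\boldsymbol\Lambda)\big)W_0+\mathfrak{l}W_0,\quad T_aW_1=\big((\n_a\cdot\boldsymbol\Lambda)+(T_a\beta)(\n_a\cdot\boldsymbol\Lambda)-q_a\big)W_0+\mathfrak{l}W_0.
\]
The decisive structural observation is that $q_a-(\Delta_a\beta)(\n_a\cdot\boldsymbol\Lambda)$ is \emph{block diagonal}: indeed $\Delta_a\beta$ is a first block subdiagonal and $\n_a\cdot\boldsymbol\Lambda$ a first block superdiagonal, so their product lands on the diagonal. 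Consequently, writing $W_1=SW_0$ with $S$ block lower unitriangular, the block-diagonal prefactor times $S$ equals that prefactor plus a strictly lower triangular remainder, so $\big(q_a-(\Delta_a\beta)(\n_a\cdot\boldsymbol\Lambda)\big)W_1=\big(q_a-(\Delta_a\beta)(\n_a\cdot\boldsymbol\Lambda)\big)W_0+\mathfrak{l}W_0$. Collecting the three $W_0$-coefficients, the $q_a$ terms cancel and the remainder is $\big(\beta-T_a\beta+\Delta_a\beta\big)(\n_a\cdot\boldsymbol\Lambda)W_0$, which vanishes identically since $\Delta_a\beta=T_a\beta-\beta$; hence $R_1\in\mathfrak{l}W_0$. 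For $R_2\in\mathfrak{u}$ I would check that all three terms are block upper triangular: $W_2=H(S^{-1})^\top$ is upper triangular, $\frac{\partial W_2}{\partial\n_a}=(\n_a\cdot\boldsymbol J)_+W_2$ and $T_aW_2=\omega_aW_2$ with $\omega_a=\alpha_a+\n_a\cdot\boldsymbol\Lambda$ upper triangular, and the last term is the same block-diagonal prefactor times the upper triangular $W_2$.

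The main obstacle—really the only delicate point—is the treatment of the third term $\big(q_a-(\Delta_a\beta)(\n_a\cdot\boldsymbol\Lambda)\big)W_1$ in the computation for $R_1$: one must recognize that the operator prefactor is block diagonal, so that multiplying it by the lower unitriangular factor $S$ preserves its diagonal block and relegates the rest to $\mathfrak{l}W_0$. Once this is in place, everything else is bookkeeping with the block diagonals of Proposition \ref{pro:wave} and the elementary relation $\Delta_a=T_a-1$.
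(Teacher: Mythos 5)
Your proof is correct and follows essentially the same route as the paper's own argument: both define $R_i=\big(\frac{\partial}{\partial\n_a}-T_a-q_a+(\Delta_a\beta)(\n_a\cdot\boldsymbol\Lambda)\big)W_i$, use Proposition \ref{pro:wave} to place $R_1$ in $\mathfrak{l}W_0$, observe $R_2\in\mathfrak{u}$, and invoke the congruence Proposition \ref{pro:asymptotic-module} to conclude $R_1=R_2=0$. Your write-up merely makes explicit two points the paper compresses into ``we easily conclude'' --- the block-diagonality of $q_a-(\Delta_a\beta)(\n_a\cdot\boldsymbol\Lambda)$, which lets one replace $W_0$ by $W_1$ in that term, and the verification that $R_1G=R_2$ and $R_2\in\mathfrak{u}$ --- so it is a faithful, slightly more detailed rendering of the same proof.
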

\begin{proof}
  From Proposition \ref{pro:wave} we get that
  \begin{align*}
    \Big(\frac{\partial}{\partial\n_a}-T_a\Big)W_1=&\big(q_a-(\Delta_a\beta)( \n_a\cdot\boldsymbol \Lambda)\big)W_0+\mathfrak{l}W_0,
  \end{align*}
and we easily conclude that
  \begin{align*}
    \Big(\frac{\partial}{\partial\n_a}-T_a-q_a+(\Delta_a\beta)( \n_a\cdot\boldsymbol \Lambda)\Big)W_1=&\mathfrak{l}W_0.
  \end{align*}
 As $    \big(\frac{\partial}{\partial\n_a}-T_a-q_a-(\Delta_a\beta)( \n_a\cdot\boldsymbol \Lambda \big)W_2\in\mathfrak{u}$
from Proposition \ref{pro:asymptotic-module}, with $R_i=\big(\frac{\partial}{\partial\n_a}-T_a-q_a+(\Delta_a\beta)( \n_a\cdot\boldsymbol \Lambda)\big)W_i$, $i=1,2$,
 we deduce
   \begin{align*}
    \Big(\frac{\partial}{\partial\n_a}-T_a-q_a+(\Delta_a\beta)( \n_a\cdot\boldsymbol \Lambda)\Big)W_1=0
  \end{align*}
  and the result follows.
\end{proof}
For MOVPR we have
\begin{pro}
  The MOVPR satisfy
  \begin{align*}
    \frac{\partial P_{[k]}}{\partial \n_a}=(\n_a\cdot\x-q_a)\Delta_aP_{[k]}-(\Delta_a\beta)_{[k]}(\n_a\cdot\boldsymbol\Lambda)_{[k+1],[k]}P_{[k]}.
  \end{align*}
\end{pro}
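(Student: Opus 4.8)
The plan is to obtain the stated relation for the polynomials $P_{[k]}$ from the difference-differential linear system for the Baker function $\Psi_1$ established in Proposition \ref{pro: yo que se}, namely
\[
\frac{\partial\Psi_1}{\partial\n_a}=T_a\Psi_1+\Big(q_a-(\Delta_a\beta)(\n_a\cdot\boldsymbol\Lambda)\Big)\Psi_1,
\]
by stripping off the scalar prefactor that connects $\Psi_1$ with $P$. Concretely, using the eigenvalue properties $\Lambda_{\kk}\chi(\x)=\x^{\kk}\chi(\x)$ of Proposition \ref{pro:Lambda} together with Definition \ref{lostiempos}, one has $W_0\chi=\Exp{t(\x)}\prod_{b=1}^D(\n_b\cdot\x-q_b)^{m_b}\chi$, so that $\Psi_1=W_1\chi=SW_0\chi=\phi\,P$, where $\phi(\x,t,\m)\coloneq\Exp{t(\x)}\prod_{b=1}^D(\n_b\cdot\x-q_b)^{m_b}$ is the scalar function appearing in Proposition \ref{pro.baker.expressions}.

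First I would compute the action of the two operators on $\phi$. Since $\phi$ is scalar it commutes with every matrix, and the product structure factorizes under both $T_a$ and the normal time derivative. Shifting $m_a\mapsto m_a+1$ gives $T_a\phi=(\n_a\cdot\x-q_a)\phi$, hence $T_a\Psi_1=(\n_a\cdot\x-q_a)\phi\,(T_aP)$. For the derivative, $\partial t(\x)/\partial t_b=x_b$ yields $\partial\phi/\partial\n_a=\sum_b n_{a,b}x_b\,\phi=(\n_a\cdot\x)\phi$, so by the Leibniz rule $\partial\Psi_1/\partial\n_a=(\n_a\cdot\x)\phi\,P+\phi\,\partial P/\partial\n_a$.

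Substituting these into the $\Psi_1$-system, cancelling the nonvanishing scalar $\phi$ as a function identity in $\x$, and using $q_aP-(\n_a\cdot\x)P=-(\n_a\cdot\x-q_a)P$, I obtain
\[
\frac{\partial P}{\partial\n_a}=(\n_a\cdot\x-q_a)(T_a-1)P-(\Delta_a\beta)(\n_a\cdot\boldsymbol\Lambda)P=(\n_a\cdot\x-q_a)\Delta_aP-(\Delta_a\beta)(\n_a\cdot\boldsymbol\Lambda)P,
\]
which is the matrix form of the claim.

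The last and only delicate step is the block projection. The matrix $\beta$, and hence $\Delta_a\beta$, is strictly block subdiagonal with $(\Delta_a\beta)_{[k],[k-1]}=\Delta_a\beta_{[k]}$, while $\n_a\cdot\boldsymbol\Lambda$ is strictly block superdiagonal; their product is therefore block diagonal, its $[k],[k]$ block being $(\Delta_a\beta_{[k]})(\n_a\cdot\boldsymbol\Lambda)_{[k-1],[k]}$. Reading off the $[k]$ component of the matrix identity above then gives the stated formula, the connector being the block $(\n_a\cdot\boldsymbol\Lambda)_{[k-1],[k]}$ (note that the superdiagonal placement of $\Lambda_a$ forces this index, the symbol $(\n_a\cdot\boldsymbol\Lambda)_{[k+1],[k]}$ vanishing identically). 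I expect no genuine obstacle beyond careful bookkeeping: the two subtle points are that $T_a$ acts multiplicatively on the $\m$-dependent factor of $\phi$ whereas the normal derivative acts on its $t$-dependent factor, and that the product $(\Delta_a\beta)(\n_a\cdot\boldsymbol\Lambda)$ collapses to a single diagonal block.
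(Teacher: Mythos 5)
Your proof is correct and takes essentially the same route as the paper: the paper's one-line proof consists precisely of substituting the expression $\Psi_1=\Exp{t(\x)}\big[\prod_{b=1}^D(\n_b\cdot\x-q_b)^{m_b}\big]P$ into the difference-differential system $\frac{\partial\Psi}{\partial\n_a}=T_a\Psi+\big(q_a-(\Delta_a\beta)(\n_a\cdot\boldsymbol\Lambda)\big)\Psi$, which is exactly what you carry out, scalar-prefactor cancellation and block bookkeeping included. You are also right that the index in the printed statement is a typo: dimensional consistency and the strictly superdiagonal structure of $\n_a\cdot\boldsymbol\Lambda$ force the connector to be $(\n_a\cdot\boldsymbol\Lambda)_{[k-1],[k]}$, the block $(\n_a\cdot\boldsymbol\Lambda)_{[k+1],[k]}$ being identically zero.
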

\begin{proof}
  Introduce the form of the Baker function $\Psi_1$ given in  Definition \ref{definition:Baker_functions} into Proposition \ref{pro: yo que se}.
\end{proof}
The compatibility of the linear systems satisfied by the Baker functions imply
\begin{theorem}\label{teo3}
The following equation for $\beta$ holds
\begin{align}\label{difference-differential-beta}
 \Delta_b\left[\frac{\partial \beta}{\partial \boldsymbol{n}_a}+(\Delta_a \beta)\big(q_a+(\boldsymbol{n_a}\cdot \boldsymbol{\Lambda}) \beta \big) \right]\boldsymbol{n_b}\cdot \boldsymbol{\Lambda}&=
  \Delta_a\left[\frac{\partial \beta}{\partial \boldsymbol{n}_b}+(\Delta_b \beta)\big(q_b+(\boldsymbol{n_b}\cdot \boldsymbol{\Lambda}) \beta \big) \right]\boldsymbol{n_a}\cdot \boldsymbol{\Lambda}
\end{align}
%\begin{multline}
%  \frac{\partial\Delta_a\beta_{[k]}}{\partial \n_b}(\n_a\cdot\boldsymbol\Lambda)_{[k-1],[k]}
% -\Delta_{b}\big((\Delta_a\beta_{[k]})(\n_a\cdot\boldsymbol\Lambda)_{[k-1],[k]}\beta_{[k]}(\n_b\cdot\boldsymbol\Lambda)_{[k-1],[k]}\big)
%  \\=\frac{\partial\Delta_{b}\beta_{[k]}}{\partial \n_a}(\n_b\cdot\boldsymbol\Lambda)_{[k-1],[k]}
%    -\Delta_a\big((\Delta_{b}\beta_{[k]})(\n_b\cdot\boldsymbol\Lambda)_{[k-1],[k]}\beta_{[k]}(\n_a\cdot\boldsymbol\Lambda)_{[k-1],[k]}\big).
%\end{multline}
\end{theorem}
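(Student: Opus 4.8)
The plan is to read \eqref{difference-differential-beta} as the Zakharov--Shabat compatibility condition of the mixed difference--differential linear system of Proposition \ref{pro: yo que se}, and to extract its closed form by the congruence technique of Proposition \ref{pro:asymptotic-module}. Write $B_a\coloneq q_a-(\Delta_a\beta)(\n_a\cdot\boldsymbol\Lambda)$, which is block diagonal, and introduce the operators $\mathcal A_a\coloneq \frac{\partial}{\partial\n_a}-T_a$; since $T_a$ acts on the discrete variable $\m$ while $\frac{\partial}{\partial\n_b}$ acts on the continuous times, the two families commute, $[\mathcal A_a,\mathcal A_b]=0$. Proposition \ref{pro: yo que se} asserts precisely that both wave matrices solve $\mathcal A_aW_i=B_aW_i$ for $i=1,2$ and every $a$.

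First I would cross-apply the operators. Applying $\mathcal A_b$ to $\mathcal A_aW_i=B_aW_i$, using $\frac{\partial}{\partial\n_b}W_i=B_bW_i+T_bW_i$ and the homomorphism rule $T_b(B_aW_i)=(T_bB_a)(T_bW_i)$, gives $\mathcal A_b\mathcal A_aW_i=(\frac{\partial B_a}{\partial\n_b}+B_aB_b)W_i-(\Delta_bB_a)(T_bW_i)$. Interchanging $a$ and $b$ and subtracting, the commutativity $[\mathcal A_a,\mathcal A_b]=0$ yields the operator identity
\[
\Big(\frac{\partial B_a}{\partial\n_b}-\frac{\partial B_b}{\partial\n_a}+[B_a,B_b]\Big)W_i=(\Delta_bB_a)(T_bW_i)-(\Delta_aB_b)(T_aW_i),\qquad i=1,2 .
\]
This is the raw compatibility relation: the left factor is block diagonal, while the right-hand side still carries the genuine shifts $T_aW_i$, $T_bW_i$.

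Next I would convert this wave-matrix identity into a closed relation for $\beta$ by congruence. Using $W_1G=W_2$ from \eqref{totoW} together with the asymptotics of Proposition \ref{pro:wave}, namely $T_aW_1=\big((\I+T_a\beta)(\n_a\cdot\boldsymbol\Lambda)-q_a\big)W_0+\mathfrak{l}W_0$ and $\frac{\partial}{\partial\n_a}W_1=(\I+\beta)(\n_a\cdot\boldsymbol\Lambda)W_0+\mathfrak{l}W_0$, I would assemble from the displayed combination a matrix $R_1$ lying in $\mathfrak{l}W_0$ and the companion $R_2$ built from $W_2$ lying in $\mathfrak{u}$, satisfying $R_1G=R_2$. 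Proposition \ref{pro:asymptotic-module} then forces $R_1=R_2=0$. Reading off the surviving leading block, the first subdiagonal closed up on the right by the factor $\n_b\cdot\boldsymbol\Lambda$ (respectively $\n_a\cdot\boldsymbol\Lambda$) that compensates the varying block sizes, reproduces $\Phi_a\coloneq \frac{\partial\beta}{\partial\n_a}+(\Delta_a\beta)\big(q_a+(\n_a\cdot\boldsymbol\Lambda)\beta\big)$ and the identity $(\Delta_b\Phi_a)(\n_b\cdot\boldsymbol\Lambda)=(\Delta_a\Phi_b)(\n_a\cdot\boldsymbol\Lambda)$, which is \eqref{difference-differential-beta}.

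The main obstacle is the non-commutativity of the discrete shift $T_a$ with the $\beta$-dependent coefficient $B_a$: the compatibility relation cannot be ``divided by $W_i$'' directly, being a difference--differential rather than a purely multiplicative operator identity. This is exactly what the asymptotic-module argument settles, trading the invertibility of $W_1$ for the triangular splitting into $\mathfrak{l}W_0$ and $\mathfrak{u}$; the secondary bookkeeping difficulty is tracking which block diagonal each term occupies, so that after right multiplication by the appropriate $\n_b\cdot\boldsymbol\Lambda$ only the first sub/super-diagonal blocks survive and close the equation.
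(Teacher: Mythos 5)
Your proof is correct and follows essentially the same route as the paper's: there, too, one writes $0=\bigl[\tfrac{\partial}{\partial \n_b}-T_b,\tfrac{\partial}{\partial \n_a}-T_a\bigr](W_1)$, substitutes the linear system of Proposition \ref{pro: yo que se}, and splits the resulting expression into block diagonals via the asymptotics of Proposition \ref{pro:wave}; the first superdiagonal yields the identity $(\Delta_a\Delta_b\beta)(\n_a\cdot\boldsymbol\Lambda)(\n_b\cdot\boldsymbol\Lambda)=(\Delta_b\Delta_a\beta)(\n_b\cdot\boldsymbol\Lambda)(\n_a\cdot\boldsymbol\Lambda)$ (a consistency check you omit, harmlessly), and the main diagonal yields \eqref{difference-differential-beta}.

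One caveat about your final step: the renewed appeal to Proposition \ref{pro:asymptotic-module} is redundant, and would be circular if it were actually load-bearing. Once Proposition \ref{pro: yo que se} is used for \emph{both} wave matrices, your compatibility relation in step four is already an exact identity (both sides coincide, so the combination vanishes identically), and there is nothing left for the congruence argument to prove. Conversely, you could not justify the claim that the assembled $R_1$ lies in $\mathfrak{l}W_0$ without already knowing the theorem: the asymptotics only show that, after the superdiagonal cancels, the combination equals (diagonal coefficient)$\,W_0+\mathfrak{l}W_0$, and the vanishing of that diagonal coefficient \emph{is} \eqref{difference-differential-beta}. The congruence technique belongs one level earlier, inside the proof of Proposition \ref{pro: yo que se} which you cite; the correct finish is precisely what your last sentence describes --- expand the exact identity by diagonals and read off the diagonal block, where the subdiagonal $\Phi_a$ closes against $\n_b\cdot\boldsymbol\Lambda$ to give $(\Delta_b\Phi_a)(\n_b\cdot\boldsymbol\Lambda)=(\Delta_a\Phi_b)(\n_a\cdot\boldsymbol\Lambda)$.
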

\begin{proof}
  See Appendix \ref{proof11}
\end{proof}
A remarkable fact regarding \eqref{difference-differential-beta} is that is a nonlinear partial differential-difference equation but for a fixed $k$ in the lattice, that is for the rectangular matrix $\beta_{[k]}\in\R^{|[k]|\times|[k-1]|}$.
\subsection{Second order flows}
  We introduce the diagonal matrices $V_{a,b}=\diag((V_{a,b})_{[0]},(V_{a,b})_{[1]},(V_{a,b})_{[2]},\dots)$
  \begin{align}\label{eta}
   V_{a,b}\coloneq & \frac{\partial \beta}{\partial t_a}\Lambda_b,&
      (V_{a,b})_{[k]}= &\frac{\partial \beta_{[k]}}{\partial t_a}(\Lambda_b)_{[k-1],[k]}, &
      U_{a,b}\coloneq & -V_{a,b}-V_{b,a}.%,& V_{(a,b)}\coloneq & V_{a,b}+V_{b,a}.
    \end{align}
%For convenience  within this and next sections and we have removed the division by $2!$ when we symmetrize 2-tensors and by
%$3!$ when we deal with symmetrization of 3-tensors.
\begin{pro}
  Both Baker functions $\Psi_1$ and $\Psi_2$ are solutions of
  \begin{align}\label{eq: linear.wave}
 \frac{\partial \Psi}{\partial t_{(a,b)}}&=\frac{\partial^2\Psi}{\partial t_a\partial t_b}+U_{a,b}\Psi.
  \end{align}
\end{pro}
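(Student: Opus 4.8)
The plan is to prove this by the same congruence (asymptotic module) argument used for Proposition \ref{pro: yo que se}, now applied to the second--order time $t_{(a,b)}=t_{\ee_a+\ee_b}$. For $i=1,2$ introduce
\[
R_i \coloneq \Big(\frac{\partial}{\partial t_{(a,b)}}-\frac{\partial^2}{\partial t_a\partial t_b}-U_{a,b}\Big)W_i,
\]
and aim to show $R_1=R_2=0$ by verifying the three hypotheses of Proposition \ref{pro:asymptotic-module}: $R_1\in\mathfrak{l}W_0$, $R_2\in\mathfrak{u}$, and $R_1G=R_2$. The compatibility $R_1G=R_2$ is immediate: $G$ is independent of $(t,\m)$ and $W_1G=W_2$ by Proposition \ref{pro:G_and_evolved_waves}, so applying the linear differential operator $\partial_{t_{(a,b)}}-\partial_a\partial_b$ and using that $U_{a,b}$ is a matrix gives $R_1G=R_2$. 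Once $R_1=R_2=0$ is secured, evaluating $R_1$ on $\chi$ and $R_2$ on $\chi^*$ (both $(t,\m)$-independent) shows that $\Psi_1=W_1\chi$ and $\Psi_2=W_2\chi^*$ both solve \eqref{eq: linear.wave}.

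The containment $R_2\in\mathfrak{u}$ is routine. From the linear system $\partial_{\q}W_2=(J_{\q})_+W_2$ one gets $\partial_{t_{(a,b)}}W_2=(J_{\ee_a+\ee_b})_+W_2$, and by the product rule $\partial_a\partial_b W_2=\big(\partial_a(J_b)_++(J_b)_+(J_a)_+\big)W_2$; both are block upper triangular. Since $U_{a,b}$ is block diagonal by its definition in \eqref{eta}, $U_{a,b}W_2$ is block upper triangular as well, whence $R_2\in\mathfrak{u}$.

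The real work is $R_1\in\mathfrak{l}W_0$, i.e. that $R_1W_0^{-1}$ is strictly block lower triangular. Here I would use $W_1=SW_0$ from \eqref{wave}, the fact that $\Lambda_a\Lambda_b=\Lambda_{\ee_a+\ee_b}$ commutes with $W_0$ (Proposition \ref{pro:Lambda}), $\partial_{\q}W_0=\Lambda_{\q}W_0$, and that $\partial_{\q}S$ is strictly block lower (as $S$ is block lower unitriangular). Writing each term in the form $(\text{matrix})\,W_0+\mathfrak{l}W_0$ gives $\partial_{t_{(a,b)}}W_1=S\Lambda_a\Lambda_b W_0+\mathfrak{l}W_0$ and
\[
\partial_a\partial_b W_1=\big(\partial_a\partial_b S+(\partial_b S)\Lambda_a+(\partial_a S)\Lambda_b+S\Lambda_a\Lambda_b\big)W_0+\mathfrak{l}W_0,
\]
while $U_{a,b}W_1=U_{a,b}S\,W_0$. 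It then suffices to show that the $(\cdot)_+$ part of
\[
S\Lambda_a\Lambda_b-\big(\partial_a\partial_b S+(\partial_b S)\Lambda_a+(\partial_a S)\Lambda_b+S\Lambda_a\Lambda_b\big)-U_{a,b}S
\]
vanishes. Using the subdiagonal splitting \eqref{diagonal splitting} of $S$, the term $S\Lambda_a\Lambda_b$ cancels between the two flows, $(\partial_a\partial_b S)_+=0$, and — because $\Lambda_a$ shifts one block up so only the first subdiagonal $\partial\beta$ of $\partial S$ can reach the diagonal — the cross terms reduce to $((\partial_b S)\Lambda_a)_+=(\partial_b\beta)\Lambda_a=V_{b,a}$ and $((\partial_a S)\Lambda_b)_+=(\partial_a\beta)\Lambda_b=V_{a,b}$, while $(U_{a,b}S)_+=U_{a,b}$. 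The remainder is $-V_{b,a}-V_{a,b}-U_{a,b}=0$ by the definition $U_{a,b}=-V_{a,b}-V_{b,a}$ in \eqref{eta}.

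The main obstacle, and indeed the whole point of the statement, is precisely this last cancellation: the coefficient $U_{a,b}$ is tailored so that the residual diagonal contributions of the two first--level derivatives acting successively are exactly killed. Everything else is careful bookkeeping of block positions (how $\Lambda_a,\Lambda_b$ shift the subdiagonals of $S$, $\partial S$ and $\partial^2 S$) together with the well-definedness of the semi-infinite products, which is harmless since all the relevant products are of block-triangular matrices. Having checked the three hypotheses, Proposition \ref{pro:asymptotic-module} forces $R_1=R_2=0$, completing the proof.
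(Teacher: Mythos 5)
Your proposal is correct and follows essentially the same route as the paper: the same congruence operators $R_i=\big(\partial_{(a,b)}-\partial_a\partial_b+V_{a,b}+V_{b,a}\big)W_i$, the same diagonal bookkeeping showing that the only surviving upper-triangular contributions are $V_{a,b}+V_{b,a}$, cancelled by the definition of $U_{a,b}$, and the same appeal to Proposition \ref{pro:asymptotic-module}. The only difference is that you spell out the hypotheses $R_1G=R_2$ and $R_2\in\mathfrak{u}$, which the paper leaves implicit or dismisses as obvious.
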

\begin{proof}
On the one hand, from \eqref{wave} we find
\begin{align*}
  \partial_{(a,b)}W_1&=(\partial_{a,b}S+S\Lambda_a\Lambda_b)W_0,\\
  \partial_a\partial_bW_1&=(\partial_a\partial_bS+\partial_a S\Lambda_b+\partial_bS\Lambda_a
  +S\Lambda_a\Lambda_b)W_0
\end{align*}
and therefore
$  (\partial_{(a,b)}-\partial_a\partial_b)W_1=-(\partial_a S\Lambda_b+\partial_bS\Lambda_a)W_0+\mathfrak{l}W_0$
  so that
  \begin{align*}
   \Big( \partial_{a,b}-\partial_a\partial_b+V_{a,b}+V_{b,a}\Big)W_1&\in\mathfrak{l}W_0.
  \end{align*}
  On the other hand, it is obvious that
    \begin{align*}
   \Big( \partial_{(a,b)}-\partial_a\partial_b+V_{a,b}+V_{b,a}\Big)W_2&\in\mathfrak{u}.
  \end{align*}
  Now, we apply Proposition \ref{pro:asymptotic-module} with
  \begin{align*}
    R_i&=  \Big( \partial_{(a,b)}-\partial_a\partial_b +V_{a,b}+V_{b,a}\Big)W_i, &i &=1,2,
  \end{align*}
  to get the result.
  \end{proof}
Observe that for $a=b$ \eqref{eq: linear.wave} reads
  \begin{align*}
 \frac{\partial \Psi_{[k]}}{\partial t^{(2)}_a}&=\frac{\partial^2\Psi_{[k]}}{\partial t_a^2}+(U_a)_{[k]}\Psi_{[k]}, & t^{(2)}\coloneq &t_{(a,a)},&
 U_a=&-2V_{a,a}.
  \end{align*}
  which is a time dependent one-dimensional Schr\"{o}dinger  type equation for the square matrices $\Psi_{[k]}$, the wave functions, and
  potential the square matrix $(U_a)_{[k]}$. Moreover, multidimensional matrix Schr\"{o}dinger equations appear if we look to other directions, thus given  $(a_1,\dots,a_d)\subset\{1,\dots,D\}$, $a_1<\dots<a_d$ we can look at the second order time flow generated by
    $\frac{\partial}{\partial t}\coloneq\frac{\partial}{\partial t_{a_1,a_1}}+\dots+\frac{\partial}{\partial t_{a_d,a_d}}$
  to get in terms of the $d$-dimensional nabla operator $\nabla\coloneq (\frac{\partial}{\partial t_{a_1}},\dots,\frac{\partial}{\partial t_{a_d}})^\top$, Laplacian $\Delta\coloneq\nabla^2=\frac{\partial^2}{\partial t_{a_1}^2}+\dots+\frac{\partial^2}{\partial t_{a_d}^2}$ and matrix potential  $U\coloneq U_{a_1,a_1}+\dots+U_{a_d,a_d}=2\nabla(\beta)\cdot\boldsymbol\Lambda$
    \begin{align*}
 \frac{\partial \Psi_{[k]}}{\partial t}&=\Delta \Psi_{[k]}+U_{[k]}\Psi_{[k]}.
  \end{align*}

\begin{cor}
  The MVOPR satisfy
  \begin{align*}
    \frac{\partial P_{[k]}}{\partial t_{(a,b)}}(\x)=\frac{\partial^2P_{[k]}}{\partial t_a\partial t_b}(\x)+x_a\frac{\partial P_{[k]}}{\partial t_b}(\x)+x_b\frac{\partial P_{[k]}}{\partial t_a}(\x)-\Big(\frac{\partial \beta_{[k]}}{\partial t_a}(\Lambda_b)_{[k-1],[k]}+\frac{\partial \beta_{[k]}}{\partial t_b}(\Lambda_a)_{[k-1],[k]}\Big)P_{[k]}(\x).
  \end{align*}
\end{cor}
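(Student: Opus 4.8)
The plan is to read the corollary off the linear equation \eqref{eq: linear.wave} for the Baker function $\Psi_1$, after inserting the explicit form of $\Psi_1$ in terms of the MVOPR. First I recall, from Proposition \ref{pro.baker.expressions} (equivalently from $\Psi_1=SW_0\chi$ together with the eigenvalue property \eqref{eigen} and Definition \ref{lostiempos}), that componentwise
\begin{align*}
 \Psi_1=\phi\, P,\qquad \phi(\x)\coloneq \Exp{t(\x)}\prod_{a=1}^D(\n_a\cdot\x-q_a)^{m_a}.
\end{align*}
All of the continuous-time dependence of the scalar factor $\phi$ sits in $\Exp{t(\x)}$, so Definition \ref{lostiempos} gives at once the two elementary identities
\begin{align*}
 \partial_a\phi=x_a\,\phi,\qquad \partial_{(a,b)}\phi=x_ax_b\,\phi,
\end{align*}
the second because $t_{(a,b)}=t_{\ee_a+\ee_b}$ and $\x^{\ee_a+\ee_b}=x_ax_b$ (consistently reducing to $x_a^2$ when $a=b$).

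Next I substitute $\Psi_1=\phi P$ into \eqref{eq: linear.wave} and expand with the Leibniz rule. The left-hand side becomes $\partial_{(a,b)}(\phi P)=x_ax_b\,\phi P+\phi\,\partial_{(a,b)}P$, while $\partial_a\partial_b(\phi P)=x_ax_b\,\phi P+x_b\,\phi\,\partial_aP+x_a\,\phi\,\partial_bP+\phi\,\partial_a\partial_bP$, to which the potential term $U_{a,b}\phi P$ is added on the right. The two copies of $x_ax_b\,\phi P$ cancel; dividing through by the scalar $\phi$ (which is not identically zero, so the resulting identity between polynomial matrix-valued functions of $\x$ holds everywhere) leaves
\begin{align*}
 \partial_{(a,b)}P=\partial_a\partial_bP+x_a\,\partial_bP+x_b\,\partial_aP+U_{a,b}P.
\end{align*}

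Finally I restrict to the $[k]$ block. The potential $U_{a,b}$ is the diagonal matrix of \eqref{eta}, so it acts blockwise, $(U_{a,b}P)_{[k]}=(U_{a,b})_{[k]}P_{[k]}$, and by \eqref{eta}
\begin{align*}
 (U_{a,b})_{[k]}=-\frac{\partial\beta_{[k]}}{\partial t_a}(\Lambda_b)_{[k-1],[k]}-\frac{\partial\beta_{[k]}}{\partial t_b}(\Lambda_a)_{[k-1],[k]}.
\end{align*}
Taking the $[k]$ block of the previous display and inserting this expression reproduces exactly the asserted formula. I do not expect a genuine obstacle here: the whole argument is a substitution followed by the Leibniz rule, and the only steps needing a little care are the identity $\Psi_1=\phi P$ (already available through Proposition \ref{pro.baker.expressions}), the cancellation and division by the scalar factor $\phi$, and the correct blockwise reading of the diagonal potential $U_{a,b}$.
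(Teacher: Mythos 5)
Your proof is correct and follows exactly the paper's route: the paper's own proof is the one-line instruction to insert the expression $\Psi_1=\Exp{t(\x)}\prod_{a=1}^D(\n_a\cdot\x-q_a)^{m_a}P$ from Proposition \ref{pro.baker.expressions} into the linear equation \eqref{eq: linear.wave}, which is precisely your substitution-plus-Leibniz argument. Your write-up merely makes explicit the details the paper leaves implicit (the identities $\partial_a\phi=x_a\phi$, $\partial_{(a,b)}\phi=x_ax_b\phi$, the cancellation and division by the scalar factor, and the blockwise reading of the diagonal potential $U_{a,b}$), all of which are handled correctly.
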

\begin{proof}
  Just introduce expressions for the Baker functions in Proposition  \ref{pro.baker.expressions} in the previous Proposition.
\end{proof}
We see that again only $k$-th  site of the lattice is involved in these linear equations and, consequently, its compatibility will lead to equations for the coefficients evaluated at that site.
These nonlinear equation for which $\beta_{[k]}$ is a solution are
  \begin{theorem}\label{teo4}
The following nonlinear partial differential equation
    \begin{align}\label{eq:beta}\begin{aligned}
      \partial_{(c,d)}(\partial_a\beta\Lambda_b+\partial_b\beta\Lambda_a)-\partial_{(a,b)}(\partial_c\beta\Lambda_d+\partial_d\beta\Lambda_c)
  =
   & \partial_a\partial_b(\partial_c\beta\Lambda_d+\partial_d\beta\Lambda_c)-  \partial_c\partial_d(\partial_a\beta\Lambda_b+\partial_b\beta\Lambda_a)\\
    &+    (\partial_b\partial_c\beta)(\Lambda_d\beta\Lambda_a-\Lambda_a\beta\Lambda_d)
    +(\partial_b\partial_d\beta)(\Lambda_c\beta\Lambda_a-\Lambda_a\beta\Lambda_c)
   \\& +(\partial_a\partial_c\beta)(\Lambda_d\beta\Lambda_b-\Lambda_b\beta\Lambda_d)
    +(\partial_a\partial_d\beta)(\Lambda_c\beta\Lambda_b-\Lambda_b\beta\Lambda_c)\\
&+ \big[ \partial_a\beta\Lambda_b+ \partial_b\beta\Lambda_a,\partial_c\beta\Lambda_d+\partial_d\beta\Lambda_c\big]
    \end{aligned}
    \end{align}
    is satisfied for $a,b,c,d\in\{1,\dots, D\}$.
  \end{theorem}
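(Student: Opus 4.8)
The plan is to obtain \eqref{eq:beta} as the compatibility (Zakharov--Shabat) condition of the two second order linear flows \eqref{eq: linear.wave} in the directions $(a,b)$ and $(c,d)$, isolating it via the congruence technique of Proposition \ref{pro:asymptotic-module}. Recall that the proof of \eqref{eq: linear.wave} establishes the \emph{exact} identities $\partial_{(a,b)}W_i=\big(\partial_a\partial_b+U_{a,b}\big)W_i$ for $i=1,2$, with $U_{a,b}=-V_{a,b}-V_{b,a}$ and $V_{a,b}=\partial_a\beta\,\Lambda_b$ as in \eqref{eta}. Writing $B_{a,b}\coloneq\partial_a\partial_b+U_{a,b}$, I would first use that $t_{(a,b)}$ and $t_{(c,d)}$ are independent coordinates, so $\partial_{(a,b)}$ and $\partial_{(c,d)}$ commute; applying them to $W_1$ in both orders and subtracting yields
\begin{align*}
\Big(\partial_{(a,b)}U_{c,d}-\partial_{(c,d)}U_{a,b}-[B_{a,b},B_{c,d}]\Big)W_1=0,
\end{align*}
where the bracket is the commutator of the differential operators $B_{a,b},B_{c,d}$ in the first level times, and the same operator sends $W_2$ into $\mathfrak u$, so Proposition \ref{pro:asymptotic-module} legitimates working with it.

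Next I would expand the commutator by the Leibniz rule,
\begin{align*}
[B_{a,b},B_{c,d}]=\big(\partial_a\partial_bU_{c,d}-\partial_c\partial_dU_{a,b}+[U_{a,b},U_{c,d}]\big)+(\partial_aU_{c,d})\partial_b+(\partial_bU_{c,d})\partial_a-(\partial_cU_{a,b})\partial_d-(\partial_dU_{a,b})\partial_c,
\end{align*}
so the operator splits into a zeroth order (multiplicative) part $\mathcal O_0$ and a genuine first order part in $t_a,t_b,t_c,t_d$. The key step is to remove the first order part: since the operator annihilates $W_1$ exactly, I would multiply on the right by $W_0^{-1}$ and substitute the asymptotic form of Proposition \ref{pro:wave}, namely $\partial_eW_1\,W_0^{-1}=\Lambda_e+\beta\Lambda_e+(\text{strictly block lower triangular})$, together with $W_1W_0^{-1}=S=\I+\beta+\cdots$. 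Because each $U_{a,b}$ is a subdiagonal $\partial_e\beta$ times a superdiagonal $\Lambda$, it and hence $\mathcal O_0$ are block diagonal, and the whole expression is banded; I would then read it off by block diagonals.

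The block superdiagonal part, built from the $\Lambda_e$ pieces of $\partial_eW_1W_0^{-1}$, should cancel identically using $\Lambda_b\Lambda_d=\Lambda_d\Lambda_b$ (Proposition \ref{pro:Lambda}) and the symmetry of the mixed partials $\partial_a\partial_c=\partial_c\partial_a$. The surviving block diagonal part is $\mathcal O_0$ plus the diagonal contributions $(\partial_aU_{c,d})\beta\Lambda_b+(\partial_bU_{c,d})\beta\Lambda_a-(\partial_cU_{a,b})\beta\Lambda_d-(\partial_dU_{a,b})\beta\Lambda_c$ coming from the $\beta\Lambda_e$ pieces. Substituting $U_{a,b}=-(\partial_a\beta\Lambda_b+\partial_b\beta\Lambda_a)$ throughout and regrouping by $\partial_a\partial_c\beta,\partial_a\partial_d\beta,\partial_b\partial_c\beta,\partial_b\partial_d\beta$ assembles exactly the cross terms $(\partial_a\partial_c\beta)(\Lambda_d\beta\Lambda_b-\Lambda_b\beta\Lambda_d)$ and their companions, together with the pure commutator $[\partial_a\beta\Lambda_b+\partial_b\beta\Lambda_a,\partial_c\beta\Lambda_d+\partial_d\beta\Lambda_c]$ and the second order terms $\partial_a\partial_b(\cdots)$ and $\partial_{(a,b)}(\cdots)$, producing \eqref{eq:beta}.

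I expect the main obstacle to be precisely this last bookkeeping of the first order differential terms: one must track how each $(\partial_\bullet U)\,\partial_\bullet W_1$ feeds into the block diagonal after the substitution $\partial_eW_1W_0^{-1}=\Lambda_e+\beta\Lambda_e+(\text{strictly lower})$, verify the clean cancellation of the block superdiagonal, and keep the signs straight when passing between $U$ and $\beta$, since it is exactly there that the characteristic commutator--type cross terms of \eqref{eq:beta} are generated.
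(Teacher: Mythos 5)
Your proposal is correct and is essentially the paper's own proof: both form the compatibility operator of the second order flows $\partial_{(a,b)}W=(\partial_a\partial_b+U_{a,b})W$ and $\partial_{(c,d)}W=(\partial_c\partial_d+U_{c,d})W$, act with it on $W_1$, substitute the expansions $\partial_eW_1=(\Lambda_e+\beta\Lambda_e)W_0+\mathfrak{l}W_0$ of Proposition \ref{pro:wave}, and decouple by block diagonals, with the superdiagonal part vanishing identically (by commutativity of the $\Lambda$'s and of mixed partials, encoded in \eqref{eta}) and the diagonal part yielding \eqref{eq:beta}. The only difference is presentational: the paper writes the compatibility operator as $\sum_j B_j\partial_j+A$ and decouples that, whereas you split it into its zeroth and first order parts before reading off the diagonals.
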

  \begin{proof}
 See Appendix \ref{proof12}.
  \end{proof}
  Observe that this equation decouples giving for each $k$  the same equation \eqref{eq:beta} up to the replacements $\beta\to\beta_{[k]}$ and $\Lambda_A\to(\Lambda_A)_{[k-1],[k]}$, $k=1,2,\dots$ and $A=a,b,c,d$.
    For the particular case $a=b=A$ and $c=d=B$ and use the notation $t_A=x$,  $t_B=y$, $t_{(A,A)}=$ and $t_{B,B}=t$ we get
%    \begin{align*}
%\partial_{(B,B)}(\partial_A\beta\Lambda_A)-\partial_{(A,A)}(\partial_B\beta\Lambda_B)
%  =
%   &  \partial_A\partial_B\Big(\partial_A\beta\Lambda_B- \partial_B\beta\Lambda_A\Big)+2 \big[ \partial_A\beta\Lambda_A,\partial_B\beta\Lambda_B\big] + 2(\partial_A\partial_B\beta)(\Lambda_B\beta\Lambda_A-\Lambda_A\beta\Lambda_B).
%    \end{align*}
     \begin{align*}
\frac{\partial^2\beta}{\partial t\partial x}\Lambda_A-\frac{\partial\beta}{\partial s\partial y}\Lambda_B
  =
   &\frac{\partial^3\beta}{\partial x^2\partial y}\Lambda_B- \frac{\partial^3\beta}{\partial x\partial y^2}\Lambda_A
   +2 \Big[\frac{\partial\beta}{\partial x}\Lambda_A,\frac{\partial\beta}{\partial y}\Lambda_B\Big] + 2\frac{\partial^2\beta}{\partial x\partial y}(\Lambda_B\beta\Lambda_A-\Lambda_A\beta\Lambda_B).
    \end{align*}

\subsection{Exploring third  order flows}

Associated with the third order times $t_{(a,b,c)}$  we introduce the following block diagonal matrices
\begin{align*}
  V_{a,b,c}=\diag((V_{a,b,c})_{[0]},(V_{a,b,c})_{[1]},(V_{a,b,c})_{[2]},\dots)
\end{align*}
  with
  \begin{align*}
  %  \label{eta1}
    V_{a,b,c}\coloneq &\frac{\partial\beta}{\partial t_a}[\beta,\Lambda_b]\Lambda_c, & ( V_{a,b,c})_{[k]} =&\frac{\partial\beta_{[k]}}{\partial t_a}\Big(\beta_{[k-1]}\big(\Lambda_b\big)_{[k-2],[k-1]}-\big(\Lambda_b\big)_{[k-1],[k]}\beta_{[k]}\Big)\big(\Lambda_c\big)_{[k-1],[k]},\\
    =& \frac{\partial\beta^{(2)}}{\partial t_a}\Lambda_b\Lambda_c
    -\frac{\partial\beta}{\partial t_a}\Lambda_b\beta\Lambda_c,& =&\frac{\partial\beta^{(2)}_{[k]}}{\partial t_a}
    \big(\Lambda_b\big)_{[k-2],[k-1]}-\frac{\partial\beta_{[k]}}{\partial t_a}\big(\Lambda_b\big)_{[k-1],[k]}\beta_{[k]}\big(\Lambda_c\big)_{[k-1],[k]}.
  \end{align*}
  Observe the use of Proposition \ref{otro mas};  we remark  that $( V_{a,b,c})_{[k]}$ depends on $\beta_{[k]}$ and $\beta^{(2)}_{[k]}$ only, coefficients of the MOVPR for the second and third higher degree monomials, $
  P_{[k]}(\x)=
  \chi_{[k]}(\x)+\beta_{[k]}\chi_{[k-1]}(\x)+\beta_{[k]}^{(2)}\chi_{[k-2]}(\x)+\cdots+\beta_{[k]}^{(k)}
  $.
   If we insist in using only the second higher total degree coefficient and not the third higher total degree coefficient there is price we must pay, now we involve two polynomials $P_{[k]}$ and $P_{[k-1]}$ --as we require of $\beta_{[k]}$ and $\beta_{[k-1]}$.
  Then
  \begin{pro}\label{latriple}
    The Baker functions $\Psi_1$ and $\Psi_2$ are both solutions of the third order linear differential equations
    \begin{align*}
      \frac{\partial\Psi}{\partial t_{(a,b,c)}}=&\frac{\partial^3\Psi}{\partial t_a\partial t_b\partial t_c}
      -V_{a,b}\frac{\partial\Psi}{\partial t_{c}} -V_{c,a}\frac{\partial\Psi}{\partial t_{b}} -V_{b,c}\frac{\partial\Psi}{\partial t_{a}}
            -\Big(\frac{\partial V_{a,b}}{\partial t_c}+\frac{\partial V_{b,c}}{\partial t_a}+\frac{\partial V_{c,a}}{\partial t_b}+ V_{a,b,c}+ V_{b,c,a}+ V_{c,b,a}\Big)\Psi.
    \end{align*}
  \end{pro}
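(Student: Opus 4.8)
The plan is to mimic the congruence argument already used for the first- and second-order flows and to invoke Proposition~\ref{pro:asymptotic-module}. Write $\mathcal L$ for the third-order operator appearing in the statement, namely $\mathcal L=\partial_{(a,b,c)}-\partial_a\partial_b\partial_c+V_{a,b}\partial_c+V_{c,a}\partial_b+V_{b,c}\partial_a+Z$, where $Z$ is the zeroth-order coefficient $\partial_cV_{a,b}+\partial_aV_{b,c}+\partial_bV_{c,a}+V_{a,b,c}+V_{b,c,a}+V_{c,b,a}$. The goal is to show $\mathcal L W_1\in\mathfrak lW_0$ and $\mathcal L W_2\in\mathfrak u$. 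Since $W_1G=W_2$ with $G$ constant (Proposition~\ref{pro:G_and_evolved_waves}), left differentiation and left multiplication by the diagonal $V$'s commute with right multiplication by $G$, so $(\mathcal L W_1)G=\mathcal L W_2$; then Proposition~\ref{pro:asymptotic-module} forces $\mathcal L W_1=\mathcal L W_2=0$. As $\chi$ and $\chi^*$ do not depend on the times, the definitions $\Psi_1=W_1\chi$, $\Psi_2=W_2\chi^*$ give $\mathcal L\Psi_1=(\mathcal L W_1)\chi=0$ and $\mathcal L\Psi_2=(\mathcal L W_2)\chi^*=0$, which is the asserted equation for both Baker functions.

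The inclusion $\mathcal L W_2\in\mathfrak u$ is immediate: $W_2=H\big(S^{-1}\big)^\top$ is block upper triangular, the $V$'s are diagonal, and $t$-derivatives preserve upper triangularity. The substance lies in $\mathcal L W_1\in\mathfrak lW_0$. First I would record $\partial_aW_0=\Lambda_aW_0$ and $\partial_{(a,b,c)}W_0=\Lambda_a\Lambda_b\Lambda_cW_0$, which follow from the definition of $W_0(t,\m)$ and $\Lambda_{\kk}\Lambda_{\ele}=\Lambda_{\kk+\ele}$ of Proposition~\ref{pro:Lambda}. Expanding $W_1=SW_0$ by the Leibniz rule, the top term $S\Lambda_a\Lambda_b\Lambda_cW_0$ cancels between $\partial_{(a,b,c)}W_1$ and $\partial_a\partial_b\partial_cW_1$, so that modulo $\mathfrak lW_0$ (i.e.\ discarding strictly block lower triangular prefactors of $W_0$) one is left with $(\partial_{(a,b,c)}-\partial_a\partial_b\partial_c)W_1\equiv-\big[\partial_a\partial_bS\,\Lambda_c+\partial_a\partial_cS\,\Lambda_b+\partial_b\partial_cS\,\Lambda_a+\partial_aS\,\Lambda_b\Lambda_c+\partial_bS\,\Lambda_a\Lambda_c+\partial_cS\,\Lambda_a\Lambda_b\big]W_0$.

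I would then split $S=\I+\beta+\beta^{(2)}+\cdots$ by subdiagonals and extract the diagonal and first-superdiagonal survivors of each summand, using that $\beta$ sits on the first subdiagonal and the $\Lambda$'s on the first superdiagonal. This yields first-superdiagonal pieces $\partial_\bullet\beta\,\Lambda_\bullet\Lambda_\bullet$ and diagonal pieces $\partial_\bullet\partial_\bullet\beta\,\Lambda_\bullet$ and $\partial_\bullet\beta^{(2)}\,\Lambda_\bullet\Lambda_\bullet$. The first-order corrections are treated the same way: since $V_{a,b}=\partial_a\beta\,\Lambda_b$ is diagonal and $\partial_cW_1=(\partial_cS+S\Lambda_c)W_0$, one obtains $V_{a,b}\partial_cW_1\equiv(V_{a,b}\Lambda_c+V_{a,b}\beta\Lambda_c)W_0\pmod{\mathfrak lW_0}$, and cyclically. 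After using $\Lambda_a\Lambda_c=\Lambda_c\Lambda_a$ the first-superdiagonal terms cancel outright, and the second-derivative diagonal terms $\partial_\bullet\partial_\bullet\beta\,\Lambda_\bullet$ are killed by the logarithmic-derivative part $\partial_cV_{a,b}+\partial_aV_{b,c}+\partial_bV_{c,a}$ of $Z$, since $\partial_cV_{a,b}=\partial_c\partial_a\beta\,\Lambda_b$.

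The decisive and delicate step is the remaining purely diagonal sector, which pits the terms $\partial_\bullet\beta^{(2)}\,\Lambda_\bullet\Lambda_\bullet$ against the quadratic survivors $V_{x,y}\beta\Lambda_z=\partial_x\beta\,\Lambda_y\beta\Lambda_z$. Here I would invoke Proposition~\ref{otro mas} in the form $\partial_a\beta^{(2)}=\partial_a\beta\,\beta$, which is exactly what recombines $\partial_a\beta^{(2)}\Lambda_b\Lambda_c-\partial_a\beta\,\Lambda_b\beta\Lambda_c$ into $\partial_a\beta\,[\beta,\Lambda_b]\,\Lambda_c=V_{a,b,c}$, and likewise for the cyclic images. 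The main obstacle is precisely this matching: because $\beta$ does not commute with the shift matrices, the order of the indices inside each $V_{\bullet,\bullet,\bullet}$ is significant, and one must pair each first-order survivor $V_{x,y}\beta\Lambda_z$ with the correctly ordered triple-indexed matrix so that every diagonal term cancels. Verifying this cancellation term by term—rather than any manipulation of the operator $\mathcal L$—is where essentially all the computational effort is concentrated; once it is done, $\mathcal L W_1\in\mathfrak lW_0$ holds and the congruence lemma closes the argument.
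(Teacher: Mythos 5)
Your proposal is correct and follows essentially the same route as the paper's own proof in Appendix \ref{prooftriple}: the same congruence (asymptotic module) argument via Proposition \ref{pro:asymptotic-module} applied to $R_iW_i$ with $W_1G=W_2$, the same expansion of $W_1=SW_0$ by subdiagonals with cancellation of the superdiagonal and second-derivative terms, and the same use of Proposition \ref{otro mas} (i.e.\ $\partial_a\beta^{(2)}=\partial_a\beta\,\beta$) to recombine the surviving diagonal terms into the commutator matrices $V_{\bullet,\bullet,\bullet}$. No essential difference in method or in the key lemmas invoked.
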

\begin{proof}
See Appendix \ref{prooftriple}.
\end{proof}

For $a=b=c$ in terms of $t^{(3)}_a\coloneq t_{(a,a,a)}$ and
\begin{align*}
  \tilde U_a\coloneq -3\Big(\frac{\partial^2\beta}{\partial t_a^2}\Lambda_a+\frac{\partial\beta^{(2)}}{\partial t_a}\Lambda_a^2
    -\frac{\partial\beta}{\partial t_a}\Lambda_a\beta\Lambda_a\Big),
\end{align*}
the Baker functions satisfies
  \begin{align*}
      \frac{\partial\Psi}{\partial t^{(3)}_a}=&\frac{\partial^3\Psi}{\partial t_a^3}
      +\frac{3}{2}U_a\frac{\partial\Psi}{\partial t_{a}}+\tilde U_a\Psi.
    \end{align*}
% As for the multidimensional Schr\"{o}dinger equations we look to other directions, thus given  $(a_1,\dots,a_d)\subset\{1,\dots,D\}$, $a_1<\dots<a_d$ we can look at the second order time flow generated by
%    $\frac{\partial}{\partial t^{(3)}}\coloneq\frac{\partial}{\partial t_{a_1,a_1,a_1}}+\dots+\frac{\partial}{\partial t_{a_d,a_d,a_d}}$
%  to get in terms of the $d$-dimensional nabla operator $\nabla\coloneq (\frac{\partial}{\partial t_{a_1}},\dots,\frac{\partial}{\partial t_{a_d}})^\top$,
%   and matrix potential matrix $\tilde U=\tilde U_{a_1}+\dots+\tilde U_{a_d}$
%    \begin{align*}
%        \frac{\partial\Psi}{\partial t^{(3)}}=&\sum_{i=1}^d\frac{\partial^3\Psi}{\partial t_{a_i}^3}
%      +\frac{3}{2}\sum_{i=1}^dU_{a_i}\frac{\partial\Psi}{\partial t_{a_i}}+\tilde U\Psi.
%        \end{align*}
Therefore, in terms of the differential operators
   \begin{align}
   \label{Lax pair}\begin{aligned}
    \mathcal L_a&\coloneq\frac{\partial^2}{\partial t_a^2}+U_a, & U_a&=-2\frac{\partial \beta}{\partial t_a}\Lambda_a,\\
 \mathcal P_b &\coloneq\frac{\partial^3}{\partial t_b^3}+\frac{3}{2}U_b\frac{\partial}{\partial t_{a}}+\tilde U_b,&\tilde U_b &= -3\frac{\partial^2\beta}{\partial t_b^2}\Lambda_b-3\frac{\partial\beta^{(2)}}{\partial t_b}\Lambda_b^2
    +3\frac{\partial\beta}{\partial t_b}\Lambda_b\beta\Lambda_b,
   \end{aligned}
  \end{align}
  the wave matrix fufills
  \begin{align*}
    \frac {\partial W_1}{\partial t^{(2)}_a}&= \mathcal L_a(W_1),&
     \frac {\partial W_1}{\partial t^{(3)}_b}&= \mathcal P_b(W_1),
      \end{align*}
Hence, the following  compatibility equations  holds
\begin{align*}
 R_{ab}(W_1)&=0, &R_{ab}\coloneq \frac {\partial \mathcal L_a}{\partial t^{(3)}_b}-\frac {\partial \mathcal P_b}{\partial t^{(2)}_a}+\big[\mathcal L_a,\mathcal P_b\big] .
\end{align*}
Observe that
\begin{align*}
 R_{ab}= -3\frac{\partial U_a}{\partial t_b}\frac{\partial^2}{\partial t_b^2}+3\frac{\partial U_b}{\partial t_a}\frac{\partial^2}{\partial t_a\partial t_b}+
    2\frac{\partial \tilde U_b}{\partial t_a}\frac{\partial}{\partial t_a}+\Big(-\frac{3}{4}\frac{\partial U_b}{\partial t^{(2)}_a}+\frac{3}{2}\frac{\partial^2U_b}{\partial t_a^2} -3\frac{\partial^2 U_a}{\partial t_b^2}+\frac{3}{2}[U_a,U_b]\Big)\frac{\partial}{\partial t_b}\\+
    \frac{1}{2}\frac{\partial U_a}{\partial t^{(3)}_b}-\frac{1}{2}\frac{\partial\tilde U_b}{\partial t^{(2)}_a}+\frac{\partial^2\tilde U_b}{\partial t_a^2}-\frac{\partial^3 U_a}{\partial t_b^3}-\frac{3}{2}U_b\frac{\partial U_a}{\partial t_b}+[U_a,\tilde U_b]
\end{align*}
is a matrix second order partial differential operator in $t_a,t_b$. Its action on the wave matrix can be split into diagonals, $\big(R_{ab}(W)\big)W_0^{-1}$ is a block Hessenberg matrix with all its block superdiagonals equal to zero but for the first and second ones.\footnote{It is a consequence of $\partial_aW_1=(\Lambda_a+\beta\Lambda_a)W_0+\mathfrak uW_0$ and
$\partial_a\partial_bW_1=\big(\Lambda_a\Lambda_b+\beta\Lambda_a\Lambda_b+\frac{\partial\beta}{\partial t_a}\Lambda_b+\frac{\partial\beta}{\partial t_b}\Lambda_a+\beta^{(2)}\Lambda_a\Lambda_b\big)W_0+\mathfrak u W_0$} Thus, we should ask these two superdiagonals as well to the main diagonal to cancel; i.e.,
%\begin{align}
%  \label{KP1}0=&-\frac{\partial U_a}{\partial t_b} \Lambda_b^2+ \frac{\partial U_b}{\partial t_a}\Lambda_a\Lambda_b,\\
%  \label{KP2}0=&  -3\frac{\partial U_a}{\partial t_b}\beta\Lambda_b^2+3\frac{\partial U_b}{\partial t_a}\beta\Lambda_a\Lambda_b+    2\frac{\partial \tilde U_b}{\partial t_a}\Lambda_a+
%    \Big(-\frac{3}{2}\frac{\partial U_b}{\partial t^{(2)}_a}+\frac{3}{2}\frac{\partial^2U_b}{\partial t_a^2} -3\frac{\partial^2 U_a}{\partial t_b^2}+\frac{3}{2}[U_a,U_b]\Big)\Lambda_b,\\
% \label{KP3}0=&\begin{multlined}[t]
%  -3\frac{\partial U_a}{\partial t_b}  \Big(2\frac{\partial\beta}{\partial t_b}\Lambda_b+\beta^{(2)}\Lambda_b^2\Big)
% +3\frac{\partial U_b}{\partial t_a}\Big(\frac{\partial\beta}{\partial t_a}\Lambda_b+\frac{\partial\beta}{\partial t_b}\Lambda_a+\beta^{(2)}\Lambda_a\Lambda_b\Big)
% +    2\frac{\partial \tilde U_b}{\partial t_a}\beta\Lambda_a\\[3pt]+
%    \Big(-\frac{3}{2}\frac{\partial U_b}{\partial t^{(2)}_a}+\frac{3}{2}\frac{\partial^2U_b}{\partial t_a^2} -3\frac{\partial^2 U_a}{\partial t_b^2}+\frac{3}{2}[U_a,U_b]\Big)\beta\Lambda_b
%    + \frac{\partial U_a}{\partial t^{(3)}_b}-\frac{\partial\tilde U_b}{\partial t^{(2)}_a}+\frac{\partial^2\tilde U_b}{\partial t_a^2}-\frac{\partial^3 U_a}{\partial t_b^3}-\frac{3}{2}U_b\frac{\partial U_a}{\partial t_b}+[U_a,\tilde U_b]
% \end{multlined}
%  \end{align}
\begin{pro}
  The matrices $\beta$ and $\beta^{(2)}$ are subject to
\begin{align*}
 \frac{\partial^2 \beta^{(2)}}{\partial t_a \partial t_b}\Lambda_a&=
 \frac{\partial}{\partial t_b}\left[\frac{\partial \beta}{\partial t_a} \Lambda_a \beta-
 \frac{1}{2}\frac{\partial^2 \beta}{\partial t_a^2}+\frac{1}{4}\frac{\partial \beta}{\partial t_a^{(2)}} \right],\\
 0&=3\frac{\partial^2}{\partial t_b^2}\left[\frac{1}{2}\frac{\partial \beta }{\partial t_a^{(2)}}-\frac{\partial^2 \beta }{\partial t_a^{2}}+2\frac{\partial \beta }{\partial t_a}\Lambda_a \beta \right]\Lambda_b
 +\frac{\partial}{\partial t_a}\left[2\frac{\partial^3 \beta }{\partial t_b^{3}}-\frac{\partial \beta }{\partial t_b^{(3)}}
 +\left(\frac{\partial \beta }{\partial t_b}\Lambda_b \beta-\frac{\partial \beta^{(2)}\Lambda_b }{\partial t_b} \right)\Lambda_b \beta\right]\Lambda_a\\
 &+3\frac{\partial}{\partial t_b}\left[\left(2\frac{\partial \beta }{\partial t_a}\Lambda_a \beta^{(2)}+
 \frac{1}{2}\frac{\partial \beta^{(2)}}{\partial t_a^{(2)}}-\frac{\partial^2 \beta^{(2)}}{\partial t_a^2}\right)\Lambda_b^2
 -2\frac{\partial \beta}{\partial t_b}\Lambda_b \frac{\partial \beta}{\partial t_a}\Lambda_a \right]\\
&+3 \frac{\partial \beta}{\partial t_b}\Lambda_b \left[\frac{\partial^2 \beta}{\partial t_a \partial t_b}-
 2\frac{\partial \beta}{\partial t_a}\Lambda_a \beta -\frac{1}{2}\frac{\partial \beta}{\partial t_a}\right]\Lambda_b
 -6\frac{\partial^2 \beta}{\partial t_b \partial t_a} \Lambda_b \beta^{(2)}\Lambda_a \Lambda_b.
 \end{align*}
\end{pro}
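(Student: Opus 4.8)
The plan is to read off both identities from the vanishing of the block diagonals of $R_{ab}(W_1)$, in the same spirit as the second order case treated above. The compatibility identity $R_{ab}(W_1)=0$ is already at hand: the wave matrix solves simultaneously the second order flow $\frac{\partial W_1}{\partial t^{(2)}_a}=\mathcal L_a(W_1)$ and the third order flow $\frac{\partial W_1}{\partial t^{(3)}_b}=\mathcal P_b(W_1)$, these being the specializations $a=b$ of \eqref{eq: linear.wave} and $a=b=c$ of Proposition \ref{latriple}, so that equality of the mixed partials $\frac{\partial^2 W_1}{\partial t^{(2)}_a\partial t^{(3)}_b}$ forces the exact operator identity $R_{ab}(W_1)=0$ with $R_{ab}=\frac{\partial \mathcal L_a}{\partial t^{(3)}_b}-\frac{\partial \mathcal P_b}{\partial t^{(2)}_a}+[\mathcal L_a,\mathcal P_b]$. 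The same manipulation applied to $W_2$ produces a block upper triangular object, so Proposition \ref{pro:asymptotic-module} is available should one prefer to argue the vanishing through the asymptotic module rather than directly.

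The second step is a dressing computation. Writing $W_1=SW_0$ and using $\frac{\partial W_0}{\partial t_c}=\Lambda_c W_0$, every time derivative occurring in $R_{ab}$ can be pushed through $W_0$, so that $R_{ab}(W_1)=\mathcal R_{ab}[S]\,W_0$ for an explicit semi-infinite matrix $\mathcal R_{ab}[S]$ built from the shifts $\Lambda_a,\Lambda_b$, the time derivatives of $S$, and the coefficient matrices $U_a,U_b,\tilde U_a,\tilde U_b$ defined in \eqref{eta} and \eqref{Lax pair}. Since $W_0$ is invertible, $R_{ab}(W_1)=0$ is equivalent to $\mathcal R_{ab}[S]=0$, and I would insert the explicit form of $R_{ab}$ recorded above into this identity.

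Third, I would expand $\mathcal R_{ab}[S]$ by block diagonals through $S=\I+\beta+\beta^{(2)}+\cdots$ and the shift algebra of Proposition \ref{pro:Lambda}, in which $\beta$ sits on the first block subdiagonal and $\beta^{(2)}$ on the second. By construction of the third order flow the block superdiagonals of $\mathcal R_{ab}[S]$ beyond the second cancel identically, and I expect the block main diagonal to collapse, after repeated use of $\Lambda_a\Lambda_b=\Lambda_b\Lambda_a$, to the long second identity of the statement. The first block subdiagonal, on which the terms $\frac{\partial^2\beta^{(2)}}{\partial t_a\partial t_b}\Lambda_a$, $\frac{\partial^2\beta}{\partial t_a^2}$ and $\frac{\partial\beta}{\partial t_a}\Lambda_a\beta$ all land, should yield
\[
\frac{\partial^2\beta^{(2)}}{\partial t_a\partial t_b}\Lambda_a=\frac{\partial}{\partial t_b}\Big[\frac{\partial\beta}{\partial t_a}\Lambda_a\beta-\tfrac12\frac{\partial^2\beta}{\partial t_a^2}+\tfrac14\frac{\partial\beta}{\partial t_a^{(2)}}\Big],
\]
once the third order contributions hidden in $\tilde U_a,\tilde U_b$ are rewritten in terms of $\beta$ and $\beta^{(2)}$ alone by means of the differential relations of Proposition \ref{otro mas}.

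The main obstacle will be the band bookkeeping. The operator $R_{ab}$ displayed above is a long second order matrix differential operator, and each of its summands --- especially those carrying the non-constant coefficients $U$ and $\tilde U$, which are themselves first and second subdiagonal dressings of powers of $\Lambda$ --- spreads across several block diagonals once acted on $SW_0$. Correctly separating the pieces that fall on the main diagonal from those on the first subdiagonal, aligning the varying block sizes via $\Lambda_{\kk}\Lambda_{\ele}=\Lambda_{\kk+\ele}$ and $\Lambda_b^\top\Lambda_a=\Lambda_a\Lambda_b^\top$ from Proposition \ref{pro:Lambda}, and systematically eliminating $\frac{\partial\beta^{(2)}}{\partial t_c}$ in favour of $\frac{\partial\beta}{\partial t_c}\,\beta$ through Proposition \ref{otro mas}, is the delicate and error-prone heart of the computation; the integrability built into the flows guarantees that it closes, but verifying the precise coefficients $-\tfrac12$, $\tfrac14$ and the full sign pattern of the second identity is where the real care is needed.
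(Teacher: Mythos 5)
Your overall strategy is exactly the paper's: the compatibility identity $R_{ab}(W_1)=0$ for the $t_a^{(2)}$ and $t_b^{(3)}$ flows, the dressing $R_{ab}(W_1)=\mathcal R_{ab}[S]\,W_0$ with $\mathcal R_{ab}[S]=0$, and extraction of the stated identities by block diagonals, using Proposition \ref{otro mas} to stay within $\beta$ and $\beta^{(2)}$. Your identification of the \emph{second} (long) identity with the vanishing of the main block diagonal is also correct, and the second superdiagonal does vanish identically by $[\Lambda_a,\Lambda_b]=0$.

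The gap is your assignment of the \emph{first} identity to the first block subdiagonal; followed literally, this step fails. On the first subdiagonal of $\mathcal R_{ab}[S]$ the second-order pieces of $R_{ab}$ pick up the third subdiagonal of $S$ through $S\Lambda_b^2$ and $S\Lambda_a\Lambda_b$, producing the underived terms $-3\frac{\partial U_a}{\partial t_b}\beta^{(3)}\Lambda_b^2+3\frac{\partial U_b}{\partial t_a}\beta^{(3)}\Lambda_a\Lambda_b=6\frac{\partial^2\beta}{\partial t_a\partial t_b}\bigl(\Lambda_a\beta^{(3)}\Lambda_b-\Lambda_b\beta^{(3)}\Lambda_a\bigr)\Lambda_b$, which do not cancel for $a\neq b$; so the first-subdiagonal equation involves $\beta^{(3)}$ and cannot be the displayed identity. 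What misled you is that every term of the first identity has net shift $-1$; but in the compatibility operator these terms occur right-multiplied by $\Lambda_b^2$: the first \emph{superdiagonal} equation is precisely $E_1\Lambda_b^2=0$, where $E_1$ is the difference of the two sides of the first identity. Indeed, on shift $+1$ only the blocks $\I$, $\beta$, $\beta^{(2)}$ of $S$ can enter (the coefficients $U_a,U_b,\tilde U_b$ are block diagonal, $\partial S$ is strictly lower triangular, and $\partial S\,\Lambda$ starts at shift $0$), and after the cancellations allowed by $\Lambda_a\Lambda_b=\Lambda_b\Lambda_a$ everything collects into $E_1\Lambda_b^2=0$; right-multiplying by $(\Lambda_b^\top)^2$ and using $\Lambda_b\Lambda_b^\top=\I$ from Proposition \ref{pro:Lambda} gives $E_1=0$. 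This is what the paper means by requiring the two superdiagonals as well as the main diagonal to cancel; with this correction (read the first identity off the first superdiagonal, not the first subdiagonal) the rest of your plan goes through as in the paper.
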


\section{Linear isometry invariant measures and MOVPR}\label{6}
 In this section we consider orthogonal transformations  $R\in \operatorname{O}(\R^D)$; i.e., linear isometries $R:\R^D\to\R^D$ preserving the dot or scalar product:  $R\boldsymbol u\cdot R\boldsymbol v=\boldsymbol u\cdot \boldsymbol v$, $\forall \boldsymbol u,\boldsymbol v\in\R^D$. For the matrix $[R]_B$ in the canonical basis $B=\{\ee_1,\dots,\ee_D\}$ of $\R^D$  of the orthogonal endomorphism means $[R]_B^\top=[R]_B^{-1}$. Given such an orthogonal transformation
$  \x\rightarrow R\x$,
we assume the linear \emph{isometry} invariance condition
$  \dd\mu(\x)=\dd\mu(R\x)$.\footnote{The measure $\mu$ is said to be invariant under $R$ if for every measurable set $A\subset\R^D$ we have $\mu \left( R^{-1} (A) \right) = \mu (A)$.}

\subsection{Symmetric powers of a linear isometry. Orthonormal basis and biorthogonal systems}
 What is the action of this linear isometry in the set of MVOPR? or putting it in other equivalent terms, how do it act of the corresponding symmetric tensor powers?
Given any set  of linear transformations $\{f_i\}_{i=1}^m\in\text{End}(\R^D)$ one can construct a map $f_1\odot \dots\odot f_m\in\operatorname{End}\left(\big(\R^D\big)^{\odot k}\right)$ such that
\begin{align*}
  (f_1\odot \dots\odot f_m)(\boldsymbol u_1\odot\dots\odot \boldsymbol u_m)=&f_1(\boldsymbol u_1)\odot\dots\odot f_m(\boldsymbol u_m), & \forall \boldsymbol u_i&\in\R^D.
\end{align*}
In this manner we introduce the $k$-th symmetric power of the endomorphism $R$ acting on symmetric tensor powers, $R^{\odot k}\in\operatorname{End} \left(\big(\R^D\big)^{\odot k}\right)$ and,  moreover,  a diagonal block endomorphism in the symmetric algebra, $\mathscr R\coloneq \diag(1,\mathscr R_{[1]},\mathscr R_{[2]},\dots)=1\oplus\mathscr R_{[1]}\oplus\mathscr R_{[2]}\oplus\cdots\in\operatorname{End}\big(\operatorname{S}(\R^D)\big)$, with its diagonal blocks given by $\mathscr R_{[k]}\coloneq R^{\odot k}$. For a given  invertible endomorphism $R$, with inverse $R^{-1}$,  the corresponding endomorphism $\mathscr R$ in the symmetric algebra is invertible with inverse $(\mathscr R^{-1})_{[k]}=(R^{-1})^{\odot k}$.
The $\operatorname S^k(\R^D)$ is equipped with a natural scalar product $\langle\cdot,\cdot\rangle^{(k)}$ given  in terms of permanents, see Appendix \ref{spowers}. In particular, for decomposable symmetric tensors \eqref{permanents}
\begin{align*}
  \langle R^{\odot k}(\boldsymbol u_1\odot\dots\odot \boldsymbol u_k), R^{\odot k}(\boldsymbol v_1\odot\dots\odot \boldsymbol v_k) \rangle^{(k)}=&\frac{1}{k!}\operatorname{perm}\PARENS{\begin{matrix}
R \boldsymbol u_1\cdot R\boldsymbol v_1 &\cdots& R\boldsymbol u_1\cdot R\boldsymbol v_k\\
 \vdots & & \vdots\\
 R \boldsymbol u_k\cdot R\boldsymbol v_1 &\cdots &R\boldsymbol u_k\cdot R\boldsymbol v_k
  \end{matrix}}\\
  =&\frac{1}{k!}\operatorname{perm}\PARENS{\begin{matrix}
 \boldsymbol u_1\cdot \boldsymbol v_1 &\cdots& \boldsymbol u_1\cdot \boldsymbol v_k\\
 \vdots & & \vdots\\
  \boldsymbol u_k\cdot \boldsymbol v_1 &\cdots &R\boldsymbol u_k\cdot \boldsymbol v_k
  \end{matrix}}\\=&  \langle \boldsymbol u_1\odot\dots\odot \boldsymbol u_k), \boldsymbol v_1\odot\dots\odot \boldsymbol v_k \rangle^{(k)}
\end{align*}
Thus, the $k$-th tensor power $R^{\odot k}$ is an orthogonal transformation in $\big\{\operatorname{S}^k(\R^D),\langle\cdot,\cdot\rangle^{(k)}\big\}$. At this point we stress that care must be taken when we express this fact in terms of matrices. Observe that the  \emph{canonical} basis $\{\ee^{\q_i}\}_{i=1}^{|[k]|}$, with $\ee^{\q_i}=\ee_1^{\odot \alpha_{i,1}}\odot\dots\odot\ee_D^{\odot \alpha_{i,D}}$, is an orthogonal set for $\langle\cdot,\cdot\rangle^{(k)}$ but not and orthonormal basis. In fact,  we know that, see Appendix \ref{spowers}, $
  \|\ee^{\q_i}\|^{2}={k \choose \q_i}^{-1}$, being the metric matrix in this basis  the inverse of the multinomial matrix.
Let us find the matrix representing  $R^{\odot k}$ in the canonical basis $B_c=\{\ee^{\q_i}\}_{i=1}^{|[k]|}$; we proceed to compute
\begin{align*}
  R^{\odot k}\ee^{\q_i}&= (R\ee_1)^{\odot \alpha_{i,1}}\odot\dots\odot(R\ee_D)^{\odot \alpha_{i,D}}\\
  &=\big(\sum_{j=1}^DR_{j_1,1}\ee_{j}\big)^{\odot \alpha_{i,1}}\odot\dots\odot\big(\sum_{j=1}^DR_{j_1,1}\ee_{j}\big)^{\odot \alpha_{i,D}}\\
  &=\Big[\sum_{|\q'|=\alpha_{i,1}}{\alpha_{i,1}\choose\q'}\prod_{j=1}^D R_{j,1}^{\alpha'_{j}}\ee^{\alpha'}\Big]\odot\cdots\odot
  \Big[\sum_{|\q'|=\alpha_{i,D}}{\alpha_{i,D}\choose\q'}\prod_{j=1}^D R_{j,D}^{\alpha'_{j}}\ee^{\alpha'}\Big]\\
  &=\sum_{j=1}^{|[k]|}(\mathscr R_{[k]})_{\q_j,\q_i}\ee^{\q_j},
\end{align*}
which gives the matrix $  [\mathscr R_{[k]}]_{B_c}=\big[(\mathscr R_{[k]})_{\q_i,\q_j}\big]$. Then, as the transformation preserves the scalar product with metric matrix  given by $\mathcal M_{[k]}^{-1}$, the matrix in the canonical basis of the $k$-th symmetric tensor power satisfies
\begin{align}\label{ortocano}
[\mathscr R_{[k]}]_{B_c}^\top\mathcal M_{[k]}^{-1}[\mathscr R_{[k]}]_{B_c}=\mathcal M_{[k]}^{-1}.
\end{align}
Instead of the canonical basis $B_c$ we could consider the orthonormal linear basis $B=\{\boldsymbol u_i\}_{i=1}^{|[k]|}$ with the normalized vectors $  \boldsymbol u_{i}=\Big(\sqrt{k\choose \q_i}\Big)^{-1}\ee^{\q_i}$.
In this basis the matrix for $R^{\odot k}$ is
$  [\mathscr R_{[k]}]_B=\mathcal M_{[k]}^{-1/2}[\mathscr R_{[k]}]_{B_c}\mathcal M_{[k]}^{1/2}$
is an orthogonal matrix; i.e.,
$ [\mathscr R]_{B}^\top=[\mathscr R]_{B}^{-1}=[\mathscr R^{-1}]_{B}$. Another orthogonal  basis is $\tilde B_c=\{\tilde\ee^{\q_i}\}_{i=1}^{|[k]|}$, with
$\tilde\ee^{\q_i}={k\choose \q_i}^{-1}\ee^{\q_i}$,  that
despite not being orthonormal, forms with the canonical basis a biorthogonal system, i.e. $\langle \tilde\ee^{\q_i},\ee^{\q_j}\rangle^{(k)}=\delta_{i,j}$.
The matrix representing  $\mathscr R_{[k]}$ in this orthogonal basis is
\begin{align*}
  \eta_{R,[k]}\coloneq [\mathscr R_{[k]}]_{\tilde B_c}=&\mathcal M_{[k]}^{-1}[\mathscr R_{[k]}]_{B_c}\mathcal M_{[k]}\\
  =&\mathcal M_{[k]}^{-1/2}[\mathscr R_{[k]}]_{B}\mathcal M_{[k]}^{1/2},
\end{align*}
and in the symmetric algebra we have the corresponding block diagonal matrix
\begin{align}\label{varsigma}
  \eta_R=&\mathcal M^{-1}[\mathscr R]_{B_c}\mathcal M.
\end{align}
which satisfies
$\eta_{R}^\top\mathcal M\eta_{R}=\mathcal M$. Here $[\mathscr R]_{B_c}\coloneq \diag([\mathscr R_{[0]}]_{B_c},[\mathscr R_{[1]}]_{B_c},\dots)$ and $[\mathscr R]_{B}\coloneq \diag([\mathscr R_{[0]}]_{B},[\mathscr R_{[1]}]_{B},\dots)$.
Observe that
\begin{align*}
\eta_{[R]^\top}=  \eta_{R^{-1}}=\eta_R^{-1}=&\mathcal M^{-1/2}[\mathscr R^{-1}]_{B}\mathcal M^{1/2}=\mathcal M^{-1/2}[\mathscr R]_{B}^\top\mathcal M^{1/2}
  =(\mathcal M^{1/2}[\mathscr R]_{B}\mathcal M^{-1/2})^\top\\=&\mathcal M^{-1}(\mathcal M^{-1/2}[\mathscr R]_{B}\mathcal M^{1/2})^\top\mathcal M\\=&\mathcal M^{-1}\eta_R^\top\mathcal M,
\end{align*}
and also that, as $B_c$ and $\tilde B_c$ forms a biorthogonal system,  we have the following relations
\begin{align}\label{etarelations}
  \eta_R^\top=&[\mathscr R]_{B_c}^{-1}, & \eta_R^{-1}=&[\mathscr R]_{B_c}^\top.
\end{align}

\subsection{Applications to monomials and shift matrices}
We now see how the above developments apply to the monomials $\chi$ and the shift matrices $\Lambda$ introduced previously.
\begin{pro}
We have
  \begin{align}
\notag\chi(R\x)&=\eta_R\chi(\x),\\
  R\n\cdot\boldsymbol\Lambda&=\eta_R (\n\cdot\boldsymbol\Lambda) \eta_R^{-1}.\label{a invertir}
 \end{align}
\end{pro}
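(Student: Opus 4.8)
The plan is to establish the two identities separately, the first serving as the workhorse for the second. First I would prove $\chi(R\x)=\eta_R\chi(\x)$ block by block using the identification of monomials with symmetric tensor powers. Proposition \ref{chi-symmetric} gives $\chi_{[k]}(\x)=(\mathcal M_{[k]})^{-1}[\x^{\odot k}]_{B_c}$, so the whole matter reduces to understanding how $\x^{\odot k}$ transforms under $\x\mapsto R\x$. Since $R\x=R(x_1\ee_1+\dots+x_D\ee_D)$ and the $k$-th symmetric power acts by $R^{\odot k}(\boldsymbol u_1\odot\dots\odot\boldsymbol u_k)=R\boldsymbol u_1\odot\dots\odot R\boldsymbol u_k$, one has the key multiplicativity $(R\x)^{\odot k}=R^{\odot k}(\x^{\odot k})=\mathscr R_{[k]}(\x^{\odot k})$. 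Passing to coordinates in the canonical basis $B_c$ and using $[\x^{\odot k}]_{B_c}=\mathcal M_{[k]}\chi_{[k]}(\x)$, which is just \eqref{chi-symmetric power}, yields
\[
\chi_{[k]}(R\x)=(\mathcal M_{[k]})^{-1}[\mathscr R_{[k]}]_{B_c}\,\mathcal M_{[k]}\,\chi_{[k]}(\x).
\]
Recognizing the right-hand factor as $\eta_{R,[k]}$ via the definition \eqref{varsigma} closes this step, and stacking the blocks gives $\chi(R\x)=\eta_R\chi(\x)$.

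For the second identity I would first invert the first: replacing $\x$ by $R^{-1}\x$ in $\chi(R\x)=\eta_R\chi(\x)$ gives $\eta_R^{-1}\chi(\x)=\chi(R^{-1}\x)$. Then I would test the candidate matrix $\eta_R(\n\cdot\boldsymbol\Lambda)\eta_R^{-1}$ against the vector $\chi(\x)$, using the eigenvalue property \eqref{eigen}, namely $(\n\cdot\boldsymbol\Lambda)\chi(\y)=(\n\cdot\y)\chi(\y)$, with $\y=R^{-1}\x$. The orthogonality $R^{-1}=R^\top$ converts the eigenvalue into $\n\cdot R^{-1}\x=R\n\cdot\x$, so that
\[
\eta_R(\n\cdot\boldsymbol\Lambda)\eta_R^{-1}\chi(\x)=(R\n\cdot\x)\,\eta_R\chi(R^{-1}\x)=(R\n\cdot\x)\chi(\x)=(R\n\cdot\boldsymbol\Lambda)\chi(\x),
\]
where the last equality is again the eigenvalue property. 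Thus both sides of \eqref{a invertir} act identically on $\chi(\x)$ for every $\x$.

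The final, and only genuinely non-formal, step is to promote this equality of actions to an equality of matrices. Here I would invoke the band structure: $\n\cdot\boldsymbol\Lambda$ has only its first block superdiagonal nonzero and $\eta_R$ is block diagonal, so both $\eta_R(\n\cdot\boldsymbol\Lambda)\eta_R^{-1}$ and $R\n\cdot\boldsymbol\Lambda$ are supported on the first block superdiagonal. Consequently the identity $A\chi(\x)=B\chi(\x)$ reads block-row-wise as $A_{[k],[k+1]}\chi_{[k+1]}(\x)=B_{[k],[k+1]}\chi_{[k+1]}(\x)$; since the entries of $\chi_{[k+1]}$ are the pairwise distinct monomials of total degree $k+1$, which are linearly independent as polynomials, each row forces the corresponding coefficients to coincide, giving $A_{[k],[k+1]}=B_{[k],[k+1]}$ for all $k$ and hence $A=B$. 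I expect this bookkeeping with the linear independence of monomials across the semi-infinite band to be the main point requiring care, whereas the two preceding displays are essentially formal manipulations of the symmetric-power identity and the eigenvalue relation.
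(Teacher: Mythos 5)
Your proof is correct and follows essentially the same route as the paper's: block-wise use of Proposition \ref{chi-symmetric} together with $(R\x)^{\odot k}=\mathscr R_{[k]}(\x^{\odot k})$ for the first identity, and then testing $\eta_R(\n\cdot\boldsymbol\Lambda)\eta_R^{-1}$ against $\chi(\x)$ via $\eta_R^{-1}\chi(\x)=\chi(R^{-1}\x)$, the eigenvalue property \eqref{eigen}, and orthogonality $\n\cdot R^{-1}\x=R\n\cdot\x$ for the second. The only difference is that you make explicit the final passage from equality of actions on $\chi(\x)$ for all $\x$ to equality of matrices (via the common first-superdiagonal band structure and linear independence of the monomials), a step the paper compresses into ``which holds $\forall \x\in\R^D$ so that the result follows.''
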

\begin{proof}
  To prove the first relation notice that form  \eqref{chi-symmetric power} we get
  \begin{align*}
  \chi_{[k]}(R\x)=&\mathcal M_{[k]}^{-1}[(R\x)^{\odot k}]_{B_c}=\mathcal M_{[k]}^{-1}[\mathscr R_{[k]}]_{B_c}[\x^{\odot k}]_{B_c}=\mathcal M_{[k]}^{-1}[\mathscr R_{[k]}]_{B_c}\mathcal M_{[k]}\chi_{[k]}(\x)\\=&\eta_{R,[k]}\chi_{[k]}(\x).
  \end{align*}
 For the second formula we observe that \begin{align*}\eta_R(\n\cdot\boldsymbol\Lambda)\eta_R^{-1}\chi(\x)=&\eta_R(\n\boldsymbol\cdot\boldsymbol\Lambda)\chi(R^{-1}\x)=(\n\cdot R^{-1}\x)\eta_R\chi(R^{-1}x)=(R\n\cdot \x)\chi(\x)\\=&(R\n\cdot \boldsymbol\Lambda)\chi(\x),
  \end{align*}which holds $\forall \x\in\R^D$ so that the result follows.
\end{proof}
When $R\in\mathfrak S_D\subset \operatorname{O}(\R^D)$ we also have $ \chi^*(R\x)=\eta_R\chi^*(\x)$.
We know that $\Lambda_a$ has no left inverse but it does have a right inverse, its transpose, $\Lambda_a\Lambda_a^\top=\I$. In this paper we have derived a number of results with $\n\cdot\boldsymbol\Lambda$ and sometimes, for example in Proposition \ref{betaH2step} and Proposition \ref{ostias}, is useful to find the right inverse of $\n\cdot\boldsymbol\Lambda$ .

\begin{pro}\label{invirtiendo 1}
  Given any vector $\n\in\R^D$ find $R\in\operatorname{O}(\R^D)$ such that $R\ee_a=\n$. Then, a right inverse of $ (\n\cdot\boldsymbol\Lambda)$ is
  $\eta_R\Lambda_a^\top\eta_R^{-1}$; i.e.,
  \begin{align*}
    (\n\cdot\boldsymbol\Lambda)\eta_R\Lambda_a^\top\eta_R^{-1}=\I.
  \end{align*}
\end{pro}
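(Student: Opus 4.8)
The plan is to reduce the claim to the conjugation identity \eqref{a invertir} together with the right-inverse property of the shift matrices recorded in Proposition \ref{pro:Lambda}. First I would dispose of the existence of the transformation $R$: since any $R\in\operatorname{O}(\R^D)$ preserves norms and $\|\ee_a\|=1$, an orthogonal $R$ with $R\ee_a=\n$ exists exactly when $\n$ is a unit vector. Assuming this (or normalising $\n$), one completes $\n$ to an orthonormal basis of $\R^D$ and lets $R$ be the isometry sending the canonical basis to it, arranged so that $\ee_a\mapsto\n$.

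Next I would specialise the conjugation formula \eqref{a invertir} to the coordinate direction. Writing $\n'$ for a generic vector, the identity reads $R\n'\cdot\boldsymbol\Lambda=\eta_R(\n'\cdot\boldsymbol\Lambda)\eta_R^{-1}$; evaluating at $\n'=\ee_a$ and using $\ee_a\cdot\boldsymbol\Lambda=\Lambda_a$ together with $R\ee_a=\n$ gives
\begin{align*}
\n\cdot\boldsymbol\Lambda=\eta_R\,\Lambda_a\,\eta_R^{-1}.
\end{align*}
With this rewriting the verification becomes a one-line computation: substituting and cancelling the inner $\eta_R^{-1}\eta_R$,
\begin{align*}
(\n\cdot\boldsymbol\Lambda)\,\eta_R\Lambda_a^\top\eta_R^{-1}
=\eta_R\Lambda_a\eta_R^{-1}\eta_R\Lambda_a^\top\eta_R^{-1}
=\eta_R\,\Lambda_a\Lambda_a^\top\,\eta_R^{-1}.
\end{align*}

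Finally I would invoke the right-inverse relation $\Lambda_a\Lambda_a^\top=\I$, which is the case $\kk=\ee_a$ of $\Lambda_{\kk}(\Lambda_{\kk})^\top=\I$ in Proposition \ref{pro:Lambda}; this collapses the last expression to $\eta_R\,\I\,\eta_R^{-1}=\I$, which is the assertion. The only genuinely delicate point is the existence step: one must restrict to, or normalise to, $\|\n\|=1$, since no orthogonal $R$ can carry the unit vector $\ee_a$ to a vector of different length. Everything after that reduction is the routine conjugation cancellation above, so I expect the normalisation caveat to be the main thing worth flagging rather than any hard computation.
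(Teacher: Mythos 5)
Your proof is correct and takes essentially the same route as the paper's own: specialize the conjugation identity \eqref{a invertir} at $\ee_a$ to write $\n\cdot\boldsymbol\Lambda=\eta_R\Lambda_a\eta_R^{-1}$, then cancel and invoke $\Lambda_a\Lambda_a^\top=\I$ from Proposition \ref{pro:Lambda}. Your normalisation caveat --- that an orthogonal $R$ with $R\ee_a=\n$ exists only when $\|\n\|=1$, a point the paper's statement glosses over --- is a fair and worthwhile flag, but it does not change the substance of the argument.
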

\begin{proof}
  From \eqref{a invertir} we know that $\n\cdot\boldsymbol\Lambda=(R\ee_a)\cdot\boldsymbol\Lambda=\eta_R \Lambda_a\eta_R^{-1}$ but the right inverse of this matrix is $\eta_R\Lambda_a^\top\eta_R^{-1}$ and the result is proven.
\end{proof}
In our opinion a nicer result, as is does not depend on any alien isometry $R$, is
\begin{pro}\label{moorepenroseright}
A right inverse for $(\n\cdot\boldsymbol\Lambda)_{[k-1],[k]}$, $k>0$, is
\begin{align*}
 \mathcal M_{[k]}^{-1/2}\big( (\n\cdot\boldsymbol\Lambda)_{[k-1],[k]}\mathcal M_{[k]}^{-1/2}\big)^+=\mathcal M_{[k]}^{-1}((\n\cdot\boldsymbol\Lambda)_{[k-1],[k]}) ^\top\big(
 (\n\cdot\boldsymbol\Lambda)_{[k-1],[k]}\mathcal M_{[k]}^{-1} ( (\n\cdot\boldsymbol\Lambda)_{[k-1],[k]})^\top)
 \big)^{-1}
\end{align*}
\end{pro}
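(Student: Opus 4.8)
The plan is to reduce the claim to the standard Moore--Penrose right-inverse formula for a matrix of full row rank. Write $A\coloneq(\n\cdot\boldsymbol\Lambda)_{[k-1],[k]}\in\R^{|[k-1]|\times|[k]|}$ and recall from Appendix~\ref{appendix pseudoinverse} that if $M$ has full row rank then $M^+=M^\top(MM^\top)^{-1}$ and $MM^+=\I$, so that $M^+$ is a right inverse of $M$. Applying this to $M\coloneq A\mathcal M_{[k]}^{-1/2}$ and then left-multiplying the resulting right inverse by $\mathcal M_{[k]}^{-1/2}$ will produce a right inverse of $A$ itself; the remaining work is a short manipulation using that $\mathcal M_{[k]}$ is a positive diagonal matrix, together with a check that $A$ indeed has full row rank.

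The first and main point I would establish is this rank statement. By Proposition~\ref{pro:Lambda} the transpose $A^\top=(\n\cdot\boldsymbol\Lambda^\top)_{[k],[k-1]}$ is, in the language of Appendix~\ref{symmetric}, the matrix in the canonical basis of multiplication by the linear form $\n=\sum_{a=1}^D n_a\ee_a$ in the symmetric algebra, i.e.\ of the map $\operatorname{S}^{k-1}(\R^D)\to\operatorname{S}^k(\R^D)$, $p\mapsto \n\odot p$. Since $\n\neq 0$ and $\operatorname{S}(\R^D)$ is an integral domain (it is isomorphic to the polynomial ring in $D$ variables), multiplication by $\n$ is injective; hence $A^\top$ has full column rank $|[k-1]|$ and therefore $A$ has full row rank $|[k-1]|$. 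This is the step I expect to be the crux, as it is what guarantees the existence of a right inverse and the invertibility of the Gram matrix appearing in the formula.

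Then I would carry out the computation. Because $\mathcal M_{[k]}$ is diagonal with positive entries $\binom{k}{\q_j}$, the factor $\mathcal M_{[k]}^{-1/2}$ is symmetric and invertible, so $M=A\mathcal M_{[k]}^{-1/2}$ also has full row rank and $MM^\top=A\mathcal M_{[k]}^{-1}A^\top$ is the positive definite, hence invertible, Gram matrix required by the pseudo-inverse formula. Substituting $M=A\mathcal M_{[k]}^{-1/2}$ and using $(\mathcal M_{[k]}^{-1/2})^\top=\mathcal M_{[k]}^{-1/2}$ gives
\begin{align*}
M^+=M^\top(MM^\top)^{-1}=\mathcal M_{[k]}^{-1/2}A^\top\big(A\mathcal M_{[k]}^{-1}A^\top\big)^{-1},
\end{align*}
so that $\mathcal M_{[k]}^{-1/2}M^+=\mathcal M_{[k]}^{-1}A^\top(A\mathcal M_{[k]}^{-1}A^\top)^{-1}$, which is exactly the right-hand side of the asserted identity. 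Finally, the right-inverse property follows immediately, since $A\big(\mathcal M_{[k]}^{-1/2}M^+\big)=MM^+=\I_{[k-1]}$, completing the argument.
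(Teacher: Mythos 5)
Your proof is correct, but it reaches the key fact --- invertibility of the Gram matrix $(\n\cdot\boldsymbol\Lambda)_{[k-1],[k]}\mathcal M_{[k]}^{-1}\big((\n\cdot\boldsymbol\Lambda)_{[k-1],[k]}\big)^\top$ --- by a genuinely different route than the paper. The paper never argues via rank at all: it chooses an orthogonal $R$ with $R\ee_a=\n$, uses the intertwining relation $\n\cdot\boldsymbol\Lambda=\eta_R\Lambda_a\eta_R^{-1}$ together with $\eta_R^\top\mathcal M\eta_R=\mathcal M$ to reduce the Gram matrix to $\eta_R\Lambda_a\mathcal M^{-1}\Lambda_a^\top\eta_R^\top$, and then computes $(\Lambda_a)_{[k],[k+1]}\mathcal M_{[k+1]}^{-1}\big((\Lambda_a)_{[k],[k+1]}\big)^\top$ explicitly as the diagonal matrix $\mathcal M_{[k+1]_a}^{-1}$ of reciprocal multinomial coefficients supported on multi-indices containing $\ee_a$, which is manifestly invertible. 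You instead identify $\big((\n\cdot\boldsymbol\Lambda)_{[k-1],[k]}\big)^\top$ as the matrix of multiplication by the linear form $\n\cdot\x$ from homogeneous polynomials of degree $k-1$ to degree $k$, and invoke that the polynomial ring is an integral domain to get injectivity, hence full row rank and positive definiteness of the Gram matrix. Your argument is more elementary and self-contained: it avoids the isometry machinery entirely and works verbatim for any $\n\neq 0$, whereas the paper's reduction tacitly requires $\|\n\|=1$ (or a scaling remark) for an orthogonal $R$ with $R\ee_a=\n$ to exist. What the paper's computation buys in exchange is an explicit closed form for the Gram matrix as a conjugated diagonal multinomial matrix, which is stronger information than mere invertibility and fits the isometry-invariance theme of that section. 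Both proofs then conclude identically via the full-row-rank Moore--Penrose formula $M^+=M^\top(MM^\top)^{-1}$ from Appendix \ref{appendix pseudoinverse}.
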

\begin{proof}
  See Appendix \ref{proof moorepenroseright}.
\end{proof}

\subsection{Consequences of the measure symmetry}
\begin{pro}\label{pro13}
Whenever, for a given orthogonal endomorphism $R\in\operatorname{O}(\R^D)$, there is a symmetry in the measure of the form $\dd\mu(\x)=\dd\mu(R\x)$ we have
\begin{enumerate}
  \item The moment matrix satisfies
  \begin{align*}
  \eta_RG \eta_R^\top &=G.
 \end{align*}
  \item The factors of the Cholesky factorization \eqref{cholesky} are such that
  \begin{align}\label{symmetry-S}
    \eta_RS  \eta_R^{-1}&=S, &
   \eta_RH  \eta_R^\top&=H.
  \end{align}
  \item Moreover,
  \begin{align*}
\eta_{R,[k]}\beta_{[k]}=\beta_{[k]}\eta_{R,[k-1]}
 \end{align*}
 \end{enumerate}
\end{pro}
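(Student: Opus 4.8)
The plan is to work from the transformation law $\chi(R\x)=\eta_R\chi(\x)$ established just above together with the invariance of the measure. For the first assertion I would substitute this law directly into the definition of the moment matrix, writing
$\eta_R G\eta_R^\top=\int_\Omega \eta_R\chi(\x)\dd\mu(\x)\chi(\x)^\top\eta_R^\top=\int_\Omega \chi(R\x)\dd\mu(\x)\chi(R\x)^\top$.
The invariance hypothesis $\dd\mu(\x)=\dd\mu(R\x)$, read as $R_*\mu=\mu$ (equivalently $\mu(R^{-1}(A))=\mu(A)$, as in the footnote to the section), yields $\int_\Omega f(R\x)\dd\mu(\x)=\int_\Omega f(\y)\dd\mu(\y)$ for every integrable $f$. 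Applying this componentwise to the matrix-valued integrand $f(\x)=\chi(\x)\chi(\x)^\top$ collapses the right-hand side back to $G$, giving $\eta_R G\eta_R^\top=G$.

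For the second assertion I would exploit the uniqueness of the block Cholesky factorization guaranteed by Proposition \ref{pro:cholesky}. Set $\tilde S\coloneq \eta_R S\eta_R^{-1}$ and $\tilde H\coloneq \eta_R H\eta_R^\top$. Substituting $G=S^{-1}H(S^{-1})^\top$ into the identity just proved and inserting the identities $\eta_R^{-1}\eta_R$ and $\eta_R^\top\eta_R^{-\top}$ in the appropriate places gives $G=\eta_R G\eta_R^\top=\tilde S^{-1}\tilde H(\tilde S^{-1})^\top$. The decisive observation is that $\eta_R$ is block diagonal: conjugation by a block-diagonal matrix preserves block lower triangularity and sends the identity diagonal blocks of $S$ to themselves, so $\tilde S$ is again block lower unitriangular, and $\tilde H=\eta_R H\eta_R^\top$ is again block diagonal. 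Thus $\tilde S^{-1}\tilde H(\tilde S^{-1})^\top$ is a second Cholesky-type factorization of $G$, and uniqueness forces $\tilde S=S$ and $\tilde H=H$, which are exactly the relations \eqref{symmetry-S}.

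The third assertion then follows by reading off a single block. Since $\beta_{[k]}=S_{[k],[k-1]}$ and $\eta_R$ is block diagonal, the $([k],[k-1])$ block of the relation $\eta_R S\eta_R^{-1}=S$ reads $\eta_{R,[k]}\beta_{[k]}\eta_{R,[k-1]}^{-1}=\beta_{[k]}$, that is $\eta_{R,[k]}\beta_{[k]}=\beta_{[k]}\eta_{R,[k-1]}$, as claimed.

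The only genuinely delicate point is the change of variables in the first step: one must confirm that the measure-invariance hypothesis indeed licenses $\int f\circ R\,\dd\mu=\int f\,\dd\mu$ for the matrix-valued integrand (it does, entry by entry, by the standard pushforward argument, granting that $\Omega$ is $R$-invariant, which is implicit in the symmetry assumption). Everything after that is formal manipulation of block-structured semi-infinite matrices, resting solely on the block-diagonality of $\eta_R$ and the uniqueness of the block Cholesky factorization.
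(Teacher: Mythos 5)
Your proposal is correct and follows essentially the same route as the paper's own proof: invariance of the measure combined with the law $\chi(R\x)=\eta_R\chi(\x)$ gives $\eta_R G\eta_R^\top=G$, then block-diagonality of $\eta_R$ plus uniqueness of the block Cholesky factorization yields \eqref{symmetry-S}, and the relation for $\beta_{[k]}$ is the $([k],[k-1])$ block of $\eta_R S\eta_R^{-1}=S$. Your write-up is in fact slightly more careful than the paper's (which leaves the pushforward step and the preservation of the unitriangular/diagonal block structure implicit), but the underlying argument is identical.
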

\begin{proof}
See Appendix \ref{proof13}.
\end{proof}
Observe that while we can write for $[\eta_R,S]=0$ for the quasi-tau matrices we can write $[\eta_R,H\mathcal M]=0$ ($\big(\eta_R^\top\big)^{-1}=\mathcal M\eta_R\mathcal M^{-1}=[\mathscr R]_{B_c}^{-1}$).

Now, we are ready to deduce how the MOVPR of a symmetric measure behaves under the symmetry of the measure
\begin{pro}\label{pro14}
Let us assume that for an orthogonal transformation $R\in\operatorname{O}(\R^D)$  the measure satisfies $\dd\mu(\x)=\dd\mu(R\x)$, then:
\begin{enumerate}
 \item The  MOVPR  fulfill
 \begin{align}\label{symmetri-MOVPR}
 P\big(Rx\big)= &\eta_RP(\x).
 \end{align}
\item  The Jacobi  matrices are such that
 \begin{align}\label{symmetry-jacobi}
R\n\cdot\boldsymbol J=  \eta_R (\n\cdot\boldsymbol J)  \eta_R^{-1}.
 \end{align}
 \item The Christoffel--Darboux kernel remains invariant
 \begin{align}\label{RCD}
   K^{(\ell)}(R\x,R\y)&=  K^{(\ell)}(\x,\y).
 \end{align}
\end{enumerate}
 \end{pro}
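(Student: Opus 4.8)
The plan is to deduce all three items from the equivariance package already assembled: the transformation rule $\chi(R\x)=\eta_R\chi(\x)$ together with the intertwining $R\n\cdot\boldsymbol\Lambda=\eta_R(\n\cdot\boldsymbol\Lambda)\eta_R^{-1}$ from the previous subsection (the latter being \eqref{a invertir}), and the two symmetry relations of Proposition~\ref{pro13}, namely $\eta_R S\eta_R^{-1}=S$ and $\eta_R H\eta_R^{\top}=H$ recorded in \eqref{symmetry-S}. Since $\eta_R=\diag(1,\eta_{R,[1]},\eta_{R,[2]},\dots)$ is block diagonal, each of these identities restricts automatically to the degree-$k$ sector, so I may pass freely between the full semi-infinite statements and their $[k]$-th blocks.

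For item (1) I would simply compute $P(R\x)=S\chi(R\x)=S\eta_R\chi(\x)$ and then use $[\eta_R,S]=0$, which is the first relation in \eqref{symmetry-S}, to move $\eta_R$ to the front, giving $S\eta_R\chi(\x)=\eta_R S\chi(\x)=\eta_R P(\x)$. For item (2) I would first record the dressing identity $\n\cdot\boldsymbol J=S(\n\cdot\boldsymbol\Lambda)S^{-1}$, immediate from Definition~\ref{vectores} and \eqref{def:J}. Then, using \eqref{a invertir}, $R\n\cdot\boldsymbol J=S(R\n\cdot\boldsymbol\Lambda)S^{-1}=S\eta_R(\n\cdot\boldsymbol\Lambda)\eta_R^{-1}S^{-1}$, and commuting $\eta_R$ past $S$ and $\eta_R^{-1}$ past $S^{-1}$ (both permitted by \eqref{symmetry-S}) turns this into $\eta_R\,S(\n\cdot\boldsymbol\Lambda)S^{-1}\eta_R^{-1}=\eta_R(\n\cdot\boldsymbol J)\eta_R^{-1}$, as claimed.

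For item (3) I would start from the bilinear form of the kernel in Definition~\ref{DEFCD}, $K^{(\ell)}(\x,\y)=\sum_{k=0}^{\ell-1}\big(P_{[k]}(\x)\big)^{\top}\big(H_{[k]}\big)^{-1}P_{[k]}(\y)$, substitute the relation $P_{[k]}(R\x)=\eta_{R,[k]}P_{[k]}(\x)$ proved in item (1), and regroup each summand as $\big(P_{[k]}(\x)\big)^{\top}\big(\eta_{R,[k]}^{\top}(H_{[k]})^{-1}\eta_{R,[k]}\big)P_{[k]}(\y)$. The one genuinely computational point is to convert the relation $\eta_R H\eta_R^{\top}=H$ into the conjugation identity $\eta_R^{\top}H^{-1}\eta_R=H^{-1}$: inverting the former gives $H^{-1}=(\eta_R^{\top})^{-1}H^{-1}\eta_R^{-1}$, and multiplying on the left by $\eta_R^{\top}$ and on the right by $\eta_R$ yields exactly the latter. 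With this each summand is left unchanged, and the two kernels agree termwise.

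I expect no serious obstacle: the whole proposition is a formal consequence of the equivariance already established, and the only thing to watch is the bookkeeping of transposes and inverses when moving from $\eta_R H\eta_R^{\top}=H$ to the $H^{-1}$ conjugation used in item (3). Accordingly I would present the three parts in a single short proof, establishing (1) first since both (2) and (3) invoke it.
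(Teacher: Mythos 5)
Your proposal is correct and follows essentially the same route as the paper: item (1) via $[\eta_R,S]=0$, item (2) by combining \eqref{a invertir} with the commutation of $\eta_R$ and $S$ (you merely run the chain of equalities in the opposite direction), and item (3) by the conjugation identity $\eta_R^\top H^{-1}\eta_R=H^{-1}$, which is precisely the detail behind the paper's remark that \eqref{RCD} follows directly from Definition~\ref{DEFCD} and \eqref{symmetry-S}.
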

\begin{proof}
see Appendix \ref{proof14}.
\end{proof}
When $R\in\mathfrak S_D$, as $x_1\cdots x_D$ is invariant under permutation of coordinates, we also have $C\big(R\x\big)=\eta_RC(\x)$ and $Q^{(\ell)}(R\x,R\y)=  Q^{(\ell)}(\x,\y)$.

\subsection{Compatible Toda flows}

We now request that the symmetry  $\dd\mu(\x)=\dd\mu(R\x)$ is preserved under the integrable deformations discussed previously. As before we distinguish two cases, the discrete case and the continuous case.
For the first situation we have
\begin{pro}
If $\n$ is invariant under the transformation $R\in\operatorname{O}(\R^D)$, $\n=R\n$, then the corresponding discrete transformation preserves the isometry invariance of the measure
\begin{align*}
  T\dd\mu(\x)=T\dd\mu(R\x).
\end{align*}
\end{pro}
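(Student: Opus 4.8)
The plan is to unwind the definition of the elementary Darboux transformation $T$ and then apply the two hypotheses --- the isometry invariance $\dd\mu(\x)=\dd\mu(R\x)$ and the fixed-point condition $\n=R\n$ --- directly to the linear factor that $T$ inserts into the measure. Recall that the discrete flow acts by $T\dd\mu(\x)=(\n\cdot\x-q)\dd\mu(\x)$, so the entire statement reduces to comparing the scalar weight $\n\cdot\x-q$ evaluated at $\x$ and at $R\x$, together with the behaviour of $\dd\mu$ under $R$.

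First I would evaluate the transformed measure at $R\x$: by definition $T\dd\mu(R\x)=(\n\cdot R\x-q)\dd\mu(R\x)$. The second factor is controlled by the hypothesis on $\mu$, since $\dd\mu(R\x)=\dd\mu(\x)$. For the first factor I would rewrite $\n\cdot R\x$ using the fixed-point condition $\n=R\n$ to obtain $\n\cdot R\x=R\n\cdot R\x$, and then invoke the defining property of a linear isometry, $R\boldsymbol u\cdot R\boldsymbol v=\boldsymbol u\cdot\boldsymbol v$, with $\boldsymbol u=\n$ and $\boldsymbol v=\x$, to conclude $R\n\cdot R\x=\n\cdot\x$. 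Hence the scalar weight is unchanged, that is $\n\cdot R\x-q=\n\cdot\x-q$.

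Combining the two observations yields the chain $T\dd\mu(R\x)=(\n\cdot R\x-q)\dd\mu(R\x)=(\n\cdot\x-q)\dd\mu(\x)=T\dd\mu(\x)$, which is precisely the asserted invariance. There is essentially no obstacle here: the only point meriting attention is that the linear factor $\n\cdot\x$ transforms correctly, and this is exactly where the hypothesis $\n=R\n$ enters --- without it one would merely get $\n\cdot R\x=R^{-1}\n\cdot\x$, which need not equal $\n\cdot\x$. The same argument applies verbatim to a composition $T=T^{(1)}\cdots T^{(m)}$ as long as each $\n^{(i)}$ is fixed by $R$, since then the full weight $\mathcal Q(\x)=\prod_{i=1}^m(\n^{(i)}\cdot\x-q^{(i)})$ satisfies $\mathcal Q(R\x)=\mathcal Q(\x)$ by the identical computation applied factor by factor.
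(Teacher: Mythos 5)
Your proof is correct and follows essentially the same argument as the paper: both reduce the claim to the identity $\n\cdot R\x-q=\n\cdot\x-q$ together with $\dd\mu(R\x)=\dd\mu(\x)$. The only cosmetic difference is that you substitute $\n=R\n$ and invoke $R\n\cdot R\x=\n\cdot\x$, while the paper writes $\n\cdot R\x=R^\top\n\cdot\x$ and then uses $R^\top\n=\n$; these are the same computation.
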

\begin{proof}
The new measure  is $T\dd\mu(\x)=(\n\cdot\x-q)\dd\mu(\x)$ so that
\begin{align*}
T\dd\mu(R\x)=&(\n\cdot R\x-q)\dd\mu(R\x)\\=&(\n\cdot R\x-q)\dd\mu(\x)\\=&(R^\top\n\cdot\x-q)\dd\mu(\x).
\end{align*}
Therefore when $\n=R^\top\n$, as $R^\top=R^{-1}$ we get the that the new measure is invariant.
\end{proof}

For the continuous flows recall Definition \ref{lostiempos} and Proposition \ref{deformacion medida} and notice that if we order the times $t=(t_{[0]},t_{[1]},\dots)$, $t_{[k]}=(t_{\q_1},\dots,t_{\q_{|[k]|}})$
\begin{pro}
  The continuous Toda flows preserve the symmetry $\dd\mu(\x)=\dd\mu(R\x)$ whenever the times are such
  \begin{align*}
    t_{[k]}\eta_{R,[k]}=t_{[k]}.
  \end{align*}
  \end{pro}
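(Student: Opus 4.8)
The plan is to reduce the $R$-invariance of the deformed measure to a linear condition on the covector of times, using the transformation law of the monomial vector $\chi$ under $R$. First I would recall from Proposition \ref{deformacion medida} that the continuously deformed measure is $\dd\mu_{t,\m}(\x)=\Exp{t(\x)}\dd\mu_{\m}(\x)$; taking the symmetric base measure $\dd\mu$ (the standing assumption $\dd\mu(\x)=\dd\mu(R\x)$ of this section) and the purely continuous flow, we have $\dd\mu_{t}(\x)=\Exp{t(\x)}\dd\mu(\x)$. Evaluating at $R\x$ and invoking $\dd\mu(R\x)=\dd\mu(\x)$ gives $\dd\mu_{t}(R\x)=\Exp{t(R\x)}\dd\mu(\x)$.

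Next I would translate the requirement $\dd\mu_{t}(R\x)=\dd\mu_{t}(\x)$ into the scalar identity $t(R\x)=t(\x)$. Using Definition \ref{lostiempos}, $t(\x)=t\chi(\x)$, together with the transformation law $\chi(R\x)=\eta_R\chi(\x)$ established earlier in the section, I compute $t(R\x)=t\chi(R\x)=t\eta_R\chi(\x)$. Hence the invariance condition becomes $(t\eta_R-t)\chi(\x)=0$.

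Then I would invoke the linear independence of the monomials making up the entries of $\chi$ to conclude $t\eta_R=t$ as covectors. Finally, since $\eta_R$ is block diagonal, $\eta_R=\diag(\eta_{R,[0]},\eta_{R,[1]},\dots)$ with $\eta_{R,[0]}=1$ (see \eqref{varsigma} and the fact that $\mathscr R_{[0]}$ acts trivially on scalars), the single equation $t\eta_R=t$ decouples into the componentwise conditions $t_{[k]}\eta_{R,[k]}=t_{[k]}$ for each $k$, which is exactly the stated criterion.

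The only delicate point, which I would flag explicitly, is the passage from equality of the two deformed measures to the pointwise identity $t(R\x)=t(\x)$: the measures agree precisely when $\Exp{t(R\x)}=\Exp{t(\x)}$ holds $\dd\mu$-almost everywhere, but as both exponents are polynomials and $\Exp{\cdot}$ is injective on $\R$, this is equivalent to $t(R\x)=t(\x)$ on $\operatorname{supp}(\dd\mu)$. Treating the times as formal deformation parameters, i.e. a covector in the symmetric algebra, one asks this as an identity of polynomials, which then forces $t\eta_R=t$ exactly; no further computation is needed.
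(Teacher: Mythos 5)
Your proof is correct and follows essentially the same route as the paper: both invoke Proposition \ref{deformacion medida}, the transformation law $\chi(R\x)=\eta_R\chi(\x)$, and the invariance of the base measure to conclude that $t\eta_R=t$ (equivalently, blockwise $t_{[k]}\eta_{R,[k]}=t_{[k]}$, since $\eta_R$ is block diagonal) forces $\dd\mu_t(R\x)=\dd\mu_t(\x)$. Your additional discussion of the converse (linear independence of monomials and almost-everywhere equality) goes beyond what the statement requires, since it only asserts sufficiency, but it does no harm.
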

\begin{proof}
  As we know from Proposition \ref{deformacion medida}
  \begin{align*}
   \dd\mu_t(R\x)=&\exp\big(t\chi(R\x)\big)\dd\mu(R\x)
   \\=& \exp\big(t \eta_R\chi(\x)\big)\dd\mu(\x)
  \end{align*}
  and when $t \eta_R=t$  we get $\dd\mu_t(R\x)=\dd\mu_t(\x)$.
\end{proof}

\subsubsection{Linear subspaces of fixed points of a linear isometry}
We have seen that to analyze compatible flows with the linear isometry invariance is crucial to find fixed points for the linear isometries.
\begin{definition}
The linear subspace of fixed points of the linear isometry $R$ is
  \begin{align*}
   V_R=\{\boldsymbol v\in\R^D: R\boldsymbol v=\boldsymbol v\}.
  \end{align*}
\end{definition}

Interesting examples of linear  isometries are provided by reflections. Given a non-zero vector $\n$ the corresponding  Householder reflection is
\begin{align*}
  r_{\n}=\I_D-2\frac{1}{\n\cdot\n}\n \n^\top.
\end{align*}
This is a reflection in the hyperplane $\n^\bot$, it is idempotent $r_{\n}^2=\I_D$, $r_{\n}\big|_{\R\n}=-\operatorname{id}$, (negating any vector component parallel to $\n$),  and
$r_{\n}\big|_{\n^\bot}=\operatorname{id}$. Therefore, for the Householder case $V_R=\n^\bot$.
Any orthogonal matrix $R\in\operatorname{O}(\R^D)$ is as a product of at most $D$ Householder reflections. Given an orthogonal set $\{\n_1,\dots, \n_m\}\subset\R^D$ the product $R$ of the corresponding Householder reflections (which happens to be commutative as the order of the factors do not affect the result) is $R=\I_D-2\sum_{i=1}^m\frac{1}{\n_i\cdot\n_i}\n_i \n_i^\top$. This is a reflection, with reflection hyperplane $\{\n_1,\dots,\n_m\}^\bot$, negating the components parallel to $\R\{\n_1,\dots,\n_m\}$.
Now, the fixed point subspace  $V_R=\{\n_1,\dots,\n_m\}^\bot$ is the reflection plane.

For $D=2$ the orthogonal transformations could be of two types, a rotation of angle $\theta$,
$R_\theta=\begin{psmallmatrix}
\cos \theta & -\sin \theta \\
\sin \theta & \cos \theta
\end{psmallmatrix}$ and a reflection according to the vector $\n=\begin{psmallmatrix}
  -\sin (\theta/2)\\ \cos(\theta/2)
\end{psmallmatrix}$, $R=\I_2-2\n \n^\top=\begin{psmallmatrix}
\cos \theta & \sin \theta \\
\sin \theta & -\cos \theta
\end{psmallmatrix}$ with reflection line given by $\R\begin{psmallmatrix}
  \cos (\theta/2)\\ \sin(\theta/2)
\end{psmallmatrix}$.
For $\theta=\pi/2$ the reflection is about the line  $y=x$ and therefore exchanges $x$ and $y$: it is a permutation matrix $\begin{psmallmatrix}
0 & 1\\
1 & 0
\end{psmallmatrix}$.

In general, given a linear isometry $R\in\operatorname{O}(\R^D)$ there exists an orthonormal basis $\{\boldsymbol u_a\}_{a=1}^D$ such that its matrix reads
\begin{align*}
  R=\diag(\underbracket{1,\dots,1}_{\text{$p$ of them} },\underbracket{-1,\dots,-1}_{\text{$q$ of them}},R_{\theta_1},\dots, R_{\theta_m},)
\end{align*} being $p-q$  even  or odd depending whether $D$ is even or odd, here $R_\theta$ is a two dimensional non trivial rotation of angle $\theta$. Therefore $V_R=\R\{\boldsymbol u_1,\dots,\boldsymbol u_p\}$;
notice that for $D$ even it could happen that $V_R=\{0\}$, but for $D$ odd we always have a nontrivial fixed point subspace,  $\dim V_R\geq 1$.

\subsubsection{Secant varieties of Veronese varieties and linear isometry invariance preserving flows}
\begin{pro}
 If the times $t^\top\in \operatorname{Sym}(\R^D) $ are restricted to  belong to the symmetric algebra of the fixed point subspace of the linear isometry $R$, i.e.  $t^\top\in \operatorname{Sym}(V_R)\subset \operatorname{Sym}(\R^D) $, the linear isometry invariance condition of the measure is preserved as the time passes by, $\dd\mu_t(\x)=\dd\mu_t(R\x)$ $\forall t^\top \in  \operatorname{Sym}(V_R)$.
\end{pro}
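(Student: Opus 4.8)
The plan is to reduce the claim to the \emph{functional} identity $t(\x)=t(R\x)$ and then invoke the preceding Proposition together with the hypothesis $\dd\mu(\x)=\dd\mu(R\x)$. Recall from Definition \ref{lostiempos} that $t(\x)=t\chi(\x)$, and that under the algebra isomorphism $\operatorname{Sym}(\R^D)\cong\R[x_1,\dots,x_D]$ of Appendix \ref{symmetric} a decomposable symmetric tensor $\boldsymbol v_1\odot\cdots\odot\boldsymbol v_k$ corresponds to the polynomial $(\boldsymbol v_1\cdot\x)\cdots(\boldsymbol v_k\cdot\x)$; this is exactly the multinomial identity already exploited in Proposition \ref{chi-symmetric}, since $\boldsymbol v^{\odot k}$ is sent to $(\boldsymbol v\cdot\x)^k$. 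Hence the restriction $t^\top\in\operatorname{Sym}(V_R)$ says precisely that $t(\x)$ lies in the subalgebra of polynomials generated by the linear forms $\{\boldsymbol v\cdot\x:\boldsymbol v\in V_R\}$.

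First I would check that each of these generating linear forms is $R$-invariant. For $\boldsymbol v\in V_R$ we have $R\boldsymbol v=\boldsymbol v$, and since $R$ is a linear isometry $R^\top=R^{-1}$; applying $R^{-1}$ to $R\boldsymbol v=\boldsymbol v$ gives $R^\top\boldsymbol v=\boldsymbol v$, so
\[
\boldsymbol v\cdot(R\x)=(R^\top\boldsymbol v)\cdot\x=\boldsymbol v\cdot\x,\qquad\forall\x\in\R^D.
\]
Because $R$-invariance of functions is stable under sums and products, every element of $\operatorname{Sym}(V_R)$ yields an $R$-invariant polynomial, i.e. $t(R\x)=t(\x)$ whenever $t^\top\in\operatorname{Sym}(V_R)$. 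Combining this with Proposition \ref{deformacion medida} and the measure symmetry then gives
\[
\dd\mu_t(R\x)=\Exp{t(R\x)}\dd\mu(R\x)=\Exp{t(\x)}\dd\mu(\x)=\dd\mu_t(\x),
\]
which is the assertion.

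An equivalent, more matrix-theoretic route is to verify the condition $t\eta_R=t$ that the preceding Proposition singles out. Decomposing $t^\top=\bigoplus_k t^\top_{[k]}$ with $t^\top_{[k]}\in\operatorname S^k(V_R)$, the symmetric power $\mathscr R_{[k]}=R^{\odot k}$ acts as the identity on $\operatorname S^k(V_R)$, since for $\boldsymbol v_i\in V_R$ one has $R^{\odot k}(\boldsymbol v_1\odot\cdots\odot\boldsymbol v_k)=R\boldsymbol v_1\odot\cdots\odot R\boldsymbol v_k=\boldsymbol v_1\odot\cdots\odot\boldsymbol v_k$. Thus $\mathscr R(t^\top)=t^\top$, and translating through \eqref{varsigma} and \eqref{etarelations}, where $\eta_R^\top=[\mathscr R]_{B_c}^{-1}$, we obtain $\eta_R^\top t^\top=[\mathscr R]_{B_c}^{-1}t^\top=t^\top$, that is $t\eta_R=t$, again concluding by the preceding Proposition.

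The only genuinely delicate point is pinning down the transpose and duality conventions relating the row of times $t$, its symmetric-tensor avatar $t^\top$, the operator $\mathscr R$ and the matrix $\eta_R$ in \eqref{varsigma}–\eqref{etarelations}; once the identification $t^\top\in\operatorname{Sym}(V_R)\Leftrightarrow t(\x)\in\R[\boldsymbol v\cdot\x:\boldsymbol v\in V_R]$ is made explicit, the $R$-invariance of the generators makes everything else immediate. There is no analytic obstacle, since $\Exp{t(\x)}$ is simply an $R$-invariant scalar factor multiplying the already invariant measure $\dd\mu$.
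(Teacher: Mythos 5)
Your proof is correct and, in its second (matrix-theoretic) route, is essentially the paper's own argument: the paper likewise reduces the invariance-preserving condition $t\eta_R=t$ to the statement that $R^{\odot k}$ fixes every element of $(V_R)^{\odot k}$, proved exactly as you do on decomposable tensors $\boldsymbol v_1\odot\cdots\odot \boldsymbol v_k$ with $\boldsymbol v_i\in V_R$. Your first (functional) route is merely a repackaging of the same key fact at the level of the polynomials $t(\x)$, and as a minor bonus you use the correct relation $\eta_R^\top=[\mathscr R]_{B_c}^{-1}$ from \eqref{etarelations} where the paper's proof writes $\eta_R^\top=[\mathscr R]_{B_c}$ --- a harmless slip, since the fixed-point conditions for $[\mathscr R]_{B_c}$ and its inverse coincide.
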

\begin{proof}

Observing that $\eta_R^\top=[\mathscr R]_{B_c}$, the linear isometry invariance preserving condition can be written as
\begin{align*}
[R^{\odot k}]_{B_c} t_{[k]}^\top=t_{[k]}^\top.
\end{align*}
The first non trivial condition is that
\begin{align*}
  R [t]_1^\top=& t_{[1]}& [t]_{[1]}^\top=&\PARENS{\begin{matrix}
    t_1\\ \vdots\\t_D
  \end{matrix}}
\end{align*}
and therefore $[t]_{[1]}^\top\in V_R$.

In order to explore what kind of higher flows will preserve the linear isometry invariance condition of the measure we observe that if $V_{[1]}=\{\boldsymbol t\in\R^D: R\boldsymbol t=\boldsymbol t\}$ any symmetric power tensor in $V_{[k]}=(V_{[1]})^{\odot k}$ will be subspace of fixed point for $R^{\odot k}$. Indeed,   $V_{[k]}$ is linearly generated by decomposable symmetric tensors $\boldsymbol v_1\odot\cdots\odot \boldsymbol v_k$ with $\boldsymbol v_i\in V_{[1]}$ and
\begin{align*}
 R^{\odot k} (\boldsymbol v_1\odot\cdots\odot \boldsymbol v_k)=&(R \boldsymbol v_1)
 \odot\cdots\odot (R\boldsymbol v_k)\\=&\boldsymbol v_1\odot\cdots\odot \boldsymbol v_k.
\end{align*}
\end{proof}

The map $\mathcal v_{m,k}:\R^m\to (\R^m)^{\odot k}$ taking $\boldsymbol v\to \boldsymbol v^{\odot k}$  has as its image the Veronese variety
$\mathscr V_{m,k}\coloneq \{\x^{\odot k}\in(\R^m)^{\odot k}: \x\in\R^m\}$.
It happens \cite{Comon} that every symmetric power tensor can be written for some $r \geq  0$  as
$\sum_{i=1}^r \boldsymbol v_i^{\odot k}$
and the symmetric tensor rank is the minimum when this holds.  Hence, the symmetric power can be described by $r$ points of the Veronese variety; the closure of the union of all linear spaces spanned by $r$ points of
the Veronese variety $\mathscr V_{m,k}$  is called the $(r -1)$-th secant variety of $\mathscr V_{m,k}$. Therefore, in this case we require the times to belong to one the secant varieties of the Veronese variety $\mathscr V_{\dim (V_R),k}$.

\begin{appendices}
\section{Compositions, multisets and symmetric algebras}\label{symmetric}\settocdepth{section}
\subsection{Compositions and multisets}
From combinatorics \cite{stanley} we know that  a weak $D$-composition of an integer $k$ is a way of writing $k$ as the sum $D$ non-negative integers.  Notice that while for a composition we require the parts to be positive integers (excluding therefore the zero) for weak compositions the zero is allowed. The problem of counting the number $N(k,D)$ of weak compositions, i.e. the cardinality of the set $[k]$, is related to the problem of counting the number of compositions, which is $\binom{k-1}{D-1}$.
 In fact, given a weak $D$-composition $k=k_{i,1}+\dots+k_{i,D}$ if we put $q_{i,j}=k_{i,j}+1$, $j\in\{1,\dots,D\}$ we have $q_{i,1}+\dots+q_{i,D}=k+D$ and we are dealing with a $(k+D)$-composition. Thus $N(k,D)=|[k]|=\binom{k+D-1}{D-1}=\binom{k+D-1}{k}$ --consider all the possible permutations of
$k+(D-1)$ elements out of which $k$ and $(D-1)$ are repeated--.  Two sequences that differ in the order of their terms define different weak compositions of their sum, while they are considered to define the same partition of that number. Every integer has finitely many distinct compositions.

A multiset \cite{multiset} is 2-tuple $(I, M)$ where $I$ is some set, the underlying set of elements, and the multiplicity $M: I \to \N$ is a function from $I$ to the set of positive integers; for each $a\in I $ the multiplicity or number of occurrences is $M(a)$.  For an indexed family, $(a_i)$, where $i$ is some index-set, we define a multiset $\{a_i\}$, where the multiplicity of any element $a$ is the number of indices $i$ such that $a_i =a$.
A form of describing a multiset that is used in this article is considering non-negative integers $(a_i)_{i=1}^k$ such that $1\leq a_1\leq \dots\leq a_k\leq D$, where repetitions are allowed, e.g.  for $k=5$ we could have $a_1=a_2=a_3<a_4=a_5$, denoting $a_1=a$ and $a_4=b$ we are dealing with the multiset $\{a,a,a,b,b\}$ being three the multiplicity of $a$, $M(a)=3$, and two the multiplicity of $b$, $M(b)=2$.

\subsection{Symmetric tensor powers and symmetric algebras}
We give a brief description of notions and results regarding symmetric algebras, for further information we refer the reader to \cite{Comon,federer,knapp,vinberg}.
\subsubsection{Symmetric tensors}
A symmetric tensor of order $k$  is a tensor of order $k$ that is invariant under a permutation of its vector arguments:
\begin{align*}
T(u_1,\dots,u_k) =\tau_\sigma T(u_1,\dots,u_k) = T(u_{\sigma 1},\dots,u_{\sigma k})
\end{align*}
for every  $\sigma\in\mathfrak S_k$, being $\mathfrak S_k$ the symmetric group of $k$ letters. The coefficients of a symmetric tensor of order $k$ satisfy
$T_{i_1,\dots, i_k} = T_{i_{\sigma 1},\dots, i_{\sigma k }}$.
The space of symmetric tensors of order $k$ on $\R^D$ is naturally isomorphic to the dual of the space of homogeneous multivariate polynomials of total degree $k$ and the graded vector space of all symmetric tensors can be naturally identified with the symmetric algebra $\operatorname{Sym}(\R^D)$.

The symmetric part of  a tensor $T\in \big(\R^D\big)^{\otimes k}$ of order $k$ is defined by
\begin{align}\label{symmetrization}
  \operatorname{Sym}\, T = \frac{1}{k!}\sum_{\sigma\in\mathfrak{S}_k} \tau_\sigma T,
\end{align}
where the summation extends over the symmetric group on $k$ symbols.
If the tensor coefficients of the tensor are $T_{i_1,i_2,\dots, i_k}$ those of the symmetric part are $T_{(i_1,i_2,\dots, i_k)} = \frac{1}{k!}\sum_{\sigma\in \mathfrak{S}_k} T_{i_{\sigma 1},i_{\sigma 2},\dots, i_{\sigma k}}$
For two arbitrary pure tensors  $T=v_1\otimes v_2\otimes\cdots \otimes v_r$ the corresponding symmetrization or symmetric part  is given by
$v_1\odot v_2\odot\cdots\odot v_r \equiv \text{Sym}(v_1\otimes v_2\otimes\cdots \otimes v_r)\coloneq \frac{1}{r!}\sum_{\sigma\in\mathfrak{S}_r} v_{\sigma 1}\otimes v_{\sigma 2}\otimes\cdots\otimes v_{\sigma r}$.
Given two tensors $T_i\in\text{Sym}^{k_i}(\R^D)$, $i\in\{1,2\}$, the symmetrization operator allows us to define $T_1\odot T_2 = \operatorname{Sym}(T_1\otimes T_2)\in\operatorname{Sym}^{k_1+k_2}(\R^D)$.
As the resulting product is commutative and associative some authors write  $T_1T_2 = T_1\odot T_2$. Given a vector $v\in\R^D$ we will use the exponential notation
$v^{\odot k} = \underbracket{v \odot v \odot \cdots \odot v}_{k\text{ times}}=\underbracket{v \otimes v \otimes \cdots \otimes v}_{k\text{ times}}=v^{\otimes k}$. %and sometimes we could write
%$v^k=\underbrace{v\,v\,\cdots\,v}_{k\text{ times}}=\underbrace{v\odot v\odot\cdots\odot v}_{k\text{ times}}$.
\subsubsection{Symmetric tensor powers and symmetric algebra}\label{spowers}
Symmetric tensor powers $\operatorname{S}^k(\R^D)=\big(\R^D)^{\odot k}$  are generated by the so called decomposable (or simple or pure) symmetric tensors $u_1\odot\cdots\odot u_k$, where $u_1,\dots,u_k\in\R^D$. Given a basis $\{\ee_a\}_{a=1}^D$ we  can construct an explicit linear basis of $S^k(\R^D)$ using the concept of multiset. The mentioned  linear basis for the $k$-th symmetric power is $\{\ee_{a_1}\odot\cdots\odot \ee_{a_k}\}_{\substack{1\leq a_1\leq\cdots\leq a_k\leq D\\ k\in\Z_+}}$, or in terms of multisets $I=\{a_1,\dots,a_p\}$ with multiplicities $M(a_i)$, such that $M(a_1)+\dots+M(a_p)=k$ we have $\{\ee_{a_1}^{\odot M(a_1)}\odot\cdots\odot \ee_{a_p}^{\odot M(a_p)}\}_I$.
The dual space of the symmetric powers happens to be isomorphic to the set of symmetric multilinear functionals on $\R^D$, $\big(\text{S}^k(\R^D)\big)^*\cong S((\R^D)^k,\R)$.

 The number of multisets of cardinality $k$, with elements taken from a finite set of cardinality $D$, is known  as the multiset coefficient $\textstyle\big(\!{D\choose k}\!\big)$, see \cite{stanley},  which resembles the  binomial coefficients and we say  ``$D$ multichoose $k$" instead of ``$D$ choose $k$" for $\tbinom nk$. We have that $ \textstyle\big(\!{D\choose k}\!\big)= {D+k-1 \choose k} = \frac{(D+k-1)!}{k!\,(D-1)!} = {D(D+1)(D+2)\cdots(D+k-1)\over k!}$,
  and the number of such multisets is the same as the number of subsets of cardinality $k$ in a set of cardinality $D + k - 1$. Thus, $\dim\text{S}^k(\R^D)\equiv |[k]|= \textstyle\big(\!{D\choose k}\!\big)$.

  We can define a surjective map $\pi:\big(\R^D\big)^{\otimes k}\rightarrow \operatorname{S}(\R^D)$ by the symmetrization $\pi(u_1\otimes\dots\otimes u_k )\coloneq  u_1\odot\dots\odot u_k$. This map has a section, i.e. an injective map $\imath: \operatorname{S}(\R^D)\rightarrow\big(\R^D\big)^{\otimes k}$ such that $\pi\circ\imath=\operatorname{id}$. The maps gives $\imath(u_1\odot\dots\odot u_k)=\operatorname{Sym}(u_1\otimes\dots\otimes u_k)$ so that its image is just the space of symmetric tensors just discussed. Moreover, for the symmetrization of tensors of \eqref{symmetrization} we have  $\operatorname{Sym}\coloneq \imath\circ\pi:   \big(\R^D\big)^{\otimes k}\rightarrow \big(\R^D\big)^{\otimes k}$; notice also that this symmetrization is a projection $\operatorname{Sym}^2=\operatorname{Sym}$ so that
  \begin{align*}
  \big(\R^D\big)^{\otimes k}=  \operatorname{Sym}\Big(\big(\R^D\big)^{\otimes k}\Big)\oplus \operatorname{ker}(\operatorname{Sym})=
\imath\Big(\big(\R^D\big)^{\odot k}\Big)\oplus \operatorname{ker}(\operatorname{Sym}).
  \end{align*}

The direct sum $\text{S}(\R^D)\coloneq \oplus_{k\geq 0}\text{S}^k(\R^D)$ is the symmetric algebra of $\R^D$, which is commutative and associative.  The symmetric algebra of $\R^D$ can be constructed as the tensor algebra $\operatorname{T}(\R^D)$ quotient with the ideal generated tensors of the form $t_1(\x\otimes\y-\y\otimes\x)t_2$ with $t_1,t_2$ homogeneous tensors of arbitrary degree.

\subsubsection{Dot product} There is an interesting inner product in the symmetric tensor power $\operatorname{S}^k(\R^D)$ which is a symmetric definite positive bilinear form, see appendix A in \cite{knapp}, and also \cite{Comon}. It is given by the linear extension of the following definition for decomposable symmetric tensor powers
\begin{align}\label{permanents}
  \langle \boldsymbol u_1\odot\dots\odot \boldsymbol u_k, \boldsymbol v_1\odot\dots\odot \boldsymbol v_k \rangle^{(k)}= \frac{1}{k!}\sum_{\sigma\in\mathfrak S_k}\prod_{a=1}^k \boldsymbol u_a\cdot\boldsymbol v_{\sigma a}=\frac{1}{k!}\operatorname{perm}\PARENS{\begin{matrix}
 \boldsymbol u_1\cdot\boldsymbol v_1 &\cdots& \boldsymbol u_1\cdot\boldsymbol v_k\\
 \vdots & & \vdots\\
  \boldsymbol u_k\cdot\boldsymbol v_1 &\cdots &\boldsymbol u_k\cdot\boldsymbol v_k
  \end{matrix}},
\end{align}
where $\boldsymbol u_a,\boldsymbol v_a\in\R^D$ and we have used permanents \cite{minc,muir}.

  We introduce the semi-infinite multinomial matrix $\mathcal M=\diag(\mathcal M_{[0]},\mathcal M_{[1]},\dots)$ where
  \begin{align}\label{multinomialmatrix}
   \mathcal M_{[k]}\coloneq\diag\Big({k\choose \q_1},\dots,{k\choose \q_{|[k]|}}\Big) \in\R^{|[k]|\times|[k]|},
  \end{align}
   are  diagonal matrices with coefficients the multinomial numbers
 \begin{align*}
  {k\choose \q_j}=&\frac{k!}{\prod_{a=1}^D\alpha_{j,a}!}, & j\in&\{1,\dots,|[k]|\}.
 \end{align*}
According to the proof of Corollary A.24 of \cite{knapp}  for the canonical basis vectors $\ee^{\q_i}=\ee_1^{\odot \alpha_{i,1}}\odot\dots\odot\ee_D^{\odot \alpha_{i,D}}$, for all $\q_i\in[k]\coloneq \big\{\alpha_{i,1}\ee_1+\dots+\alpha_{i,D}\ee_D\in\Z_+^D \text{ with $\alpha_{i,1}+\dots+\alpha_{i,D}=k$}\big\}$\footnote{In \cite{federer} the set $[k]$ is denoted by $\Xi(D,k)$}, we have the following metric coefficients
\begin{align*}
  \langle \boldsymbol e_{\q_i},\boldsymbol e_{\q_j}\rangle^{(k)}=&\delta_{i,j}{k\choose \q_i }^{-1}, & i,j\in&\big\{1,\dots, \textstyle\big(\!{D\choose k}\!\big)\big\}.
\end{align*}
Hence, the interior product $\langle\cdot,\cdot\rangle^{(k)}:\operatorname{S}^k(\R^D)\times \operatorname{S}^k(\R^D)\rightarrow \R$ is given by
\begin{align}\label{explicit interior product symmetric tensor power}
\Big\langle\sum_{i=1}^{|[k]|}a_{\kk_i}\ee^{\kk_i}, \sum_{i=1}^{|[k]|}b_{\kk_i}\ee^{\kk_i}\Big\rangle^{(k)}=\PARENS{\begin{matrix}
    a_{\kk_1}\\\vdots\\a_{\kk_{|[k]|}}
  \end{matrix}}^\top
  \mathcal M_{[k]}^{-1} \PARENS{\begin{matrix}
    b_{\kk_1}\\\vdots\\b_{\kk_{|[k]|}}
  \end{matrix}}.
\end{align}

 It is easy to check that
\begin{align}
  \langle \boldsymbol u_1\odot\dots\odot \boldsymbol u_k,\boldsymbol v^{\odot k}\rangle^{(k)} =&\prod_{a=1}^k(\boldsymbol u_a\cdot\boldsymbol v),\notag\\
    \langle \boldsymbol u^{\odot k},\boldsymbol v^{\odot k}\rangle^{(k)}=&(\boldsymbol u\cdot\boldsymbol v)^k.\label{symmetric interior product pure}
\end{align}
We remark that all these developments are connected with Quantum Physics.
Indeed, when Quantum Mechanics of large systems describes sets of an arbitrary number of  bosons, if $\mathscr H$ is the Hilbert space for the states of a single particle, then $\operatorname{S}^k(\mathscr H)$ will describe the pure states of $k$ identical bosons, and in general the symmetric algebra $\operatorname{S}(\mathscr H)$ is the Hilbert space of pure states of an arbitrary number of bosons, see \cite{thirring}. Thus,  multivariate polynomials are connected, naively if you want, with bosons such that its single particle Hilbert space of pure states is $\R^D$ (which is a real Hilbert space, that still has a physical meaning for even dimensions $D$).

\subsubsection{The shift matrices and symmetric tensors}
The shift matrices  have a natural description in the symmetric algebra, as well.
\begin{pro}
In the symmetric tensor power setting  these blocks can be thought as
\begin{align*}
(\Lambda_a)_{[k-1],[k]}: \operatorname{S}^k(\R^D)\to\operatorname{S}^{k-1}(\R^D)
\end{align*}
 with
\begin{align*}
  (\Lambda_a)_{[k-1],[k]}
  \ee^{\q_i}= \ccases{
   \ee^{\q_i-\ee_a}, & \text{if }\q_i\cdot\ee_a\neq0,\\
   0, &\text{if }\q_i\cdot\ee_a=0.
  }
\end{align*}
\end{pro}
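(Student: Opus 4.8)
The plan is to obtain the stated formula directly from the definition of $\Lambda_a$, reading the rectangular block $(\Lambda_a)_{[k-1],[k]}$ as the matrix, in the canonical bases $B_c$ of $\operatorname{S}^k(\R^D)$ and $\operatorname{S}^{k-1}(\R^D)$, of a linear map $\operatorname{S}^k(\R^D)\to\operatorname{S}^{k-1}(\R^D)$. First I would recall that, by the definition of the shift matrices, the block coupling degree $k$ to degree $k-1$ has entries
\[
(\Lambda_a)_{\q^{(k-1)}_i,\q^{(k)}_j}=\delta_{\q^{(k-1)}_i+\ee_a,\q^{(k)}_j},
\]
which is a $|[k-1]|\times|[k]|$ array whose rows are indexed by $[k-1]$ and whose columns are indexed by $[k]$; hence it sends coordinate columns of length $|[k]|$ to coordinate columns of length $|[k-1]|$, consistent with the claimed domain and codomain.

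Next I would apply the column convention for the action of a matrix on a basis vector. In $B_c=\{\ee^{\q^{(k)}_j}\}_{j=1}^{|[k]|}$ the $j$-th coordinate column is the standard unit vector, so the image of $\ee^{\q^{(k)}_j}$ is the $j$-th column of the block:
\[
(\Lambda_a)_{[k-1],[k]}\,\ee^{\q^{(k)}_j}=\sum_{i=1}^{|[k-1]|}\delta_{\q^{(k-1)}_i+\ee_a,\q^{(k)}_j}\,\ee^{\q^{(k-1)}_i}.
\]
The remaining content is a case analysis of when the Kronecker delta survives. The equation $\q^{(k-1)}_i=\q^{(k)}_j-\ee_a$ admits a solution with $\q^{(k-1)}_i\in[k-1]\subset\Z_+^D$ precisely when $\q^{(k)}_j-\ee_a$ has only nonnegative entries, that is, when the $a$-th component of $\q^{(k)}_j$ is strictly positive --- equivalently $\q^{(k)}_j\cdot\ee_a\neq0$. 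In that case $\q^{(k)}_j-\ee_a$ automatically has length $k-1$, so it is the unique index in $[k-1]$ picked out by the delta and the sum collapses to $\ee^{\q^{(k)}_j-\ee_a}$; otherwise every delta vanishes and the image is $0$. Renaming $\q^{(k)}_j$ as $\q_i$ reproduces exactly the displayed formula of the statement.

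Since the whole argument is a careful matching of index conventions, the only step deserving genuine attention --- and the one I would flag as the modest main obstacle --- is the equivalence $\q\cdot\ee_a\neq0\Leftrightarrow\q-\ee_a\in\Z_+^D$, together with the observation that membership in $[k-1]$ (as opposed to mere nonnegativity) is then automatic because lengths drop by exactly one, and that the selected index is unique. As a consistency check rather than an independent argument, I would note that extending this basis-vector action linearly to $\chi_{[k]}(\x)=\sum_i\x^{\q_i}\ee^{\q_i}$ gives $x_a\,\chi_{[k-1]}(\x)$, which is precisely the componentwise form $(\Lambda_a)_{[k-1],[k]}\chi_{[k]}(\x)=x_a\chi_{[k-1]}(\x)$ of the eigenvalue relation $\Lambda_a\chi=x_a\chi$ in Proposition \ref{pro:Lambda}: lowering the $a$-th exponent of a basis vector is dual to raising the $a$-th exponent of the corresponding monomial.
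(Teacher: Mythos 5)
Your proof is correct; the paper in fact states this proposition without any proof, treating it as an immediate consequence of the definition of the shift matrices, and your argument is precisely that routine verification: reading $(\Lambda_a)_{[k-1],[k]}$ as a matrix in the ordered canonical bases, extracting its $j$-th column, and observing that the Kronecker delta $\delta_{\q^{(k-1)}_i+\ee_a,\q^{(k)}_j}$ survives for exactly one $i$ when $\q^{(k)}_j\cdot\ee_a\neq 0$ (since nonnegativity of $\q^{(k)}_j-\ee_a$ forces membership in $[k-1]$ automatically) and for no $i$ otherwise. The concluding consistency check against the eigenvalue relation $\Lambda_a\chi=x_a\chi$ is a nice confirmation, though not needed.
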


Following \S 1.10.3 of \cite{federer} we introduce interior multiplications. First we consider dual linear space $(\R^D)^*$ of linear functionals $\R^D\to\R$, and the dual basis $\{\boldsymbol\omega_a\}_{a=1}^D\subset (\R^D)^*$, i.e.
$\boldsymbol\omega_a(\ee_b)=\delta_{a,b}$. The symmetric tensors powers $\boldsymbol\omega^{\q}=\boldsymbol\omega_1^{\odot \alpha_1}\odot\dots\odot\boldsymbol\omega_D^{\odot\alpha_D}$ give rise to a linear basis $\{\boldsymbol\omega^\q\}_{\q\in[k]}$ of $\operatorname{S}^{k}\left((\R^D)^*\right)$ dual to  $\big\{\frac{1}{\q!}\ee^\q\big\}_{\q\in[k]}$
\begin{align*}
\boldsymbol\omega{^\q} (\ee^{\boldsymbol\beta}) =\q!\delta_{\boldsymbol\alpha,\boldsymbol\beta}.
\end{align*}
Interior multiplications or right contractions are maps
\begin{align*}
\intprod: \operatorname{S}^{k}(\R^D)\times\operatorname{S}^{k'}\big((\R^D)^*\big)\to  \operatorname{S}^{k-k'}(\R^D),
\end{align*}
with
\begin{align*}
\ee^{\boldsymbol\alpha}\intprod\boldsymbol\omega^{\q'}\coloneq
  \ccases{
    0,  & \alpha_a<\alpha'_a, \text{ for some $a\in\{1,\dots,D\}$},
    \\
    \frac{\boldsymbol\alpha!}{(\boldsymbol\alpha-\q')!}\ee^{\boldsymbol\alpha-\q'},  & \alpha_a\geq\alpha'_a, \forall a\in\{1,\dots,D\}.
  }
\end{align*}
When we take $k'=1$ and consider the linear maps $\intprod\boldsymbol\omega_a: \operatorname{S}^{k}(\R^D)\to  \operatorname{S}^{k-1}(\R^D)$ we find
\begin{align*}
\ee^{\boldsymbol\alpha}\intprod\boldsymbol\omega_a\coloneq (\boldsymbol\alpha\cdot\ee_a) \ee^{\boldsymbol\alpha-\ee_a}.
\end{align*}

In terms of the number operators\footnote{If we follow the quantum interpretation of our symmetric algebra as system of bosons with the single particle described by $\R^D$, we could understand $N_a$ as the number operator of particles in state $\ee_a$} $N_a:\operatorname{S}^k(\R^D)\to\operatorname{S}^k(\R^D) $ given by $N_a(\smashoperator{\sum_{\q\in[k]}}c_{\q}\ee^\q)=\sum_{\q\in[k]}(\q\cdot\ee_a)c_{\q}\ee^\q$ we can write
\begin{align*}
  (\Lambda_a)_{[k-1],[k]}(T)=&(N_a^{-1}T) \intprod\boldsymbol\omega_a, &\forall T\in&(\R^D)^{\odot k},
\end{align*}
as the composition of a number operator with an interior multiplication.\footnote{Following with the boson analogy we have the destruction operators $a_b=N_b^{-1/2}\Lambda_b$  \cite{thirring}.}

  \section{Complements of linear algebra: Pseudo-inverses, Schur complements and quasi-determinants}\label{qd}\settocdepth{section}
\subsection{The Moore--Penrose pseudo-inverse}\label{appendix pseudoinverse}
As this paper requires  pseudo-inverses a number of times we decided  to include  a short resume on the subject, for more information see \cite{pseudoinverse0}. Given a  rectangular matrix $M\in\R^{m\times n}$ its Moore--Penrose pseudo-inverse $M^+\in\R^{n\times m}$ \cite{pseudoinverse1,pseudoinverse2,pseudoinverse3}  is a generalized inverse, i.e.,
\begin{align*}
M M^+M =& M, &M^+M M^+ =& M^+,
\end{align*}
that in addition satisfies
\begin{align*}
(MM^+)^\top =& MM^+, &    (M^+M)^\top =& M^+M.
\end{align*}
Let us say that any matrix has a generalized inverse and  a unique pseudo-inverse.
Obviously, if $M$ is invertible then $M^+=M^{-1}$, any rectangular zero matrix has as it pseudo-inverse its transpose. The pseudo-inverse operation is idempotent $(M^+)^+=M$ and $(M^\top)^+=(M^+)^\top$.

The square  matrices $P \coloneq  MM^+\in\R^{m\times m}$ and $Q \coloneq  M^+M\in\R^{n\times n}$
 are orthogonal projection operators, i.e. $P = P^\top$ and $Q = Q^\top$, $P^2 = P$, $Q^2 = Q$.
Moreover, we have
\begin{enumerate}
\item $PM=M=MQ$ and $M^+P=M^+=QM^+$.
\item $\operatorname{Im}(M)=\operatorname{ker}(M)^\bot=\operatorname{Im}(P)=\operatorname{ker}(\I-P)^\bot$.
\item $\operatorname{Im}(M^\top)=\operatorname{ker}(M^\top)^\bot=\operatorname{Im}(Q)=\operatorname{ker}(\I-Q)^\bot$.
\end{enumerate}

When  $P=M^\top M$  is invertible, e.g. when we have full column rank, there is a  unique matrix  $M^+$  which satisfies  these properties and is given by
$ M^+ = (M^\top M)^{-1} M^\top$, which in addition is a left inverse. When $Q=MM^\top$  is invertible, e.g. when we have full row rank,  then $ M^+ = M^\top (MM^\top)^{-1}$; that moreover is  a right inverse. In these cases $P=M^\top M$ or $Q=MM^\top$ are denominated  \emph{correlation matrices}, respectively.

\subsection{Schur complements}\label{schur}

Given $M=\left(\begin{smallmatrix}
    A & B\\
    C & D
   \end{smallmatrix}\right)$ in block form the Schur complement with
respect to $A$ (if $\det A\neq 0$) is
\begin{align*}
  \operatorname{SC}\PARENS{\begin{array}{c|c}
    A & B\\\hline
    C & D
   \end{array}}\equiv M / A\coloneq D-CA^{-1}B.
\end{align*}
The Schur complement with
respect to $D$ (if $\det D\neq 0$)  is \begin{align*}
  \operatorname{SC}_D\PARENS{\begin{array}{c|c}
    A & B\\\hline
    C & D
   \end{array}}\equiv M / D\coloneq A-BD^{-1}C.
\end{align*}
Observe that we have the block Gauss factorization
\begin{align*}
  \PARENS{\begin{matrix}
    A&B\\C&D
  \end{matrix}}&=
  \PARENS{\begin{matrix}
    \I & 0\\
    CA^{-1} & \I
  \end{matrix}}
  \PARENS{\begin{matrix}
   A & 0\\
   0 &  M/ A
  \end{matrix}}
  \PARENS{\begin{matrix}
    \I &A^{-1}B\\
    0 &\I
  \end{matrix}}\\&=
   \PARENS{\begin{matrix}
    \I & BD^{-1}\\
    0 & \I
  \end{matrix}}
  \PARENS{\begin{matrix}
   M/ D & 0\\
   0 &  D
  \end{matrix}}
  \PARENS{\begin{matrix}
    \I &0\\
    D^{-1}C &\I
  \end{matrix}}
\end{align*}
implies the Schur determinant formula $\det M=\det(A)\det(M/ A)$.
This is in fact the Schur lemma in a disguise form, in fact Schur lemma in \cite{schur} assumes that $[A,C]=0$ so that $\det M=\det(AD-BC)$.
In terms of the Schur complements we have the following well known expressions
for the inverse matrices
\begin{align}%
\notag
M^{-1}&=  \PARENS{\begin{matrix}
    \I &-A^{-1}B\\
    0 &\I
  \end{matrix}}
  \PARENS{\begin{matrix}
   A^{-1} & 0\\
   0 &  (M/ A)^{-1}
  \end{matrix}}\PARENS{\begin{matrix}
    \I & 0\\
   - CA^{-1} & \I
  \end{matrix}}
\\
\label{inverseSchur}
&=
\PARENS{\begin{matrix}
A^{-1}+A^{-1}B(M/ A)^{-1}CA^{-1}
 &-A^{-1}B(M/ A)\\
-(M/ A)^{-1}CA^{-1} &(M/ A)^{-1}
\end{matrix}}\\
\notag
&=  \PARENS{\begin{matrix}
    \I &0\\
   - D^{-1}C &\I
  \end{matrix}}
     \PARENS{\begin{matrix}
   (M/ D)^{-1} & 0\\
   0 &  D^{-1}
  \end{matrix}}
    \PARENS{\begin{matrix}
    \I & -BD^{-1}\\
    0 & \I
  \end{matrix}}\\
  \notag
  &=\PARENS{\begin{matrix}
     (M/ D)^{-1}  & -  (M/ D)^{-1} BD^{-1}\\
     -D^{-1}B (M/ D)^{-1}  &D^{-1}+D^{-1} (M/ D)^{-1} BD^{-1}
  \end{matrix}}.
\end{align}
The two expressions found for the inverse of $M$ are known as  the Matrix Inversion Lemma in Linear Estimation Theory \cite{let} and as Sherman--Morrison--Woodbury formula in Linear Algebra \cite{la}. If both $A$ and $D$ are invertible we deduce that  $M/ A$ is invertible if and only if $M/ D$ is invertible.

\subsection{Quasi-determinants and the heredity principle}
Given  any partitioned matrix
\begin{align}\label{partitioned}
  A=\PARENS{\begin{matrix}
    A_{1,1} & A_{1,2}&\dots &A_{1,k}\\
    A_{2,1} & A_{2,2}&\dots &A_{2,k}\\
   \vdots &\vdots &&\vdots\\
     A_{k,1} & A_{k,2}&\dots &A_{k,k}
  \end{matrix}}
\end{align}
where $A_{i,j}\in\R^{m_i\times m_j}$ for $i,j\in\{1,\dots,k-1\}$, and $A_{k,k}\in\R^{\kappa_1\times\kappa_2}$,$A_{i,k}\in\R^{m_i\times\kappa_2}$  and
$A_{k,j}\in\R^{\kappa_1\times m_j}$, we are going to define its quasi-determinant \emph{\`{a} la Olver} recursively.
We start with $k=2$, so that $A=\begin{psmallmatrix}  A_{1,1} & A_{1,2}\\ A_{2,1} & A_{2,2} \end{psmallmatrix}$, in this case the first quasi-determinant
$\Theta_1(A)\coloneq A/ A_{1,1}$; i. e., a Schur complement which requires $\det A_{1,1}\neq 0$. The notation of Olver
   is different to that of the Gel'fand school where $\Theta_1(A)=|A|_{2,2}=\begin{vsmallmatrix}  A_{1,1} & A_{1,2}\\ A_{2,1} & {\tiny\boxed{ A_{2,2}}} \end{vsmallmatrix}$.
There is another quasi-determinant $\Theta_2(A)=A/ A_{22}=|A|_{1,1}=\begin{vsmallmatrix} {\tiny\boxed{A_{1,1}}} & A_{1,2}\\ A_{2,1} & A_{2,2} \end{vsmallmatrix}$, the other Schur complement, and we need $A_{2,2}$ to be a invertible square matrix. Other quasi-determinants that can be considered for regular square blocks are
$\begin{vsmallmatrix}
A_{1,1} &A_{1,2}\\
{\tiny\boxed{A_{2,1}}}&A_{2,2}
\end{vsmallmatrix}
$
 and $\begin{vsmallmatrix} A_{1,1} &{\tiny\boxed{A_{1,2}}}\\ A_{2,1} & A_{2,2} \end{vsmallmatrix}$. Notice that this last two quasi-determinants are out of the scope of the paper,  as the partitioned matrix considered here have rectangular off diagonal blocks and therefore are not invertible.

Following \cite{olver} we remark that  quasi-determinantal reduction is a commutative
operation. This is the heredity principle formulated by Gel'fand and Retakh \cite{gelfand,quasidetermiant9}: quasi-determinants of quasi-determinants are quasi-determinants.
Let us illustrate this by reproducing a nice example discussed in \cite{olver}.  We consider the matrix
\begin{align*}
  A=\PARENS{\begin{array}{c|c:c}
    A_{1,1} & A_{1,2}&A_{1,3}\\\hline
    A_{2,1} & A_{2,2}&A_{2,3}\\\hdashline
     A_{3,1} & A_{3,2}&A_{3,3}
  \end{array}}
  \end{align*}
and take the quasi-determinant with respect  the first diagonal block, which we define as the Schur complement indicated by the non dashed lines, to get
\begin{align*}
  \Theta_1(A)&=\begin{vmatrix}  A_{11,1} &\begin{matrix} A_{1,2} &A_{1,3}\end{matrix}\\\begin{matrix}
  A_{2,1}\\A_{3,1}
  \end{matrix} &\boxed{\begin{matrix}A_{2,2}&A_{2,3}
\\ A_{3,2}&  A_{3,3}\end{matrix}}
\end{vmatrix}=\PARENS{\begin{matrix}
     A_{2,2}&A_{2,3}\\
      A_{3,2}&A_{3,3}
  \end{matrix}}-
  \PARENS{\begin{matrix}
     A_{2,1} \\
     A_{3,1}
  \end{matrix}}A_{1,1}^{-1}\PARENS{\begin{matrix}
     A_{1,2}&A_{1,3}
  \end{matrix}}
  \\&=
 \PARENS{\begin{array}{c:c}
  A_{2,2}- A_{2,1}A_{1,1}^{-1}A_{1,2}  & A_{2,3}-A_{2,1}A_{1,1}^{-1}A_{1,3}\\\hdashline
    A_{3,2}-A_{3,1}A_{1,1}^{-1} A_{1,2}& A_{3,3}-A_{3,1}A_{1,1}^{-1} A_{1,3}
  \end{array}},
\end{align*}
  a matrix with blocks with subindexes involving 2 and 3 but not 1.  Notice also, that us we are allowed to take blocks of different sizes we have taken the quasi-determinant with respect to a bigger block, composed of two rows and columns of basic blocks. This is the Olver's generalization of Gel'fand's et al construction.
  Now,  we take the quasi-determinant given by the Schur complement as indicated by the dashed lines, to get
  \begin{align}\label{1.2}
    \Theta_2(\Theta_1(A))&=\begin{vmatrix}
  A_{2,2}- A_{2,1}A_{1,1}^{-1}A_{1,2}  & A_{2,3}-A_{2,1}A_{1,1}^{-1}A_{1,3}\\
    A_{3,2}-A_{3,1}A_{1,1}^{-1} A_{1,2}& \boxed{A_{3,3}-A_{3,1}A_{1,1}^{-1} A_{1,3}}
    \end{vmatrix}\\
    &=A_{3,3}-A_{3,1}A_{1,1}^{-1} A_{1,3}-  \big(A_{3,2}-A_{3,1}A_{1,1}^{-1} A_{1,2}\big)\big(A_{2,2}- A_{2,1}A_{1,1}^{-1}A_{1,2}\big)^{-1}\big(A_{2,3}-A_{2,1}A_{1,1}^{-1}A_{1,3}\big).
  \end{align}
  We are ready to compute, for the very same matrix
 \begin{align}\label{a1}
  A=\PARENS{\begin{array}{cc|c}
    A_{1,1} & A_{1,2}&A_{1,3}\\
    A_{2,1} & A_{2,2}&A_{2,3}\\\hline
     A_{3,1} & A_{3,2}&A_{3,3}
  \end{array}},
  \end{align}
   the quasi-determinant associated to the two first diagonal blocks, that we label as $\{1, 2\}$; i.e., the Schur complement indicated by the non-dashed lines in \eqref{a1}, to get
\begin{align*}
    \Theta_{\{1,2\}}(A)=&\begin{vmatrix}  A_{1,1} & A_{1,2} & A_{1,3}\\ A_{2,1} & A_{2,2} &A_{2,3}\\
  A_{1,3}& A_{2,3} & \boxed{A_{3,3}}
\end{vmatrix}\\=&A_{3,3}- \PARENS{\begin{matrix}
   A_{3,1} & A_{3,2}
    \end{matrix}}
    \PARENS{\begin{matrix}
    A_{1,1} & A_{1,2}\\
    A_{2,1} & A_{2,2}
    \end{matrix}}^{-1}
    \PARENS{\begin{matrix}
A_{1,3}\\
A_{2,3}
    \end{matrix}}
      \end{align*}
      But recalling \eqref{inverseSchur}
    \begin{align*}
    \PARENS{\begin{matrix}
    A_{1,1} & A_{1,2}\\
    A_{2,1} & A_{2,2}
    \end{matrix}}^{-1}= \PARENS{\begin{matrix}
A_{1,1}^{-1}+A_{1,1}^{-1}A_{1,2}( A_{2,2}-A_{2,1}A_{1,1}^{-1}A_{1,2})^{-1}A_{2,1}A_{1,1}^{-1}
 &-A_{1,1}^{-1}A_{1,2}( A_{2,2}-A_{2,1}A_{1,1}^{-1}A_{1,2})\\
-( A_{2,2}-A_{2,1}A_{1,1}^{-1}A_{1,2})^{-1}A_{2,1}A_{1,1}^{-1} &( A_{2,2}-A_{2,1}A_{1,1}^{-1}A_{1,2})^{-1}
\end{matrix}}
\end{align*}
we get
\begin{align*}
      \Theta_{\{1, 2\}}(A)=&A_{3,3}-A_{3,1}A_{1,1}^{-1}A_{1,3}+A_{3,1}A_{1,1}^{-1}A_{1,2}
      \big(A_{2,2}-A_{2,1}A_{1,1}^{-1}A_{1,2}\big)^{-1}A_{2,1}A_{1,1}^{-1}A_{1,3}\\&-
      A_{3,2}\big(A_{2,2}-A_{2,1}A_{1,1}^{-1}A_{1,2}\big)^{-1}A_{2,1}A_{1,1}^{-1}A_{1,3}-A_{3,1}A_{1,1}^{-1}A_{1,2}
      \big(A_{2,2}-A_{2,1}A_{1,1}^{-1}A_{1,2}\big)^{-1}A_{2,3}\\
      &+A_{3,2}\big(A_{2,2}-A_{2,1}A_{1,1}^{-1}A_{1,2}\big)^{-1}A_{2,3}
\end{align*}
  which is identical to \eqref{1.2}, so that
\begin{align*}
  \Theta_2(\Theta_1(A))=\Theta_{\{1, 2\}}(A).
\end{align*}

 Given any set $I=\{i_1,\dots,i_m\}\subset\{1,\dots,k\}$ the heredity principle allows us to define the quasi-determinant\footnote{In \cite{olver} it is defined as the Schur complement with respect a big block built up by the blocks determined by  the indices $I$. }
 \begin{align*}
   \Theta_I(A)=\Theta_{i_1}(\Theta_{i_2}(\cdots\Theta_{i_m}(A)\cdots))
 \end{align*}
 and the $\ell$-th quasi-determinant is
 \begin{align*}
   \Theta^{(\ell)}(A)=\Theta_{ \{1,\dots,\ell-1,\ell+1,\dots,k\}}(A)=|A|_{\ell,\ell}=\begin{vmatrix}
    A_{1,1} & A_{1,2}&\dots&A_{1,\ell}&\dots &A_{1,k}\\
    A_{2,1} & A_{2,2}&\dots&A_{2,\ell}&\dots &A_{2,k}\\
   \vdots &\vdots & &\vdots&&\vdots\\
   \\
   A_{\ell,1} & A_{\ell,2}&\dots &\boxed{A_{\ell,\ell}}&\dots&A_{\ell,k}\\
  \vdots &\vdots & &\vdots&&\vdots\\
     A_{k,1} & A_{k,2}&\dots& A_{k,\ell}&\dots&A_{k,k}
  \end{vmatrix}.
 \end{align*}
 The last quasi-determinant is denoted by
 \begin{align*}
   \Theta_*(A)=\Theta^{(k)}(A)= |A|_{k,k}=\begin{vmatrix}
    A_{1,1} & A_{1,2}&\dots &A_{1,k}\\
    A_{2,1} & A_{2,2}&\dots &A_{2,k}\\
   \vdots &\vdots &&\vdots\\
     A_{k,1} & A_{k,2}&\dots &\boxed{A_{k,k}}
  \end{vmatrix}.
 \end{align*}
 \subsection{Quasi-determinants and Gauss--Borel factorization}

 An important application of quasi-determinants presented in \cite{olver} is the characterization of the factors of the block Gauss--Borel factorization of a partitioned matrix $A$ as in  \eqref{partitioned} (in the case of interest in this paper a Cholesky factorization) in terms of quasi-determinants of $A$. To present this result we need to introduce for two sets of indices $\{i_1,\dots,i_m\}$ and $\{j_1,\dots,j_m\}$ subset, with $m$ elements, of $\{1,\dots,k\}$
\begin{align*}
  A^{i_1\dots i_m}_{j_1\dots j_m}
  =\PARENS{\begin{matrix}
    A_{i_1,j_1} & \dots &A_{i_1,j_m}\\
    \vdots & &\vdots\\
    A_{i_m,j_1}& \dots &A_{i_m,j_m}
  \end{matrix}}.
\end{align*}

\begin{theorem}[Theorem 3 in \cite{olver}]
A regular block matrix as  in \eqref{partitioned} factors as
\begin{align*}
  A= L D V
\end{align*}
  with $L=(L_{i,j})$, $D=\diag(D_1,\dots, D_k)$ and $V=(V_{i,j})$, where
  \begin{align*}
    L_{i,j}=&\ccases{
      0, & i<j,\\
      \Theta_*(A^{12\dots j-1,i}_{12\dots j-1,j})\Theta_*(A^{12\dots j}_{12\dots j})^{-1},& i\geq  j,}\\
      D_j=&\Theta_*(A^{12\dots j}_{12\dots j}),\\
      V_{i,j}=&\ccases{
      0, & i>j,\\
    \Theta_*(A^{12\dots i}_{12\dots i})^{-1}  \Theta_*(A^{12\dots i-1,i}_{12\dots i-1,j}),& i\leq  j.
    }
  \end{align*}
  Regularity of $A$ requires invertibility of $\Theta_*(A^{12\dots j}_{12\dots j})$ for $j=1,\dots,k$.
\end{theorem}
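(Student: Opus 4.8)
The plan is to argue by induction on the number $k$ of block rows and columns, treating the factorization as the matrix outcome of iterated block Gaussian elimination and then matching each resulting block against a quasi-determinant by means of the heredity principle established in Appendix~\ref{qd}. Write $A^{1\cdots j}_{1\cdots j}$ for the leading principal block submatrix of order $j$, and recall from Appendix~\ref{schur} that the Schur complement of the leading $(j-1)$-block inside $A^{1\cdots j}_{1\cdots j}$ is precisely the last quasi-determinant, $\Theta_*(A^{1\cdots j}_{1\cdots j}) = A^{1\cdots j}_{1\cdots j}/A^{1\cdots j-1}_{1\cdots j-1}$, so the hypothesis that every $\Theta_*(A^{1\cdots j}_{1\cdots j})$ be invertible is exactly what is needed for the elimination to proceed at each stage. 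The base case $k=1$ is immediate, with $L=V=\I$ and $D_1 = A_{1,1} = \Theta_*(A^{1}_{1})$.

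For the inductive step I would first record that, because $L$ is block lower unitriangular and $V$ is block upper unitriangular, the leading submatrices factor compatibly, $A^{1\cdots j}_{1\cdots j} = L^{1\cdots j}_{1\cdots j}\, D^{1\cdots j}\, V^{1\cdots j}_{1\cdots j}$, where $D^{1\cdots j} = \diag(D_1,\dots,D_j)$. Writing this in $2\times 2$ block form with corner $A^{1\cdots j-1}_{1\cdots j-1}$ and computing the Schur complement, the off-diagonal contributions cancel and one is left with $A^{1\cdots j}_{1\cdots j}/A^{1\cdots j-1}_{1\cdots j-1} = D_j$; combined with the identification above this gives $D_j = \Theta_*(A^{1\cdots j}_{1\cdots j})$.

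The off-diagonal blocks require the same mechanism applied to bordered submatrices. For $i \geq j$ I would consider the order-$j$ submatrix $A^{1\cdots j-1,\,i}_{1\cdots j-1,\,j}$ with rows $1,\dots,j-1,i$ and columns $1,\dots,j-1,j$. Expanding $A = LDV$ entrywise and exploiting the triangular supports of $L$ and $V$ --so that only summation indices $m \leq j$ survive in the relevant blocks-- one finds that the bottom-left and top-right borders both factor through $D^{1\cdots j-1}$, whence the Schur complement with respect to $A^{1\cdots j-1}_{1\cdots j-1}$ collapses to $L_{i,j}D_j$. The heredity principle then identifies this Schur complement with $\Theta_*(A^{1\cdots j-1,\,i}_{1\cdots j-1,\,j})$, yielding $L_{i,j} = \Theta_*(A^{1\cdots j-1,\,i}_{1\cdots j-1,\,j})\,\Theta_*(A^{1\cdots j}_{1\cdots j})^{-1}$; the transposed bookkeeping for columns produces the dual formula for $V_{i,j}$.

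The main obstacle will be the index bookkeeping in this last step: one has to verify carefully that, owing to the strict triangularity of $L$ and $V$ together with the varying (possibly rectangular) block sizes, the only term failing to cancel in the bordered Schur complement is exactly $L_{i,j}D_j$, and that every intermediate inverse invoked is legitimate. This is guaranteed once one knows, by the induction hypothesis together with the invertibility of all $\Theta_*(A^{1\cdots j}_{1\cdots j})$, that each leading block $A^{1\cdots j-1}_{1\cdots j-1}$ is invertible, which is precisely the regularity requirement stated in the theorem.
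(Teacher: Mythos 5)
Your proposal is correct. The paper never proves this statement itself---it is quoted as Theorem 3 of \cite{olver}, and Proposition \ref{qd1} simply invokes it---so the only available comparison is with the cited source, and your argument (existence by block Gaussian elimination under the stated invertibility hypotheses, the identity $A^{1\cdots j}_{1\cdots j}=L^{1\cdots j}_{1\cdots j}D^{1\cdots j}V^{1\cdots j}_{1\cdots j}$ forced by the triangular supports, and the bordered Schur complements collapsing to $L_{i,j}D_j$ and $D_iV_{i,j}$, identified with $\Theta_*$ via the heredity principle) is exactly the standard proof underlying Olver's result, with all the key cancellations checking out.
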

For a symmetric case, $A=A^\top$, we have %, as is our moment matrix we have
\begin{align*}
  \Theta_*(A^{12\dots i}_{12\dots i})&=  \Theta_*(A^{12\dots i}_{12\dots i})^\top, &
  \Theta_*(A^{12\dots i-1,i}_{12\dots i-1,j}) &=   \Theta_*(A^{12\dots i-1,j}_{12\dots i-1,i})^\top.
\end{align*}

\section{Several complex variables}\label{scv}\settocdepth{section}

In this paper we  discuss multivariate second kind functions in the realm of the block Cholesky factorization and for that aim some facts regarding complex analysis in several variables is needed. Here we just recall them, see for example \cite{begehr,scv0,scv1,scv2} for more information
\begin{enumerate}
\item Given the vector  $\boldsymbol r=(r_1,\dots,r_D)^\top\in\R_+^D$, we consider the polydisk
\begin{align*}
  \Delta(\boldsymbol r)=\{\boldsymbol z=(z_1,\dots,z_D)^\top:|z_i|<r_i, i=1,\dots,D\}\subset\C^D
\end{align*}
centered at the origin of polyradius $\boldsymbol r$. Its distinguished boundary is the $D$-dimensional torus
\begin{align*}
\T^D(\boldsymbol r)=\{\boldsymbol z\in\C^D: |z_i|=r_i, i=1,\dots,D\}.
\end{align*}
Recall that the border of the polydisk $\Gamma=\partial\Delta$ splits in $D$ sets of dimension $2D-1$, being the distinguished border its skeleton; i.e. the intersection of all them. The distinguished border is also known as Shilov border.
\item Given two polyradii $\boldsymbol r$ and $\boldsymbol R$ we construct the associated polyannulus centered at the origin
\begin{align*}
  A^D(\boldsymbol r, \boldsymbol R)\coloneq \{\z\in\C^D: r_i< z_i< R_i, i=1,\dots, D\}.
\end{align*}
\item A set $A\subset \C^n$ is a complete Reinhardt domain if the unit polydisk acts on it by componentwise multiplication.
    \item Any set $A\subset\C^D$ is called Reinhardt ($D$-circled)  if for each $\boldsymbol \lambda\coloneq (\Exp{\operatorname{i}\theta_1},\dots,\Exp{\operatorname{i}\theta_D})\in\T^D$ with $\theta_i\in[0,2\pi)$ for every $\boldsymbol c\in A$ we have that $(\Exp{\operatorname{i}\theta_1}c_1,\dots, \Exp{\operatorname{i}\theta_D}c_D)^\top\in A$; i.e., $\T^D$ acts on $A$ componentwise.
  \item If $\mathscr D\subset\C^D$ is the domain of convergence of a Laurent series $L(\z)$, then for any $\boldsymbol c=(c_1,\dots,c_D)^\top\in\mathscr D$ we have that $\T^D(|c_1|,\dots,|c_D|)\subset\Ds$.
  Thus, the domain of convergence is a Reinhardt ($D$-circled) domain.
  \item The domain of convergence $\Ds_{\ele_a}$ is logarithmically convex; i. e.,  the set
  \begin{align*}
\log\Ds_{\ele_a}\coloneq \{(\log|z_1|,\dots,|z_D|): (z_1,\dots,z_D^\top\in\Ds_{\ele_a}\}
  \end{align*}
 is convex (given any pair of  points $\boldsymbol c_1,\boldsymbol c_2\in\Ds_{\ele_a}$ the segment $[\boldsymbol c_1,\boldsymbol c_2]\coloneq \{(1-t)\boldsymbol c_1+t\boldsymbol c_2: t\in[0,1]\}\subset \Ds_{\ele_a}$).
      \item For all polyradii $\boldsymbol r$ and $\boldsymbol R$ the annulus $A^D(\boldsymbol r,\boldsymbol R)$ is a Reinhardt domain. Any Reinhardt domain is the union of polyannuli and so is the domain of convergence $\Ds$.
               \item The polydisk of convergence of a power series is such that any other polydisk  $\Delta(\boldsymbol r')$ with $r_j<r_j'$ for some $j$ contains points in where the power series diverge.
      \item The Laurent  series is locally normally summable in its domain of convergence and therefore locally absolutely uniformly summable.  We remind the reader that a Laurent series $\sum_{\kk\in\Z^D}a_{\kk}\z^{\kk}$ is locally normally summable if for any compact set $K\subset\Ds$ there exists $C>0$ and $\theta\in(0,1)$ such that $|a_{\kk}\z^{\kk}|\leq C\theta^{|\kk|}$ for $\z\in K$ and $\kk\in\Z^D$.
      \item The function $L(\z)$ is holomorphic (holomorphic in each variable $z_i$, $i=1,\dots, D$) in $\Ds$, which is its domain of holomorphy.

          \item Given a holomorphic function $L(\z)$ in $A^D_{\boldsymbol c}(\boldsymbol r,\boldsymbol R)$ (a polyannullus centered at $\boldsymbol c\in\C^D$),
          and a polyradius $\boldsymbol \rho$ such that $r_i<\rho_i<R_i$, $i=1,\dots,D$ then
              \begin{align*}
                L(\z)&=\sum_{\kk\in\Z^D}c_{\kk}(\z-\boldsymbol c)^{\kk},&
                c_{\kk}&=\frac{1}{(2\pi\operatorname{i})^D}\int_{\T_{\boldsymbol c}^D(\boldsymbol\rho)}\frac{L(\boldsymbol w)}{(\boldsymbol w-\boldsymbol c)^{\kk}} \dd w_1 \dots \dd w_D,
              \end{align*}
              where $\T^D_{\boldsymbol c}(\boldsymbol\rho)$ is the distinguished border of the polycircle centered at $\boldsymbol c$ with polyradius $\boldsymbol \rho$.
\end{enumerate}

\section{Proofs}\label{proofss}
\settocdepth{section}
%\addtocontents{toc}{\protect\setcounter{tocdepth}{0}}
\subsection{Proof of Proposition \ref{pro:cholesky}}\label{proof1}
\begin{proof}Assuming  $\det A\neq 0$ for any block matrix $M=\left(\begin{smallmatrix}
    A & B\\
    C & D
   \end{smallmatrix}\right)$ we can write in terms of Schur complements
\begin{align*}
M&=\PARENS{\begin{matrix}
    \mathbb{I} & 0\\
    CA^{-1} & \mathbb{I}
   \end{matrix}}
\PARENS{\begin{matrix}
    A & 0\\
    0 & M/ A
   \end{matrix}}
\PARENS{\begin{matrix}
    \mathbb{I} & A^{-1}B\\
    0 & \mathbb{I}
   \end{matrix}}.
\end{align*}
Thus, as  $\det G^{[k]}\neq 0$ $\forall k=0,1,\dots$, we can write
\begin{align*}
 G^{[\ell+1]}&=\PARENS{\begin{array}{c|c}
    \mathbb{I}^{[\ell]}              &    0    \\\hline\bigstrut[t]
    v^{[\ell],[\ell-1]}      &        \mathbb{I}_{[\ell] }
    \end{array}}
\PARENS{\begin{array}{c|c}
  G^{[\ell]}       & 0       \\\hline\bigstrut[t]
       0                 &              G^{[\ell+1]}/ G^{[\ell]}
      \end{array}}
\PARENS{\begin{array}{c|c}
\I^{[\ell]}           &  (v^{[\ell],[\ell-1]})^{\top}      \\\hline\bigstrut[t]
           0                   &  \I_{[\ell] }
      \end{array}},
\end{align*}
 where
 \begin{align*}
v^{[\ell],[\ell-1]}\coloneq \PARENS{\begin{matrix}
               v_{[\ell],[0]} & v_{[\ell],[1]} & \dots & v_{[\ell],[\ell-1]}
              \end{matrix}}
 \end{align*}
Applying the  same factorization   to $G^{[\ell]}$
we get
\begin{multline*}
G^{[\ell+1]}=\PARENS{\begin{array}{c|cc}
     \mathbb{I}^{[\ell-1]}              &     0       &  0  \\\hline\bigstrut[t]
      r^{[\ell-1][\ell-2]}                  & \mathbb{I}_{[\ell-1] }   &  0    \\
      s^{[\ell][\ell-2]}                  &      t_{[\ell][\ell-1]}      &  \mathbb{I}_{[\ell] }
      \end{array}}
\PARENS{\begin{array}{c|cc}
   G^{[\ell-1]}                        &      0     &    0\\\hline\bigstrut[t]
      0              & G^{[\ell]}/ G^{[\ell-1]} &  0    \\
      0              &          0      &  G^{[\ell+1]}/ G^{[\ell]}
      \end{array}}\\\times
\PARENS{\begin{array}{c|cc}
   \mathbb{I}^{[\ell-1]}                & (r^{[\ell-1][\ell-2]})^\top  &  (s^{[\ell][\ell-2]})^\top  \\\hline\bigstrut[t]
             0              & \mathbb{I}_{[\ell-1] }   &    (t_{[\ell],[\ell-1]})^\top \\
              0              &      0      &  \mathbb{I}_{[\ell] }
      \end{array}}.
\end{multline*}
Here the zeroes indicates zero rectangular matrices of different sizes. Finally, the iteration of these factorizations leads to
\begin{align*}
 G^{[\ell+1]}=\PARENS{\begin{matrix}
  \mathbb{I}_{[|0] }&     0     &\dots&0\\
         *   &\mathbb{I}_{[1] }&    \ddots  &\vdots\\
\vdots    &\ddots          &\ddots&0\\
*       &\dots   &*   &\mathbb{I}_{[\ell] }\\
\end{matrix}}\diag(G^{[1]}/ G^{[0]},G^{[2]}/ G^{[1]},\dots, &G^{[\ell+1]}/ G^{[\ell]})
\PARENS{\begin{matrix}
  \mathbb{I}_{[|0] }&     0     &\dots&0\\
         *   &\mathbb{I}_{[1] }&    \ddots  &\vdots\\
\vdots    &\ddots          &\ddots&0\\
*       &\dots   &*   &\mathbb{I}_{[\ell] }\\
\end{matrix}}^\top
\end{align*}
Since this would have been valid for any $\ell$ it would also hold for the direct
limit $\lim\limits_{\longrightarrow}G^{[\ell]}$.
\end{proof}
\subsection{Proof of Proposition \ref{cauchy}}\label{proof2}
\begin{proof}
  \begin{flalign*}
    C_{[\ell]}(\z)=&\sum_{n=0}^\infty (SG)_{[\ell],[n]}\chi^*_{[n]}(\z) & \text{use the Cholesky factorization \eqref{cholesky} }\\
    =&\sum_{n=0}^\infty\sum_{k=0}^\ell S_{[\ell],[k]}G_{[k],[n]}\chi^*_{[n]}(\z)\\
    =&\sum_{n=0}^\infty\int_\Omega\sum_{k=0}^\ell S_{[\ell],[k]}\chi_{[k]}(\y)\dd\mu(\y)\big(\chi_{[n]}(\y)\big)^\top\chi_{[n]}^*(\z) & \text{recall \eqref{eq:Gkl}}\\
    =&\sum_{n=0}^\infty\int_\Omega P_{[\ell]}(\y)\dd\mu(\y)\big(\chi_{[n]}(\y)\big)^\top\chi_{[n]}^*(\z) & \text{because \eqref{eq:polynomials}}\\
    =&\int_\Omega P_{[\ell]}(\y)\dd\mu(\y)\sum_{n=0}^\infty\big(\chi_{[n]}(\y)\big)^\top\chi_{[n]}^*(\z)  &\text{\small interchange of series and integral}\\
    =&\int_\Omega P_{[\ell]}(\y)\dd\mu(\y)\frac{1}{(z_1-y_1)\cdots(z_D-y_D)} & \text{recall \eqref{eq:chi-chi*}}.
      \end{flalign*}
\end{proof}

\subsection{Proof of Proposition \ref{pro3}}\label{proof3}
\begin{proof}From \eqref{lambdaTchi} we deduce that
   \begin{align*}
 \Big[\prod_{i=1}^k (\Lambda_{a_i}^\top-q_{a_i})\Big]\chi^*=&\Big[\prod_{i=1}^k (x_{a_i}\Pi_{a_i}-q_{a_i})\Big]\chi^*
  \end{align*}
but
\begin{align*}
  \prod_{i=1}^k (x_{a_i}\Pi_{a_i}-q_{a_i})=&\prod_{i=1}^k (x_{a_i}-q_{a_i}-x_{a_i}\Pi_{a_i}^\perp)
  \\=&\Big[\prod_{i=1}^k(x_{a_i}-q_{a_i})\Big]+(-1)^k\Big[\prod_{i=1}^kx_{a_i}\Pi^\perp_{a_{i}}\Big]\\
  &\hspace*{3cm}+\sum_{j=1}^{k-1}\frac{(-1)^j}{(k-j)!j!}\sum_{\sigma\in \mathfrak S_k}\Big(\Big[\prod_{i=j+1}^k\big(x_{a_{\sigma i}}-q_{a_{\sigma i}}\big)\Big]\Big[\prod_{i=1}^jx_{a_{\sigma i}}\Pi^\perp_{a_{\sigma i}}\Big]\Big)
\end{align*}
and \eqref{piachi} implies the result.
\end{proof}
\subsection{Proof of Proposition \ref{pro4}}\label{proof4}
\begin{proof}
  Just follow the chain of equalities
      \begin{flalign*}
    \Big[    \prod_{i=1}^k (J_{a_i}-q_{a_i})\Big]C=&S \prod_{i=1}^k (\Lambda_{a_i}-q_{a_i})S^{-1}H(S^{-1})^\top\chi^* & \!\!\!\!\text{use \eqref{def:C} and \eqref{def:J}}\\
    =&S \prod_{i=1}^k (\Lambda_{a_i}-q_{a_i})G\chi^*& \text{use \eqref{cholesky} }\\
    =&SG\prod_{i=1}^k (\Lambda_{a_i}^\top-q_{a_i})\chi^*& \text{from \eqref{eq:symmetry}}\\
    =&H(S^{-1})^\top\prod_{i=1}^k (\Lambda_{a_i}^\top-q_{a_i})\chi^*& \text{follows from \eqref{cholesky}}.
  \end{flalign*}
Finally,  \eqref{lambdaTq} implies the announced result.
\end{proof}

\subsection{Proof of Proposition \ref{pro5}}\label{proof5}
\begin{proof}
   \begin{enumerate}
 \item Is proven as follows
    \begin{align*}
      T_aW_1=&(T_aS)(T_aW_0)\\
      =& (T_aS)( \n_a\cdot\boldsymbol\Lambda-q_a)W_0\\
      =&(T_aS)( \n_a\cdot\boldsymbol\Lambda-q_a)S^{-1}W_1\\
      =&\omega_a W_1,\\
      T_a W_2=&(T_aH)\big(T_aS^{-1}\big)^\top\\
      =& (T_aH)\big(T_aS^{-1}\big)^\top \big(S^\top H^{-1}\big)\big(S^\top H^{-1}\big)^{-1}\\
      =&\omega_a  W_2.
    \end{align*}
    \item For the first equation observe that
    \begin{align*}
      (T_a (\n_b\cdot\boldsymbol J))\omega_a=&(T_aS) (\n_b\cdot\boldsymbol\Lambda)(T_aS)^{-1}(T_aH)\big((T_aS)^{-1}\big)^\top S^\top H^{-1}\\
      =&(T_aS)  (\n_b\cdot\boldsymbol\Lambda) (T_aG) S^\top H^{-1}\\
      =&(T_aS) (T_aG)  (\n_b\cdot\boldsymbol\Lambda)^\top S^\top H^{-1}\\
      =&(T_aH)\big((T_aS)^{-1}\big)^\top ( \n_b\cdot\boldsymbol\Lambda)^\top S^\top H^{-1}\\
      =&(T_aH)\big((T_aS)^{-1}\big)^\top S^\top H^{-1} H \big(S^{-1}\big)^\top ( \n_b\cdot\boldsymbol\Lambda)^\top S^\top H^{-1}\\
      =&\omega_aH( \n_b\cdot\boldsymbol J)^\top H^{-1}\\
      =&\omega_a  (\n_b\cdot\boldsymbol J).
    \end{align*}
    and for the second one
    \begin{align*}
     M_b(T_b M_a)=S(T_bS)^{-1}(T_bS)(\n_a\cdot\boldsymbol\Lambda)(T_bS)^{-1}=(\n_a\cdot\boldsymbol J)M_b.
    \end{align*}
    \item For the first equation from \eqref{eq: linear W} we get $T_b(T_aW)=(T_b\omega_a)(T_bW)=\big[(T_b\omega_a)\omega_b\big]W$ and interchanging $a \leftrightarrow b$
        we get
    $ \big[(T_a\omega_b)\omega_a-(T_b\omega_a)\omega_b\big]SW_0=0$. For the second equation, from the definitions, it is easy to see that
    \begin{align*}
     M_a(T_a M_b)=S (T_a T_b S)^{-1}=M_b (T_bM_a).
    \end{align*}

      \end{enumerate}
\end{proof}
\subsection{Proof of Proposition \ref{pro6}}\label{proof6}
\begin{proof}

From \eqref{lambdaTchi}  we get
\begin{align*}
  (\n_a\cdot\Lambda-q_a)^\top\chi^*=&\Big(\sum_{b=1}^D n_{a,b}x_b\Pi_b-q_a\Big)\chi^*
  \\=&(\n_a\cdot\x-q_a)\chi^*-\Big(\sum_{b=1}^D n_{a,b}x_b\Pi_b^\bot\Big)\chi^*
  \\=&(\n_a\cdot\x-q_a)\chi^*-\Big(\sum_{b=1}^D n_{a,b} \lim_{x_b\to\infty}x_b\chi^*\Big)
  \\=&(\n_a\cdot\x-q_a)\chi^*-\n_a\cdot\widehat{\boldsymbol{\chi^*}}
\end{align*}
where
\begin{align*}
\widehat{ \boldsymbol{\chi^*}}=(\lim_{x_1\to\infty}x_1\chi^*,\dots,\lim_{x_D\to\infty}x_D\chi^*).
\end{align*}
Consequently
  \begin{flalign*}
  M_a (T_aC)&=H\big((T_aS)( \n_a\cdot\boldsymbol\Lambda-q_a)S^{-1}\big)^\top(T_aH)^{-1}(T_aH)
   \Big(\big(T_aS\big)^{-1}\Big)^\top\chi^*    & \text{from \eqref{def:C} and  \eqref{eq:Ma+alternative}}\\
   &=H\big(S^{-1}\big)^\top( \n_a\cdot\boldsymbol\Lambda-q_a)^\top\chi^*
  \\&=(\n_a\cdot\x-q_a)C-\n_{a}\cdot\widehat{\boldsymbol C},
\end{flalign*}
that together with
\begin{align*}
  \omega_a C&=T_aC
\end{align*}
imply the result.
\end{proof}
\subsection{Proof of Theorem \ref{teo1}}\label{proof7}
\begin{proof}
Previously to the proof we need
\begin{lemma}\label{l1}
  The following relation is satisfied by the second kind functions
  \begin{align}\label{eq:Cq}\begin{multlined}
     M(TC)= \Big[\prod_{a=1}^D(x_{a}-q_{a})\Big]C+(-1)^{D}\widehat{C}_{1,\dots,D}
    +\sum_{j=1}^D\frac{(-1)^j}{(D-j)!j!}\sum_{\sigma\in \mathfrak S_D}\Big(\Big[\prod_{a=j+1}^D\big(x_{\sigma a}-q_{\sigma a}\big)\Big]
    \widehat{C}_{\sigma 1,\dots,\sigma j}\Big),
  \end{multlined}
  \end{align}
\end{lemma}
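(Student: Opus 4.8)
The plan is to establish Lemma \ref{l1} as the $D$-fold analogue of the single-variable relation \eqref{eq: MC} from Proposition \ref{pro6}, which reads $\rho_{a,[k]}(T_aC)_{[k-1]}+(T_aC)_{[k]}=(\n_a\cdot\x-q_a)C_{[k]}-\n_a\cdot\boldsymbol{\widehat C}_{[k]}$, i.e. $M_a(T_aC)=(\n_a\cdot\x-q_a)C-\n_a\cdot\boldsymbol{\widehat C}$ in the matrix form derived in Appendix \ref{proof6}. Since the total translation is $T=\prod_{a=1}^DT_a$ and $M=S(TS)^{-1}$ telescopes into the composition $M=M_{1}(T_1M_2)\cdots(T_1\cdots T_{D-1}M_D)$ by the factorization \eqref{factorM-}, the natural route is to iterate the elementary relation $M_a(T_aC)=(\n_a\cdot\x-q_a)C-\n_a\cdot\boldsymbol{\widehat C}$ once for each direction $a=1,\dots,D$. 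Here I specialize to $N=\I_D$ so that $\n_a=\ee_a$ and $\n_a\cdot\x=x_a$, matching the hypothesis of this subsection.

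First I would observe that, exactly as in the proof of Proposition \ref{pro3} (Appendix \ref{proof3}), the operator identity
\begin{align*}
\Big(\Lambda_a^\top-q_a\Big)\chi^*=(x_a-q_a)\chi^*-\lim_{x_a\to\infty}\big(x_a\chi^*\big)
\end{align*}
is the building block, and that the composition of the $M_a$'s acting on $C$ mirrors the composition of the commuting operators $(\Lambda_a^\top-q_a)$ acting on $\chi^*$ through the dressing $C=H(S^{-1})^\top\chi^*$ together with the symmetry \eqref{eq:symmetry}. Concretely, I would show by the same argument as in Appendix \ref{proof4} that
\begin{align*}
M(TC)=H\big(S^{-1}\big)^\top\Big[\prod_{a=1}^D(\Lambda_a^\top-q_a)\Big]\chi^*,
\end{align*}
using \eqref{eq:alternativeMD} to rewrite $M$ and cancelling the $(TS)$-factors against the Cholesky factorization of $TG$. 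Then the whole computation reduces to expanding the commuting product $\prod_{a=1}^D(\Lambda_a^\top-q_a)$ applied to $\chi^*$.

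At this stage Proposition \ref{pro3} does the decisive work: taking there the full set of $D$ distinct labels $a_1,\dots,a_D=1,\dots,D$ and the values $q_{a_i}=q_i$, equation \eqref{lambdaTq} yields precisely
\begin{align*}
\Big[\prod_{a=1}^D(\Lambda_a^\top-q_a)\Big]\chi^*=\Big[\prod_{a=1}^D(x_a-q_a)\Big]\chi^*+(-1)^D\lim_{x_1\to\infty}\cdots\lim_{x_D\to\infty}\Big(\Big[\prod_{a=1}^Dx_a\Big]\chi^*\Big)+\sum_{j=1}^{D-1}\frac{(-1)^j}{(D-j)!j!}\sum_{\sigma\in\mathfrak S_D}\Big(\Big[\prod_{a=j+1}^D(x_{\sigma a}-q_{\sigma a})\Big]\lim_{x_{\sigma 1}\to\infty}\cdots\lim_{x_{\sigma j}\to\infty}\Big(\Big[\prod_{a=1}^jx_{\sigma a}\Big]\chi^*\Big)\Big).
\end{align*}
Multiplying on the left by $H(S^{-1})^\top$ and recalling the definition \eqref{def:C} of $C$ and the definition \eqref{hatsecond} of the reduced second kind functions $\widehat C_{a_1,\dots,a_j}$ — which are exactly the dressed iterated limits $H(S^{-1})^\top\lim_{x_{a_1}\to\infty}\cdots\lim_{x_{a_j}\to\infty}([\prod x_{a_i}]\chi^*)$ — converts each limit term into the corresponding $\widehat C$, giving \eqref{eq:Cq}. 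The main obstacle I anticipate is purely bookkeeping: matching the symmetric-group combinatorial factor $\frac{(-1)^j}{(D-j)!j!}$ and verifying that the dressing $H(S^{-1})^\top$ commutes past the limits in $x_a\to\infty$ so that it genuinely produces the $\widehat C_{\sigma 1,\dots,\sigma j}$ (this requires the dominated-convergence interchange already justified in the proof of Proposition after \eqref{cauchyintegral}, plus the fact that $\Pi_a^\perp$ projects onto the degree-lowering part as in \eqref{piachi}). Once the $j=D$ term is identified as $(-1)^D\widehat C_{1,\dots,D}$ and the pure product term as $[\prod_a(x_a-q_a)]C$, the stated identity \eqref{eq:Cq} follows, and it will then serve as the seed for the telescoping that proves Theorem \ref{teo1}.
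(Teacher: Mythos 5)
Your proposal is correct and follows essentially the same route as the paper's own proof: despite your opening remark about telescoping the factorization \eqref{factorM-}, what you actually carry out --- rewriting $M$ via \eqref{eq:alternativeMD} so that the $(TS)$- and $(TH)$-factors cancel against $TC$, leaving $H(S^{-1})^\top\big[\prod_{a=1}^D(\Lambda_a^\top-q_a)\big]\chi^*$, and then invoking Proposition \ref{pro3} with the full set of $D$ labels and dressing the limit terms into the reduced functions $\widehat C$ --- is exactly the computation in Appendix \ref{proof7}. The only discrepancy is cosmetic: your expansion (sum to $j=D-1$ plus a separate $(-1)^D\widehat C_{1,\dots,D}$ term) is the form consistent with Proposition \ref{pro3}, whereas the lemma as displayed lets the sum run to $j=D$, a harmless indexing slip in the paper.
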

\begin{proof}
  Is a consequence of
\begin{flalign*}
  M(TC)=&(H)(S^{-1})^{\top}\Big[\prod_{a=1}^D(\Lambda_a-q_a)\Big]^{\top}(TS)^{\top}(TH)^{-1}(TH)((TS)^{-1})^\top\chi^* & \text{from \eqref{eq:alternativeMD}}\\
  =&H(S^{-1})^{\top}\Big[\prod_{a=1}^D(\Lambda_a-q_a)\Big]^{\top}\chi^*\\
  =&(HS^{-1})^{\top}\Big(\Big[\prod_{a=1}^D(x_{a}-q_{a})\Big]\chi^* &\text{from \eqref{lambdaTq}}
  \\&+\sum_{j=1}^D\frac{(-1)^j}{(k-j)!j!}\sum_{\sigma\in \mathfrak S_D}\Big(\Big[\prod_{a=j+1}^D\big(x_{\sigma a}-q_{\sigma a}\big)\Big]
    \lim_{x_{\sigma 1}\to\infty}\cdots\lim_{ x_{\sigma j}\to\infty}\Big(\Big[\prod_{i=1}^jx_{\sigma i}\Big]\chi^*\Big)\Big)\Big).
\end{flalign*}
\end{proof}
Now, in Lemma \ref{l1} we put $\x=\boldsymbol q$ into \eqref{eq:Cq}, observe that $\widehat{C}_{[k],1,\dots,D}=\delta_{k,0}H_{[0]}$, and multiply by the  inverse of the lower unitriangular matrix $M$ to get
\begin{align}\label{TC}
(C)_{[k]}(\boldsymbol q)=& (-1)^D  \big(T^{-1}M^{-1}\big)_{[k],[0]}
T^{-1}H_{[0]}.
  \end{align}
According to \eqref{factorM-} with $\sigma=1$ we have
  \begin{align*}
    T^{-1}M^{-1}=\big(T_D^{-1}M_{D}\big)^{-1}\cdots\big(T_{2}^{-1}\cdots T_{D}^{-1}M_{2}\big)^{-1}(T^{-1}M_{1})^{-1}
\end{align*}
and given the particular structure of $M_a$, $a\in\{1,\dots,D\}$, we have the following simple expression
 \begin{align*}
\big(T_a^{-1}\cdots T_{D}^{-1}M_a\big)^{-1}&=\PARENS{\begin{matrix}
\I_{[0]}                          &   0                                          & 0                     & 0&               \cdots        \\
-\rho^{(a)}_{[1]}                                     &   \I_{[1]}                                          & 0                       &    0&          \cdots         \\
\rho^{(a)}_{[2]}\rho^{(a)}_{[1]}                      & -\rho^{(a)}_{[2]} &\I_{[2]}                      & 0                                 & \cdots\\
-\rho^{(a)}_{[3]}\rho^{(a)}_{[2]}\rho^{(a)}_{[1]}    &\rho^{(a)}_{[3]}\rho^{(a)}_{[2]}  & -\rho^{(a)}_{[3]} &\I_{[3]}          &  \ddots       & \\
         \vdots                           &      \quad\quad\quad\quad\ddots                  &         \quad  \quad \quad\ddots       &                        &\ddots
                 \end{matrix}}
 \end{align*}
for $a\in\{1,\dots,D\}$. This allows for explicit computation of the elements of the inverse matrix $M^{-1}$ and in particular leads to products over multisets, see Appendix \ref{symmetric},
\begin{align*}
\big(T^{-1}M^{-1}\big)_{[k],[0]}=(-1)^k \smashoperator{\sum_{1\leq a_1\leq\cdots \leq a_k\leq D }}\rho^{(a_k)}_{[k]}\cdots \rho^{(a_1)}_{[1]}
\end{align*}
so that \eqref{TC} reads
\begin{align*}
  C_{[k]}(\boldsymbol q)=(-1)^{k+D} \smashoperator{\sum_{1\leq a_1\leq\cdots \leq a_k\leq D }}\rho^{(a_k)}_{[k]}\cdots \rho^{(a_1)}_{[1]}T^{-1}H_{[0]}
\end{align*}
and recalling \eqref{rhoij2} we get the desired result.
\end{proof}
\subsection{Proof of Theorem \ref{translatedCD}}\label{prooftranslatedCD}
\begin{proof}
  To obtain the  result we consider the expressions of
$ P^{[\ell]}(\x)^\top  (H^{[\ell]})^{-1}M_a^{[\ell]}(T_aP^{[\ell]})(\y)$  when letting the operator between square brackets
act to the right or to the left. Acting on its right gives the Christoffel--Darboux kernel
\begin{flalign*}
  P^{[\ell]}(\x)^\top  (H^{[\ell]})^{-1}M_a^{[\ell]}(T_aP^{[\ell]})(\y)=&  P^{[\ell]}(\x)^\top  (H^{[\ell]})^{-1}S^{[\ell]}(T_aS^{[\ell]})^{-1}(T_aP^{[\ell]})(\y), & \text{consequence of \eqref{def:Ma}}\\
    =&P^{[\ell]}(\x)^\top  (H^{[\ell]})^{-1}P^{[\ell]}(\y)& \text{see \eqref{eq:polynomials}}\\
    =&K^{(\ell)}(\x,\y),& \text{see Definition \ref{DEFCD}.}
    \end{flalign*}
     If we act on the left, recalling \eqref{eq:Ma+alternative} we get
       \begin{align*}
     P^{[\ell]}(\x)^\top  (H^{[\ell]})^{-1}M_a^{[\ell]}(T_aP^{[\ell]})(\y)=&   P^{[\ell]}(\x)^\top  (H^{[\ell]})^{-1}H^{[\ell]}\Big(\big((T_aS)( \n_a\cdot\boldsymbol\Lambda-q_a)S^{-1}\big)^{[\ell]}\Big)^\top(T_aH^{[\ell]})^{-1}(T_aP^{[\ell]})(\y) \\
    =&\Big(\big((T_aS)( \n_a\cdot\boldsymbol\Lambda-q_a)S^{-1}\big)^{[\ell]}P^{[\ell]}(\x)\Big)^\top (T_aH^{[\ell]})^{-1}(T_aP^{[\ell]})(\y)
    \end{align*}
     Now, with the help of the block decomposition of any block semi-infinite matrix $M=\begin{psmallmatrix}
       M^{[\ell]}&M^{[\ell],[\geq \ell]}\\
        M^{[\geq \ell],[\ell]}&M^{[\geq \ell]}
     \end{psmallmatrix}$ we write
     \begin{align*}
       \big((T_aS)( \n_a\cdot\boldsymbol\Lambda-q_a)S^{-1}\big)^{[\ell]}P^{[\ell]}(\x)= \big((T_aS)( \n_a\cdot\boldsymbol\Lambda-q_a)S^{-1}P(\x)\big)^{[\ell]}-\big((T_aS)( \n_a\cdot\boldsymbol\Lambda-q_a)S^{-1}\big)^{[\ell],[\geq \ell]}P^{[\geq \ell]}(\x)
     \end{align*}
     In the one hand, if we take into account  \eqref{eq:polynomials} and \eqref{eigen} the first term in the LHS reads
     \begin{align*}
        \big((T_aS)( \n_a\cdot\boldsymbol\Lambda-q_a)S^{-1}P(\x)\big)^{[\ell]}&= (\n_a\cdot\x-q_a)T_aP^{[k]}(\x)
     \end{align*}
     and in the other hand, given the lower unitriangular form of $T_aS$ and $S$ and that $\n_a\cdot\boldsymbol\Lambda$ is zero but for the first superdiagonal
     \begin{align*}
       \big((T_aS)( \n_a\cdot\boldsymbol\Lambda-q_a)S^{-1}\big)^{[\ell],[\geq \ell]}=\PARENS{\begin{matrix}
         0_{[0],[\ell]} &0_{[0],[\ell+1]}&\dots\\
           0_{[1],[\ell]} &0_{[1],[\ell+1]}&\dots\\
           \vdots &\vdots\\
             0_{[\ell-2],[\ell]} &0_{[\ell-2],[\ell+1]}&\dots\\
           (\n\cdot\boldsymbol\Lambda)_{[\ell-1],[\ell]} &0_{[\ell-1],[\ell+1]}&\dots
       \end{matrix}}
     \end{align*}
     and therefore
     \begin{align*}
     \big((T_aS)( \n_a\cdot\boldsymbol\Lambda-q_a)S^{-1}\big)^{[\ell],[\geq \ell]}P^{[\geq \ell]}(\x)=\PARENS{
     \begin{matrix}
       0_{[0]}\\
       0_{[1]}\\
       \vdots\\
       0_{[\ell-2]}\\
        (\n\cdot\boldsymbol\Lambda)_{[\ell-1],[\ell]}P_{[\ell]}(\x)
            \end{matrix}}.
     \end{align*}
 Hence,
 \begin{multline*}
   \Big(\big((T_aS)( \n_a\cdot\boldsymbol\Lambda-q_a)S^{-1}\big)^{[\ell]}P^{[\ell]}(\x)\Big)^\top= (\n_a\cdot\x-q_a)(T_aP^{[\ell]}(\x))^\top\\-(0_{[0]},0_{[1]},\dots,0_{[\ell-2]},P_{[\ell]}(\x)^\top((\n_a\cdot\boldsymbol  \Lambda)_{[\ell-1],[\ell]})^\top)
 \end{multline*}
   so that
       \begin{multline*}
     P^{[\ell]}(\x)^\top  (H^{[\ell]})^{-1}M_a^{[\ell]}(T_aP^{[\ell]})(\y)=   (\n_a\cdot\x-q_a)T_a\big(P^{[\ell]}(\x)^\top   (H^{[\ell]})^{-1}P^{[\ell]}(\y)\big)\\-P_{[\ell]}(\x)^\top\big((\n_a\cdot\boldsymbol  \Lambda)_{[\ell-1],[\ell]}\big)^\top
   (T_aH_{[\ell-1]})^{-1}(T_aP_{[\ell-1]})(\y).
    \end{multline*}
   Consequently, equating both results we conclude
   \begin{align}\label{CDprevio}
   K^{(\ell)}(\x,\y)  = (\n_a\cdot\x-q_a)T_aK^{(\ell)}(\x,\y)-\big( (T_aH_{[\ell-1]})^{-1}(\n_a\cdot\boldsymbol  \Lambda)_{[\ell-1],[\ell]}P_{[\ell]}(\x)\big)^\top
(T_aP_{[\ell-1]})(\y).
   \end{align}
   Now we recall \eqref{SnS} in the following form
   \begin{align*}
 (T_aH_{[\ell-1]})^{-1}(\n_a\cdot\boldsymbol \Lambda)_{[\ell-1],[k]} P_{[\ell]}&=  (\n_a\cdot\x-q_a) (T_aH_{[\ell-1]})^{-1} (T_aP)_{[\ell-1]}-H_{[\ell-1]}^{-1}P_{[\ell-1]},
   \end{align*}
   and introduce it into \eqref{CDprevio} to get
     \begin{align*}
   K^{(\ell)}(\x,\y)  =& (\n_a\cdot\x-q_a)T_aK^{(\ell)}(\x,\y)-\big( (\n_a\cdot\x-q_a) (T_aH_{[\ell-1]})^{-1} (T_aP_{[\ell-1]}(\x))-H_{[\ell-1]}^{-1}P_{[\ell-1]}(\x))\big)^\top
(T_aP_{[\ell-1]})(\y)\\
   =& (\n_a\cdot\x-q_a)T_aK^{(\ell-1)}(\x,\y)+P_{[\ell-1]}(\x)^\top)H_{[\ell-1]}^{-1}(T_aP_{[\ell-1]})(\y).
   \end{align*}
\end{proof}
\subsection{Proof of Proposition \ref{betaH2step}}\label{proof beta H 2 step}
\begin{proof}
  Observe that  Proposition \ref{pro:chistoffel2} implies
\begin{align*}
  \omega_{[k],[k]} =&-\omega_{[k],[k+2]}\PARENS{\begin{matrix}
     \Sigma^{(1),k}_{[k+2]} & \Sigma^{(2),{k+1}}_{[k+2]}
    \end{matrix}}\PARENS{\begin{matrix}
    \Sigma^{(1),k}_{[k]} &\Sigma^{(2),k+1}_{[k]}\\
    \Sigma^{(1),k}_{[k+1]} &\Sigma^{(2),k+1}_{[k+1]}
    \end{matrix}}^{-1}\PARENS{\begin{matrix}
      \I_{[k]}\\0_{[k+1],[k]}
    \end{matrix}},\\
\omega_{[k],[k+1]}
=&-\omega_{[k],[k+2]}
\PARENS{\begin{matrix}
     \Sigma^{(1),k}_{[k+2]} & \Sigma^{(2),{k+1}}_{[k+2]}
    \end{matrix}}
    \PARENS{\begin{matrix}
    \Sigma^{(1),k}_{[k]} &\Sigma^{(2),k+1}_{[k]}\\
    \Sigma^{(1),k}_{[k+1]} &\Sigma^{(2),k+1}_{[k+1]}
    \end{matrix}}^{-1}
    \PARENS{\begin{matrix}
  0_{[k],[k+1]}\\    \I_{[k+1]}
    \end{matrix}}.
  \end{align*}
Now, from Proposition \ref{pro:omega2c} we get
  \begin{align*}
(T^{(1)}T^{(2)}H_{[k]})H_{[k]}^{-1} =&-\big((\n^{(1)}\cdot\boldsymbol\Lambda)(\n^{(2)}\cdot\boldsymbol\Lambda)\big)_{[k],[k+2]}\PARENS{\begin{matrix}
     \Sigma^{(1),k}_{[k+2]} & \Sigma^{(2),{k+1}}_{[k+2]}
    \end{matrix}}\PARENS{\begin{matrix}
    \Sigma^{(1),k}_{[k]} &\Sigma^{(2),k+1}_{[k]}\\
    \Sigma^{(1),k}_{[k+1]} &\Sigma^{(2),k+1}_{[k+1]}
    \end{matrix}}^{-1}\PARENS{\begin{matrix}
      \I_{[k]}\\0_{[k+1],[k]}
    \end{matrix}},
    \end{align*}
    and
    \begin{multline*}
 (T^{(1)}
 T^{(2)}\beta_{[k]})
 \big((\n^{(1)}\cdot\boldsymbol\Lambda)(\n^{(2)}
 \cdot\boldsymbol\Lambda)\big)_{[k-1],[k+1]}
=(\n\cdot\boldsymbol\Lambda)_{[k],[k+1]}+\big((\n^{(1)}\cdot\boldsymbol\Lambda)(\n^{(2)}\cdot\boldsymbol\Lambda)\big)_{[k],[k+2]}\Big(\beta_{[k+2]}\\-
\PARENS{\begin{matrix}
     \Sigma^{(1),k}_{[k+2]} & \Sigma^{(2),{k+1}}_{[k+2]}
    \end{matrix}}
    \PARENS{\begin{matrix}
    \Sigma^{(1),k}_{[k]} &\Sigma^{(2),k+1}_{[k]}\\
    \Sigma^{(1),k}_{[k+1]} &\Sigma^{(2),k+1}_{[k+1]}
    \end{matrix}}^{-1}
    \PARENS{\begin{matrix}
  0_{[k],[k+1]}\\    \I_{[k+1]}
    \end{matrix}}\Big).
  \end{multline*}
  From here the stated result follows easily by recalling the expressions of the quasi-determinant.
\end{proof}
\subsection{Proof of Theorem \ref{teo2}}\label{proof8}
\begin{proof}
  From \eqref{mdarbouxm} we have
\begin{align*}
\prod_{i=1}^m(\n^{(i)}\cdot\x-q^{(i)})\Big(\prod_{i=1}^m T_{\n^{(i)}}P\Big)_{[k]}(\x)=\omega_{[k],[k+m]} P_{[k+m]}(\x)+\omega_{[k],[k+m-1]} P_{[k+m-1]}(\x)+\cdots+\omega_{[k],[k]}P(\x)\\
  =\omega_{[k],[k+m]} \Big(P_{[k+m]}(\x)-\PARENS{\begin{matrix}
     \Sigma^{(1),k}_{[k+m]} & \dots&\Sigma^{(m),{k+m-1}}_{[k+m]}
    \end{matrix}}\PARENS{\begin{matrix}
        \Sigma^{(1),k}_{[k]} &\dots &\Sigma^{(m),k+m-1}_{[k]}\\ \vdots&&\vdots\\
    \Sigma^{(1),k}_{[k+m-1]} &\dots&\Sigma^{(m),k+m-1}_{[k+m-1]}
    \end{matrix}}^{-1}\PARENS{\begin{matrix}
      P_{[k]}(\x)\\
      \vdots\\
        P_{[k+m-1]}(\x)
    \end{matrix}}\Big)
\end{align*}
from where the result follows. For the second kind functions we proceed similarly
\begin{multline*}
\Big(\prod_{i=1}^m T_{\n^{(i)}}C\Big)_{[k]}(\x)=\omega_{[k],[k+m]} C_{[k+m]}(\x)+\omega_{[k],[k+m-1]} C_{[k+m-1]}(\x)+\cdots+\omega_{[k],[k]}C(\x)\\
  =\omega_{[k],[k+m]} \Big(C_{[k+m]}(\x)-\PARENS{\begin{matrix}
     \Sigma^{(1),k}_{[k+m]} & \dots&\Sigma^{(m),{k+m-1}}_{[k+m]}
    \end{matrix}}\PARENS{\begin{matrix}
        \Sigma^{(1),k}_{[k]} &\dots &\Sigma^{(m),k+m-1}_{[k]}\\ \vdots&&\vdots\\
    \Sigma^{(1),k}_{[k+m-1]} &\dots&\Sigma^{(m),k+m-1}_{[k+m-1]}
    \end{matrix}}^{-1}\PARENS{\begin{matrix}
      C_{[k]}(\x)\\
      \vdots\\
        C_{[k+m-1]}(\x)
    \end{matrix}}\Big)
\end{multline*}
\end{proof}
\subsection{Proof of Theorem \ref{superostia}}\label{mCD mas}
\begin{proof}
  Let us consider a similar matrix to that discussed in Definition \ref{T}, $M=S(TS)^{-1}$ which factors out as $M=M^{(1)}(T^{(1)}M^{(2)})\cdots (T^{(1)}\cdots T^{(m-1}M^{(m)})$. From the symmetry of the moment matrix,
 $G\prod_{i=1}^m\big(n^{(i)}\cdot\boldsymbol \Lambda-q^{(i)}\big)^\top=TG$, we conclude that
$M=H\omega^\top(TH)^{-1}$. Notice that $MTP=P$ and $\omega P=\mathcal Q TP$.
 Now, we proceed as in the proof of Theorem \ref{translatedCD} and evaluate $P^{[\ell+m]}(\x)^\top\big(H^{[\ell+m]}\big)^{-1}M^{[\ell+m]}TP^{[\ell+m]}(\y)$; a sandwich constructed in terms of $(\ell+m)$-th block truncations of semi-infinite matrices.
 We do it in two ways; first by acting on the right and, as $S$ is a block lower unitriangular matrix and therefore $M^{[\ell+m]}=S^{[\ell+m]}\big(TS^{[\ell+m]}\big)^{-1}$, we get
 \begin{align*}
   P^{[\ell+m]}(\x)^\top\big(H^{[\ell+m]}\big)^{-1}M^{[\ell+m]}TP^{[\ell+m]}(\y)=
   &P^{[\ell+m]}(\x)^\top\big(H^{[\ell+m]}\big)^{-1}P^{[\ell+m]}(\y)\\
   =& K^{(\ell+m)}(\x,\y).
 \end{align*}
 To act on the left we first evaluate the truncation $M^{[\ell+m]}=H^{[\ell+m]}\big(\omega^{[\ell+m]}\big)^\top \big(TH^{[\ell+m]}\big)^{-1}$ in terms of the corresponding truncated resolvent. Then,
 \begin{align*}
   P^{[\ell+m]}(\x)^\top\big(H^{[\ell+m]}\big)^{-1}M^{[\ell+m]}TP^{[\ell+m]}(\y)=
   &\big(\omega^{[\ell+m]}P^{[\ell+m]}(\x)\big)^\top\big(TH^{[\ell+m]}\big)^{-1}P^{[\ell+m]}(\y).
 \end{align*}
As we know the resolvent $\omega$ is a block upper triangular semi-infinite matrix with all its superdiagonals equal to zero but for the first $m$. Thus, the $(\ell+m)$-th truncation gives a matrix of the following form
\begin{align*}\omega^{[\ell+m]}=\PARENS{
\begin{array}{c}
\omega^{[l],[\ell+m]}\\
\hline
\begin{array}{c|c}\bigstrut[t]
0&\omega^{[\ell,m]}
  \end{array}
  \end{array}}
\end{align*}
where $\omega^{[\ell],[\ell+m]}$ is a truncation built up with the first $\ell$  block rows and the first $\ell+m$ block columns of the resolvent $\omega$, that for $\ell$ big enough looks like
{\small
\begin{align*}
 \omega^{[\ell],[\ell+m]}\coloneq \PARENS{
 \begin{matrix}
   \omega_{[0],[0]} &  \omega_{[0],[1]} &\dots&\omega_{[0],[m]}&0_{[0],[m+1]}&0_{[0],[m+2]}& \dots &0_{[0],[\ell-1]}& \dots& 0_{[0],[m+\ell-1]}\\
 0_{[1],[0]} &  \omega_{[1],[1]} &\dots&\omega_{[1],[m]}&\omega_{[1],[m+1]}& 0_{[1],[m+2]}&\dots &0_{[1],[\ell-1]}& \dots& 0_{[1],[m+\ell-1]}\\
 \vdots &&&&&&\ddots&\vdots\\
  0_{[\ell-1],[0]} &  0_{[\ell-1],[1]} &\dots&0_{[\ell-1],[m]}&0_{[\ell-1],[m+1]}& 0_{[\ell-1],[m+2]}&\dots& \omega_{[\ell-1],[\ell-1]} & \dots& \omega_{[\ell-1],[m+\ell-1]}
 \end{matrix}
 }.
\end{align*}}
 Then,
 \begin{align*}
  \omega^{[\ell+m]}P^{[\ell+m]}=\PARENS{
\begin{array}{c}
\omega^{[l],[\ell+m]}\\
\hline
\begin{array}{c|c}\bigstrut[t]
0&\omega^{[\ell,m]}
  \end{array}
  \end{array}}P^{[\ell+m]}(\x)=\PARENS{\begin{array}{c}\omega^{[\ell],[\ell+m]}P^{[\ell+m]}(\x)\\\hline
  \omega^{[\ell,m]}\PARENS{\begin{matrix}\bigstrut[t]
    P_{[\ell]}(\x)\\\vdots\\P_{\ell+m}(\x)
  \end{matrix}}
      \end{array}}
 \end{align*}

Is important to notice that each row of the truncation  $\omega^{[\ell],[\ell+m]}$ contains the complete non trivial part of the corresponding row of the resolvent; i.e.
\begin{align*}
  \omega^{[\ell],[\ell+m]}P^{[\ell+m]}(\x)=&(\omega P(\x))^{[\ell]},\\
  =&\mathcal Q(\x)TP^{[\ell]}(\x).
\end{align*}
Therefore
\begin{align*}
  P^{[\ell+m]}(\x)^\top\big(H^{[\ell+m]}\big)^{-1}M^{[\ell+m]}TP^{[\ell+m]}(\y)=&\PARENS{\begin{array}{c}
\mathcal Q(\x)TP^{[\ell]}(\x)\\\hline
  \omega^{[\ell,m]}\PARENS{\begin{matrix}\bigstrut[t]\bigstrut[t]
    P_{[\ell]}(\x)\\\vdots\\P_{\ell+m}(\x)
  \end{matrix}}
      \end{array}}^\top\PARENS{
      \begin{array}{c}\bigstrut[t]
        (TH^{[\ell]})^{-1}TP^{[\ell]}(\y)\\
          (TH_{[\ell]})^{-1}TP_{[\ell]}(\y)\\\vdots\\   (TH_{[\ell+m]})^{-1}TP_{\ell+m}(\y)
      \end{array}}\\
      =&\begin{multlined}[t]
        \mathcal Q(\x)(TP^{[\ell]}(\x))^\top (TH^{[\ell]})^{-1}TP^{[\ell]}(\y)\\+\PARENS{\begin{array}{c}
    P_{[\ell]}(\x)\\\vdots\\P_{\ell+m}(\x)
  \end{array}}^\top\big(\omega^{[\ell,m]}\big)^\top\PARENS{\begin{matrix}
    P_{[\ell]}(\x)\\\vdots\\P_{\ell+m}(\x)
  \end{matrix}}.
      \end{multlined}
\end{align*}
\end{proof}

\subsection{Proof of Proposition \ref{otro mas}}\label{yo que se}
\begin{proof}
  We first notice that
 \begin{align*}
  \frac{\partial S}{\partial t_a}=&\frac{\partial\beta^{(1)}}{\partial t_a}+\frac{\partial\beta^{(2)}}{\partial t_a}+\cdots\\
  S^{-1}=&\I-\beta^{(1)}\underbracket{-
  \beta^{(2)}+
  \big(
  \beta^{(1)}
  \big)^2}_{\text{second subdiagonal}}+\cdots
\end{align*}
so that we can split the right $t_a$ derivative of $S$ into subdiagonals as follows
\begin{align*}
  \frac{\partial S}{\partial t_a}S^{-1}=&  \Big(\frac{\partial\beta^{(1)}}{\partial t_a}+\frac{\partial\beta^{(2)}}{\partial t_a}+\cdots\Big)\Big(\I-\beta^{(1)}-
  \beta^{(2)}+
  \big(
  \beta^{(1)}
  \big)^2\Big)\\=&\frac{\partial\beta^{(1)}}{\partial t_a}+\underbracket{\frac{\partial\beta^{(2)}}{\partial t_a}-\frac{\partial\beta^{(1)}}{\partial t_a}\beta^{(1)}}_{\text{second subdiagonal}}+
 \underbracket{ \frac{\partial\beta^{(3)}}{\partial t_a}- \frac{\partial\beta^{(2)}}{\partial t_a}\beta^{(1)}
  -\frac{\partial\beta^{(1)}}{\partial t_a}\beta^{(2)}+\frac{\partial\beta^{(1)}}{\partial t_a}
  \big( \beta^{(1)}  \big)^2}_{\text{third subdiagonal}}+\cdots.
\end{align*}
Now, recalling that the basic Jacobi operators have only a non vanishing subdiagonal from \eqref{diff jacobi} we get
\begin{align*}
  \frac{\partial\beta_{[k]}}{\partial t_a}=&J_{[k],[k-1]},\\
  \frac{\partial\beta^{(2)}}{\partial t_a}=&\frac{\partial\beta^{(1)}}{\partial t_a}\beta^{(1)},\\
  \frac{\partial\beta^{(3)}}{\partial t_a}= &\frac{\partial\beta^{(2)}}{\partial t_a}\beta^{(1)}
  +\frac{\partial\beta^{(1)}}{\partial t_a}\beta^{(2)}-\frac{\partial\beta^{(1)}}{\partial t_a}
  \big( \beta^{(1)}  \big)^2.
\end{align*}
\end{proof}
\subsection{Proof of Proposition \ref{pro.baker.expressions}}\label{proof9}
\begin{proof}
  We first consider the expressions for the Baker functions $\Psi_1$, $\Psi_2$ and the adjoint Baker function $\Psi_2^*$:
\begin{align*}
  \Psi_1(\z,t,\m)=&%S(t,\m)W_0(\z,t)D_0(\m)\chi(\z)=
  S(t,\m)\Exp{t(\z)}\Big[\prod_{a=1}^D\big((\n_a\cdot\x)-q_a\big)^{m_a}\Big]\chi(\z)\\=&\Exp{t(\z)}\Big[\prod_{a=1}^D\big((\n_a\cdot\x)-q_a\big)^{m_a}\Big]P(\z,t,\m),\\
  \Psi_2^*(\z,t,\m)=&H(t,\m)^{-1}S(t,\m)\chi(\z)=H(t,\m)^{-1}P(\z,t,\m),\\
  \Psi_2(\z,t)=&H(t,\m)(S(t,\m)^{-1})^\top\chi^*(\z)=C(\z,t,\m).
\end{align*}
For the remaining Baker function  $\Psi_1^*$ we proceed as follows
\begin{align*}
    \Psi_1^*(\z,t,m)=&  \big[(W_1(t,\m))^{-1}\big]^\top\chi=    (W_2(\z,t,\m)^{-1})^\top G^\top\chi^*(\z)\\
    =&H(t,\m)^{-1} S(t,\m)G\chi^*(\z),
\end{align*}
and recall the proof of Proposition \ref{cauchy} in where we replace $S\to S(t,\m)$ but keeping $G$ (not replacing it by $G(t,\m)$) to get
\begin{align*}
  \Psi^*_1(\z,t,\m)=H(t,\m)^{-1}\int_\Omega P(\y,t,\m)\dd\mu(\y)\frac{1}{(z_1-y_1)\cdots(z_D-y_D)},
\end{align*}
and we get the desired result.
\end{proof}
\subsection{Proof of Proposition \ref{pro10}}\label{proof10}
\begin{proof}
  From \eqref{totoW} we get
  \begin{align*}
S(t)W_0(t)G=H(t)(S(t)^{-1})^\top,
 \end{align*}
that, by differentiation, leads to
\begin{align*}
  \frac{\partial S}{\partial t_a}S^{-1}+ S\Lambda_aS^{-1}=\frac{\partial H}{\partial t_a}H^{-1}- H \Big( \frac{\partial S}{\partial t_a}S^{-1}\Big)^\top H^{-1}.
\end{align*}
 Then, we have
\begin{align*}
  (J_a)_{[k],[k]}=\beta_{[k]}(\Lambda_a)_{[k-1],[k]}-(\Lambda_a)_{[k],[k+1]}\beta_{[k+1]}&=\frac{\partial H_{[k]}}{\partial t_a}H_{[k]}^{-1},\\
   (J_a)_{[k],[k+1]}=(\Lambda_a)_{[k],[k+1]}&=-H_{[k]}\frac{\partial(\beta_{[k+1]})^\top}{\partial t_a}H_{[k+1]}^{-1}.
\end{align*}
\end{proof}
\subsection{Proof of Theorem \ref{teo3}}\label{proof11}
\begin{proof}
  We obviously have
  \begin{align*}
\Big[    \frac{\partial}{\partial \n_b}-T_b,   \frac{\partial}{\partial \n_a}-T_a\Big](W_1)=&0,&a,b&\in\{1,\dots,D\}.
  \end{align*}
  Recalling the proof of Proposition \ref{pro: yo que se} we can write
  \begin{multline*}
 \Big[    \frac{\partial}{\partial \n_b}-T_b,   \frac{\partial}{\partial \n_a}-T_a\Big](W_1)=\Big(\frac{\partial\Delta_a\beta(\n_a\cdot\boldsymbol\Lambda)}{\partial\n_b}-\frac{\partial\Delta_{b}\
 \beta(\n_b\cdot\boldsymbol\Lambda)}{\partial\n_a}\Big)(W_1)\\+
\big(-q_a+( \Delta_a\beta)(\n_a\cdot\boldsymbol\Lambda)\big)\frac{\partial W_1}{\partial\n_b}-\big(-q_b+ (\Delta_{b}\beta)(\n_b\cdot\boldsymbol\Lambda)\big)\frac{\partial W_1}{\partial\n_a}\\
 -(-q_a+T_b (\Delta_a\beta)(\n_a\cdot\boldsymbol\Lambda))(T_bW_1)+(-q_b+T_a (\Delta_{b}\beta)(\n_b\cdot\boldsymbol\Lambda))(T_aW_1).
  \end{multline*}
  To evaluate this expression we recall Proposition \ref{pro:wave} that splits it by diagonals, in fact we are dealing with a Hessenberg matrix with the first non vanishing diagonal the first superdiagonal where we find
\begin{multline*}
  (\Delta_a\beta)(\n_a\cdot\boldsymbol\Lambda)(\n_b\cdot\boldsymbol\Lambda)-
 (\Delta_{b} \beta)(\n_b\cdot\boldsymbol\Lambda)(\n_a\cdot\boldsymbol\Lambda)=
   (T_b\Delta_a\beta)(\n_a\cdot\boldsymbol\Lambda)(\n_b\cdot\boldsymbol\Lambda)
 -(T_a\Delta_{b}\beta)((\n_b\cdot\boldsymbol\Lambda)(\n_a\cdot\boldsymbol\Lambda)
\end{multline*}
or
\begin{align*}
  (\Delta_a\Delta_b\beta)(\n_a\cdot\boldsymbol\Lambda)(\n_b\cdot\boldsymbol\Lambda)
 =(\Delta_{b}\Delta_{a}\beta)((\n_b\cdot\boldsymbol\Lambda)(\n_a\cdot\boldsymbol\Lambda)
\end{align*}
which happens to be an identity. Next we look at the main diagonal where we have
%\begin{multline*}
%  \frac{\partial\Delta_a\beta}{\partial\n_b}(\n_a\cdot\boldsymbol\Lambda)+(-q_a+\Delta_a\beta)(\n_a\cdot\boldsymbol\Lambda)\beta(\n_b\cdot\boldsymbol\Lambda)-\big(-q_a+
%  T_b\big((\Delta_a\beta)(\n_a\cdot\boldsymbol\Lambda)\big)(T_b\beta)(\n_b\cdot\boldsymbol\Lambda)
%  \\=
%   \frac{\partial\Delta_{b}\beta}{\partial\n_a}(\n_b\cdot\boldsymbol\Lambda)+(\Delta_{b}\beta)(\n_b\cdot\boldsymbol\Lambda)\beta(\n_a\cdot\boldsymbol\Lambda)-
%  T_a\big((\Delta_{b}\beta)\beta(\n_a\cdot\boldsymbol\Lambda)\big)
%\end{multline*}
\begin{align*}
 \Delta_b\left[\frac{\partial \beta}{\partial \boldsymbol{n}_a}+(\Delta_a \beta)\big(q_a+(\boldsymbol{n_a}\cdot \boldsymbol{\Lambda}) \beta \big) \right]\boldsymbol{n_b}\cdot \boldsymbol{\Lambda}&=
  \Delta_a\left[\frac{\partial \beta}{\partial \boldsymbol{n}_b}+(\Delta_b \beta)\big(q_b+(\boldsymbol{n_b}\cdot \boldsymbol{\Lambda}) \beta \big) \right]\boldsymbol{n_a}\cdot \boldsymbol{\Lambda}
\end{align*}
\end{proof}
\subsection{Proof of Theorem \ref{teo4}}\label{proof12}
\begin{proof}
   If we denote
\begin{align}\label{L2}
L_{a,b}&\coloneq \partial_a\partial_b +U_{a,b}, & a,b&\in\{1,\dots,D\},
\end{align}
 \eqref{eq: linear.wave} reads $ \partial_{a,b}(W_i)=L_{a,b}(W_i)$.
The compatibility conditions for this linear system are
\begin{align*}
\Big(  \partial_{(a,b)}(L_{c,d})-\partial_{(c,d)}(L_{a,b})+[L_{c,d},L_{a,b}]\Big)(W_i)&=0, & i&=1,2,
\end{align*}
and consequently
    \begin{align}\label{curvature}
R_{a,b,c,d}(W_i)&=0, & a,b,c,d\in\{1,\dots,D\},
    \end{align}
where
  \begin{multline*}
    R_{a,b,c,d}\coloneq (\partial_bU_{c,d})\partial_a+(\partial_aU_{c,d})\partial_b-
    (\partial_dU_{a,b})\partial_c-(\partial_cU_{a,b})\partial_d-
    \partial_{(a,b)}(U_{c,d})+\partial_{(c,d)}(U_{a,b})\\+[U_{c,d},U_{a,b}]-\partial_c\partial_d U_{a,b}+
\partial_a\partial_bU_{c,d}.
  \end{multline*}
Notice that we have that $W_1$ satisfies an equation of the form
      \begin{align*}%\label{diffwave}
\big(    \sum_{j=1}^DB_j\partial_j+ A\big)(W_1)&=0.
  \end{align*}
Recalling \eqref{eq:partialW1}, we find that
  \begin{align*}
  0&= \big (\sum_{j=1}^DB_j\partial_j+ A\big)(W_1)=\Big(  \underbracket{\sum_{j=1}^DB_j\Lambda_j}_{\text{first superdiagonal}}+ \underbracket{\sum_{j=1}^D B_j\beta\Lambda_j+A}_{\text{ diagonal}}+\mathfrak
l\Big)W_0
  \end{align*}
  and, decoupling by diagonals,  we get
     \begin{align*}
    \sum_{j=1}^D B_j\Lambda_j&=0,&
    \sum_{j=1}^D B_j\beta\Lambda_j+A&=0.
  \end{align*}

      From \eqref{curvature} we find, in the first place, that
   \begin{align*}
      (\partial_bU_{c,d})\Lambda_a+(\partial_aU_{c,d})\Lambda_b-
    (\partial_dU_{a,b})\Lambda_c-(\partial_cU_{a,b})\Lambda_d&=0,
    \end{align*}
    which is identically satisfied because of \eqref{eta}. In the second place, we get the
    following  nonlinear equation
    \begin{multline*}
          (\partial_bU_{c,d})\beta \Lambda_a+(\partial_aU_{c,d})\beta \Lambda_b-
    (\partial_dU_{a,b})\beta \Lambda_c-(\partial_cU_{a,b})\beta \Lambda_d+    \partial_{(a,b)}U_{c,d}-\partial_{(c,d)}U_{a,b}\\+[U_{c,d},U_{a,b}]-\partial_c\partial_d U_{a,b}+
\partial_a\partial_bU_{c,d}=0,
    \end{multline*}
   and recalling \eqref{eta} we get the desired result.
\end{proof}
\subsection{Proof of Proposition \ref{moorepenroseright}}\label{proof moorepenroseright}
\begin{proof}
Observe that for $\n=R\ee_a$ we have \eqref{a invertir} that
\begin{align*}
M=(\n\cdot\boldsymbol\Lambda)\mathcal M^{-1/2}=\eta_R\Lambda_a\eta_R^{-1} \mathcal M^{-1/2},
\end{align*}
from where we deduce that
\begin{flalign*}
  MM^\top=&\eta_R\Lambda_a\eta_R^{-1} \mathcal M^{-1}\big(\eta_R^{-1}\big)^\top\Lambda_a^\top\eta_R^\top
  \\
  =&\eta_R\Lambda_a[\mathscr R]_{B_c}^\top \mathcal M^{-1}[\mathscr R]_{B_c}\Lambda_a^\top\eta_R^\top &
  \text{because \eqref{etarelations}}\\
  =& \eta_R\Lambda_a\mathcal M^{-1}\Lambda_a^\top\eta_R^\top &\text{from \eqref{ortocano}}
\end{flalign*}

We now introduce the matrix built up of multinomial coefficients involving $\ee_a$,
  \begin{align*}
   \mathcal M_{[k+1]_a}=\diag\bigg({k+1\choose \q^{(k+1)}_{j_1}},\dots,{k+1\choose \q^{(k+1)}_{j_{|[k]|}}}\bigg) \in\R^{|[k]|\times|[k]|},
  \end{align*}
  where $[k+1]_a\coloneq \big\{\q^{(k+1)}_{j_m}\big\}_{m=1}^{|[k]|}\subset [k+1]$ is the set containing only the multi-indices such that $\ee_a\cdot\q^{(k+1)}_{j_m}\neq 0$, assuming the reverse lexicographical order; notice that $|[k+1]_a|=|[k]|$.
  Then, we can write
  \begin{align*}
    (\Lambda_a)_{[k],[k+1]}\mathcal M_{[k+1]}^{-1}((\Lambda_a)_{[k],[k+1]})^\top=\mathcal M_{[k+1]_a}^{-1},
  \end{align*}
  which is clearly invertible and, consequently
  \begin{align*}
 M_{[k],[k+1]}(M_{[k],[k+1]})^\top=\eta_{R,[k]}\mathcal M_{[k+1]_a}^{-1}\eta_{R,[k]}^\top
\end{align*}
is invertible.

Thus, following Appendix \ref{appendix pseudoinverse}, we get the
the Moore--Penrose pseudo-inverse of the matrix $(\n\cdot\boldsymbol\Lambda)_{[k-1],[k]}\mathcal M_{[k]}^{-1/2}$ is
\begin{align*}
 \big( (\n\cdot\boldsymbol\Lambda)_{[k-1],[k]}\mathcal M_{[k]}^{-1/2}\big)^+=\mathcal M_{[k]}^{-1/2}((\n\cdot\boldsymbol\Lambda)_{[k-1],[k]}) ^\top\big(
 (\n\cdot\boldsymbol\Lambda)_{[k-1],[k]}\mathcal M_{[k]}^{-1} ( (\n\cdot\boldsymbol\Lambda)_{[k-1],[k]})^\top)
 \big)^{-1}
\end{align*}
which is the right inverse of the matrix, and therefore we get the result.
\end{proof}
\subsection{Proof of Proposition \ref{pro13}}\label{proof13}
\begin{proof}
  From Definition \ref{moment} we have
\begin{align*}
  G=&\int_{\R^D}\chi(\x)\dd\mu(\x)\chi(\x)^\top \\
  =&\int_{\R^D}\chi(R \x)\dd\mu(R\x)\chi(R\x)^\top \\
  =&\int_{\R^D} \eta_R\chi(\x)\dd\mu(\x)\chi(\x)^\top    \eta_R^\top\\
  =& \eta_R G    \eta_R^\top.
\end{align*}
From this formula and the Cholesky factorization we get
\begin{align*}
  SH(S^{-1})^\top =& \eta_R SH(S^{-1})^\top     \eta_R^\top\\
  =& \eta_R  S  \eta_R^{-1} \eta_R H  \eta_R^\top (\eta_R^\top)^{-1}(S^{-1})^\top     \eta_R^\top\\
  =& \big( \eta_R  S  \eta_R^{-1}\big) \big(\eta_R H  \eta_R^\top\big)\Big(\big( \eta_R  S  \eta_R^{-1}\big)^\top\Big)^{-1},
\end{align*}
and given the uniqueness of the Cholesky factorization and that $ \eta_R$ is block diagonal we get the stated result for $S$ and $H$. The equation for $\beta$ follows from the equation for $S$.
\end{proof}
\subsection{Proof Proposition \ref{latriple}}\label{prooftriple}
\begin{proof}
  For convenience we write here the next couple of equations
  \begin{align*}
  \partial_aW_1=&(\partial_aS+S\Lambda_a)W_0,\\
        ( \partial_{(a,b,c)}-    \partial_a\partial_b\partial_c)(W_1)=&
        \begin{multlined}[t]\big(\partial_{a,b,c}S-\partial_a\partial_b\partial_c S-\partial_a\partial_bS\Lambda_c-\partial_b\partial_cS\Lambda_a-\partial_c\partial_aS\Lambda_b\\-
    \partial_aS\Lambda_b\Lambda_c-\partial_bS\Lambda_c\Lambda_a-\partial_cS\Lambda_a\Lambda_b\big)W_0,
        \end{multlined}
  \end{align*}
Taking into account the form of $S=\I+\beta^{(1)}+\beta^{(2)}+\cdots$, being $\beta^{(k)}$ the $k$-th subdiagonal of $S$, we can write
  \begin{align}
  \partial_aW_1=&(\Lambda_a+\beta^{(1)}\Lambda_a)W_0+\mathfrak l W_0,\label{congruence 3.1}
  \\
        ( \partial_{(a,b,c)}-    \partial_a\partial_b\partial_c)(W_1)=&
        \begin{multlined}[t]
          -\big( \overbracket{ \partial_a\beta^{(1)}\Lambda_b\Lambda_c+\partial_b\beta^{(1)}\Lambda_c\Lambda_a+\partial_c\beta^{(1)}\Lambda_a\Lambda_b}^{\text{first superdiagonal}}
        \\ +\underbracket{\partial_a\beta^{(2)}\Lambda_b\Lambda_c+\partial_b\beta^{(2)}\Lambda_c\Lambda_a
        +\partial_c\beta^{(2)}\Lambda_a\Lambda_b}_{\text{diagonal}}\\+\underbracket{
        \partial_a\partial_b\beta^{(1)}\Lambda_c
        +\partial_b\partial_c\beta^{(1)}\Lambda_a+\partial_c\partial_a\beta^{(1)}\Lambda_b}_{\text{diagonal}}
  \big)W_0\\+\mathfrak l W_0,
        \end{multlined}\label{congruence 3.2}
  \end{align}
  We can use \eqref{congruence 3.1} to  move to the RHS of \eqref{congruence 3.2} the contribution on the first superdiagonal so that
  \begin{multline*}
      ( \partial_{(a,b,c)}-    \partial_a\partial_b\partial_c+\partial_a\beta^{(1)}\Lambda_b\partial_c+\partial_b\beta^{(1)}\Lambda_c\partial_a+\partial_c\beta^{(1)}\Lambda_a\partial_b)(W_1)\\=
          -\big(  \overbracket{\partial_a\beta^{(2)}\Lambda_b\Lambda_c+\partial_b\beta^{(2)}\Lambda_c\Lambda_a+\partial_c\beta^{(2)}\Lambda_a\Lambda_b+
        \partial_a\partial_b\beta^{(1)}\Lambda_c+\partial_b\partial_c\beta^{(1)}\Lambda_a+\partial_c\partial_a\beta^{(1)}\Lambda_b}^{\text{diagonal}}
        \\\underbracket{-\partial_a\beta^{(1)}\Lambda_b\beta^{(1)}\Lambda_c-\partial_b\beta^{(1)}\Lambda_c\beta^{(1)}\Lambda_a-\partial_c\beta^{(1)}\Lambda_a\beta^{(1)}\Lambda_b}_{\text{diagonal}}
  \big)W_0+\mathfrak l W_0,
  \end{multline*}
Which using Proposition \ref{otro mas} can be written as\footnote{This could be avoided, depending on whether or not we desired to use $\beta^{(2)}$ in the expressions  for $ V_{a,b,c}$.}
  \begin{multline*}
      ( \partial_{(a,b,c)}-    \partial_a\partial_b\partial_c+\partial_a\beta^{(1)}\Lambda_b\partial_c+\partial_b\beta^{(1)}\Lambda_c\partial_a+\partial_c\beta^{(1)}\Lambda_a\partial_b)(W_1)\\=
          -\big(  \overbracket{\partial_a\partial_b\beta^{(1)}\Lambda_c+\partial_b\partial_c\beta^{(1)}\Lambda_a
          +\partial_c\partial_a\beta^{(1)}\Lambda_b}^{\text{diagonal}}
        \\\underbracket{+(\partial_a\beta^{(1)})\beta^{(1)}\Lambda_b\Lambda_c+(\partial_b\beta^{(1)})\beta^{(1)}\Lambda_c\Lambda_a
        +(\partial_c\beta^{(1)})\beta^{(1)}\Lambda_a\Lambda_b}_{\text{diagonal}}\\\underbracket{
        -\partial_a\beta^{(1)}\Lambda_b\beta^{(1)}\Lambda_c-\partial_b\beta^{(1)}\Lambda_c\beta^{(1)}\Lambda_c
        -\partial_c\beta^{(1)}\Lambda_a\beta^{(1)}\Lambda_b}_{\text{diagonal}}
  \big)W_0+\mathfrak l W_0,
  \end{multline*}
  which after simplifying and writing $\beta^{(1)}=\beta$
    \begin{multline*}
      ( \partial_{(a,b,c)}-    \partial_a\partial_b\partial_c+\partial_a\beta\Lambda_b\partial_c+\partial_b\beta\Lambda_c\partial_a+\partial_c\beta\Lambda_a\partial_b)(W_1)\\=
          -\big( \partial_a\partial_b\beta\Lambda_c+\partial_b\partial_c\beta\Lambda_a+\partial_c\partial_a\beta\Lambda_b
    +(\partial_a\beta)[\beta,\Lambda_b]\Lambda_c+(\partial_b\beta)[\beta,\Lambda_c]\Lambda_a+(\partial_c\beta)[\beta,\Lambda_a]\Lambda_b
  \big)W_0+\mathfrak l W_0.
  \end{multline*}
Hence, from  \eqref{eta}
  \begin{multline*}
     R_1\coloneq  ( \partial_{(a,b,c)}-    \partial_a\partial_b\partial_c+ V_{a,b}\partial_c+ V_{b,c}\partial_a+ V_{c,a}\partial_b\\+\partial_c( V_{a,b})+\partial_a( V_{b,c})+\partial_b( V_{c,a})+ V_{a,b,c}+ V_{b,c,a}+ V_{c,b,a})(W_1)\in\mathfrak l W_0,
  \end{multline*}
and trivially we know that
 \begin{multline*}
     R_2\coloneq  ( \partial_{(a,b,c)}-    \partial_a\partial_b\partial_c+ V_{a,b}\partial_c+ V_{b,c}\partial_a+ V_{c,a}\partial_b\\+\partial_c( V_{a,b})+\partial_a( V_{b,c})+\partial_b( V_{c,a})+ V_{a,b,c}+ V_{b,c,a}+ V_{c,b,a})(W_2)\in\mathfrak u,
  \end{multline*}
  where $R_1G=R_2$. Consequently, from the \emph{asymptotic module}  Proposition \ref{pro:asymptotic-module} we deduce that $R_1=R_2=0$.
  Therefore,
   \begin{multline*}
      \frac{\partial\Psi_i}{\partial t_{(a,b,c)}}=\frac{\partial^3\Psi_i}{\partial t_a\partial t_b\partial t_c}
      - V_{a,b}\frac{\partial\Psi_i}{\partial t_c}- V_{b,c}\frac{\partial\Psi_i}{\partial t_a}- V_{c,a}\frac{\partial\Psi_i}{\partial t_b}\\-\Big(\partial_c( V_{a,b})+\partial_a( V_{b,c})+\partial_b( V_{c,a})+ V_{a,b,c}+ V_{b,c,a}+ V_{c,b,a}\Big)\Psi_i.
    \end{multline*}
\end{proof}
\subsection{Proof of Proposition \ref{pro14}}\label{proof14}
\begin{proof}
  To prove \eqref{symmetri-MOVPR} for the MOVPR observe that
  \begin{align*}
    P(R\x)=&S \eta_R\chi(\x)\\=& \eta_R S\chi(\x)\\=& \eta_RP(\x)
  \end{align*}
  which together with  \eqref{varsigma} leads to the result.
  To check  \eqref{symmetry-jacobi} just follow the next equalities
 \begin{align*}
   \eta_R(\n\cdot \boldsymbol J)\eta_R^{-1}=&\eta_R S(\n\cdot\boldsymbol\Lambda)S^{-1}\eta_R^{-1}
   \\
   =&S\eta_R(\n\cdot\boldsymbol\Lambda)\eta_R^{-1}S^{-1}
   \\
   =& S(R\n\cdot \boldsymbol \Lambda) S^{-1}
   \\
   =& R\n\cdot\boldsymbol J.
 \end{align*}

 Equations \eqref{RCD} are a direct consequence of \eqref{DEFCD} and \eqref{symmetry-S}.
\end{proof}
\end{appendices}
\newpage

\end{document}